\documentclass[reqno,10pt]{amsart}
\usepackage{amsmath,amsthm}
\usepackage{xcolor}
\colorlet{BLUE}{blue}

\usepackage{amssymb}
\usepackage{esint}

    \usepackage{a4wide}
\usepackage[hidelinks,breaklinks=true,bookmarks=false]{hyperref}

\allowdisplaybreaks

 \usepackage[margin=1in]{geometry}

\newcommand{\Mtwo}{\mathfrak M_2}

\newtheorem{theorem}{Theorem}
\newtheorem{corollary}[theorem]{Corollary}
\newtheorem{definition}{Definition}
\newtheorem{lemma}{Lemma}
\newtheorem{proposition}[theorem]{Proposition}
\newtheorem{remark}{Remark}
\newtheorem*{maintheoremA}{Theorem A}
\newtheorem*{maintheoremB}{Theorem B}
\newtheorem*{maintheoremC}{Theorem C}
\newtheorem*{maintheoremD}{Theorem D}

\let\e=\varepsilon
\let\p=\partial
\newcommand{\Hh}{\mathbb H}

\def\bPhi{{\bf \Phi}}

\numberwithin{equation}{section}

\newcommand{\Db}{\mathbf{D}}
\newcommand{\Lb}{\mathbf{L}}
\newcommand{\Bb}{\mathbf{B}}
\newcommand{\R}{\mathbb{R}}
\newcommand{\dd}{\mathrm{d}}
\newcommand{\Bcal}{\mathcal B}
\newtheorem{assumption}{Assumption}
\def\bega{\begin{aligned}}
\def\enda{\end{aligned}}
\def\lw{\left}
\def\rw{\right}

\begin{document}
\title{Rough Axisymmetric Euler Dynamics:\\
Global Energy Compactness and Kinetic Passage Across Regularity Breakdown 
}
\author{Gi-Chan Bae}
\address{School of Mathematical Sciences, Dalian University of Technology, Dalian 116024, P. R. China}
\email{gcbae02@gmail.com}
\author{Hongxu Chen}
\address{School of Mathematical Sciences, Shenzhen University, Shenzhen, Guangdong 518060, China}
\email{hongxuchen.math@gmail.com}
\author{Chanwoo Kim}
\address{Department of Mathematics, University of Wisconsin--Madison, Madison, WI 53717, USA}
\email{chanwookim.math@gmail.com; chanwoo.kim@wisc.edu}

\begin{abstract}
We establish global energy compactness for rough three-dimensional
axisymmetric Euler flow without swirl and use it to construct
hydrodynamic limits of collisional kinetic equations beyond finite-time
Euler regularity breakdown.  For relative vorticity in
$L^1\cap L^p$, regular approximate Euler flows are globally compact in
$C_tL_x^2$ for $p\ge4/3$, without a sign condition or any
spatial-moment assumption.  Below $4/3$, the same conclusion
holds under uniform control of the absolute impulse.  We identify the
sharp radial dynamics governing this confinement: the absolute impulse
is propagated without a sign condition for $p\ge5/3$, while for
$1<p<5/3$ it is controlled under one-sided $L^{p^\sharp}$
integrability, where $p^\sharp= {4p} /  (3p-1)$,
and this exponent is sharp.  In particular, for one-sign finite-impulse
vorticity the resulting global energy compactness holds throughout the full
range $p>1$.

For the Coulomb Landau equation and a class of non-cutoff soft-potential
Boltzmann equations, we construct strong solutions on lifespans
$T^\varepsilon\to\infty$ realizing prescribed rough finite-energy Euler
initial data and obtain energy-conserving axisymmetric Euler limits on every
fixed time interval.  If the corresponding regular Euler evolution loses
regularity in finite time, the hydrodynamic fields converge to the regular Euler solution up to the singular time
and have a common strong trace there, while subsequences yield global
energy-conserving Euler continuations past that time.

\end{abstract}

\maketitle
 \setcounter{tocdepth}{1}
\tableofcontents

\section{Introduction}
 Collisions relax microscopic non-equilibrium toward local equilibrium.  When
this relaxation survives at the macroscopic level as a dissipative mechanism,
the passage from kinetic theory to fluid dynamics is by now well developed
\cite{bardosgolselevermore1991,bardosgolselevermore1993,
lionsmasmoudi2001I,lionsmasmoudi2001II,
golsesaintraymond2004,golsesaintraymond2009,guo2006diffusive}.
The situation is fundamentally different when no macroscopic dissipation
remains, as in the incompressible Euler limit.  Classical
kinetic derivations of Euler are built around a regular macroscopic reference
flow: spatial Lipschitz control is central to the weak--strong stability
mechanism, while Hilbert-expansion methods require still higher regularity
\cite{saintraymond2003euler,saintraymond2009entropy,
demasiespositolebowitz1989}.  What is not understood is whether the
kinetic-to-Euler limit can persist through a finite-time loss of regularity
of the limiting three-dimensional Euler flow.

At the rough regularity relevant here, this limitation is not merely
technical.  Although sufficiently regular axisymmetric no-swirl Euler flows
are globally regular \cite{UkhovskiiIudovich1968}, finite-time loss of
regularity is known within the same symmetry class.  Elgindi established
finite-time singularity formation in the $C^{1,\alpha}$ class
\cite{Elgindi2021} (see also the recent work of Chen
\cite{Chen2026EulerII}); finite-energy singular solutions were subsequently
obtained by Elgindi--Ghoul--Masmoudi
\cite{ElgindiGhoulMasmoudi2021}, while
C\'ordoba--Mart\'inez-Zoroa--Zheng constructed a distinct finite-energy,
non-self-similar blow-up mechanism
\cite{CordobaMartinezZoroaZheng2025}.  Thus the regular Euler reference
required by classical kinetic-to-Euler arguments can genuinely break down
in finite time.

Passing beyond such a breakdown raises a second, independent difficulty.
For rough Euler solutions, conservation of kinetic energy is itself
nontrivial; see the broader Onsager framework
\cite{Onsager1949,ConstantinETiti1994}.  The obstruction relevant here,
however, is not small-scale anomalous dissipation but large-scale loss of
energy: even under strong local compactness, an order-one amount of energy
may escape to spatial infinity.  A singularity-crossing kinetic limit must
therefore control both the loss of the regular Euler reference flow and the
possible escape of energy at infinity.  This leads to the central question
of the paper:
\begin{quote}
\itshape
Can strong solutions of collisional kinetic equations realize prescribed rough finite-energy Euler data, remain globally energy-compact across a
finite-time loss of Euler regularity, and generate energy-conserving Euler
continuations beyond it?
\end{quote}

We answer this question affirmatively.  Our construction is not confined to
the lifespan of a regular Euler reference flow.  We construct strong
solutions of collisional kinetic equations realizing prescribed rough
three-dimensional finite-energy Euler initial data and producing global
energy-conserving Euler limits along subsequences on every finite time interval. If the corresponding
regular Euler solution loses regularity at $T_*$, then, for each fixed
admissible kinetic realization, the hydrodynamic fields converge to that
solution without subsequence extraction on $[0,T_*]$. Beyond $T_*$, these limits realize energy-conserving Euler continuations through the same trace at $T_*$. Thus
the kinetic approximation is rigid up to the singular time, while compactness
survives across and beyond it.

The fluid mechanism behind this passage is a global energy-compactness theory
for rough axisymmetric Euler flow.  The mechanism changes at the threshold
$p=4/3$.  For $p\ge4/3$, strong $L^1\cap L^p$ compactness of the transported
relative vorticity, together with the natural energy bound, rules out escape
of energy to infinity without any sign condition or spatial-moment
assumption.  For $1<p<4/3$, global energy compactness instead requires
control of the absolute impulse.  To propagate this quantity, it is enough
to control one sign component in $L^{p^\sharp}$, where
\begin{equation}
    p^\sharp=\frac{4p}{3p-1}, \label{eq:psharp}
\end{equation}
and this exponent is sharp.  For one-sign finite-impulse vorticity, the
additional condition is automatic. Therefore, we extend the energy-conservation range
of \cite{nobili2022renormalization} from $p>3/2$ to every $p>1$ for the one-sign vorticity.  We also
identify the sharp sign-free thresholds $3/2$ and $5/3$ for propagation of
the first radial moment and the absolute impulse, respectively.

We then realize this fluid compactness by three-dimensional kinetic dynamics
for both the Coulomb Landau equation and non-cutoff Boltzmann equations with
soft potentials.  Bae--Kim \cite{bae2026quantitative} developed a rough
kinetic-to-Euler construction for the two-dimensional hard-sphere Boltzmann
equation.  In the present three-dimensional setting, an additional global
obstruction appears: compactness of the transported vorticity does not by
itself control the global $L^2$ energy of the fluid velocity, since energy
can escape to large cylindrical radius.  The energy-compactness and
radial-confinement theory provides the global tail control needed to prevent
this escape.  Combined with the kinetic estimates, it yields strong kinetic
solutions on lifespans $T^\varepsilon\to\infty$ without requiring a smooth
Euler reference flow throughout those lifespans.

We first isolate the fluid mechanism responsible for global energy
compactness.

\subsection{Global energy compactness and radial dynamics}

The first obstruction is already visible at the Euler level.  Write
$x=(x_h,z)\in\mathbb R^2\times\mathbb R$ and $r=|x_h|$.  For an
axisymmetric velocity without swirl,
\[
    u=u^r e_r+u^z e_z,\qquad
    \omega=\omega^\vartheta e_\vartheta,\qquad
    \varpi=\frac{\omega^\vartheta}{r}.
\]
Then
\begin{equation}\label{eq:vor-eqtn-preview}
    \partial_t\varpi+u\cdot\nabla\varpi=0,
    \qquad
    u=\mathcal B\varpi
    :=\nabla\times(-\Delta)^{-1}(r\varpi e_\vartheta).
\end{equation}
Axisymmetry therefore restores a two-dimensional transport structure,
but not a two-dimensional energy theory.  The scalar $\varpi$ is
transported in the meridional variables $(r,z)$, whereas the
Biot--Savart reconstruction and the $L^2$ energy
\[
    \frac12\int_{\mathbb R^3}|u(t,x)|^2\,dx
\]
retain the geometry of three dimensions.  This mismatch is largely
invisible to local compactness, but becomes decisive for global $L^2$
compactness.

The smooth axisymmetric no-swirl theory goes back to
Ukhovskii--Iudovich \cite{UkhovskiiIudovich1968}, with later Sobolev,
bounded-vorticity, and critical developments
\cite{ShirotaYanagisawa1994,danchin2007axisymmetric,
AbidiHmidiKeraani2010,ShaoWeiZhang2026}.  At low regularity, Delort
established axisymmetric vortex-sheet compactness \cite{Delort1992}.
For relative vorticity in $L^1\cap L^p$, $p>1$, natural approximations
yield strong local $L^2$ compactness of the velocity
\cite{JiuXin2004,JiuXin2006,nobili2022renormalization}, while global weak
Euler solutions exist in the same range \cite{JiuWuYang2015,JiuLiuNiu2021};
strong relative-vorticity convergence is also known for vanishing
viscosity \cite{brkic2024vanishing}.

Local compactness, however, does not control the global $L^2$ energy.
An order-one amount of this energy may escape to arbitrarily large
cylindrical radius while remaining invisible on every fixed region.
Thus $u_n\to u
    \quad\text{strongly in }L^2_{\mathrm{loc}}$ does not by itself imply $\|u_n(t)\|_2\to\|u(t)\|_2$. The missing issue is therefore compactness at spatial infinity.

A classical way to restore such compactness uses a sign condition
together with finite impulse.  The signed impulse
\[
    \frac1{2\pi}\int_{\mathbb R^3}r^2\varpi(t,x)\,dx
\]
is conserved by the axisymmetric Euler evolution.  When $\varpi$ has
one sign, this invariant becomes a coercive radial moment: it prevents
a fixed amount of vorticity from moving to arbitrarily large
cylindrical radius, and the Biot--Savart law transfers this radial
tightness to the velocity tail.  This is the confinement mechanism underlying the one-sign
finite-impulse theory of Nobili--Seis \cite{nobili2022renormalization}.
This leaves open whether radial confinement is needed at all.  More
precisely:

\begin{quote}
\itshape
Can the transported $L^1\cap L^p$ structure alone enforce global
$L^2$ compactness, without a sign condition or an a priori radial
moment?
\end{quote}

The answer is moment-free for every $p\ge4/3$.  Below $4/3$,
global energy compactness is recovered from control of the absolute impulse
\begin{equation}\label{eq:absolute-impulse}
    \mathfrak M_2(\varpi)
    :=\frac1{2\pi}\int_{\mathbb R^3}r^2|\varpi(x)|\,dx.
\end{equation}
The precise statement is the following.

\begin{maintheoremA}[Global energy compactness]
Fix $T<\infty$ and let $1<p<\infty$.  Let
$(u_n,\varpi_n,f_n,\pi_n)$ be regular axisymmetric no-swirl approximate
Euler flows satisfying
\[
    \partial_tu_n+u_n\cdot\nabla u_n+\nabla\pi_n=f_n,
    \qquad
    \nabla\cdot u_n=\nabla\cdot f_n=0,
    \qquad
    \nabla\times u_n=r\varpi_ne_\vartheta,
\]
with
\[
    \sup_n\|u_n\|_{L^\infty(0,T;L^2)}<\infty,
    \qquad
    f_n\to0
    \quad\text{in }L^1(0,T;L^2),
\]
and
\[
    \varpi_n\to\varpi
    \quad\text{in }C([0,T];L^1\cap L^p),
    \qquad
    u_n(0)\to u_0
    \quad\text{in }L^2.
\]
Then, with $u=\mathcal B\varpi$,
\[
    u_n\to u
    \quad\text{in }C([0,T];L^2(\mathbb R^3)),
    \qquad
    \|u(t)\|_2=\|u_0\|_2
    \quad (0\le t\le T),
\]
provided either
\[
    p\ge\frac43
    \qquad\text{or}\qquad
    \sup_n\sup_{0\le t\le T}
    \mathfrak M_2(\varpi_n(t))<\infty.
\]
\end{maintheoremA}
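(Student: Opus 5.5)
Split the proof into local compactness, control of the velocity tail at large cylindrical radius, and identification of the energy. The first two together give $u_n\to u$ in $C([0,T];L^2)$; the energy identity then comes from the approximate energy balance.

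\emph{Local step.} Since $u_n=\mathcal B\varpi_n$ and $\varpi_n\to\varpi$ in $C([0,T];L^1\cap L^p)$, I would use the standard axisymmetric Biot--Savart bounds (as in \cite{JiuXin2004,nobili2022renormalization}). These show that on every cylinder $\{r<R,|z|<R\}$ the map $\varpi\mapsto\mathcal B\varpi$ is continuous from $L^1\cap L^p$ into $L^2$, with a modulus uniform in $t$. This gives $u_n\to u$ in $C([0,T];L^2_{\rm loc})$.

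\emph{Tail step.} This is the heart of the argument and the step I expect to be hardest. We need
\[
\lim_{R\to\infty}\sup_n\sup_t\int_{\{r>R\}\cup\{|z|>R\}}|u_n|^2=0 .
\]
Decay in $|z|$ for bounded $r$ should follow from the kernel decay together with tightness of $\varpi_n$ in $L^1$ (the family is compact in $C_tL^1$). The real issue is large $r$. I would split $\varpi_n=\varpi_n\mathbf 1_{r<R/2}+\varpi_n\mathbf 1_{r\ge R/2}$.
\begin{itemize}
\item The near part produces a dipole-type far field of size $\lesssim \mathfrak M_2$-weighted mass over $|x|^{3}$. More precisely, its $L^2(r>R)$ norm is controlled by $\int_{r<R/2} r^2|\varpi_n|$ times $R^{-3/2}$, and also directly by $R^{1/2}\|\varpi_n\|_{L^1}$-type quantities through truncation. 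This has to be handled carefully without a moment.
\item The far part is estimated by the energy. I would use $\|\mathcal B\omega\|_{L^2}^2\sim\int \psi\,\omega$ with $\omega=r\varpi$ and the stream function $\psi$. A Hardy--Littlewood--Sobolev bound gives $\|u\|_2\lesssim\|r\varpi\|_{L^{6/5}}$.
\end{itemize}

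For $p\ge 4/3$, interpolate $\|r\varpi\mathbf 1_{r>R/2}\|_{6/5}$ between $L^1$ and $L^p$ using the measure $r\,dr\,dz$. Then $r^{6/5}|\varpi|^{6/5}$ integrated over $dx=2\pi r\,dr\,dz$ carries the weight $r^{11/5}$. Weights can be traded only against the energy bound and the quantity $r\varpi\in L^2$-type control of $\nabla u$, which we do not have. So I would instead use the conserved energy $\int\psi\omega$ directly: $\psi/r$ is controlled, and the two-dimensional meridional estimate $\|u\|_{L^2}^2\approx\int \psi\, r\varpi\, dx$ splits into near and far contributions. The far contribution is $\le\|\psi/r^{a}\|_{L^{p'}}\|r^{1+a}\varpi\mathbf 1_{r>R/2}\|_{L^p}$-type pieces. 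The scaling count is that $\psi\sim r^{?}$ is bounded by $\|u\|_2$ through a weighted Sobolev (Hardy) inequality in the meridional half-plane. The exponent balance closes exactly when $p\ge4/3$, and the far part is then $o(1)$ uniformly by compactness of $\{\varpi_n\}$ in $L^1\cap L^p$, since tightness makes $\|\varpi_n\mathbf 1_{r>R/2}\|_{L^1\cap L^p}\to0$ uniformly. I expect checking this exponent balance to be the main technical obstacle, and the first thing I would try is a Hardy inequality for $\psi$ in the $(r,z)$ half-plane weighted by $r^{-1}$. When $p<4/3$, the bound $\sup\mathfrak M_2<\infty$ replaces the missing integrability: Chebyshev gives $\int_{r>R/2}r|\varpi_n|\le 2R^{-1}\mathfrak M_2$, which gives the needed decay after interpolation.

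\emph{Energy.} Tightness plus local convergence yields $u_n\to u$ in $C([0,T];L^2)$. Testing the approximate Euler equation with $u_n$ (the flows are regular and divergence-free) gives
\[
\|u_n(t)\|_2^2=\|u_n(0)\|_2^2+2\int_0^t\!\!\int f_n\cdot u_n .
\]
The forcing term is bounded by $2\|f_n\|_{L^1L^2}\sup_n\|u_n\|_{L^\infty L^2}\to0$. Passing to the limit gives $\|u(t)\|_2=\|u_0\|_2$.
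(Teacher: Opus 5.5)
The gap is in your tail step for the moment-free branch $p\ge4/3$. As written, that step is purely static. It tries to get $L^2$-tightness of $\mathcal B\varpi_n(t)$ at large $r$ from tightness of $\varpi_n(t)$ in $L^1\cap L^p$, via the pairing $\int\psi\,\varpi$ and a Hardy inequality. It never uses the equation or the strong convergence $u_n(0)\to u_0$, and no such implication holds for any $p$.

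To see this, rescale the packets of Proposition~\ref{prop:cubsharp}, setting $\tilde\zeta_R:=R^{-1/2}\zeta_R$. Then $\mathcal B\tilde\zeta_R=R^{-1/2}v_R$ is supported in $\mathcal C_R(1,2)$ with $\|\mathcal B\tilde\zeta_R\|_2\asymp1$. On the other hand $|\tilde\zeta_R|\lesssim R^{-3/2}$ on a set of volume $O(R)$, so $\|\tilde\zeta_R\|_{L^1\cap L^p}\lesssim R^{-1/2}\to0$ for every $p\ge1$.

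Now take the smooth global Euler flows issued from $\tilde\zeta_{R_n}$ with $R_n\to\infty$; the data are supported away from the axis, so \cite{UkhovskiiIudovich1968} applies. These flows satisfy every hypothesis except strong convergence of $u_n(0)$: $f_n=0$, the energy is constant and $\asymp1$, and transport gives $\sup_t\|\varpi_n(t)\|_{L^1\cap L^p}=\|\tilde\zeta_{R_n}\|_{L^1\cap L^p}\to0$. Your local step then forces $u_n(t)\to\mathcal B0=0$ in $L^2_{\rm loc}$, while $\|u_n(t)\|_2\asymp1$. So $u_n(t)$ is not tight, even though your tail argument would apply to this family verbatim. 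Concretely, the far piece $\|r^{1+a}\varpi_n\mathbf 1_{\{r>R/2\}}\|_{L^p}$ in your pairing carries a positive power of $r$. Tightness in $L^1\cap L^p$ does not control it, and no choice of exponents repairs this.

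The missing idea is that, for $p\ge4/3$, confinement is \emph{dynamical}. In the paper, tightness is supplied at $t=0$ by $u_n(0)\to u_0$ in $L^2$. It is then propagated by the energy identity localized with a radial cutoff $\chi_R(r)$. What must vanish, first as $n\to\infty$ and then as $R\to\infty$, is the flux $R^{-1}\int_0^T\!\int_{\mathcal C_R}\bigl(|u_n|^3+|\pi_n||u_n|\bigr)\,dx\,dt$ through the shell $r\sim R$ (Proposition~\ref{prop:radnoesc}).

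This is handled as follows.
\begin{itemize}
\item The cubic term is controlled by Lemma~\ref{lem:cubshell}. Two-dimensional Gagliardo--Nirenberg on meridional squares of side $R$, summed along the shell and combined with the weighted Calder\'on--Zygmund bound $\|r^{-1}\nabla\mathcal B\zeta\|_{L^s}\lesssim\|\zeta\|_{L^s}$, produces a factor $R^{-\delta_s}$ with $\delta_s=(3s-4)/(4(s-1))$. This is positive exactly when $s>4/3$, which is where the threshold $4/3$ comes from.
\item At $s=4/3$ there is no power gain. Instead one uses that $r^{-1}\nabla u_n$ is Cauchy in $C_tL^{4/3}$, hence uniformly tight (Lemma~\ref{lem:endpttight}).
\item The pressure flux is handled by a near/far splitting of the Riesz-transform pressure (Lemma~\ref{lem:preslocal}).
\end{itemize}

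In the moment branch a static argument is legitimate, but your near/far split is unnecessary there. Proposition~\ref{prop:radmom} gives $\|\mathcal B\varpi_n-\mathcal B\varpi\|_2^2\lesssim\|\varpi_n-\varpi\|_p^{A_p}\bigl(\mathfrak M_2(\varpi_n)+\mathfrak M_2(\varpi)\bigr)^{B_p}$, hence $C_tL^2$ convergence directly (Corollary~\ref{cor:unifstab}). This is consistent with the example above, whose absolute impulse grows like $R^{3/2}$. Your local step and your final energy step are fine.
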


The two results are proved separately in the body: the moment-free
$p\ge4/3$ result is Theorem~\ref{thm:mfreecomp}, while the
moment-confined result is Corollary~\ref{cor:moment-energy-comp}. The direct $p\ge4/3$ result has a matching passive-transport obstruction
below $4/3$: Proposition~\ref{prop:passive-shell-obstruction} constructs,
for every $1<p<4/3$, smooth incompressible transports with vanishing
$L^1\cap L^p$ scalar norm and vanishing initial Biot--Savart energy but
order-one terminal Biot--Savart energy.  %This is a sharpness statement for the moment-free no-escape mechanism, not an active Euler counterexample.

The two alternatives in Theorem~A correspond to different mechanisms.
For $p\ge4/3$, energy escape is ruled out directly, without first
confining the vorticity.  Below $4/3$, the theorem reduces global energy
compactness to propagation of the absolute impulse. For sign-changing
vorticity the latter leads to the mixed exponent $p^\sharp$ in \eqref{eq:psharp}, which enters the next result.

\begin{maintheoremB}[Sharp absolute-impulse dynamics]
Fix $T<\infty$ and let $1<p<\infty$.  Let $(u,\varpi)$ be a regular
finite-energy axisymmetric no-swirl Euler flow on $[0,T]$ such that 
\[
    \varpi\in L^\infty(0,T;L^1\cap L^p),
    \qquad
    \mathfrak M_2(\varpi(0))<\infty.
\]

If $p\ge5/3$, then
\[
    \bigl(1+\mathfrak M_2(\varpi(t))\bigr)^{1/4}
    \le
    \bigl(1+\mathfrak M_2(\varpi(0))\bigr)^{1/4}
    +Ct,
    \qquad 0\le t\le T,
\]
without a sign condition.

If $1<p<5/3$, the same estimate holds provided
\[
    \min\Bigl\{
        \|\varpi_+(0)\|_{L^{p^\sharp}},
        \|\varpi_-(0)\|_{L^{p^\sharp}}
    \Bigr\}<\infty.
\]
Here $C$ depends only on $p$, the
$L^\infty(0,T;L^1\cap L^p)$ bound of $\varpi$, and, in the latter case,
the displayed one-sided $L^{p^\sharp}$ bound.
\end{maintheoremB}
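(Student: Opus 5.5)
Since $\varpi$ is transported by the divergence-free flow $u$, $\mathfrak M_2$ evolves by
\[
\frac{d}{dt}\mathfrak M_2(\varpi(t))=\frac1{2\pi}\int_{\mathbb R^3}2r\,u^r\,|\varpi|\,dx .
\]
The plan is therefore to bound $\int r|u^r||\varpi|\,dx$ by $C(1+\mathfrak M_2)^{3/4}$. This differential inequality integrates to $(1+\mathfrak M_2(t))^{1/4}\le(1+\mathfrak M_2(0))^{1/4}+Ct$. The exponent $3/4$ is dictated by scaling. If $\varpi$ has fixed $L^1\cap L^p$ mass located near radius $R$, then $\mathfrak M_2\sim R^2$. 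The radial velocity of the standard pointwise axisymmetric Biot--Savart kernel, of size $|u^r(x)|\lesssim\int \frac{r'^2|\varpi(y)|}{|x-y|^2}\,dy$ and variants, is of order $R^{-1/2}$ after using only the energy and $L^1\cap L^p$ controls. This produces $R\cdot R^{-1/2}=R^{1/2}=\mathfrak M_2^{1/4}$ at leading order. The extra room up to $\mathfrak M_2^{3/4}$ is to be spent on the worse region. Regularity of the flow is used only to justify the differentiation and the finiteness of $\mathfrak M_2(t)$ (by a cutoff $r^2\chi(r/R)$, letting $R\to\infty$).

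The first step is to split the integral over the pair $(x,y)$ into a near part $|x-y|<\frac12 r$ and a far part. In the far part, $r,r'\lesssim |x-y|$, and the kernel gains decay. Hardy--Littlewood--Sobolev-type bounds with the conserved $L^1\cap L^p$ norms then give $\int r|u^r_{\rm far}||\varpi|\lesssim \mathfrak M_2^{3/4}$ after interpolating the weights $r$ between $r^0$ and $r^2$ in $|\varpi|$. In the near part, the kernel behaves like the two-dimensional meridional kernel $r'/|x-y|^2$ with $r'\simeq r$. The key step is to use the antisymmetry of the kernel. For the signed quantity, the self-interaction $\int\!\!\int K(x,y)\varpi(x)\varpi(y)$ essentially cancels; this is why the signed impulse is conserved. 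For $|\varpi|$ no such cancellation is available, and the cross terms $\varpi_+\varpi_-$ remain.

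Write $|\varpi|=\varpi_+ +\varpi_-$ and $\varpi=\varpi_+-\varpi_-$. Using the conservation of $\int r^2\varpi$, it suffices to control $\frac{d}{dt}\int r^2\varpi_-$ (say). The main obstacle is therefore the cross interaction
\[
\int\!\!\int_{|x-y|<r/2} r\,\frac{r}{|x-y|^2}\,\varpi_-(x)\varpi_+(y)\,dx\,dy ,
\]
localized at radius $r$. In the meridional variables with measure $r\,dr\,dz$, the Young/HLS estimate for the kernel $|x-y|^{-1}$ in two dimensions requires $\varpi_\pm$ in $L^{q}$ with weights. The conversion between three-dimensional $L^p$ norms and two-dimensional meridional norms at radius $r$ costs powers of $r$. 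Balancing these powers against the weight $r^2$ in $\mathfrak M_2$, with total exponent $3/4$, gives exactly the condition $\frac1p+\frac1{p^\sharp}$ equal to the critical HLS value. This produces $p^\sharp=4p/(3p-1)$, and $p^\sharp\le p$ precisely when $p\ge5/3$. Hence for $p\ge5/3$ the $L^p$ bound alone suffices, with no sign condition. For $1<p<5/3$ one places $\varpi_+$ (or $\varpi_-$) in $L^{p^\sharp}$; since the flow is measure preserving, $\|\varpi_\pm(t)\|_{L^{p^\sharp}}=\|\varpi_\pm(0)\|_{L^{p^\sharp}}$. I would carry out this exponent bookkeeping carefully via a dyadic decomposition in $r$, applying Hölder with $\mathfrak M_2$ on each shell and summing.

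Finally, I would combine the far and near bounds to get $\frac{d}{dt}\mathfrak M_2\le C(1+\mathfrak M_2)^{3/4}$, or rather the analogous inequality for the one-signed moment, and then add back the conserved signed impulse. Integrating this inequality gives the stated estimate, with $C$ depending only on $p$, the $L^\infty(0,T;L^1\cap L^p)$ bound, and, when $1<p<5/3$, the one-sided $L^{p^\sharp}$ bound. The delicate point I expect is the near-field cross term and verifying that the dyadic sum closes exactly at $p^\sharp$ without logarithmic loss.
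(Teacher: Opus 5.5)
Your proposal is correct and follows the paper's route (Theorem~\ref{thm:miximpulse} via Proposition~\ref{prop:crossint}). Skew-symmetry kills the self-interactions, so $\frac{d}{dt}\mathfrak M_2$ reduces to the $\varpi_+$--$\varpi_-$ cross interaction. The far field is bounded by $\|\varpi\|_1^2$. The near field is handled by the two-dimensional HLS inequality, pairing one sign in $L^{p^\sharp}$ against the other sign interpolated between $L^p$ and $\mathfrak M_2$ with weights $\tfrac14,\tfrac34$, and measure preservation transports the one-sided $L^{p^\sharp}$ norm. Your dyadic bookkeeping also closes without logarithmic loss: each shell contribution is scale invariant, and $\frac1{4p}+\frac34+\frac1{p^\sharp}=\frac32\ge1$ lets H\"older in the shell index sum it.

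Two small corrections. First, the HLS balance is $1/\tau_p+1/p^\sharp=3/2$ with the interpolated exponent $\tau_p=4p/(3p+1)$; it is not a condition on $1/p+1/p^\sharp$. Second, the vanishing of the self-interaction of the rough sign component (Lemma~\ref{lem:diagonal-cancel}) also uses the classical regularity, e.g.\ $r\varpi\in L^2_{\rm loc}$ when $p<4/3$, so regularity is needed for more than the differentiation step.
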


The positive statement is Corollary~\ref{cor:smooth-impulse}, obtained
from the general mixed criterion in Theorem~\ref{thm:miximpulse}.  The
$p^\sharp$ sharpness is contained in the latter theorem, while the
fixed-time sign-free sharpness of $5/3$ is Theorem~\ref{thm:impobstr}
and Corollary~\ref{cor:signfree}.

The exponent $p^\sharp=4p/(3p-1)$ is optimal for this mixed
criterion: if $p\le q<p^\sharp$, uniform $L^1\cap L^p$, absolute-impulse,
and one-sided $L^q$ bounds do not yield a class-uniform bound on the
instantaneous growth of $\mathfrak M_2$.  The sign-free threshold $5/3$ is also
dynamically sharp: below it, uniformly controlled smooth sign-changing
families can develop arbitrarily large absolute impulse at a fixed positive
time.

We also derive sign-free propagation of the first radial moment for \(p\ge 3/2\) in Theorem \ref{thm:firstmom}, with \(3/2\) sharp for class-uniform instantaneous control by Theorem \ref{thm:thrsharp}.

Theorems A and B identify the fluid mechanisms that convert transported
vorticity control into global $L^2$ compactness.  We now show that the
kinetic dynamics generates precisely the structure needed to apply
these mechanisms, without prescribing a smooth Euler trajectory.

\subsection{Kinetic realization beyond Euler regularity breakdown}

We consider
\begin{equation}\label{eq:kinetic-model}
    \varepsilon\partial_tF^\varepsilon
    +\xi\cdot\nabla_xF^\varepsilon
    =
    \frac1{\kappa\varepsilon}
    Q(F^\varepsilon,F^\varepsilon),
    \qquad
    \kappa=\kappa(\varepsilon)\to0.
\end{equation}
Here $Q$ is either the Coulomb Landau operator or a non-cutoff
Boltzmann operator with soft potentials in the parameter range
considered here.

Let $\rho^\varepsilon$, $u^\varepsilon$, and $\theta^\varepsilon$
denote the scaled macroscopic fluctuations determined by the
collision-invariant moments of $F^\varepsilon$, let $\mathbb P$ denote the Leray projector, and set
\[
    v^\varepsilon:=\mathbb P u^\varepsilon,\qquad
    \mathfrak s^\varepsilon:=\frac32\theta^\varepsilon-\rho^\varepsilon,
    \qquad
    \varpi^\varepsilon :=  \frac{(\nabla \times u^\e)^\vartheta}{r} =\varpi_A^\varepsilon+\varpi_B^\varepsilon.
\]
Let $M^\varepsilon=M[F^\varepsilon]$ be the exact local Maxwellian and
set $G^\varepsilon:=F^\varepsilon-M^\varepsilon$.   \(\varpi_B^\varepsilon\) denotes the component generated by the forcing terms,
and the component $\varpi_A^\varepsilon$ is transported exactly by
$v^\varepsilon$:
\[
    \partial_t\varpi_A^\varepsilon
      +v^\varepsilon\cdot\nabla_x\varpi_A^\varepsilon=0,
    \qquad
    \varpi_A^\varepsilon(0)=\varpi_0^\varepsilon.
\]
Since $v^\varepsilon$ is divergence free, the $L^q$ norms of
$\varpi_A^\varepsilon$ and of its positive and negative parts are
conserved.

\begin{maintheoremC}[Kinetic realization of prescribed rough finite-energy Euler data]
Let $1<p<\infty$ and let
$\varpi_0\in L^1(\mathbb R^3)\cap L^p(\mathbb R^3)$ and $\mathfrak s_0\in L^2(\mathbb R^3)$ be axisymmetric,
with $u_0:=\mathcal B\varpi_0\in L^2(\mathbb R^3).$

If $1<p<4/3$, assume in addition that
\[
    \mathfrak M_2(\varpi_0)<\infty,
    \qquad
    \min\Bigl\{
        \|(\varpi_0)_+\|_{L^{p^\sharp}},
        \|(\varpi_0)_-\|_{L^{p^\sharp}}
    \Bigr\}<\infty.
\]

For either the Coulomb Landau equation or the non-cutoff
soft-potential Boltzmann equation considered here, there exists a family
of smooth nonnegative axisymmetric no-swirl kinetic initial data
$F_0^\varepsilon$ whose corresponding strong solutions $F^\varepsilon$
exist uniquely on
\[
    [0,T^\varepsilon],
    \qquad T^\varepsilon\to\infty.
\]
Their macroscopic initial data satisfy
\[
    \varpi_0^\varepsilon\to\varpi_0
        \quad\text{in }L^1\cap L^p,
    \qquad
    v_0^\varepsilon\to u_0
        \quad\text{in }L^2,
    \qquad
    \mathfrak s_0^\varepsilon\to\mathfrak s_0
        \quad\text{in }L^2.
\]
No vanishing acoustic preparation is required in this realization.

Along the whole kinetic lifespan, the same solutions satisfy the
microscopic relaxation $G^\e \to 0$. For every finite $T>0$ and every sequence
$\varepsilon_j\downarrow0$, one may extract a subsequence and a
renormalized finite-energy axisymmetric no-swirl Euler pair
$(u,\varpi)$ on $[0,T]$ with initial data $(u_0,\varpi_0)$ such that
\[
    (\varpi_A^{\varepsilon_j},v^{\varepsilon_j},\mathfrak s^{\varepsilon_j})
    \longrightarrow
    (\varpi,u,\mathfrak s)
    \quad\text{strongly in }C\bigl([0,T];
    (L^1\cap L^p)\times L^2\times L^2\bigr).
\]
The limiting thermodynamic mode solves $\partial_t\mathfrak s+u\cdot\nabla_x\mathfrak s=0$ with $\mathfrak s(0)=\mathfrak s_0,
$
and the limiting Euler energy is conserved:
\[
    \|u(t)\|_2=\|u_0\|_2,
    \qquad 0\le t\le T.
\]
The forcing vorticity $\varpi_B^{\varepsilon_j}$
%\[
 %   \varpi_B^{\varepsilon_j}\to0
 %   \quad\text{in }L^\infty(0,T;L^q(\mathbb R^3)),
 %   \qquad 2\le q\le6,
%\]
as well as $\rho^{\varepsilon_j}+\theta^{\varepsilon_j}$ and
$\mathbb P^\perp u^{\varepsilon_j}$ vanish as in Theorem~\ref{thm:roughlimit}.

See Theorems~\ref{thm:longtime}, \ref{thm:roughlimit}, and
\ref{thm:globenergy}, and Corollary~\ref{cor:roughreal}, for the precise statements.
\end{maintheoremC}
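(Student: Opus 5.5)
The proof of Theorem~C has three layers: construct well-prepared data, run a long-time kinetic bootstrap, and then use Theorems~A and~B to obtain global compactness of the hydrodynamic fields.

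\emph{Initial data.} We first mollify and truncate $\varpi_0$ in the meridional variables, with a cutoff at large $r$ and $|z|$, to get smooth compactly supported axisymmetric $\varpi_0^\varepsilon\to\varpi_0$ in $L^1\cap L^p$. The regularization scale $\delta(\varepsilon)\to0$ is chosen slowly relative to $\kappa(\varepsilon)$. Truncation only improves the absolute impulse and the one-sided $L^{p^\sharp}$ norms. Convergence of $\mathcal B\varpi_0^\varepsilon\to u_0$ in $L^2$ follows from the energy representation $\|u\|_2^2=\int r\varpi\,\psi$ together with the assumption $u_0\in L^2$ and a tail estimate on the cutoff. We regularize $\mathfrak s_0$ in the same way and set $F_0^\varepsilon=M_{[1+\varepsilon\rho_0^\varepsilon,\,\varepsilon u_0^\varepsilon,\,1+\varepsilon\theta_0^\varepsilon]}$, a local Maxwellian, so that $G^\varepsilon(0)=0$. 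We take $\rho_0^\varepsilon+\theta_0^\varepsilon=0$ but do \emph{not} require $\mathbb P^\perp u$ to vanish, which is why no acoustic preparation is needed.

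\emph{Long-time strong solutions.} Next we set up a bootstrap in a weighted high-order energy for $G^\varepsilon$ and for the macroscopic fluctuations. Derivatives of the fluid part are controlled by norms of the exactly transported component $\varpi_A^\varepsilon$, which are conserved, plus the forcing component $\varpi_B^\varepsilon$, which is driven by $\nabla_x\!\cdot$ of moments of $G^\varepsilon$ and by acoustic terms. The growth of the smooth norms of $\varpi_A^\varepsilon$ is only exponential in $\|\nabla v^\varepsilon\|_\infty\lesssim\delta^{-N}$. Because $\delta(\varepsilon)$ is chosen slowly, the closing condition $\exp(C\delta^{-N}T)\kappa^{1/2}\ll1$ holds up to a time $T^\varepsilon\to\infty$.

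The collision dissipation of order $1/(\kappa\varepsilon^2)$, together with the soft-potential or Coulomb weighted velocity interpolation, gives $G^\varepsilon\to0$. Acoustic waves are dispersed through Strichartz-type estimates, which yields $\mathbb P^\perp u^\varepsilon\to0$ and $\rho^\varepsilon+\theta^\varepsilon\to0$ in $L^2_tL^q_x$. The forcing $\varpi_B^\varepsilon\to0$ then follows from Duhamel's formula. I expect this bootstrap to be the main obstacle: the required regularity of the fluid grows without bound, while $G^\varepsilon$ and the acoustic terms must stay small on an unbounded lifespan with no Euler reference flow. The plan is to bound everything by transported norms of $\varpi_A^\varepsilon$ alone, never by a limit solution.

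\emph{Limit.} The component $\varpi_A^\varepsilon$ is transported by the divergence-free field $v^\varepsilon$. Renormalization (DiPerna--Lions stability, with the local $L^2$ compactness of $v^\varepsilon$ coming from the $L^1\cap L^p$ bounds) gives strong convergence of $\varpi_A^{\varepsilon_j}$ in $C([0,T];L^1\cap L^p)$ along a subsequence.

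The field $v^\varepsilon$ satisfies an approximate Euler system whose forcing $f^\varepsilon$ collects the contributions of $G^\varepsilon$, the acoustic terms, and the $\varpi_B^\varepsilon$ part of the velocity. All of these tend to zero in $L^1(0,T;L^2)$. Theorem~A then applies directly when $p\ge4/3$. When $p<4/3$, we first establish the uniform impulse bound via Theorem~\ref{thm:miximpulse} (Theorem~B) applied to $\varpi_A^\varepsilon$; this is legitimate because the one-sided $L^{p^\sharp}$ norm is conserved along the transport. Theorem~A then gives $v^{\varepsilon_j}\to u$ in $C([0,T];L^2)$ and conservation of energy.

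Finally, $\mathfrak s^\varepsilon$ satisfies a transport equation driven by $v^\varepsilon$ up to vanishing errors, so the same renormalized stability argument gives its strong convergence. The limiting pair is a renormalized solution because it is the limit of transported quantities.
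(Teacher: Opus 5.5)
Your architecture follows the paper's: local-Maxwellian data along a slow diagonal, a long-time bootstrap with acoustic dispersion, the split $\varpi^\varepsilon=\varpi_A^\varepsilon+\varpi_B^\varepsilon$, DiPerna--Lions stability, then Theorems~A and~B. However, two load-bearing steps do not work as written.

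The first is the long-time bootstrap, which rests on $\|\nabla v^\varepsilon\|_\infty\lesssim\delta^{-N}$ over the whole lifespan. This is unjustified and essentially circular. The regularization scale controls $\nabla v^\varepsilon$ only at $t=0$. The conserved $L^q$ norms of $\varpi_A^\varepsilon$ do not bound $\|\nabla v^\varepsilon(t)\|_\infty$, and even smooth axisymmetric no-swirl Euler flows can have growing vorticity and strain. The missing idea is the axisymmetric no-swirl regularity chain behind Theorem~\ref{thm:longtime}:
\begin{itemize}
\item bound the \emph{full} relative vorticity, forcing included, in $L^{3,1}$ along the measure-preserving flow (Lemma~\ref{lem:vortenergy});
\item Danchin's estimate \eqref{eq:omth-bd} then controls $\|v^{\varepsilon,r}/r\|_\infty$;
\item the stretching equation then gives an exponential bound on $\|\omega^{\varepsilon,\vartheta}\|_\infty$ (Lemma~\ref{lem:angenergy});
\item the logarithmic Biot--Savart estimate (Lemma~\ref{lem:logbs}) together with the $H^2$ vorticity estimate (Lemma~\ref{lem:wenergy}) bounds $\|\nabla\mathbb Pu^\varepsilon\|_\infty$ by a double exponential in $t$;
\item Gronwall in Theorem~\ref{thm:kinenergy} then produces the iterated exponential $\Psi$ of \eqref{eq:ct-def}.
\end{itemize}
This is why the diagonal must satisfy $\mathcal E_{tot}^N(0)+\|\varpi_0^\varepsilon\|_{L^{3,1}}^2\le\Lambda_5^\varepsilon$ with $T^\varepsilon=(\Lambda_5^\varepsilon)^{1/32}$, rather than a single-exponential condition like yours. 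This is exactly where axisymmetry without swirl enters, and nothing in your sketch replaces it.

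The second is your application of Theorem~A to $v^\varepsilon$, whose curl is $r(\varpi_A^\varepsilon+\varpi_B^\varepsilon)e_\vartheta$. Theorem~A needs this relative vorticity to converge in $C([0,T];L^1\cap L^p)$. Lemma~\ref{lem:cylcurl} uses integrability below $6/5$, and the shell flux uses the weighted Calder\'on--Zygmund bound for $1<s<2$. But $\varpi_B^\varepsilon$ is driven by the $L^2$-based microscopic stress and is small only in $L^q$ for $2\le q\le6$ (Lemma~\ref{lem:splitconv}). Duhamel gives nothing in $L^1$, or in $L^p$ for $p<2$.

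The paper instead applies Theorem~A to the finite-energy field $u_A^\varepsilon$, which satisfies $\nabla\times u_A^\varepsilon=r\varpi_A^\varepsilon e_\vartheta$ exactly. By Lemma~\ref{lem:approxeul} it is an approximate Euler flow with forcing $f_A^\varepsilon=-\mathbb P[u_B^\varepsilon\cdot\nabla u_A^\varepsilon+(\nabla u_B^\varepsilon)^{\mathsf T}u_A^\varepsilon]$. This forcing vanishes in $L^1_tL^2$ only because $\|u_B^\varepsilon\|_\infty+\|\nabla u_B^\varepsilon\|_2\lesssim\varepsilon^{1/2-\delta}+\kappa^{1/2-\delta}$ beats the $|\log\varepsilon|^{15/32}$ growth of $\|u_A^\varepsilon\|_\infty+\|\nabla u_A^\varepsilon\|_2$. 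One then recovers $v^\varepsilon=u_A^\varepsilon+u_B^\varepsilon$.

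The same issue affects your use of Theorem~B. Because $\varpi_A^\varepsilon$ is transported by $v^\varepsilon\neq\mathcal B\varpi_A^\varepsilon$, conservation of the one-sided $L^{p^\sharp}$ norm is not enough. Corollary~\ref{cor:unifimp} splits off the $\mathcal B\varpi_A^\varepsilon$ part, which it handles by skew cancellation and Proposition~\ref{prop:crossint}; for $p<4/3$ the diagonal terms use $r\varpi_A^\varepsilon\in L^2$. It then bounds the remainder by $\|u_B^\varepsilon\|_2\|r\varpi_A^\varepsilon\|_2\to0$.
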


%The long kinetic lifespan is Theorem~\ref{thm:longtime}; microscopic relaxation and the all-$p$ rough limit are Theorem~\ref{thm:roughlimit}; the forcing-part decay is Lemma~\ref{lem:splitconv}; the global $C_tL_x^2$ upgrade is Theorem~\ref{thm:globenergy}; and the prescribed-data realization is Corollary~\ref{cor:roughreal}.

%More generally, the compactness argument applies to arbitrary admissible kinetic families with strongly convergent transported vorticity and thermodynamic data, yielding a subsequential rough Euler limit for every $p>1$.  The additional assumptions below $4/3$ enter only in upgrading the velocity convergence to the global energy topology.  The initial acoustic modes need not vanish: they may approach nonzero compatible $L^2$ targets and disappear dynamically through dispersion.

 \begin{maintheoremD}[Kinetic passage through Euler regularity breakdown]
Assume the setting of Theorem~C, and suppose that the prescribed Euler
initial datum generates a regular finite-energy axisymmetric no-swirl
solution on $[0,T_*)$ that loses regularity at some $0<T_*<\infty$.
Then, for each fixed admissible kinetic realization, the projected velocity
and transported relative vorticity converge, without subsequence extraction,
to the regular Euler solution on $[0,T_*)$ and reach the same strong traces at
$T_*$.  Moreover, from every sequence $\varepsilon_j\downarrow0$ one may
extract a subsequence converging on every finite time interval to a global
energy-conserving Euler solution that agrees with the regular solution for
$t<T_*$ and attains the same trace at $T_*$.  No uniqueness beyond $T_*$
is asserted. 

See Corollaries~\ref{cor:beyondbreakdown}--\ref{cor:global-passage-invariants} for the precise statements. Corollary~\ref{cor:three-singularity-passage} applies these results to three
finite-time singularity constructions
\cite{ElgindiGhoulMasmoudi2021,CordobaMartinezZoroaZheng2025,Chen2026EulerII}.
\end{maintheoremD}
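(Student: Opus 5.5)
The plan is to combine the compactness output of Theorem~C with a weak--strong (relative energy) stability argument against the regular Euler solution on each interval $[0,T]$ with $T<T_*$. Fix an admissible kinetic realization $F^\varepsilon$ and let $(\bar u,\bar\varpi)$ denote the regular solution on $[0,T_*)$. For $T<T_*$, $\bar u$ is Lipschitz on $[0,T]\times\mathbb R^3$. By Theorem~C, every sequence $\varepsilon_j\downarrow0$ has a subsequence along which $(\varpi_A^{\varepsilon_j},v^{\varepsilon_j})\to(\varpi,u)$ in $C([0,T];(L^1\cap L^p)\times L^2)$. The limit $u$ is a finite-energy weak Euler solution with $\|u(t)\|_2=\|u_0\|_2$. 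Weak--strong uniqueness then applies on $[0,T]$: an energy-conserving (hence admissible) weak solution coincides with the Lipschitz solution with the same data. Concretely, the relative energy $\|u-\bar u\|_2^2$ satisfies a Gronwall inequality with rate $\|\nabla\bar u\|_{L^\infty}$. This forces $u=\bar u$, and the axisymmetric Biot--Savart relation gives $\varpi=\bar\varpi$. Since the limit is independent of the subsequence, the whole family converges on $[0,T]$ without extraction, for every $T<T_*$.

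The next step is the trace at $T_*$. Here I would use uniformity in time of the kinetic compactness rather than the regular solution. Theorem~C applied on $[0,T_*+1]$ gives, along any subsequence, limits continuous into $(L^1\cap L^p)\times L^2$ on the closed interval. Each such limit agrees with $\bar u$ on $[0,T_*)$ by the first step. By continuity, its value at $T_*$ equals the strong limit $\bar u(T_*^-):=\lim_{t\uparrow T_*}\bar u(t)$, and similarly for $\bar\varpi$.
\begin{itemize}
\item The existence of this left limit follows from the same argument: $\bar u$ is the restriction of a $C([0,T_*+1];L^2)$ function.
\item Hence every subsequential limit has the same value at $T_*$.
\item Therefore $v^\varepsilon(T_*)\to\bar u(T_*^-)$ and $\varpi_A^\varepsilon(T_*)\to\bar\varpi(T_*^-)$ along the full family.
\end{itemize}
One could alternatively argue by equicontinuity in $C_tL^2$ on $[0,T_*]$ plus uniqueness on the open interval. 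That is essentially the same argument.

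For the global continuation, I would take $T_k=k\to\infty$ and apply Theorem~C successively, extracting nested subsequences. A diagonal subsequence then converges in $C([0,k];\cdot)$ for every $k$. The limits are consistent, so they define a global energy-conserving renormalized Euler pair equal to $\bar u$ on $[0,T_*)$ with the trace above.

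The main obstacle is the weak--strong step. The kinetic limit gives energy equality, but the relative-energy inequality must hold in the precise class of Theorem~C. This requires $u\in C_tL^2$, a weak solution tested against $\bar u$, and $\bar u$ with $\nabla\bar u\in L^1(0,T;L^\infty)$ and finite energy, including possibly unbounded spatial domain effects at infinity. I would verify that $\bar u\in C([0,T];L^2)\cap L^\infty(0,T;W^{1,\infty})$ for $T<T_*$, which is the meaning of "regular finite-energy". I would then check that the standard computation is legitimate when both energies are conserved, with no defect measure. The global $L^2$ compactness in Theorem~C is exactly what removes a possible defect.
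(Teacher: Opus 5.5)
Your proposal is correct and follows essentially the same route as the paper. It combines Theorem~C compactness on an interval $[0,T]$ with $T>T_*$, weak--strong identification with the regular solution on every $[0,\tau]$, $\tau<T_*$ (Corollary~\ref{cor:beyondbreakdown}), the subsequence-uniqueness principle together with time continuity of cluster limits for full-family convergence and the common trace at $T_*$, and a diagonal extraction over $[0,T_*+m]$ for the global continuation (Corollary~\ref{cor:global-passage-invariants}).

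Two points differ in detail. First, you take for granted that the limit satisfies the divergence-free velocity weak formulation, whereas Theorem~C and Definition~\ref{def:eulerpair} only give the renormalized vorticity formulation; the paper supplies this step in Lemma~\ref{lem:weak-euler-limit} by passing to the limit in the approximate Euler equation for $u_A^\varepsilon$ using the strong $C_tL^2$ convergence. Second, the paper quotes the weak--strong step from \cite{DeRosaInversiStefani2023} under the weaker hypothesis $\nabla^{\rm s}\bar u\in L^1(0,\tau;L^\infty)$, rather than running a relative-energy computation with a full Lipschitz bound.
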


 \iffalse The kinetic passage through $T_*$ is quantitative.  At each fixed kinetic
scale, the solenoidal velocity remains spatially Lipschitz throughout the
kinetic lifespan, with explicit logarithmic control.  More importantly, the
kinetic family itself approaches the same Euler trace from both sides of the
breakdown in shrinking time windows around $T_*$.  Thus the approximation
does not merely identify the regular Euler solution before singularity and
extract a continuation afterwards: the two regimes are joined through the
common trace attained by the kinetic family.  In the $p=2$ branch, the
limiting velocity additionally satisfies a local-in-space almost-Lipschitz
temporal modulus.\fi

We also identify several quantitative features of the kinetic passage through
$T_*$. In shrinking time windows around the breakdown, the kinetic family
approaches the same Euler trace from both sides of $T_*$. This control is obtained from explicit fixed-scale estimates, including spatial Lipschitz bounds for the solenoidal velocity with a quantitatively controlled blow-up rate. Thus the approximation quantitatively joins the two regimes through the common trace. In the $p=2$ branch, Proposition \ref{prop:quantitative-singular-passage} shows that the limiting velocity satisfies a local-in-space almost-Lipschitz temporal modulus. 

Combining the global $C_tL_x^2$ control supplied by the $4/3$ shell
mechanism and the radial-moment theory with the long kinetic lifespan of
Theorem~\ref{thm:longtime}, the rough-limit theory of
Theorem~\ref{thm:roughlimit}, and the prescribed-data realization of
Corollary~\ref{cor:roughreal} yields a kinetic-to-Euler passage that persists
up to, across, and beyond the loss of Euler regularity.

\subsection{Mechanisms: radial geometry and the kinetic--fluid bridge}

Three mechanisms organize the argument: two fluid-geometric estimates
and a kinetic--fluid bridge implementing the structural separation above.

\medskip
\paragraph{\textbf{The $4/3$ no-escape mechanism.}} The high-$p$ result of Theorem~A controls energy transport directly,
rather than deriving energy tightness from radial confinement.  A radial
localization of the energy identity shows that energy reaching large
cylindrical radius must cross a shell $r\sim R$.  For
$4/3\le s\le3/2$, the nonlinear flux satisfies the sharp shell estimate
\[
    \frac1R\int_{\mathcal C_R}|u|^3\,dx
    \lesssim
    R^{-\delta_s}
    \|u\|_2^{2\alpha_s}
    \left\|\frac{\nabla u}{r}\right\|_{L^s(\mathcal C_R')}^{\gamma_s}
    +R^{-5/2}\|u\|_2^3,
    \qquad
    \delta_s=\frac{3s-4}{4(s-1)},
\]
where $\mathcal C_R\subset\mathcal C_R'$ denote comparable cylindrical shells \(r\sim R\), and $\alpha_s,\gamma_s>0$ are interpolation exponents. The radial
gain is positive precisely for $s>4/3$.  It reflects the interaction
between the two-dimensional meridional geometry, the three-dimensional
cylindrical measure, and the weighted derivative $r^{-1}\nabla u$.
The weighted axisymmetric Calder\'on--Zygmund estimate of Nobili--Seis
\cite{nobili2022renormalization} transfers the transported $L^s$ control
of the relative vorticity to this weighted derivative.  At the endpoint
$s=4/3$ the power gain vanishes, but strong $C_tL_x^{4/3}$ convergence
supplies uniform far-field tightness of $r^{-1}\nabla u$.  Together with
the corresponding pressure localization, this rules out energy escape to
$r=\infty$ and upgrades interior compactness to global $C_tL_x^2$
compactness.  Below $4/3$, Proposition~\ref{prop:passive-shell-obstruction}
realizes the corresponding subcritical shell scaling by a fixed smooth
incompressible passive transport; it is not a self-consistent Euler
counterexample.  Thus, for the direct moment-free shell mechanism,
$4/3$ is a large-scale no-escape threshold at spatial infinity rather
than a small-scale Onsager regularity threshold.

\medskip
\paragraph{\textbf{Opposite signs and the $5/3$ threshold.}}
The absolute impulse is governed by a different geometry.  Set
\[
    \mathfrak X(\varpi,\eta)
    :=\int_{\mathbb R^3}(\mathcal B\eta)^r\,r\varpi\,dx .
\]
The meridional kernel is skew:
$\mathfrak X(\varpi,\eta)=-\mathfrak X(\eta,\varpi)$.  Hence the same-sign
interactions cancel exactly and, for a smooth Euler flow,
\[
    \frac d{dt}\mathfrak M_2(\varpi(t))
    =\frac2\pi\,\mathfrak X(\varpi_-,\varpi_+).
\]
The key mixed estimate is
\[
    |\mathfrak X(\varpi,\eta)|
    \lesssim_p
    \|\varpi\|_1\|\eta\|_1
    +\|\varpi\|_p^{1/4}
      \mathfrak M_2(\varpi)^{3/4}
      \|\eta\|_{p^\sharp},
    \qquad
    p^\sharp=\frac{4p}{3p-1}.
\]
The far interaction is controlled at the $L^1$ level.  Near the diagonal,
redistributing the radial weight reduces the singular interaction to a
two-dimensional Hardy--Littlewood--Sobolev pairing on the meridional
half-plane, and the balance forces $p^\sharp$.  With only the ambient
$L^1\cap L^p$ control, the estimate closes exactly when
$p^\sharp\le p$, whose fixed point is $p=5/3$.  Below $5/3$, one-sided
$L^{p^\sharp}$ control supplies the missing integrability.  The sharpness
constructions show that neither the mixed exponent nor the sign-free
$5/3$ threshold is merely an artifact of the estimate.

\medskip
\paragraph{\textbf{The kinetic--fluid bridge.}}
The structural separation above is implemented by the exact
local-Maxwellian decomposition and the transport--forcing split.  Writing
\[
    F^\varepsilon=M^\varepsilon+G^\varepsilon,
    \qquad
    \varpi^\varepsilon
      =\varpi_A^\varepsilon+\varpi_B^\varepsilon,
    \qquad
    v^\varepsilon
      =u_A^\varepsilon+u_B^\varepsilon,
    \qquad
    \nabla\times u_A^\varepsilon
      =r\varpi_A^\varepsilon e_\vartheta,
\]
exact moment matching removes the five collision-invariant directions
from $G^\varepsilon$, while the principal relative vorticity satisfies
\[
    \partial_t\varpi_A^\varepsilon
      +v^\varepsilon\cdot\nabla_x\varpi_A^\varepsilon=0.
\]
Thus its $L^q$ norms and sign structure are transported exactly.  The
acoustic, baroclinic, and microscopic-stress effects are carried by the
complementary component $\varpi_B^\e$, and the corresponding velocity
$u_B^\varepsilon$ tends to zero on every fixed time interval.  At the velocity level, $u_A^\varepsilon$ satisfies an approximate Euler
equation,
\[
    \partial_tu_A^\varepsilon
    +u_A^\varepsilon\cdot\nabla u_A^\varepsilon
    +\nabla\widetilde\pi_A^\varepsilon
    =f_A^\varepsilon,
    \qquad
    \sup_{\varepsilon\ll1}
    \|u_A^\varepsilon\|_{L_t^\infty L_x^2}<\infty,
    \qquad
    \|f_A^\varepsilon\|_{L_t^1L_x^2}\to0
\]
on every fixed interval.  The kinetic estimates therefore produce the
approximate Euler sequence required by Theorem~A rather than taking one
as an external input.  For $p\ge4/3$, the no-escape result applies
directly.  Below $4/3$, the same splitting permits the absolute-impulse
bounds to be propagated while the $u_B^\varepsilon$ errors are controlled,
and finite-impulse Biot--Savart stability supplies the global $L^2$
upgrade.  In this way the fluid thresholds enter only after the
transported rough component has been isolated.

Before $T_*$, weak--strong uniqueness identifies every cluster path with
the regular Euler solution; compactness then fixes the common trace at $T_*$,
without invoking any uniqueness theory beyond the singular time.

The local-Maxwellian structure closes the strong kinetic estimates.
Exact moment matching exposes the stress--heat-flux cancellation in the
combined energy, collision coercivity controls the microscopic defect,
acoustic dispersion controls the fast compressible modes, and the
axisymmetric estimates control the solenoidal dynamics.  Together these
bounds yield $T^\varepsilon\to\infty$.  A slow diagonal approximation then
realizes prescribed rough finite-energy data while allowing the high-order
kinetic norms to grow with $\varepsilon^{-1}$.  Thus collisional dissipation
remains active in the microscopic sector while the rough macroscopic
component retains the transport structure needed by the fluid compactness
theory.

Together, these estimates connect the long-time kinetic construction to
global finite-energy Euler compactness without requiring a regular Euler
reference flow throughout the limiting time interval.

\subsection{Proof architecture and organization.}
Sections~\ref{sec:bscompact}--\ref{sec:oppsign} develop the fluid theory: Section~\ref{sec:bscompact} proves the
moment-free $4/3$ global-energy compactness mechanism, Section~\ref{sec:radmom} establishes
the sharp $3/2$ first-moment threshold, and Section~\ref{sec:oppsign} establishes the sharp \(5/3\) sign-free threshold and the sharp \(p^\sharp\) mixed criterion for absolute impulse. Sections~\ref{sec:kinframe}--\ref{sec:longconv} develop the kinetic theory,
beginning with the exact local-Maxwellian framework and energy estimates
and leading to the long-time bounds and rough kinetic-to-Euler
convergence. Section~\ref{sec:globlimit} brings the fluid and kinetic analyses together,
establishing radial-moment bounds for the transported kinetic vorticity and
upgrading the rough kinetic limit to global $C_tL_x^2$ convergence.
Section~\ref{sec:roughdata} constructs kinetic realizations of prescribed
rough finite-energy data and gives sign-changing examples.  Finally,
Section~\ref{sec:singularpassage} proves full-family rigidity up to a common
singular-time trace, quantitative temporal control of the passage, and global
subsequential Euler continuations for later times, with applications to
finite-time singularity scenarios.

\section{The $4/3$ shell mechanism for global energy compactness}\label{sec:bscompact}
The finite-energy axisymmetric Biot--Savart map converts relative-vorticity
compactness into velocity compactness.
Theorem~\ref{thm:mfreecomp} combines a cubic radial-shell estimate with
infinite-cylinder div--curl compactness to prevent loss of fluid $L^2$ energy
at large radius when $p\ge4/3$, without a sign condition or spatial moment.
After establishing the sharp radial power, we also record a passive-transport
realization of the subcritical scaling below $4/3$; it is a sharpness
mechanism, not an active Euler counterexample.
The radial-moment Biot--Savart estimates and their sharp endpoints are treated separately
in Sections~\ref{sec:radmom}--\ref{sec:oppsign}.
Throughout Sections~\ref{sec:bscompact}--\ref{sec:oppsign}, $\Hh:=\{(r,z)\in\R^2:r>0\}$.
For an axisymmetric vector field $u$ and an axisymmetric scalar $\varpi$, we use the Cartesian identities
\begin{equation}\label{eq:axisym-cart}
 ru^r=x_1u_1+x_2u_2,
 \qquad
 r\varpi e_\vartheta=(-x_2\varpi,x_1\varpi,0).
\end{equation}
\subsection{Div--curl identification and the finite-energy Biot--Savart map}\label{sec:divcurlmap}
The moment-free argument below needs only the finite-energy div--curl
identification.  The radial-moment estimates, kernel formulas, and stability
bounds used in the $3/2$ and $5/3$ mechanisms are collected instead at the
beginning of Section~\ref{sec:radmom}.

\begin{lemma}[Div--curl energy map]\label{lem:divcurl}
If $\Omega\in\dot H^{-1}(\R^3;\R^3)$ and
$\nabla\cdot\Omega=0$, then
\begin{equation}\label{eq:curl-hm}
 \bigl\|\nabla\times(-\Delta)^{-1}\Omega\bigr\|_2
 \le\|\Omega\|_{\dot H^{-1}}.
\end{equation}
If $u\in L^2$ is divergence free and $\Omega=\nabla\times u$, then
\begin{equation}\label{eq:curl-isom}
 u=\nabla\times(-\Delta)^{-1}\Omega,
 \qquad
 \|u\|_2=\|\Omega\|_{\dot H^{-1}}.
\end{equation}
Moreover, for every $G \in L^{6/5}(\R^3;\R^3)$,
\begin{equation}\label{eq:hminus-lp}
 \|G\|_{\dot H^{-1}}\lesssim\|G\|_{6/5}.
\end{equation}
\end{lemma}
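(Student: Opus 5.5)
The plan is to work entirely on the Fourier side, with $\|\Omega\|_{\dot H^{-1}}^2=\int|\xi|^{-2}|\hat\Omega(\xi)|^2\,d\xi$. For \eqref{eq:curl-hm}, the multiplier of $\nabla\times(-\Delta)^{-1}$ is $i\xi\times(\cdot)/|\xi|^2$. Since $|\xi\times\hat\Omega|\le|\xi||\hat\Omega|$, Plancherel gives the inequality immediately. Divergence-free $\Omega$ means $\xi\cdot\hat\Omega=0$, so in fact $|\xi\times\hat\Omega|=|\xi||\hat\Omega|$ and equality holds. The only point needing care is that $\nabla\times(-\Delta)^{-1}\Omega$ be well defined for $\Omega\in\dot H^{-1}$. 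I would define it directly by the Fourier multiplier: the output lies in $L^2$ because $\hat\Omega/|\xi|\in L^2$.

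For \eqref{eq:curl-isom}, take $u\in L^2$ with $\nabla\cdot u=0$ and $\Omega=\nabla\times u$. Then $\hat\Omega=i\xi\times\hat u$, and $\xi\cdot\hat u=0$ gives the vector identity
\[
\xi\times(\xi\times\hat u)=\xi(\xi\cdot\hat u)-|\xi|^2\hat u=-|\xi|^2\hat u.
\]
Hence $i\xi\times\hat\Omega/|\xi|^2=-\xi\times(\xi\times\hat u)/|\xi|^2=\hat u$ almost everywhere. This shows $u=\nabla\times(-\Delta)^{-1}\Omega$. Along the way we get $|\hat\Omega|=|\xi||\hat u|$, so $\Omega\in\dot H^{-1}$ with $\|\Omega\|_{\dot H^{-1}}=\|u\|_2$. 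Both $\Omega\in\dot H^{-1}$ and $\nabla\cdot\Omega=0$ are therefore automatic and need not be assumed.

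For \eqref{eq:hminus-lp}, I would argue by duality: $\dot H^{-1}$ is the dual of $\dot H^1$, and the Sobolev embedding $\dot H^1(\R^3)\hookrightarrow L^6$ holds. Thus for a test field $\phi$,
\[
|\langle G,\phi\rangle|\le\|G\|_{6/5}\|\phi\|_6\lesssim\|G\|_{6/5}\|\nabla\phi\|_2,
\]
and taking the supremum over $\phi$ gives the bound. An equivalent route is Hardy--Littlewood--Sobolev: $\|G\|_{\dot H^{-1}}^2=c\iint G(x)\cdot G(y)|x-y|^{-1}\,dx\,dy$, with exponents $\tfrac56+\tfrac56+\tfrac13=2$.

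The main obstacle, such as it is, is bookkeeping in the function-space setup. One needs that $\dot H^{-1}$ is realized as tempered distributions with $\hat G/|\xi|\in L^2$, and that $L^{6/5}$ embeds into it. The duality argument settles both points by density of Schwartz functions. Everything else is Plancherel.
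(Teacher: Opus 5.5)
Your proposal is correct and follows the paper's own argument: the multiplier bound $|\xi\times\hat\Omega|\le|\xi||\hat\Omega|$ with Plancherel, then the identity $i\xi\times(i\xi\times\hat u)=|\xi|^2\hat u$ for divergence-free $u$ to get the isometry, and finally duality with $\dot H^1(\R^3)\hookrightarrow L^6$ for the $L^{6/5}$ bound. The only differences are cosmetic. You define $\nabla\times(-\Delta)^{-1}$ directly as a Fourier multiplier on $\dot H^{-1}$ rather than extending from $C_c^\infty$ by density, and you add the (correct) observation that \eqref{eq:curl-hm} is in fact an equality when $\xi\cdot\hat\Omega=0$.
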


The proof is recorded in Appendix~\ref{sec:auxana}.

For every axisymmetric scalar $\varpi$ such that
$r\varpi e_\vartheta\in\dot H^{-1}(\R^3;\R^3)$, define
\begin{equation}\label{eq:biot-basic}
 \Bcal\varpi:=\nabla\times(-\Delta)^{-1}(r\varpi e_\vartheta).
\end{equation}
Thus, whenever $u\in L^2(\R^3)$ is divergence free and
$\nabla\times u=r\varpi e_\vartheta$, Lemma~\ref{lem:divcurl} gives
$u=\Bcal\varpi$ uniquely in $L^2$.
\subsection{Moment-free energy compactness at the $L^{4/3}$ endpoint}
\begin{theorem}[Moment-free energy compactness at and above $4/3$]\label{thm:mfreecomp}
Fix $T<\infty$ and $4/3\le p<\infty$.  Let
$(a_n,\varpi_n,f_n,\pi_n)$ be axisymmetric no-swirl fields on
$[0,T]\times\R^3$ such that, for each $n$,
\begin{equation}\label{eq:td-id}
 \partial_ta_n+a_n\cdot\nabla a_n+\nabla\pi_n=f_n,
 \qquad \nabla\cdot a_n=\nabla\cdot f_n=0,
 \qquad \nabla\times a_n=r\varpi_ne_\vartheta,
\end{equation}
with
\[
 a_n\in C([0,T];H^{m_n}(\R^3))\cap
 C^1([0,T];H^{m_n-1}(\R^3)),\qquad m_n>\frac52.
\]
No uniform bound on $m_n$-th order norms is assumed.  Suppose
\begin{align}
 &\sup_n\|a_n\|_{L^\infty(0,T;L^2)}<\infty,
 \qquad
 \|f_n\|_{L^1(0,T;L^2)}\to0,
 \label{eq:free-bd}\\
 &\varpi_n\to\varpi
 \quad\text{strongly in }
 C([0,T];L^1(\R^3))\cap C([0,T];L^p(\R^3)),
 \label{eq:mom-conv}\\
 &a_n(0)\to u_0
 \quad\text{strongly in }L^2(\R^3).
 \label{eq:sys-conv}
\end{align}
Then the div--curl system associated with $\varpi$ has a unique solution
$u=\Bcal\varpi\in C([0,T];L^2(\R^3))$, and
\begin{equation}\label{eq:sol-conv}
 a_n\to u
 \qquad\text{strongly in }C([0,T];L^2(\R^3)).
\end{equation}
Moreover,
\begin{equation}\label{eq:comp-bd}
 \|u(t)\|_2=\|u_0\|_2,
 \qquad 0\le t\le T.
\end{equation}
Neither a sign condition nor a spatial moment assumption is required.
\end{theorem}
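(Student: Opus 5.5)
The proof splits into three parts: identification of the limit, local compactness, and a radial no-escape argument that upgrades local to global convergence.

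\textbf{Step 1: identification and local compactness.} Since $\varpi_n\to\varpi$ in $C_tL^1\cap L^p$ and $\sup_n\|a_n\|_{L^\infty_tL^2}<\infty$, weak-$*$ limits of $a_n(t)$ are divergence free with curl $r\varpi(t)e_\vartheta$. By Lemma~\ref{lem:divcurl} such a limit is unique and equals $u(t)=\Bcal\varpi(t)$, with $\|u(t)\|_2\le\liminf_n\|a_n(t)\|_2$. Local compactness comes from the weighted axisymmetric Calder\'on--Zygmund estimate quoted in the introduction,
\[
\|r^{-1}\nabla a_n\|_{L^s}\lesssim\|\varpi_n\|_{L^s},\qquad s\in(1,p].
\]
On any bounded region away from and near the axis, this gives a uniform $W^{1,s}$ bound, hence compactness in $L^2_{\mathrm{loc}}$ for fixed $t$; here one uses $s\ge4/3>6/5$ so that Rellich reaches $L^2$. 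Equicontinuity in time on compact sets follows from the equation tested against smooth divergence-free test functions, using $\|a_n\otimes a_n\|_{L^1}$ bounded and $f_n\to0$. A diagonal Arzel\`a--Ascoli argument then gives $a_n\to u$ in $C([0,T];L^2(K))$ for bounded cylinders $K=\{r<R,|z|<L\}$. Vertical tails $|z|\to\infty$ inside $r<R$ are handled by infinite-cylinder div--curl compactness: the curl lies in $L^{6/5}$, and its $L^{6/5}$ mass is tight in $z$ by strong $L^1\cap L^p$ convergence. So the convergence holds in $C_tL^2(\{r<R\})$.

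\textbf{Step 2: energy balance on large radii.} Let $\chi_R(r)$ be a cutoff equal to $1$ for $r>2R$ and $0$ for $r<R$. The local energy identity gives
\[
\frac{d}{dt}\int\chi_R\frac{|a_n|^2}{2}
=\int\Bigl(\frac{|a_n|^2}{2}+\pi_n\Bigr)a_n^r\,\chi_R'
+\int f_n\cdot a_n\,\chi_R .
\]
Integrating in time, the forcing term is $o(1)$ uniformly in $R$. Initially, $\int\chi_R|a_n(0)|^2$ is uniformly small for large $R$ because $a_n(0)\to u_0$ in $L^2$. It therefore suffices to show
\[
\sup_n\int_0^T\frac1R\int_{\mathcal C_R}\bigl(|a_n|^3+|\pi_n||a_n|\bigr)\,dx\,dt\to0\qquad(R\to\infty).
\]

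\textbf{Step 3: the shell estimate (main obstacle).} The plan is to prove the shell inequality stated in the introduction. In shells $r\sim R$, view the shell as a two-dimensional meridional domain with measure $r\,dr\,dz\approx R\,dr\,dz$, and apply two-dimensional Gagliardo--Nirenberg to $a_n$:
\[
\|a\|_{L^3(dr\,dz)}\lesssim\|a\|_{L^2(dr\,dz)}^{1-\theta}\|\nabla a\|_{L^s(dr\,dz)}^{\theta}.
\]
Converting back with $\|a\|_{L^2(dr\,dz)}\approx R^{-1/2}\|a\|_2$, $\|\nabla a\|_{L^s(dr\,dz)}\approx R^{1-1/s}\|r^{-1}\nabla a\|_{L^s(dx)}$ (the factor $R$ from the weight and $R^{-1/s}$ from the measure), and $\frac1R\int_{\mathcal C_R}|a|^3\,dx\approx\int|a|^3\,dr\,dz$, should produce the power $R^{-\delta_s}$ with $\delta_s=\frac{3s-4}{4(s-1)}$. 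Low frequencies are handled by a separate term $R^{-5/2}\|a\|_2^3$ from a mean/Poincar\'e correction. For $p>4/3$, take $s\in(4/3,\min(p,3/2)]$; then $\delta_s>0$ and the bound is uniform in $n$.

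At $s=4/3$ the gain vanishes, and one instead uses strong $C_tL^{4/3}$ convergence: $\sup_{n,t}\|\varpi_n\|_{L^{4/3}(r>R/2)}\to0$. A localized version of the weighted Calder\'on--Zygmund bound, with a commutator/far-field error controlled by $\|\varpi_n\|_{L^1}$ and $\|a_n\|_2$ times negative powers of $R$, makes $\|r^{-1}\nabla a_n\|_{L^{4/3}(\mathcal C_R')}$ uniformly small, so the cubic flux is small anyway.

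For the pressure, write $\pi_n=(-\Delta)^{-1}\partial_i\partial_j(a_n^ia_n^j)$, decomposed into a near part, bounded in $L^{3/2}(\mathcal C_R)$ by the localized $|a_n|^2$, and a far part of lower order. This reduces $\int|\pi_n||a_n|$ to the same cubic shell quantity plus decaying errors. I expect this localized pressure and Calder\'on--Zygmund bookkeeping at the endpoint to be the hardest part.

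\textbf{Conclusion.} Steps 2 and 3 give radial tightness, uniform in $n$ and $t$. Combined with the convergence in $C_tL^2(\{r<R\})$ from Step 1, this yields $a_n\to u$ in $C([0,T];L^2)$. Energy equality follows from
\[
\|a_n(t)\|_2^2=\|a_n(0)\|_2^2+2\int_0^t\!\!\int f_n\cdot a_n,
\]
which is exact for the regular $a_n$, by passing to the limit.
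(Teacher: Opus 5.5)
Your architecture is essentially the paper's: a meridional Gagliardo--Nirenberg shell estimate with gain $R^{-\delta_s}$, the weighted Calder\'on--Zygmund bound \eqref{eq:bs-bd}, a near/far split of the canonical pressure, the radially localized energy identity, and a cylinder-local convergence step.

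Your endpoint device also works. You localize \eqref{eq:bs-bd} to $\{r>R/4\}$ and treat vorticity at $r<R/4$ as a far field; that far field costs about $R^{-3/4}\|\varpi_n\|_1$ in $L^{4/3}(\mathcal C_R(1/2,4))$ after the weight $r^{-1}$. The paper's route in Lemma~\ref{lem:endpttight} is simpler. By linearity, $r^{-1}\nabla(a_n-a_m)$ is controlled by $\|\varpi_n-\varpi_m\|_{4/3}$, so $r^{-1}\nabla a_n$ is Cauchy in $C_tL^{4/3}$. The compact image of the limit is then uniformly tight.

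The genuine gap is in Step~1, in passing from bounded cylinders to the infinite cylinder $\mathfrak C_R=\{r<R\}$.

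\begin{itemize}
\item Your claim that ``the curl lies in $L^{6/5}$'' is false for the global field. Since $r$ is unbounded and $\varpi_n$ lies only in $L^1\cap L^p$, the lift $r\varpi_n e_\vartheta$ is in $L^{6/5}$ only after truncation to $\{r<2R\}$. That truncation is still divergence free, and its $L^{6/5}$ norm is at most $2R\|\varpi_n\|_{6/5}$.
\item Vorticity at $r>2R$ still induces velocity inside $\mathfrak C_R$, at all heights $z$. Neither $z$-tightness of an $L^{6/5}$ mass nor the radial no-escape of your Steps~2--3 controls this. The radial no-escape bounds $a_n$ on $\{r>R\}$, not the field that distant vorticity induces on $\{r<R\}$.
\end{itemize}

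What is missing is a quantitative far-field Biot--Savart bound, which is exactly Lemma~\ref{lem:cylcurl}.

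\begin{itemize}
\item For $x\in\mathfrak C_R$ and a source at radius $\rho>2R$, one has $|x-y|^2\gtrsim\rho^2+(z-\zeta)^2$. Hence the outer field is dominated by $\int_{2R}^\infty K_\rho*_z|\eta(\rho,\cdot)|\,d\rho$ with $K_\rho(s)=\rho^2/(\rho^2+s^2)$.
\item Young's inequality in $z$ (using $\|K_\rho\|_{L^a}\simeq\rho^{1/a}$) and H\"older in $\rho$ give $\|\mathbf 1_{\mathfrak C_R}v_{\rm out}\|_2\lesssim_q R^{7/2-3/q}\|\eta\|_q$ for $1<q<6/5$.
\item Apply this to $a_n-a_m$ with $\eta=\varpi_n-\varpi_m$, together with the inner bound $\|v_{\rm in}\|_2\lesssim\|\mathbf 1_{\{r<2R\}}r\eta\|_{6/5}\le 2R\|\eta\|_{6/5}$ from Lemma~\ref{lem:divcurl}. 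Then $\{a_n\}$ is Cauchy in $C([0,T];L^2(\mathfrak C_R))$ directly from \eqref{eq:mom-conv}, via interpolation into $L^q\cap L^{6/5}$.
\end{itemize}

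With this estimate, your Aubin--Lions and time-equicontinuity argument becomes unnecessary: no time compactness of $a_n$ is needed once the vorticity converges uniformly in time. Also note a minor point: \eqref{eq:bs-bd} holds only for $1<s<2$, not for all $s\in(1,p]$.
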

\begin{remark}
If \(6/5\le p<4/3 \), then \(a_n \) still converges to \(u \) locally in the radial variable. More precisely, for every finite \(R>0\),
\[
a_n\to u
\qquad\text{strongly in }C([0,T];L^2(\mathfrak C_R)),
\quad
\mathfrak C_R:=\{x\in\mathbb R^3, r(x)<R\}.
\]
This follows directly from Lemma \ref{lem:cylcurl} and interpolation.

\end{remark}

For $0<c<C<\infty$ and $R>0$, set $\mathcal C_R(c,C):=\{x\in\R^3:cR<r(x)<CR\}$.
We use the weighted axisymmetric Calder\'on--Zygmund estimate
\begin{equation}\label{eq:bs-bd}
 \left\|\frac{\nabla_x\Bcal\zeta}{r}\right\|_{L^s(\R^3)}
 \lesssim_s \|\zeta\|_{L^s(\R^3)},
 \qquad 1<s<2,
\end{equation}
proved in \cite[Lemma~4]{nobili2022renormalization}. Let $A_s(\R^3),1<s<\infty$ denote the Muckenhoupt class of weights \(w\) satisfying
\begin{align}
    &    
[w]_{A_s}
:=
\sup_{B \text{ ball in } \R^3}
\left(\frac1{|B|}\int_B w\,dx\right)
\left(\frac1{|B|}\int_B w^{-\frac1{s-1}}\,dx\right)^{s-1}
<\infty. \label{As}
\end{align}
The proof of \eqref{eq:bs-bd} uses the
three-dimensional Calder\'on--Zygmund estimate with the Muckenhoupt weight
$r^{-s}$, which belongs to $A_s(\R^3)$,
$1<s<2$. Hence, once the $L^2$ div--curl uniqueness in
Lemma~\ref{lem:divcurl} identifies a field with $\Bcal\zeta$, \eqref{eq:bs-bd}
applies directly; below we use it only in this form.
\begin{lemma}[Cubic shell estimate]\label{lem:cubshell}
Let $4/3\le s\le3/2$ and define
\begin{equation}\label{eq:est-def}
 \alpha_s:=\frac{5s-6}{4(s-1)},\qquad
 \gamma_s:=\frac{s}{2(s-1)},\qquad
 \delta_s:=\frac{3s-4}{4(s-1)}.
\end{equation}
For fixed $0<c<C<\infty$ and every smooth axisymmetric no-swirl field
$v\in L^2(\R^3)$ with $r^{-1}\nabla_xv\in L^s(\R^3)$,
\begin{align}\label{eq:shell-dx}
 \frac1R\int_{\mathcal C_R(c,C)}|v|^3\,dx
 &\lesssim_{c,C,s}
 R^{-\delta_s}\|v\|_2^{2\alpha_s}
 \left\|\frac{\nabla_xv}{r}\right\|_{L^s(\mathcal C_R(c/2,2C))}^{\gamma_s}
 +R^{-5/2}\|v\|_2^3.
\end{align}
At $s=4/3$ this becomes
\begin{equation}\label{eq:shell-xv}
 \frac1R\int_{\mathcal C_R(c,C)}|v|^3\,dx
 \lesssim_{c,C}
 \|v\|_2
 \left\|\frac{\nabla_xv}{r}\right\|_{L^{4/3}(\mathcal C_R(c/2,2C))}^{2}
 +R^{-5/2}\|v\|_2^3.
\end{equation}
\end{lemma}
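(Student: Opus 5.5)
The plan is to reduce \eqref{eq:shell-dx} to a two-dimensional Gagliardo--Nirenberg inequality in the meridional half-plane $\Hh$, applied on the strip corresponding to the shell. On $\mathcal C_R(c/2,2C)$ we have $r\sim R$, so $dx=2\pi r\,dr\,dz\simeq R\,dr\,dz$. Write $S_R:=(cR/2,2CR)\times\R\subset\Hh$ and $S_R':=(cR,CR)\times\R$, and regard $v$ as a function of $(r,z)$, using $|\nabla_{r,z}v|\le|\nabla_xv|$ for axisymmetric fields. The conversions are
\[
 \frac1R\int_{\mathcal C_R(c,C)}|v|^3dx\simeq\|v\|_{L^3(S_R')}^3,\qquad
 \|v\|_{L^2(S_R)}\lesssim R^{-1/2}\|v\|_2,
\]
\[
 \|\nabla v\|_{L^s(S_R)}\lesssim R^{-1/s}\|\nabla_xv\|_{L^s(\mathcal C_R(c/2,2C))}\lesssim R^{(s-1)/s}\Bigl\|\frac{\nabla_xv}{r}\Bigr\|_{L^s(\mathcal C_R(c/2,2C))}.
\]

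\textbf{Step 1 (exponents).} The homogeneous 2D Gagliardo--Nirenberg inequality $\|w\|_3\lesssim\|w\|_2^{1-a}\|\nabla w\|_s^{a}$ has scaling relation $\tfrac13=\tfrac{1-a}2+a(\tfrac1s-\tfrac12)$, which gives $a=\frac{s}{6(s-1)}$. Then $3a=\gamma_s$ and $3(1-a)=2\alpha_s$, and $a\in(0,1)$ precisely for $s>6/5$, which covers our range. Inserting the conversions, the power of $R$ is
\[
 -\alpha_s+\gamma_s\,\tfrac{s-1}{s}=-\tfrac{5s-6}{4(s-1)}+\tfrac12=-\delta_s,
\]
which is exactly the first term in \eqref{eq:shell-dx}. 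At $s=4/3$ we get $\alpha=1/2$, $\gamma=2$ and $\delta=0$, which is \eqref{eq:shell-xv}.

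\textbf{Step 2 (localization to the strip, the main obstacle).} A naive cutoff $\chi(r/R)$ generates $R^{-1}\|v\|_{L^s(S_R)}$. This is not controlled by $\|v\|_2$ for $s<2$, because the strip is unbounded in $z$. I would instead rescale $(r,z)\mapsto(r,z)/R$ to a strip of unit width and tile it by unit squares $Q_k$, together with their slightly enlarged concentric copies $\widetilde Q_k\subset S_1$. On each square, the inhomogeneous Gagliardo--Nirenberg inequality on a Lipschitz domain (via extension) gives
\[
 \|w\|_{L^3(Q_k)}^3\lesssim\|w\|_{L^2(\widetilde Q_k)}^{2\alpha_s}\|\nabla w\|_{L^s(\widetilde Q_k)}^{\gamma_s}+\|w\|_{L^2(\widetilde Q_k)}^3 .
\]
Set $A_k=\|w\|_{L^2(\widetilde Q_k)}^2$ and $B_k=\|\nabla w\|_{L^s(\widetilde Q_k)}^s$. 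Then the sum over $k$ has the form $\sum_k A_k^{x}B_k^{y}$ with $x=\alpha_s$ and $y=\gamma_s/s=\frac1{2(s-1)}$. Here $x+y=\frac{5s-4}{4(s-1)}\ge1$ exactly because $s\le 3/2$; this is where the upper restriction on $s$ enters. Hölder's inequality together with $\ell^1\subset\ell^{x+y}$ then gives $\sum_kA_k^xB_k^y\le(\sum A_k)^x(\sum B_k)^y$, and the finite overlap of the $\widetilde Q_k$ lets us replace these sums by the global norms on $S_1$. The cubic lower-order term sums similarly, since $\sum_kA_k^{3/2}\le(\sum_kA_k)^{3/2}$.

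\textbf{Step 3 (rescaling back).} Undoing the dilation, the homogeneous term is scale-consistent and reproduces the bound of Step 1. The lower-order term becomes $R^{-1}\|v\|_{L^2(S_R)}^3\lesssim R^{-1}R^{-3/2}\|v\|_2^3=R^{-5/2}\|v\|_2^3$, which is the second term in \eqref{eq:shell-dx}. All constants depend only on $c$, $C$ and $s$.

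The only delicate point is Step 2: the unbounded $z$-direction has to be handled by the unit-square tiling and the $\ell^1$ summation, rather than by a global cutoff.
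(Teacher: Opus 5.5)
Your argument is correct and is essentially the paper's proof: tile the meridional strip by squares of side comparable to $R$, apply the inhomogeneous two-dimensional Gagliardo--Nirenberg inequality tile by tile, and sum over the tiles by H\"older plus an $\ell^p$ inclusion (the paper uses $\ell^s\hookrightarrow\ell^{2s/(2-s)}$ where you use $\ell^1\hookrightarrow\ell^{x+y}$), with the lower-order term giving $R^{-5/2}\|v\|_2^3$ exactly as you describe. One small correction: $x+y=\frac{5s-4}{4(s-1)}>1$ for every $s>1$, so the upper restriction $s\le3/2$ plays no role in that step; your summation only needs $\alpha_s\ge0$, i.e.\ $s\ge6/5$, and the condition $s\ge4/3$ is only what makes $\delta_s\ge0$.
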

\begin{proof}
A finite radial partition reduces the proof to $\mathcal C_R(1,2)$.  Write the
two meridional components of $v$ on $S_R=(R,2R)\times\R_z$ and partition it into
$Q_j=(R,2R)\times(jR,(j+1)R)$, $j\in\mathbb Z$.
All norms on $Q_j$ below are taken with respect to $dr\,dz$.  Set
$A_j:=\|v\|_{L^2(Q_j)}$, $B_j:=\|\nabla_{r,z}v\|_{L^s(Q_j)}$, and
$\vartheta_s:=s/[6(s-1)]$.  Rescaling the two-dimensional
Gagliardo--Nirenberg inequality from the unit square gives
\[
 \|v\|_{L^3(Q_j)}^3
 \lesssim_s
 A_j^{3(1-\vartheta_s)}B_j^{3\vartheta_s}
 +R^{-1}A_j^3.
\]
The first term is scale invariant because
$1/3=(1-\vartheta_s)/2+\vartheta_s(1/s-1/2)$.

Since $3(1-\vartheta_s)=2\alpha_s$ and
$3\vartheta_s=\gamma_s$, H\"older's inequality in $j$, followed by the
embedding $\ell^s\hookrightarrow\ell^{2s/(2-s)}$, gives
\[
 \sum_j A_j^{2\alpha_s}B_j^{\gamma_s}
 \lesssim_s
 \left(\sum_jA_j^2\right)^{\alpha_s}
 \left(\sum_jB_j^s\right)^{\gamma_s/s}.
\]
Also $\sum_jA_j^3\le(\sum_jA_j^2)^{3/2}$.  Since
$dx=2\pi r\,dr\,dz$ and $r\simeq R$ on $S_R$,
\[
 \sum_jA_j^2
 \lesssim R^{-1}\|v\|_{L^2(\mathcal C_R(1,2))}^2,
 \qquad
 \sum_jB_j^s
 \lesssim R^{s-1}
 \left\|\frac{\nabla_xv}{r}\right\|_{L^s(\mathcal C_R(1,2))}^s.
\]
Summing the local Gagliardo--Nirenberg estimate and substituting the two bounds above gives exactly the factor $R^{-\delta_s}$ in
\eqref{eq:shell-dx}. At $s=4/3$,
$\alpha_s=1/2$, $\gamma_s=2$, and $\delta_s=0$, which yields
\eqref{eq:shell-xv}.
\end{proof}
\begin{proposition}[Power sharpness of the cubic shell estimate]\label{prop:cubsharp}
Let $4/3\le s\le3/2$, and let $\alpha_s$, $\gamma_s$, and $\delta_s$
be as in \eqref{eq:est-def}.  There exists $R_0\ge1$ and,
for every $R\ge R_0$, an axisymmetric
$\zeta_R\in C_c^\infty(\R^3)$ such that
$v_R:=\Bcal\zeta_R\in C_c^\infty(\R^3;\R^3)$ is no-swirl,
\begin{equation}\label{eq:supp-shell}
 \operatorname{supp}v_R\subset\mathcal C_R(1,2),
\end{equation}
and
\begin{align}
 \frac1R\int_{\mathcal C_R(1,2)}|v_R|^3\,dx
 &\to c_*>0,
 \label{eq:shell-conv}\\
 \|v_R\|_2
 &\lesssim R^{1/2},
 \label{eq:mom-free}\\
 \left\|\frac{\nabla_xv_R}{r}\right\|_{L^s(\R^3)}
 &\lesssim_s R^{1/s-1}.
 \label{eq:xv-eta}
\end{align}
Consequently, for no $\eta>0$ and $K<\infty$ can the estimate
\begin{align}\label{eq:shell-bd}
 \frac1R\int_{\mathcal C_R(1,2)}|v|^3\,dx
 &\le K R^{-\delta_s-\eta}\|v\|_2^{2\alpha_s}
 \left\|\frac{\nabla_xv}{r}\right\|_{L^s(\mathcal C_R(1/2,4))}^{\gamma_s}
 +K R^{-5/2}\|v\|_2^3
\end{align}
hold for all $R\ge R_0$ and all smooth axisymmetric no-swirl
$v\in L^2(\R^3)$ with $r^{-1}\nabla_xv\in L^s(\R^3)$.
In particular, since $\delta_{4/3}=0$, the endpoint estimate
\eqref{eq:shell-xv} admits no uniform gain $R^{-\eta}$ with
$\eta>0$.
\end{proposition}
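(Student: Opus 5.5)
The construction is a fixed meridional profile translated to radius $R$ without any rescaling. Fix a smooth compactly supported stream-type profile $\psi\in C_c^\infty((0,1)\times(0,1))$, not identically zero, in the meridional variables $(r,z)$. Set
\[
 v_R:=\nabla\times\bigl(\psi(r-R-\tfrac12,z)\,e_\vartheta/r\cdot r\bigr),
\]
or more concretely $v_R:=\nabla\times(\Psi_R e_\vartheta)$ with $\Psi_R(x):=\psi(r-R-\tfrac12,z)$. This field is smooth, compactly supported in $\mathcal C_R(1,2)$ for $R\ge R_0$, divergence free, axisymmetric and no-swirl. Its curl is $r\zeta_Re_\vartheta$ with
\[
 \zeta_R:=r^{-1}(-\Delta+r^{-2})\Psi_R\in C_c^\infty .
\]
Since $v_R\in L^2$ is divergence free, Lemma~\ref{lem:divcurl} identifies $v_R=\Bcal\zeta_R$. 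This gives \eqref{eq:supp-shell}.

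The norms are computed using $dx=2\pi r\,dr\,dz$ with $r\simeq R$ on the support. In meridional variables,
\[
 v_R=(-\partial_z\Psi_R,\ \partial_r\Psi_R+\Psi_R/r),
\]
and this equals a fixed profile $V=(-\partial_z\psi,\partial_r\psi)$ up to an $O(R^{-1})$ correction in $C^1$. The three bounds then follow directly.
\begin{itemize}
\item The energy bound is $\|v_R\|_2^2\simeq R\,\|V\|_{L^2(dr\,dz)}^2$, which is \eqref{eq:mom-free}.
\item For \eqref{eq:xv-eta}, $\|r^{-1}\nabla v_R\|_s^s\simeq R^{1-s}\|\nabla V\|_{L^s}^s$, so $\|r^{-1}\nabla v_R\|_s\lesssim R^{1/s-1}$.
\item For the cubic term,
\[
 \frac1R\int|v_R|^3dx=2\pi\int\frac rR|V+O(R^{-1})|^3\,dr\,dz\to 2\pi\int|V|^3\,dr\,dz=:c_*>0 .
\]
This uses dominated convergence, since $r/R\to1$ uniformly on the support. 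This is \eqref{eq:shell-conv}.
\end{itemize}

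For the sharpness conclusion, insert these bounds into the right-hand side of \eqref{eq:shell-bd}. The first term is bounded by
\[
 KR^{-\delta_s-\eta}R^{\alpha_s}R^{\gamma_s(1/s-1)} .
\]
The exponent arithmetic is
\[
 \alpha_s+\gamma_s\Bigl(\frac1s-1\Bigr)=\frac{5s-6}{4(s-1)}-\frac12=\frac{3s-4}{4(s-1)}=\delta_s,
\]
so the first term is $\lesssim KR^{-\eta}$. The second term is $\lesssim KR^{-5/2}R^{3/2}=KR^{-1}$. The right-hand side therefore tends to $0$ as $R\to\infty$, while the left-hand side tends to $c_*>0$, a contradiction for large $R$. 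At $s=4/3$ we have $\delta_{4/3}=0$, so no gain $R^{-\eta}$ is possible in \eqref{eq:shell-xv}.

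The main point requiring care is the choice of ansatz, not any estimate. The field must be a genuine Biot--Savart image of a compactly supported smooth $\zeta_R$ with support inside the shell. Taking $v_R$ as the curl of a compactly supported azimuthal potential achieves this. The small $\Psi_R/r$ correction must also be checked to be harmless, and it is $O(R^{-1})$ uniformly.
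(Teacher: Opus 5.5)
Your proof is correct and is essentially the paper's argument: a fixed, unscaled meridional packet at radius $\sim R$, given as the curl of a compactly supported azimuthal potential, identified with $\Bcal\zeta_R$ by div--curl uniqueness, and followed by the same scaling computations and the same exponent identity $\alpha_s+\gamma_s(1/s-1)=\delta_s$. The paper instead writes the field through the equivalent Stokes stream function $\psi_R=R\Psi(r-\tfrac32R,z)$, which corresponds to your $r\Psi_R$ up to normalization. Two details you leave implicit are harmless: the Cartesian gradient also contains the angular term $v_R^r/r=O(R^{-1})$, and $c_*>0$ because a nonzero compactly supported $\psi$ cannot have $\nabla\psi\equiv0$.
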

\begin{proof}
Choose a nonconstant
$\Psi\in C_c^\infty((-1/8,1/8)\times(-1/8,1/8))$ and put
$\lambda:=3/2$.  For $R\ge1$, set $\rho:=r-\lambda R$ and $\psi_R(r,z):=R\Psi(\rho,z)$,
and define the axisymmetric no-swirl field
\begin{equation}\label{eq:psi-def}
 v_R^r:=-\frac1r\partial_z\psi_R,
 \qquad
 v_R^z:=\frac1r\partial_r\psi_R,
 \qquad
 v_R:=v_R^re_r+v_R^ze_z.
\end{equation}
For $R$ sufficiently large, the support of $\psi_R$ is contained in
$\{R<r<2R\}$, so \eqref{eq:supp-shell} holds.  Since the support
stays away from the axis, $v_R$ is a smooth compactly supported vector
field.  Moreover,
\[
 \frac1r\partial_r(rv_R^r)+\partial_zv_R^z
 =-\frac1r\partial_r\partial_z\psi_R
 +\frac1r\partial_z\partial_r\psi_R=0.
\]
Thus $v_R$ is divergence free.  Define $\zeta_R:=(\nabla\times v_R)^\vartheta/r$.  Then $\zeta_R\in C_c^\infty(\R^3)$ is axisymmetric and
$\nabla\times v_R=r\zeta_Re_\vartheta$.  The $L^2$ div--curl uniqueness
of Lemma~\ref{lem:divcurl} therefore identifies
$v_R=\Bcal\zeta_R$.
On the support of $\Psi$, write $q_R(\rho):=R/(\lambda R+\rho)$.
Equation \eqref{eq:psi-def} becomes
\begin{equation}\label{eq:psi-id}
 v_R^r=-q_R\partial_z\Psi,
 \qquad
 v_R^z=q_R\partial_\rho\Psi.
\end{equation}
Since $q_R\to\lambda^{-1}$ uniformly on the fixed support of $\Psi$,
$0<c_0\le q_R\le C_0$ for all sufficiently large $R$.  Using
$dx=2\pi r\,dr\,dz$ and the change of variables $r=\lambda R+\rho$,
\begin{align}
 \frac1R\int_{\R^3}|v_R|^3\,dx
 &=2\pi\int_{\R^2}
 \left(\frac{R}{\lambda R+\rho}\right)^2
 |\nabla_{\rho,z}\Psi(\rho,z)|^3\,d\rho\,dz
 \notag\\
 &\to
 2\pi\lambda^{-2}\int_{\R^2}
 |\nabla_{\rho,z}\Psi|^3\,d\rho\,dz
 =:c_*>0. \notag
\end{align}
Together with \eqref{eq:supp-shell}, this is
\eqref{eq:shell-conv}.  The same change of variables gives
\begin{align}
 \|v_R\|_2^2
 &=2\pi R\int_{\R^2}
 \frac{R}{\lambda R+\rho}
 |\nabla_{\rho,z}\Psi(\rho,z)|^2\,d\rho\,dz
 \lesssim R, \notag
\end{align}
which proves \eqref{eq:mom-free}.
It remains to estimate the weighted gradient.  Differentiating \eqref{eq:psi-id} in $r$ and $z$, and using $|q_R|\lesssim1$ and $|\partial_rq_R|=R/(\lambda R+\rho)^2\lesssim R^{-1}$, gives
\begin{equation*}
 |\partial_rv_R|+|\partial_zv_R|+\frac{|v_R^r|}{r}
 \le C_\Psi
 \qquad\text{on }\operatorname{supp}v_R.
\end{equation*}
For an axisymmetric no-swirl field, the left-hand side controls
$|\nabla_xv_R|$ up to an absolute constant.  Since $r\simeq R$ on the
support and $|\operatorname{supp}v_R|\lesssim R$,
\[
 \left\|\frac{\nabla_xv_R}{r}\right\|_s^s
 \lesssim_{\Psi,s} R\,R^{-s},
\]
which is \eqref{eq:xv-eta}.
Suppose now that \eqref{eq:shell-bd} holds for some
$\eta>0$.  Apply it to $v=v_R$.  By
\eqref{eq:mom-free}--\eqref{eq:xv-eta}, the first term on
its right-hand side is bounded by
\[
 C R^{-\delta_s-\eta}
 R^{\alpha_s}R^{\gamma_s(1/s-1)}.
\]
By the definitions in \eqref{eq:est-def}, the displayed power of $R$ is
$-\eta$.  Hence this term is $O(R^{-\eta})$.  The second term is
$O(R^{-1})$ by \eqref{eq:mom-free}.  Both tend to zero, contradicting
\eqref{eq:shell-conv}.  Therefore no such improvement is possible.
\end{proof}

\begin{proposition}[Passive-transport realization of the subcritical shell obstruction]
\label{prop:passive-shell-obstruction}
Let $1<p<4/3$.  There exists a fixed smooth axisymmetric divergence-free
no-swirl vector field $b$ on $\R^3$ and, for every sufficiently large
$R$, a time $T_R\to\infty$ and a smooth compactly supported axisymmetric
solution
\begin{equation}\label{eq:passive-transport}
 \partial_t\theta_R+b\cdot\nabla\theta_R=0
 \qquad\text{on }[0,T_R]\times\R^3
\end{equation}
such that
\begin{equation}\label{eq:passive-small}
 \sup_{0\le t\le T_R}
 \|\theta_R(t)\|_{L^1\cap L^p}\longrightarrow0,
 \qquad
 \|\Bcal\theta_R(0)\|_2\longrightarrow0,
\end{equation}
whereas
\begin{equation}\label{eq:passive-terminal}
 \|\Bcal\theta_R(T_R)\|_2\longrightarrow c_2>0,
 \qquad
 \frac1R\int_{\R^3}|\Bcal\theta_R(T_R)|^3\,dx
 \longrightarrow c_3>0.
\end{equation}
Moreover, for every $1\le q<\infty$,
\begin{equation}\label{eq:passive-lq-scaling}
 \|\theta_R(t)\|_q
 =\|\theta_R(T_R)\|_q
 \asymp_q R^{\frac32-\frac2q}
 \qquad(0\le t\le T_R),
\end{equation}
and, for all sufficiently large $R$,
\begin{equation}\label{eq:passive-terminal-support}
 \operatorname{supp}\theta_R(T_R)
 \cup\operatorname{supp}\Bcal\theta_R(T_R)
 \subset\mathcal C_R(1,2).
\end{equation}
In particular,
$\|\theta_R(T_R)\|_{4/3}\asymp1$.
\end{proposition}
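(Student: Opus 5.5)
The plan is to combine a concentrated vortex ring at radius $R$, whose scaling gives order-one Biot--Savart energy and order-one shell cubic flux, with a fixed linear straining flow that carries the same scalar outward from radius $\simeq1$, where its Biot--Savart energy is negligible.

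\textbf{Terminal profile.} Following the stream-function construction in the proof of Proposition~\ref{prop:cubsharp}, fix a nonconstant $\Psi\in C_c^\infty((-1/8,1/8)^2)$ and $\lambda=3/2$. Set $\ell:=R^{-3/2}$ and
\[
 \psi_R(r,z):=R^{1/2}\,\Psi\bigl((r-\lambda R)/\ell,\;z/\ell\bigr).
\]
Define $w_R$ by \eqref{eq:psi-def} with this $\psi_R$, and put $\zeta_R:=(\nabla\times w_R)^\vartheta/r$. As in Proposition~\ref{prop:cubsharp}, $w_R$ is smooth, divergence free, no-swirl and supported in $\mathcal C_R(1,2)$, and Lemma~\ref{lem:divcurl} identifies $w_R=\Bcal\zeta_R$.

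On the support, $|w_R|\sim R^{1/2}/(R\ell)=R$ and $|\zeta_R|\sim |w_R|/(\ell R)=R^{3/2}$. The support has $3$-volume $\sim R\ell^2=R^{-2}$. The same change of variables as before, now with the blow-up factor $\ell$, gives
\[
 \|w_R\|_2^2\to c_2^2>0,\qquad
 \frac1R\int|w_R|^3\,dx\to c_3>0,\qquad
 \|\zeta_R\|_q^q\asymp R^{3q/2}R^{-2}.
\]
The last relation is exactly \eqref{eq:passive-lq-scaling}. Since $1<p<4/3$, both $\|\zeta_R\|_1\asymp R^{-1/2}$ and $\|\zeta_R\|_p\asymp R^{3/2-2/p}$ tend to $0$.

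\textbf{Transport.} Take the fixed smooth linear field $b(x):=(x_1,x_2,-2z)$, i.e. $b^r=r$, $b^z=-2z$. It is axisymmetric, no-swirl and divergence free, since $r^{-1}\partial_r(r^2)-2=0$. Its flow map is $(r,z)\mapsto(re^t,ze^{-2t})$ and preserves Lebesgue measure. Set $T_R:=\log R\to\infty$ and
\[
 \theta_R(t):=\zeta_R\circ X_{t-T_R},
\]
where $X_s$ is the flow of $b$. Then $\theta_R$ solves \eqref{eq:passive-transport} and $\theta_R(T_R)=\zeta_R$. Because the flow is measure preserving, every $L^q$ norm is constant in time, which gives \eqref{eq:passive-small} for the $L^1\cap L^p$ part together with \eqref{eq:passive-lq-scaling}. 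The terminal conclusions \eqref{eq:passive-terminal} and \eqref{eq:passive-terminal-support} are the facts recorded for $w_R$ above.

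\textbf{Initial energy (main obstacle).} The remaining point is $\|\Bcal\theta_R(0)\|_2\to0$. At $t=0$ the scalar $\theta_R(0)$ is supported where $r\in(\lambda-\tfrac18R^{-5/2},\lambda+\tfrac18R^{-5/2})$ and $|z|\lesssim R^{1/2}$. It is still smooth and compactly supported, and it has the same distribution function as $\zeta_R$. By \eqref{eq:curl-hm} and \eqref{eq:hminus-lp},
\[
 \|\Bcal\theta_R(0)\|_2\lesssim\|r\theta_R(0)\|_{6/5}\le 2\|\zeta_R\|_{6/5}\asymp R^{3/2-5/3}=R^{-1/6}\to0,
\]
using $r\le2$ on the support. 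The step I expect to need the most care is justifying the application of Lemma~\ref{lem:divcurl} here: $r\theta_R(0)e_\vartheta$ is divergence free and smooth, so this should be routine. Beyond that, one must check the exponent bookkeeping in the terminal limits, namely that the factors $q_R$ converge uniformly, exactly as in Proposition~\ref{prop:cubsharp}.
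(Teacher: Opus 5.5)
Your proof is correct and differs from the paper's only in the choice of transport field. Your terminal packet (the stream-function ring with $\ell=R^{-3/2}$ and amplitude $R^{1/2}$) is exactly the paper's, and so is your initial-energy bound $\|\Bcal\theta_R(0)\|_2\lesssim\|r\theta_R(0)\|_{6/5}\lesssim\|\zeta_R\|_{6/5}\asymp R^{-1/6}$.

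\textbf{The transport field.}
\begin{itemize}
\item The paper builds a bounded field from the stream function $\psi_b=-a(r)z\eta(z)$, equal to $r^{-1}e_r$ on the core $\{r\ge2,\ |z|\le1/2\}$. In the variable $s=r^2/2$ this is the translation $\partial_s$, so the packet moves rigidly in $(s,z)$ from $r\approx3$ to $r\approx\lambda R$ in time $T_R\sim R^2$. The paper must then check that $b$ is smooth across the axis and that the whole trajectory stays inside the core.
\item Your linear strain $b=(x_1,x_2,-2z)$ makes all of this automatic. The flow is an explicit linear map of determinant one, so measure preservation, smoothness and compact support of $\theta_R$ are immediate, and $T_R=\log R$.
\item You give up boundedness of $b$ (it is only globally Lipschitz) and a fixed packet shape: your initial datum is a sheet of radial width $\sim R^{-5/2}$ and height $\sim R^{1/2}$.
\item Neither loss matters for the statement, since the $L^{6/5}$ bound only uses $r\le2$ on the support. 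The paper's construction additionally shows that a bounded transport field suffices.
\end{itemize}

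\textbf{Sign slip in the composition.} With $X_s(r,z)=(re^s,ze^{-2s})$, your formula $\theta_R(t)=\zeta_R\circ X_{t-T_R}$ solves $\partial_t\theta-b\cdot\nabla\theta=0$, not the stated equation. It also places $\theta_R(0)$ near $r=\lambda R^2$, where the initial-energy bound fails. The solution you want is $\theta_R(t)=\zeta_R\circ X_{T_R-t}$; this is the one whose initial support ($r\approx\lambda$, $|z|\lesssim R^{1/2}$) you actually describe.

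\textbf{Lower bound in the $L^q$ scaling.} The lower half of $\|\zeta_R\|_q\asymp R^{3/2-2/q}$ requires the two-variable Laplacian of $\Psi$ not to vanish identically. This holds because a compactly supported harmonic function is zero; you should state it, since the change of variables only gives the limit constant $2\pi\lambda^{1-2q}\int|\Delta\Psi|^q$.
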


\begin{remark}
    Proposition \ref{prop:passive-shell-obstruction} identifies a passive-transport obstruction below the moment-free \(4/3\) threshold, but leaves open whether the same large-radius energy escape can occur self-consistently under the Euler dynamics. More precisely, can one construct conservative smooth or vanishing-forcing Euler approximations
whose energy escapes to spatial infinity in the weak limit?  A positive answer would also raise the kinetic question of whether such a family can
be lifted to collisional dynamics so as to produce a renormalized Boltzmann solution \cite{diperna1989cauchy} with a strict global energy inequality.
\end{remark}

\begin{proof}
\emph{Step 1: terminal shell packet.}
Fix $\lambda=3/2$ and
$0\not\equiv\Psi\in C_c^\infty((-1/8,1/8)^2)$.  Put
\[
 \ell_R:=R^{-3/2},\qquad A_R:=R^{1/2},\qquad
 \rho:=\frac{r-\lambda R}{\ell_R},\qquad
 \zeta:=\frac z{\ell_R},
\]
and define the axisymmetric stream function
\[
 \psi_R(r,z):=A_R\Psi(\rho,\zeta).
\]
Set
\begin{equation}\label{eq:passive-stream-field}
 u_R^r:=-\frac1r\partial_z\psi_R,
 \qquad
 u_R^z:=\frac1r\partial_r\psi_R,
 \qquad
 u_R:=u_R^re_r+u_R^ze_z.
\end{equation}
For all sufficiently large $R$, both $\psi_R$ and $u_R$ are supported
in $\mathcal C_R(1,2)$ and hence away from the axis.  Moreover,
\[
 \frac1r\partial_r(ru_R^r)+\partial_zu_R^z
 =-\frac1r\partial_r\partial_z\psi_R
  +\frac1r\partial_z\partial_r\psi_R=0.
\]
Define the relative vorticity
\begin{equation}\label{eq:passive-vartheta-def}
 \vartheta_R:=\frac{(\nabla\times u_R)^\vartheta}{r}.
\end{equation}
Then $\vartheta_R\in C_c^\infty(\R^3)$ is axisymmetric,
$\nabla\times u_R=r\vartheta_Re_\vartheta$, and Lemma~\ref{lem:divcurl}
gives the exact identity
\begin{equation}\label{eq:passive-biot-id}
 u_R=\Bcal\vartheta_R.
\end{equation}

From \eqref{eq:passive-stream-field},
\[
 u_R=\frac{A_R}{r\ell_R}
 \bigl(-\Psi_\zeta e_r+\Psi_\rho e_z\bigr).
\]
Using $dx=2\pi r\ell_R^2\,d\rho\,d\zeta$ and
$r=\lambda R+\ell_R\rho$, we obtain
\begin{align}
 \|u_R\|_2^2
 &=2\pi R\int_{\R^2}
   \frac{|\nabla\Psi(\rho,\zeta)|^2}
        {\lambda R+\ell_R\rho}\,d\rho\,d\zeta
 \longrightarrow
 \frac{2\pi}{\lambda}\int_{\R^2}|\nabla\Psi|^2
 =:c_2^2>0,
 \label{eq:passive-energy-limit}\\
 \frac1R\int_{\R^3}|u_R|^3\,dx
 &=2\pi\int_{\R^2}
   \left(\frac{R}{\lambda R+\ell_R\rho}\right)^2
   |\nabla\Psi(\rho,\zeta)|^3\,d\rho\,d\zeta
 \longrightarrow
 \frac{2\pi}{\lambda^2}\int_{\R^2}|\nabla\Psi|^3
 =:c_3>0.
 \label{eq:passive-cubic-limit}
\end{align}
The positivity follows because a nonzero compactly supported smooth
function cannot have identically vanishing gradient.

The stream-function identity also yields
\begin{equation}\label{eq:passive-vartheta-formula}
 \vartheta_R
 =-\frac{A_R}{r^2\ell_R^2}\Delta_{\rho,\zeta}\Psi(\rho,\zeta)
  +\frac{A_R}{r^3\ell_R}\Psi_\rho(\rho,\zeta).
\end{equation}
Equivalently,
\begin{equation}\label{eq:passive-vartheta-scaled}
 R^{-3/2}\vartheta_R
 =-\left(\frac Rr\right)^2\Delta_{\rho,\zeta}\Psi
  +R^{-5/2}\left(\frac Rr\right)^3\Psi_\rho.
\end{equation}
Since $r/R\to\lambda$ uniformly on the fixed $(\rho,\zeta)$ support and
$R^2dx=2\pi(r/R)\,d\rho\,d\zeta$, dominated convergence gives, for every
$1\le q<\infty$,
\begin{equation}\label{eq:passive-vartheta-scaling}
 R^{2-\frac{3q}{2}}\|\vartheta_R\|_q^q
 \longrightarrow
 2\pi\lambda^{1-2q}
 \int_{\R^2}|\Delta_{\rho,\zeta}\Psi|^q>0.
\end{equation}
Here $\Delta_{\rho,\zeta}\Psi\not\equiv0$: otherwise $\Psi$ would be compactly
supported and harmonic, hence zero.  Thus
\begin{equation}\label{eq:passive-vartheta-asymp}
 \|\vartheta_R\|_q\asymp_q R^{\frac32-\frac2q}.
\end{equation}

\emph{Step 2: realization by one fixed incompressible transport.}
Choose $\eta\in C_c^\infty(\R)$ with $\eta=1$ on $[-1/2,1/2]$, and
choose $a\in C^\infty([0,\infty))$ so that
\[
 a(r)=r^2\quad(0\le r\le1),
 \qquad
 a(r)=1\quad(r\ge2),
\]
with $a$ a smooth function of $r^2$ near $r=0$.  Define
\[
 \psi_b(r,z):=-a(r)z\eta(z),
 \qquad
 b^r:=-\frac1r\partial_z\psi_b,
 \qquad
 b^z:=\frac1r\partial_r\psi_b,
 \qquad b:=b^re_r+b^ze_z.
\]
For $r>0$ the stream-function identity gives $\nabla\cdot b=0$.  Near
$r=0$, where $a(r)=r^2$,
\[
 b^r=r(\eta+z\eta'),\qquad b^z=-2z\eta,
\]
so the Cartesian vector field extends smoothly across the axis.  Hence
$b$ is a fixed smooth axisymmetric divergence-free no-swirl field on
$\R^3$.  On the transport core
\begin{equation}\label{eq:passive-core-flow}
 \{r\ge2,\ |z|\le1/2\}
\end{equation}
one has
\begin{equation}\label{eq:passive-core-identity}
 b^r=\frac1r,\qquad b^z=0.
\end{equation}

Set
\[
 s:=\frac{r^2}{2},\qquad
 s_R:=\frac{\lambda^2R^2}{2},\qquad
 s_*:=\frac92,\qquad
 T_R:=s_R-s_*.
\]
Then $T_R\to\infty$.  Let
\[
 \widetilde\vartheta_R(s,z):=\vartheta_R(\sqrt{2s},z)
 \qquad(s>0),
\]
and extend $\widetilde\vartheta_R$ by zero to $s\le0$.  This extension
is smooth because $\vartheta_R$ is supported at $r\simeq R$.  Since
$|r-\lambda R|\lesssim\ell_R$ on its support,
\begin{equation}\label{eq:passive-s-support}
 |s-s_R|\lesssim R\ell_R=R^{-1/2},
 \qquad |z|\lesssim\ell_R=R^{-3/2}.
\end{equation}
Define
\begin{equation}\label{eq:passive-theta-def}
 \widetilde\theta_R(t,s,z)
 :=\widetilde\vartheta_R(s-t+T_R,z),
 \qquad
 \theta_R(t,r,z)
 :=\widetilde\theta_R\!\left(t,\frac{r^2}{2},z\right).
\end{equation}
By \eqref{eq:passive-s-support}, the support of $\theta_R(t)$ satisfies
\[
 |s-(s_*+t)|\lesssim R^{-1/2},
 \qquad |z|\lesssim R^{-3/2}
 \qquad(0\le t\le T_R).
\]
Consequently, for all sufficiently large $R$ the entire transported
packet remains in the core \eqref{eq:passive-core-flow}.  There
$b\cdot\nabla=r^{-1}\partial_r=\partial_s$, while
\eqref{eq:passive-theta-def} gives
$\partial_t\widetilde\theta_R+\partial_s\widetilde\theta_R=0$.
Thus \eqref{eq:passive-transport} holds, and
\begin{equation}\label{eq:passive-terminal-id}
 \theta_R(T_R)=\vartheta_R.
\end{equation}
This also proves \eqref{eq:passive-terminal-support} in view of
\eqref{eq:passive-biot-id}.

For axisymmetric integrals, $dx=2\pi\,ds\,dz$.  Translation in $s$ in
\eqref{eq:passive-theta-def}, together with the fact that the support
stays in $s>0$, therefore gives
\begin{equation}\label{eq:passive-norm-preserve}
 \|\theta_R(t)\|_q=\|\vartheta_R\|_q
 \qquad(1\le q<\infty,\ 0\le t\le T_R).
\end{equation}
Combining this identity with \eqref{eq:passive-vartheta-asymp} proves
\eqref{eq:passive-lq-scaling}.  In particular,
\[
 \sup_{0\le t\le T_R}\|\theta_R(t)\|_1
 \lesssim R^{-1/2}\to0,
 \qquad
 \sup_{0\le t\le T_R}\|\theta_R(t)\|_p
 \lesssim R^{\frac32-\frac2p}\to0
\]
because $p<4/3$.

\emph{Step 3: small initial Biot--Savart energy and terminal escape.}
At $t=0$, \eqref{eq:passive-s-support}--\eqref{eq:passive-theta-def}
place the support of $\theta_R(0)$ in a fixed annulus $2<r<4$ for all
sufficiently large $R$.  Lemma~\ref{lem:divcurl}, specifically
\eqref{eq:curl-hm}--\eqref{eq:hminus-lp}, therefore gives
\[
 \|\Bcal\theta_R(0)\|_2
 \le \|r\theta_R(0)e_\vartheta\|_{\dot H^{-1}}
 \lesssim \|r\theta_R(0)\|_{6/5}
 \lesssim \|\theta_R(0)\|_{6/5}
 \asymp R^{-1/6}\longrightarrow0.
\]
This proves the second assertion in \eqref{eq:passive-small}.  Finally,
\eqref{eq:passive-terminal-id}, \eqref{eq:passive-biot-id},
\eqref{eq:passive-energy-limit}, and \eqref{eq:passive-cubic-limit}
give \eqref{eq:passive-terminal}.  Taking $q=4/3$ in
\eqref{eq:passive-lq-scaling} gives
$\|\theta_R(T_R)\|_{4/3}\asymp1$ and completes the proof.
\end{proof}

\begin{remark}[Shell sharpness, passive transport, and endpoint compactness]
\label{rem:passive-shell-obstruction}
Proposition~\ref{prop:cubsharp} shows that the radial power in
Lemma~\ref{lem:cubshell} cannot be improved within the axisymmetric
Biot--Savart class.  Proposition~\ref{prop:passive-shell-obstruction}
shows, in addition, that below $4/3$ the corresponding subcritical
scaling can be realized dynamically by one fixed smooth incompressible
transport: arbitrarily small transported $L^1\cap L^p$ data with
vanishing initial Biot--Savart energy can be carried to a far-field
packet with order-one Biot--Savart energy and an order-one cubic shell quantity.

This is not a counterexample to Theorem~\ref{thm:mfreecomp}, nor to any
active Euler compactness statement below $4/3$.  The construction lies
strictly below the theorem's range, uses a prescribed transport field
$b\neq\Bcal\theta_R$ in general, and reaches the far field only over
times $T_R\to\infty$.  At the endpoint $p=4/3$, the same packet is
scale-invariant in $L^{4/3}$ rather than small.  Theorem~\ref{thm:mfreecomp}
closes exactly there because strong $C_tL_x^{4/3}$ convergence yields the
weighted-gradient tightness of Lemma~\ref{lem:endpttight}.
\end{remark}
\begin{lemma}[Endpoint weighted-gradient tightness]\label{lem:endpttight}
Under the hypotheses of Theorem~\ref{thm:mfreecomp} with $p=4/3$,
\begin{equation}\label{eq:xa-conv}
 \lim_{R\to\infty}\limsup_{n\to\infty}
 \sup_{0\le t\le T}
 \left\|\frac{\nabla_xa_n(t)}r\right\|_{L^{4/3}(\{r>R\})}=0.
\end{equation}
\end{lemma}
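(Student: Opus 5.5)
The plan is to reduce the tightness of $r^{-1}\nabla_x a_n$ at large radius to tightness of $\varpi_n$ in $L^{4/3}$, using linearity of $\Bcal$ and the weighted estimate \eqref{eq:bs-bd} at $s=4/3\in(1,2)$. We have $\nabla\times a_n=r\varpi_ne_\vartheta$ and $a_n\in L^2$ divergence free. Lemma~\ref{lem:divcurl} therefore identifies $a_n(t)=\Bcal\varpi_n(t)$. Hence \eqref{eq:bs-bd} applies, and the question is purely about the Biot--Savart image of a strongly convergent sequence.

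Fix $R$ and a smooth radial cutoff $\chi_R(r)$ with $\chi_R=1$ for $r<R/2$, $\chi_R=0$ for $r>R/4\cdot 3$ (any fixed fraction). Split $\varpi_n=\chi_R\varpi_n+(1-\chi_R)\varpi_n$, where both pieces are axisymmetric. Then
\[
\left\|\frac{\nabla_x a_n}{r}\right\|_{L^{4/3}(\{r>R\})}
\le \left\|\frac{\nabla_x\Bcal((1-\chi_R)\varpi_n)}{r}\right\|_{4/3}
+\left\|\frac{\nabla_x\Bcal(\chi_R\varpi_n)}{r}\right\|_{L^{4/3}(\{r>R\})}.
\]
By \eqref{eq:bs-bd}, the first term is at most $C\|(1-\chi_R)\varpi_n(t)\|_{4/3}$. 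This is in turn at most
\[
C\|\varpi_n-\varpi\|_{C_tL^{4/3}}+C\sup_t\|\varpi(t)\|_{L^{4/3}(\{r>R/2\})}.
\]
The first part vanishes as $n\to\infty$. The second part vanishes as $R\to\infty$, because $\{\varpi(t)\}_{t\in[0,T]}$ is a compact subset of $L^{4/3}$ (a continuous image of $[0,T]$), and compact sets are uniformly tight. To use \eqref{eq:bs-bd} on the pieces, each must be legitimately in the Biot--Savart class. This holds since $\chi_R\varpi_n\in L^1\cap L^{4/3}$ gives $r\chi_R\varpi_n\in L^{6/5}$ on the bounded-radius region, which lies in $\dot H^{-1}$ by \eqref{eq:hminus-lp}; the remainder is the difference $a_n-\Bcal(\chi_R\varpi_n)\in L^2$.

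The main obstacle is the second, near-region term: the field generated by vorticity at $r<3R/4$, observed at $r>R$. Here I would use the off-diagonal decay of the Biot--Savart kernel. For $x$ with $r(x)>R$ and sources at $r'<3R/4$, the kernel of $\nabla\Bcal$ is nonsingular, with $|\nabla_x K(x,y)|\lesssim r'|x-y|^{-3}$ and $|x-y|\gtrsim r-r'$. This gives the pointwise bound
\[
|\nabla a^{\rm near}_n(x)|\lesssim\int\frac{r'|\chi_R\varpi_n(y)|}{|x-y|^3}\,dy .
\]
Splitting further into sources with $r'<\sqrt R$ and $\sqrt R<r'<3R/4$ is the fallback if the crude bound fails. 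The far-source part is again controlled by the $L^{4/3}$ tail of $\varpi_n$ beyond $\sqrt R$, via the off-diagonal (Schur/Young) version of the same weighted estimate. For the near-axis sources, the factor $r'\le\sqrt R$, the $L^1$ bound on $\varpi_n$, and the integration of $|x-y|^{-3}r^{-1}$ over $r>R$ in $L^{4/3}$ should produce a negative power of $R$. Checking this power carefully is the step I expect to require the most care. The uniform $L^1$ bound from \eqref{eq:mom-conv} is what should make it close; no moment is needed because the source mass is bounded and the receiver is far away. Combining the three contributions and taking $\limsup_n$ and then $R\to\infty$ yields \eqref{eq:xa-conv}.
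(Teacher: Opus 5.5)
Your argument is correct, but it takes a different route from the paper's proof.

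\textbf{The paper's route.} The paper never localizes the vorticity. It applies \eqref{eq:bs-bd} to the differences $a_n-a_m=\Bcal(\varpi_n-\varpi_m)$. The strong $C_tL^{4/3}$ convergence of $\varpi_n$ then makes $r^{-1}\nabla_xa_n$ itself Cauchy in $C([0,T];L^{4/3}(\R^3))$. The limit $G$ has compact image $G([0,T])$, which is therefore uniformly tight, and uniform convergence transfers that tightness to $r^{-1}\nabla_xa_n$.

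\textbf{Your route.} You transfer tightness from $\varpi$ to the weighted gradient by a near/far radial splitting of the source. On the far piece $(1-\chi_R)\varpi_n$ you use \eqref{eq:bs-bd}; on the near piece $\chi_R\varpi_n$ you use off-diagonal kernel decay. The near-piece step closes with your crude bound, so the $\sqrt R$ fallback is unnecessary. Indeed, $r(x)>R$ and $r(y)<3R/4$ force $|x-y|>R/4$, and Minkowski's inequality gives
\[
\Bigl\|\frac{\nabla_x\Bcal(\chi_R\varpi_n)}{r}\Bigr\|_{L^{4/3}(\{r>R\})}
\lesssim R^{-1}\Bigl(\int_{|w|>R/4}|w|^{-4}\,dw\Bigr)^{3/4}\int_{\R^3} r(y)\,|\chi_R\varpi_n(y)|\,dy
\lesssim R^{-7/4}\cdot R\,\|\varpi_n\|_1=R^{-3/4}\|\varpi_n\|_1 .
\]

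\textbf{Comparison.} The paper's argument is shorter, uses only the $L^{4/3}$ convergence, and needs no pointwise kernel representation. Yours must justify the Biot--Savart integral representation for the radially bounded piece; this is available since $r\chi_R\varpi_n\in L^1\cap L^{6/5}$. It also uses the uniform $L^1$ bound. In exchange, you obtain the explicit tail estimate
\[
\|r^{-1}\nabla_xa_n(t)\|_{L^{4/3}(\{r>R\})}\lesssim\|\varpi_n(t)\|_{L^{4/3}(\{r>R/2\})}+R^{-3/4}\|\varpi_n(t)\|_1 ,
\]
valid for each $n$ and $t$, which makes the dependence on the vorticity tail transparent.
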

\begin{proof}
The $L^2$ div--curl uniqueness in Lemma~\ref{lem:divcurl} identifies
$a_n-a_m=\Bcal(\varpi_n-\varpi_m)$.  Apply
\eqref{eq:bs-bd} to this difference.  Since
\eqref{eq:mom-conv} is strong in
$C([0,T];L^{4/3})$,
\[
 \left\{\frac{\nabla_xa_n}{r}\right\}_{n\ge1}
 \quad\text{is Cauchy in }C([0,T];L^{4/3}(\R^3)).
\]
Let $G$ be its limit.  The image $G([0,T])$ is compact in $L^{4/3}$ and
is therefore uniformly spatially tight.  Uniform convergence then implies
\eqref{eq:xa-conv}.
\end{proof}
\begin{proposition}[Vanishing cubic shell flux]\label{prop:fluxvan}
Under the hypotheses of Theorem~\ref{thm:mfreecomp}, for every fixed
$0<c<C<\infty$,
\begin{equation}\label{eq:flux-conv}
 \lim_{R\to\infty}\limsup_{n\to\infty}
 \sup_{0\le t\le T}
 \frac1R\int_{\mathcal C_R(c,C)}|a_n(t)|^3\,dx=0.
\end{equation}
\end{proposition}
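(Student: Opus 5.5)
The plan is to apply the cubic shell estimate of Lemma~\ref{lem:cubshell} to $v=a_n(t)$ and to split according to whether $p>4/3$ or $p=4/3$. Fix $0<c<C<\infty$. Set $E:=\sup_n\|a_n\|_{L^\infty(0,T;L^2)}<\infty$, which is finite by \eqref{eq:free-bd}. The lower-order term in \eqref{eq:shell-dx} is then bounded by $R^{-5/2}E^3$ uniformly in $n$ and $t$, and it vanishes as $R\to\infty$. The whole issue is therefore the leading term, which carries the weighted gradient $r^{-1}\nabla_x a_n$ on the enlarged shell $\mathcal C_R(c/2,2C)$.

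\emph{Case $p>4/3$.} Choose $s\in(4/3,3/2]$ with $s\le p$, for instance $s=\min\{p,3/2\}$. By \eqref{eq:mom-conv}, $\varpi_n$ is bounded in $L^\infty(0,T;L^1\cap L^p)$, and hence in $L^\infty(0,T;L^s)$ by interpolation. Lemma~\ref{lem:divcurl} identifies $a_n(t)=\Bcal\varpi_n(t)$. The weighted Calder\'on--Zygmund bound \eqref{eq:bs-bd} then gives
\[
\sup_n\sup_t\|r^{-1}\nabla_x a_n(t)\|_{L^s(\R^3)}\lesssim\sup_n\sup_t\|\varpi_n(t)\|_{s}<\infty .
\]
Substituting into \eqref{eq:shell-dx}, the leading term is $O(R^{-\delta_s})$ uniformly in $n$ and $t$. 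Since $\delta_s>0$ for $s>4/3$, the limit \eqref{eq:flux-conv} follows, even without taking the $\limsup$ in $n$.

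\emph{Case $p=4/3$.} Here $\delta_{4/3}=0$, so there is no power gain, and Proposition~\ref{prop:cubsharp} shows that none can be extracted from the estimate itself. Instead I use the endpoint form \eqref{eq:shell-xv}:
\[
\frac1R\int_{\mathcal C_R(c,C)}|a_n(t)|^3\,dx\lesssim E\,\|r^{-1}\nabla_x a_n(t)\|_{L^{4/3}(\{r>cR/2\})}^2+R^{-5/2}E^3 .
\]
Taking $\sup_t$, then $\limsup_n$, then $R\to\infty$, Lemma~\ref{lem:endpttight} (applied with $cR/2$ in place of $R$) sends the first term to zero. This gives \eqref{eq:flux-conv}.

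The regularity $a_n\in C([0,T];H^{m_n})$ with $m_n>5/2$ justifies applying Lemma~\ref{lem:cubshell} pointwise in $t$. The only care needed is a density or approximation step to pass from smooth $v$ to $H^{m_n}$ fields; both sides of \eqref{eq:shell-dx} are continuous in the relevant norms, so this is routine. The main obstacle is therefore the endpoint case. Its difficulty has already been isolated into Lemma~\ref{lem:endpttight}, which converts strong $C_tL^{4/3}$ convergence into uniform far-field tightness. What remains is to check that the shell $\mathcal C_R(c/2,2C)$ is contained in $\{r>cR/2\}$ and that the constants in \eqref{eq:shell-xv} are independent of $R$, $n$ and $t$.
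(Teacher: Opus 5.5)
Your proposal is correct and follows the paper's proof essentially verbatim. For $p>4/3$ you use the same choice $s=\min\{p,3/2\}$, identify $a_n=\Bcal\varpi_n$, and apply \eqref{eq:bs-bd} to get a uniform weighted-gradient bound, so Lemma~\ref{lem:cubshell} yields the gain $R^{-\delta_s}$; for $p=4/3$ you combine the endpoint estimate \eqref{eq:shell-xv} with Lemma~\ref{lem:endpttight} at radius $cR/2$, exactly as the paper does. Your approximation remark for $H^{m_n}$ fields is a harmless extra precaution, since the Gagliardo--Nirenberg step in Lemma~\ref{lem:cubshell} applies to such fields directly.
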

\begin{proof}
If $p=4/3$, use \eqref{eq:shell-xv},
Lemma~\ref{lem:endpttight}, and the uniform $L^2$ bound in
\eqref{eq:free-bd}.  If $p>4/3$, choose $s=p$ for $4/3<p\le3/2$ and $s=3/2$ for $p>3/2$.  The strong convergence in \eqref{eq:mom-conv} and
interpolation give a uniform $C_tL^s_x$ bound.  The $L^2$ div--curl
uniqueness in Lemma~\ref{lem:divcurl} identifies
$a_n=\Bcal\varpi_n$, so \eqref{eq:bs-bd} yields a uniform bound
for $r^{-1}\nabla_xa_n$ in $C_tL^s_x$.  Since $s>4/3$,
$\delta_s>0$ in \eqref{eq:est-def}, and
Lemma~\ref{lem:cubshell} proves \eqref{eq:flux-conv}.
\end{proof}
\begin{lemma}[Pressure localization]\label{lem:preslocal}
Under the hypotheses of Theorem~\ref{thm:mfreecomp}, normalize the pressure by
\begin{equation}\label{eq:ir-def}
 \pi_n:=\sum_{i,j=1}^3R_iR_j(a_{n,i}a_{n,j}).
\end{equation}
Here $R_j$ denotes the $j$-th Riesz transform. Then
\begin{equation}\label{eq:free-conv}
 \lim_{R\to\infty}\limsup_{n\to\infty}
 \sup_{0\le t\le T}
 \frac1R\int_{\mathcal C_R(1,2)}|\pi_n(t)|\,|a_n(t)|\,dx=0.
\end{equation}
\end{lemma}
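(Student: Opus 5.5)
By H\"older's inequality on $\mathcal C_R(1,2)$, I will reduce the pressure term to the cubic shell flux controlled in Proposition~\ref{prop:fluxvan}:
\[
 \frac1R\int_{\mathcal C_R(1,2)}|\pi_n||a_n|\,dx
 \le\Bigl(\frac1R\int_{\mathcal C_R(1,2)}|\pi_n|^{3/2}\Bigr)^{2/3}
 \Bigl(\frac1R\int_{\mathcal C_R(1,2)}|a_n|^3\Bigr)^{1/3}.
\]
The second factor vanishes in the iterated limit by \eqref{eq:flux-conv}. It therefore suffices to show
\[
 \limsup_{R\to\infty}\limsup_{n}\sup_t \frac1R\int_{\mathcal C_R(1,2)}|\pi_n|^{3/2}\,dx=0,
\]
or at least that this quantity is bounded; boundedness already suffices, since the other factor tends to zero.

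To estimate $\pi_n$ on the shell, I will split $a_n\otimes a_n$ with a smooth radial cutoff $\chi_R$. Take $\chi_R$ equal to $1$ on $\mathcal C_R(1/2,4)$ and supported in $\mathcal C_R(1/4,8)$, and write
\[
 \pi_n=\pi_n^{\rm near}+\pi_n^{\rm far},\qquad
 \pi_n^{\rm near}:=R_iR_j(\chi_Ra_{n,i}a_{n,j}).
\]

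\textbf{Near part.} The operators $R_iR_j$ are bounded on $L^{3/2}(\R^3)$, so
\[
 \frac1R\|\pi_n^{\rm near}\|_{L^{3/2}}^{3/2}\lesssim\frac1R\int_{\mathcal C_R(1/4,8)}|a_n|^3\,dx .
\]
The right-hand side tends to zero by Proposition~\ref{prop:fluxvan}, applied with $c=1/4$ and $C=8$.

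\textbf{Far part.} For $x\in\mathcal C_R(1,2)$, the kernel of $R_iR_j$ acts off the support of $\chi_R$ and is pointwise bounded by $|x-y|^{-3}$. I split the remaining region into the inner cylinder $\{r<R/2\}$ and the outer region $\{r>4R\}$, and only the $L^2$ bound of $a_n$ is used.
\begin{itemize}
\item[] For $y$ with $r(y)<R/2$ or $r(y)>4R$, one has $|x-y|\ge c\,\max(R,|x_h-y_h|)$, but the $z$-separation may be small. I will therefore integrate the kernel bound in $z$ first. Using $\int_{\R}(\rho^2+s^2)^{-3/2}\,ds\simeq\rho^{-2}$ with $\rho=|x_h-y_h|\gtrsim R$, I expect to obtain
\[
 |\pi_n^{\rm far}(x)|\lesssim\int\frac{|a_n(y)|^2}{(R+|x_h-y_h|)^2\,(|z-y_z|+R)}\,dy+\cdots .
\]
\item[] A cleaner route is to bound the operator $|\pi_n^{\rm far}|\mathbf 1_{\mathcal C_R(1,2)}$ from $L^1$ to $L^{3/2}(\mathcal C_R(1,2))$. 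By Minkowski's inequality, its norm is at most $\sup_y\|K(\cdot-y)\|_{L^{3/2}(\mathcal C_R(1,2))}$, where $|K(w)|\lesssim|w|^{-3}$ and $\operatorname{dist}\gtrsim R$ in the horizontal variables. A direct computation gives
\[
 \|K(\cdot-y)\|_{L^{3/2}(\mathcal C_R(1,2))}\lesssim R^{-4/3}.
\]
\end{itemize}
Combining this with $\|a_n\|_2^2\lesssim1$ gives
\[
 \frac1R\|\pi_n^{\rm far}\|_{L^{3/2}(\mathcal C_R(1,2))}^{3/2}\lesssim R^{-1}R^{-2}\|a_n\|_2^3\to0
\]
uniformly in $n$ and $t$.

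The main obstacle I expect is this far-field kernel bound. The difficulty is that points $y$ near the axis with the same $z$ but horizontal distance only of order $R$ must still give decay. The same concern applies to outer points $y$ with $r(y)\gg R$, where summation over dyadic shells in $r(y)$ has to be checked. I will verify explicitly that $\int_{\mathcal C_R(1,2)}|x-y|^{-9/2}\,dx\lesssim R^{-2}$, which follows because the horizontal distance is at least about $R$ and the $z$-integral converges. With that verified, the argument closes as above.
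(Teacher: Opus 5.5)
Your proof is correct and is essentially the paper's. You use the same cutoff $\chi_R$, $L^{3/2}$ Calder\'on--Zygmund boundedness plus Proposition~\ref{prop:fluxvan} for the near part, and the off-diagonal $|x-y|^{-3}$ kernel with Minkowski's inequality for the far part; the only difference is that the paper puts $\pi_n^{\rm far}$ in $L^2(\mathcal C_R(1,2))$ and pairs it with $\|a_n\|_2$, where you use a global $L^{3/2}\times L^3$ H\"older split.

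One correction: $\int_{\mathcal C_R(1,2)}|x-y|^{-9/2}\,dx\asymp R^{-3/2}$, not $R^{-2}$ (take $y$ on the axis). So the kernel norm is $\lesssim R^{-1}$, not $R^{-4/3}$, and the far term is $\lesssim R^{-5/2}\|a_n\|_2^3$, which still vanishes. The obstacle you anticipated does not arise: $|x-y|\ge|x_h-y_h|\ge R/2$ on source $\times$ target, so $\sup_y\int_{\{|w|\ge R/2\}}|w|^{-9/2}\,dw\lesssim R^{-3/2}$ directly, with no dyadic summation in $r(y)$.
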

\begin{proof}
Since $\nabla\cdot a_n=\nabla\cdot f_n=0$, applying the complementary
Leray projection to \eqref{eq:td-id} shows that the equation may
be written with the canonical pressure \eqref{eq:ir-def}; changing a
pressure by a function of time is immaterial.
Choose a radial cutoff $\psi_R=\psi_R(r)$ such that
$\psi_R=1$ on $\{R/2<r<4R\}$ and
$\operatorname{supp}\psi_R\subset\{R/4<r<8R\}$.  Write
\[
 \pi_{n,\mathrm{near}}
 :=\sum_{i,j}R_iR_j(\psi_Ra_{n,i}a_{n,j}),
 \qquad
 \pi_{n,\mathrm{far}}:=\pi_n-\pi_{n,\mathrm{near}}.
\]
Calder\'on--Zygmund boundedness on $L^{3/2}$ and H\"older give
\[
 \frac1R\int_{\mathcal C_R(1,2)}
 |\pi_{n,\mathrm{near}}|\,|a_n|\,dx
 \lesssim
 \frac1R
 \|a_n\|_{L^3(\mathcal C_R(1/4,8))}^2
 \|a_n\|_{L^3(\mathcal C_R(1,2))}.
\]
With $J_{n,R}(c,C):=R^{-1}\int_{\mathcal C_R(c,C)}|a_n|^3\,dx$, the right-hand side is bounded by
$CJ_{n,R}(1/4,8)^{2/3}J_{n,R}(1,2)^{1/3}$ and therefore vanishes by
Proposition~\ref{prop:fluxvan}.
For the far part, source and target have Euclidean distance at least
$cR$.  Indeed, $(1-\psi_R)a_{n,i}a_{n,j}$ vanishes on the open annulus
$\{R/2<r<4R\}$ containing the target cylinder
$\mathcal C_R(1,2)$.  Thus any distributional kernel term of $R_iR_j$
supported on the diagonal (in particular the local term when $i=j$)
vanishes on the target, and only the off-diagonal Calder\'on--Zygmund
kernel is present.  This kernel is $O(|x-y|^{-3})$.  Minkowski's
inequality gives
\begin{equation*}
 \|\pi_{n,\mathrm{far}}\|_{L^2(\mathcal C_R(1,2))}
 \lesssim R^{-3/2}\|a_n\|_2^2,
\end{equation*}
because $\sup_y(\int_{|x-y|\ge cR}|x-y|^{-6}\,dx)^{1/2}\lesssim R^{-3/2}$.  Thus
\[
 \frac1R\int_{\mathcal C_R(1,2)}
 |\pi_{n,\mathrm{far}}|\,|a_n|\,dx
 \lesssim R^{-5/2}\|a_n\|_2^3.
\]
Together with \eqref{eq:free-bd}, this proves
\eqref{eq:free-conv}.
\end{proof}
\begin{proposition}[Radial no-escape]\label{prop:radnoesc}
Under the hypotheses of Theorem~\ref{thm:mfreecomp},
\begin{equation}\label{eq:rad-conv}
 \lim_{R\to\infty}\limsup_{n\to\infty}
 \sup_{0\le t\le T}\int_{\{r>R\}}|a_n(t)|^2\,dx=0.
\end{equation}
\end{proposition}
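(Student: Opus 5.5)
We localize the energy identity with a radial cutoff and control the time derivative of the outer energy by the shell fluxes already shown to vanish. Fix $\chi\in C^\infty([0,\infty))$ with $\chi=0$ on $[0,1]$, $\chi=1$ on $[2,\infty)$ and $0\le\chi\le1$. Set $\chi_R(x):=\chi(r(x)/R)$, so that $\nabla\chi_R$ is supported in $\mathcal C_R(1,2)$ with $|\nabla\chi_R|\lesssim R^{-1}$. Since $a_n\in C([0,T];H^{m_n})\cap C^1([0,T];H^{m_n-1})$ with $m_n>5/2$, we may test \eqref{eq:td-id} against $\chi_Ra_n$ and integrate by parts, using $\nabla\cdot a_n=0$ and the canonical pressure \eqref{eq:ir-def}. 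This gives
\[
 \frac{d}{dt}\frac12\int\chi_R|a_n|^2\,dx
 =\int\Bigl(\tfrac12|a_n|^2+\pi_n\Bigr)a_n\cdot\nabla\chi_R\,dx
 +\int\chi_Rf_n\cdot a_n\,dx .
\]
The flux term is bounded by $CR^{-1}\int_{\mathcal C_R(1,2)}(|a_n|^3+|\pi_n||a_n|)\,dx$. The forcing term is bounded by $\|f_n(t)\|_2\sup_n\|a_n\|_{L^\infty L^2}$.

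Integrating in time from $0$ to $t\le T$, we get
\[
 \sup_{t\le T}\int\chi_R|a_n(t)|^2
 \le\int\chi_R|a_n(0)|^2+CT\sup_{t\le T}\bigl(J_{n,R}(1,2)+P_{n,R}\bigr)+C\|f_n\|_{L^1L^2},
\]
where $J_{n,R}(1,2)=R^{-1}\int_{\mathcal C_R(1,2)}|a_n|^3\,dx$ and $P_{n,R}=R^{-1}\int_{\mathcal C_R(1,2)}|\pi_n||a_n|\,dx$. We now take $\limsup_{n\to\infty}$ and treat each term in turn.
\begin{itemize}
\item The forcing term vanishes by \eqref{eq:free-bd}.
\item The initial term tends to $\int\chi_R|u_0|^2$ by \eqref{eq:sys-conv}.
\item The $J$ and $P$ terms have $\limsup_n$ that tends to $0$ as $R\to\infty$, by Proposition~\ref{prop:fluxvan} and Lemma~\ref{lem:preslocal}.
\end{itemize}
Since $u_0\in L^2$, $\int\chi_R|u_0|^2\le\int_{\{r>R\}}|u_0|^2\to0$. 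Finally, $\{r>2R\}\subset\{\chi_R=1\}$, so relabelling $R$ gives \eqref{eq:rad-conv}.

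The analytic work is already contained in Proposition~\ref{prop:fluxvan} and Lemma~\ref{lem:preslocal}, so the remaining difficulty is making the localized identity rigorous. We must check that the integrals converge and that the integrations by parts are justified. The field $a_n\in H^{m_n}$ with $m_n>5/2$ is bounded and Lipschitz, so $a_n\in L^2\cap L^\infty$ and the products $|a_n|^2a_n$ and $\pi_na_n$ are integrable; here $\pi_n\in L^1\cap L^\infty$ by Calder\'on--Zygmund theory on $a_n\otimes a_n$. Since $\chi_R$ is bounded with bounded gradient, a further spatial truncation $\phi(|x|/L)$ with $L\to\infty$ removes boundary terms at infinity. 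The time derivative is justified by the $C^1([0,T];H^{m_n-1})$ regularity. We also note that replacing the given $\pi_n$ by the canonical one changes it only by a function of time, which integrates to zero against $a_n\cdot\nabla\chi_R$ because $\nabla\cdot(\chi_Ra_n)$ has zero integral.
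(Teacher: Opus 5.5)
Your argument is correct and is essentially the paper's proof: you localize the energy identity with the radial cutoff $\chi_R$, justified through an auxiliary compact cutoff, and then dispose of each term. The cubic and pressure fluxes vanish by Proposition~\ref{prop:fluxvan} and Lemma~\ref{lem:preslocal}, the forcing by $\|f_n\|_{L^1L^2}\to0$, and the initial term by $L^2$-tightness of the convergent data.

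Two small repairs are needed in your rigor paragraph.

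\emph{Integrability of the pressure flux.} Riesz transforms are not bounded on $L^1$ or $L^\infty$, and $\pi_n$ need not be integrable: its far field typically behaves like $|x|^{-3}$. Argue instead, as the paper does, that $a_n\in L^2\cap L^\infty\subset L^4$. Then $a_n\otimes a_n\in L^2$, so $\pi_n\in L^2$ and $\pi_na_n\in L^1$.

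\emph{The additive function of time.} Your claim that $\int a_n\cdot\nabla\chi_R\,dx=0$ is not actually justified, since $\nabla\chi_R$ lives on an infinite shell and $a_n\cdot\nabla\chi_R$ need not be integrable. The argument is also unnecessary: the tested momentum equation involves only $\nabla\pi_n$, so you can integrate by parts directly with the canonical representative.
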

\begin{proof}
Let $\chi_R=\chi_R(r)$ satisfy
$0\le\chi_R\le1$, $\chi_R=0$ on $\{r\le R\}$,
$\chi_R=1$ on $\{r\ge2R\}$, and
$|\nabla\chi_R|\lesssim R^{-1}$.  Since
$a_n\in L^2\cap L^\infty$, one has $|a_n|^3\in L^1$; moreover
$a_n\in L^4$, so \eqref{eq:ir-def} gives
$\pi_n\in L^2$ and $\pi_na_n\in L^1$.
Thus the local energy identity with the bounded radial cutoff $\chi_R$ is
obtained by multiplying first by an additional compact cutoff and sending
its radius to infinity.  It gives, for $0\le t\le T$,
\begin{align}
 \frac12\int\chi_R|a_n(t)|^2\,dx
 &=\frac12\int\chi_R|a_n(0)|^2\,dx   +\int_0^t\!\int
 \left(\frac{|a_n|^2}{2}+\pi_n\right)
 a_n\cdot\nabla\chi_R\,dx\,ds
 +\int_0^t\!\int\chi_R f_n\cdot a_n\,dx\,ds.\notag
\end{align}
The cubic and pressure contributions vanish in the iterated limit
$n\to\infty$, $R\to\infty$ by Proposition~\ref{prop:fluxvan} and
Lemma~\ref{lem:preslocal}.  The forcing contribution is bounded by $\|a_n\|_{L^\infty_tL^2_x}\|f_n\|_{L^1_tL^2_x}=o_n(1)$.  Strong convergence in \eqref{eq:sys-conv} makes $\{a_n(0)\}$ uniformly $L^2$-tight.  Since $\int_{\{r>2R\}}|a_n(t)|^2\,dx\le\int\chi_R|a_n(t)|^2\,dx$, \eqref{eq:rad-conv} follows.
\end{proof}
For $R>0$, define the infinite radial cylinder $\mathfrak C_R:=\{x\in\R^3:r(x)<R\}$.  Unlike a ball, $\mathfrak C_R$ is unbounded in the axial variable.
\begin{lemma}[Div--curl control on an infinite cylinder]\label{lem:cylcurl}
Let $1<q<6/5$.  Let $v\in L^2(\R^3)$ be axisymmetric without swirl and
suppose
\[
 \nabla\cdot v=0,
 \qquad
 \nabla\times v=r\eta e_\vartheta,
 \qquad
 \eta\in L^q(\R^3)\cap L^{6/5}(\R^3).
\]
Then
\begin{equation}\label{eq:eta-qr}
 \|\mathbf1_{\mathfrak C_R}v\|_2
 \lesssim_R\|\eta\|_{6/5}
 +C_qR^{7/2-3/q}\|\eta\|_q.
\end{equation}
In particular, if $(v_n,\eta_n)$ satisfy the same div--curl relation and
$\eta_n\to0$ in $L^q\cap L^{6/5}$, then
\begin{equation*}
 v_n\to0
 \qquad\text{in }L^2(\mathfrak C_R).
\end{equation*}
\end{lemma}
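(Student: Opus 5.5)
The plan is to write $v=\Bcal\eta$ and split the source at radius $2R$: $\eta=\eta_{\mathrm{near}}+\eta_{\mathrm{far}}$ with $\eta_{\mathrm{near}}:=\eta\mathbf 1_{\{r<2R\}}$. This identification is justified because $v\in L^2$ is divergence free with $\nabla\times v=r\eta e_\vartheta$, so Lemma~\ref{lem:divcurl} applies. For the near part, $\|r\eta_{\mathrm{near}}\|_{6/5}\le 2R\|\eta\|_{6/5}$. Then \eqref{eq:curl-hm} and \eqref{eq:hminus-lp} give $\|\Bcal\eta_{\mathrm{near}}\|_{L^2(\R^3)}\lesssim R\|\eta\|_{6/5}$, which is the first term of \eqref{eq:eta-qr}; the constant is allowed to depend on $R$. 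Everything else concerns the far part. We only need $\Bcal\eta_{\mathrm{far}}$ on $\mathfrak C_R$, where the target point $x$ and the source point $y$ satisfy $r(x)<R<2R<\rho:=r(y)$.

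\emph{Step 1 (ring kernel bound).} The velocity is $\Bcal\eta_{\mathrm{far}}(x)=\frac1{4\pi}\int\frac{(x-y)\times r(y)\eta(y)e_\vartheta(y)}{|x-y|^3}\,dy$. I would integrate first over the angle of $y$ at fixed meridional position $(\rho,z')$, which turns the source into a vortex ring. The key observation is that $\int_0^{2\pi}e_\vartheta\,d\phi=0$. Hence, after Taylor expanding $|x-y|^{-3}$ in $x_h$ about $x_h=0$ (legitimate since $|x_h|<R<\rho/2$), the leading monopole term vanishes. What remains is the ring field, which is of size $\Gamma/\rho$ when $|z-z'|\lesssim\rho$ and of size $\Gamma\rho^2/|z-z'|^3$ when $|z-z'|\gg\rho$. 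In terms of $dy$ this yields the pointwise bound
\[
|\Bcal\eta_{\mathrm{far}}(x)|\lesssim\int_{\{\rho>2R\}}|\eta(y)|\,\frac{\rho^2}{(\rho+|z-z'|)^3}\,dy,
\qquad r(x)<R.
\]
Alternatively, this bound can be read off from the explicit elliptic-integral ring kernel. I expect this step to be the main obstacle. The naive bound $\rho/|x-y|^2$ does not exploit the cancellation and is not square integrable in $z$; the gain $\rho^2(\rho+|z-z'|)^{-3}$ coming from the ring cancellation is exactly what makes the estimate close. Justifying it uniformly for all $r(x)<R$, including near the axis, is where the care is needed.

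\emph{Step 2 (mixed Young--H\"older).} Write $dy=2\pi\rho\,d\rho\,dz'$ and $k_\rho(t):=\rho^2(\rho+|t|)^{-3}$. The bound from Step 1 is uniform in $x_h$, so the disc $\{|x_h|<R\}$ contributes a factor $R$. Fix $x_h$ and take the $L^2_z$ norm. Young's inequality in $z$ with $1/q+1/m=3/2$ gives
\[
\|v_{\mathrm{far}}(x_h,\cdot)\|_{L^2_z}\lesssim\int_{2R}^\infty \rho\,\|\eta(\rho,\cdot)\|_{L^q_z}\,\|k_\rho\|_{L^m}\,d\rho .
\]
A scaling computation gives $\|k_\rho\|_{L^m}\simeq\rho^{-1+1/m}$. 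Now apply H\"older in $\rho$ against the measure $\rho\,d\rho$, so that $\|\eta\|_q$ appears. The dual factor is $\bigl(\int_{2R}^\infty\rho^{(1/m-1/q)q'+1}d\rho\bigr)^{1/q'}$. Its exponent satisfies $(3/2-2/q)q'+1<0$ precisely when $q<6/5$, which is the hypothesis of the lemma. Evaluating the integral gives $R^{5/2-3/q}$, and with the factor $R$ from the disc this becomes $R^{7/2-3/q}\|\eta\|_q$, the second term of \eqref{eq:eta-qr}. The range $q>1$ ensures $m\in[1,\infty)$, so Young's inequality applies.

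\emph{Step 3 (consequence).} The final assertion follows by applying \eqref{eq:eta-qr} to $(v_n,\eta_n)$. Both terms on the right-hand side tend to zero when $\eta_n\to0$ in $L^q\cap L^{6/5}$, so $v_n\to0$ in $L^2(\mathfrak C_R)$.
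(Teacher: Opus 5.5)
Your proof is correct and has the same architecture as the paper's. Both arguments split $\omega=r\eta e_\vartheta$ at $r=2R$ and bound the inner part globally through $\|r\eta\mathbf 1_{\{r<2R\}}\|_{6/5}\le 2R\|\eta\|_{6/5}$. The outer part is then handled on $\mathfrak C_R$ by a pointwise Biot--Savart bound, Young in $z$ with $1/q+1/m=3/2$, H\"older in $\rho$ (convergent exactly for $q<6/5$), and a factor $R$ from the disc.

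The one real difference is your Step 1, and there the obstacle you anticipate does not exist. The paper uses only the crude bound $|x-y|^2\gtrsim\rho^2+(z-z')^2$, which holds trivially and uniformly for $r(x)<R<\rho/2$, including at the axis. After the angular integration this gives $|v_{\rm far}|\lesssim\int_{2R}^\infty (K_\rho*_z|\eta(\rho,\cdot)|)\,d\rho$ with $K_\rho(s)=\rho^2/(\rho^2+s^2)$. This kernel decays like $s^{-2}$, so your claim that the naive bound is not square integrable in $z$ is false. In fact $\|K_\rho\|_{L^m(\mathbb R)}\simeq_m\rho^{1/m}$ for every $m>1/2$. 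That is exactly the scaling $\rho\,\|k_\rho\|_{L^m}\simeq\rho^{1/m}$ produced by your ring-cancelled kernel. Since the relevant $m$ lies in $(3/2,2)$, the extra dipole decay coming from $\int e_\vartheta\,d\phi=0$ buys nothing here, and the uniform justification of the ring estimate can be dropped entirely.

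There are two minor points to fix.
\begin{itemize}
\item The dual factor in Step 2 should be $\bigl(\int_{2R}^\infty\rho^{(1/m-1/q)q'}\,d\rho\bigr)^{1/q'}$, equivalently $\int\rho^{(1/m-1)q'}\rho\,d\rho$, without the extra $+1$. Your condition $(3/2-2/q)q'+1<0\iff q<6/5$ and the power $R^{5/2-3/q}$ are the correct ones for this corrected integrand.
\item To apply \eqref{eq:curl-hm} as stated, you must note that $r\eta_{\rm near}e_\vartheta$ is still divergence free despite the sharp cutoff. This holds because the cutoff depends only on $r$ and $re_\vartheta\cdot\nabla\phi=\partial_\vartheta\phi$ integrates to zero in the angle; the paper checks this explicitly.
\end{itemize}
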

The proof is recorded in Appendix~\ref{sec:auxana}.

\begin{proof}[Proof of Theorem~\ref{thm:mfreecomp}]
Fix $q\in(1,6/5)$.  Since $p\ge4/3$, interpolation in
\eqref{eq:mom-conv} gives
\[
 \varpi_n-\varpi\to0
 \qquad\text{in }C([0,T];L^q(\R^3)\cap L^{6/5}(\R^3)).
\]
For $n,m$, apply Lemma~\ref{lem:cylcurl} to
$v=a_n-a_m$ and $\eta=\varpi_n-\varpi_m$.  This convergence and
\eqref{eq:mom-conv} show
that, for every fixed $R$,
\[
 \{a_n\}_{n\ge1}
 \quad\text{is Cauchy in }C([0,T];L^2(\mathfrak C_R)).
\]
The limits on nested cylinders agree.  Hence there is an axisymmetric
no-swirl field $u$ such that, for every $R>0$,
\begin{equation}\label{eq:fld-conv}
 a_n\to u
 \qquad\text{in }C([0,T];L^2(\mathfrak C_R)).
\end{equation}
Fix $0<R<S$.  By \eqref{eq:fld-conv}, for every $t$,
\[
 \int_{\{R<r<S\}}|u(t)|^2\,dx
 =\lim_{n\to\infty}\int_{\{R<r<S\}}|a_n(t)|^2\,dx
 \le
 \limsup_{n\to\infty}\sup_{0\le s\le T}
 \int_{\{r>R\}}|a_n(s)|^2\,dx.
\]
Letting $S\to\infty$ gives
\begin{equation}\label{eq:en-conv}
 \sup_{0\le t\le T}\int_{\{r>R\}}|u(t)|^2\,dx
 \le
 \limsup_{n\to\infty}\sup_{0\le s\le T}
 \int_{\{r>R\}}|a_n(s)|^2\,dx.
\end{equation}
Thus Proposition~\ref{prop:radnoesc} gives $u(t)\in L^2$ and uniform
radial $L^2$-tightness of $u$.
For every $R$,
\begin{align}
 \sup_{0\le t\le T}\|a_n(t)-u(t)\|_2
 &\le
 \sup_t\|\mathbf1_{\mathfrak C_R}(a_n-u)(t)\|_2   +\sup_t\|\mathbf1_{\{r>R\}}a_n(t)\|_2
 +\sup_t\|\mathbf1_{\{r>R\}}u(t)\|_2.\notag
\end{align}
First let $n\to\infty$ using \eqref{eq:fld-conv};
then let $R\to\infty$ using \eqref{eq:rad-conv} and
\eqref{eq:en-conv}. Together with the continuity of $a_n(t)$ in $t$, this proves \eqref{eq:sol-conv}
and $u\in C([0,T];L^2)$.  At $t=0$,
\eqref{eq:sys-conv} gives $u(0)=u_0$.
For every $\Phi\in C_c^\infty(\R^3;\R^3)$, choose $R$ with
$\operatorname{supp}\Phi\subset\mathfrak C_R$.  Passing to the limit in
\[
 \int a_n\cdot(\nabla\times\Phi)\,dx
 =\int r\varpi_ne_\vartheta\cdot\Phi\,dx
\]
by \eqref{eq:fld-conv} and
\eqref{eq:mom-conv}, and passing to the limit in the divergence identity gives
\[
 \nabla\cdot u=0,
 \qquad
 \nabla\times u=r\varpi e_\vartheta.
\]
The $L^2$ div--curl solution is unique by Lemma~\ref{lem:divcurl}; hence
$u=\Bcal\varpi$.
Finally, $m_n>5/2$ implies $a_n\in L^\infty_x\cap L^2_x$, hence
$a_n\in L^4_x$ and $\pi_n\in L^2_x$.  A compact-cutoff argument in
\eqref{eq:td-id} gives
\begin{equation*}
 \frac12\|a_n(t)\|_2^2-\frac12\|a_n(0)\|_2^2
 =\int_0^t\!\int_{\R^3}f_n\cdot a_n\,dx\,ds.
\end{equation*}
Consequently,
\[
 \sup_{0\le t\le T}
 \left|\|a_n(t)\|_2^2-\|a_n(0)\|_2^2\right|
 \le2\|a_n\|_{L^\infty_tL^2_x}\|f_n\|_{L^1_tL^2_x}
 \to0.
\]
Together with \eqref{eq:sol-conv} and \eqref{eq:sys-conv}, this gives
\eqref{eq:comp-bd}.
\end{proof}
\section{The first radial moment and the $3/2$ threshold}\label{sec:radmom}
For the first radial moment, $p=3/2$ is the endpoint at which
$L^1\cap L^p$ control and a finite initial first moment yield a sign-free
class-uniform differential estimate.  For $p<3/2$, finite absolute impulse
still controls the first moment through \eqref{eq:mominterp}. This first-radial-moment theory is logically distinct from the moment-free
energy-compactness theorem of Section~\ref{sec:bscompact}.  The dynamical statements
below use only the stated div--curl relations, integrability conditions,
and bounded-weight balance identities; no additional weak-solution
formulation is imposed.

\subsection{Radial-moment Biot--Savart estimates}\label{sec:radbiot}
For an axisymmetric relative vorticity $\varpi=\omega^\vartheta/r$, define
\begin{equation}\label{eq:radmom-def}
\mathfrak M_k(\varpi)
:=\frac1{2\pi}\int_{\mathbb R^3}r^k|\varpi(x)|dx,
\qquad k=1,2.
\end{equation}
Thus $\mathfrak M_1$ is the \textit{first radial moment}, while
$\mathfrak M_2$ is the \textit{absolute impulse}. By Cauchy--Schwarz,
\begin{equation}\label{eq:mominterp}
\mathfrak M_1(\varpi)^2
\le \frac1{2\pi}\|\varpi\|_1\mathfrak M_2(\varpi).
\end{equation}

Equation~\eqref{eq:radmom-def} and cylindrical coordinates give
\begin{equation}\label{eq:mtwo-cyl}
 \mathfrak M_2(\varpi)
 =\int_{\Hh}r^3|\varpi(r,z)|\,dr\,dz
 =\int_{\Hh}r^2|\omega^\vartheta(r,z)|\,dr\,dz.
\end{equation}
The cylindrical identity is
\begin{equation}\label{eq:lp-cyl}
 \|\varpi\|_{L^p(\R^3)}^p
 =2\pi\int_{\Hh}r|\varpi(r,z)|^p\,dr\,dz.
\end{equation}
By cylindrical coordinates, \eqref{eq:radmom-def} is equivalently
\begin{equation}\label{eq:mone-cyl}
 \mathfrak M_1(\varpi)=\int_{\Hh}r^2|\varpi(r,z)|\,dr\,dz.
\end{equation}
\begin{definition}[Radial-moment classes]
For $1<p<\infty$, set
\begin{align}
 \mathcal Y_p
 &:=\{\varpi:\ \varpi\text{ is axisymmetric},\ \varpi\in L^p(\R^3),\ \mathfrak M_1(\varpi)<\infty\}, \notag
 \\
 \mathcal X_p
 &:=\{\varpi:\ \varpi\text{ is axisymmetric},\ \varpi\in L^p(\R^3),\ \mathfrak M_2(\varpi)<\infty\}. \notag
\end{align}
\end{definition}

\begin{proposition}[Radial-moment energy estimates]\label{prop:radmom}
Let $1<p<\infty$ and define
\begin{equation}\label{eq:radexp}
 A_p=\frac{3p}{5p-3},\qquad
 B_p=\frac{7p-6}{5p-3},\qquad
 A_{1,p}=\frac{p}{4p-3},\qquad
 B_{1,p}=\frac{7p-6}{4p-3}.
\end{equation}
If $\varpi\in\mathcal X_p$, there is a unique $u\in L^2(\R^3;\R^3)$ satisfying
\begin{equation}\label{eq:divcurl-sys}
 \nabla\cdot u=0,\qquad
 \nabla\times u=r\varpi e_\vartheta
 \quad\text{in }\mathcal D'(\R^3).
\end{equation}
Writing $u=\Bcal\varpi$,
\begin{equation}\label{eq:bs-xbound}
 \|\Bcal\varpi\|_{L^2_x}^2
 \lesssim_p
 \|\varpi\|_{L^p}^{A_p}
 \mathfrak M_2(\varpi)^{B_p}.
\end{equation}
For $\varpi,\eta\in\mathcal X_p$,
\begin{equation}\label{eq:bs-xstab}
 \|\Bcal\varpi-\Bcal\eta\|_2^2
 \lesssim_p\|\varpi-\eta\|_p^{A_p}
 \bigl(\mathfrak M_2(\varpi)+\mathfrak M_2(\eta)\bigr)^{B_p}.
\end{equation}
If $\varpi\in\mathcal Y_p$, the same div--curl problem
\eqref{eq:divcurl-sys} has a unique solution in $L^2$, and
\begin{equation}\label{eq:bs-ybound}
 \|\Bcal\varpi\|_2^2
 \lesssim_p
 \|\varpi\|_p^{A_{1,p}}
 \mathfrak M_1(\varpi)^{B_{1,p}}.
\end{equation}
For $\varpi,\eta\in\mathcal Y_p$,
\begin{equation}\label{eq:bs-ystab}
 \|\Bcal\varpi-\Bcal\eta\|_2^2
 \lesssim_p
 \|\varpi-\eta\|_p^{A_{1,p}}
 \bigl(\mathfrak M_1(\varpi)+\mathfrak M_1(\eta)\bigr)^{B_{1,p}}.
\end{equation}
In both radial-moment classes, $\Bcal\varpi$ is axisymmetric without swirl.
\end{proposition}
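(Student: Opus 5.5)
The plan is to prove \eqref{eq:bs-xbound} by splitting the vorticity near and far from the axis, and to get everything else from it by linearity.

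\emph{Existence, uniqueness, symmetry.} First we check that $r\varpi e_\vartheta\in\dot H^{-1}$ whenever $\varpi\in\mathcal X_p$ (resp. $\mathcal Y_p$). Then Lemma~\ref{lem:divcurl} gives a unique $L^2$ solution $u=\Bcal\varpi$ with $\|u\|_2=\|r\varpi e_\vartheta\|_{\dot H^{-1}}$. Uniqueness also yields the symmetry: rotations about the $z$-axis and the no-swirl reflection map solutions to solutions, so $\Bcal\varpi$ is axisymmetric without swirl. Because $\Bcal$ is linear, the stability bounds \eqref{eq:bs-xstab} and \eqref{eq:bs-ystab} follow by applying \eqref{eq:bs-xbound} and \eqref{eq:bs-ybound} to $\varpi-\eta$ and using $\mathfrak M_k(\varpi-\eta)\le\mathfrak M_k(\varpi)+\mathfrak M_k(\eta)$. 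It therefore suffices to prove the two energy bounds.

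\emph{Energy bound via an axis split.} Fix a radius $\lambda>0$ and write $\varpi=\varpi\mathbf 1_{r<\lambda}+\varpi\mathbf 1_{r\ge\lambda}$.
\begin{itemize}
\item \emph{Near the axis.} By \eqref{eq:hminus-lp}, $\|r\varpi\mathbf 1_{r<\lambda}\|_{\dot H^{-1}}\lesssim\|r\varpi\mathbf 1_{r<\lambda}\|_{6/5}$. H\"older's inequality alone cannot control this, since the axial extent of the support is unbounded. Instead we interpolate in three dimensions between the $L^p$ norm and the weighted $L^1$ norm, splitting by the size of $|\varpi|$: for a threshold $h$, the set $\{|\varpi|>h\}$ is handled by $L^p$ and the set $\{|\varpi|\le h\}$ by $\int r^2|\varpi|$.
\item \emph{Far from the axis.} Here we use the quadratic form $\|r\varpi e_\vartheta\|_{\dot H^{-1}}^2=\iint r\varpi(x)\,r'\varpi(y)\,\frac{\cos(\vartheta-\vartheta')}{4\pi|x-y|}\,dx\,dy$. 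Alternatively, we would use the meridional stream-function kernel $G(r,z,r',z')$, which behaves like $\sqrt{rr'}\,\log(1+\sqrt{rr'}/d)$ near the diagonal and decays like $(rr')^2/d^3$ far away.
\end{itemize}

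\emph{Main obstacle.} The main obstacle is getting exactly the exponents $A_p+B_p$-homogeneous scaling. The route is to check the two scaling invariances the inequality must respect: amplitude scaling $\varpi\mapsto c\varpi$, and the dilation $\varpi_\mu(x)=\varpi(x/\mu)$. Under the dilation, $\|u\|_2^2$ scales like $\mu^{5}$, $\|\varpi\|_p$ like $\mu^{3/p}$, and $\mathfrak M_2$ like $\mu^{5}$. These two constraints force $A_p+B_p=2$ and $3A_p/p+5B_p=5$, which give $A_p=3p/(5p-3)$ as in \eqref{eq:radexp}. It thus suffices to prove a non-sharp inequality of the form
\[
\|u\|_2^2\lesssim \|\varpi\|_p^{a}\mathfrak M_2^{b}+\|\varpi\|_p^{a'}\mathfrak M_2^{b'},
\]
with one pair of exponents on each side of the scaling line, and then optimize over the dilation parameter $\mu$.

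\emph{Proving the non-sharp inequality.} Use the pointwise kernel bound $|G|\lesssim (rr')^{2}\,/\bigl((r-r')^2+(z-z')^2+rr'\bigr)^{3/2}$ up to a logarithm. Combined with the two-dimensional Hardy--Littlewood--Sobolev inequality on $\Hh$ applied to $r^{3}|\varpi|\in L^1$ and to $r^{1/p}\varpi\in L^p$, this yields the required bound. The case of $\mathcal Y_p$ and $\mathfrak M_1$ is identical: under the dilation $\mathfrak M_1$ scales like $\mu^{4}$, which gives $A_{1,p}+B_{1,p}=2$ and $3A_{1,p}/p+4B_{1,p}=5$, matching $A_{1,p}$ and $B_{1,p}$ in \eqref{eq:radexp}.
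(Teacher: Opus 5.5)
Your reductions are sound. Uniqueness, axisymmetry without swirl obtained from uniqueness under the $O(2)_z$ action, and the stability bounds from linearity together with $\mathfrak M_k(\varpi-\eta)\le\mathfrak M_k(\varpi)+\mathfrak M_k(\eta)$ all work; the paper gets the symmetry from smooth approximants instead.

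\textbf{The gap: the energy bound is never established.} That bound is the whole content of the proposition, and your proposal does not prove it.

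\emph{The near-axis step fails.} Passing through $\|r\varpi\mathbf{1}_{\{r<\lambda\}}\|_{6/5}$ cannot succeed outside $6/5\le p\le3/2$. The reason is that $\mathfrak M_2$ carries the weight $r^2$, which degenerates at the axis, and $\mathcal X_p$ contains no $L^1$ assumption. For $p>3/2$, take $\varpi=\epsilon\mathbf{1}_{\{r<\ell,\,|z|<L\}}$ with $\epsilon=\ell^{2/(p-1)}$ and $L=\epsilon^{-1}\ell^{-4}$. Then $\|\varpi\|_p\simeq\mathfrak M_2(\varpi)\simeq1$, while $\|r\varpi\|_{6/5}^{6/5}\simeq\ell^{\frac{2}{5(p-1)}-\frac45}\to\infty$ as $\ell\to0$. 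Since also $\epsilon\to0$, all of this mass lies in your set $\{|\varpi|\le h\}$, for any fixed $h$ and $\lambda$. For $p<6/5$ the failure is on $\{|\varpi|>h\}$ instead, because $L^p\not\subset L^{6/5}_{\rm loc}$.

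\emph{The last paragraph does not repair this.} HLS is not available with an $L^1$ input. You also never say how $r^3|\varpi|\in L^1$ and $r^{1/p}\varpi\in L^p$ are combined into an admissible HLS exponent, and that combination is the heart of the argument.

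\emph{The scaling check is wrong.} Under $\varpi_\mu=\varpi(\cdot/\mu)$ the curl becomes $\mu\,\omega(\cdot/\mu)$. This forces $u_\mu=\mu^2u(\cdot/\mu)$, so $\|u\|_2^2$ scales like $\mu^7$, not $\mu^5$. The correct constraints are $3A/p+5B=7$ and $3A/p+4B=7$. The ones you wrote give $A=5p/(5p-3)$ and $A_1=3p/(4p-3)$, not the exponents in \eqref{eq:radexp}.

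\textbf{The missing idea.} It makes both the axis split and the dilation optimization unnecessary. The bound $|\mathcal F(s)|\lesssim_\sigma s^{-\sigma}$ of Lemma~\ref{lem:axiskern} holds for every $0<\sigma<3/2$, uniformly in $s>0$; it interpolates the logarithm as $s\to0$ and the $s^{-3/2}$ decay as $s\to\infty$.
\begin{itemize}
\item Insert this into the energy identity of Lemma~\ref{lem:bsformula}. The kernel is then bounded by the homogeneous expression $(r\rho)^{3/2+\sigma}|X-Y|^{-2\sigma}$. With $g=r^{3/2+\sigma}|\varpi|$, two-dimensional HLS gives $\|u\|_2^2\lesssim\|g\|_q^2$, where $q=4/(4-2\sigma)>1$.
\item Tune the free parameter to $\sigma_p=3(p-1)/(5p-3)$, so that $q_p=2(5p-3)/(7p-3)$. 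Then $|g|^{q_p}=(r|\varpi|^p)^{3/(7p-3)}(r^3|\varpi|)^{(7p-6)/(7p-3)}$ exactly, and H\"older gives \eqref{eq:bs-xbound} with the sharp exponents directly.
\item The choice $\sigma_{1,p}=(p-1)/(4p-3)$ does the same for $\mathfrak M_1$.
\end{itemize}

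\textbf{Extension to rough data.} The energy identity relies on angular cancellation inside $\mathcal F$, so it should be applied to $C_c^\infty$ data. The estimate then extends to $\mathcal X_p$ and $\mathcal Y_p$ by density (Lemma~\ref{lem:raddense}) and a Cauchy argument in $L^2$, and this is also how existence is obtained. Asserting $r\varpi e_\vartheta\in\dot H^{-1}$ directly for rough $\varpi$ presupposes the estimate you are trying to prove.
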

\begin{lemma}[Axisymmetric kernel bounds]\label{lem:axiskern}
For $s>0$, set
\begin{equation}\label{eq:kern-def}
 \mathcal F(s)
 :=\int_0^\pi
 \frac{\cos\alpha}{\bigl(s+2(1-\cos\alpha)\bigr)^{1/2}}\,d\alpha.
\end{equation}
For $0<\sigma<3/2$ and $0\le\sigma'\le1/2$,
\begin{equation}\label{eq:kern-bound}
 |\mathcal F(s)|\lesssim_\sigma s^{-\sigma},\qquad s>0,
\end{equation}
and
\begin{equation}\label{eq:kern-der}
 |\mathcal F'(s)|\lesssim_{\sigma'} s^{-1-\sigma'},\qquad s>0.
\end{equation}
\end{lemma}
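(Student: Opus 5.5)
We need bounds on $\mathcal F(s)=\int_0^\pi \cos\alpha\,(s+2(1-\cos\alpha))^{-1/2}\,d\alpha$: decay $s^{-\sigma}$ for $0<\sigma<3/2$, and $|\mathcal F'(s)|\lesssim s^{-1-\sigma'}$ for $0\le\sigma'\le1/2$. The plan is to prove two regimes, $s\le1$ and $s\ge1$, for $\mathcal F$ and for $\mathcal F'$, and then interpolate between the endpoint power laws. Since $\mathcal F$ and $\mathcal F'$ are continuous and bounded on compact subintervals of $(0,\infty)$, it suffices to prove a small-$s$ bound and a large-$s$ bound. If $|\mathcal F(s)|\lesssim \min\{s^{-\epsilon},s^{-3/2}\}$ for every small $\epsilon>0$ (with the $\epsilon$-dependent constant), then for any $\sigma\in(0,3/2)$ the bound $s^{-\sigma}$ follows by taking $\epsilon<\sigma$. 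Similarly $|\mathcal F'(s)|\lesssim\min\{s^{-1},s^{-3/2}\}$ gives $s^{-1-\sigma'}$ for all $\sigma'\in[0,1/2]$.

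\emph{Small $s$.} For $0<s\le1$ use $2(1-\cos\alpha)=4\sin^2(\alpha/2)\simeq\alpha^2$ on $[0,\pi]$. This gives
\[
|\mathcal F(s)|\le\int_0^\pi (s+c\alpha^2)^{-1/2}\,d\alpha\lesssim 1+\log(1/s)\lesssim_\epsilon s^{-\epsilon}.
\]
For the derivative,
\[
\mathcal F'(s)=-\tfrac12\int_0^\pi\cos\alpha\,(s+2(1-\cos\alpha))^{-3/2}\,d\alpha,
\]
so
\[
|\mathcal F'(s)|\lesssim\int_0^\infty (s+c\alpha^2)^{-3/2}\,d\alpha=Cs^{-1},
\]
by the substitution $\alpha=\sqrt{s}\,\beta$.

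\emph{Large $s$: cancellation.} This is the step where sign cancellation is needed, and it is the main point of the lemma. Expand $g(t)=(s+t)^{-1/2}$ with $t=2(1-\cos\alpha)\in[0,4]$. Since $\int_0^\pi\cos\alpha\,d\alpha=0$, the leading term $s^{-1/2}$ integrates to zero. Taylor's theorem gives $g(t)=s^{-1/2}+R(t)$ with $|R(t)|\le\tfrac12 t\,s^{-3/2}$ for $t\ge0$. Hence
\[
|\mathcal F(s)|=\Bigl|\int_0^\pi\cos\alpha\,R(2(1-\cos\alpha))\,d\alpha\Bigr|\lesssim s^{-3/2}.
\]
The exponent $3/2$ is exactly the limit here: the next Taylor term, $-\tfrac12 s^{-3/2}\cdot 2\int_0^\pi\cos\alpha(1-\cos\alpha)\,d\alpha$, is nonzero, which explains why $\sigma<3/2$ is stated. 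In fact $\sigma=3/2$ also works at large $s$; the strict inequality is only needed to pair with the small-$s$ logarithm.

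For the derivative at large $s$ the crude bound suffices: $|\mathcal F'(s)|\le\tfrac12\pi s^{-3/2}$, since $s+t\ge s$. One could apply the same cancellation to get $s^{-5/2}$, but that is not needed.

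\emph{Combining.} For $\mathcal F$: on $(0,1]$ we have $s^{-\epsilon}\le s^{-\sigma}$ when $\epsilon\le\sigma$, and on $[1,\infty)$ we have $s^{-3/2}\le s^{-\sigma}$ when $\sigma\le3/2$. For $\mathcal F'$: on $(0,1]$, $s^{-1}\le s^{-1-\sigma'}$, and on $[1,\infty)$, $s^{-3/2}\le s^{-1-\sigma'}$ since $\sigma'\le1/2$. Differentiating under the integral sign is justified for $s>0$ because the integrand is smooth and bounded there.

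The only delicate ingredient is the cancellation $\int_0^\pi\cos\alpha\,d\alpha=0$ at large $s$; the rest is elementary comparison with $\alpha^2$.
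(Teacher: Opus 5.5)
Your proof is correct and follows the paper's argument essentially step for step: split at $s=1$; for $s\le1$ use $1-\cos\alpha\gtrsim\alpha^2$, with the rescaling $\alpha=\sqrt{s}\,\beta$ giving the $s^{-1}$ bound for $\mathcal F'$; for $s\ge1$ subtract $s^{-1/2}$ via $\int_0^\pi\cos\alpha\,d\alpha=0$ and apply the mean-value theorem to get $s^{-3/2}$ for $\mathcal F$, and bound $\mathcal F'$ crudely by $s^{-3/2}$. One small correction to your side remark: the small-$s$ logarithm is what forces $\sigma>0$, and the endpoint $\sigma=3/2$ is actually admissible for all $s>0$, since $1+\log(1/s)\lesssim s^{-3/2}$ on $(0,1]$.
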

The proof is given in Appendix~\ref{sec:auxana}.

\begin{lemma}\label{lem:bsformula}
Let $\varpi\in C_c^\infty(\R^3)$ be axisymmetric, set
\begin{equation}\label{eq:vortlift}
 \omega=r\varpi e_\vartheta,
\end{equation}
and define
\begin{equation}\label{eq:bs-def}
 u=\nabla\times(-\Delta)^{-1}\omega.
\end{equation}
Then $u$ is axisymmetric without swirl, $u\in L^2(\R^3)$, and
\begin{align}
 \|u\|_2^2
 &=\iint_{\Hh\times\Hh}
 (r\rho)^{3/2}
 \mathcal F\left(
 \frac{(r-\rho)^2+(z-\zeta)^2}{r\rho}
 \right)
 \varpi(r,z)\varpi(\rho,\zeta)
 \,dr\,dz\,d\rho\,d\zeta.
 \label{eq:bs-energy}
\end{align}
The value of the displayed integrand on the diagonal
$(r,z)=(\rho,\zeta)$ is immaterial and may be defined arbitrarily; the
diagonal has four-dimensional Lebesgue measure zero.
\end{lemma}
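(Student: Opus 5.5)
The plan is to work with the energy identity $\|u\|_2^2=\int u\cdot u\,dx$ and express it through the vector potential. Set $\Psi:=(-\Delta)^{-1}\omega$, with $\omega=r\varpi e_\vartheta$ smooth and compactly supported. Then $u=\nabla\times\Psi$, and since $\nabla\cdot\Psi=0$ (because $\nabla\cdot\omega=0$ for an axisymmetric azimuthal field), we have
\[
 \|u\|_2^2=\int(\nabla\times\Psi)\cdot(\nabla\times\Psi)\,dx=\int\Psi\cdot\nabla\times\nabla\times\Psi\,dx=\int\Psi\cdot\omega\,dx.
\]
For $L^2$ membership we first note that $\omega\in C_c^\infty$, so $\Psi=\frac{1}{4\pi|x|}*\omega$. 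Moreover $\int\omega\,dx=0$, since $e_\vartheta$ integrates to zero over each circle. Hence $\Psi=O(|x|^{-2})$ and $u=O(|x|^{-3})$ at infinity, which gives $u\in L^2$ and justifies the integration by parts above. Axisymmetry and the no-swirl property follow from the rotation covariance of $(-\Delta)^{-1}$ and of the curl, together with reflection symmetry. Concretely, $\Psi=\psi^\vartheta e_\vartheta$ is azimuthal, so $u=\nabla\times(\psi^\vartheta e_\vartheta)$ has only $e_r$ and $e_z$ components.

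Next we compute the pairing explicitly in cylindrical coordinates. We have
\[
 \int\Psi\cdot\omega\,dx=\frac{1}{4\pi}\iint\frac{\omega(x)\cdot\omega(y)}{|x-y|}\,dx\,dy .
\]
Write $x=(r\cos\theta,r\sin\theta,z)$ and $y=(\rho\cos\phi,\rho\sin\phi,\zeta)$. Then $e_\vartheta(\theta)\cdot e_\vartheta(\phi)=\cos(\theta-\phi)$ and
\[
 |x-y|^2=(r-\rho)^2+(z-\zeta)^2+2r\rho(1-\cos(\theta-\phi)).
\]
Using $dx\,dy=r\rho\,dr\,d\theta\,dz\,d\rho\,d\phi\,d\zeta$, the integrand in the angles depends only on $\alpha=\theta-\phi$. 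One angular integration gives a factor $2\pi$, and the remaining integral over $\alpha\in(-\pi,\pi)$ is twice the integral over $(0,\pi)$ by evenness. Factoring $(r\rho)^{1/2}$ out of the square root, with $s=((r-\rho)^2+(z-\zeta)^2)/(r\rho)$, the denominator becomes $(r\rho)^{1/2}(s+2(1-\cos\alpha))^{1/2}$. Collecting weights, the expression
\[
 r\rho\cdot r\varpi\cdot\rho\varpi\cdot(r\rho)^{-1/2}=(r\rho)^{3/2}\varpi(r,z)\varpi(\rho,\zeta)
\]
produces exactly the integrand of \eqref{eq:bs-energy}, with constant $\frac{1}{4\pi}\cdot2\pi\cdot2=1$.

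The only real obstacle is justifying Fubini and the rearrangements, that is, absolute integrability of the resulting four-dimensional integrand. For this we use Lemma~\ref{lem:axiskern} with some $\sigma\in(1,3/2)$, or simply the fact that the original six-dimensional integral $\iint|\omega(x)||\omega(y)|/|x-y|\,dx\,dy$ is finite for compactly supported bounded $\omega$. Absolute convergence in six dimensions then justifies Tonelli/Fubini for the angular reduction, and it also shows that the reduced integrand is in $L^1(\Hh\times\Hh)$. Its singular set $s=0$ is the diagonal, which is a null set, so the value assigned there is immaterial.
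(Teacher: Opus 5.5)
Your proposal is correct and follows the paper's proof essentially step by step. You use the zero mean of $\omega$ for $|x|^{-3}$ decay and $L^2$ membership, rotation and reflection covariance for the no-swirl structure, and the energy written as $\frac1{4\pi}\iint\omega(x)\cdot\omega(y)\,|x-y|^{-1}\,dx\,dy$; the only cosmetic difference is that you reach this identity by integrating by parts against the vector potential, whereas the paper uses Plancherel. You then perform the same angular reduction to $\mathcal F$, with total constant $\frac1{4\pi}\cdot 4\pi=1$.

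One small slip: invoking Lemma~\ref{lem:axiskern} with $\sigma\in(1,3/2)$ would leave a non-integrable singularity $|(r,z)-(\rho,\zeta)|^{-2\sigma}$ at the diagonal of $\Hh\times\Hh$, since one needs $\sigma<1$ there. Your alternative justification, absolute convergence of the six-dimensional integral, is exactly what the paper uses and suffices on its own.
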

The proof is given in Appendix~\ref{sec:auxana}.

\begin{proposition}
Let $1<p<\infty$ and $\varpi\in C_c^\infty(\R^3)$ be axisymmetric, and set
$u_\varpi:=\nabla\times(-\Delta)^{-1}(r\varpi e_\vartheta)$.  Then
\begin{align}
 \|u_\varpi\|_2^2
 &\lesssim_p\|\varpi\|_p^{A_p}\mathfrak M_2(\varpi)^{B_p},
 \label{eq:mtwo-mone}\\
 \|u_\varpi\|_2^2
 &\lesssim_p\|\varpi\|_p^{A_{1,p}}
 \mathfrak M_1(\varpi)^{B_{1,p}}.
 \label{eq:mone-en}
\end{align}
\end{proposition}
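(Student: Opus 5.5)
The plan is to start from the kernel representation \eqref{eq:bs-energy} of Lemma~\ref{lem:bsformula}, insert the pointwise bound \eqref{eq:kern-bound} of Lemma~\ref{lem:axiskern}, and reduce both estimates to a two-dimensional Hardy--Littlewood--Sobolev (HLS) inequality on the meridional half-plane $\Hh$, with weights distributed by H\"older's inequality.

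\textbf{Reduction to a weighted HLS pairing.} Write $X=(r,z)$, $Y=(\rho,\zeta)$ and $s=|X-Y|^2/(r\rho)$. For $0<\sigma<3/2$, \eqref{eq:kern-bound} gives
\[
 (r\rho)^{3/2}\bigl|\mathcal F(s)\bigr|\lesssim_\sigma (r\rho)^{3/2+\sigma}|X-Y|^{-2\sigma}.
\]
Hence, with $a:=3/2+\sigma$,
\[
 \|u_\varpi\|_2^2\lesssim_\sigma\iint_{\Hh\times\Hh}\frac{r^a|\varpi(X)|\,\rho^a|\varpi(Y)|}{|X-Y|^{2\sigma}}\,dX\,dY .
\]
Extend $r^a|\varpi|$ by zero to $\R^2$ and apply HLS in $\R^2$ with kernel $|X-Y|^{-\lambda}$, $\lambda=2\sigma\in(0,2)$. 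With equal exponents this requires $2/q+\lambda/2=2$, that is $q=(1-\sigma/2)^{-1}$. This gives
\[
 \|u_\varpi\|_2^2\lesssim\|r^a\varpi\|_{L^q(\Hh)}^2 .
\]

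\textbf{Interpolation of the weighted norm.} Next I interpolate $\|r^a\varpi\|_{L^q(dr\,dz)}$ between the two quantities already available in cylindrical form. These are $\mathfrak M_2(\varpi)=\int r^3|\varpi|$ from \eqref{eq:mtwo-cyl} and $\|\varpi\|_p^p\simeq\int r|\varpi|^p$ from \eqref{eq:lp-cyl}. I write
\[
 r^{aq}|\varpi|^q=(r^3|\varpi|)^{1-\beta}(r|\varpi|^p)^{\beta}
\]
and use H\"older with exponents $1/(1-\beta)$ and $1/\beta$. This needs two matching conditions: $q=1-\beta+p\beta$ on the power of $|\varpi|$, and $aq=3-2\beta$ on the power of $r$.

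The first condition together with $q=(1-\sigma/2)^{-1}$ fixes $\beta$ in terms of $\sigma$. The second condition then becomes a single algebraic equation for $\sigma$. Solving it should give a $\sigma$ depending on $p$. The step I expect to be the main obstacle is checking that this $\sigma$ lies in $(0,3/2)$ and that $\beta\in(0,1)$ for every $1<p<\infty$; this is where \eqref{eq:kern-bound} must be used in its full range.

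If some range of $p$ fails, I will first split the integral into a near region $s\le1$ and a far region $s>1$. On each region I use a different $\sigma$, keeping one HLS pairing per region. Each region then produces a product of powers of $\|\varpi\|_p$ and $\mathfrak M_2(\varpi)$. The exponents then collapse to $A_p,B_p$ by the scaling invariance of both sides under $\varpi(r,z)\mapsto\mu\varpi(\lambda r,\lambda z)$, so they need not be tracked by hand.

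Once $\sigma$ is admissible, H\"older yields
\[
 \|u_\varpi\|_2^2\lesssim\mathfrak M_2(\varpi)^{2(1-\beta)/q}\,\|\varpi\|_p^{2p\beta/q}.
\]
Scaling invariance forces these exponents to be $B_p$ and $A_p$ respectively: the $L^2$ energy scales like $\mu^2\lambda^{-7}$, $\mathfrak M_2$ like $\mu\lambda^{-5}$, and $\|\varpi\|_p$ like $\mu\lambda^{-3/p}$. Solving these two homogeneity relations gives exactly $A_p=3p/(5p-3)$ and $B_p=(7p-6)/(5p-3)$, which I will verify against the H\"older output.

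\textbf{The $\mathfrak M_1$ estimate.} The bound \eqref{eq:mone-en} follows by the same argument. I replace $r^3|\varpi|$ by $r^2|\varpi|$, using \eqref{eq:mone-cyl}. The power condition becomes $aq=2-\beta$, and the scaling of $\mathfrak M_1$ is $\mu\lambda^{-4}$. The homogeneity relations again yield $A_{1,p}=p/(4p-3)$ and $B_{1,p}=(7p-6)/(4p-3)$, subject to the same admissibility check on $\sigma$ and $\beta$.
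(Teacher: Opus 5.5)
Your proposal is correct and follows the paper's proof. The paper applies the kernel bound \eqref{eq:kern-bound}, then the equal-exponent two-dimensional HLS inequality to $r^{3/2+\sigma}|\varpi|$, then H\"older between $r|\varpi|^p$ and $r^3|\varpi|$ (resp.\ $r^2|\varpi|$).

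Your matching conditions are linear in $\sigma$. Solving them gives $\sigma=3(p-1)/(5p-3)\in(0,3/5)$, $q=2(5p-3)/(7p-3)$, $\beta=3/(7p-3)$ for the impulse bound, and $\sigma=(p-1)/(4p-3)$, $\beta=1/(7p-5)$ for the first-moment bound. These are exactly the paper's choices, and they are admissible for every $p>1$, so the near/far splitting fallback is never needed.
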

\begin{proof}
For the absolute-impulse estimate, set
$\sigma_p:=3(p-1)/(5p-3)$ and $q_p:=2(5p-3)/(7p-3)$.
Then $0<2\sigma_p<2$ and $q_p=4/(4-2\sigma_p)$.  Lemmas~\ref{lem:axiskern}
and \ref{lem:bsformula}, with
$g(r,z):=\mathbf 1_{\{r>0\}}r^{3/2+\sigma_p}|\varpi(r,z)|$, give
\[
 \|u_\varpi\|_2^2
 \lesssim_p
 \iint_{\R^2\times\R^2}
 \frac{g(X)g(Y)}{|X-Y|^{2\sigma_p}}\,dX\,dY.
\]
The choice of $q_p$ and $\sigma_p$ gives the factorization
\[
 |g|^{q_p}
 =
 (r|\varpi|^p)^{\frac{3}{7p-3}}
 (r^3|\varpi|)^{\frac{7p-6}{7p-3}}.
\]
Hence H\"older's inequality, \eqref{eq:lp-cyl}, and \eqref{eq:mtwo-cyl} imply
\[
 \|g\|_{q_p}^{q_p}
 \lesssim_p
 \|\varpi\|_p^{\frac{3p}{7p-3}}
 \mathfrak M_2(\varpi)^{\frac{7p-6}{7p-3}}.
\]
The two-dimensional Hardy--Littlewood--Sobolev inequality in Lemma \ref{lem:hls} therefore yields
\[
 \|u_\varpi\|_2^2
 \lesssim_p \|g\|_{q_p}^2
 \lesssim_p
 \|\varpi\|_p^{A_p}\mathfrak M_2(\varpi)^{B_p},
\]
which is \eqref{eq:mtwo-mone}.

For the first-radial-moment estimate, set
\[
 \sigma_{1,p}:=\frac{p-1}{4p-3},
 \qquad
 q_{1,p}:=\frac{2(4p-3)}{7p-5},
 \qquad
 g_1(r,z):=\mathbf 1_{\{r>0\}}
 r^{3/2+\sigma_{1,p}}|\varpi(r,z)|.
\]
Again $0<2\sigma_{1,p}<2$ and
$q_{1,p}=4/(4-2\sigma_{1,p})$, while
\[
 |g_1|^{q_{1,p}}
 =
 (r|\varpi|^p)^{\frac1{7p-5}}
 (r^2|\varpi|)^{\frac{7p-6}{7p-5}}.
\]
Thus
\[
 \|g_1\|_{q_{1,p}}^{q_{1,p}}
 \lesssim_p
 \|\varpi\|_p^{\frac{p}{7p-5}}
 \mathfrak M_1(\varpi)^{\frac{7p-6}{7p-5}}.
\]
Applying the same kernel estimate and Hardy--Littlewood--Sobolev inequality gives
\[
 \|u_\varpi\|_2^2
 \lesssim_p \|g_1\|_{q_{1,p}}^2
 \lesssim_p
 \|\varpi\|_p^{A_{1,p}}
 \mathfrak M_1(\varpi)^{B_{1,p}},
\]
which is \eqref{eq:mone-en}.
\end{proof}
\begin{lemma}\label{lem:raddense}
Let $1<p<\infty$.
If $\varpi\in\mathcal X_p$, there are axisymmetric
$\varpi_n\in C_c^\infty(\R^3)$ such that
\begin{equation}\label{eq:mtwo-axis}
 \|\varpi_n-\varpi\|_p\to0,
 \qquad
 \mathfrak M_2(\varpi_n-\varpi)\to0.
\end{equation}
If $\varpi\in\mathcal Y_p$, the sequence may be chosen so that
\begin{equation}\label{eq:mone-ycls}
 \|\varpi_n-\varpi\|_p\to0,
 \qquad
 \mathfrak M_1(\varpi_n-\varpi)\to0.
\end{equation}
In either case, if $\varpi\in L^1(\R^3)$, one may also impose
\begin{equation}\label{eq:lone-mone}
 \|\varpi_n-\varpi\|_1\to0.
\end{equation}
\end{lemma}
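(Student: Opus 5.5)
The plan is to approximate $\varpi$ in two stages: first truncate in space, then mollify. All the quantities involved, namely $\|\cdot\|_p$, $\|\cdot\|_1$, $\mathfrak M_1$ and $\mathfrak M_2$, are integrals of $|\varpi|^p$ or of $w|\varpi|$ against the fixed weights $w=1$, $w=r$ or $w=r^2$. Each is therefore continuous under dominated convergence. The one danger is that mollification must not inflate the weighted moments. We can control this if the function being mollified has compact support, because then the weights are bounded there. Throughout, the argument for $\mathcal X_p$ and the argument for $\mathcal Y_p$ are identical after replacing $r^2$ by $r$. The $L^1$ clause comes along for free, since it is again a weighted integral with weight $1$.

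\emph{Step 1 (truncation).} Take an axisymmetric cutoff $\chi_N(x)=\chi(x/N)$ with $\chi\in C_c^\infty(\R^3)$ radial in $x_h$ and even in $z$ (for instance, radial in $x$), with $0\le\chi\le1$ and $\chi=1$ on $B_1$. Set $\varpi^N:=\chi_N\varpi$; this is axisymmetric and compactly supported. The differences satisfy $|\varpi^N-\varpi|\le|\varpi|\mathbf 1_{|x|>N}$ pointwise. Dominated convergence then gives $\|\varpi^N-\varpi\|_p\to0$, and $\mathfrak M_k(\varpi^N-\varpi)\to0$ for the relevant $k$ because $r^k|\varpi|\in L^1$. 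The same argument gives $\|\varpi^N-\varpi\|_1\to0$ when $\varpi\in L^1$.

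\emph{Step 2 (mollification).} For fixed $N$, let $\phi_\delta$ be a standard mollifier that is radial in $\R^3$. Convolution with a radial kernel commutes with rotations about the $z$-axis, so $\phi_\delta*\varpi^N$ is an axisymmetric element of $C_c^\infty$ supported in $B_{2N}$ for $\delta<1$. We then have $\|\phi_\delta*\varpi^N-\varpi^N\|_p\to0$ as $\delta\to0$. The remaining point is that $\varpi^N\in L^1$, even if $\varpi$ itself is not assumed integrable. This holds because $\varpi^N\in L^p$ and has compact support, so H\"older's inequality gives $\varpi^N\in L^1$. Standard mollification then gives $\|\phi_\delta*\varpi^N-\varpi^N\|_1\to0$. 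On $B_{2N}$ we have $r^k\le(2N)^k$, hence
\[
 \mathfrak M_k(\phi_\delta*\varpi^N-\varpi^N)\le\frac{(2N)^k}{2\pi}\|\phi_\delta*\varpi^N-\varpi^N\|_1\to0 .
\]
A diagonal choice $\delta=\delta_N$ now produces the sequence $\varpi_n$. It satisfies \eqref{eq:mtwo-axis}, or \eqref{eq:mone-ycls}, and in addition \eqref{eq:lone-mone} whenever $\varpi\in L^1$.

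I see no serious obstacle. The only points that need care are the preservation of axisymmetry under both operations, which is handled by the radial choices of cutoff and kernel, and the local $L^1$ control in Step 2 that lets the moment estimate go through without assuming $\varpi\in L^1$.
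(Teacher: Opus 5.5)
Your proposal is correct and follows essentially the same route as the paper: a spherically radial cutoff with dominated convergence, then spherically radial mollification of the compactly supported truncation, with $\mathfrak M_k$ of the mollification error bounded by a constant times its $L^1$ norm on the fixed support, and finally a diagonal choice. The only cosmetic difference is how the $L^1$ smallness of that error is obtained: you use $L^1$ mollification of $\varpi^N\in L^1$, while the paper bounds the $L^1$ norm by the $L^p$ error via H\"older on the fixed support.
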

The proof is given in Appendix~\ref{sec:auxana}.

\begin{proof}[Proof of Proposition~\ref{prop:radmom}]
Assume first that $\varpi\in\mathcal X_p$.  Choose $\varpi_n$ by
\eqref{eq:mtwo-axis} and set
$u_n:=\nabla\times(-\Delta)^{-1}(r\varpi_ne_\vartheta)$.  By linearity and \eqref{eq:mtwo-mone},
\[
 \|u_n-u_m\|_2^2
 \lesssim_p
 \|\varpi_n-\varpi_m\|_p^{A_p}
 \mathfrak M_2(\varpi_n-\varpi_m)^{B_p}\to0.
\]
Thus $u_n\to u$ in $L^2$.  For every
$\Phi\in C_c^\infty(\R^3;\R^3)$,
\[
 \int u_n\cdot(\nabla\times\Phi)\,dx
 =\int r\varpi_ne_\vartheta\cdot\Phi\,dx.
\]
The right-hand side converges to the corresponding expression with
$\varpi$, because $r$ is bounded on $\operatorname{supp}\Phi$ and
$\varpi_n\to\varpi$ locally in $L^1$.  Passing to the limit gives
$\nabla\times u=r\varpi e_\vartheta$.  Similarly,
$\int u\cdot\nabla\phi=0$ for every $\phi\in C_c^\infty$, so
$\nabla\cdot u=0$.

If $v\in L^2$ satisfies the same div--curl system, then $w=u-v$ has zero
divergence and curl.  Hence
$k\cdot\widehat w(k)=k\times\widehat w(k)=0$ for almost every $k$, and
therefore $w=0$.  This proves uniqueness.  Since each $u_n$ is
axisymmetric without swirl by Lemma~\ref{lem:bsformula}, the $L^2$ limit
has the same symmetries.  Passing to the limit in \eqref{eq:mtwo-mone}
gives \eqref{eq:bs-xbound}; linearity and
$\mathfrak M_2(\varpi-\eta)\le
\mathfrak M_2(\varpi)+\mathfrak M_2(\eta)$ give
\eqref{eq:bs-xstab}.

If instead $\varpi\in\mathcal Y_p$, choose $\varpi_n$ by
\eqref{eq:mone-ycls}.  The estimate \eqref{eq:mone-en} makes the
corresponding Biot--Savart fields Cauchy in $L^2$, and the same div--curl
argument identifies their limit with the unique solution of
\eqref{eq:divcurl-sys}.  Passing to the limit in \eqref{eq:mone-en} gives
\eqref{eq:bs-ybound}, while linearity and
$\mathfrak M_1(\varpi-\eta)\le
\mathfrak M_1(\varpi)+\mathfrak M_1(\eta)$ give
\eqref{eq:bs-ystab}.
\end{proof}
\begin{corollary}[Uniform-in-time $L^2$ stability]\label{cor:unifstab}
Let $T>0$, $1<p<\infty$, and let $\varpi_n,\varpi\in
C([0,T];L^p(\R^3))$ be axisymmetric with
\begin{equation}\label{eq:axis-conv}
 \varpi_n\to\varpi
 \qquad\text{in }C([0,T];L^p(\R^3)).
\end{equation}
If
\begin{equation}\label{eq:mtwo-bd}
 \sup_n\sup_{t\in[0,T]}\mathfrak M_2(\varpi_n(t))\le M<\infty,
\end{equation}
then
\begin{equation}\label{eq:mom-bd}
 \sup_{t\in[0,T]}\mathfrak M_2(\varpi(t))\le M
\end{equation}
and
\begin{equation}\label{eq:bs-mtwo}
 \sup_{t\in[0,T]}
 \|\Bcal\varpi_n(t)-\Bcal\varpi(t)\|_2^2
 \lesssim_p M^{B_p}
 \left(\sup_{t\in[0,T]}\|\varpi_n(t)-\varpi(t)\|_p\right)^{A_p}.
\end{equation}
If instead
\begin{equation}\label{eq:fin-bd}
 \sup_n\sup_{t\in[0,T]}\mathfrak M_1(\varpi_n(t))\le M<\infty,
\end{equation}
then
\begin{equation}\label{eq:biot-bd}
 \sup_{t\in[0,T]}\mathfrak M_1(\varpi(t))\le M
\end{equation}
and
\begin{equation}\label{eq:bs-mone}
 \sup_{t\in[0,T]}
 \|\Bcal\varpi_n(t)-\Bcal\varpi(t)\|_2^2
 \lesssim_p M^{B_{1,p}}
 \left(\sup_{t\in[0,T]}\|\varpi_n(t)-\varpi(t)\|_p\right)^{A_{1,p}}.
\end{equation}
In either case,
\begin{equation}\label{eq:bs-conv}
 \Bcal\varpi_n\to\Bcal\varpi
 \qquad\text{in }C([0,T];L^2(\R^3)).
\end{equation}
\end{corollary}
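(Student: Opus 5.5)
The proof has two parts. First we show that the uniform moment bound passes to the limit, which gives \eqref{eq:mom-bd} and \eqref{eq:biot-bd}. Then we apply the stability estimates of Proposition~\ref{prop:radmom} at each fixed time.

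\emph{Step 1: lower semicontinuity of the moments.} Fix $t\in[0,T]$. Strong convergence $\varpi_n(t)\to\varpi(t)$ in $L^p$ gives a subsequence converging almost everywhere. Fatou's lemma applied to $r^k|\varpi_n(t)|$, $k=1,2$, then yields
\[
\mathfrak M_k(\varpi(t))\le\liminf_n\mathfrak M_k(\varpi_n(t))\le M .
\]
Since $t$ is arbitrary, this proves \eqref{eq:mom-bd} and \eqref{eq:biot-bd}. In particular $\varpi(t)\in\mathcal X_p$ (respectively $\mathcal Y_p$) for every $t$, and $\varpi_n(t)$ lies in the same class. So $\Bcal\varpi_n(t)$ and $\Bcal\varpi(t)$ are the well-defined $L^2$ div--curl solutions of Proposition~\ref{prop:radmom}.

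\emph{Step 2: pointwise-in-time stability.} Apply \eqref{eq:bs-xstab} with $\eta=\varpi(t)$:
\[
\|\Bcal\varpi_n(t)-\Bcal\varpi(t)\|_2^2\lesssim_p\|\varpi_n(t)-\varpi(t)\|_p^{A_p}\bigl(\mathfrak M_2(\varpi_n(t))+\mathfrak M_2(\varpi(t))\bigr)^{B_p}\le (2M)^{B_p}\|\varpi_n(t)-\varpi(t)\|_p^{A_p}.
\]
Take the supremum over $t$; since $A_p>0$, the map $x\mapsto x^{A_p}$ is monotone, so the supremum passes inside. Absorbing $2^{B_p}$ into the implicit constant gives \eqref{eq:bs-mtwo}. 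The first-moment case is identical, using \eqref{eq:bs-ystab} with the exponents $A_{1,p},B_{1,p}$, and gives \eqref{eq:bs-mone}.

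\emph{Step 3: convergence in $C([0,T];L^2)$.} The right-hand sides of \eqref{eq:bs-mtwo} and \eqref{eq:bs-mone} tend to zero by \eqref{eq:axis-conv}, because $A_p,A_{1,p}>0$ for $p>1$. This gives uniform-in-time convergence in $L^2$. The only subtle point is that \eqref{eq:bs-conv} is stated in $C([0,T];L^2)$, so we must also know that each $t\mapsto\Bcal\varpi_n(t)$ is continuous into $L^2$. This follows from \eqref{eq:bs-xstab} applied to $\varpi_n(t)$ and $\varpi_n(s)$: the moment factor is bounded by $2M$, and the $L^p$ factor tends to zero as $s\to t$ by $\varpi_n\in C([0,T];L^p)$. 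The same argument applies to $\Bcal\varpi$ using \eqref{eq:mom-bd}. A uniform limit of continuous maps is continuous, so the convergence holds in $C([0,T];L^2)$.

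Every step is routine given Proposition~\ref{prop:radmom}. The only point needing care is checking that each $\varpi_n(t)$ and the limit $\varpi(t)$ lie in the relevant moment class, so that the stability estimates apply; Step~1 supplies this.
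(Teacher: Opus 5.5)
Your proof is correct and follows the paper's argument: pass the moment bound to the limit, apply the stability estimates of Proposition~\ref{prop:radmom} at each fixed time and take the supremum, and get continuity of $t\mapsto\Bcal\varpi_n(t)$ and $t\mapsto\Bcal\varpi(t)$ from the same estimates. The only difference is in Step~1, where you use Fatou along an a.e.-convergent subsequence while the paper uses convergence of the moments truncated to $B_R$ (from $L^1(B_R)$ convergence) followed by monotone convergence in $R$; both give the same lower semicontinuity.
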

\begin{proof}
Fix $t\in[0,T]$ and $R>0$.  Since $p>1$ and $B_R$ has finite measure,
\eqref{eq:axis-conv} implies convergence in $L^1(B_R)$.  Therefore
\[
 \frac1{2\pi}\int_{B_R}r^k|\varpi(t)|\,dx
 =\lim_{n\to\infty}
 \frac1{2\pi}\int_{B_R}r^k|\varpi_n(t)|\,dx,
 \qquad k=1,2.
\]
Under \eqref{eq:mtwo-bd}, monotone convergence as $R\to\infty$ gives
\eqref{eq:mom-bd}; under \eqref{eq:fin-bd} it gives
\eqref{eq:biot-bd}.  The stability estimates
\eqref{eq:bs-xstab} and \eqref{eq:bs-ystab} then yield
\eqref{eq:bs-mtwo} and \eqref{eq:bs-mone}, respectively.

The same estimates applied to $\varpi(t)-\varpi(s)$ show that
$\Bcal\varpi\in C([0,T];L^2)$; applied to
$\varpi_n(t)-\varpi_n(s)$ they give the same continuity for each
$\Bcal\varpi_n$.  Finally, \eqref{eq:bs-mtwo} or
\eqref{eq:bs-mone} gives \eqref{eq:bs-conv} uniformly in time.
\end{proof}

\begin{corollary}[Moment-confined global energy compactness]
\label{cor:moment-energy-comp}
Fix $T<\infty$ and let $1<p<\infty$.  Let
$(a_n,\varpi_n,f_n,\pi_n)$ be regular axisymmetric no-swirl approximate
Euler flows satisfying \eqref{eq:td-id}, with
\[
 a_n\in C([0,T];H^{m_n}(\mathbb R^3))\cap
 C^1([0,T];H^{m_n-1}(\mathbb R^3)),\qquad m_n>\frac52,
\]
and assume
\[
 \sup_n\|a_n\|_{L^\infty(0,T;L^2)}<\infty,
 \qquad
 \|f_n\|_{L^1(0,T;L^2)}\to0,
\]
\[
 \varpi_n\to\varpi
 \quad\text{in }C([0,T];L^1(\mathbb R^3)\cap L^p(\mathbb R^3)),
 \qquad
 a_n(0)\to u_0\quad\text{in }L^2(\mathbb R^3),
\]
and
\[
 \sup_n\sup_{0\le t\le T}\mathfrak M_2(\varpi_n(t))<\infty.
\]
Then $u:=\Bcal\varpi$ belongs to $C([0,T];L^2(\mathbb R^3))$,
\[
 a_n\to u\quad\text{in }C([0,T];L^2(\mathbb R^3)),
\]
and
\[
 \|u(t)\|_2=\|u_0\|_2,
 \qquad 0\le t\le T.
\]
\end{corollary}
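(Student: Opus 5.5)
The plan is to use Corollary~\ref{cor:unifstab} instead of the shell mechanism of Theorem~\ref{thm:mfreecomp}. The absolute-impulse bound gives global $L^2$ control of the Biot--Savart fields directly, so no far-field flux estimate is needed. The first step is the div--curl identification. For each $n$ and $t$, $a_n(t)\in L^2$ is divergence free with $\nabla\times a_n(t)=r\varpi_n(t)e_\vartheta$. We also have $\varpi_n(t)\in L^p$ and $\mathfrak M_2(\varpi_n(t))\le M$, so $\varpi_n(t)\in\mathcal X_p$. The uniqueness part of Proposition~\ref{prop:radmom} (or Lemma~\ref{lem:divcurl}) then gives $a_n(t)=\Bcal\varpi_n(t)$.

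The second step applies Corollary~\ref{cor:unifstab} with the hypotheses \eqref{eq:axis-conv} and \eqref{eq:mtwo-bd}. These hold because $\varpi_n\to\varpi$ in $C([0,T];L^p)$, which is part of the assumed $C_t(L^1\cap L^p)$ convergence. The corollary yields $\sup_t\mathfrak M_2(\varpi(t))\le M$, so $\varpi(t)\in\mathcal X_p$ and $u=\Bcal\varpi$ is well defined. From \eqref{eq:bs-mtwo},
\[
 \sup_{t}\|a_n(t)-u(t)\|_2^2\lesssim_p M^{B_p}\Bigl(\sup_t\|\varpi_n(t)-\varpi(t)\|_p\Bigr)^{A_p}\to0,
\]
and the continuity part of the same corollary gives $u\in C([0,T];L^2)$. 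At $t=0$ we have $a_n(0)\to u(0)$ in $L^2$. Together with the hypothesis $a_n(0)\to u_0$, this gives $u(0)=u_0$.

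The third step is energy conservation, which repeats the last paragraph of the proof of Theorem~\ref{thm:mfreecomp}. Since $m_n>5/2$, we have $a_n\in L^2\cap L^\infty$, hence $a_n\in L^4$ and the canonical pressure \eqref{eq:ir-def} lies in $L^2$. A compact-cutoff argument in \eqref{eq:td-id} then gives the energy identity $\tfrac12\|a_n(t)\|_2^2-\tfrac12\|a_n(0)\|_2^2=\int_0^t\!\int f_n\cdot a_n$. The right side is bounded by $\|a_n\|_{L^\infty_tL^2}\|f_n\|_{L^1_tL^2}\to0$. Passing to the limit with the uniform $C_tL^2$ convergence gives $\|u(t)\|_2=\|u_0\|_2$.

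The only point needing care is the first step. One has to check that the Biot--Savart identification $a_n=\Bcal\varpi_n$ holds in the finite-impulse class, so that the stability estimate \eqref{eq:bs-xstab} can be applied to the actual fields $a_n$. This follows from $L^2$ div--curl uniqueness, since both $a_n(t)$ and $\Bcal\varpi_n(t)$ lie in $L^2$ and solve \eqref{eq:divcurl-sys}. Beyond that, the argument is a direct combination of earlier results, and neither the $L^1$ part of the convergence nor the sign of $\varpi$ is used.
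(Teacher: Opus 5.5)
Your proposal is correct and matches the paper's proof step for step. First, $L^2$ div--curl uniqueness from Proposition~\ref{prop:radmom} identifies $a_n(t)=\Bcal\varpi_n(t)$. Next, Corollary~\ref{cor:unifstab} gives $a_n\to\Bcal\varpi$ in $C([0,T];L^2)$ and $u\in C([0,T];L^2)$. Finally, the regular energy identity together with $\|a_n\|_{L^\infty_tL^2_x}\|f_n\|_{L^1_tL^2_x}\to0$ yields $\|u(t)\|_2=\|u_0\|_2$; your remark that only the $L^p$ part of the vorticity convergence is used also holds for the paper's argument.
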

\begin{proof}
The uniform absolute-impulse bound and
Corollary~\ref{cor:unifstab} give
\[
 \Bcal\varpi_n\to\Bcal\varpi
 \quad\text{in }C([0,T];L^2).
\]
For every $n$ and $t$, Proposition~\ref{prop:radmom} and the div--curl
relations in \eqref{eq:td-id} identify
$a_n(t)=\Bcal\varpi_n(t)$ in $L^2$.  Hence the displayed convergence is
exactly $a_n\to u:=\Bcal\varpi$ in $C_tL_x^2$.

The regular approximate Euler equation has the energy identity
\[
 \frac12\|a_n(t)\|_2^2-\frac12\|a_n(0)\|_2^2
 =\int_0^t\!\int_{\mathbb R^3} f_n\cdot a_n\,dx\,ds.
\]
Therefore
\[
 \sup_{0\le t\le T}
 \left|\|a_n(t)\|_2^2-\|a_n(0)\|_2^2\right|
 \le
 2\|a_n\|_{L_t^\infty L_x^2}
 \|f_n\|_{L_t^1L_x^2}\to0.
\]
Passing to the limit with the strong $C_tL_x^2$ convergence and the
initial $L^2$ convergence proves the energy identity for $u$.
\end{proof}

\subsection{Sign-free first-radial-moment control}
Fix a nonincreasing cutoff
$\chi_{\rm rad}\in C_c^\infty([0,\infty))$ with
$0\le\chi_{\rm rad}\le1$, $\chi_{\rm rad}=1$ on $[0,1]$, and
$\chi_{\rm rad}=0$ on $[2,\infty)$, and set
\begin{equation}\label{eq:rad-def}
\Psi_L(s):=\int_0^s\chi_{\rm rad}(\tau/L)\,d\tau,
\qquad L>0,\ s\ge0.
\end{equation}
Then
\begin{equation}\label{eq:psi-bd}
0\le\Psi_L'\le1,\qquad
0\le\Psi_L(s)\le\min\{s,2L\},\qquad
\Psi_L(s)\uparrow s,\qquad
\Psi_L'(s)\to1
\end{equation}
for every $s\ge0$ as $L\to\infty$.

\begin{theorem}[Sign-free first-radial-moment control]
\label{thm:firstmom}
Let $3/2\le p<\infty$, and let $(u,\varpi)$ be a measurable axisymmetric
no-swirl pair on $[0,T]\times\R^3$.  Assume
\begin{equation}\label{eq:axis-bd}
 u(t)\in L^2(\R^3)\quad\text{for a.e. }t\in(0,T),
 \qquad
 \varpi(t)\in L^1(\R^3)\cap L^p(\R^3)\quad\text{for }0\le t\le T,
\end{equation}
and, for almost every $t\in(0,T)$,
\begin{equation}\label{eq:diss-id}
 \nabla\cdot u(t)=0,
 \qquad
 \nabla\times u(t)=r\varpi(t)e_\vartheta
 \quad\text{in }\mathcal D'(\R^3).
\end{equation}
No boundedness or trace condition on $\varpi$ at $r=0$ is imposed; the
Cartesian identities in \eqref{eq:axisym-cart} are used below.  Set
\begin{equation}\label{eq:cond-def}
 N_1:=\sup_{0\le t\le T}\|\varpi(t)\|_1,
 \qquad
 N_p:=\sup_{0\le t\le T}\|\varpi(t)\|_p,
\end{equation}
and assume
\begin{equation}\label{eq:urad-bd}
 N_1+N_p<\infty,
 \qquad
 \mathfrak M_1(\varpi(0))<\infty,
 \qquad
 \int_0^T\int_{\R^3}|u^r|\,|\varpi|\,dx\,dt<\infty.
\end{equation}
Assume also that, for every $L>0$ and $0\le t\le T$,
\begin{align}
 &\frac1{2\pi}\int_{\R^3}\Psi_L(r)|\varpi(t)|\,dx
 -\frac1{2\pi}\int_{\R^3}\Psi_L(r)|\varpi(0)|\,dx
  =
 \frac1{2\pi}\int_0^t\int_{\R^3}
 \Psi_L'(r)u^r(\tau,x)|\varpi(\tau,x)|\,dx\,d\tau.
 \label{eq:urad-id}
\end{align}
Set
\begin{equation*}
 \Lambda_{p,T}:=
 \begin{cases}
  N_p^{3/2}, & p=3/2,\\[1mm]
  N_p^{\frac{p}{2(p-1)}}
  N_1^{\frac{2p-3}{2(p-1)}}, & p>3/2.
 \end{cases}
\end{equation*}
Then $t\mapsto\mathfrak M_1(\varpi(t))$ is absolutely continuous, and
\begin{equation}\label{eq:dt-mone}
 \left|\frac{d}{dt}\mathfrak M_1(\varpi(t))\right|
 \lesssim_p
 \Lambda_{p,T}\,\mathfrak M_1(\varpi(t))^{1/2}
 \qquad\text{for a.e. }t\in(0,T).
\end{equation}
Moreover,
\begin{equation}\label{eq:sign-bd}
 \mathfrak M_1(\varpi(t))^{1/2}
 \le \mathfrak M_1(\varpi(0))^{1/2}
 +C_p\Lambda_{p,T}t,
 \qquad 0\le t\le T.
\end{equation}
\end{theorem}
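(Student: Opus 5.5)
The plan is to bound the right-hand side of \eqref{eq:urad-id} by a bilinear Biot--Savart estimate, uniformly in $L$, and then pass to $L\to\infty$. Since $0\le\Psi_L'\le1$, the identity \eqref{eq:urad-id} gives, for $0\le s\le t\le T$,
\[
 \Bigl|\tfrac1{2\pi}\!\int\Psi_L(r)|\varpi(t)|-\tfrac1{2\pi}\!\int\Psi_L(r)|\varpi(s)|\Bigr|
 \le\frac1{2\pi}\int_s^t\!\!\int_{\R^3}|u^r||\varpi|\,dx\,d\tau .
\]
The right-hand side is finite by \eqref{eq:urad-bd}. By \eqref{eq:psi-bd} and monotone convergence, $\frac1{2\pi}\int\Psi_L(r)|\varpi(t)|\uparrow\mathfrak M_1(\varpi(t))$. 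Dominated convergence in the time integral (using $\Psi_L'\to1$ together with the integrability in \eqref{eq:urad-bd}) then gives the unlocalized identity. Hence $\mathfrak M_1(\varpi(t))$ is finite for all $t$, absolutely continuous, and
\[
 \Bigl|\tfrac{d}{dt}\mathfrak M_1(\varpi(t))\Bigr|\le\tfrac1{2\pi}\int_{\R^3}|u^r||\varpi|\,dx
\]
for a.e.\ $t$. Everything therefore reduces to the fixed-time estimate
\begin{equation*}
 \int_{\R^3}|u^r|\,|\varpi|\,dx\lesssim_p \|\varpi\|_p^{a}\|\varpi\|_1^{b}\,\mathfrak M_1(\varpi)^{1/2},
\end{equation*}
with $(a,b)=(3/2,0)$ at $p=3/2$ and $(a,b)=\bigl(\tfrac{p}{2(p-1)},\tfrac{2p-3}{2(p-1)}\bigr)$ for $p>3/2$. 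This is \eqref{eq:dt-mone}.

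For this estimate I would first work with smooth compactly supported axisymmetric $\varpi$, where Lemma~\ref{lem:bsformula} applies. There I write $u^r$ through the stream function $\psi$, with $u^r=-r^{-1}\partial_z\psi$ and $\psi$ given by the axisymmetric kernel $(r\rho)^{1/2}\mathcal F$-type expression. Differentiating in $z$ and using the derivative bound \eqref{eq:kern-der} yields a pointwise kernel estimate on the meridional half-plane, schematically
\[
 |u^r(r,z)|\lesssim\int_{\Hh}\frac{\rho^{a'}r^{-b'}}{|X-Y|^{\lambda}}\,|\varpi(Y)|\,dY,
 \qquad X=(r,z),\ Y=(\rho,\zeta).
\]
The exponents here are to be chosen using the freedom $0\le\sigma'\le1/2$ in \eqref{eq:kern-der}, exactly as $\sigma_p$ was chosen in the proof of \eqref{eq:mtwo-mone}. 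Since $dx=2\pi r\,dr\,dz$, the quantity $\int|u^r||\varpi|\,dx$ becomes a two-dimensional bilinear pairing $\iint g(X)h(Y)|X-Y|^{-\lambda}\,dX\,dY$. Here $g$ and $h$ are power weights of $r$ times $|\varpi|$. I would then redistribute the powers of $r$ and $\rho$ (by $r\rho\le(r^2+\rho^2)/2$-type splitting near the diagonal, and direct $L^1$ control far from it) so that one factor carries $r^2|\varpi|$ to the power $1/2$. That factor produces $\mathfrak M_1^{1/2}$ via \eqref{eq:mone-cyl}.

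The remaining factors are estimated by the two-dimensional Hardy--Littlewood--Sobolev inequality (Lemma~\ref{lem:hls}). Their Lebesgue norms are then interpolated by Hölder between $r|\varpi|^p$, which gives $\|\varpi\|_p^p$ by \eqref{eq:lp-cyl}, and $r|\varpi|$, which gives $\|\varpi\|_1$. The exponents of $N_p$ and $N_1$ are then forced by scaling, and they reproduce $\Lambda_{p,T}$. The condition $p\ge3/2$ should appear exactly as the requirement that the HLS-admissible Lebesgue exponent not exceed $p$. At $p=3/2$ it is attained with no $L^1$ factor. The bound then extends from smooth data to $\varpi(t)$ by Lemma~\ref{lem:raddense}: approximate with convergence in $L^1\cap L^p$ and in $\mathfrak M_1$, noting that $u$ is identified with $\Bcal\varpi$ by div--curl uniqueness (Proposition~\ref{prop:radmom}), and apply Fatou's lemma.

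Finally, \eqref{eq:sign-bd} follows by applying \eqref{eq:dt-mone} to $\sqrt{\mathfrak M_1+\epsilon}$. This function is absolutely continuous with derivative bounded by $C_p\Lambda_{p,T}/2$, so one integrates in time and lets $\epsilon\to0$. The main obstacle is the kernel step: obtaining a pointwise bound for $u^r$ whose $r$-weights can be balanced so that exactly $\mathfrak M_1^{1/2}$ appears, rather than a mixture of $\mathfrak M_1$ and $\mathfrak M_2$, while keeping the HLS exponent admissible down to $p=3/2$. If the direct pointwise route fails, I would instead write $\int u^r|\varpi|$ as the mixed form $\int(\Bcal\varpi)^r\,r\cdot(r^{-1}|\varpi|)$ and bound it via the kernel representation of Lemma~\ref{lem:bsformula} differentiated in $r$.
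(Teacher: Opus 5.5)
Your outer framework is the paper's, and your Fatou extension from smooth data via Lemma~\ref{lem:raddense} is a valid substitute for the paper's limiting argument. That framework consists of the truncated balance with monotone and dominated convergence, absolute continuity of $\mathfrak M_1$, the reduction to the fixed-time flux $\frac1{2\pi}\int|u^r||\varpi|\,dx$, and the final $\sqrt{\mathfrak M_1+\epsilon}$ integration.

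The gap is in the fixed-time flux estimate. That estimate is the real content of the theorem, and you leave it schematic in a form that does not close. The near-diagonal part is fine: on $\{|X-Y|^2<r\rho\}$ one has $r\simeq\rho$, so you may redistribute the weights freely. The far region, however, cannot be handled by ``direct $L^1$ control'' as in Proposition~\ref{prop:crossint}.
\begin{itemize}
\item With $\sigma'=1/2$, the far kernel of the $\mathfrak M_1$ flux is $r\rho^2|X-Y|^{-2}$. Once the decay in $|X-Y|$ is discarded, it can only be bounded by weights like $\min\{r,\rho\}$.
\item That leaves sub-linearly weighted meridional norms such as $\int_{\Hh}r^{1/2}|\varpi|\,dr\,dz$, which $N_1$, $N_p$, $\mathfrak M_1$ do not control. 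For $\varpi=\mathbf 1_{\{r<\delta,\,|z|<Z\}}$ the crude far bound is of order $\delta^3Z^2$, while $\Lambda_{p,T}\mathfrak M_1^{1/2}\simeq\delta^{7/2}Z^{3/2}$.
\item The $\mathfrak M_2$ flux tolerated this only because it carries one more power of $r$.
\end{itemize}
Likewise, a $p$-dependent global $\sigma'$ in the spirit of \eqref{eq:mtwo-mone} is the wrong analogy. After HLS and H\"older, any $\sigma'>0$ yields $\mathfrak M_1^{1/2+3\sigma'}$ rather than $\mathfrak M_1^{1/2}$. Using $\sigma'=0$ globally instead leaves asymmetric weights $r^{1/2},\rho^{3/2}$, which close only for $p\ge2$.

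What is missing is the global, symmetric pointwise bound
\[
 |u^r(r,z)|\lesssim\int_{\Hh}\frac{\rho\,|\varpi(\rho,\zeta)|}{|(r,z)-(\rho,\zeta)|}\,d\rho\,d\zeta=I_1h(r,z),\qquad h:=\mathbf 1_{\{r>0\}}\,r|\varpi|.
\]
The paper quotes this from \cite[Proposition~2.3, (2.11)]{gallay2015remarks}. You can derive it from \eqref{eq:kern-id} and \eqref{eq:kern-der}:
\begin{itemize}
\item On $\{|X-Y|^2<r\rho\}$, where $r\simeq\rho$, take $\sigma'=0$.
\item On the complement take $\sigma'=1/2$. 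There $\rho\lesssim|X-Y|$, so $\rho^2|X-Y|^{-2}\lesssim\rho|X-Y|^{-1}$.
\end{itemize}
The point is that the far region keeps the HLS kernel $|X-Y|^{-1}$ instead of being reduced to $L^1$ norms. Then
\[
 \frac1{2\pi}\int|u^r||\varpi|\,dx\le C\int_{\R^2}h\,I_1h\lesssim\|h\|_{L^{4/3}(\R^2)}^2
\]
by a single application of Lemma~\ref{lem:hls} with $\lambda=1$, independent of $p$. With $f=|\varpi|$, the factorization
\[
 h^{4/3}=(rf^p)^{\frac1{3(p-1)}}(r^2f)^{\frac13}(rf)^{\frac{2p-3}{3(p-1)}}
\]
then gives exactly $\Lambda_{p,T}\mathfrak M_1^{1/2}$. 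Each factor $h$ carries $\mathfrak M_1^{1/4}$, and the condition $p\ge3/2$ is precisely nonnegativity of the $L^1$ exponent $\frac{2p-3}{3(p-1)}$.
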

\begin{proof}
For $L>0$, set
$\mathfrak M_{1,L}(t):=(2\pi)^{-1}\int_{\R^3}\Psi_L(r(x))|\varpi(t,x)|\,dx$.
The balance hypothesis \eqref{eq:urad-id}, the bound $0\le\Psi_L'\le1$, and
\eqref{eq:urad-bd} give
\[
 \mathfrak M_{1,L}(t)
 =\mathfrak M_{1,L}(0)
 +\frac1{2\pi}\int_0^t\int_{\R^3}
 \Psi_L'(r)u^r|\varpi|\,dx\,d\tau,
\]
with an integrable right-hand side.  Letting $L\to\infty$, monotone convergence
on the moment terms and dominated convergence on the flux term yield
\[
 \mathfrak M_1(\varpi(t))-
 \mathfrak M_1(\varpi(0))
 =\frac1{2\pi}\int_0^t\int_{\R^3}
 u^r(\tau,x)|\varpi(\tau,x)|\,dx\,d\tau.
\]
Consequently $\mathfrak M_1(\varpi(\cdot))\in W^{1,1}(0,T)$ and
\[
 \frac d{dt}\mathfrak M_1(\varpi(t))
 =\frac1{2\pi}\int_{\R^3}u^r(t,x)|\varpi(t,x)|\,dx
\]
for almost every $t$.

Fix such a time $t$, write $f:=|\varpi(t)|$, and extend
$h(r,z):=\mathbf 1_{\{r>0\}}rf(r,z)$ by zero to $\R^2$.  If $p>3/2$, then
\[
 \|h\|_{4/3}^{4/3}
 =\int_{\Hh}
 (rf^p)^{\frac1{3(p-1)}}
 (r^2f)^{1/3}
 (rf)^{\frac{2p-3}{3(p-1)}}\,dr\,dz,
\]
so H\"older's inequality gives
\[
 \|h\|_{4/3}^2
 \lesssim_p
 \|\varpi(t)\|_p^{\frac{p}{2(p-1)}}
 \|\varpi(t)\|_1^{\frac{2p-3}{2(p-1)}}
 \mathfrak M_1(\varpi(t))^{1/2}.
\]
At $p=3/2$ the endpoint factorization
\[
 \|h\|_{4/3}^{4/3}
 =\int_{\Hh}(rf^{3/2})^{2/3}(r^2f)^{1/3}\,dr\,dz
\]
gives
\[
 \|h\|_{4/3}^2
 \lesssim
 \|\varpi(t)\|_{3/2}^{3/2}
 \mathfrak M_1(\varpi(t))^{1/2}.
\]
Thus in both cases
\[
 \|h\|_{4/3}^2
 \lesssim_p
 \Lambda_{p,T}\mathfrak M_1(\varpi(t))^{1/2}.
\]

It remains to justify the radial Biot--Savart bound for the present rough pair.
Choose $\varpi_n\in C_c^\infty$ by Lemma~\ref{lem:raddense}, with convergence
in $L^1\cap L^p$ and in $\mathfrak M_1$, and set
$u_n:=\Bcal\varpi_n$ and $h_n:=r|\varpi_n|$.  Applying this interpolation to $|\varpi_n-\varpi(t)|$ gives $h_n\to h$ in $L^{4/3}(\R^2)$,
and Lemma~\ref{lem:hls} then gives $I_1h_n\to I_1h$ in $L^4(\R^2)$.
Proposition~\ref{prop:radmom} also gives
$u_n\to\Bcal\varpi(t)$ in $L^2(\R^3)$.  The div--curl assumptions and
uniqueness in \eqref{eq:divcurl-sys} identify $u(t)=\Bcal\varpi(t)$.
After passing to a common subsequence, both convergences hold almost everywhere, and the pointwise axisymmetric kernel bound
\cite[Proposition~2.3, estimate~(2.11)]{gallay2015remarks} therefore yields
\[
 |u^r(t,r,z)|\lesssim I_1h(r,z)
 \qquad\text{for a.e. }(r,z)\in\Hh.
\]
Lemma~\ref{lem:hls} now gives
\[
 \frac1{2\pi}\int_{\R^3}|u^r(t,x)|\,|\varpi(t,x)|\,dx
 \le C\int_{\R^2}hI_1h
 \lesssim \|h\|_{4/3}^2
 \lesssim_p
 \Lambda_{p,T}\mathfrak M_1(\varpi(t))^{1/2}.
\]
Combining this estimate with the derivative identity proves \eqref{eq:dt-mone}.
Finally, for $\eta>0$,
\[
 \left|\frac d{dt}
 (\mathfrak M_1(\varpi(t))+\eta)^{1/2}\right|
 \le C_p\Lambda_{p,T}
\]
for almost every $t$.  Integrating and then letting $\eta\downarrow0$ proves
\eqref{eq:sign-bd}.
\end{proof}
\subsection{Sharpness at $p=3/2$}
Theorem~\ref{thm:firstmom} propagates the first radial moment without a sign
condition for $p\ge3/2$.  Sharpness of this class-uniform instantaneous
estimate is produced by a thin anti-parallel pair whose meridional kernel
converges at the core scale to the planar Biot--Savart kernel.
Fix a nonzero radial $\phi\in C_c^\infty(B_{1/8}(0))$ with $\phi\ge0$ and $\int_{\mathbb R^2}\phi=1/2$, and, with $e_2=(0,1)$, set $W(Y):=\phi(Y+e_2)-\phi(Y-e_2)$.
Then
\begin{align}
 W(y_1,-y_2) =-W(y_1,y_2), \   \ 
 W\le0\quad\text{on }\{y_2>0\}, \notag
 \\
 \int_{\mathbb R^2}|W|\,dY =1,
  \   \ \int_{\mathbb R^2}y_1|W(Y)|\,dY=0.
 \label{eq:dy-id}
\end{align}
\begin{theorem}[Sharpness at $3/2$]\label{thm:thrsharp}
Let $1<p<3/2$ and define
\begin{align}
 \alpha_p:=\frac{2p-1}{2(p-1)}, \   \ 
 \gamma_p:=\alpha_p-2=\frac{3-2p}{2(p-1)}>0, 
 \ \
 \Gamma_R:=R^{-1}, \   \ 
 \ell_R:=R^{-\alpha_p}.
 \label{eq:large-def}
\end{align}
For all sufficiently large $R$, set
\begin{equation}\label{eq:sharp-def}
 \varpi_{0,R}(r,z)
 :=\frac{\Gamma_R}{r\ell_R^2}
 W\!\left(\frac{r-R}{\ell_R},\frac z{\ell_R}\right).
\end{equation}
Then $\varpi_{0,R}\in C_c^\infty(\mathbb R^3)$ is axisymmetric,
\begin{equation*}
 \varpi_{0,R}(r,-z)=-\varpi_{0,R}(r,z),
 \qquad
 \varpi_{0,R}(r,z)\le0\quad(z>0),
\end{equation*}
and
\begin{equation}\label{eq:sharp-bd}
 \sup_{R\gg1}\left(
 \|\varpi_{0,R}\|_1+\|\varpi_{0,R}\|_p
 +\mathfrak M_1(\varpi_{0,R})+\|\Bcal\varpi_{0,R}\|_2
 \right)<\infty.
\end{equation}
If $\varpi_R(t)$ denotes the corresponding global smooth axisymmetric no-swirl
Euler solution, then
\begin{equation}\label{eq:dt-conv}
 \left.\frac d{dt}\mathfrak M_1(\varpi_R(t))\right|_{t=0}
 =c_W R^{\gamma_p}(1+o(1)),
 \qquad R\to\infty.
\end{equation}
There are also times $t_R\downarrow0$ such that
\begin{equation}\label{eq:sharp-conv}
 \frac{\mathfrak M_1(\varpi_R(t_R))-\mathfrak M_1(\varpi_R(0))}{t_R}
 \to+\infty.
\end{equation}
Here
\begin{equation}\label{eq:dy-def}
 c_W:=\frac1{2\pi}
 \iint_{\mathbb R^2\times\mathbb R^2}
 |W(X)|W(Y)\frac{x_2-y_2}{|X-Y|^2}\,dY\,dX>0.
\end{equation}
Consequently, there exists $K_p<\infty$ such that
\begin{equation*}
 \sup\left\{
 \left.\frac d{dt}\mathfrak M_1(\varpi(t))\right|_{t=0}:
 \begin{array}{l}
  \varpi(0)\in C_c^\infty(\mathbb R^3)\ \text{axisymmetric},\\[-1mm]
  \|\varpi(0)\|_1+\|\varpi(0)\|_p+\mathfrak M_1(\varpi(0))
  +\|\Bcal\varpi(0)\|_2\le K_p
 \end{array}
 \right\}=\infty.
\end{equation*}
Thus $p=3/2$ is sharp for class-uniform sign-free instantaneous control of
$\mathfrak M_1$ by the quantities in
\eqref{eq:sharp-bd}.
\end{theorem}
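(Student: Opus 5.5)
The plan is to verify the static bounds \eqref{eq:sharp-bd} by scaling, and then to compute the instantaneous derivative \eqref{eq:dt-conv} through the flux identity $\frac{d}{dt}\mathfrak M_1=\frac1{2\pi}\int u^r|\varpi|\,dx$ (the computation in the proof of Theorem~\ref{thm:firstmom}, valid here for smooth compactly supported solutions).

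\textbf{Static bounds.} Change variables $r=R+\ell_RX_1$, $z=\ell_RX_2$, with $dx=2\pi r\ell_R^2\,dX$. Then
\[
\|\varpi_{0,R}\|_1=2\pi\Gamma_R\int|W|=2\pi R^{-1},
\qquad
\mathfrak M_1(\varpi_{0,R})=\Gamma_R\int r|W|\,dX\to 1,
\]
using $\int|W|=1$ and $\int y_1|W|=0$. For $L^p$,
\[
\|\varpi_{0,R}\|_p^p\simeq \Gamma_R^p R^{1-p}\ell_R^{2-2p}=R^{1-2p+2\alpha_p(p-1)}=R^0,
\]
which is exactly the choice of $\alpha_p$. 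For the energy, the anti-parallel pair has vanishing net circulation, so the meridional stream function decays away from the core. Alternatively, use Lemma~\ref{lem:bsformula} together with \eqref{eq:kern-bound}: the energy is about $R\,\Gamma_R^2\log(1/\ell_R)\to0$, which is bounded. Smoothness, the odd symmetry in $z$, and the sign condition follow directly from those of $W$.

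\textbf{Derivative at $t=0$.} Write $\frac{d}{dt}\mathfrak M_1|_{t=0}=\frac1{2\pi}\int u^r|\varpi_{0,R}|\,dx$. The key step is a core-scale expansion of the axisymmetric Biot--Savart law. Near $r=R$, at distances of order $\ell_R\ll R$, the meridional velocity equals the planar Biot--Savart field of $\omega^\vartheta=r\varpi$, up to a relative error $O(\ell_R\log(1/\ell_R)/R)$. In the rescaled variables, $\omega^\vartheta=\Gamma_R\ell_R^{-2}W(X)$ and
\[
u^r(X)\approx\frac{\Gamma_R}{2\pi\ell_R}\int\frac{(x_2-y_2)}{|X-Y|^2}W(Y)\,dY,
\]
where the sign convention must match $c_W>0$: the anti-parallel pair self-propels radially outward. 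Substituting gives
\[
\frac1{2\pi}\int u^r|\varpi|\,dx=\int u^r|W|\,\Gamma_R\,dX\approx\Gamma_R^2\ell_R^{-1}c_W=R^{\alpha_p-2}c_W=c_WR^{\gamma_p}.
\]
The approximation error is $o(R^{\gamma_p})$ because the curvature correction carries the extra factor $\ell_R\log/R$, and the far-field contribution is controlled by the $L^1$ bound.

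Positivity of $c_W$ follows from symmetry. The self-interaction of each blob vanishes by antisymmetry of the planar kernel, since $\phi$ is radial. The cross term is
\[
c_W=\frac1{2\pi}\cdot 2\iint\phi(X+e_2)\phi(Y-e_2)\frac{(y_2-x_2)}{|X-Y|^2}\,dX\,dY,
\]
with $y_2-x_2>0$ on the supports because the supports are disjoint.

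\textbf{Main obstacle.} The hardest step is the rigorous kernel asymptotics. I would use the exact meridional kernel from \cite{gallay2015remarks}, written in terms of $\mathcal F$ as in Lemma~\ref{lem:axiskern}. On the region $|X-Y|\lesssim\ell_R$, I would expand $\mathcal F(s)=-\tfrac12\log s+C+O(s\log s)$ as $s\to0$ to extract the planar kernel. The derivative bound \eqref{eq:kern-der} then controls the remainder.

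\textbf{Finite-time statement and consequence.} For \eqref{eq:sharp-conv}, the global smooth solution $\varpi_R$ is continuous in time, so $\frac{d}{dt}\mathfrak M_1(\varpi_R(t))=\frac1{2\pi}\int u^r|\varpi_R|\,dx$ is continuous at $t=0$. For each $R$, pick $t_R\le1/R$ small enough that the difference quotient is at least $\tfrac12c_WR^{\gamma_p}$; this quotient tends to infinity. The final supremum statement follows with $K_p$ equal to the supremum in \eqref{eq:sharp-bd}, since $\gamma_p>0$.
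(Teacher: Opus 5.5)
Your proposal is correct and follows essentially the same route as the paper: cylindrical rescaling for the static bounds, the flux identity for $\frac{d}{dt}\mathfrak M_1$, rescaling the exact Gallay--\v{S}ver\'ak radial kernel to the planar kernel at the core scale $\ell_R$, and the cross-term computation showing $c_W>0$. The differences are minor. The paper bounds $\|\Bcal\varpi_{0,R}\|_2$ directly by the first-moment estimate \eqref{eq:bs-ybound}, using the uniform $L^p$ and $\mathfrak M_1$ bounds, though your direct kernel estimate also works. For the velocity kernel, what you actually need is the derivative-level asymptotic $-2s\mathcal F'(s)\to1$ with $\sup_s|s\mathcal F'(s)|<\infty$; the paper takes this from Gallay--\v{S}ver\'ak and combines it with dominated convergence. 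An expansion of $\mathcal F$ together with the upper bound \eqref{eq:kern-der} does not by itself give this.
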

\begin{proof}
Cylindrical coordinates and
\eqref{eq:dy-id} give
\begin{align}
 \|\varpi_{0,R}\|_1
 &=2\pi\Gamma_R=2\pi R^{-1},
 \\
 \mathfrak M_1(\varpi_{0,R})
 &=\Gamma_R\int_{\mathbb R^2}(R+\ell_Ry_1)|W(Y)|\,dY
 =\Gamma_RR=1,
 \label{eq:ry-id}\\
 \|\varpi_{0,R}\|_p^p
 &=2\pi\Gamma_R^p\ell_R^{2-2p}R^{1-p}
 \int_{\mathbb R^2}(1+\varepsilon_Ry_1)^{1-p}|W(Y)|^p\,dY,
 \qquad
 \varepsilon_R:=\frac{\ell_R}{R}.
 \label{eq:ry-def}
\end{align}
The definition of $\alpha_p$ makes the power of $R$ in \eqref{eq:ry-def} equal to zero.  Since $\varepsilon_R\to0$, \eqref{eq:ry-def} therefore implies
\begin{equation*}
 0<\inf_{R\gg1}\|\varpi_{0,R}\|_p
 \le\sup_{R\gg1}\|\varpi_{0,R}\|_p<\infty.
\end{equation*}
The first-radial-moment estimate \eqref{eq:bs-ybound}, together with
\eqref{eq:ry-id} and this uniform $L^p$ bound, yields
\[
 \sup_{R\gg1}\|\Bcal\varpi_{0,R}\|_2<\infty.
\]
Thus \eqref{eq:sharp-bd} holds.
It remains to compute the initial flux.  Write
$\rho_R(X):=1+\varepsilon_Rx_1$ and
$s_R(X,Y):=\varepsilon_R^2|X-Y|^2/[\rho_R(X)\rho_R(Y)]$.
For the function $\mathcal F$ in \eqref{eq:kern-def},
\cite[Lemma~2.1]{gallay2015remarks} gives
\begin{equation*}
 \widetilde{\mathcal F}(s):=-2s\mathcal F'(s),
 \qquad
 \sup_{s>0}|\widetilde{\mathcal F}(s)|<\infty,
 \qquad
 \widetilde{\mathcal F}(s)\to1\quad(s\downarrow0).
\end{equation*}
Let \(G^r(r,z;\rho,\zeta)\) denote the meridional radial-velocity kernel \cite[Eq.~(2.7)]{gallay2015remarks}
\[G^r(r,z;\rho,\zeta) = -\frac{1}{\pi}\frac{z-\zeta}{r^{3/2}\rho^{1/2}}\,
 \mathcal F'\!\left(
 \frac{(r-\rho)^2+(z-\zeta)^2}{r\rho}
 \right),\]
where $\mathcal{F}$ is defined in \eqref{eq:kern-def}; see also Lemma~\ref{lem:crossform}.  Set $G_R^r(X,Y):=G^r(R+\ell_Rx_1,\ell_Rx_2;\,R+\ell_Ry_1,\ell_Ry_2)$ and make the change of variables
\begin{equation}\label{eq:ell-rx}
 r=R+\ell_Rx_1,
 \quad z=\ell_Rx_2,
 \quad \rho=R+\ell_Ry_1,
 \quad \zeta=\ell_Ry_2,
\end{equation}
the radial Biot--Savart kernel satisfies
\begin{equation*}
 \ell_R G_R^r(X,Y)
 =\frac1{2\pi}
 \left(\frac{\rho_R(Y)}{\rho_R(X)}\right)^{1/2}
 \frac{x_2-y_2}{|X-Y|^2}
 \widetilde{\mathcal F}\!\left(s_R(X,Y)\right).
\end{equation*}
On the fixed support of $W$,
\begin{equation*}
 \ell_RG_R^r(X,Y)
 \to\frac1{2\pi}\frac{x_2-y_2}{|X-Y|^2}
 \quad(X\ne Y),
 \qquad
 |\ell_RG_R^r(X,Y)|\lesssim|X-Y|^{-1}.
\end{equation*}
The majorant is locally integrable in the two-dimensional difference
variable.  Hence dominated convergence gives
\begin{equation}\label{eq:ell-rg}
 \iint |W(X)|\ell_RG_R^r(X,Y)W(Y)\,dY\,dX
 \to c_W.
\end{equation}
To determine the sign, set
\begin{equation}
 W_-(X):=\phi(X+e_2),
 \qquad
 W_+(X):=\phi(X-e_2).
\end{equation}
Then $W=W_--W_+$ and $|W|=W_-+W_+$.  The two self-interactions in
\eqref{eq:dy-def} vanish after exchanging $X$ and $Y$, while
the cross terms give
\begin{equation}
 c_W=\frac1\pi\iint W_+(X)W_-(Y)
 \frac{x_2-y_2}{|X-Y|^2}\,dY\,dX>0,
\end{equation}
because $x_2-y_2>0$ on
$\operatorname{supp}W_+\times\operatorname{supp}W_-$.
For the classical Euler solution, transport of $|\varpi_R|$ and
incompressibility give
\begin{equation*}
 \left.\frac d{dt}\mathfrak M_1(\varpi_R(t))\right|_{t=0}
 =\int_{\Hh}u_{0,R}^r(r,z)\,r|\varpi_{0,R}(r,z)|\,dr\,dz.
\end{equation*}
Using \eqref{eq:sharp-def}, \eqref{eq:ell-rx}, and \eqref{eq:ell-rg} in this flux identity, we obtain
\begin{align}
 \left.\frac d{dt}\mathfrak M_1(\varpi_R(t))\right|_{t=0}
 &=\frac{\Gamma_R^2}{\ell_R}(c_W+o(1))
 =c_WR^{\gamma_p}(1+o(1)), \notag
\end{align}
where $\Gamma_R^2/\ell_R=R^{\alpha_p-2}=R^{(3-2p)/(2(p-1))}$.  This proves \eqref{eq:dt-conv}.  Since
$t\mapsto\mathfrak M_1(\varpi_R(t))$ is differentiable at $t=0$, for each
sufficiently large $R$ one may choose $0<t_R\le R^{-1}$ so that
\[
 \frac{\mathfrak M_1(\varpi_R(t_R))-\mathfrak M_1(\varpi_R(0))}{t_R}
 \ge \frac12
 \left.\frac d{dt}\mathfrak M_1(\varpi_R(t))\right|_{t=0}.
\]
Together with \eqref{eq:dt-conv}, this proves
\eqref{eq:sharp-conv}.  The data are smooth and supported
away from the axis for each $R$, and \cite{UkhovskiiIudovich1968} gives the
corresponding global smooth solution; see also the exact modern formulation
of this classical class in \cite[Section~1.2]{LimJeong2025}.
\end{proof}
\begin{remark}[Sharpness of the first-radial-moment route]
For $h_R=r|\varpi_{0,R}|$,
\begin{equation*}
 \|h_R\|_{4/3}^2
 =\left(\int_{\mathbb R^2}|W|^{4/3}\right)^{3/2}
 \frac{\Gamma_R^2}{\ell_R}
 \asymp R^{\gamma_p},
 \qquad
 \gamma_p>0\iff p<\frac32,
 \qquad
 \gamma_{3/2}=0.
\end{equation*}
Thus the construction saturates the HLS flux scale in
Theorem~\ref{thm:firstmom}, and $p=3/2$ is the sharp endpoint for
class-uniform sign-free differential control of the first radial moment.
The obstruction is genuinely dynamical: Theorem~\ref{thm:thrsharp} rules out such
control below $3/2$ under uniformly bounded $L^1$, $L^p$, first-radial-moment,
and kinetic-energy data.

The exponent $3/2$ therefore marks the sharp endpoint of the sign-free
first-radial-moment differential estimate.  Theorem~\ref{thm:mfreecomp} provides a
separate moment-free route to global energy compactness for $p\ge4/3$.
\end{remark}
\section{Opposite-sign interaction and the $5/3$ threshold}\label{sec:oppsign}
For the absolute impulse, the relevant mixed exponent is
$\displaystyle p^\sharp=4p/(3p-1)$ for every $p>1$, and
$p^\sharp=p$ holds exactly at $p=5/3$.  The results in this section identify
the corresponding sharp class-uniform mixed-integrability criterion and its
fixed-time obstruction.  As in Section~\ref{sec:radmom}, the dynamical statements
use only the stated div--curl relations, integrability conditions, and
bounded-weight balance identities.
We use throughout the radial-moment Biot--Savart and kernel estimates collected in Section~\ref{sec:radbiot}.

\subsection{Mixed opposite-sign criterion}
For axisymmetric $\varpi,\eta\in C_c^\infty(\R^3)$, define
\begin{equation}\label{eq:axis-def}
 \mathfrak X(\varpi,\eta)
 :=\int_{\R^3}(\Bcal\eta)^r\,r\varpi\,dx.
\end{equation}
For the mixed-integrability classes below, $\mathfrak X$ denotes the
absolutely convergent kernel extension constructed in Proposition~\ref{prop:crossint};
it agrees with \eqref{eq:axis-def} whenever the latter pairing is defined.  We write
$h_+=\max\{h,0\}$ and $h_-=\max\{-h,0\}$.
For axisymmetric $\varpi,\eta\in C_c^\infty(\R^3)$, the cross interaction
introduced in \eqref{eq:axis-def} has the following exact kernel
representation.
\begin{lemma}\label{lem:crossform}
For axisymmetric $\varpi,\eta\in C_c^\infty(\R^3)$, let
$u_\eta=\Bcal\eta$.  Then
\begin{equation}\label{eq:kern-id}
 u_\eta^r(r,z)
 =-\frac1\pi\int_{\Hh}
 \frac{z-\zeta}{r^{3/2}\rho^{1/2}}\,
 \mathcal F'\!\left(
 \frac{(r-\rho)^2+(z-\zeta)^2}{r\rho}
 \right)\rho\eta(\rho,\zeta)\,d\rho\,d\zeta,
\end{equation}
and
\begin{align}
 \mathfrak X(\varpi,\eta)
 &=-2\iint_{\Hh\times\Hh}
 (r\rho)^{1/2}(z-\zeta)\,
 \mathcal F'\!\left(
 \frac{(r-\rho)^2+(z-\zeta)^2}{r\rho}
 \right)
 \varpi(r,z)\eta(\rho,\zeta)
 \,dr\,dz\,d\rho\,d\zeta.
 \label{eq:dz-id}
\end{align}
Consequently,
\begin{equation}\label{eq:mix-id}
 \mathfrak X(\varpi,\eta)=-\mathfrak X(\eta,\varpi).
\end{equation}
\end{lemma}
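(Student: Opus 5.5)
The plan is to derive \eqref{eq:kern-id} from the stream-function representation of $\Bcal\eta$, and then read off \eqref{eq:dz-id} and \eqref{eq:mix-id} by direct integration and a change of variables. Write $\omega=r\eta e_\vartheta$. The vector potential $A=(-\Delta)^{-1}\omega$ is azimuthal, $A=A^\vartheta e_\vartheta$, with
\[
 A^\vartheta(r,z)=\frac1{4\pi}\int_{\Hh}\int_0^{2\pi}\frac{\cos\alpha\,\rho\eta(\rho,\zeta)\,\rho\,d\alpha}{\bigl(r^2+\rho^2-2r\rho\cos\alpha+(z-\zeta)^2\bigr)^{1/2}}\,d\rho\,d\zeta .
\]
Factoring $(r\rho)^{1/2}$ out of the square root, the distance becomes $(r\rho)^{1/2}\bigl(s+2(1-\cos\alpha)\bigr)^{1/2}$ with $s=((r-\rho)^2+(z-\zeta)^2)/(r\rho)$. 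By symmetry $\int_0^{2\pi}=2\int_0^\pi$, so
\[
 A^\vartheta=\frac1{2\pi}\int_{\Hh}\frac{\rho^{1/2}}{r^{1/2}}\mathcal F(s)\,\rho\eta\,d\rho\,d\zeta .
\]
Since $u=\nabla\times A$, we get $u^r=-\partial_zA^\vartheta$. Differentiating under the integral, which is legitimate for compactly supported smooth $\eta$ by \eqref{eq:kern-der}, gives $\partial_z s=2(z-\zeta)/(r\rho)$. Hence
\[
 u^r=-\frac1\pi\int\frac{z-\zeta}{r^{3/2}\rho^{1/2}}\mathcal F'(s)\,\rho\eta\,d\rho\,d\zeta ,
\]
which is \eqref{eq:kern-id}. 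This agrees with the kernel $G^r$ of \cite{gallay2015remarks} quoted earlier, so one could alternatively just cite it.

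Next we insert this into \eqref{eq:axis-def}, using $dx=2\pi r\,dr\,dz$:
\[
 \mathfrak X=2\pi\int_{\Hh}u_\eta^r\,r^2\varpi\,dr\,dz .
\]
The powers of $r$ and $\rho$ combine as $2\pi\cdot\pi^{-1}\cdot r^2 r^{-3/2}\rho^{-1/2}\rho=2(r\rho)^{1/2}$, which yields \eqref{eq:dz-id}. The step needing care is Fubini. Near the diagonal, $|\mathcal F'(s)|\lesssim s^{-1-\sigma'}$ with $\sigma'=1/2$ gives an integrand bounded by
\[
 (r\rho)^{1/2}|z-\zeta|\,(r\rho)^{3/2}|X-Y|^{-3}\lesssim|X-Y|^{-2}
\]
on compact sets away from the axis, which is borderline non-integrable in two dimensions. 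We therefore take $\sigma'=0$ instead, which gives $|\mathcal F'(s)|\lesssim s^{-1}$ and a bound $\lesssim|X-Y|^{-1}$, locally integrable. Near the axis $r\to0$, the smallness of the factor $(r\rho)^{1/2}$ and the compact support handle the integrability. Thus the four-fold integral converges absolutely.

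Finally, the kernel in \eqref{eq:dz-id} has the form $k(X,Y)=(r\rho)^{1/2}(z-\zeta)\mathcal F'(s(X,Y))$. Here $s$ is symmetric in $X=(r,z)$ and $Y=(\rho,\zeta)$, while $z-\zeta$ is antisymmetric, so $k(Y,X)=-k(X,Y)$. Exchanging the roles of the variables by Fubini, which is justified by the absolute convergence just established, gives \eqref{eq:mix-id}.
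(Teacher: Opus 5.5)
Your proposal is correct and is essentially the paper's proof. The paper computes $u_\eta^r$ directly from the three-dimensional Biot--Savart kernel and uses $\int_0^{2\pi}\cos\varphi\,D^{-3}\,d\varphi=-4(r\rho)^{-3/2}\mathcal F'(s)$, whereas you write the azimuthal vector potential $A^\vartheta$ in terms of $\mathcal F$ and take $u^r=-\partial_zA^\vartheta$; this is the same computation with the $z$-derivative taken after the angular integral, and differentiating under the integral is harmless because the kernel depends on $z-\zeta$. The rest matches the paper: the cylindrical integration giving \eqref{eq:dz-id}, absolute convergence from the choice $\sigma'=0$ (bound $\lesssim (r\rho)^{3/2}|X-Y|^{-1}$), and the antisymmetry-plus-Fubini argument for \eqref{eq:mix-id}.
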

\begin{proof}
Let $u_\eta$ be the field in \eqref{eq:bs-def} with $\varpi=\eta$.
Lemma~\ref{lem:bsformula} and uniqueness in
Proposition~\ref{prop:radmom} identify $u_\eta=\Bcal\eta$.  For
$x=(r,0,z)$ and
$y=(\rho\cos\varphi,\rho\sin\varphi,\zeta)$, write
\[
 D^2=(r-\rho)^2+(z-\zeta)^2+2r\rho(1-\cos\varphi),
 \qquad
 s=\frac{(r-\rho)^2+(z-\zeta)^2}{r\rho}.
\]
The three-dimensional Biot--Savart formula gives
\[
 u_\eta^r(r,z)
 =\frac1{4\pi}\int_{\Hh}\int_0^{2\pi}
 \frac{(z-\zeta)\cos\varphi}{D^3}\,
 \rho^2\eta(\rho,\zeta)\,d\varphi\,d\rho\,d\zeta.
\]
Differentiating \eqref{eq:kern-def} yields
\[
 \int_0^{2\pi}\frac{\cos\varphi}{D^3}\,d\varphi
 =-\frac4{(r\rho)^{3/2}}\mathcal F'(s),
\]
which proves \eqref{eq:kern-id}.  Cylindrical integration in
\eqref{eq:axis-def} then gives \eqref{eq:dz-id}.

By \eqref{eq:kern-der} with $\sigma'=0$,
\[
 \left|(r\rho)^{1/2}(z-\zeta)\mathcal F'(s)\right|
 \lesssim
 \frac{r^{3/2}\rho^{3/2}}{|(r,z)-(\rho,\zeta)|}.
\]
For compactly supported smooth data this is integrable on
$\Hh\times\Hh$, since $|Z|^{-1}$ is locally integrable in two
variables.  Thus \eqref{eq:dz-id} is absolutely convergent, and
interchanging $(r,z)$ with $(\rho,\zeta)$ gives \eqref{eq:mix-id}.
\end{proof}

\begin{lemma}[Rough kernel pairing]\label{lem:kernel-identity}
Let $\varpi\in\mathcal X_p\cap L^1(\R^3)$ and
$\eta\in\mathcal X_q\cap L^1(\R^3)$.  Assume that the kernel integral defining
$\mathfrak{X}(\varpi,\eta)$ in \eqref{eq:dz-id} is absolutely convergent.  Then
\[
 (\Bcal \eta)^r r\varpi,\ (\Bcal \varpi)^r r\eta\in L^1(\R^3),
\]
and
\begin{equation}\label{eq:bs-id}
 \int_{\R^3}(\Bcal \eta)^r r\varpi\,dx
 =\mathfrak{X}(\varpi,\eta)=-\mathfrak{X}(\eta,\varpi)
 =-\int_{\R^3}(\Bcal \varpi)^r r\eta\,dx .
\end{equation}

\end{lemma}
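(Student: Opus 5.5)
The plan is to reduce everything to the smooth identity of Lemma~\ref{lem:crossform} by approximation, and to pass to the limit in the kernel integral by dominated convergence. First I would identify $(\Bcal\eta)^r$ pointwise. Take $\eta_n\in C_c^\infty$ axisymmetric, given by Lemma~\ref{lem:raddense}, with $\eta_n\to\eta$ in $L^1\cap L^q$ and $\mathfrak M_2(\eta_n-\eta)\to0$. By Proposition~\ref{prop:radmom}, $\Bcal\eta_n\to\Bcal\eta$ in $L^2$, so a subsequence converges a.e. Each $(\Bcal\eta_n)^r$ is given by the kernel formula \eqref{eq:kern-id}. The bound \eqref{eq:kern-der} with $\sigma'=0$ majorizes that kernel by
\[
 |G^r(r,z;\rho,\zeta)|\rho\lesssim \frac{\rho^{3/2}r^{1/2}}{r\,|(r,z)-(\rho,\zeta)|}\cdot\frac{1}{r^{1/2}\rho^{1/2}}\cdot\rho^{1/2}.
\]
This is, up to powers of $r,\rho$, the two-dimensional Riesz potential $I_1$. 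For a.e.\ $(r,z)$ the kernel integral against $|\eta|$ is finite, by HLS (Lemma~\ref{lem:hls}) applied to $r|\eta|\in L^{4/3}(\R^2)$ locally. That integrability follows by interpolating $L^q$ and $L^1$ on bounded regions and $\mathfrak M_2$ at large $r$. With it, the kernel integrals against $\eta_n$ converge a.e.\ to the one against $\eta$. Hence $(\Bcal\eta)^r(r,z)$ equals the absolutely convergent kernel integral \eqref{eq:kern-id} with $\eta$, for a.e.\ $(r,z)\in\Hh$.

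Next comes integrability and the identity. Since the four-variable integral in \eqref{eq:dz-id} is assumed absolutely convergent, Tonelli gives that
\[
\int_{\Hh} r^2|\varpi(r,z)|\int_{\Hh}|G^r(r,z;\rho,\zeta)|\,\rho|\eta(\rho,\zeta)|\,d\rho\,d\zeta\,dr\,dz<\infty.
\]
Here I should check that the integrand of \eqref{eq:dz-id} equals $2\pi r^2\varpi\, G^r\rho\eta$ up to the constant, which is exactly how \eqref{eq:dz-id} was derived from \eqref{eq:kern-id} via $dx=2\pi r\,dr\,dz$. This bounds $\int_{\R^3}|(\Bcal\eta)^r|\,r|\varpi|\,dx$, so $(\Bcal\eta)^r r\varpi\in L^1$. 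Fubini then yields $\int(\Bcal\eta)^r r\varpi\,dx=\mathfrak X(\varpi,\eta)$, the kernel integral. The kernel in \eqref{eq:dz-id} is antisymmetric under $(r,z)\leftrightarrow(\rho,\zeta)$: the factor $(z-\zeta)$ is odd and the rest is symmetric. So the absolute integral is symmetric in $(\varpi,\eta)$, and the kernel integral for $\mathfrak X(\eta,\varpi)$ is absolutely convergent and equals $-\mathfrak X(\varpi,\eta)$. Repeating the pointwise identification with the roles swapped, using $\varpi\in\mathcal X_p\cap L^1$, gives $(\Bcal\varpi)^r r\eta\in L^1$ and the last equality in \eqref{eq:bs-id}.

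The main obstacle is the first step: showing that the $L^2$ limit $\Bcal\eta$ coincides a.e.\ with the kernel integral. The kernel is singular and $\eta$ is only rough, and the weights $r^{-3/2}\rho^{1/2}$ must be handled near the axis and at infinity. I would try dominated convergence with the HLS majorant on compact subsets of $\Hh$ away from $r=0$. Outside the diagonal region, the $L^1$ and $\mathfrak M_2$ convergence of $\eta_n$ gives the remaining control, since there $|\mathcal F'(s)|\lesssim s^{-3/2}$ decays. Because $\Hh$ is exhausted by such compacts, a.e.\ identification on all of $\Hh$ follows. If that direct route is awkward, the fallback is to cite the pointwise bound $|u^r|\lesssim I_1(r|\eta|)$ from \cite{gallay2015remarks}, exactly as in the proof of Theorem~\ref{thm:firstmom}, and transfer it to the difference $\eta_n-\eta$.
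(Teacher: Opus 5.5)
Your outline is correct and has the same architecture as the paper's proof. You approximate by smooth axisymmetric data and use Proposition~\ref{prop:radmom} to get $\Bcal\eta_n\to\Bcal\eta$ in $L^2$. You identify $(\Bcal\eta)^r$ almost everywhere with the meridional kernel integral. You then conclude by Tonelli--Fubini and the antisymmetry of the kernel in \eqref{eq:dz-id}; your second paragraph is exactly how the paper closes.

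The difference is in the identification step. The paper lifts to three dimensions. $L^1$ and finite absolute impulse give $\mathfrak M_1<\infty$ by \eqref{eq:mominterp}, so $r\eta_ne_\vartheta\to r\eta e_\vartheta$ in $L^1(\R^3)$. The weak-type bound $L^1\to L^{3/2,\infty}$ for the three-dimensional Biot--Savart kernel then identifies $\Bcal\eta$ a.e.\ with the usual Biot--Savart integral, whose angular integration is \eqref{eq:kern-id}. You stay in the meridional half-plane with a two-dimensional majorant. At that step this needs only $\eta\in L^1(\R^3)$ and no first moment, at the price of verifying the kernel bound yourself.

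As written, however, your justification of that step fails in part of the range. You claim $r|\eta|\in L^{4/3}_{\rm loc}(\R^2)$ by interpolating $L^q$ and $L^1$. For $q<4/3$ this interpolation cannot reach $L^{4/3}$. The case is not vacuous: in Corollary~\ref{cor:sign-cancel} the lemma is used to identify $\Bcal b$ with $b\in\mathcal X_p$, $1<p<4/3$. Dominated convergence is also not the right mechanism, since the $\eta_n$ are not dominated.

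The repair is simpler than what you propose. Write $d=|(r,z)-(\rho,\zeta)|$ and let $K[\eta](r,z)$ denote the right-hand side of \eqref{eq:kern-id}.
\begin{itemize}
\item On $\{d^2<r\rho\}$, where $r\simeq\rho$, use \eqref{eq:kern-der} with $\sigma'=0$.
\item On $\{d^2\ge r\rho\}$, where $\rho\lesssim d$, use \eqref{eq:kern-der} with $\sigma'=1/2$.
\end{itemize}
Together these give the global bound $|G^r(r,z;\rho,\zeta)|\,\rho\lesssim\rho/d$; your displayed majorant carries a stray factor $r^{-1/2}$. Now $\|\rho\eta\|_{L^1(\Hh)}=(2\pi)^{-1}\|\eta\|_{L^1(\R^3)}$ and $\|I_1f\|_{L^1(B_R)}\le2\pi R\|f\|_{L^1(\R^2)}$. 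Hence $K[\eta]$ converges absolutely a.e. Moreover $|K[\eta_n]-K[\eta]|\lesssim I_1(\rho|\eta_n-\eta|)\to0$ in $L^1_{\rm loc}$, so the convergence holds a.e.\ along a subsequence.

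This is precisely your fallback via the pointwise bound of \cite{gallay2015remarks} applied to differences. With it, no HLS inequality, no $L^{4/3}$ integrability, and no separate treatment near the axis or at infinity is needed. The rest of your argument then goes through unchanged for all $p,q>1$.
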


\begin{proof}
Since $h\in L^1$ and $\Mtwo(h)<\infty$ imply
$\mathfrak M_1(h)<\infty$, the lifted vorticity $rh e_\vartheta$ belongs to
$L^1(\R^3)$.  Choose axisymmetric $h_n\in C_c^\infty$ with
\[
 h_n\to h\quad\text{in }L^1\cap L^s,
 \qquad \Mtwo(h_n-h)\to0.
\]
Then, by $(\mathfrak M_1(k))^2\lesssim\|k\|_1\Mtwo(k)$,
\[
 rh_ne_\vartheta\to rhe_\vartheta\quad\text{in }L^1,
 \qquad \Bcal h_n\to\Bcal h\quad\text{in }L^2.
\]
The weak $L^1\to L^{3/2,\infty}$ bound for the three-dimensional
Biot--Savart kernel identifies $\Bcal h$ a.e. with the usual Biot--Savart
integral.  Hence, for a.e. $(r,z)$,
\[
 (\Bcal h)^r(r,z)
 =-\frac1\pi\int_{\Hh}
 \frac{z-\zeta}{r^{3/2}\rho^{1/2}}
 \mathcal F'\!\left(\frac{(r-\rho)^2+(z-\zeta)^2}{r\rho}\right)
 \rho h(\rho,\zeta)\,d\rho\,d\zeta .
\]
Absolute convergence of the double kernel integral and Tonelli's theorem now give
\eqref{eq:bs-id}.  Skew-symmetry follows by interchanging the two meridional variables.
\end{proof}

\begin{proposition}[Symmetric cross interaction]
Let $5/3\le p<\infty$.  Define
\begin{equation}\label{eq:inter-id}
 \alpha_p=\frac{2p}{5p-3},
 \qquad
 \beta_p=\frac{3(p-1)}{5p-3}.
\end{equation}
For $\varpi,\eta\in\mathcal X_p$, let
$\mathfrak X(\varpi,\eta)$ denote the right-hand side of
\eqref{eq:dz-id}.  This integral is absolutely convergent,
agrees with \eqref{eq:axis-def} for axisymmetric
$C_c^\infty$ data, satisfies \eqref{eq:mix-id}, and obeys
\begin{equation}\label{eq:eta-alp}
 |\mathfrak X(\varpi,\eta)|
 \lesssim_p
 \|\varpi\|_p^{\alpha_p}\mathfrak M_2(\varpi)^{\beta_p}
 \|\eta\|_p^{\alpha_p}\mathfrak M_2(\eta)^{\beta_p}.
\end{equation}
\end{proposition}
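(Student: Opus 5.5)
Our plan is to bound the kernel of \eqref{eq:dz-id} by a two-dimensional Riesz potential and then apply the Hardy--Littlewood--Sobolev inequality of Lemma~\ref{lem:hls}, in the same spirit as the proof of \eqref{eq:mtwo-mone}. Write $X=(r,z)$, $Y=(\rho,\zeta)$ and $s=|X-Y|^2/(r\rho)$. Lemma~\ref{lem:axiskern}, estimate \eqref{eq:kern-der} with a parameter $\sigma'\in[0,1/2]$, gives
\[
 \bigl|(r\rho)^{1/2}(z-\zeta)\mathcal F'(s)\bigr|
 \lesssim_{\sigma'} (r\rho)^{1/2}|X-Y|\,(r\rho)^{1+\sigma'}|X-Y|^{-2-2\sigma'}
 =\frac{(r\rho)^{3/2+\sigma'}}{|X-Y|^{1+2\sigma'}} .
\]
We would therefore bound the integrand of \eqref{eq:dz-id} in absolute value by
\[
 \frac{g_\varpi(X)\,g_\eta(Y)}{|X-Y|^{1+2\sigma'}},
 \qquad
 g_\varpi:=\mathbf 1_{\{r>0\}}r^{3/2+\sigma'}|\varpi|,
\]
with $g_\eta$ defined in the same way. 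The symmetric HLS inequality in $\R^2$ with exponent $\lambda=1+2\sigma'\in(0,2)$ then gives the bound $\|g_\varpi\|_{q}\|g_\eta\|_{q}$, where $2/q+\lambda/2=2$, that is, $q=4/(3-2\sigma')$.

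Next we choose $\sigma'$ so that $\|g\|_q$ interpolates exactly between $r|\varpi|^p$, which is controlled by \eqref{eq:lp-cyl}, and $r^3|\varpi|$, which is controlled by \eqref{eq:mtwo-cyl}. We write $|g|^q=(r|\varpi|^p)^{\theta}(r^3|\varpi|)^{1-\theta}$. Matching the powers of $|\varpi|$ gives $q=p\theta+1-\theta$. Matching the powers of $r$ gives $q(3/2+\sigma')=3-2\theta$.

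Eliminating $\sigma'$ through $q(3/2+\sigma')=q\cdot 3/2+q\sigma'$ with $q\sigma'=(3q-4)/2$ yields $3q-2=3-2\theta$, so $q=(5-2\theta)/3$. Combining this with $q=1+(p-1)\theta$ gives $\theta=2/(3p-1)$ and $q=(5p-3)/(3p-1)$. Hence
\[
 \sigma'=\frac{3q-4}{2q}=\frac{3(p-1)}{2(5p-3)} .
\]

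The constraint $\sigma'\le 1/2$ is equivalent to $3p-3\le5p-3$, which always holds, and $\sigma'\ge0$ holds for $p\ge1$. At first sight the restriction $p\ge5/3$ therefore seems unneeded. We expect it to enter instead through $\theta\le1$ together with the requirement that the resulting exponents agree with \eqref{eq:inter-id}. Hölder then gives
\[
 \|g\|_q^2\lesssim \|\varpi\|_p^{2p\theta/q}\,\mathfrak M_2^{2(1-\theta)/q},
 \qquad
 \frac{2p\theta}{q}=\frac{4p}{5p-3},
 \qquad
 \frac{2(1-\theta)}{q}=\frac{2(3p-3)}{5p-3}.
\]
Each factor on the right of \eqref{eq:eta-alp} is $\|g\|_q$ itself, whose exponents are half of these, namely $2p/(5p-3)=\alpha_p$ and $3(p-1)/(5p-3)=\beta_p$. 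This matches \eqref{eq:inter-id}, so the estimate holds with these exponents.

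The main point to double-check is whether $p\ge5/3$ is genuinely needed; the paper presumably imposes it because it wants the kernel exponent and HLS admissibility in a form matching later uses, in particular $p^\sharp\le p$. In the write-up we would verify carefully that $1+2\sigma'<2$ and $q>1$, which both hold.

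Finally, absolute convergence for general $\varpi,\eta\in\mathcal X_p$ follows from this majorant, because the bound is applied to $|\varpi|$ and $|\eta|$. Agreement with \eqref{eq:axis-def} for axisymmetric $C_c^\infty$ data is exactly Lemma~\ref{lem:crossform}. The skew-symmetry \eqref{eq:mix-id} follows by swapping $(r,z)\leftrightarrow(\rho,\zeta)$ in the absolutely convergent integral, since the kernel $(r\rho)^{1/2}(z-\zeta)\mathcal F'(s)$ is odd under this swap.
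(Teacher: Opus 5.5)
Your strategy is the paper's: bound the kernel via \eqref{eq:kern-der}, apply the symmetric two-dimensional HLS inequality, and interpolate $\|g\|_q$ between $r|\varpi|^p$ and $r^3|\varpi|$. Your values $\theta=2/(3p-1)$ and $q=(5p-3)/(3p-1)$ are correct, and so are the resulting exponents $\alpha_p,\beta_p$. However, your final evaluation of $\sigma'$ is wrong. Since $3q-4=\frac{3(5p-3)-4(3p-1)}{3p-1}=\frac{3p-5}{3p-1}$,
\[
 \sigma'=\frac{3q-4}{2q}=\frac{3p-5}{2(5p-3)},
\]
which is the paper's $\sigma_p^\times$, not $\frac{3(p-1)}{2(5p-3)}$. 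With your value, neither the HLS relation $q=4/(3-2\sigma')$ nor the $r$-power matching $q(3/2+\sigma')=3-2\theta$ holds. For example, at $p=2$ one needs $\sigma'=1/14$, not $3/14$. So the argument as you wrote it does not close.

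More importantly, this slip hides the only place where the hypothesis is used, and your conclusion that $p\ge5/3$ ``seems unneeded'' is false. The correct $\sigma'$ is nonnegative exactly when $p\ge5/3$, and \eqref{eq:kern-der} is available only for $0\le\sigma'\le1/2$. It cannot be extended to $\sigma'<0$: the proof of Proposition~\ref{prop:asymsharp} shows $-\mathcal F'(s)\ge c_0s^{-1}$ as $s\downarrow0$, so $|\mathcal F'(s)|\lesssim s^{-1-\sigma'}$ fails near the diagonal. Your alternative guess that the restriction enters through $\theta\le1$ gives nothing, since $\theta=2/(3p-1)\le1$ for all $p\ge1$.

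In fact \eqref{eq:eta-alp} is false for every $1<p<5/3$. Proposition~\ref{prop:asymsharp} with $q=p<p^\sharp$ produces $\varpi_R,\eta_R\in\mathcal X_p$ whose $L^p$ norms and absolute impulses are uniformly bounded, yet $\mathfrak X(\varpi_R,\eta_R)\to\infty$.

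Once $\sigma'$ is corrected, the rest of your argument goes through as in the paper. Note that $\lambda=1+2\sigma'=8(p-1)/(5p-3)\in(0,2)$ and $0\le\sigma'<3/10$. Absolute convergence follows from the majorant, the identification with \eqref{eq:axis-def} follows from Lemma~\ref{lem:crossform}, and skew-symmetry follows by swapping the meridional variables.
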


\begin{proof}
Set
\[
 \sigma_p^\times:=\frac{3p-5}{2(5p-3)},
 \qquad
 \lambda:=1+2\sigma_p^\times=\frac{8(p-1)}{5p-3},
 \qquad
 q:=\frac{5p-3}{3p-1}.
\]
For $p\ge5/3$, one has $0\le\sigma_p^\times<3/10$, $0<\lambda<2$, and
$q=4/(4-\lambda)$.  By \eqref{eq:kern-der} with $\sigma'=\sigma_p^\times$ and
$|z-\zeta|\le |(r,z)-(\rho,\zeta)|$,
\begin{equation*}
 2\left|(r\rho)^{1/2}(z-\zeta)
 \mathcal F'\!\left(
 \frac{(r-\rho)^2+(z-\zeta)^2}{r\rho}
 \right)\right|
 \lesssim_p
 \frac{r^{3/2+\sigma_p^\times}
       \rho^{3/2+\sigma_p^\times}}
 {|(r,z)-(\rho,\zeta)|^\lambda}.
\end{equation*}

For $h\in\mathcal X_p$, set
$g_h(r,z):=\mathbf 1_{\{r>0\}}r^{3/2+\sigma_p^\times}|h(r,z)|$.
The choice of $q$ and $\sigma_p^\times$ gives
\[
 |g_h|^q
 =
 (r|h|^p)^{\frac{2}{3p-1}}
 (r^3|h|)^{\frac{3(p-1)}{3p-1}}.
\]
Hence H\"older's inequality, \eqref{eq:lp-cyl}, and
\eqref{eq:mtwo-cyl} yield
\[
 \|g_h\|_{L^q(\R^2)}
 \lesssim_p
 \|h\|_p^{\frac{2p}{5p-3}}
 \mathfrak M_2(h)^{\frac{3(p-1)}{5p-3}}
 =
 \|h\|_p^{\alpha_p}\mathfrak M_2(h)^{\beta_p}.
\]
Since $0<\lambda<2$ and $q=4/(4-\lambda)$, Lemma~\ref{lem:hls} and
the kernel bound above give
\begin{align*}
 &2\iint_{\Hh\times\Hh}
 \left|(r\rho)^{1/2}(z-\zeta)
 \mathcal F'\!\left(
 \frac{(r-\rho)^2+(z-\zeta)^2}{r\rho}
 \right)
 \varpi(r,z)\eta(\rho,\zeta)\right|
 \,dr\,dz\,d\rho\,d\zeta
 \\
 &\qquad\lesssim_p
 \|g_\varpi\|_{L^q(\R^2)}
 \|g_\eta\|_{L^q(\R^2)}
\lesssim_p
 \|\varpi\|_p^{\alpha_p}\mathfrak M_2(\varpi)^{\beta_p}
 \|\eta\|_p^{\alpha_p}\mathfrak M_2(\eta)^{\beta_p}.
\end{align*}
This proves absolute convergence and \eqref{eq:eta-alp}.
Lemma~\ref{lem:crossform} identifies the resulting bilinear form with
\eqref{eq:axis-def} for axisymmetric $C_c^\infty$ data.  Absolute
convergence also permits interchange of the two meridional variables,
which gives \eqref{eq:mix-id}.
\end{proof}
 
This estimate is symmetric in the two factors.  The next estimate allows different integrability on the two signs and yields the sharp mixed threshold.
\begin{proposition}[Asymmetric cross interaction]\label{prop:crossint}
Let $1<p<\infty$, let $p^\sharp$ be defined by
\eqref{eq:psharp}, and set
\begin{equation}\label{eq:inter-def}
 \tau_p:=\frac{4p}{3p+1}.
\end{equation}
Let $\varpi\in\mathcal X_p\cap L^1(\R^3)$ and
$\eta\in\mathcal X_{p^\sharp}\cap L^1(\R^3)$.  Then the right-hand
side of \eqref{eq:dz-id} is absolutely convergent, defines
$\mathfrak X(\varpi,\eta)$, and satisfies
\begin{equation}\label{eq:eta-mtwo}
 |\mathfrak X(\varpi,\eta)|
 \lesssim_p  
 \|\varpi\|_1\|\eta\|_1
 +\|\varpi\|_p^{1/4}\mathfrak M_2(\varpi)^{3/4}
  \|\eta\|_{p^\sharp} .
\end{equation}
\end{proposition}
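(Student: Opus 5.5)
The plan is to bound the absolute kernel integral in \eqref{eq:dz-id} after splitting the meridional product $\Hh\times\Hh$ according to the size of
\[
 s=\frac{(r-\rho)^2+(z-\zeta)^2}{r\rho}.
\]
In the far region $\{s\ge1\}$ I expect a pure $L^1$ bound. In the near region $\{s<1\}$ I will redistribute the radial weights and apply a two-dimensional Hardy--Littlewood--Sobolev pairing. Write $X=(r,z)$, $Y=(\rho,\zeta)$ and
\[
 K(X,Y):=2\bigl|(r\rho)^{1/2}(z-\zeta)\mathcal F'(s)\bigr|.
\]
Absolute convergence will follow from these two bounds. Agreement with \eqref{eq:axis-def}, and skew-symmetry, then follow from Lemma~\ref{lem:kernel-identity} (which applies once absolute convergence is known) and from Lemma~\ref{lem:crossform} for smooth data.

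\emph{Far region.} I will use \eqref{eq:kern-der} with $\sigma'=1/2$, so that $|\mathcal F'(s)|\lesssim s^{-3/2}$. Together with $|z-\zeta|\le|X-Y|$ this gives
\[
 K\lesssim\frac{(r\rho)^2}{|X-Y|^2}.
\]
On $\{s\ge1\}$ we have $|X-Y|^2\ge r\rho$, hence $K\lesssim r\rho$. Since $\|h\|_{L^1(\R^3)}=2\pi\int_{\Hh}r|h|\,dr\,dz$, the far contribution is bounded by $\|\varpi\|_1\|\eta\|_1$. This is the first term in \eqref{eq:eta-mtwo}.

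\emph{Near region.} I will use \eqref{eq:kern-der} with $\sigma'=0$, which gives
\[
 K\lesssim\frac{(r\rho)^{3/2}}{|X-Y|}.
\]
On $\{s<1\}$ we have $|r-\rho|<\sqrt{r\rho}$, which forces $r\simeq\rho$ with absolute constants. Hence
\[
 (r\rho)^{3/2}\lesssim r^a\rho^b \qquad\text{whenever } a+b=3 .
\]
After this, I drop the indicator of $\{s<1\}$ and apply Lemma~\ref{lem:hls} with $\lambda=1$ to $f=r^a|\varpi|\in L^{q_1}(\R^2)$ and $g=\rho^b|\eta|\in L^{q_2}(\R^2)$. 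The exponents are chosen as follows.
\begin{itemize}
 \item Take $q_2=p^\sharp$ and $bq_2=1$, so that $\|g\|_{q_2}\simeq\|\eta\|_{p^\sharp}$ by \eqref{eq:lp-cyl}.
 \item Take $q_1=\tau_p=4p/(3p+1)$ and interpolate
 \[
  |f|^{q_1}=(r|\varpi|^p)^{\theta}(r^3|\varpi|)^{1-\theta},\qquad \theta=\frac{q_1}{4p}.
 \]
 Hölder's inequality with \eqref{eq:lp-cyl} and \eqref{eq:mtwo-cyl} then gives $\|f\|_{q_1}\lesssim\|\varpi\|_p^{1/4}\mathfrak M_2(\varpi)^{3/4}$. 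Matching powers of $r$ forces $a=(3-2\theta)/q_1=(9p+1)/(4p)$.
 \item The integrability side of Hardy--Littlewood--Sobolev is satisfied:
 \[
  \frac1{\tau_p}+\frac1{p^\sharp}=\frac{3p+1}{4p}+\frac{3p-1}{4p}=\frac32=2-\frac\lambda2 .
 \]
 \item The weight side is also consistent:
 \[
  a+b=\frac{9p+1}{4p}+\frac{3p-1}{4p}=3,
 \]
 exactly as the near-diagonal kernel requires.
\end{itemize}
This coincidence of the two constraints is what singles out $p^\sharp$. It yields the second term in \eqref{eq:eta-mtwo}.

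The main obstacle is this near-region bookkeeping. One must check that a single choice of weights satisfies both the Hölder factorization and the HLS scaling with the admissible value $\sigma'=0$, and that the comparability $r\simeq\rho$ on $\{s<1\}$ justifies moving weight from $\rho$ to $r$. I also need to confirm that $1<\tau_p,\,p^\sharp<\infty$ for every $p>1$. This holds because $1<\tau_p<4/3$ and $4/3<p^\sharp<2$. The remaining steps (the far-region $L^1$ bound and the identification through Lemma~\ref{lem:kernel-identity}) are routine.
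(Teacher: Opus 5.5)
Your proposal is correct and follows the paper's proof essentially step for step. Both split at $|X-Y|^2=r\rho$ and use the $\sigma'=1/2$ kernel bound to get the $\|\varpi\|_1\|\eta\|_1$ far term. Both then use $\sigma'=0$ and $r\simeq\rho$ in the near region to redistribute the weight as $r^{(9p+1)/(4p)}\rho^{1/p^\sharp}/|X-Y|$, and close with the same H\"older factorization and two-dimensional Hardy--Littlewood--Sobolev pairing with $1/\tau_p+1/p^\sharp=3/2$.
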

\begin{proof}
Write $X=(r,z)$, $Y=(\rho,\zeta)$, $d=|X-Y|$, and split
$\Hh\times\Hh$ into $\mathcal U_{\rm near}:=\{d^2<r\rho\}$ and
$\mathcal U_{\rm far}:=\{d^2\ge r\rho\}$.  On $\mathcal U_{\rm far}$, \eqref{eq:kern-der} with $\sigma'=1/2$ and
$|z-\zeta|\le d$ gives
\[
 2\left|(r\rho)^{1/2}(z-\zeta)
 \mathcal F'\!\left(\frac{d^2}{r\rho}\right)\right|
 \lesssim r\rho.
\]
Hence the far contribution is bounded by
$C\|\varpi\|_1\|\eta\|_1$.

On $\mathcal U_{\rm near}$, $(r-\rho)^2<r\rho$, so $r/\rho$ stays between
the two roots of $t^2-3t+1=0$; in particular $r$ and $\rho$ are comparable.
Using \eqref{eq:kern-der} with $\sigma'=0$ and the definition of
$p^\sharp$, we may distribute the radial weight between the two comparable
variables to obtain
\[
 2\left|(r\rho)^{1/2}(z-\zeta)
 \mathcal F'\!\left(\frac{d^2}{r\rho}\right)\right|
 \lesssim_p
 \frac{r^{(9p+1)/(4p)}\rho^{1/p^\sharp}}d.
\]
Set
\[
 G_\varpi(r,z):=\mathbf1_{\{r>0\}}r^{(9p+1)/(4p)}|\varpi(r,z)|,
 \qquad
 H_\eta(\rho,\zeta):=\mathbf1_{\{\rho>0\}}
 \rho^{1/p^\sharp}|\eta(\rho,\zeta)|.
\]
With $\tau_p=4p/(3p+1)$,
\[
 |G_\varpi|^{\tau_p}
 =(r|\varpi|^p)^{\frac1{3p+1}}
  (r^3|\varpi|)^{\frac{3p}{3p+1}},
\]
and therefore
\[
 \|G_\varpi\|_{\tau_p}
 \lesssim_p
 \|\varpi\|_p^{1/4}\mathfrak M_2(\varpi)^{3/4},
 \qquad
 \|H_\eta\|_{p^\sharp}
 =(2\pi)^{-1/p^\sharp}\|\eta\|_{p^\sharp}.
\]
Since $1/\tau_p+1/p^\sharp=3/2$, the two-dimensional
Hardy--Littlewood--Sobolev inequality in Lemma \ref{lem:hls} with kernel $|X-Y|^{-1}$ bounds the near
contribution by
\[
 C_p\|\varpi\|_p^{1/4}
 \mathfrak M_2(\varpi)^{3/4}\|\eta\|_{p^\sharp}.
\]
Together with the far estimate this proves absolute convergence and
\eqref{eq:eta-mtwo}.  Absolute convergence also permits interchange of the
meridional variables, hence
$\mathfrak X(\varpi,\eta)=-\mathfrak X(\eta,\varpi)$ whenever both kernel
forms are defined.
\end{proof}

\begin{lemma}[Diagonal cancellation]\label{lem:diagonal-cancel}
Let $\varpi\in\mathcal X_p\cap L^1(\R^3)$, $p>1$, and set $u=\Bcal \varpi$.
Let $\chi_R$ be a standard radial cutoff function defined by
\begin{align*}
    &  \chi_R(x) := \chi(|x|/R), \ \ \chi\in C_c^\infty([0,\infty))  \ \text{ s.t } \ \chi(\rho) = \begin{cases}
        1, \ 0\leq \rho\leq 1\\
        0, \ 2 \leq \rho
    \end{cases},
\end{align*}
then
\begin{align}
 \lim_{R\to\infty}\int_{\R^3}\chi_R u^r r\varpi\,dx=0
 \label{eq:bs-dx}
\end{align}
provided either
\begin{align}
 p\ge\frac43,
 \qquad\text{or}\qquad
 1<p<\frac43\ \text{ and }\ r\varpi\in L^2_{\rm loc}(\R^3). \label{eq:diagonal-cancel-cases}
\end{align}
\end{lemma}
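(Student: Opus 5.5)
The plan is to write the cut-off pairing as a double kernel integral, symmetrize it with the skew-symmetry \eqref{eq:mix-id}, and show that the commutator of the cutoff with the kernel tends to zero. The hypotheses \eqref{eq:diagonal-cancel-cases} are needed only for the first step, the justification of the kernel representation. No global integrability of $u^r r\varpi$ is needed.

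\emph{Step 1: kernel representation.} Write $X=(r,z)$, $Y=(\rho,\zeta)$, $d=|X-Y|$, and let $K(X,Y):=-2(r\rho)^{1/2}(z-\zeta)\mathcal F'\bigl(d^2/(r\rho)\bigr)$ denote the kernel in \eqref{eq:dz-id}. The goal is
\[
 \int_{\R^3}\chi_R u^r r\varpi\,dx=\iint_{\Hh\times\Hh}\chi_R(X)K(X,Y)\varpi(X)\varpi(Y)\,dX\,dY,
\]
with the right-hand side absolutely convergent. Since $\chi_R$ depends only on $|x|=|X|$, it is a function on $\Hh$ supported in $\{|X|\le 2R\}$. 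As in Lemma~\ref{lem:kernel-identity}, $u^r$ agrees a.e.\ with the kernel integral \eqref{eq:kern-id}, so it suffices to check
\[
 \iint \chi_R(X)|K(X,Y)||\varpi(X)||\varpi(Y)|<\infty,
\]
and then apply Tonelli.
\begin{itemize}
\item On the far set $\{d^2\ge r\rho\}$, the bound $|K|\lesssim r\rho$ from the proof of Proposition~\ref{prop:crossint} gives control by $\|\varpi\|_1^2$.
\item On the near set, $r\simeq\rho$ and $|K|\lesssim(r\rho)^{3/2}/d$. Since $|X|\le 2R$ and $r\simeq\rho$, both points lie in a bounded meridional region. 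With $g=r^{3/2}|\varpi|$ restricted there, Lemma~\ref{lem:hls} with kernel $d^{-1}$ requires $g\in L^{4/3}(\R^2)$ locally.
\item If $p\ge4/3$: $\int g^{4/3}=\int r^2|\varpi|^{4/3}\,dr\,dz\lesssim_R\int r|\varpi|^{4/3}\,dr\,dz$, which is finite by interpolating $L^1$ and $L^p$.
\item If $p<4/3$: $g\lesssim_R r|\varpi|$ on the bounded region, and $r\varpi\in L^2_{\rm loc}$ gives $g\in L^2\subset L^{4/3}$ locally.
\end{itemize}
This is the one place where the case distinction enters, and I expect it to be the main technical point. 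In particular, the identification of $u^r$ with the kernel integral near the axis must be done carefully, by approximation as in Lemma~\ref{lem:kernel-identity}.

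\emph{Step 2: symmetrization.} By Step 1, the same absolute convergence holds with $\chi_R(Y)$ in place of $\chi_R(X)$. Exchanging $X$ and $Y$ and using $K(Y,X)=-K(X,Y)$ gives
\[
 \int\chi_R u^r r\varpi\,dx=\frac12\iint\bigl(\chi_R(X)-\chi_R(Y)\bigr)K(X,Y)\varpi(X)\varpi(Y)\,dX\,dY.
\]
The factor $\chi_R(X)-\chi_R(Y)$ vanishes unless $\max(|X|,|Y|)>R$, and it satisfies $|\chi_R(X)-\chi_R(Y)|\lesssim\min\{1,d/R\}$.

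\emph{Step 3: the commutator vanishes.}
\begin{itemize}
\item Far set: bound the cutoff difference by $1$ and use $|K|\lesssim r\rho$. The contribution is at most
\[
 \iint_{\{\max(|X|,|Y|)>R\}}r|\varpi(X)|\,\rho|\varpi(Y)|\le 2\|\varpi\|_1\,\|\varpi\mathbf 1_{\{|x|>R\}}\|_1,
\]
which tends to $0$ by dominated convergence.
\item Near set: use the factor $d/R$, which removes the singularity. Since $d<(r\rho)^{1/2}$ and $r\simeq\rho$,
\[
 \frac{d}{R}|K|\lesssim\frac{(r\rho)^{3/2}}{R}\cdot\frac{d}{d}\lesssim\frac{r^2\rho}{R}.
\]
The contribution is therefore $\lesssim R^{-1}\mathfrak M_1(\varpi)\|\varpi\|_1$.
\item By \eqref{eq:mominterp}, $\mathfrak M_1(\varpi)\lesssim(\|\varpi\|_1\mathfrak M_2(\varpi))^{1/2}<\infty$, so this near contribution is $O(R^{-1})$.
\end{itemize}
Combining the two parts gives \eqref{eq:bs-dx}.
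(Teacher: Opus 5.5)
Your proof is correct, but it takes a different route from the paper.

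\textbf{The paper's route.} It proves the lemma through a local PDE identity.
\begin{itemize}
\item For $4/3\le p<2$, the weighted Calder\'on--Zygmund bound \eqref{eq:diagonal-cancel-weighted-cz} gives $u\in W^{1,p}$ on bounded meridional domains. The 2D embedding $W^{1,p}\hookrightarrow L^{p'}$ holds exactly when $p\ge4/3$, and this justifies writing $ru^r(r\varpi)$ as the meridional divergence \eqref{eq:diagonal-cancel-divergence}.
\item When $r\varpi\in L^2_{\rm loc}$, local div--curl regularity gives $u\in H^1_{\rm loc}$, and the paper uses the Lamb-vector identity \eqref{eq:diagonal-cancel-divcurl}.
\item In both cases, integrating by parts against $\chi_R$ (after an axis cutoff) gives
$\bigl|\int\chi_Ru^rr\varpi\,dx\bigr|\lesssim R^{-1}\int_{R\lesssim|x|\lesssim2R}|u|^2\,dx$.
\end{itemize}

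\textbf{Your route.} You stay at the level of the meridional kernel.
\begin{itemize}
\item The hypotheses are used only to make the cut-off pairing absolutely convergent. The local $L^{4/3}(dr\,dz)$ integrability of $r^{3/2}\varpi$ that HLS with kernel $d^{-1}$ requires is the kernel-side counterpart of the paper's embedding, with the same $4/3$ threshold.
\item The decay comes from skew-symmetry together with the commutator $\chi_R(X)-\chi_R(Y)$. Its factor $d/R$ cancels the diagonal singularity, so no local integrability is needed in Step~3.
\end{itemize}

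\textbf{What each buys.} Your argument avoids the weighted CZ estimate, the local elliptic regularity and the axis cutoff, and it treats both cases uniformly. It also matches the kernel machinery of Lemma~\ref{lem:kernel-identity} and Corollary~\ref{cor:sign-cancel}, where this lemma is actually used. It yields the explicit bound $\lesssim R^{-1}\mathfrak M_1(\varpi)\|\varpi\|_1+\|\varpi\|_1\|\varpi\mathbf 1_{\{|x|>R\}}\|_1$. The cost is that the decay step uses $\varpi\in L^1$ and $\mathfrak M_1(\varpi)<\infty$; both are available from $\mathcal X_p\cap L^1$ via \eqref{eq:mominterp}. The paper's tail bound needs only $u\in L^2$ once the divergence identity is available, so it is the more robust estimate for fields without moment control.

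\textbf{One small slip.} In the case $1<p<4/3$, the bound $g\lesssim_R r|\varpi|$ loses a factor of $r$ near the axis, so it does not by itself give $g\in L^2(dr\,dz)$. The correct observation is that $\int g^2\,dr\,dz=(2\pi)^{-1}\int|r\varpi|^2\,dx$ over the relevant bounded set, which is finite because $r\varpi\in L^2_{\rm loc}$.
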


\begin{proof}
Assume first $4/3\le p<2$.  The weighted axisymmetric
Calder\'on--Zygmund estimate \eqref{As} gives
\begin{align}
 \left\|\frac{\nabla_x u}{r}\right\|_{L^p(\R^3)}
 \lesssim_p\|\varpi\|_{L^p(\R^3)}.
 \label{eq:diagonal-cancel-weighted-cz}
\end{align}
Let $Q\subset\overline{\Hh}$ be a bounded, connected Lipschitz domain.  Since $r\le C_Q$ on $Q$,
\eqref{eq:diagonal-cancel-weighted-cz} implies $\nabla_{r,z}u\in L^p(Q)$.  Using $u\in L^2(\R^3)$ on a
subrectangle of $Q$ away from the axis and an anchored Poincar\'e inequality, we obtain $u\in W^{1,p}(Q)$.  Writing $p'=\frac{p}{p-1}$ for the H\"older conjugate, we have
\begin{align}
 W^{1,p}(Q)\hookrightarrow L^{p'}(Q),
 \qquad p'\le\frac{2p}{2-p}\iff p\ge\frac43. \label{eq:diagonal-cancel-embedding}
\end{align}
Thus $u\nabla u\in L^1_{\rm loc}(dr\,dz)$.  Using
\[
 \partial_zu^r-\partial_ru^z=r\varpi,
 \qquad \frac1r\partial_r(ru^r)+\partial_zu^z=0,
\]
we obtain distributionally on $\{r>0\}$
\begin{align}
 r u^r(r\varpi)
 =\partial_z\!\left[\frac r2\bigl((u^r)^2-(u^z)^2\bigr)\right]
 -\partial_r(ru^ru^z).  \label{eq:diagonal-cancel-divergence}
\end{align}
Denote an axis cutoff function $\theta_\e(r) := 1-\chi(r/\e) $. Fix $R$ and insert $\theta_\varepsilon(r)$ together with
$\chi_R$.  The axis error is bounded by
\[
 C\int_{\substack{0<r<2\varepsilon\\ (r,z)\in\operatorname{supp}\chi_R}}
 |u|^2\,dr\,dz\longrightarrow0
 \qquad(\varepsilon\downarrow0),
\]
by the local Sobolev control above.  Letting first $\varepsilon\downarrow0$
in \eqref{eq:diagonal-cancel-divergence} and then $R\to\infty$ gives
\begin{align}
 \left|\int_{\R^3}\chi_R u^r r\varpi\,dx\right|
 \lesssim \frac1R\int_{R\lesssim|x|\lesssim2R}|u|^2\,dx\longrightarrow0.
 \label{eq:diagonal-cancel-limit}
\end{align}

If $p\ge2$, then $\varpi\in L^1\cap L^p$ implies $\varpi\in L^2_{\rm loc}$, hence
$r\varpi\in L^2_{\rm loc}$.  Thus it remains to treat the second alternative in
\eqref{eq:diagonal-cancel-cases}.  Since $u\in L^2$, $\nabla\cdot u=0$, and
$\nabla\times u=r\varpi e_\vartheta\in L^2_{\rm loc}$, the local div--curl
estimate yields $u\in H^1_{\rm loc}$.  Therefore
\begin{align}
 u^r r\varpi =(u\times\nabla\times u)_3
 =\partial_3\frac{|u|^2}{2}-\nabla\cdot(u_3u)
 \label{eq:diagonal-cancel-divcurl}
\end{align}
holds distributionally.  Testing \eqref{eq:diagonal-cancel-divcurl} with $\chi_R$ gives \eqref{eq:diagonal-cancel-limit}, hence \eqref{eq:bs-dx}.
\end{proof}

\begin{corollary}[Mixed sign cancellation]\label{cor:sign-cancel}
Let $p>1$, $p^\sharp=4p/(3p-1)$, and
\[
 \varpi=a-b,
 \qquad a,b\ge0, \qquad ab=0 \text{ a.e.},
 \qquad a\in\mathcal X_{p^\sharp}\cap L^1,
 \qquad b\in\mathcal X_p\cap L^1.
\]
Set $u=\Bcal a-\Bcal b$.  Assume $u^r r(a+b)\in L^1$.  If
\begin{align}
 p\ge\frac43,
 \qquad\text{or}\qquad
 1<p<\frac43\ \text{ and }\ rb\in L^2_{\rm loc},
 \label{eq:sign-cancel-cases}
\end{align}
then
\begin{align}
 \int_{\R^3}u^r r|\varpi|\,dx=2\mathfrak{X}(b,a),
 \label{eq:sign-cancel-identity}
\end{align}
and from \eqref{eq:eta-mtwo},
\begin{align}
 \left|\int_{\R^3}u^r r|\varpi|\,dx\right|
 \lesssim_p
 \|b\|_1\|a\|_1
 +\|b\|_p^{1/4}(\Mtwo(b))^{3/4}\|a\|_{p^\sharp}.
 \label{eq:sign-cancel-bound}
\end{align}
\end{corollary}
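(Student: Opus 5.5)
The plan is to split $|\varpi|=a+b$ and $\varpi=a-b$, so that
\[
 u^r r|\varpi| = (\Bcal a)^r r a - (\Bcal b)^r r b - (\Bcal b)^r r a + (\Bcal a)^r r b .
\]
The first two terms are same-sign self-interactions and should vanish. The cross terms are handled by the skew-symmetry of Lemma~\ref{lem:kernel-identity}: $(\Bcal a)^r r b$ integrates to $\mathfrak X(b,a)$, and $-(\Bcal b)^r r a$ integrates to $-\mathfrak X(a,b)=\mathfrak X(b,a)$. That produces \eqref{eq:sign-cancel-identity}, and \eqref{eq:sign-cancel-bound} then follows at once from Proposition~\ref{prop:crossint} with $\varpi\to b\in\mathcal X_p\cap L^1$ and $\eta\to a\in\mathcal X_{p^\sharp}\cap L^1$.

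To justify the cross terms I would note that Proposition~\ref{prop:crossint} gives absolute convergence of the kernel integral for the pair $(b,a)$. Lemma~\ref{lem:kernel-identity} then applies, with $b$ in $\mathcal X_p$ and $a$ in $\mathcal X_{p^\sharp}$, giving $(\Bcal a)^r rb$ and $(\Bcal b)^r ra$ in $L^1$ together with the pairing identity. These terms need no cutoff.

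The self-interactions are the main obstacle, because $(\Bcal a)^r ra$ and $(\Bcal b)^r rb$ are not a priori in $L^1$ separately. I would work with the full field instead. Since $u^r r(a+b)\in L^1$ is assumed and $u^r r(a-b)=u^r r\varpi$, dominated convergence gives
\[
 \int u^r r|\varpi|\,dx=\lim_{R\to\infty}\int \chi_R u^r r|\varpi|\,dx .
\]
Writing $|\varpi|=\varpi+2b$, this becomes
\[
 \int \chi_R u^r r\varpi\,dx + 2\int \chi_R u^r r b\,dx .
\]
The first integral tends to $0$ by Lemma~\ref{lem:diagonal-cancel}. I need $\varpi\in\mathcal X_p\cap L^1$ for this; it holds because $a\in L^1\cap L^{p^\sharp}$ with $p^\sharp$ between $1$ and $p$ or above it. If $p^\sharp<p$, the lemma is applied at exponent $\min(p,p^\sharp)$, which is legitimate when that exponent is at least $4/3$ or $r\varpi\in L^2_{\rm loc}$. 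This is where I expect care to be needed.

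In the case $p<4/3$, the hypothesis $rb\in L^2_{\rm loc}$ must be combined with $ra\in L^2_{\rm loc}$. The latter comes from $p^\sharp>4/3\cdot\ldots$: when $p<4/3$ we have $p^\sharp>4/3$, and in fact $p^\sharp\geq 2$ exactly when $p\le 2$ (since $4p/(3p-1)\ge2\iff p\le1$, this fails). So instead I would run the div--curl argument of Lemma~\ref{lem:diagonal-cancel} separately on the two pieces. For $\Bcal a$ I would use the Sobolev embedding branch, valid because $p^\sharp\ge4/3$ always. For $\Bcal b$ I would use the $L^2_{\rm loc}$ branch. This shows $\int\chi_R(\Bcal a)^r ra\to0$ and $\int\chi_R(\Bcal b)^r rb\to0$, with the products well-defined locally by the same local regularity.

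For the second integral, I write $u=\Bcal a-\Bcal b$ and split
\[
 2\int \chi_R u^r r b\,dx = 2\int \chi_R (\Bcal a)^r r b\,dx - 2\int \chi_R (\Bcal b)^r r b\,dx .
\]
The first term converges to $2\mathfrak X(b,a)$ by the $L^1$ integrability above. The second vanishes by the diagonal cancellation for $b$: Lemma~\ref{lem:diagonal-cancel} applies to $b\in\mathcal X_p\cap L^1$ under exactly \eqref{eq:sign-cancel-cases}. This sidesteps the issue with $a$ entirely, since $\int\chi_R u^r r\varpi\to0$ also reduces to the $b$ and $a$ self-terms plus the $L^1$ cross terms. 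For the $a$ self-term I would invoke Lemma~\ref{lem:diagonal-cancel} at exponent $p^\sharp$: when $p\le 5/3$ one has $p^\sharp\ge p$, and in all cases $p^\sharp\ge4/3$ as long as $p\le 2$. For larger $p$, the lemma is applied with exponent $p$ using interpolation $a\in L^{\min}$ through $L^1$. Collecting the limits gives \eqref{eq:sign-cancel-identity}.
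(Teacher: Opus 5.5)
Your plan is the paper's proof. You insert $\chi_R$ by dominated convergence and expand $(\Bcal a-\Bcal b)^r r(a+b)$ into four pieces. The two cross pieces become $\mathfrak X(b,a)$ and $-\mathfrak X(a,b)=\mathfrak X(b,a)$ via Proposition~\ref{prop:crossint} and Lemma~\ref{lem:kernel-identity}, and the two self-interactions vanish by applying Lemma~\ref{lem:diagonal-cancel} separately to $a$ and to $b$.

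Your later detours are unnecessary and partly wrong, though none of it is load-bearing.
\begin{itemize}
\item Since $p^\sharp-4/3=4/(3(3p-1))>0$ and $p^\sharp<2$ for every $p>1$, the first branch of Lemma~\ref{lem:diagonal-cancel} always applies to $a$ at exponent $p^\sharp$. No interpolation of $a$ into $L^p$ is needed, and that interpolation would in fact fail for $p>5/3$, where $p>p^\sharp$.
\item The rewriting $|\varpi|=\varpi+2b$ does not remove the $a$ self-term. The cross terms in $\int\chi_R u^r r\varpi$ cancel in the limit, so its vanishing is equivalent to the vanishing of the sum of the two self-terms. For $1<p<4/3$ you also cannot apply the lemma to $\varpi$ directly, because $ra$ need not lie in $L^2_{\rm loc}$ when $p^\sharp<2$.
\end{itemize}
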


\begin{proof}
The asymmetric kernel estimate \eqref{eq:eta-mtwo} makes $\mathfrak{X}(b,a)$ absolutely convergent, so
Lemma \ref{lem:kernel-identity} identifies both cross pairings. Since $p^\sharp\ge4/3$, Lemma \ref{lem:diagonal-cancel}
applies to $a$; it applies to $b$ by \eqref{eq:sign-cancel-cases}. Hence
\[
 \begin{aligned}
 \int u^r r|\varpi|\,dx
 &=\lim_{R\to\infty}
 \int\chi_R(\Bcal a-\Bcal b)^r r(a+b)\,dx =\mathfrak{X}(b,a)-\mathfrak{X}(a,b)=2\mathfrak{X}(b,a).
 \end{aligned}
\]
\end{proof}

\begin{proposition}[Sharpness of the asymmetric exponent]\label{prop:asymsharp}
Let $1<p<5/3$, let $p^\sharp$ be defined by
\eqref{eq:psharp}, and let
\begin{equation}\label{eq:exp-bd}
 p\le q<p^\sharp.
\end{equation}
There are nonnegative axisymmetric
$\varpi_R,\eta_R\in C_c^\infty(\R^3)$, $R\ge R_0$, with disjoint supports, such that
\begin{equation}\label{eq:mtwo-eta}
 \sup_{R\ge R_0}\left(
 \|\varpi_R\|_1+\|\varpi_R\|_p+\mathfrak M_2(\varpi_R)
 +\|\eta_R\|_1+\|\eta_R\|_q+\mathfrak M_2(\eta_R)
 \right)<\infty,
\end{equation}
whereas
\begin{equation}\label{eq:mtwo-conv}
 \mathfrak X(\varpi_R,\eta_R)\to\infty
 \qquad\text{as }R\to\infty.
\end{equation}
Hence the exponent $p^\sharp=4p/(3p-1)$ in
Proposition~\ref{prop:crossint} cannot be lowered
using only the norms appearing in \eqref{eq:mtwo-eta}.
\end{proposition}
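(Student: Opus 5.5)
The plan is a two-packet construction in the near-diagonal regime that drives the Hardy--Littlewood--Sobolev step of Proposition~\ref{prop:crossint}. It is modeled on the anti-parallel pair of Theorem~\ref{thm:thrsharp}, except that the two packets now have independent amplitudes and scales. Fix nonnegative $\phi,\psi\in C_c^\infty(B_{1/8}(0))$ with disjoint supports, with $\operatorname{supp}\psi$ lying strictly above $\operatorname{supp}\phi$ in the second variable. For parameters $a_R,b_R>0$ and $0<\lambda_R\le\ell_R\ll R$ set
\[
 \varpi_R(r,z):=a_R\,\phi\!\left(\frac{r-R}{\ell_R},\frac z{\ell_R}\right),
 \qquad
 \eta_R(r,z):=b_R\,\psi\!\left(\frac{r-R}{\lambda_R},\frac{z-c\ell_R}{\lambda_R}\right),
\]
with $c$ fixed so that the supports are disjoint and at meridional distance $\simeq\ell_R$. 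Both functions are nonnegative and smooth, and they are supported away from the axis once $R$ is large.

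First, compute the norms in \eqref{eq:mtwo-eta} by cylindrical coordinates \eqref{eq:lp-cyl}, \eqref{eq:mtwo-cyl}. This gives $\|\varpi_R\|_1\simeq a_RR\ell_R^2$, $\|\varpi_R\|_p^p\simeq a_R^pR\ell_R^2$ and $\mathfrak M_2(\varpi_R)\simeq a_RR^3\ell_R^2$, with the analogous formulas for $\eta_R$ using $(b_R,\lambda_R,q)$.

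Second, compute the interaction from \eqref{eq:dz-id} by the rescaling \eqref{eq:ell-rx} used in Theorem~\ref{thm:thrsharp}. On the supports, $d^2/(r\rho)\simeq\ell_R^2/R^2\to0$, and the small-$s$ asymptotics $-2s\mathcal F'(s)\to1$ from \cite[Lemma~2.1]{gallay2015remarks} show that the kernel in \eqref{eq:dz-id} equals $R^2(z-\zeta)/d^2\,(1+o(1))$. This quantity has a fixed sign on the product of supports, because $\zeta>z$ throughout. Dominated convergence, with the majorant $|X-Y|^{-1}$, then gives
\[
 |\mathfrak X(\varpi_R,\eta_R)|\simeq a_Rb_R\,R^2\,\ell_R\lambda_R^2
\]
in the regime $\lambda_R\lesssim\ell_R$, with the sign fixed by the placement; if needed, reverse the vertical order so that the sign is positive. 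In this regime, the velocity of the $\varpi$-packet at distance $\sim\ell_R$ is $\sim a_RR\ell_R$ times the appropriate kernel factor.

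Third, choose the parameters to maximize this quantity subject to the constraints. Saturate $\mathfrak M_2(\varpi_R)\simeq1$ and $\|\varpi_R\|_p\simeq1$, which fixes $a_R=R^{-3}\ell_R^{-2}$ and $\ell_R^{2p-2}=R^{1-3p}$. Then choose $b_R$ to saturate $\|\eta_R\|_q\simeq1$, and check that $\|\eta_R\|_1$ and $\mathfrak M_2(\eta_R)$ stay bounded. The remaining freedom is the ratio $\lambda_R/\ell_R$, and possibly relaxing the saturation of $\mathfrak M_2(\varpi_R)$ in favour of $\|\varpi_R\|_p$. The optimal choice should be dictated by equality in the HLS step of Proposition~\ref{prop:crossint}: there the weighted function $G_\varpi$ is measured in $L^{\tau_p}$ and $H_\eta$ in $L^{p^\sharp}$, and $1/\tau_p+1/p^\sharp=3/2$. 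I expect that the resulting power of $R$ in $\mathfrak X$ is positive exactly when $q<p^\sharp$, which gives \eqref{eq:mtwo-conv}. The interpolation bounds of the earlier propositions then keep the $L^1$ norms under control, and disjointness of the supports is automatic.

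The main obstacle is this exponent bookkeeping. My first attempt, with $\lambda_R=\ell_R$ and both impulses saturated, yields growth only for $q<4p/(5p-3)$, which is strictly smaller than $p^\sharp$. So the packets must be unbalanced. The next thing to try is to let $\varpi_R$ be spread over a region whose size is distinct from the interaction distance. One option is to make the $\varpi$-packet elongated, or to place several $\eta$-packets along it. Another is to take $\lambda_R\ll\ell_R$ while keeping the two packets at distance $\lambda_R$, so that the local velocity $\sim a_RR\lambda_R$ is paired against a concentrated $\eta_R$. In each case, I would redo the three scalings and verify that the critical relation between $p$ and $q$ becomes $q=4p/(3p-1)$ before writing out the dominated-convergence details.
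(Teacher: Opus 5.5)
\paragraph{Gap: the kernel is mis-scaled by a factor of $R$.} Your ``first attempt'' is the correct construction; it is exactly the one the paper uses. You discarded it because of a scaling error in the kernel, so as written the proposal never completes a proof for $4p/(5p-3)\le q<p^\sharp$.

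Near the diagonal, $-2\mathcal F'(s)=s^{-1}(1+o(1))$ with $s=d^2/(r\rho)$. Hence the integrand of \eqref{eq:dz-id} is
\[
 -2(r\rho)^{1/2}(z-\zeta)\mathcal F'(s)=(r\rho)^{3/2}\,\frac{z-\zeta}{d^2}\,(1+o(1))\simeq R^3\,\frac{z-\zeta}{d^2},
\]
not $R^2(z-\zeta)/d^2$. Your own heuristic confirms this. The $\varpi$-packet has velocity $\sim a_RR\ell_R$ at distance $\ell_R$. Pairing it with $r\eta_R$ against $dx=2\pi r\,dr\,dz$ contributes two more factors of $R$, giving $|\mathfrak X(\varpi_R,\eta_R)|\simeq a_Rb_RR^3\ell_R\lambda_R^2$.

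\paragraph{The balanced construction works.} Take $\lambda_R=\ell_R=R^{-\mu_p}$ with $\mu_p=(3p-1)/(2(p-1))$, $a_R=R^{-3}\ell_R^{-2}$, and $b_R=(R\ell_R^2)^{-1/q}$. Place the $\eta$-packet below the $\varpi$-packet, so that $z-\zeta\simeq\ell_R>0$ and the integrand is nonnegative. The bounds are then:
\begin{itemize}
\item $\|\varpi_R\|_p\simeq\mathfrak M_2(\varpi_R)\simeq1$ and $\|\varpi_R\|_1\simeq R^{-2}$;
\item $\|\eta_R\|_q\simeq1$ and $\|\eta_R\|_1\simeq(R\ell_R^2)^{1-1/q}\to0$, since $\mu_p>1/2$;
\item $\mathfrak M_2(\eta_R)\simeq R^{2(p-q)/(q(p-1))}\le1$, since $q\ge p$.
\end{itemize}
The $L^1$ norms are computed directly; no interpolation is needed. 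Using only the lower bound $-\mathcal F'(s)\ge c_0s^{-1}$ for small $s$,
\[
 \mathfrak X(\varpi_R,\eta_R)\gtrsim a_Rb_RR^3\ell_R^3=R^{\frac{4p+q-3pq}{2q(p-1)}},
\]
which diverges exactly when $q(3p-1)<4p$, i.e.\ when $q<p^\sharp$.

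\paragraph{Where your threshold came from.} With your factor $R^2$, the exponent drops by exactly one, to $(4p+3q-5pq)/(2q(p-1))$. This is positive iff $q<4p/(5p-3)$, which is precisely the obstruction you reported; it is an artifact of the miscalculation.

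The unbalanced alternatives you then propose are left unverified, and they are not needed. At fixed separation $\simeq\ell_R$ they are counterproductive: with $b_R=(R\lambda_R^2)^{-1/q}$, the corrected interaction scales like $\lambda_R^{2-2/q}$, so shrinking $\lambda_R$ below $\ell_R$ only weakens it. Fixing the kernel factor and keeping the balanced two-packet construction gives a complete proof, identical to the paper's.
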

\begin{proof}
For $0<s\le1$, differentiation of \eqref{eq:kern-def} gives
\[
 -2\mathcal F'(s)
 =\int_0^\pi
 \frac{\cos\alpha}
 {(s+2(1-\cos\alpha))^{3/2}}\,d\alpha.
\]
The part $0\le\alpha\le\sqrt s$ is bounded below by
$c s^{-1}$, the contribution from $[\sqrt s,\pi/2]$ is nonnegative, and the
part $[\pi/2,\pi]$ is uniformly bounded.  Hence there are $s_0,c_0>0$ such
that
\[
 -\mathcal F'(s)\ge c_0s^{-1},
 \qquad 0<s\le s_0.
\]

Choose nonzero nonnegative $\phi,\psi\in C_c^\infty(\R^2)$ supported in
$B_{1/4}((0,2))$ and $B_{1/4}((0,-2))$, respectively.  Set
\[
 \mu_p:=\frac{3p-1}{2(p-1)},
 \qquad
 \ell_R:=R^{-\mu_p},
 \qquad
 A_R:=R^{-3}\ell_R^{-2},
 \qquad
 B_R:=(R\ell_R^2)^{-1/q},
\]
and define
\[
 \varpi_R(r,z)
 :=A_R\phi\!\left(\frac{r-R}{\ell_R},\frac z{\ell_R}\right),
 \qquad
 \eta_R(r,z)
 :=B_R\psi\!\left(\frac{r-R}{\ell_R},\frac z{\ell_R}\right).
\]
For large $R$ these functions are smooth, axisymmetric, supported in
$\{R/2<r<2R\}$, and have disjoint supports.  Cylindrical coordinates and the
change of variables $r=R+\ell_Ru$, $z=\ell_Rv$ give
\[
 \|\varpi_R\|_p^p\asymp R A_R^p\ell_R^2=1,
 \qquad
 \|\varpi_R\|_1\lesssim R A_R\ell_R^2=R^{-2},
 \qquad
 \mathfrak M_2(\varpi_R)\lesssim R^3A_R\ell_R^2=1,
\]
and
\[
 \|\eta_R\|_q^q\asymp R B_R^q\ell_R^2=1,
 \qquad
 \|\eta_R\|_1\lesssim(R\ell_R^2)^{1-1/q},
 \qquad
 \mathfrak M_2(\eta_R)
 \lesssim R^{\frac{2(p-q)}{q(p-1)}}.
\]
Since $\mu_p>1/2$ and $q\ge p$, these bounds prove
\eqref{eq:mtwo-eta}.

If $X=(r,z)\in\operatorname{supp}\varpi_R$ and
$Y=(\rho,\zeta)\in\operatorname{supp}\eta_R$, then
$r,\rho\asymp R$, $z-\zeta\asymp\ell_R>0$, and $|X-Y|\asymp\ell_R$.
Consequently
$s:=|X-Y|^2/(r\rho)\to0$, and for large $R$ the lower bound just proved for
$-\mathcal F'$ implies
\[
 -2(r\rho)^{1/2}(z-\zeta)\mathcal F'(s)
 \gtrsim \frac{R^3}{\ell_R}
\]
on the product of the two supports.  Since $\phi$ and $\psi$ are nonnegative,
\[
 \mathfrak X(\varpi_R,\eta_R)
 \gtrsim A_RB_RR^3\ell_R^3
 =R^{\frac{4p+q-3pq}{2q(p-1)}}.
\]
The exponent is positive exactly when $q(3p-1)<4p$, equivalently
$q<p^\sharp$.  Hence \eqref{eq:mtwo-conv} follows.
\end{proof}

Recall the truncations $\Psi_L$ from \eqref{eq:rad-def}.  For the absolute impulse we use the weight $\Psi_L(r^2)$.  Then $\Psi_L(r^2)\uparrow r^2$ as $L\to\infty$, while for every $L>0$,
$u\cdot\nabla\Psi_L(r^2)=2\Psi_L'(r^2)(x_1u_1+x_2u_2)$.

\begin{theorem}[Sharp mixed-integrability criterion for absolute impulse]\label{thm:miximpulse}
Let $1<p<\infty$, and let $p^\sharp$ be defined by
\eqref{eq:psharp}.  Let $(u,\varpi)$ be a measurable
axisymmetric no-swirl pair on $[0,T]\times\R^3$.  Assume
\begin{equation}\label{eq:imp-bd}
 u(t)\in L^2(\R^3)\quad\text{for a.e. }t\in(0,T),
 \qquad
 \varpi(t)\in L^1(\R^3)\cap L^p(\R^3)\quad\text{for }0\le t\le T,
\end{equation}
and, for almost every $t\in(0,T)$,
\begin{equation}\label{eq:div-curl}
 \nabla\cdot u(t)=0,
 \qquad
 \nabla\times u(t)=r\varpi(t)e_\vartheta
 \quad\text{in }\mathcal D'(\R^3).
\end{equation}
No boundedness or trace condition on $\varpi$ at $r=0$ is imposed; here
$r\varpi e_\vartheta=(-x_2\varpi,x_1\varpi,0)$.  Set
\begin{equation}\label{eq:trace-def}
 N_1:=\sup_{0\le t\le T}\|\varpi(t)\|_1,
 \qquad
 N_p:=\sup_{0\le t\le T}\|\varpi(t)\|_p,
 \qquad
 Y(t):=\mathfrak M_2(\varpi(t)),
\end{equation}
and assume
\begin{equation}\label{eq:ltwo-dx}
 N_1+N_p<\infty,
 \qquad
 Y(0)<\infty,
 \qquad
 \int_0^T\int_{\R^3}
 |x_1u_1+x_2u_2|\,|\varpi|\,dx\,dt<\infty.
\end{equation}
Assume also that, for every $L>0$ and $0\le t\le T$,
\begin{align}
 &\frac1{2\pi}\int_{\R^3}\Psi_L(r^2)|\varpi(t)|\,dx
 -\frac1{2\pi}\int_{\R^3}\Psi_L(r^2)|\varpi(0)|\,dx
 =
 \frac1\pi\int_0^t\int_{\R^3}
 \Psi_L'(r^2)(x_1u_1+x_2u_2)|\varpi(\tau,x)|\,dx\,d\tau.
 \label{eq:psi-tau}
\end{align}
Then $Y\in W^{1,1}(0,T)$.  If, in addition,
\begin{equation}\label{eq:mix-def}
\begin{split}
    & m_\sharp(t):=\min_{\varsigma\in\{+,-\}}\|\varpi_\varsigma(t)\|_{p^\sharp}, \qquad 
 N_\sharp:=\sup_{0\le t\le T}m_\sharp(t)<\infty,
\end{split}
\end{equation}
and only when $1<p<4/3$, assume additionally that
\begin{equation}
\text{for a.e. }t\in(0,T),\ r\varpi_{\overline{\sigma}(t)}(t)
\in L^2_{\rm loc}(\mathbb R^3) , \text{ where }
\sigma(t)=\operatorname*{arg\,min}_{\varsigma\in\{+,-\}}
\|\varpi_\varsigma(t)\|_{p^\sharp} ,\label{eq:mix-local-l2}
\end{equation}
and \(\overline{\sigma}(t)\) denotes the opposite sign, then
\begin{equation}\label{eq:sharp-mix}
 |Y'(t)|
 \lesssim_p
 N_1^2+N_p^{1/4}m_\sharp(t)Y(t)^{3/4}
 \qquad\text{for a.e. }t\in(0,T).
\end{equation}
Consequently,
\begin{equation}\label{eq:sharp-sign}
 (1+Y(t))^{1/4}
 \le
 (1+Y(0))^{1/4}
 +C_p\left(N_1^2+N_p^{1/4}N_\sharp\right)t,
 \qquad 0\le t\le T.
\end{equation}
Assume $1<p<5/3$ and let $p\le q<\infty$.  For $M>0$, consider
axisymmetric no-swirl pairs satisfying
\eqref{eq:imp-bd}--\eqref{eq:psi-tau}, and when \(1<p<4/3\), the one-sided local condition \eqref{eq:mix-local-l2}, together with
\begin{equation}\label{eq:sig-axis}
 N_1+N_p+Y(0)
 +\sup_{0\le t\le T}\min_{\sigma\in\{+,-\}}
 \|\varpi_\sigma(t)\|_q\le M.
\end{equation}
If $q\ge p^\sharp$, then there is $C_{p,M,T}<\infty$ such that every
such pair obeys
\begin{equation}\label{eq:sharp-mom}
 \sup_{0\le t\le T}Y(t)
 +\|Y'\|_{L^\infty(0,T)}
 \le C_{p,M,T}.
\end{equation}
If $p\le q<p^\sharp$, there are classical finite-energy axisymmetric
no-swirl Euler solutions $U_R$, with relative vorticities $\widetilde\varpi_R$, and
times $t_R\downarrow0$ such that \eqref{eq:sig-axis} holds
on $[0,t_R]$ and, with $Y_R(t):=\mathfrak M_2(\widetilde\varpi_R(t))$,
\begin{equation}\label{eq:sol-bd}
 \sup_R\sup_{0\le t\le t_R}\left(
 \|U_R(t)\|_2+\|\widetilde\varpi_R(t)\|_1+\|\widetilde\varpi_R(t)\|_p+Y_R(t)
 +\min\{\|(\widetilde\varpi_R(t))_+\|_q,\|(\widetilde\varpi_R(t))_-\|_q\}
 \right)<\infty,
\end{equation}
whereas
\begin{equation}\label{eq:mix-sign}
 Y_R'(0)\to+\infty
\end{equation}
and
\begin{equation}\label{eq:thr-conv}
 \frac{Y_R(t_R)-Y_R(0)}{t_R}\to+\infty.
\end{equation}
Thus $p^\sharp=4p/(3p-1)$ is the exact threshold, for $q\ge p$, for
class-uniform local Lipschitz control of the absolute impulse under
the bounds in \eqref{eq:sig-axis}.
\end{theorem}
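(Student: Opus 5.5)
The proof has three parts: absolute continuity of $Y$, the differential inequality \eqref{eq:sharp-mix} with its integrated form \eqref{eq:sharp-sign}, and the two sharpness statements.

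\emph{Absolute continuity.} We follow the proof of Theorem~\ref{thm:firstmom}. In \eqref{eq:psi-tau} let $L\to\infty$. Monotone convergence applies to the moment terms since $\Psi_L(r^2)\uparrow r^2$. Dominated convergence applies to the flux term, using $0\le\Psi_L'\le1$ and the integrability in \eqref{eq:ltwo-dx}. This gives
\[
 Y(t)-Y(0)=\frac1\pi\int_0^t\int_{\R^3}(x_1u_1+x_2u_2)|\varpi|\,dx\,d\tau ,
\]
so $Y\in W^{1,1}(0,T)$ and $Y'(t)=\pi^{-1}\int u^r r|\varpi|\,dx$ for a.e.\ $t$.

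\emph{Differential inequality.} Fix such a $t$ and write $\varpi=\varpi_+-\varpi_-$. Let $a:=\varpi_{\sigma(t)}$ be the sign component attaining $m_\sharp(t)$, and $b:=\varpi_{\overline\sigma(t)}$ the other one.
\begin{itemize}
\item[(i)] $a\in\mathcal X_{p^\sharp}\cap L^1$: it is in $L^1$, has finite $\mathfrak M_2$, and lies in $L^{p^\sharp}$ because $m_\sharp<\infty$.
\item[(ii)] $b\in\mathcal X_p\cap L^1$.
\item[(iii)] $u=\Bcal\varpi=\Bcal\varpi_+-\Bcal\varpi_-$, by the div--curl uniqueness of Proposition~\ref{prop:radmom}.
\item[(iv)] When $1<p<4/3$, the local hypothesis \eqref{eq:mix-local-l2} supplies $rb\in L^2_{\rm loc}$.
\end{itemize}
Corollary~\ref{cor:sign-cancel} therefore applies, with the sign relabelling absorbed by skew-symmetry if $\sigma=-$. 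It yields
\[
 |Y'(t)|\lesssim \|b\|_1\|a\|_1+\|b\|_p^{1/4}Y^{3/4}\|a\|_{p^\sharp}\le N_1^2+N_p^{1/4}Y^{3/4}m_\sharp(t),
\]
which is \eqref{eq:sharp-mix}. Because $Y^{3/4}\le(1+Y)^{3/4}$, the inequality gives $|\tfrac{d}{dt}(1+Y)^{1/4}|\lesssim N_1^2+N_p^{1/4}N_\sharp$, and integrating proves \eqref{eq:sharp-sign}.

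\emph{Positive sharpness statement.} For $q\ge p^\sharp$, interpolate between $L^1$ and $L^q$ to bound the smaller-in-$L^{p^\sharp}$ component in terms of $M$. The minimizing sign for $L^q$ may differ from the one for $L^{p^\sharp}$, but this is harmless: the proof of the differential inequality only needs some component with controlled $L^{p^\sharp}$ norm, and the cancellation step can be run with that component. Hence $N_\sharp\lesssim_M1$, and \eqref{eq:sharp-sign} together with \eqref{eq:sharp-mix} give \eqref{eq:sharp-mom}.

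\emph{Negative sharpness statement.} For $q<p^\sharp$, take the disjoint nonnegative packets $\varpi_R,\eta_R$ of Proposition~\ref{prop:asymsharp} and set the initial datum $\widetilde\varpi_R(0):=\eta_R-\varpi_R$. It is smooth, compactly supported, and supported away from the axis, so \cite{UkhovskiiIudovich1968} provides a global classical solution $U_R$. By the cancellation identity \eqref{eq:sign-cancel-identity}, applied at $t=0$ with $b=\varpi_R$ and $a=\eta_R$,
\[
 Y_R'(0)=\tfrac2\pi\mathfrak X(\varpi_R,\eta_R)\to+\infty .
\]
The orientation must be checked against the sign convention in \eqref{eq:mtwo-conv}; if it comes out negative, reverse the roles of the two packets, which flips the sign of the cross term by \eqref{eq:mix-id}.

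Next, transport of $\widetilde\varpi_R$ by a divergence-free field preserves the $L^1$, $L^p$ and $L^q$ norms of each sign part, so the bounds \eqref{eq:mtwo-eta} persist for all times. The energy $\|U_R(t)\|_2$ is conserved, and it is bounded at $t=0$ by \eqref{eq:bs-xbound} applied with \eqref{eq:mtwo-eta}; note $\eta_R$ is bounded in $L^p$ by interpolation between $L^1$ and $L^q$.

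Finally, choose $t_R\downarrow0$ so small that $Y_R\le Y_R(0)+1$ on $[0,t_R]$, which is possible by continuity for each fixed $R$. Choose $t_R$ also small enough that the difference quotient of $Y_R$ over $[0,t_R]$ exceeds $\tfrac12 Y_R'(0)$, using differentiability of $Y_R$ at $t=0$. This gives \eqref{eq:sol-bd}, \eqref{eq:mix-sign} and \eqref{eq:thr-conv}.

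The main obstacle is the cancellation step. One must verify the hypotheses of Corollary~\ref{cor:sign-cancel} for a rough pair: integrability of $u^r r(a+b)$, which follows from \eqref{eq:ltwo-dx}, and the local $L^2$ condition on the correct sign when $p<4/3$. One must also handle measurability of the time-dependent choice $\sigma(t)$, for which a.e.-$t$ pointwise arguments suffice.
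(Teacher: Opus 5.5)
Your proposal follows the paper's proof essentially step for step: the $L\to\infty$ limit in the truncated balance \eqref{eq:psi-tau}, the reduction of $\pi Y'(t)$ (up to sign) to $2\mathfrak X(\varpi_{\overline\sigma},\varpi_\sigma)$ via Corollary~\ref{cor:sign-cancel} and the bound \eqref{eq:eta-mtwo}, interpolation between $L^1$ and $L^q$ when $q\ge p^\sharp$, and the disjoint packets of Proposition~\ref{prop:asymsharp} when $q<p^\sharp$. Your orientation $\widetilde\varpi_R(0)=\eta_R-\varpi_R$ is the correct one, giving $Y_R'(0)=\frac2\pi\mathfrak X(\varpi_R,\eta_R)\to+\infty$, so no reversal is needed.

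Two small repairs are needed.

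\emph{Normalization to $M$.} The constraint \eqref{eq:sig-axis} is imposed with an arbitrary prescribed $M>0$. Your unscaled datum satisfies it only when $M$ exceeds the uniform constant $C_{p,q}$ coming from \eqref{eq:mtwo-eta}. The paper multiplies the datum by $\lambda_M=\min\{1,M/(1+C_{p,q})\}$. This scales every quantity in \eqref{eq:sig-axis} by $\lambda_M$ and scales $Y_R'(0)$ by $\lambda_M^2$, so the divergence survives.

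\emph{The case $q\ge p^\sharp$.} You do not need to rerun the cancellation step with the $L^q$-minimizing component. For $1<p<4/3$ that rerun would not even be licensed, because \eqref{eq:mix-local-l2} supplies local $L^2$ control only opposite the $L^{p^\sharp}$-minimizer. Since $m_\sharp(t)$ is itself a minimum, let $\sigma_q(t)$ be the sign minimizing the $L^q$ norm. Interpolation then gives directly
\[
m_\sharp(t)\le\|\varpi_{\sigma_q(t)}(t)\|_{p^\sharp}\le\|\varpi_{\sigma_q(t)}(t)\|_1^{\theta}\,\|\varpi_{\sigma_q(t)}(t)\|_q^{1-\theta}\le M ,
\]
and \eqref{eq:sharp-mix}, as already proved, applies.

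Two minor points. The $\mathfrak M_2$ entries of \eqref{eq:mtwo-eta} are not transported invariants, so they do not persist for all times. This is harmless because you control $Y_R$ separately on $[0,t_R]$. Also, the counterexamples should be checked to lie in the class, that is, to satisfy \eqref{eq:imp-bd}--\eqref{eq:psi-tau} and \eqref{eq:mix-local-l2}. This is routine for smooth compactly supported classical solutions.
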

\begin{proof}
By \eqref{eq:ltwo-dx} and $ru^r=x_1u_1+x_2u_2$,
$|ru^r||\varpi|$ is integrable on $(0,T)\times\R^3$.  Define
\[
 Y^{(L)}(t):=\frac1{2\pi}\int_{\R^3}
 \Psi_L(r(x)^2)|\varpi(t,x)|\,dx.
\]
The balance hypothesis \eqref{eq:psi-tau} gives
\[
 Y^{(L)}(t)
 =Y^{(L)}(0)
 +\frac1\pi\int_0^t\int_{\R^3}
 \Psi_L'(r^2)ru^r|\varpi|\,dx\,d\tau.
\]
Letting $L\to\infty$, monotone convergence on the moment terms and dominated
convergence on the flux term yield
\[
 Y(t)-Y(0)
 =\frac1\pi\int_0^t\int_{\R^3}ru^r|\varpi|\,dx\,d\tau.
\]
Thus $Y\in W^{1,1}(0,T)$ and
\[
 \pi Y'(t)=\int_{\R^3}u^r(t,x)r|\varpi(t,x)|\,dx
\]
for almost every $t$.

Fix such a time and set $\sigma(t)=\operatorname*{arg\,min}_{\varsigma\in\{+,-\}}
\|\varpi_\varsigma(t)\|_{p^\sharp}$. Then
$\|\varpi_\sigma(t)\|_{p^\sharp}=m_\sharp(t)$, with $\bar\sigma$ denoting the
opposite sign. Without loss of generality, let $\sigma= +$. Set $a=\varpi_\sigma$ and
$b=\varpi_{\bar\sigma}$. Corollary \ref{cor:sign-cancel} gives
\[
 |Y'(t)|
 =\frac2\pi
 \left|\mathfrak X(\varpi_{\bar\sigma}(t),\varpi_\sigma(t))\right|
 \lesssim_p
 N_1^2+N_p^{1/4}m_\sharp(t)Y(t)^{3/4},
\]
which is \eqref{eq:sharp-mix}.  With $Z:=1+Y$, multiplication by
$\frac14Z^{-3/4}$ and integration give \eqref{eq:sharp-sign}.

Assume now $1<p<5/3$ and $q\ge p^\sharp$.  Since
$1<p^\sharp\le q$, interpolation between $L^1$ and $L^q$ gives, for each
sign component,
\[
 \|\varpi_\sigma(t)\|_{p^\sharp}
 \le
 \|\varpi_\sigma(t)\|_1^\theta
 \|\varpi_\sigma(t)\|_q^{1-\theta}
\]
with the unique $\theta\in[0,1)$ satisfying
$1/p^\sharp=\theta+(1-\theta)/q$.  The bound \eqref{eq:sig-axis} therefore
implies $N_\sharp\le M$.  Equations \eqref{eq:sharp-mix} and
\eqref{eq:sharp-sign} then give \eqref{eq:sharp-mom}.

It remains to prove sharpness when $p\le q<p^\sharp$.  Let
$\varpi_R,\eta_R$ be given by Proposition~\ref{prop:asymsharp}.  Because
$q\ge p$, interpolation between the uniform $L^1$ and $L^q$ bounds also gives
$\sup_R\|\eta_R\|_p<\infty$.  Hence there is a constant $C_{p,q}$ such that
\[
 \|\eta_R-\varpi_R\|_1+
 \|\eta_R-\varpi_R\|_p+
 \mathfrak M_2(\eta_R-\varpi_R)
 +\min\{\|\eta_R\|_q,\|\varpi_R\|_q\}
 \le C_{p,q}
\]
for all $R$.  Set
\[
 \lambda_M:=\min\left\{1,\frac{M}{1+C_{p,q}}\right\},
 \qquad
 \widetilde\varpi_{0,R}:=\lambda_M(\eta_R-\varpi_R).
\]
The supports are disjoint, so
$(\widetilde\varpi_{0,R})_+=\lambda_M\eta_R$ and
$(\widetilde\varpi_{0,R})_-=\lambda_M\varpi_R$, and the initial quantities
in \eqref{eq:sig-axis} are bounded by $M$ after this fixed scaling.  By
Proposition~\ref{prop:radmom}, the velocities
$U_{0,R}:=\Bcal\widetilde\varpi_{0,R}$ are uniformly bounded in $L^2$.
The corresponding vorticities are smooth and compactly supported; axisymmetry
gives zero spatial mean for the vorticity, so the Biot--Savart velocities are
smooth and belong to every $H^m$.  Classical Euler theory therefore gives a
smooth axisymmetric no-swirl solution $U_R$ on some interval $[0,T_R]$.
Transport preserves all $L^s$ norms of the relative vorticity and of its sign
components, and the Euler energy identity preserves $\|U_R(t)\|_2$.
Smoothness and compact support also justify the truncated balance
\eqref{eq:psi-tau} on every compact subinterval of $[0,T_R]$.

Writing $Y_R(t):=\mathfrak M_2(\widetilde\varpi_R(t))$ and differentiating at
$t=0$ gives
\[
 \pi Y_R'(0)
 =\int_{\R^3}U_{0,R}^r r|\widetilde\varpi_{0,R}|\,dx.
\]
By linearity of $\Bcal$, vanishing of the two self-interactions, and the skew
identity for $\mathfrak X$,
\[
 Y_R'(0)
 =\frac{2\lambda_M^2}{\pi}
 \mathfrak X(\varpi_R,\eta_R)
 \to+\infty
\]
by \eqref{eq:mtwo-conv}.  This proves \eqref{eq:mix-sign}.  Since $Y_R$ is differentiable at $0$, there is $s_R\in(0,T_R)$ such that for every $0<t\le s_R$,
\[
 \frac{Y_R(t)-Y_R(0)}{t}\ge\frac12Y_R'(0),
 \qquad
 |Y_R(t)-Y_R(0)|\le1.
\]
Choose $0<t_R\le\min\{s_R,R^{-1}\}$.  Then $t_R\to 0$ and \eqref{eq:thr-conv} follows.  The transport and energy identities, together with the uniform bound on $Y_R$ throughout $[0,t_R]$, give \eqref{eq:sol-bd} and verify \eqref{eq:sig-axis} on that interval.
\end{proof}

\begin{corollary}[Absolute impulse for classical Euler flows]\label{cor:smooth-impulse}
Let $1<p<\infty$, and let $(u,\varpi)$ be a classical finite-energy
axisymmetric no-swirl Euler flow on $[0,T]$ such that
\[
    u\in C([0,T];H^m(\mathbb R^3)),\qquad m>\frac52,
\]
and
\[
    \varpi\in L^\infty(0,T;L^1(\mathbb R^3)\cap L^p(\mathbb R^3)),
    \qquad
    \mathfrak M_2(\varpi(0))<\infty.
\]
If $p\ge5/3$, then
\[
    \bigl(1+\mathfrak M_2(\varpi(t))\bigr)^{1/4}
    \le
    \bigl(1+\mathfrak M_2(\varpi(0))\bigr)^{1/4}
    +Ct,
    \qquad 0\le t\le T,
\]
without a sign condition.

If $1<p<5/3$, the same estimate holds provided
\[
    \min\Bigl\{
        \|\varpi_+(0)\|_{L^{p^\sharp}},
        \|\varpi_-(0)\|_{L^{p^\sharp}}
    \Bigr\}<\infty,
    \qquad
    p^\sharp=\frac{4p}{3p-1}.
\]
Here $C$ depends only on $p$, the
$L^\infty(0,T;L^1\cap L^p)$ bound of $\varpi$, and, in the latter case,
the displayed one-sided $L^{p^\sharp}$ norm.
\end{corollary}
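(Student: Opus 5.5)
The plan is to derive the corollary from Theorem~\ref{thm:miximpulse}, applied to the classical flow on $[0,T]$. The work lies in checking that theorem's structural hypotheses \eqref{eq:imp-bd}--\eqref{eq:psi-tau}, and, below $4/3$, the local condition \eqref{eq:mix-local-l2}. Once these hold, \eqref{eq:sharp-sign} gives the stated bound. The constant is then $C=C_p(N_1^2+N_p^{1/4}N_\sharp)$, and it remains to bound $N_\sharp$ by the data.

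\emph{Step 1: structural hypotheses.} Since $u\in C([0,T];H^m)$ with $m>5/2$, we have $u\in L^\infty_tL^2_x$, and $\nabla u$ is bounded and continuous. The div--curl relations \eqref{eq:div-curl} hold classically, and $\varpi$ is transported by the Lipschitz flow map $X_t$. Hence $\varpi_\pm(t)=\varpi_\pm(0)\circ X_t^{-1}$, so every $L^s$ norm of $\varpi$ and of $\varpi_\pm$ is conserved. This gives $N_1+N_p<\infty$.

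For the truncated balance \eqref{eq:psi-tau}, write $\Psi_L(r^2)$ for the truncated weight. It is smooth and bounded, and $\nabla\Psi_L(r^2)$ is bounded with compact radial support; it is not compact in $z$. I would insert an additional compact cutoff $\chi_S(z)$. Transport of $|\varpi|$ and $\nabla\cdot u=0$ then give the identity with the error $\int\Psi_L\,u^z\chi_S'|\varpi|$. This error vanishes as $S\to\infty$ because $u\in L^\infty$ and $\varpi\in L^1$.

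The integrability condition in \eqref{eq:ltwo-dx} is the only nontrivial one. I would get it by first using the balance law with $L$ fixed to obtain a priori local bounds. A cleaner alternative is to run Theorem~\ref{thm:miximpulse} on $[0,T']$ with $T'$ chosen so that $\sup_{[0,T']}Y<\infty$ is known, and then close by continuity. On such an interval, $|ru^r||\varpi|\le r\|u\|_\infty|\varpi|$, and this is integrable by \eqref{eq:mominterp}.

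\emph{Step 2: propagation of finite impulse.} We need $Y(t)<\infty$ a priori, and this is the main obstacle. I would handle it through a bootstrap. The identity \eqref{eq:psi-tau} with $\Psi_L'\le1$ and $|ru^r|\le r\|u\|_{L^\infty_{t,x}}$ gives
\[
Y^{(L)}(t)\le Y(0)+C\|u\|_\infty\int_0^t\mathfrak M_1(\varpi)\,d\tau .
\]
Now bound the weight by $r\le 1+r^2$, together with $\Psi_L'(r^2)r\lesssim 1+\Psi_L(r^2)$. The latter holds since $\Psi_L'(s)\le\mathbf 1_{s\le2L}$ and $\Psi_L(s)\ge\min\{s,L\}$. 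Gronwall in $Y^{(L)}$ then gives a bound uniform in $L$. Letting $L\to\infty$ yields $\sup_{[0,T]}Y<\infty$, so \eqref{eq:ltwo-dx} holds on all of $[0,T]$.

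\emph{Step 3: the two cases.} If $p\ge5/3$, then $p^\sharp\le p$ and $p^\sharp>1$. Interpolation between $L^1$ and $L^p$ therefore bounds $\|\varpi_\pm(t)\|_{p^\sharp}$ by $N_1+N_p$, so $N_\sharp\lesssim N_1+N_p$ with no sign condition.

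If $1<p<5/3$, conservation of the one-sided norm shows that the sign $\sigma$ with $\|\varpi_\sigma(0)\|_{p^\sharp}<\infty$ satisfies $m_\sharp(t)\le\|\varpi_\sigma(0)\|_{p^\sharp}$ for all $t$, which bounds $N_\sharp$. For $1<p<4/3$, condition \eqref{eq:mix-local-l2} also follows from smoothness: $r\varpi e_\vartheta=\nabla\times u\in H^{m-1}\subset L^2$, hence $r\varpi_\pm\in L^2_{\rm loc}$.

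A technical point concerns the arg\,min in \eqref{eq:mix-def}, which may switch between signs over time. This is harmless, since \eqref{eq:sharp-mix} holds pointwise for a.e.\ $t$ with whichever sign attains the minimum. Applying \eqref{eq:sharp-sign} completes the proof, with $C$ depending only on $p$, $N_1$, $N_p$ and, in the second case, the initial one-sided norm.
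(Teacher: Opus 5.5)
Your proposal is correct and shares the paper's skeleton. Both reduce to Theorem~\ref{thm:miximpulse}, use exact transport along the volume-preserving flow to conserve all signed $L^q$ norms, and bound $N_\sharp$ by interpolation when $p\ge5/3$ and by the conserved one-sided norm when $1<p<5/3$. Both also obtain \eqref{eq:mix-local-l2} from $r\varpi e_\vartheta=\nabla\times u\in L^2$.

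You differ in how you verify the flux integrability in \eqref{eq:ltwo-dx}. You call it the main obstacle and resolve it by a Gronwall bootstrap on $Y^{(L)}$. That bootstrap uses $\|u\|_{L^\infty_{t,x}}$ and $\Psi_L'(r^2)r\le1+2\Psi_L(r^2)$, and you then bound $\int r|u^r||\varpi|\le2\pi\|u\|_\infty\mathfrak M_1$ via \eqref{eq:mominterp}. This argument is valid and not circular, since \eqref{eq:psi-tau} at fixed $L$ only involves a flux supported in $r\le\sqrt{2L}$.

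It is, however, unnecessary. The paper simply writes $|x_1u_1+x_2u_2|\,|\varpi|\le|u|\,r|\varpi|$ and applies Cauchy--Schwarz to get $\int_0^T\|u\|_2\|r\varpi\|_2\,dt<\infty$. This uses exactly the fact $r\varpi\in C([0,T];L^2)$ that you already invoke in Step~3. No a priori finiteness of $Y$ is needed, because Theorem~\ref{thm:miximpulse} itself yields $Y\in W^{1,1}(0,T)$ once the flux is integrable. Your detour gives an independent, $\|u\|_\infty$-dependent exponential propagation of finite impulse; the paper's route takes one line.

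For the truncated balance, the paper works along characteristics. It applies the chain rule to $\Psi_L(r(X(t,a))^2)$, multiplies by $|\varpi(0,a)|$, and uses volume preservation, so no cutoffs are needed. If you keep your Eulerian version, note that $\Psi_L(r^2)$ itself is bounded but not compactly supported in $r$; only its gradient is. You therefore need a cutoff in $r$ as well as in $z$, with the error controlled in the same way by $L\|u\|_\infty\|\varpi\|_1/S$.
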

\begin{proof}
Since $m>5/2$, the velocity is Lipschitz in space on $[0,T]$, and its
classical flow $X(t,\cdot)$ is volume preserving.  The relative vorticity
satisfies
\[
    \partial_t\varpi+u\cdot\nabla\varpi=0,
    \qquad \nabla\cdot u=0,
\]
so
\[
    \varpi(t,X(t,a))=\varpi(0,a).
\]
In particular, each sign component is transported separately and, whenever
the right-hand side is finite,
\[
    \|\varpi_\pm(t)\|_{L^q}
    =\|\varpi_\pm(0)\|_{L^q},
    \qquad 1\le q\le\infty,\quad 0\le t\le T.
\]
Thus, if $1<p<5/3$, the one-sided initial $L^{p^\sharp}$ assumption gives
\eqref{eq:mix-def} uniformly on $[0,T]$.  If $p\ge5/3$, then
$p^\sharp\le p$, and interpolation between $L^1$ and $L^p$ gives
\eqref{eq:mix-def} for both sign components.

It remains only to verify the balance hypotheses of
Theorem~\ref{thm:miximpulse}.  Since
\[
    \nabla\times u=r\varpi e_\vartheta
\]
and $u\in C([0,T];H^m)$, one has $r\varpi\in C([0,T];L^2)$; hence, when
$1<p<4/3$, the local condition \eqref{eq:mix-local-l2} holds for both sign
components.  Moreover,
\[
 \int_0^T\!\int_{\mathbb R^3}
 |x_1u_1+x_2u_2|\,|\varpi|\,dx\,dt
 \le
 \int_0^T \|u(t)\|_2\,\|r\varpi(t)\|_2\,dt<\infty.
\]
For fixed $L>0$, use the flow identity and the chain rule along
characteristics:
\[
 \Psi_L(r(X(t,a))^2)-\Psi_L(r(a)^2)
 =2\int_0^t
 \Psi_L'(r(X(s,a))^2)\,r(X(s,a))\,u^r(s,X(s,a))\,ds.
\]
Multiplying by $|\varpi(0,a)|/(2\pi)$, integrating in $a$, and using
volume preservation together with the preceding flux integrability yields
exactly \eqref{eq:psi-tau}.  The div--curl assumptions
\eqref{eq:div-curl} are the Euler identities.  Therefore all hypotheses of
Theorem~\ref{thm:miximpulse} are satisfied, and \eqref{eq:sharp-sign}
gives the asserted estimate.  The $H^m$ regularity is used only to
justify the classical transport identities and the admissibility conditions
of Theorem~\ref{thm:miximpulse}; it does not enter the constant $C$.
\end{proof}

\begin{corollary}[Conditional first-radial-moment propagation below $3/2$]\label{cor:momprop}
Let $1<p<3/2$, and assume the hypotheses of Theorem~\ref{thm:miximpulse}, including
\eqref{eq:mix-def}.  Then
\begin{equation}\label{eq:prop-bd}
 \sup_{0\le t\le T}\mathfrak M_1(\varpi(t))<\infty.
\end{equation}
More precisely, with $N_1$ and $Y$ defined in \eqref{eq:trace-def},
\begin{equation}\label{eq:mone-mix}
 \mathfrak M_1(\varpi(t))
 \le \left(\frac{N_1Y(t)}{2\pi}\right)^{1/2},
 \qquad0\le t\le T,
\end{equation}
and $Y$ is bounded by \eqref{eq:sharp-sign}.
\end{corollary}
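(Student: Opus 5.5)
The corollary follows by combining the Cauchy--Schwarz interpolation \eqref{eq:mominterp} with the absolute-impulse bound of Theorem~\ref{thm:miximpulse}; no new estimate is needed. We proceed in three steps.

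\emph{Step 1: the pointwise interpolation.} Fix $t\in[0,T]$. Write $r^2|\varpi|=(r^2|\varpi|^{1/2})\cdot|\varpi|^{1/2}$ inside \eqref{eq:mone-cyl} in cylindrical form, or directly in $\R^3$ as $\frac1{2\pi}\int r|\varpi|\,dx$. Cauchy--Schwarz then gives
\[
 \mathfrak M_1(\varpi(t))^2\le\frac1{2\pi}\|\varpi(t)\|_1\,\mathfrak M_2(\varpi(t)).
\]
This is exactly \eqref{eq:mominterp} applied at time $t$. Since $\|\varpi(t)\|_1\le N_1$ by \eqref{eq:trace-def}, it yields \eqref{eq:mone-mix}. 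Both sides may a priori be $+\infty$; the inequality remains valid in $[0,\infty]$.

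\emph{Step 2: finiteness of $Y$.} The hypotheses of Theorem~\ref{thm:miximpulse}, including \eqref{eq:mix-def}, are assumed. In particular, when $1<p<4/3$ the local condition \eqref{eq:mix-local-l2} is part of those hypotheses. The theorem therefore applies verbatim and gives $Y\in W^{1,1}(0,T)$ together with \eqref{eq:sharp-sign}:
\[
 Y(t)\le\Bigl((1+Y(0))^{1/4}+C_p(N_1^2+N_p^{1/4}N_\sharp)T\Bigr)^4-1.
\]
The right-hand side is finite and independent of $t$, because $Y(0)<\infty$ by \eqref{eq:ltwo-dx}, $N_1+N_p<\infty$, and $N_\sharp<\infty$.

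\emph{Step 3: conclusion.} Insert this uniform bound on $Y$ into \eqref{eq:mone-mix}. This bounds $\sup_t\mathfrak M_1(\varpi(t))$ by an explicit constant depending on $p,N_1,N_p,N_\sharp,Y(0),T$, which is \eqref{eq:prop-bd}.

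The only point requiring care is that the hypothesis set of Theorem~\ref{thm:miximpulse} is inherited in full, including the one-sided local $L^2$ condition in the range $p<4/3$, which is part of ``including \eqref{eq:mix-def}'' as the theorem is stated. Nothing else is delicate. The restriction $p<3/2$ plays no role in the argument. It only marks the regime where Theorem~\ref{thm:firstmom} is unavailable, which is why this conditional route is needed there.
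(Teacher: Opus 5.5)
Your proposal is correct and is exactly the paper's argument: apply \eqref{eq:mominterp} at each time with $\|\varpi(t)\|_1\le N_1$, then bound $Y$ uniformly on $[0,T]$ by \eqref{eq:sharp-sign}. One cosmetic slip: the Cauchy--Schwarz split you wrote in cylindrical form would yield the wrong weights; it should be $r^2|\varpi|=(r^{3/2}|\varpi|^{1/2})(r^{1/2}|\varpi|^{1/2})$, or in $\R^3$, $r|\varpi|=(r|\varpi|^{1/2})\,|\varpi|^{1/2}$. The inequality you then state is nonetheless the right one.
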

\begin{proof}
Equation~\eqref{eq:mominterp} applied at time $t$ gives
\[
 \mathfrak M_1(\varpi(t))^2
 \le \frac1{2\pi}\|\varpi(t)\|_1\mathfrak M_2(\varpi(t))
 \le \frac{N_1Y(t)}{2\pi}.
\]
Theorem~\ref{thm:miximpulse}, through \eqref{eq:sharp-sign}, bounds $Y$ uniformly on
$[0,T]$, which proves \eqref{eq:prop-bd}--\eqref{eq:mone-mix}.
\end{proof}
Thus Theorem~\ref{thm:thrsharp} identifies $p=3/2$ as the endpoint for
class-uniform sign-free differential control based on the first moment alone,
while Theorem~\ref{thm:miximpulse} yields first-radial-moment propagation below $3/2$ under finite initial absolute impulse and the mixed sign-component condition.
\subsection{The symmetric impulse threshold at $p=5/3$}
A stronger obstruction holds on a fixed time interval.  Unless an \(R\)-dependence is indicated explicitly, all implicit constants below are independent of the large parameter $R$; subscripts record additional
dependence.  The anti-parallel monotonicity and localization estimates are
adapted from Choi--Jeong \cite{ChoiJeong2025}, Gustafson--Miller--Tsai
\cite{GustafsonMillerTsai2026}, Egamberganov--Yao
\cite{EgamberganovYao2025}, and Cao--Fan--Qin \cite{CaoFanQin2026}.  Only
estimates uniform in the datum are used.
\begin{theorem}[Fixed-time obstruction to symmetric impulse control below $5/3$]\label{thm:impobstr}
Let $1<p\le q<5/3$.  There exists smooth, compactly supported, axisymmetric no-swirl initial data
$\varpi_{0,R}$, with $u_{0,R} = \mathcal{B}\varpi_{0,R}$ such that
\begin{equation}\label{eq:data-bd}
 \varpi_{0,R}(r,-z)=-\varpi_{0,R}(r,z),
 \qquad
 \varpi_{0,R}(r,z)\le0\quad(z>0),
\end{equation}
\begin{equation}\label{eq:sym-bd}
 \sup_{R\gg1}\Bigl(
 \|\varpi_{0,R}\|_{L^1}+\|\varpi_{0,R}\|_{L^p}+\|\varpi_{0,R}\|_{L^q}
 +\mathfrak M_2(\varpi_{0,R})+\|u_{0,R}\|_{L^2}
 \Bigr)<\infty,
\end{equation}
and, for the corresponding global smooth solutions,
\begin{equation}\label{eq:sm-conv}
 \mathfrak M_2(\varpi_R(1))\to\infty.
\end{equation}
More precisely, if
\begin{equation}\label{eq:sym-def}
 \beta_q:=\frac{5-3q}{2(q-1)},
\end{equation}
then
\begin{equation*}
 R^{-\beta}\mathfrak M_2(\varpi_R(1))\to\infty
 \qquad(0\le\beta<\beta_q).
\end{equation*}
\end{theorem}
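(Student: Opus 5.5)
We would prove Theorem~\ref{thm:impobstr} with a thin anti-parallel vortex-ring pair of the same type as in Theorem~\ref{thm:thrsharp}, now scaled so that the \emph{absolute impulse}, not the first moment, stays bounded. The initial datum is
\[
 \varpi_{0,R}(r,z):=\frac{\Gamma_R}{r^2\ell_R^2}\,\lambda\,W\!\left(\frac{r-R_0}{\ell_R},\frac z{\ell_R}\right),
\]
with $W$ the odd profile of \eqref{eq:dy-id}. The pair sits at a \emph{fixed} radius $R_0$ (say $R_0=1$), the circulation is $\Gamma_R\simeq1$, the core size $\ell_R\to0$ is tied to $R$, and $\lambda>0$ is a fixed small parameter. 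Properties \eqref{eq:data-bd} follow from the symmetry of $W$.

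The bounds follow from cylindrical coordinates. $\mathfrak M_2\simeq\Gamma_R R_0$ and $\|\varpi_{0,R}\|_1\simeq\Gamma_R$ are bounded. Moreover
\[
 \|\varpi_{0,R}\|_s^s\simeq\Gamma_R^s\ell_R^{2-2s}.
\]
This is not bounded for $s>1$ if $\Gamma_R\simeq1$. So the scaling must be arranged differently: take $\Gamma_R:=\ell_R^{2(q-1)/q}$, which makes the $L^q$ norm exactly order one. Since $p\le q$, the $L^1$ and $L^p$ norms are then bounded, in fact small. The energy bound on $\|u_{0,R}\|_2$ then follows from \eqref{eq:bs-xbound}.

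The dynamics use the known anti-parallel monotonicity for odd-in-$z$ data with $\varpi\le0$ on $z>0$. The two rings separate radially: by \eqref{eq:sign-cancel-identity} and the cross-interaction positivity of the proof of Theorem~\ref{thm:thrsharp}, $\tfrac{d}{dt}\mathfrak M_2=\tfrac2\pi\mathfrak X(\varpi_-,\varpi_+)\ge0$ while the cores remain concentrated. The initial radial velocity of the cores is about
\[
 \Gamma_R/\ell_R=\ell_R^{(q-2)/q}\to\infty,
\]
the planar dipole self-induced velocity, as in the flux computation of Theorem~\ref{thm:thrsharp} (kernel $\ell_RG^r_R\to$ planar kernel). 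A dipole of strength $\Gamma$ and separation $d$ moves at speed $\Gamma/d$. Ring stretching preserves $\Gamma$ and the area (incompressibility in $r\,dr\,dz$), but the meridional area shrinks like $1/r$, so the separation shrinks. Hence the speed increases, and the ring reaches radius $r(t)$ with $r(1)\gg1$.

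To obtain a quantitative rate I would use, following Choi--Jeong and Gustafson--Miller--Tsai, a Lyapunov-type functional such as the first moment $P(t)=\int r^2\varpi_-$ together with the vertical moment. The aim is a differential inequality
\[
 \dot r\gtrsim\Gamma_R/d(r),\qquad d(r)\simeq\ell_R(R_0/r)^{1/2}
\]
(area conservation with $r\,dr\,dz$). This gives $r^{3/2}\gtrsim\ell_R^{-1}\Gamma_R t$. Then
\[
 \mathfrak M_2(t)\simeq\Gamma_R r(t)^2\gtrsim\Gamma_R\bigl(\Gamma_R/\ell_R\bigr)^{4/3}.
\]
Matching the parametrization $\ell_R=R^{-\kappa}$ with $\kappa$ chosen so that this exponent equals $\beta_q=(5-3q)/(2(q-1))$ up to $\beta<\beta_q$ recovers the statement. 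The exponent positivity corresponds exactly to $q<5/3$, consistent with the $p^\sharp=p$ fixed point. The loss $\beta<\beta_q$ absorbs logarithmic and localization errors.

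\textbf{Main obstacle.} The hard step is the rigorous lower bound on radial propagation up to time $1$: proving that the cores do not disperse or lose circulation uniformly in $R$. I would try a bootstrap. Localize $\varpi_-$ via a weighted $z$-moment and a radial-moment monotonicity, as in Egamberganov--Yao and Cao--Fan--Qin. Control the far-field filamentation using only the transported $L^1\cap L^q$ norms and the kernel bounds of Lemma~\ref{lem:axiskern}. Then show that a fixed fraction of $\varpi_-$ travels outward at the predicted rate. Only that fraction is needed for the lower bound on $\mathfrak M_2$, so losses of mass to small radii are harmless.
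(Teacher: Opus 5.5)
Your proposal has a genuine gap, and it starts with the choice of data, not only with the coherence step you flag as the main obstacle. A pair at fixed radius $R_0=1$ cannot satisfy \eqref{eq:sm-conv} on the whole range $1<q<5/3$. For any smooth solution, transport of $\varpi$ and conservation of energy give
\[
 \Bigl|\frac{d}{dt}\mathfrak M_2(\varpi(t))\Bigr|
 \le\frac1\pi\|u(t)\|_2\,\|r\varpi(t)\|_2
 \le C\|u_0\|_2\,\|\varpi_0\|_{L^\infty}^{1/2}\,\mathfrak M_2(\varpi(t))^{1/2},
\]
since $\|r\varpi\|_2^2\le2\pi\|\varpi\|_{L^\infty}\mathfrak M_2(\varpi)$. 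This is the sign-free form of the estimate the paper uses to obtain \eqref{eq:local-bd}. For your family, $\|\varpi_{0,R}\|_{L^\infty}\simeq\Gamma_R\ell_R^{-2}$, $\mathfrak M_2(\varpi_{0,R})\simeq\Gamma_R$, and $\|u_{0,R}\|_2^2\simeq\Gamma_R^2$ (the computation of Lemma~\ref{lem:initscale} at unit radius). Hence
\[
 \mathfrak M_2(\varpi_R(1))\lesssim\Gamma_R+\Gamma_R^3\ell_R^{-2}
 =\Gamma_R+\ell_R^{(4q-6)/q},
\]
which stays bounded for $3/2\le q<5/3$. For $q<3/2$ the family is not excluded, but even the paper's rigorous mechanism applied to it gives growth only for $q<4/3$.

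Do the same bookkeeping for a pair at radius $R$ with circulation $\Gamma$ and core $\ell$. There $\|\varpi_0\|_{L^q}^q\simeq\Gamma^qR^{1-q}\ell^{2-2q}$ and $\mathfrak M_2\simeq\Gamma R^2$ must stay bounded, which gives the ceiling $\Gamma^3\ell^{-2}\lesssim\Gamma^{(2q-3)/(q-1)}R$. For $q>3/2$ this diverges only if $R\to\infty$. It is maximized precisely by the paper's choice \eqref{eq:imp-def}, $\Gamma_R=R^{-2}$, $\ell_R=R^{-a_q}$, for which $\Gamma_R^3\ell_R^{-2}=R^{2a_q-6}=R^{2\beta_q}$. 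Placing the pair at large radius with $\mathfrak M_2\asymp1$ is what produces the threshold $5/3$.

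The dynamical step is not merely unproven; the picture behind it is inconsistent. A coherent anti-parallel pair of circulation $\Gamma$ at radius $r$ carries energy $\gtrsim\Gamma^2r$ (again by Lemma~\ref{lem:initscale}), so energy conservation forbids carrying the whole pair from $R_0$ to $r(1)\gg R_0$. There are also arithmetic problems with the ODE:
\begin{itemize}
\item Integrated correctly, $\dot r\gtrsim\Gamma r^{1/2}/\ell$ gives $r^{1/2}\gtrsim\Gamma t/\ell$, not $r^{3/2}\gtrsim\Gamma t/\ell$. This predicts $\mathfrak M_2(1)\gtrsim\Gamma^5\ell^{-4}$, which violates the rigorous ceiling above.
\item The exponent you actually wrote, $\Gamma_R(\Gamma_R/\ell_R)^{4/3}=\ell_R^{(10q-14)/(3q)}$, diverges only for $q<7/5$, so it does not match $5/3$ either.
\end{itemize}
You also do not need positivity of $\mathfrak X(\varpi_-,\varpi_+)$ under a concentration hypothesis: the axis identity \eqref{eq:dr-eul} gives $P_R'\ge0$ unconditionally.

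The paper tracks no cores at all. For the large-radius family it combines this monotonicity with the Cao--Fan--Qin functional $\widetilde M_R$. This functional is nonincreasing, satisfies $\widetilde M_R(0)\lesssim R^{-1-a_q}$, and has dissipation controlling the energy in $\{\varrho\le L\}$. By \eqref{eq:dz-bd} and the conserved energy $\asymp R^{-3}$, there is $t_R\in[0,1]$ at which three quarters of the energy lies outside $\{\varrho\le AR^b\}$, for any fixed $b<a_q-2$. The vorticity at $t_R$ is then split into three pieces:
\begin{itemize}
\item the exterior harmonic estimate \eqref{eq:al-bd} disposes of the inner truncation;
\item the $z$-moment bound \eqref{eq:pr-bd} disposes of the cap;
\item the logarithmic energy bound \eqref{eq:ltwo-pr} (from Lemma~\ref{lem:logbound}) turns the remaining far energy into $R^{-3}\lesssim R^{-2}\log R\cdot R^{-b}P_R(t_R)$.
\end{itemize}
Thus $P_R(1)\ge P_R(t_R)\gtrsim R^{b-1}/\log R$, and letting $b\uparrow a_q-2$ gives every $\beta<a_q-3=\beta_q$. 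This energy-escape pigeonhole argument is what replaces the coherence bootstrap you could not close.
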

\subsubsection{The singular family}
Fix $W\in C_c^\infty(\mathbb R^2)$ such that
\begin{equation}\label{eq:dy-bd}
 W(y_1,-y_2)=-W(y_1,y_2),\qquad
 W\ge0\ \text{on }\{y_2>0\},\qquad
 \operatorname{supp}W\subset B_1,
 \qquad
 \int_{y_2>0}W\,dy=1.
\end{equation}
In particular,
\begin{equation}\label{eq:sym-id}
 \int_{\mathbb R^2}W\,dy=0.
\end{equation}
Define
\begin{equation}\label{eq:imp-def}
 a_q:=\frac{3q-1}{2(q-1)},
 \qquad
 \Gamma_R:=R^{-2},
 \qquad
 \ell_R:=R^{-a_q},
\end{equation}
and
\begin{equation}\label{eq:thr-def}
 \varpi_{0,R}(r,z)
 :=-\frac{\Gamma_R}{r\ell_R^2}
 W\!\left(\frac{r-R}{\ell_R},\frac z{\ell_R}\right).
\end{equation}
The sign assertion \eqref{eq:data-bd} follows from
\eqref{eq:dy-bd}--\eqref{eq:thr-def}.
\begin{lemma}[Initial scales]\label{lem:initscale}
The family \eqref{eq:thr-def} satisfies
\begin{align}
 \|\varpi_{0,R}\|_{L^s(\mathbb R^3)}^s
 &\asymp_{W,s}
 \Gamma_R^s R^{1-s}\ell_R^{-2(s-1)},
   \ \ 1\le s<\infty,
 \label{eq:thr-bd}\\
 \mathfrak M_2(\varpi_{0,R})&\asymp_W \Gamma_RR^2\asymp_W1,
 \label{eq:mtwo-rr}\\
 \|u_{0,R}\|_{L^2(\mathbb R^3)}^2&\asymp_W\Gamma_R^2R=R^{-3}.
 \label{eq:imp-id}
\end{align}
Consequently, \eqref{eq:sym-bd} holds.
\end{lemma}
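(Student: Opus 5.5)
The $L^s$ and impulse scalings \eqref{eq:thr-bd}--\eqref{eq:mtwo-rr} follow from direct computation in cylindrical coordinates, and the energy bound \eqref{eq:imp-id} needs a separate argument. For the $L^s$ norms we use \eqref{eq:lp-cyl} and the substitution $r=R+\ell_Ry_1$, $z=\ell_Ry_2$, so $dr\,dz=\ell_R^2\,dy$ and $r=R(1+\varepsilon_Ry_1)$ with $\varepsilon_R:=\ell_R/R\to0$. This gives
\[
 \|\varpi_{0,R}\|_s^s=2\pi\,\Gamma_R^s\ell_R^{2-2s}R^{1-s}\int_{\R^2}(1+\varepsilon_Ry_1)^{1-s}|W(y)|^s\,dy .
\]
Since $|y_1|\le1$ on $\operatorname{supp}W$, the integral lies between fixed multiples of $\int|W|^s>0$, which yields \eqref{eq:thr-bd}. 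Similarly, \eqref{eq:mtwo-cyl} gives
\[
 \mathfrak M_2(\varpi_{0,R})=\Gamma_R R^2\int(1+\varepsilon_Ry_1)^2|W|\,dy\asymp_W \Gamma_R R^2=1,
\]
because $\int|W|=2$.

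For the uniform bounds in \eqref{eq:sym-bd}, insert $\Gamma_R=R^{-2}$ and $\ell_R=R^{-a_q}$ into \eqref{eq:thr-bd}. The resulting power of $R$ is
\[
 -2s+1-s+2a_q(s-1)=1-3s+\frac{(3q-1)(s-1)}{q-1}.
\]
It vanishes at $s=q$: indeed $(3q-1)-(3q-1)=0$ after multiplying by $q-1$. At $s=1$ it equals $-2$. The exponent is affine in $s$, so it is nonpositive on $[1,q]$, and this bounds the $L^1$, $L^p$ and $L^q$ norms uniformly in $R$.

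The energy estimate \eqref{eq:imp-id} is the delicate step, since we need a two-sided asymptotic of size $R^{-3}$ rather than just an upper bound. An upper bound from Proposition~\ref{prop:radmom} is too weak here. Instead we use the exact kernel formula \eqref{eq:bs-energy} of Lemma~\ref{lem:bsformula} with the same blow-up change of variables in both meridional points, writing $s_R(X,Y)=\varepsilon_R^2|X-Y|^2/[\rho_R(X)\rho_R(Y)]\to0$ and $\rho_R(X):=1+\varepsilon_Rx_1$. We have $\varpi=-(\Gamma_R/(r\ell_R^2))W$, so after the substitution $(r\rho)^{3/2}$ combines with $(r\rho)^{-1}$ and $\ell_R^4\,dX\,dY$ to give
\[
 \|u_{0,R}\|_2^2=\Gamma_R^2 R\iint (\rho_R(X)\rho_R(Y))^{1/2}\mathcal F(s_R)\,W(X)W(Y)\,dX\,dY .
\]
Near $s=0$ one expects $\mathcal F(s)=-\tfrac12\log s+c_0+o(1)$. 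This can be checked from \eqref{eq:kern-def}, since the singular part comes from $\alpha$ near $0$, where $\cos\alpha\approx1$ and $2(1-\cos\alpha)\approx\alpha^2$. The kernel therefore splits as
\[
 -\log\varepsilon_R-\log|X-Y|+c_0+o(1).
\]
The constant $-\log\varepsilon_R$ (and $c_0$) multiplies $\big(\int W\big)^2=0$ by \eqref{eq:sym-id}, with an error coming from the weights $\rho_R^{1/2}=1+O(\varepsilon_R)$. That error is $O(\varepsilon_R\log\varepsilon_R^{-1})\to0$. The surviving term is
\[
 -\iint\log|X-Y|\,W(X)W(Y)\,dX\,dY=2\pi\,\|\nabla\Delta^{-1}W\|_{L^2(\R^2)}^2,
\]
which is positive and finite because $W$ has zero mean. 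Dominated convergence, justified by the logarithmic majorant and the bound $|\mathcal F(s)|\lesssim s^{-\sigma}$ of \eqref{eq:kern-bound}, then gives $\|u_{0,R}\|_2^2\asymp_W\Gamma_R^2R=R^{-3}$, which also completes \eqref{eq:sym-bd}. Making the expansion of $\mathcal F$ at $s\downarrow0$ precise, and controlling the $O(\varepsilon_R\log)$ cross terms uniformly, is where I expect the main technical effort to lie.
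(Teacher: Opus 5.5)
Your argument is correct. The $L^s$, impulse, and exponent computations are the same as the paper's, but for the energy bound \eqref{eq:imp-id} you take a different route.

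The paper never uses the ring kernel. It rescales the stream function to $\Phi_R(y)=\psi_R(R+\ell_Ry_1,\ell_Ry_2)/(\Gamma_RR)$, which solves $\int\nabla\Phi_R\cdot\nabla\varphi\,a_R^{-1}\,dy=-\int W\varphi\,dy$ with $a_R=1+\varepsilon_Ry_1$. This gives $\|u_{0,R}\|_2^2=2\pi\Gamma_R^2R\,\mathcal A_R$ with $\mathcal A_R=\int|\nabla\Phi_R|^2a_R^{-1}\,dy$. Testing with $\Phi_R$ and applying Poincar\'e's inequality on $B_2$ (constants are removed by $\int W=0$) gives $\mathcal A_R\lesssim1$, and testing with $W$ gives $\|W\|_2^2\lesssim\mathcal A_R^{1/2}$. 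That argument uses only coercivity. However, it needs a separate density argument, based on decay of $\psi_R$ at the axis and at infinity, to make $\Phi_R$ an admissible test function, and it yields only two-sided bounds.

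Your route applies Lemma~\ref{lem:bsformula} directly, which is legitimate because $\varpi_{0,R}\in C_c^\infty$ is supported away from the axis. It needs only the elementary expansion $\mathcal F(s)=-\frac12\log s+c_0+o(1)$, and it produces the sharper asymptotic $\|u_{0,R}\|_2^2=\Gamma_R^2R\bigl(2\pi\|\nabla\Delta^{-1}W\|_{L^2(\R^2)}^2+o(1)\bigr)$.

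The cross terms you expect to be the main difficulty are actually absent. Since $\rho_R(X)=1+\varepsilon_Rx_1$ depends only on $x_1$ and $W$ is odd in $x_2$, every integral $\int g(x_1)W(X)\,dX$ vanishes. Hence the contributions of $-\log\varepsilon_R$, of $c_0$, and of $\frac12\log(\rho_R(X)\rho_R(Y))$ are exactly zero. Only two pieces survive: the term $-\log|X-Y|$, with weight $(\rho_R(X)\rho_R(Y))^{1/2}=1+O(\varepsilon_R)$, and a remainder that is uniformly $o(1)$ because $s_R\lesssim\varepsilon_R^2$ on $\operatorname{supp}W\times\operatorname{supp}W$.
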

\begin{proof}
With $(r,z)=(R+\ell_Ry_1,\ell_Ry_2)$ and
$dx=2\pi r\,dr\,dz$,
\begin{equation*}
 \|\varpi_{0,R}\|_{L^s}^s
 =2\pi\Gamma_R^s\ell_R^{-2(s-1)}
 \int |W(y)|^s(R+\ell_Ry_1)^{1-s}\,dy,
\end{equation*}
which proves \eqref{eq:thr-bd}.  By the choice of $a_q$, the $L^q$
scale is invariant, while the $L^p$ scale is bounded for $p\le q$.
Equation \eqref{eq:mtwo-rr} follows from the same change of variables.
To prove \eqref{eq:imp-id}, let $\psi_R$ be the axisymmetric
stream function,
\begin{equation}\label{eq:axis-id}
 u_{0,R}^r=-\frac1r\partial_z\psi_R,
 \qquad
 u_{0,R}^z=\frac1r\partial_r\psi_R.
\end{equation}
Set $\varepsilon_R=\ell_R/R$, $a_R(y)=1+\varepsilon_Ry_1$,
$\Omega_R=\{y_1>-\varepsilon_R^{-1}\}$, and
\begin{equation}\label{eq:rr-def}
 \Phi_R(y):=\frac{\psi_R(R+\ell_Ry_1,\ell_Ry_2)}{\Gamma_RR}.
\end{equation}
The stream equation and \eqref{eq:sym-id} give
\begin{equation}\label{eq:phi-dy}
 \int_{\Omega_R}\frac{\nabla\Phi_R\cdot\nabla\varphi}{a_R}\,dy
 =-\int_{\mathbb R^2}W\varphi\,dy,
 \qquad
 \varphi\in C_c^\infty(\Omega_R).
\end{equation}
The axisymmetric vector identity $\int_{\mathbb R^3}\omega_{0,R}(y)\,dy=0$ and the three-dimensional Biot--Savart formula imply, for $|x|$ larger
than twice the vorticity-support radius,
\begin{equation}\label{eq:supp-id}
 u_{0,R}(x)=\frac1{4\pi}\int_{\mathbb R^3}\omega_{0,R}(y)\times
 \left(\frac{x-y}{|x-y|^3}-\frac{x}{|x|^3}\right)dy,
 \qquad |u_{0,R}(x)|\lesssim_R|x|^{-3}.
\end{equation}
Local boundedness and \eqref{eq:supp-id} already give $u_{0,R}\in L^2$.
Normalize the stream function by $\psi_R(0,z)=0$.  Then $\psi_R(r,z)=\int_0^r s u_{0,R}^z(s,z)\,ds$.  Moreover, $\nabla\cdot u_{0,R}=0$ and the decay in \eqref{eq:supp-id} imply $\frac d{dz}\int_0^\infty s u_{0,R}^z(s,z)\,ds=0$ and $\int_0^\infty s u_{0,R}^z(s,z)\,ds=0$.  Hence also $\psi_R(r,z)=-\int_r^\infty s u_{0,R}^z(s,z)\,ds$.
These two representations, smoothness, and \eqref{eq:supp-id} give
\begin{equation}\label{eq:psi-sym}
 |\psi_R(r,z)|\lesssim_R r^2(1+z^2)^{-3/2}\quad(0<r\le1),
 \qquad
 |\psi_R(r,z)|\lesssim_R(1+r^2+z^2)^{-1/2}.
\end{equation}
If $\psi_{R,n}$ is obtained by cutting $\psi_R$ off on
$\{r<1/n\}\cup\{(r^2+z^2)^{1/2}>n\}$, then
\begin{align}
 \int\frac{|\nabla(\psi_{R,n}-\psi_R)|^2}{r}\,dr\,dz
 &\lesssim
 \int_{\{r<2/n\}\cup\{(r^2+z^2)^{1/2}>n\}}
 \frac{|\nabla\psi_R|^2}{r}\,dr\,dz+O_R(n^{-2})
  =o_{n\to\infty}(1). \notag
\end{align}
The first term tends to zero because $u_{0,R}\in L^2$ and
$2\pi\int |\nabla\psi_R|^2r^{-1}\,dr\,dz=\|u_{0,R}\|_2^2$.
The $O_R(n^{-2})$ term follows from \eqref{eq:psi-sym}, after splitting the
outer annulus into $r<1$ and $r\ge1$.  After the scaling \eqref{eq:rr-def}, these cutoffs are compactly supported
inside $\Omega_R$ and converge to $\Phi_R$ in the weighted energy norm.
Thus $\Phi_R$ is an admissible energy test in \eqref{eq:phi-dy}.
Since $\operatorname{supp}W\subset B_1$, $a_R\asymp1$ on $B_2$, and
\eqref{eq:sym-id} removes constants, Poincar\'e's inequality on
$B_2$ yields
\begin{equation*}
 \mathcal A_R:=\int_{\Omega_R}\frac{|\nabla\Phi_R|^2}{a_R}\,dy
 =-\int W\Phi_R\,dy\lesssim_W \mathcal A_R^{1/2}.
\end{equation*}
Testing \eqref{eq:phi-dy} with $W$ gives
\begin{equation*}
 \|W\|_{L^2}^2
 \lesssim_W \mathcal A_R^{1/2}.
\end{equation*}
Thus $\mathcal A_R\asymp_W1$.  Finally, \eqref{eq:axis-id}--\eqref{eq:rr-def}
give
\begin{equation*}
 \|u_{0,R}\|_{L^2}^2=2\pi\Gamma_R^2R \mathcal A_R,
\end{equation*}
which proves \eqref{eq:imp-id}.
\end{proof}
For each fixed $R$, the datum \eqref{eq:thr-def} is smooth, compactly
supported away from the symmetry axis, and satisfies
$\omega_{0,R}:=(r\varpi_{0,R})e_\vartheta \in H^m(\mathbb R^3)$ for every $m$ and
$\omega_{0,R}^\vartheta/r=\varpi_{0,R}\in L^2\cap L^\infty$.  It therefore lies
in the classical global axisymmetric no-swirl class of
\cite{UkhovskiiIudovich1968}; see also the modern formulation with the same
$H^m$ and $L^2\cap L^\infty$ hypotheses in
\cite[Section~1.2]{LimJeong2025}.  Denote the solution by
$(u_R,\varpi_R)$.  No estimate below uses a high-Sobolev norm uniformly in $R$.
\subsubsection{Anti-parallel localization estimates and their $R$-scales}
On $\Pi_+:=\{r>0,z>0\}$ set
\begin{equation*}
 \eta_R:=-\omega_R^\vartheta\ge0,
 \qquad
 P_R(t):=\iint_{\Pi_+}r^2\eta_R\,dr\,dz,
 \qquad
 \mathcal K_R:=\iint_{\Pi_+}r|u_R|^2\,dr\,dz.
\end{equation*}
Odd symmetry gives
\begin{equation}\label{eq:bsk-id}
 \mathfrak M_2(\varpi_R(t))=2P_R(t),
 \qquad
 \|u_R(t)\|_{L^2}^2=4\pi\mathcal K_R.
\end{equation}
The sign and symmetry in \eqref{eq:data-bd} are preserved by the Euler flow; see Choi--Jeong \cite{ChoiJeong2025} and Cao--Fan--Qin \cite{CaoFanQin2026}.
The exact axis identity of Egamberganov--Yao \cite[(4.3)]{EgamberganovYao2025} is
\begin{equation}\label{eq:dr-eul}
 P_R'(t)=\int_0^\infty r|u_R^r(r,0,t)|^2\,dr\ge0.
\end{equation}
Here the scalar vorticity denoted by $\omega$ in
\cite{EgamberganovYao2025} equals $-\omega^\vartheta$ in our convention,
namely $\eta_R$ on $\Pi_+$, so no sign change is hidden in
\eqref{eq:dr-eul}.  Let $Q_R:=\|\eta_R(0)/r\|_{L^\infty(\Pi_+)}$.  Transport of $\eta_R/r$ gives
$\eta_R(t,r,z)\le Q_Rr$.  Using the first equality in the derivation of
\eqref{eq:dr-eul}, Cauchy--Schwarz, and energy conservation,
\begin{align}
 P_R'(t)
 &\le 2\left(\iint_{\Pi_+}r|u_R^r|^2\right)^{1/2}
       \left(\iint_{\Pi_+}r\eta_R^2\right)^{1/2} \le 2(\mathcal K_RQ_RP_R(t))^{1/2}. \notag
\end{align}
Hence
\begin{equation*}
 P_R(t)^{1/2}
 \le P_R(0)^{1/2}+t(\mathcal K_RQ_R)^{1/2}.
\end{equation*}
This is the same finite-time moment estimate recorded in
\cite[Section~2.1, equations~(2.9)--(2.10)]{CaoFanQin2026}; here it is
rederived to keep its dependence on the degenerating family explicit.
By Lemma~\ref{lem:initscale},
\begin{equation}\label{eq:bsk-wr}
 P_R(0)\asymp_W1,
 \qquad
 \mathcal K_R\asymp_WR^{-3},
 \qquad
 Q_R\lesssim_WR^{2a_q-3},
\end{equation}
and therefore
\begin{equation}\label{eq:local-bd}
 \sup_{0\le t\le1}P_R(t)\lesssim_W R^{2a_q-6}.
\end{equation}
Cao--Fan--Qin \cite[Proposition~3.2, equation~(3.5)]{CaoFanQin2026} introduced the mixed
weight
\begin{equation*}
 \Upsilon(r,z):=\int_0^z\sqrt{r^2+s^2}\,ds,
 \qquad
 M_R(t):=\iint_{\Pi_+}\Upsilon(r,z)\eta_R(r,z,t)\,dr\,dz,
\end{equation*}
and proved the exact dissipation identity
\begin{align}
 M_R'(t)
 &=-\frac12\int_0^\infty z|u_R^z(0,z,t)|^2\,dz
 -\iint_{\Pi_+}\frac{(zu_R^r-ru_R^z)^2}{r\sqrt{r^2+z^2}}\,dr\,dz
 \le0. \notag
\end{align}
Since $rz\le\Upsilon(r,z)$,
\begin{equation}\label{eq:pr-bd}
 \iint_{\Pi_+}rz\eta_R(t)\,dr\,dz\le M_R(0).
\end{equation}
The support of \eqref{eq:thr-def} gives
\begin{equation}\label{eq:pr-def}
 m_R:=\iint_{\Pi_+}\eta_R\,dr\,dz=\Gamma_R=R^{-2},
 \qquad
 M_R(0)\lesssim_W R\ell_R\Gamma_R=R^{-1-a_q}.
\end{equation}
To control both meridional components of the velocity, we use the second mixed moment of Cao--Fan--Qin \cite[Propositions~3.3--3.4]{CaoFanQin2026}.  Put $\varrho=(r^2+z^2)^{1/2}$ and $g(\varrho):=\varrho/\operatorname{arsinh}\varrho$, with $g(0)=1$.  There is an absolute constant $\lambda_0>0$ such that,
for
\[
 \widetilde\Upsilon(r,z)
 :=\lambda_0\Upsilon(r,z)+z g(\varrho),
 \qquad
 \widetilde M_R(t)
 :=\iint_{\Pi_+}\widetilde\Upsilon(r,z)\eta_R(r,z,t)\,dr\,dz,
\]
the function $\widetilde M_R$ is nonnegative and nonincreasing.  If $\mathfrak D_R(t):=-\widetilde M_R'(t)\ge0$, then
\begin{equation*}
 \int_0^\infty\mathfrak D_R(t)\,dt\le \widetilde M_R(0),
 \qquad
 \iint_{\Pi_+\cap\{\varrho\le L\}}r|u_R|^2\,dr\,dz
 \lesssim L\log^2(2+L)\mathfrak D_R(t),
 \quad L\ge2.
\end{equation*}
On the support of the upper-half-plane part of \eqref{eq:thr-def}, one has
$r\asymp R$, $0<z\lesssim\ell_R$, and $\varrho\asymp R$.  Since
$\Upsilon(r,z)\lesssim R\ell_R$ and
$zg(\varrho)\lesssim R\ell_R$, \eqref{eq:pr-def} gives $\widetilde M_R(0)\lesssim_W R\ell_R\Gamma_R=R^{-1-a_q}$.  Consequently,
\begin{equation}\label{eq:dz-bd}
 \int_0^1\iint_{\Pi_+\cap\{\sqrt{r^2+z^2}\le L\}}r|u_R|^2\,dr\,dz\,dt
 \lesssim_W L\log^2(2+L)R^{-1-a_q},
 \qquad L\ge2.
\end{equation}
Fix $0<b<a_q-2$ and $A>1$.  Taking $L=AR^b$ in \eqref{eq:dz-bd}, dividing by
$\mathcal K_R\asymp_W R^{-3}$ from \eqref{eq:bsk-wr}, and using
$b+2-a_q<0$, we obtain
\[
 \frac{1}{\mathcal K_R}
 \int_0^1\iint_{\Pi_+\cap\{\sqrt{r^2+z^2}\le AR^b\}}r|u_R|^2\,dr\,dz\,dt
 \lesssim_{W,A}R^{b+2-a_q}\log^2 R\to0.
\]
Energy conservation therefore gives, for all sufficiently large $R$, a time
$t_R\in[0,1]$ such that
\begin{equation}\label{eq:ar-bd}
 \iint_{\Pi_+\cap\{\sqrt{r^2+z^2}>AR^b\}}r|u_R(r,z,t_R)|^2\,dr\,dz
 \ge\frac34\mathcal K_R.
\end{equation}
\subsubsection{Uniform logarithmic energy control}
For a measurable $0\le g\le\eta_R(t)$, let $u_g$ be the velocity induced
by the odd extension of $-g$ from $\Pi_+$.  For measurable truncations, the positive energy representation and the
pointwise logarithmic energy-kernel bound are given in
\cite[Section~2.2, equations~(2.14), (2.22)--(2.23)]{CaoFanQin2026}; the
underlying three-dimensional energy estimate goes back, in this anti-parallel
framework, to \cite[Section~3.3, Lemma~3.5]{GustafsonMillerTsai2026}.  The
source logarithm is
$L(X,Y)=\log(2+rs/|X-Y|^2)$.  Since, for
$a=\sqrt{rs}/|X-Y|$,
$\log(2+a^2)\le2\log(2+a)$, its bound implies the following form.  In the notation
$X=(r,z)$, $Y=(s,w)$,
\begin{equation}\label{eq:rs-def}
 \|u_g\|_{L^2}^2
 \lesssim
 \iint_{\Pi_+\times\Pi_+}
 \sqrt{rs}\,\Lambda(X,Y)g(X)g(Y)\,dX\,dY,
 \qquad
 \Lambda(X,Y):=\log\!\left(2+\frac{\sqrt{rs}}{|X-Y|}\right).
\end{equation}
Moreover, the kernel in the exact representation
\cite[Eq.~(2.14)]{CaoFanQin2026} is nonnegative on
$\Pi_+\times\Pi_+$; hence truncation monotonicity gives
\begin{equation}\label{eq:trunc-bd}
 0\le g\le\eta_R(t)
 \quad\Longrightarrow\quad
 \|u_g\|_{L^2}\le\|u_R(t)\|_{L^2}.
\end{equation}
The logarithmic $L^p$ kernel estimates in
\cite[Lemma~3.6]{GustafsonMillerTsai2026} and
\cite[Lemma~2.1]{CaoFanQin2026} have constants depending on fixed-data
quantities.  For the degenerating family \eqref{eq:thr-def}, the required dependence on the geometric parameters is given by the following estimate.
\begin{lemma}[Uniform logarithmic bound]\label{lem:logbound}
Let $\eta\ge0$ on $\Pi_+$ and set $m:=\iint\eta$, $Q:=\|\eta/r\|_\infty$, and $P:=\iint r^2\eta$.  If $m>0$, then every $0\le g\le\eta$ satisfies
\begin{equation}\label{eq:qm-bd}
 \left(\iint\Lambda(X,Y)^2g(X)g(Y)\,dX\,dY\right)^{1/2}
 \lesssim
 m\left[1+\log\!\left(2+\frac Qm+\frac Pm\right)\right].
\end{equation}
\end{lemma}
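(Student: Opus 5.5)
Since $0\le g\le\eta$ and $\Lambda\ge\log 2>0$, it suffices to bound
\[
 I:=\iint_{\Pi_+\times\Pi_+}\Lambda(X,Y)^2\,g(X)g(Y)\,dX\,dY ,
\]
and then take square roots. The plan is to split the $Y$-integral, for each fixed $X=(r,z)$, at a scale $\delta(X)>0$ chosen below. We write $\Lambda\le\log(2+\sqrt{rs}/\delta)$ on $\{|X-Y|\ge\delta\}$, and we bound the near region $\{|X-Y|<\delta\}$ directly using the pointwise bound $g(Y)\le\eta(Y)\le Qs$. The far region contributes at most
\[
 \int g(X)\int g(Y)\,\log^2\!\bigl(2+\sqrt{rs}/\delta\bigr)\,dY\,dX .
\]
For the near region we use $g(Y)\le Qs$ together with
\[
 \int_{|X-Y|<\delta}\log^2\!\bigl(2+\sqrt{rs}/|X-Y|\bigr)\,dY
 \lesssim \delta^2\log^2\!\bigl(2+\sqrt{r(r+\delta)}/\delta\bigr),
\]
which follows by polar coordinates around $X$, since the logarithm is integrable at the origin in two dimensions. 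This gives a near contribution of order $Q(r+\delta)\delta^2\log^2(\cdots)$ for each $X$, to be integrated against $g(X)\,dX$.

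Next we choose $\delta$ to balance the two regions. The natural choice is a fixed scale $\delta$ with $Q\delta^2\cdot(\text{radius})\sim m$, independent of $X$. We handle the weight $\sqrt{rs}$ by $\sqrt{rs}\le\tfrac12(r+s)$ and control radii through the moment $P$. Large radii are treated with Chebyshev-type mass bounds: $\iint_{r>\lambda}g\le P/\lambda^2$.

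It is cleaner to first reduce to the normalization $m=1$, by rescaling the amplitude of $\eta$, which changes $Q$ and $P$ proportionally to $m$. We then apply the elementary inequality
\[
 \log\!\bigl(2+\sqrt{rs}/\delta\bigr)\lesssim 1+\log(2+r)+\log(2+s)+\log(2+1/\delta).
\]
The far part is then bounded by $m^2$ times the square of the averages $\int\log(2+r)\,g/m$ plus $\log^2(2+1/\delta)$. By Jensen's inequality (concavity of the logarithm), $\int\log(2+r^2)\,g/m\le\log(2+P/m)$.

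For the near part, integrate $Q(r+\delta)\delta^2\log^2(\cdots)\,g(X)$ over $X$. Taking $\delta=(m/Q)^{1/2}(2+P/m)^{-1/4}$, or a similar choice, gives a contribution of order $m^2$ times logarithms of $Q/m$ and $P/m$, again with Jensen's inequality applied to the $r$-weights. Summing the two regions yields
\[
 I\lesssim m^2\bigl[1+\log(2+Q/m+P/m)\bigr]^2,
\]
which is the square of \eqref{eq:qm-bd}.

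The main obstacle is the near-diagonal region, where $\Lambda$ is unbounded and the only available smallness is the $L^\infty$ bound $\eta\le Qr$. The difficulty is to make the scale choice uniform in $X$ while keeping every dependence on $r$ logarithmic, without ever using an $L^p$ norm of $\eta$. I would first try the fixed scale $\delta^2=m/Q$, with the weight $r+\delta$ controlled by $\sqrt{P/m}$ via Cauchy--Schwarz. If that fails, I would let $\delta$ depend on the dyadic radial shell of $X$, with the shell masses controlled by $P$.
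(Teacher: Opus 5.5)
Your fixed-scale strategy (split at one scale $\delta$ independent of $X$, treat the near region with $\eta\le Qr$ and the two-dimensional integrability of $\log^2$, and the far region with logarithmic moments) can be made to work. However, the scale you actually propose does not close the near-region estimate, which is the step you yourself flag as the main obstacle. Your near bound is $CQ\delta^2\int g(X)\,(r+\delta)\log^2(\cdots)\,dX$, so two constraints are needed: $Q\delta^2\sqrt{P/m}\lesssim m$ for the $r$-part (after Cauchy--Schwarz), and $Q\delta^3\lesssim m$ for the $\delta$-part.

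Your first try $\delta^2=m/Q$ is not even a squared length, since $m/Q$ scales like a length cubed. It leaves the non-logarithmic factor $\sqrt{P/m}+\delta$. The choice $\delta=(m/Q)^{1/2}(2+P/m)^{-1/4}$ enforces only the first constraint, because $Q\delta^3/m=\bigl((Q/m)(2+P/m)^{3/2}\bigr)^{-1/2}$ is unbounded on the class. For example, $\eta=Qr\mathbf{1}_{\{r<1,\,0<z<L\}}$ has $P/m=1/2$ and $Q/m=2/L$. The right-hand side of the lemma is then $\simeq m$, while the $\delta$-part of your near bound with $g=\eta$ is already $\gtrsim m^2\sqrt L$. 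The underlying problem is that your near estimate never uses the mass cap $\int g\le m$, so $\delta$ must be capped. The choice $\delta=\min\{(m/Q)^{1/3},(m/Q)^{1/2}(m/P)^{1/4}\}$ meets both constraints. Since $1/\delta\le(Q/m)^{1/3}+(Q/m)^{1/2}(P/m)^{1/4}$ and $P/(m\delta^2)\le(P/m)(Q/m)^{2/3}+(P/m)^{3/2}(Q/m)$, every resulting logarithm is $\lesssim1+\log(2+Q/m+P/m)$.

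There is a second slip in the far region. Expanding $\log^2(2+\sqrt{rs}/\delta)$ produces the diagonal terms $\log^2(2+r)$ and $\log^2(2+s)$. What you need is therefore the second logarithmic moment $\int g\log^2(2+r)$, not the square of the average of $\log(2+r)$. Your first-moment Jensen bound $\int\log(2+r^2)\,g/m\le\log(2+P/m)$ does not give it. Instead, use that $t\mapsto\log^2(e+t)$ is concave on $[0,\infty)$, which gives $\int\eta\log^2(e+r^2)\le m\log^2(e+P/m)$. Likewise, the near term $\int g\,r\log^2(\cdots)$ is handled by Cauchy--Schwarz against $P$ together with concavity of $t\mapsto\log^4(e^3+t)$. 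With both repairs your argument is a valid alternative to the paper's.

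The paper's route is different. It fixes $Y=(s,w)$ and splits at the $Y$-dependent radius $|X-Y|=s/2$, beyond which $\Lambda\lesssim1$ outright. Inside that radius, a layer-cake in $|X-Y|$ uses the capped mass $\min\{m,CQs^3e^{-2t}\}$ of $\{|X-Y|<se^{-t}\}$, and this cap $m$ is exactly what your fixed-$\delta$ version lacks. This gives $m[1+\log_+^2(CQs^3/m)]$, and the leftover $\log^2(1+s)$ is integrated against $\eta/m$ using Chebyshev and $P$. The paper's argument needs no optimization of a scale and no Cauchy--Schwarz. Yours replaces the adaptive scale by balancing two constraints on a single global $\delta$.
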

\begin{proof}
Fix $Y=(s,w)\in\Pi_+$ and put $d=|X-Y|$.  If $d\ge s/2$, then
$r\le3d$ and $\sqrt{rs}/d\lesssim1$; if $d<s/2$, then $r<3s/2$.
Hence
\[
 \Lambda(X,Y)^2
 \lesssim1+\mathbf1_{\{d<s/2\}}
 \log_+^2\!\left(\frac{s}{d}\right).
\]
Since $g\le\eta\le Qr$, for every $t\ge0$,
\[
 \int_{\{|X-Y|<se^{-t}\}}g(X)\,dX
 \le\min\{m,CQs^3e^{-2t}\}.
\]
Layer cake therefore gives
\[
 \int\log_+^2\!\left(\frac{s}{|X-Y|}\right)g(X)\,dX
 \lesssim
 m\left[1+\log_+^2\!\left(\frac{CQs^3}{m}\right)\right].
\]
Here we used the elementary bound
$\int_0^\infty2t\min\{1,Ae^{-2t}\}\,dt
\lesssim1+\log_+^2A$.

Now set $d\nu(Y)=m^{-1}\eta(Y)\,dY$.  Since
$\int s^2\,d\nu=P/m$, Chebyshev's inequality and the same layer-cake
bound give
\[
 \int\log^2(1+s)\,d\nu
 \lesssim1+\log_+^2(P/m).
\]
Finally,
\[
 \log_+\!\left(\frac{CQs^3}{m}\right)
 \lesssim
 1+\log\!\left(1+\frac Qm\right)+\log(1+s).
\]
Integrating the previous pointwise bound for $\Lambda^2$ first in $X$
and then against $g(Y)\,dY\le\eta(Y)\,dY$ proves the square of
\eqref{eq:qm-bd}.
\end{proof}
Lemma~\ref{lem:logbound}, \eqref{eq:bsk-wr},
\eqref{eq:local-bd}, and \eqref{eq:pr-def} give, uniformly for
$t\in[0,1]$,
\begin{equation*}
 \left(\iint\Lambda^2g(X)g(Y)\,dX\,dY\right)^{1/2}
 \lesssim_{W,q}R^{-2}\log R,
 \qquad 0\le g\le\eta_R(t).
\end{equation*}
By Cauchy--Schwarz in \eqref{eq:rs-def},
\begin{equation}\label{eq:ltwo-pr}
 \|u_g\|_{L^2}^2
 \lesssim_{W,q}R^{-2}\log R
 \iint_{\Pi_+}r g\,dr\,dz.
\end{equation}
\subsubsection{Radial escape and the $5/3$ window}
Fix $A\ge4$, let $L_R:=R^b$, and let $t_R$ satisfy
\eqref{eq:ar-bd}.  Split
\begin{equation*}
 \eta_R(t_R)=\eta_R^{\rm in}+\eta_R^{\rm out},
 \qquad
 \eta_R^{\rm out}=\eta_R^{\rm cap}+\eta_R^{\rm far},
\end{equation*}
where
\begin{equation*}
 \eta_R^{\rm in}:=\eta_R\mathbf1_{\{\sqrt{r^2+z^2}\le L_R\}},
 \quad
 \eta_R^{\rm cap}:=\eta_R\mathbf1_{\{\sqrt{r^2+z^2}>L_R,\ r<L_R/2\}},
 \quad
 \eta_R^{\rm far}:=\eta_R\mathbf1_{\{\sqrt{r^2+z^2}>L_R,\ r\ge L_R/2\}}.
\end{equation*}
The odd azimuthal vorticity obtained from the hard truncation
$\eta_R^{\rm in}$ is compactly supported and belongs to $L^1\cap L^2$.
Moreover it is distributionally divergence-free: for an axisymmetric scalar
$f(r,z)$, $\nabla\cdot(f e_\vartheta)=r^{-1}\partial_\theta f=0$, and this
identity does not differentiate the truncating indicator.  Hence the
hypotheses of Cao--Fan--Qin's exterior harmonic estimate
\cite[Lemma~3.5, equation~(3.29)]{CaoFanQin2026} are satisfied, and
\begin{equation}\label{eq:al-bd}
 \|u_{\eta_R^{\rm in}}\|_{L^2(\mathbb R^3\setminus B_{AL_R})}
 \le A^{-3/2}\|u_{\eta_R^{\rm in}}\|_{L^2}
 \le A^{-3/2}\|u_R(t_R)\|_{L^2},
\end{equation}
where the last inequality is \eqref{eq:trunc-bd}.
Combining \eqref{eq:ar-bd}, \eqref{eq:bsk-id},
and \eqref{eq:al-bd} yields
\begin{equation}\label{eq:out-bd}
 \|u_{\eta_R^{\rm out}}\|_{L^2}\ge\frac12\|u_R(t_R)\|_{L^2}
\end{equation}
for $A\ge4$.
On the support of $\eta_R^{\rm cap}$,
$z>\sqrt3L_R/2$.  Hence \eqref{eq:pr-bd} and
\eqref{eq:pr-def} give
\begin{equation*}
 \iint r\eta_R^{\rm cap}
 \le\frac{2M_R(0)}{\sqrt3L_R}
 \lesssim_W R^{-1-a_q-b}.
\end{equation*}
By \eqref{eq:ltwo-pr},
\begin{equation}\label{eq:thr-id}
 \|u_{\eta_R^{\rm cap}}\|_{L^2}^2
 \lesssim_{W,q}R^{-3-a_q-b}\log R=o(R^{-3}).
\end{equation}
Since \eqref{eq:imp-id} and energy conservation give
$\|u_R(t_R)\|_2^2\asymp_WR^{-3}$, equations
\eqref{eq:out-bd}--\eqref{eq:thr-id} imply
\begin{equation}\label{eq:eta-far}
 \|u_{\eta_R^{\rm far}}\|_{L^2}^2\gtrsim_{W,q}R^{-3}.
\end{equation}
This step uses only linearity of the Biot--Savart map and the triangle
inequality; no additivity of kinetic energy is used.
Since $r\ge L_R/2$ on $\operatorname{supp}\eta_R^{\rm far}$,
\begin{equation}\label{eq:far-bd}
 \iint r\eta_R^{\rm far}
 \le\frac2{L_R}\iint r^2\eta_R(t_R)
 =\frac{2P_R(t_R)}{L_R}.
\end{equation}
Equations \eqref{eq:ltwo-pr},
\eqref{eq:eta-far}, and \eqref{eq:far-bd} give
\begin{equation}\label{eq:rad-sym}
 P_R(t_R)\gtrsim_{W,q}\frac{R^{b-1}}{\log R}.
\end{equation}
By \eqref{eq:dr-eul}, $P_R(1)\ge P_R(t_R)$.
By \eqref{eq:imp-def} and \eqref{eq:sym-def},
\begin{equation}\label{eq:bet-rad}
 a_q-3
 =\frac{3q-1-6(q-1)}{2(q-1)}
 =\frac{5-3q}{2(q-1)}
 =\beta_q.
\end{equation}
For $0\le\beta<\beta_q$ choose $\beta+1<b<a_q-2$; this is possible precisely by \eqref{eq:bet-rad}.
Then \eqref{eq:bsk-id} and \eqref{eq:rad-sym} imply
\begin{equation*}
 R^{-\beta}\mathfrak M_2(\varpi_R(1))
 \gtrsim_{W,q}\frac{R^{b-1-\beta}}{\log R}
 \to\infty.
\end{equation*}
This proves Theorem~\ref{thm:impobstr}.
\begin{corollary}[Symmetric sign-free absolute-impulse threshold]\label{cor:signfree}
Let $5/3\le p<\infty$ and let $(u,\varpi)$ satisfy
\eqref{eq:imp-bd}--\eqref{eq:psi-tau}.  For every $M,T>0$ there is
$C_{p,M,T}<\infty$ such that
\begin{equation}\label{eq:thr-imp}
 N_1+N_p+Y(0)\le M
 \quad\Longrightarrow\quad
 \sup_{0\le t\le T}Y(t)+\|Y'\|_{L^\infty(0,T)}
 \le C_{p,M,T}.
\end{equation}
No sign condition is imposed.  Conversely, for every $1<p<5/3$ there
are smooth classical solutions satisfying the balance hypotheses of
Theorem~\ref{thm:miximpulse} with uniformly bounded $N_1$, $N_p$, $Y(0)$, and
Euler energy on $[0,1]$, but with $Y(1)\to\infty$.  Thus $5/3$ is the
exact sign-free threshold for class-uniform fixed-time absolute-impulse
control in this balance class.
\end{corollary}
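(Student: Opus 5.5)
The corollary has two halves. The positive half, \eqref{eq:thr-imp}, is a direct specialization of Theorem~\ref{thm:miximpulse}. The converse is a restatement of Theorem~\ref{thm:impobstr} with $q=p$.

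\emph{Positive half.} Assume $p\ge5/3$, so that $p^\sharp=4p/(3p-1)\le p$. Since also $p^\sharp>1$, we can interpolate between $L^1$ and $L^p$ for each sign component:
\[
\|\varpi_\pm(t)\|_{p^\sharp}\le\|\varpi_\pm(t)\|_1^{\theta}\|\varpi_\pm(t)\|_p^{1-\theta}\le N_1^\theta N_p^{1-\theta},
\]
where $\theta\in[0,1)$ is determined by $1/p^\sharp=\theta+(1-\theta)/p$. This gives $N_\sharp\le M$, so \eqref{eq:mix-def} holds. The extra local condition \eqref{eq:mix-local-l2} is needed only for $p<4/3$, so it is vacuous here.

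Theorem~\ref{thm:miximpulse} then applies. Estimate \eqref{eq:sharp-sign} gives
\[
\sup_{0\le t\le T}Y(t)\le\bigl((1+M)^{1/4}+C_p(M^2+M^{5/4})T\bigr)^4 .
\]
Inserting this into \eqref{eq:sharp-mix} bounds $|Y'(t)|$ for a.e.\ $t$ by $C_p(M^2+M^{5/4}\sup_t Y(t)^{3/4})$, which controls $\|Y'\|_{L^\infty}$. No sign assumption enters, because the minimum over signs is taken inside Theorem~\ref{thm:miximpulse}.

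\emph{Converse.} Fix $1<p<5/3$ and take $q=p$ in Theorem~\ref{thm:impobstr}. This produces smooth, compactly supported data $\varpi_{0,R}$, odd in $z$, whose $N_1$, $N_p$ (by transport, these are time-independent), $Y(0)=\mathfrak M_2(\varpi_{0,R})$ and energy $\|u_{0,R}\|_2$ are bounded uniformly in $R$ by \eqref{eq:sym-bd}. Energy conservation keeps the Euler energy bounded on $[0,1]$, while \eqref{eq:sm-conv} gives $Y(1)\to\infty$.

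It remains to check that these Ukhovskii--Iudovich solutions lie in the balance class \eqref{eq:imp-bd}--\eqref{eq:psi-tau} on $[0,1]$. For each fixed $R$, the solution is smooth with $u_R\in C([0,1];H^m)$ for all $m$, and $\varpi_R\in L^1\cap L^\infty$ is transported. This is exactly the setting of Corollary~\ref{cor:smooth-impulse}, whose verification carries over: $r\varpi_R\in C_tL^2$ gives the flux integrability in \eqref{eq:ltwo-dx}, and the characteristic identity for $\Psi_L(r^2)$ gives \eqref{eq:psi-tau}. Only the per-$R$ regularity is used, never a uniform-in-$R$ high norm.

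Thus the solutions violate any uniform bound of the form \eqref{eq:thr-imp}, and $5/3$ is the exact threshold. The one step that needs care is that the balance hypotheses are qualitative, so they may depend on $R$ while the controlling quantities stay uniform.
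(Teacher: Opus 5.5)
Your proposal is correct and follows the paper's own proof essentially step by step. For $p\ge5/3$ you use $p^\sharp\le p$ and $L^1$--$L^p$ interpolation to verify \eqref{eq:mix-def} for both sign components, bound $\sup_t Y$ by \eqref{eq:sharp-sign}, and then bound $\|Y'\|_{L^\infty}$ by \eqref{eq:sharp-mix}; for $1<p<5/3$ you invoke Theorem~\ref{thm:impobstr} with $q=p$ and check the balance hypotheses from per-$R$ smoothness and transport, as in Corollary~\ref{cor:smooth-impulse}, with an explicit constant and an $R$-dependence remark that only make the paper's brief argument more precise.
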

\begin{proof}
For $p\ge5/3$, one has $p^\sharp\le p$.  Interpolation between
$L^1$ and $L^p$ therefore bounds each sign component in $L^{p^\sharp}$
uniformly in time by a constant depending only on $p$, $N_1$, and $N_p$.
Hence \eqref{eq:mix-def} holds without a sign hypothesis.  Equation
\eqref{eq:sharp-sign} first bounds $\sup_{0\le t\le T}Y(t)$ in terms of
$p,M,T$; substituting this bound into \eqref{eq:sharp-mix} then bounds
$\|Y'\|_{L^\infty(0,T)}$.  This proves \eqref{eq:thr-imp}.
If $1<p<5/3$, apply Theorem~\ref{thm:impobstr} with $q=p$.  Relative vorticity is
transported by the smooth incompressible Euler flow, so its $L^1$ and
$L^p$ norms are conserved.  Smoothness, compact support on finite time intervals, and the classical
transport identity verify \eqref{eq:imp-bd}--\eqref{eq:psi-tau}.  The solutions in Theorem~\ref{thm:impobstr}
therefore have uniformly bounded $N_1$, $N_p$, $Y(0)$, and energy on
$[0,1]$, while \eqref{eq:sm-conv} gives $Y(1)\to\infty$.
\end{proof}

\section{Kinetic framework, exact Maxwellian geometry, and main theorems}\label{sec:kinframe}
We first fix the collision models and parameter ranges in \eqref{eq:kinetic-model}.  For Coulomb Landau,
\[
 Q(G,H)
 =\nabla_\xi\!\cdot\!\int_{\mathbb R^3}
 \Phi(\xi-\xi_*)\bigl(G_*\nabla_\xi H-H\nabla_{\xi_*}G_*\bigr)\,d\xi_*,
 \qquad
 \Phi(\mathbf k)=|\mathbf k|^{-1}\!\left(I-\frac{\mathbf k\otimes\mathbf k}{|\mathbf k|^2}\right).
\]
For non-cutoff Boltzmann,
\[
 Q(G,H)(\xi)
 =\int_{\mathbb R^3\times\mathbb S^2}
 B(\xi-\xi_*,\mathbf n)(G'H'_*-GH_*)\,d\mathbf n\,d\xi_*,
\]
where
\[
 \xi'=\frac{\xi+\xi_*}{2}+\frac{|\xi-\xi_*|}{2}\mathbf n,
 \qquad
 \xi_*'=\frac{\xi+\xi_*}{2}-\frac{|\xi-\xi_*|}{2}\mathbf n,
\]
and
\[
 B(\mathbf k,\mathbf n)=C_v|\mathbf k|^\gamma b(\cos\sigma),
 \qquad
 \sin\sigma\,b(\cos\sigma)\asymp\sigma^{-1-2s}, \qquad \cos \sigma = \frac{\mathbf k}{|\mathbf k|}\cdot \mathbf n,
 \qquad 0<s<1,
\]
with
\[
 \max\left\{-3,-\frac32-2s\right\}<\gamma<0,
 \qquad
 \gamma+2s\in
 \left(\max\left\{-3+2s,-\frac32\right\},0\right].
\]
For either model set
\[
 \mathcal N(G,H):=\frac12\bigl(Q(G,H)+Q(H,G)\bigr),
\]
so that $\mathcal N(F,F)=Q(F,F)$.  The kinetic equation is therefore
\[
 \e\,\partial_tF^\e+\xi\cdot\nabla_xF^\e
 =\frac1{\kappa\e}\mathcal N(F^\e,F^\e),
\]
or, after division by $\e$,
$\mathcal T_\e F^\e=(\kappa\e^2)^{-1}\mathcal N(F^\e,F^\e)$ with
$\mathcal T_\e:=\partial_t+\e^{-1}\xi\cdot\nabla_x$.

\begin{equation}\label{eq:xi-id}
\int_{\mathbb{R}^3} F^\varepsilon
\begin{pmatrix}
1\\ \xi\\ |\xi|^2/2
\end{pmatrix}\, \dd \xi
=
\begin{pmatrix}
\mathrm P^\varepsilon\\ \mathrm P^\varepsilon \mathrm U^\varepsilon\\
\tfrac{3k_B}{2}\mathrm P^\varepsilon\Theta^\varepsilon + \tfrac{1}{2}\mathrm P^\varepsilon|\mathrm U^\varepsilon|^2
\end{pmatrix}.
\end{equation}
The collision operator preserves the collision invariants:
\begin{equation*}
\int_{\mathbb{R}^3}\mathcal{N}(F^\e,F^\e)(\xi)\,(1,\ \xi,\ |\xi|^2/2)^{\mathrm T}\, \dd \xi =(0,0,0)^T.
\end{equation*}
These collision invariants yield exact balance laws:
\begin{align}\label{eq:invar-id}
\bega
&\e\p_t\mathrm{P}^{\e} +\nabla_x \cdot (\mathrm{P}^{\e}\mathrm{U}^{\e})=0, \cr
&\e\p_t(\mathrm{P}^{\e} \mathrm{U}^{\e}) + \nabla_x\cdot\big(\mathrm{P}^{\e}\,\mathrm{U}^{\e}\otimes\mathrm{U}^{\e}\big) +k_B\nabla_x(\mathrm{P}^{\e} \mathrm{\Theta}^{\e}) + \nabla_x\cdot \mathbf{r}^{\e} =0, \cr  
&\e\p_t\bigg(\frac{3k_B}{2}\mathrm{P}^{\e}\mathrm{\Theta}^{\e}\bigg)
+  \nabla_x\cdot\bigg(\frac{3k_B}{2}\mathrm{P}^{\e}\mathrm{\Theta}^{\e} \mathrm{U}^{\e}\bigg) + k_B
\mathrm{P}^{\e} \mathrm{\Theta}^{\e}(\nabla_x \cdot \mathrm{U}^{\e})  + \nabla_x\cdot \mathfrak{q}^{\e} +\nabla \mathrm U^{\e} \! : \mathbf{r}^{\e} =0.
\enda
\end{align}
The stress deviator and heat flux are the Burnett moments.
\begin{definition}[Burnett stress and heat-flux moments]
Suppose $F^{\e}$ is given and the macroscopic fields in \eqref{eq:xi-id} are
well-defined. Set
\begin{align}
\mathbf r_{ij}^{\e}
&:=\langle F^{\e},\mathfrak R_{ij}^{\e}\rangle,
&
\mathfrak q_i^{\e}
&:=\langle F^{\e},\mathcal Q_i^{\e}\rangle, \notag
\end{align}
where
\begin{equation}\label{eq:ij-def}
\mathfrak R_{ij}^{\e}
:=(\xi_i-\mathrm U_i^{\e})(\xi_j-\mathrm U_j^{\e})
-\frac{|\xi-\mathrm U^{\e}|^2}{3}\delta_{ij},
\qquad
\mathcal Q_i^{\e}
:=(\xi_i-\mathrm U_i^{\e})
\frac{|\xi-\mathrm U^{\e}|^2-5k_B\Theta^{\e}}{2}.
\end{equation}
\end{definition}
For spatial vector fields, let $\mathbb P$ denote the Leray projection:
\begin{equation}\label{eq:pq-def}
 \mathbb P^\perp f:=\nabla_x\Delta_x^{-1}\nabla_x\cdot f,
 \qquad
 \mathbb Pf:=f-\mathbb P^\perp f,
\end{equation}
where $\Delta_x^{-1}$ is the multiplier $-|k|^{-2}$.  Thus
$\nabla_x\cdot\mathbb Pf=0$ and $\mathbb P$ is the $L_x^2$-orthogonal
projection onto divergence-free fields.
\subsection{Local and global Maxwellian decompositions}
Write the local Maxwellian as
\begin{align}
    & M_{[\mathrm P,\mathrm U,\Theta]} 
=
\frac{\mathrm P(t,x)}{\big(2\pi k_B \Theta(t,x)\big)^{3/2}}
\exp\!\Big(-\frac{|\xi-\mathrm U(t,x)|^2}{2k_B\Theta(t,x)}\Big). \label{eq:loc-id}
\end{align}
\begin{align*}
    & M^\varepsilon  = M_{[\mathrm P^\e, \mathrm U^\e, \Theta^\e]}
=
\frac{\mathrm P^\varepsilon(t,x)}{\big(2\pi k_B \Theta^\varepsilon(t,x)\big)^{3/2}}
\exp\!\Big(-\frac{|\xi-\mathrm U^\varepsilon(t,x)|^2}{2k_B\Theta^\varepsilon(t,x)}\Big). 
\end{align*}
For fixed $(t,x)$, put $c^\e=\xi-\mathrm U^\e$ and
$\chi^\e=|c^\e|^2/(2k_B\Theta^\e)-3/2$.  The local null space in the
distribution variable is
\[
 \mathcal N_M
 :=\operatorname{span}\{M^\e,\ c_1^\e M^\e,\ c_2^\e M^\e,
 c_3^\e M^\e,\ \chi^\e M^\e\}.
\]
For any integrable $h=h(\xi)$ define the local macroscopic projection by
\begin{align}\label{eq:mp-def}
 \Pi_Mh
 :=M^\e\bigg[&\frac1{\mathrm P^\e}\int h\,d\xi
 +\frac{c^\e}{k_B\Theta^\e}\cdot
   \frac1{\mathrm P^\e}\int c^\e h\,d\xi +\frac2{3\mathrm P^\e}\chi^\e
   \int\chi^\e h\,d\xi\bigg].
\end{align}
The Gaussian identities
\[
 \int c_i^\e c_j^\e M^\e\,d\xi
 =\mathrm P^\e k_B\Theta^\e\delta_{ij},
 \qquad
 \int(\chi^\e)^2M^\e\,d\xi=\frac32\mathrm P^\e
\]
show that \eqref{eq:mp-def} is the orthogonal projection onto
$\mathcal N_M$ for the pairing
$\langle h,k\rangle_{M^{-1}}=\int hk(M^\e)^{-1}d\xi$.  Equivalently,
$\Pi_M^\perp h$ is characterized by the five vanishing moments
\[
 \int \Pi_M^\perp h\,(1,c^\e,|c^\e|^2)\,d\xi=0.
\]
For the moment-matching Maxwellian $M^\e=M[F^\e]$ this gives
\begin{equation}\label{eq:dist-def}
 \Pi_M^{\perp}:=I-\Pi_M,
 \qquad
 \Pi_M F^\e=M^\e.
\end{equation}
Consequently, with $G^\e:=F^\e-M^\e$, one has for every spatial
multi-index $\alpha$
\begin{equation}\label{eq:xd-id}
 \int_{\mathbb R^3}\partial_x^\alpha G^\e
 (1,\xi,|\xi|^2)\,d\xi=0,
 \qquad
 \Pi_M(\partial_x^\alpha G^\e)=0.
\end{equation}
Indeed, the first identity follows by differentiating the exact five moment
matching relations, and the second follows from the characterization of
$\operatorname{Ran}\Pi_M^\perp$ above.  In particular,
\begin{equation*}
 F^\e=\Pi_M F^\e+\Pi_M^{\perp} F^\e
 =M^\e+\Pi_M^{\perp} F^\e.
\end{equation*}
The Burnett functions \eqref{eq:ij-def} are orthogonal to the local Maxwellian:
\begin{align}
    & \int_{\R^3} M^\e \mathfrak{R}_{ij}^\e \dd \xi = \int_{\R^3} M^\e \mathcal
    Q^\e_i \dd \xi = 0.   \notag
\end{align}
For either collision operator, we denote the local linearized collision operator around $M^\e$ by
\begin{align*}
    &
\mathcal{L}_M(\Pi_M^{\perp}F^{\varepsilon}) := -2\,\mathcal{N}\big(M^{\varepsilon},\Pi_M^{\perp}F^{\varepsilon}\big).
\end{align*}
For high velocity moments, use the global Maxwellians 
\begin{align}
 &   \mu(\xi) = \frac{1}{\big(2\pi k_B \big)^{3/2}}
\exp\!\Big(-\frac{|\xi|^2}{2k_B}\Big), \qquad \mu_0(\xi) = \frac{1}{\big(2\pi k_B(1-\e \e^{-c_0}) \big)^{3/2}}
\exp\!\Big(-\frac{|\xi|^2}{2k_B(1-\e \e^{-c_0})}\Big)  \label{eq:glob-id}.
\end{align}
Here $c_0\ll 1$ will be specified in \eqref{eq:boot-bd}.
The expansion around the global Maxwellian reads
\begin{equation}\label{eq:dist-id}
\begin{split}
    & F^\e = \mu + \e \sqrt{\mu}f^\e, \text{ for Landau equation,} \\
    & F^\e = \mu_0 + \e \sqrt{\mu_0}f^\e, \ \  F^\e = M^\e + \e \sqrt{\mu_0}g^\e, \text{ for the non-cutoff Boltzmann equation}. 
\end{split}
\end{equation}
The linearized Landau operator around $\mu$ and linearized Boltzmann operator around $\mu_0$ are denoted as
\begin{align*}
    & \mathcal{L} f^\e := -2\frac{\mathcal{N}(\mu,\sqrt{\mu}f^\e)}{\sqrt{\mu}}, \ \  \mathcal{L}_0 f^\e := -2 \frac{\mathcal{N}(\mu_0,\sqrt{\mu_0} f^\e)}{\sqrt{\mu_0}}.
\end{align*}
For the non-cutoff Boltzmann equation, define also the difference operator
\begin{equation}\label{eq:lin-def}
 \mathcal L_d^0 h
 :=-\frac{2\mathcal N(M^\e-\mu_0,\sqrt{\mu_0}h)}{\sqrt{\mu_0}}
 =-\frac{Q(M^\e-\mu_0,\sqrt{\mu_0}h)
          +Q(\sqrt{\mu_0}h,M^\e-\mu_0)}{\sqrt{\mu_0}}.
\end{equation}
Thus, pointwise in $(t,x)$,
\begin{equation}\label{eq:coll-id}
 -\frac{2\mathcal N(M^\e,\sqrt{\mu_0}h)}{\sqrt{\mu_0}}
 =\mathcal L_0h+\mathcal L_d^0h.
\end{equation}
The nonlinear Landau and Boltzmann operators are the bilinear maps
\begin{equation*}
 \Gamma(h,k):=\frac{Q(\sqrt\mu\,h,\sqrt\mu\,k)}{\sqrt\mu},
 \qquad
 \Gamma_0(h,k):=\frac{Q(\sqrt{\mu_0}\,h,\sqrt{\mu_0}\,k)}{\sqrt{\mu_0}}.
\end{equation*}
Let $\Theta_0^\e:=1-\e^{1-c_0}$. The null spaces are
\begin{align*}
 \operatorname{Null}(\mathcal L)
 &=\operatorname{span}\!\left\{
 \sqrt\mu,\ \xi_1\sqrt\mu,\ \xi_2\sqrt\mu,\ \xi_3\sqrt\mu,
 (|\xi|^2-3k_B)\sqrt\mu\right\},\\
 \operatorname{Null}(\mathcal L_0)
 &=\operatorname{span}\!\left\{
 \sqrt{\mu_0},\ \xi_1\sqrt{\mu_0},\ \xi_2\sqrt{\mu_0},\
 \xi_3\sqrt{\mu_0},
 (|\xi|^2-3k_B\Theta_0^\e)\sqrt{\mu_0}\right\}.
\end{align*}
Let $\Pi$ and $\Pi_0$ denote the $L^2_\xi$-orthogonal projections onto
these spaces:
\begin{equation}\label{eq:pr-conv}
 \Pi:L^2_\xi\to\operatorname{Null}(\mathcal L),
 \qquad
 \Pi_0:L^2_\xi\to\operatorname{Null}(\mathcal L_0).
\end{equation}
Because $M^\e$ has the same mass, momentum, and energy moments as $F^\e$,
\eqref{eq:dist-id} gives the exact zero-order orthogonality
\begin{equation}\label{eq:pr-id}
 \Pi_0 g^\e=0.
\end{equation}
The exact-local decomposition $F^\e=M^\e+\Pi_M^\perp F^\e$ is common to
both collision models and carries the macroscopic closure.  Only the global
high-velocity representation is model-dependent.  For Landau it is taken
around the unshifted equilibrium $\mu$.  For non-cutoff Boltzmann the global
reference $\mu_0$ is shifted to a lower temperature.  Exact moment matching
then gives the zero-order orthogonality \eqref{eq:pr-id}.  The homogeneous
temperature mismatch is not perturbative after division by $\e$, so the
zero spatial order is measured by $g^\e$; for $|\alpha|\ge1$,
$\partial_x^\alpha\mu_0=0$ removes that mismatch and $f^\e$ is the natural
unknown.  The quantitative comparison between $M^\e$ and $\mu_0$ is proved
after the scale and bootstrap assumptions below.
With the projections defined in \eqref{eq:pr-conv}, the linearized operators satisfy the coercive estimates
\begin{align*}
    &    \int_{\R^3} f^\e\mathcal{L}f^\e  \dd \xi \gtrsim \Vert (I-\Pi)f^\e\Vert_{L^2_{\mathbf{L}}}^2, \qquad \int_{\mathbb{R}^3} f^\e \mathcal{L}_0 f^\e \dd \xi  \gtrsim \Vert (I-\Pi_0)f^\e\Vert_{L^2_{\mathbf{B}}}^2.
\end{align*}
Define the collision dissipation norms as follows:
\begin{align*}
   \Vert f^\e\Vert_{L^2_\Lb}^2 : = &   \int_{\R^3} \sigma^{ij}(\xi) \p_{\xi_i} f^\e (t,x,\xi) \p_{\xi_j} f^\e(t,x,\xi) \dd \xi + \frac{1}{4k_B^2} \int_{\R^3} \sigma^{ij}(\xi) \xi_i \xi_j |f^\e(t,x,\xi)|^2 \dd \xi, 
\end{align*}
where
\begin{align*}
    & \sigma^{ij}(\xi) := \int_{\R^3} \Phi^{ij}(\xi-\xi')\mu(\xi') \dd \xi'.
\end{align*}
\begin{align*}
 \Vert f^\e\Vert_{L^2_{\Bb}}^2 :=   &   \iint_{\R^6\times \mathbb S^2} B(\xi-\xi_*,\mathbf n)\mu_0(\xi_*) (f^\e(\xi')-f^\e(\xi))^2 \dd \xi_* \dd \xi \dd \mathbf n \\
    & + \iint_{\R^6\times \mathbb S^2} B(\xi-\xi_*,\mathbf n) |f^\e(\xi_*)|^2 (\sqrt{\mu_0(\xi')}-\sqrt{\mu_0(\xi)})^2 \dd \xi \dd \xi_* \dd \mathbf n.
\end{align*}
The dissipation norms satisfy the following coercive estimates:
\begin{equation}\label{eq:xi-bd}
 \begin{split}
     & C_1 [\Vert f^\e\Vert_{H^1_{-3}(\R_\xi^3)}^2 +\Vert f^\e\Vert_{L^2_{-1}(\R_\xi^3)}^2] \leq  \Vert f^\e\Vert_{L^2_{\Lb}}^2 \leq C_2 \Vert f^\e\Vert_{H^1_{-1}(\R_\xi^3)}^2, \\
    &C_1 [\Vert f^\e\Vert_{H^s_{\gamma}(\R_\xi^3)}^2 + \Vert f^\e\Vert_{L^2_{2s+\gamma}(\R_\xi^3)}^2] \leq  \Vert  f^\e\Vert_{L^2_{\Bb}}^2 \leq C_2 \Vert f^\e\Vert_{H^s_{2s+\gamma}(\R_\xi^3)}^2.
 \end{split}    
\end{equation}
Fix the polynomial velocity exponent as follows: $\ell\ge0$ in the
Landau case, while in the non-cutoff Boltzmann case we require
$\ell\ge-(\gamma+2s)$.  Since the collision range above has $\gamma+2s>-3/2$, the fixed choice
$\ell=2$ is admissible uniformly throughout that range.  Choose $q>0$
sufficiently small, in particular $q<1/8$.  Set $a_L:=k_B^{1/2}$ and
$a_\e:=(k_B\Theta_0^\e)^{1/2}$.  We use the velocity weights in the corresponding thermal variables,
\begin{align*}
 w(\xi):=\left\langle\frac{\xi}{a_L}\right\rangle^\ell
 e^{q\left\langle\xi/a_L\right\rangle^2}
 & \   \   \  \ \text{for Landau},\\
 w_0(\xi)=w_0^\e(\xi)
 :=\left\langle\frac{\xi}{a_\e}\right\rangle^\ell
 e^{q\left\langle\xi/a_\e\right\rangle}
 & \  \ \  \ \text{for Boltzmann}.
\end{align*}
We suppress the superscript $\e$ on $w_0$.
\subsection{Norms and energy functionals}
Throughout, $\delta>0$ is sufficiently small and independent of $\e$.
The notation $a+$ and $a-$ means $a+\eta$ and $a-\eta$, respectively,
for an arbitrarily small fixed $\eta>0$; the value of $\eta$ may change between estimates.
The energy multipliers are fixed with $0<\delta_2\ll\delta_1\ll1$.
In the kinetic estimates below, $o(1)=o_{\e\to0}(1)$ always means a
quantity tending to zero uniformly for $0\le t\le T_*$ under the bootstrap
assumption \eqref{eq:tot-bd} and the scale relations \eqref{eq:choice-bd}--\eqref{eq:choice-boot}.
We write $L^2_{x,\Lb}=L^2_xL^2_{\Lb}$ and
$L^2_{x,\Bb}=L^2_xL^2_{\Bb}$, and set
\begin{align*}
 \langle\xi\rangle&:=(1+|\xi|^2)^{1/2},\qquad
 \|f\|_{L^2_\gamma}:=\|\langle\xi\rangle^{\gamma/2}f\|_{L^2_\xi},\\
 \|f\|_{H^s_\gamma}^2&:=\|f\|_{L^2_\gamma}^2
 +\iint_{\R^3\times\R^3}\langle\xi\rangle^\gamma
 \frac{|f(\xi)-f(\xi^*)|^2}{|\xi-\xi^*|^{3+2s}}
 \chi_{|\xi-\xi^*|\le1}\,d\xi^*d\xi, \ \ s\in (0,1), \\
  \Vert f\Vert_{H^1_\gamma}^2 &:= \Vert f\Vert_{L^2_\gamma}^2 + \sum_{j=1}^3 \Vert \p_{\xi_j} f\Vert_{L^2_\gamma}^2.
\end{align*}
\paragraph{Energy and dissipation functionals.}
We work on $\mathbb R^3$ and recall the low-Mach parametrization
\begin{equation}\label{eq:param-id}
 \mathrm P^\e=e^{\e\rho^\e},\qquad
 \mathrm U^\e=\e u^\e,\qquad
 \Theta^\e=e^{\e\theta^\e},
\end{equation}
used throughout the kinetic estimates.
For $\alpha,\beta\in\mathbb N_0^3$, write
$\partial^\alpha:=\partial_x^\alpha$, $\partial_\beta:=\partial_\xi^\beta$, and
$\partial_\beta^\alpha:=\partial_x^\alpha\partial_\xi^\beta$.
For $N\geq0$, we define the lower-order energy and dissipation as follows:
\begin{align}\label{eq:diss-def}
\bega
\mathcal{E}_{M}^{N}(F^\e(t))=
\mathcal{E}_{M}^{N}(t) &:= \bigg(\|\rho^{\e}(t)\|_{L^2_x}^2+\|u^{\e}(t)\|_{L^2_x}^2 +\|\theta^{\e}(t)\|_{L^2_x}^2\bigg) \\
&+ \sum_{1\leq|\alpha|\leq N} \bigg(\|\p^{\alpha}\rho^{\e}(t)\|_{L^2_x}^2+\|\p^{\alpha}u^{\e}(t)\|_{L^2_x}^2+\|\p^{\alpha}\theta^{\e}(t)\|_{L^2_x}^2\bigg), \cr
\mathcal{E}_G^{N}(F^\e (t)) =\mathcal{E}_G^{N}(t)&:= \sum_{0\leq|\alpha|\leq N}\frac{1}{\e^2} \iint_{\R^3\times\R^3}|\p^{\alpha}\Pi_M^{\perp}F^{\e}|^2|M^{\e}|^{-1}\dd \xi \dd x , \\ 
\mathcal{D}_G^{N}(F^\e(t))=
\mathcal{D}_G^{N}(t)&:= \begin{cases}
    \sum_{0\leq|\alpha|\leq N} \frac{1}{\kappa\e^4} \big\Vert (\p^\alpha \Pi_M^{\perp} F^\e) |M^\e|^{-1/2} \big\Vert_{L^2_{x,\Lb}}^2 , \text{ for Landau}\\
    \sum_{0\leq|\alpha|\leq N} \frac{1}{\kappa\e^4} \big\Vert (\p^\alpha \Pi_M^{\perp} F^\e) |M^\e|^{-1/2} \big\Vert_{L^2_{x,\Bb}}^2, \text{ for Boltzmann}
\end{cases} .
\enda
\end{align}
For the top spatial order we use the intrinsic full-distribution energy
\begin{align}\label{eq:top-def}
\bega
\mathcal H_{\rm top}(t)
&:=\sum_{|\alpha|=N+1}\frac{\kappa}{2\e^2}
\iint_{\R^3\times\R^3}\frac{|\p^\alpha F^\e(t,x,\xi)|^2}{M^\e(t,x,\xi)}\,\dd\xi\dd x,\cr
\mathcal{D}_{top}^{N}(F^\e (t)) =  \mathcal{D}_{top}^{N}(t)&:=\begin{cases}
    \sum_{|\alpha|=N+1} \frac{1}{\e^4}\big\Vert (\Pi_M^{\perp}\p^\alpha F^\e) |M^\e|^{-1/2} \big\Vert_{L^2_{x,\Lb}}^2 , \text{ for Landau}\\
     \sum_{|\alpha|=N+1} \frac{1}{\e^4}\big\Vert (\Pi_M^{\perp}\p^\alpha F^\e) |M^\e|^{-1/2} \big\Vert_{L^2_{x,\Bb}}^2, \text{ for Boltzmann}.
\end{cases}
\enda
\end{align}
The full energy and dissipation are
\begin{align}\label{eq:kin-def}
\bega
\mathcal{E}^{N}(F^\e(t)) =\mathcal{E}^{N}(t) &:= \mathcal{E}_M^{N}(t) + \mathcal{E}_G^{N}(t) + 2\mathcal H_{\rm top}(t), \\
\mathcal{D}^{N}(F^\e(t)) = \mathcal{D}^{N}(t) &:= \mathcal{D}_G^{N}(t) + \mathcal{D}_{top}^{N}(t).
\enda
\end{align}
The high-moment functionals differ by the collision operator.
\begin{itemize}
    \item  Landau equation.
\begin{align}\label{eq:land-id}
\bega
 &\mathcal{E}_H^{N}(F^\e(t)) = \mathcal{E}_H^{N}(t)  \\
 &:  = \sum_{0\le|\alpha|\le N}  [\kappa^{2c} \Vert \p^\alpha f^\e\Vert_{L^2_{x,\xi}}^2 + \kappa^{2c} \Vert w \p^\alpha (I-\Pi)f^\e\Vert_{L^2_{x,\xi}}^2]  + \sum_{\substack{|\beta|=1\\|\alpha|\le\lfloor N/2\rfloor}}   \kappa^{2c} \Vert w^{\frac{1}{2}} \p^\alpha_\beta (I-\Pi)f^\e\Vert_{L^2_{x,\xi}}^2 \\
 &+   \kappa^{2c+1} \e^2 \Vert w\p^{N+1} f^\e\Vert_{L^2_{x,\xi}}^2 + \kappa^{2c+1} \Vert \p^{N+1} f^\e\Vert_{L^2_{x,\xi}}^2 .
\enda
\end{align}
\begin{align*}
  &\mathcal{D}_H^{N}(F^\e(t)) = \mathcal{D}_H^{N}(t)  \\
  &  := \sum_{0\le|\alpha|\le N}  \frac{\kappa^{2c-1}}{\e^2}[\Vert (I-\Pi)\p^\alpha f^\e\Vert_{L^2_{x,\Lb}}^2+  \Vert w(I-\Pi)\p^\alpha f^\e\Vert_{L^2_{x,\Lb}}^2]   +  \sum_{|\beta|=1, |\alpha|\leq \frac{N}{2}}  \frac{\kappa^{2c-1}}{\e^2}  \Vert w^{\frac{1}{2}}\p^\alpha_\beta(I-\Pi) f^\e\Vert_{L^2_{x,\Lb}}^2  \\
  &+  \kappa^{2c} \Vert w \p^{N+1} f^\e\Vert_{L^2_{x,\Lb}}^2 + \frac{\kappa^{2c}}{\e^2}\Vert (I-\Pi)\p^{N+1} f^\e\Vert_{L^2_{x,\Lb}}^2.
\end{align*}
\end{itemize}
\begin{itemize}
    \item Boltzmann equation
\end{itemize}
\begin{align}\label{eq:xi-def}
 \mathcal E_{H,1}^N
 &:={\kappa^{2c}}\Big(
 \|g^\e\|_{L^2_{x,\xi}}^2
 +\|w_0g^\e\|_{L^2_{x,\xi}}^2
 +\sum_{|\beta|=1}
  \|w_0^{1/2}\partial_\beta g^\e\|_{L^2_{x,\xi}}^2\Big),\notag\\
 \mathcal E_{H,2}^N
 &:=\sum_{1\le|\alpha|\le N}\kappa^{2c}
 \Big(\|\partial^\alpha f^\e\|_{L^2_{x,\xi}}^2
 +\|w_0\partial^\alpha(I-\Pi_0)f^\e\|_{L^2_{x,\xi}}^2\Big)\notag\\
 &\quad+
 \sum_{\substack{|\beta|=1\\1\le|\alpha|\le\lfloor N/2\rfloor}}
 \kappa^{2c}\|w_0^{1/2}\partial_\beta^\alpha(I-\Pi_0)f^\e\|_{L^2_{x,\xi}}^2
 +\kappa^{2c+1}\e^2\|w_0\partial^{N+1}f^\e\|_{L^2_{x,\xi}}^2
 +\kappa^{2c+1}\|\partial^{N+1}f^\e\|_{L^2_{x,\xi}}^2,\notag\\
 \mathcal E_H^N&:=\mathcal E_{H,1}^N+\mathcal E_{H,2}^N.
\end{align}
\begin{align}\label{eq:hn-def}
 \mathcal D_{H,1}^N
 &:=\frac{\kappa^{2c-1}}{\e^2}\Big(
 \|g^\e\|_{L^2_{x,\Bb}}^2
 +\|w_0g^\e\|_{L^2_{x,\Bb}}^2
 +\sum_{|\beta|\le1}
  \|w_0^{1/2}\partial_\beta g^\e\|_{L^2_{x,\Bb}}^2\Big),\notag\\
 \mathcal D_{H,2}^N
 &:=\sum_{1\le|\alpha|\le N}\frac{\kappa^{2c-1}}{\e^2}
 \left(\|(I-\Pi_0)\partial^\alpha f^\e\|_{L^2_{x,\Bb}}^2
 +\|w_0(I-\Pi_0)\partial^\alpha f^\e\|_{L^2_{x,\Bb}}^2\right)\notag\\
 &\quad+
 \sum_{\substack{|\beta|=1\\1\le|\alpha|\le\lfloor N/2\rfloor}}
 \frac{\kappa^{2c-1}}{\e^2}
 \|w_0^{1/2}\partial_\beta^\alpha(I-\Pi_0)f^\e\|_{L^2_{x,\Bb}}^2
 +\kappa^{2c}\|w_0\partial^{N+1}f^\e\|_{L^2_{x,\Bb}}^2
 +\frac{\kappa^{2c}}{\e^2}
 \|(I-\Pi_0)\partial^{N+1}f^\e\|_{L^2_{x,\Bb}}^2,\notag\\
 \mathcal D_H^N&:=\mathcal D_{H,1}^N+\mathcal D_{H,2}^N.
\end{align}
The total energy and dissipation are defined by
\begin{align}
 \mathcal{E}_{tot}^{N}(F^\e(t)) = \mathcal{E}_{tot}^{N}(t)  &:= \mathcal{E}^{N}(t) + \mathcal{E}_H^{N}(t), \notag\\
 \mathcal{D}_{tot}^{N}(F^\e(t)) = \mathcal D_{tot}^{N}(t) & := \mathcal{D}^{N}(t) + \mathcal{D}_H^{N}(t). \label{eq:tot-def}
\end{align}
\subsection{Scale choice and bootstrap}
\begin{assumption}[Bootstrap Assumption]
For $\mathfrak{y} \ll 1, N\geq 4$, we impose the following bootstrap assumption.
\begin{align}\label{eq:tot-bd}
\sup_{0\leq t \leq T_*}
\big(\e^{\mathfrak{y}}+\e\kappa^{-\frac{1}{2}}\big)\big(1+t\big)
\bigg(\mathcal{E}_{tot}^{N}(t)
+\int_0^t\mathcal{D}_{tot}^{N}(s)\,ds\bigg)
\lesssim 1.
\end{align}
We choose $\e \ll 1, \kappa \ll 1$ such that
\begin{align}
    &  \frac{\e}{\kappa^c} \sim \e^{\mathfrak{y}},     \ \ \frac{1}{2}<c \leq 1 , \ \ \e \kappa^{-\frac{1}{2}} = \frac{\e}{\kappa^{c}} \kappa^{c-\frac{1}{2}} < \e^{\mathfrak{y}}.  \label{eq:choice-bd}
\end{align}
We further let $|c-\frac{1}{2}| \ll 1$, and let $c_0\ll 1$ in \eqref{eq:glob-id} satisfy
\begin{equation}\label{eq:boot-bd}
 c_0=\mathfrak y,
 \qquad \frac{2c-1}{c}(1-c_0)\ge 6c_0.
\end{equation}
We also assume
\begin{equation}\label{eq:choice-boot}
 \e^{c_0}T_*\lesssim1.
\end{equation}
Such a choice of $c_0$ and the bootstrap assumption \eqref{eq:tot-bd} guarantee
\begin{align}
    &   \Theta^\e = e^{\e \theta^\e} \geq   e^{-C \e \e^{-\frac{c_0}{2}}}  \geq 1- \e \e^{-c_0}.   \label{eq:assm-bd}
\end{align}
We impose the same scale relation on both collision operators.
\end{assumption}
\begin{lemma}[Elementary consequences of the scale choice]\label{lem:scalecons}
Assume \eqref{eq:choice-bd}--\eqref{eq:boot-bd} and set $c_0=\mathfrak y$.  Then, after reducing $\e_0>0$ if necessary,
\begin{align}
 \kappa^c \asymp \e^{1-c_0},
 & \ \ \ \kappa\asymp \e^{(1-c_0)/c},
 \label{eq:kap-choice}\\
 \e\kappa^{-1/2}\lesssim \e^{c_0},
 & \ \ \ \kappa^2\lesssim \e,
 \label{eq:kin-bd}\\
 \kappa^{2c-1}\lesssim \e^{6c_0},
 &\ \ \ \frac{\e^2}{\kappa}\lesssim \e^{8c_0}.
 \label{eq:maxw-bd}
\end{align}
In particular, every quantity on the left-hand side of
\eqref{eq:kin-bd}--\eqref{eq:maxw-bd} tends to zero.
\end{lemma}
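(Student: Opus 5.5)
This is pure exponent bookkeeping from the scale relations \eqref{eq:choice-bd}--\eqref{eq:boot-bd}, with no analysis involved. First, \eqref{eq:choice-bd} gives $\e\kappa^{-c}\asymp\e^{\mathfrak y}=\e^{c_0}$. Hence $\kappa^c\asymp\e^{1-c_0}$, and raising to the power $1/c$ gives $\kappa\asymp\e^{(1-c_0)/c}$; this is \eqref{eq:kap-choice}. The implicit constants change only by a power depending on $c\in(1/2,1]$, which is harmless.

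For \eqref{eq:kin-bd}, write $\e\kappa^{-1/2}=(\e\kappa^{-c})\kappa^{c-1/2}\asymp\e^{c_0}\kappa^{c-1/2}$. Since $c>1/2$ and $\kappa\ll1$, we have $\kappa^{c-1/2}\le1$, and this gives $\e\kappa^{-1/2}\lesssim\e^{c_0}$. For the second bound in \eqref{eq:kin-bd}, \eqref{eq:kap-choice} gives $\kappa^2\asymp\e^{2(1-c_0)/c}$. Because $c\le1$ and $c_0\ll1$, the exponent satisfies $2(1-c_0)/c\ge2(1-c_0)\ge1$. Since $\e<1$, this yields $\kappa^2\lesssim\e$.

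For \eqref{eq:maxw-bd}, \eqref{eq:kap-choice} gives
\[
\kappa^{2c-1}\asymp\e^{(2c-1)(1-c_0)/c}.
\]
By the second condition in \eqref{eq:boot-bd}, the exponent is at least $6c_0$, so $\kappa^{2c-1}\lesssim\e^{6c_0}$. For the last bound, write
\[
\frac{\e^2}{\kappa}=(\e\kappa^{-1/2})^2=(\e\kappa^{-c})^2\kappa^{2c-1}\asymp\e^{2c_0}\kappa^{2c-1}.
\]
The previous bound then gives $\e^2/\kappa\lesssim\e^{8c_0}$.

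All right-hand sides are positive powers of $\e$, because $c_0=\mathfrak y>0$. Hence every left-hand quantity tends to zero as $\e\to0$, after shrinking $\e_0$ so that the asymptotic relations hold with fixed constants.

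The only step needing care is the direction of the exponent inequality in $\kappa^2\lesssim\e$, which requires $c\le1$ and $c_0$ small. The other bounds follow directly from \eqref{eq:choice-bd}--\eqref{eq:boot-bd}, so there is no real obstacle.
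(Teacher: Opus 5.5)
Your proof is correct and follows the paper's argument essentially step for step. Both use \eqref{eq:choice-bd} to get $\kappa^c\asymp\e^{1-c_0}$, use $c\le1$ with $c_0$ small to get $\kappa^2\lesssim\e$, use the second condition in \eqref{eq:boot-bd} to get $\kappa^{2c-1}\lesssim\e^{6c_0}$, and use the factorization $\e^2/\kappa=(\e\kappa^{-c})^2\kappa^{2c-1}$ for the last bound.
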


The proof is recorded in Appendix~\ref{sec:auxana}.

\subsection{Long-time kinetic estimate}\label{sec:kinlimit}
Let
\[
 O(2)_z:=\{\operatorname{diag}(G_h,1):G_h\in O(2)\}.
\]
A kinetic function is axisymmetric no-swirl if
$F(t,Gx,G\xi)=F(t,x,\xi)$ for every $G\in O(2)_z$.
A vector field is $O(2)_z$-equivariant if $A(Gx)=GA(x)$; equivariance under
reflections implies the no-swirl form $A=A^re_r+A^ze_z$.
Assume throughout that $F_0^\e$ has this symmetry.
Recall $v^\e=\mathbb Pu^\e$.  By axisymmetry and absence of swirl,
\begin{align}
 v^\e&=v^{\e,r}e_r+v^{\e,z}e_z,\label{eq:pp-id}\\
 \omega^\e&=\nabla_x\times u^\e=\nabla_x\times\mathbb Pu^\e
       =\omega^{\e,\vartheta}e_\vartheta, \qquad \varpi^\e=\frac{\omega^{\e,\vartheta}}r. \notag 
\end{align}
We use the standard Lorentz space $L^{3,1}(\R^3)$ and the homogeneous
Besov spaces $\dot B_{p,q}^s$ with the standard homogeneous Littlewood--Paley convention.  In particular,
\[
 \|f\|_{L^{3,1}}:=\int_0^\infty
 |\{x:|f(x)|>\lambda\}|^{1/3}\,d\lambda.
\]
Fix $\delta>0$ so small that
\begin{equation}\label{eq:small-bd}
 \frac{\delta}{2(1+\delta)}
 \left(1+\frac{1-\mathfrak y}{2c}\right)<\frac1{16}.
\end{equation}
For $\e>0$ sufficiently small, set
\begin{equation}\label{eq:small-def}
 \Lambda_1^\e:=\log\frac1\e,
 \qquad
 \Lambda_{j+1}^\e:=\log\Lambda_j^\e\quad(1\le j\le4),
 \qquad
 T^\e:=(\Lambda_5^\e)^{1/32}.
\end{equation}
Then $T^\e\to\infty$, every power of \(T^\varepsilon\) is dominated by every negative power of $\e$.
\begin{theorem}[Long-time kinetic estimate]\label{thm:longtime}
Consider either the Landau or the non-cutoff Boltzmann equation. Let $N\ge5$. Assume the scale relations \eqref{eq:choice-bd}--\eqref{eq:boot-bd}, and let $T^\e$ be defined by \eqref{eq:small-def}. For smooth nonnegative axisymmetric no-swirl initial data, there exists $\e_0>0$ such that for $0<\e\le\e_0$, if
\begin{align}
    & \mathcal{E}_{tot}^{N}(0) + \Vert \varpi_0^\e\Vert^2_{L^{3,1}_x}  \leq \Lambda_5^\e,  \label{eq:init-bd}
\end{align}
then the corresponding strong solution exists uniquely on $[0,T^\e]$. Moreover, for every $0\leq t\leq T^\e$,
\begin{align}
    & \mathcal{E}_{tot}^{N}(t) + \int_0^t \mathcal{D}_{tot}^{N}(s) \dd s \leq \Big|  \log \frac{1}{\e} \Big|^{\frac{15}{16}}.  \label{eq:ac-bd}
\end{align}
For every $0<T\le T^\e$ and $s\in[0,N-1)$, the acoustic components satisfy
\begin{align}\label{eq:bega-rho}
\bega
\Big\|\Big((\rho^{\e}+\theta^{\e}),\mathbb{P}^{\perp}u^{\e}\Big)&\Big\|_{L^\mathfrak{r}_T\dot{B}_{p,1}^{s+3(\frac{1}{p}-\frac{1}{2})+\frac{1}{\mathfrak{r}}}} \leq C \e^{\frac{1}{\mathfrak{r}}}\Big\|\Big((\rho^{\e}_0+\theta^{\e}_0),\mathbb{P}^{\perp}u^{\e}_0\Big)\Big\|_{\dot{B}_{2,1}^s}  + C\e^{\frac{1}{\mathfrak{r}}}\int_0^{T}\Big(\mathcal{E}_{tot}^{N}(s) +\kappa^{\frac{1}{2}} \sqrt{\mathcal{D}_{tot}^{N}(s)}\Big) ds,
\enda
\end{align}
where $2\leq p\leq\infty$, $2\leq \mathfrak{r}\leq\infty$, and
$\frac{1}{\mathfrak{r}}\leq\frac{1}{2}-\frac{1}{p}$, with
$(\mathfrak{r},p)\neq(2,\infty)$. The Besov convention is the standard homogeneous Littlewood--Paley convention.
\end{theorem}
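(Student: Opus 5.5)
We prove Theorem~\ref{thm:longtime} by a continuity (bootstrap) argument on $[0,T_*]$ with $T_*\le T^\e$. Assume \eqref{eq:tot-bd} on $[0,T_*]$; note \eqref{eq:choice-boot} holds because $T^\e$ is a power of an iterated logarithm. Local well-posedness of smooth nonnegative solutions for either collision model is classical in the weighted space encoded by $\mathcal E_{tot}^N$. The symmetry class $O(2)_z$ is preserved by uniqueness, since the equation commutes with $(x,\xi)\mapsto(Gx,G\xi)$. It then suffices to show that on $[0,T_*]$ the a priori bound \eqref{eq:ac-bd} holds with a strictly better constant, and that \eqref{eq:ac-bd} implies a strict improvement of \eqref{eq:tot-bd}. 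The latter is immediate: $(\e^{\mathfrak y}+\e\kappa^{-1/2})(1+T^\e)|\log\frac1\e|^{15/16}\to0$ by Lemma~\ref{lem:scalecons} and \eqref{eq:small-def}.

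\textbf{Energy inequality.} The core step is a differential inequality of the form
\[
 \frac{d}{dt}\mathcal E_{tot}^N+c\,\mathcal D_{tot}^N
 \lesssim \bigl(1+\|\nabla_x v^\e\|_{L^\infty}\bigr)\mathcal E_{tot}^N
 +o(1)\bigl(\mathcal E_{tot}^N+\mathcal D_{tot}^N\bigr)+o(1).
\]
It is assembled from the following pieces.
\begin{enumerate}
\item Macroscopic part $\mathcal E_M^N$, from the exact balance laws \eqref{eq:invar-id} in the parametrization \eqref{eq:param-id}. The singular $\e^{-1}$ acoustic terms cancel in the symmetrized combination. The Burnett moments $\mathbf r^\e,\mathfrak q^\e$ are expressed through $\Pi_M^\perp F^\e$, and the stress--heat-flux cancellation pairs them against the corresponding terms of the $G$-equation.
\item Microscopic part $\mathcal E_G^N$ and $\mathcal H_{\rm top}$, from the equation for $\Pi_M^\perp F^\e$ weighted by $(M^\e)^{-1}$. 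Coercivity of $\mathcal L_M$ yields $\mathcal D_G^N$. The commutators $[\partial^\alpha,\Pi_M]$ cost derivatives of $(\rho^\e,u^\e,\theta^\e)$ at size $\e$, which are absorbed. At top order the full-distribution energy avoids losing a derivative in $\Pi_M$.
\item High-moment part $\mathcal E_H^N$. For Landau we use the expansion around $\mu$, with $w$-weighted estimates in the standard Guo/Strain manner. For Boltzmann we use the expansion around $\mu_0$ via $g^\e$ at order zero and $f^\e$ at orders $|\alpha|\ge1$. The bound \eqref{eq:assm-bd} makes $\mathcal L_d^0$ perturbative. The $\kappa^{2c}$ scaling of these functionals and \eqref{eq:maxw-bd} make all cross terms $o(1)$.
\end{enumerate}

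\textbf{Velocity gradient and acoustic modes.} The Lipschitz factor is closed using axisymmetric structure and dispersion. We control $\|\nabla v^\e\|_{L^\infty}$ by a logarithmic interpolation: $\|\nabla v^\e\|_{L^\infty}\lesssim \|\varpi^\e\|_{L^{3,1}}\log(2+\mathcal E_{tot}^N)$ plus lower order, in the spirit of the Danchin/Abidi--Hmidi--Keraani axisymmetric estimate. Here $\|\omega/r\|_{L^{3,1}}$ controls $\|u^r/r\|_{L^\infty}$. The relative vorticity splits as $\varpi^\e=\varpi_A^\e+\varpi_B^\e$. The transported part conserves its $L^{3,1}$ norm, which is bounded by $(\Lambda_5^\e)^{1/2}$ by \eqref{eq:init-bd}. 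The forced part is bounded by $o(1)$ times energy and dissipation.

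The acoustic estimate \eqref{eq:bega-rho} is proved separately. Write the linear acoustic system for $(\rho^\e+\theta^\e,\mathbb P^\perp u^\e)$ with propagator $e^{\pm it|\nabla|/\e}$. The Strichartz estimates for the wave group rescaled in time by $\e$ give the factor $\e^{1/\mathfrak r}$. The nonlinear and Burnett forcings are treated by Duhamel, and they are bounded by $\mathcal E_{tot}^N+\kappa^{1/2}\sqrt{\mathcal D_{tot}^N}$ using \eqref{eq:tot-bd}. This is also what makes the $\mathbb P^\perp u^\e$ contribution to $\nabla u^\e$ harmless.

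\textbf{Iterating Gronwall.} Gronwall then gives
\[
 \mathcal E_{tot}^N(t)\le \bigl(\mathcal E_{tot}^N(0)+o(1)\bigr)\exp\!\Bigl(Ct+C\!\int_0^t\|\nabla v^\e\|_\infty\Bigr).
\]
The logarithm of the energy reenters through the Lipschitz bound, producing a double-exponential-type growth in $t\,(\Lambda_5^\e)^{1/2}$. The layered definition $T^\e=(\Lambda_5^\e)^{1/32}$ with five iterated logarithms, together with \eqref{eq:small-bd}, is chosen so that even iterated exponentials of $T^\e\Lambda_5^\e$ remain below $|\log\frac1\e|^{15/16}$.

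I expect the main obstacle to be making this Lipschitz–energy loop genuinely closed. This requires proving the logarithmic $L^\infty$ bound for $\nabla v^\e$ with the forced vorticity $\varpi_B^\e$ controlled in $L^{3,1}$ uniformly. It also requires that every error term carry an explicit positive power of $\e$ (or $\kappa^{2c-1}$) beating the growth factors. I would first try to bound $\|\varpi_B^\e\|_{L^{3,1}}$ by $\|\varpi_B^\e\|_{L^2\cap L^6}$ via interpolation, using $N\ge5$ Sobolev control of the forcing.
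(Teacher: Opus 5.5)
Your overall architecture (continuity argument, dispersive control of $(\rho^\e+\theta^\e,\mathbb P^\perp u^\e)$ through the $\e$-rescaled half-wave Strichartz estimate, $L^{3,1}$ control of the relative vorticity along the measure-preserving flow with the forcing bounded via $L^2\cap L^6$, and the iterated-log choice of $T^\e$) agrees with the paper. The acoustic part is essentially Lemmas~\ref{lem:acdisp}--\ref{lem:acsource}. The gap is in the Lipschitz loop, which is the actual content of the theorem.

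First, the energy inequality that is actually available (Theorem~\ref{thm:kinenergy}) carries $\|\nabla_x(\rho^\e,u^\e,\theta^\e)\|_{L^\infty}^2$, not $\|\nabla v^\e\|_{L^\infty}$. The transport commutators in the macroscopic energy produce $\|\nabla\rho^\e\|_\infty$ and $\|\nabla\theta^\e\|_\infty$. Dispersion removes only $\rho^\e+\theta^\e$. The entropy mode $\mathfrak s^\e=\tfrac32\theta^\e-\rho^\e$ solves $(\partial_t+u^\e\cdot\nabla)\mathfrak s^\e=-S^\e$ and is not small. Its gradient must be propagated separately, as in Proposition~\ref{prop:entgrad}: $\|\nabla\mathfrak s^\e(t)\|_\infty\le e^{\int_0^t\|\nabla u^\e\|_\infty}\bigl(\|\nabla\mathfrak s^\e_0\|_\infty+C\kappa^{1/2}\int_0^t(\mathcal D_G^N)^{1/2}\bigr)$, with $\|\nabla\mathfrak s_0^\e\|_\infty\lesssim(\Lambda_5^\e)^{1/2}$ from \eqref{eq:init-bd}. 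Your proposal never controls this term, and without it the Gronwall argument does not close.

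Second, the bound $\|\nabla v^\e\|_\infty\lesssim\|\varpi^\e\|_{L^{3,1}}\log(2+\mathcal E_{tot}^N)$ is false unless the ``lower order'' term contains $\|\omega^\e\|_{L^\infty}$. A vortex ring of unit amplitude and unit core at radius $R$ has $\|\varpi\|_{L^{3,1}}\sim R^{-2/3}$ and energy polynomial in $R$, yet $\|\nabla v\|_\infty\gtrsim1$. The $L^{3,1}$ norm controls $\|v^{\e,r}/r\|_\infty$, which is the \emph{stretching rate}. So you must first propagate $\|\omega^{\e,\vartheta}(t)\|_\infty\le e^{\int_0^t\|v^{\e,r}/r\|_\infty}(\|\omega^{\e,\vartheta}_0\|_\infty+o(1))$ (Lemma~\ref{lem:angenergy}). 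Only then can you use $\|\nabla v\|_\infty\lesssim(1+\|\omega\|_\infty)(1+\ln^+\|\omega\|_{H^2})$ (Lemma~\ref{lem:logbs}).

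Moreover, the logarithm should not be that of $\mathcal E_{tot}^N$ fed back into the energy Gronwall. With the squared Lipschitz factor this gives a Riccati-type inequality $\frac{d}{dt}\log\mathcal E\lesssim K^2\log^2\mathcal E$, whose bound is lost after a time $\lesssim(K^2\log\mathcal E(0))^{-1}$, far shorter than $T^\e$. The paper decouples this with the $H^2$ vorticity estimate (Lemma~\ref{lem:wenergy}), which is linear in $\|\nabla v^\e\|_\infty$ and $\|\omega^\e\|_\infty$. Gronwall on $\ln\|\omega^\e\|_{H^2}$ then gives a bound on $\|\nabla v^\e\|_\infty$ depending only on the data, once the acoustic and Burnett forcing $\mathfrak O$ is shown to be $o(1)$ in $L^1(0,T^\e)$. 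That bound is inserted into the entropy estimate and finally into the energy Gronwall.

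The resulting growth $\Psi(t)$ is a fourfold, not double, exponential in quantities of size $(\Lambda_5^\e)^{1/2}$. This is exactly why five iterated logarithms are needed to get $\Psi(T^\e)=(\Lambda_1^\e)^{o(1)}$.
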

The proof of Theorem \ref{thm:longtime} is given in Section \ref{sec:closure}.

\subsection{Main kinetic-to-Euler conclusions}
From this point on, all families indexed by $\e$ are understood on the
small-parameter range $0<\e\le\e_0$ supplied by Theorem~\ref{thm:longtime};
for brevity, suprema written as $\sup_{\e>0}$ are taken over this range.
\begin{definition}[Renormalized axisymmetric Euler pair]\label{def:eulerpair}
Fix $1\le p<\infty$. Following
\cite{diperna1989ordinary}, a pair $(u^E,\varpi^E)$ is a
\emph{renormalized axisymmetric Euler pair in $L^p$} on $[0,T]$ with initial
relative vorticity $\varpi_0^E$ if:
\begin{itemize}
\item $\varpi^E\in L^\infty(0,T;L^p(\R^3))$ and
$u^E\in L^\infty(0,T;L^2(\R^3;\R^3))$ is axisymmetric without swirl, with
\begin{equation*}
 \nabla_x\cdot u^E=0,
 \qquad
 \nabla_x\times u^E=r\varpi^E e_\vartheta
 \quad\text{in }\mathcal D'((0,T)\times\R^3);
\end{equation*}
\item for every $\psi\in C_c^\infty([0,T)\times\R^3)$ and every $\beta\in C^1(\R)\cap L^\infty(\R)$ vanishing in a neighborhood of $0$,
\[
\int_0^T\int_{\R^3}
\beta(\varpi^E)
\bigl(\partial_t\psi+u^E\cdot\nabla_x\psi\bigr)
\,dx\,dt
+
\int_{\R^3}
\beta(\varpi^E_0)\psi(0,x)
\,dx
=0.
\]
\end{itemize}
\end{definition}
The transported relative-vorticity and velocity components $\varpi_A^\e$ and $u_A^\e$ appearing below are defined in Subsection~\ref{sec:transport}.
\begin{theorem}[Rough kinetic-to-Euler limit for all $p>1$]\label{thm:roughlimit}
Consider either collision operator under the hypotheses of Theorem~\ref{thm:longtime}.
Assume in addition that, for some $1<p<\infty$,
\begin{align}
&\sup_{\e>0}\|\varpi_0^{\e}\|_{L^p\cap L^1(\R^3)}<\infty,
\label{eq:lp-lone}\\
&\sup_{\e>0}\|\mathbb{P}u_0^\e\|_{L^2(\R^3)}<\infty,
\label{eq:rough-bd}
\end{align}
and that, as $\e\to0$,
\begin{align}
\varpi_0^\e\to\varpi_0^E
&\text{ in }L^p(\R^3)\cap L^1(\R^3),\label{eq:kin-conv}\\
\mathfrak{s}_0^\e:=\tfrac32\theta_0^\e-\rho_0^\e
\to
\mathfrak{s}_0^E:= \tfrac32\theta_0^E-\rho_0^E
&\text{ in }L^2(\R^3).
\label{eq:mic-conv}
\end{align}
Then the microscopic defect $G^\e=F^\e-M^\e$ satisfies
\begin{equation}\label{eq:mic-maxw}
 \sup_{0\le t\le T^\e}
 \left\|\frac{G^\e(t)}{\sqrt{M^\e(t)}}\right\|_{L^2_{x,\xi}}
 \lesssim \e\left|\log\frac1\e\right|^{15/32}
 \to0.
\end{equation}
Moreover, for every finite $T>0$, up to a subsequence, the following statements hold on $[0,T]$:
\begin{enumerate}
\item[(1)] The transported relative vorticity and the incompressible velocity
satisfy
\begin{align}
 \varpi_A^\e\to\varpi^E
 &\text{ strongly in }C([0,T];L^1(\R^3))\cap C([0,T];L^p(\R^3)),\label{eq:vel-conv}\\
 u_A^\e\to u^E
 &\text{ strongly in }C([0,T];L^2_{\rm loc}(\R^3)),\label{eq:loc-conv}\\
 \mathbb Pu^\e\to u^E
 &\text{ strongly in }C([0,T];L^2_{\rm loc}(\R^3)).\label{eq:pp-conv}
\end{align}
If $p\geq 6/5$, then the local in $\R^3$ convergence can be upgraded to local in the radial variable: for $\mathfrak{C}_R=\{x\in \R^3:r(x)<R\}$ with any finite $R>0$
\begin{align}
 u_A^\e\to u^E &\text{ strongly in }C([0,T];L^2(\mathfrak{C}_R)),\label{eq:loc-conv_r}\\
 \mathbb Pu^\e\to u^E
 &\text{ strongly in }C([0,T];L^2(\mathfrak{C}_R)).\label{eq:pp-conv_r}
\end{align}
\item[(2)] The pair $(u^E,\varpi^E)$ is a renormalized axisymmetric Euler pair
in $L^p$ in the sense of Definition~\ref{def:eulerpair}, with
\[
 \varpi^E\in L^\infty(0,T;L^1(\R^3)\cap L^p(\R^3)),
 \qquad
 u^E\in L^\infty(0,T;W^{1,p}_{\rm loc}(\R^3)).
\]
In particular, the limit in (1) is identified by
$\nabla\cdot u^E=0$ and $\nabla\times u^E=r\varpi^Ee_\vartheta$ in
$\mathcal D'((0,T)\times\R^3)$.
\item[(3)] The thermodynamic variable
$\mathfrak{s}^{\e}:=\frac{3}{2}\theta^{\e}-\rho^{\e}$ satisfies
\begin{align}\label{eq:therm-conv}
 \mathfrak{s}^\e
 \to \mathfrak{s}^E
 \qquad\text{in }C([0,T];L^2(\R^3)),
\end{align}
where $\mathfrak{s}^E$ is the unique renormalized solution of
\[
 \partial_t\mathfrak{s}^E+u^E\cdot\nabla_x\mathfrak{s}^E=0,
 \qquad
 \mathfrak{s}^E(0)=\mathfrak{s}_0^E.
\]

\item[(4)] Fix $2<\bar p\le\infty$, $1\le \mathfrak{r}<\infty$, and $0<s<4$ such that
$1/\mathfrak{r}\le1/2-1/\bar p$ and $(\mathfrak{r},\bar p)\ne(2,\infty)$.  For every sufficiently
small $\delta>0$,
\begin{align}\label{eq:pq-rho}
 \|\mathbb P^\perp u^\e\|_{L^\mathfrak{r}(0,T;\dot B_{\bar p,1}^{s+3(1/\bar p-1/2)+1/\mathfrak{r}})}
 +\|\rho^\e+\theta^\e\|_{L^\mathfrak{r}(0,T;\dot B_{\bar p,1}^{s+3(1/\bar p-1/2)+1/\mathfrak{r}})}
 \lesssim_{T,\delta,s,\mathfrak{r},\bar p}\e^{1/\mathfrak{r}-\delta}.
\end{align}
After decreasing $\delta$ so that $\delta<1/\mathfrak{r}$, both acoustic terms
converge to zero in the displayed space.
\end{enumerate}
\end{theorem}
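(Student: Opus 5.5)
The plan is to derive everything from the uniform long-time bound \eqref{eq:ac-bd} of Theorem~\ref{thm:longtime}, combined with the transport--forcing split $\varpi^\e=\varpi_A^\e+\varpi_B^\e$, $\mathbb Pu^\e=u_A^\e+u_B^\e$ of Subsection~\ref{sec:transport}.

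\emph{Microscopic relaxation.} By definition, $\mathcal E_G^0(t)\le\mathcal E_{tot}^N(t)$, and
\[
 \mathcal E_G^0=\e^{-2}\bigl\|G^\e/\sqrt{M^\e}\bigr\|_{L^2_{x,\xi}}^2 .
\]
Together with \eqref{eq:ac-bd} this gives
\[
 \bigl\|G^\e/\sqrt{M^\e}\bigr\|_{L^2_{x,\xi}}^2\le \e^2|\log(1/\e)|^{15/16},
\]
which is \eqref{eq:mic-maxw} after taking square roots. The estimate holds on all of $[0,T^\e]$.

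\emph{Acoustic decay, item (4).} Insert \eqref{eq:ac-bd} into \eqref{eq:bega-rho} on $[0,T]$. The integral term is then at most
\[
 T|\log(1/\e)|^{15/16}+\kappa^{1/2}T^{1/2}|\log(1/\e)|^{15/32}.
\]
The initial $\dot B^s_{2,1}$ norm is bounded by the high Sobolev norms contained in $\mathcal E_{tot}^N(0)\le\Lambda_5^\e$. Every such logarithmic factor is $O(\e^{-\delta})$, which yields \eqref{eq:pq-rho}. The range $s<4\le N-1$ matches that of Theorem~\ref{thm:longtime}.

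\emph{Transported vorticity, item (1).} Since $\partial_t\varpi_A^\e+v^\e\cdot\nabla\varpi_A^\e=0$ with $v^\e$ smooth and divergence free at each fixed $\e$, the $L^1$ and $L^p$ norms of $\varpi_A^\e$ are conserved. The initial data converge strongly in $L^1\cap L^p$, and the velocities $v^\e$ are bounded in $L^\infty_tL^2$, since $u_A^\e$ is bounded by the energy and $u_B^\e\to0$. From this point the argument is the DiPerna--Lions stability theory in the axisymmetric setting, as in Bae--Kim and Nobili--Seis.
\begin{itemize}
\item The Calder\'on--Zygmund bound \eqref{eq:bs-bd}, together with local Sobolev estimates, gives $u_A^\e$ bounded in $L^\infty_tW^{1,\min(p,3/2)}_{\rm loc}$.
\item The transport equation bounds $\partial_t u_A^\e$ in a negative Sobolev space.
\item Aubin--Lions then gives $u_A^\e\to u^E$ in $C_tL^2_{\rm loc}$, hence \eqref{eq:loc-conv}. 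Since $u_B^\e\to0$, we also get \eqref{eq:pp-conv}.
\item For $p\ge6/5$, Lemma~\ref{lem:cylcurl} applied to differences upgrades this to \eqref{eq:loc-conv_r} and \eqref{eq:pp-conv_r}.
\end{itemize}
Weak compactness gives $\varpi_A^\e\rightharpoonup\varpi^E$. To upgrade to strong convergence, I would use the renormalization property. Because $u^E\in W^{1,p}_{\rm loc}$ (for small $p$, through the axisymmetric weighted estimate), limits of renormalized quantities are renormalized solutions. Then $\|\varpi^E(t)\|_{L^p}=\|\varpi_0^E\|_{L^p}$, equality of norms plus weak convergence gives strong convergence, and time equicontinuity in $C_tL^p$ comes from the same argument applied uniformly in $t$. \textbf{This is the main obstacle}, especially for $1<p<4/3$. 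There, $u^E\in L^2$ only locally pairs with $\varpi\in L^p$ in the commutator estimates, so I would work on the meridional half-plane away from the axis, where the flow is effectively two-dimensional. Near the axis I would invoke the Nobili--Seis renormalization via the weighted $A_p$ bound. Tightness in $L^1$ at large radius would come from the $L^1$-convergent initial data plus finite-speed-type control of the $L^1$ flux.

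\emph{Items (2) and (3).} Passing to the limit in the div--curl relations and in the renormalized transport identifies the Euler pair, using that $u^E$ has the $W^{1,p}_{\rm loc}$ regularity. For $\mathfrak s^\e$, the kinetic balance laws \eqref{eq:invar-id} give a transport equation by $u^\e$ for the combination $\tfrac32\theta-\rho$, up to an $o(1)$ forcing controlled by $G^\e$ and \eqref{eq:ac-bd}. The same DiPerna--Lions stability, with uniqueness of the renormalized solution for the $W^{1,p}_{\rm loc}$ field $u^E$, gives \eqref{eq:therm-conv}. Strong $C_tL^2$ convergence follows from conservation of the $L^2$ norm, up to $o(1)$, together with weak convergence.
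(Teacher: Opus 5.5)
Your treatment of \eqref{eq:mic-maxw} and of item (4) matches the paper. The genuine gap is the local $L^2$ velocity convergence \eqref{eq:loc-conv}--\eqref{eq:pp-conv} when $p$ is small.

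From \eqref{eq:bs-bd} you keep only the three-dimensional bound $u_A^\e\in L^\infty_tW^{1,s}(B_R)$ with $s=\min(p,3/2)$, and then apply Aubin--Lions in $L^2(B_R)$. But $W^{1,s}(B_R)\hookrightarrow L^{3s/(3-s)}(B_R)$, and $3s/(3-s)\le2$ when $s\le 6/5$. Axisymmetry does not rescue this. Take $\delta^{-3/2}\Phi(x/\delta)$ with $\Phi$ a fixed smooth axisymmetric no-swirl divergence-free field supported near the origin. This family is bounded in $L^2\cap W^{1,6/5}$, yet it concentrates at a point of the axis and converges only weakly to $0$ in $L^2(B_1)$.

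So for $1<p\le6/5$ your bounds give precompactness only in $C_tL^r_{\rm loc}$ with $r<2$. That is still enough for the DiPerna--Lions step, which needs only $L^1_tL^1_{\rm loc}$ convergence of $v^\e$. At $p=6/5$ you could then recover \eqref{eq:loc-conv} a posteriori through Lemma~\ref{lem:cylcurl}. For $1<p<6/5$, however, nothing in the proposal yields $L^2_{\rm loc}$ convergence.

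\emph{The missing idea is to keep the weight in} \eqref{eq:xu-bd}. Since $dx=2\pi r\,dr\,dz$ and $r\le R$ on $\mathbb H_R$, that estimate bounds $\nabla_xu_A^\e$ in the meridional space $L^{p}(\mathbb H_R;dr\,dz)$; this is \eqref{eq:norm-bd}, and it excludes concentration on the axis. You must also control the meridional zeroth-order term. The paper does this with the Gallay--\v{S}ver\'ak bound \eqref{eq:sav-bd} and $\|\omega_A^{\e,\vartheta}\|_{L^1(\Hh)}=(2\pi)^{-1}\|\varpi_0^\e\|_1$; an anchored Poincar\'e inequality using the $L^2$ bound away from the axis would also work. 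The two-dimensional Sobolev embedding then gives a uniform $L^{q_0}(B_R)$ bound with $q_0>2$ for every $p>1$. Interpolating this against Rellich compactness in $L^{r_0}$, $r_0<2$, yields $W^{1,p}(B_R)\cap L^{q_0}(B_R)\Subset L^2(B_R)$, and Simon's lemma applies.

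Three smaller points.

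First, the obstacle you flag for $1<p<4/3$ does not arise, and $4/3$ plays no role in this theorem. Theorem~\ref{thm:dlstab} (DiPerna--Lions, Theorem~II.5) needs only:
\begin{itemize}
\item $v^\e\to u^E$ in $L^1_tL^1_{\rm loc}$;
\item $u^E\in L^1_tW^{1,1}_{\rm loc}$;
\item the growth condition \eqref{eq:lone-vort}, which is automatic from $u^E\in L^\infty_tL^2$;
\item the global identity \eqref{eq:bet-dx}.
\end{itemize}
Because it works with bounded $\beta(\varpi_A^\e)$, no commutator pairing of $u^E$ against $\varpi\in L^p$ ever appears. So no near-axis analysis or Nobili--Seis renormalization is needed for any $p>1$.

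Second, the same theorem with exponent $1$ gives the $C_tL^1$ convergence. The ``finite-speed'' control of the $L^1$ flux you suggest is not available, since $v^\e$ is uniformly bounded only in $L^2$.

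Third, in (3) the transporting field $u^\e$ is compressible. Both the near-conservation of $\|\mathfrak s^\e\|_2$ and the convergence $u^\e\to u^E$ in $L^1_tL^1_{\rm loc}$ therefore have to come from the acoustic bounds: $\int_0^T\|\nabla\cdot u^\e\|_\infty\,dt\to0$ and $\mathbb P^\perp u^\e\to0$ in $L^1_tL^\infty_x$, both obtained from \eqref{eq:bega-rho}. The source $S^\e$ is then removed, as in the paper, by splitting $\mathfrak s^\e=\mathfrak s_A^\e+\mathfrak s_B^\e$ and using an $L^2$ energy estimate for $\mathfrak s_B^\e$.
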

The proof of Theorem \ref{thm:roughlimit} is given in Section \ref{sec:proof-rough-limit}.

\begin{theorem}[Global-energy compactness at the $4/3$ threshold]\label{thm:globenergy}
Assume the hypotheses of Theorem~\ref{thm:roughlimit} for some $1<p<\infty$, and
suppose
\begin{equation}\label{eq:comp-conv}
 \mathbb Pu_0^\e\to u_0^E
 \qquad\text{strongly in }L^2(\R^3).
\end{equation}
Then, for every finite $T>0$,
\begin{equation}\label{eq:pp-fin}
 \sup_{t\in[0,T]}
 \left|\|\mathbb Pu^\e(t)\|_2-\|u_0^E\|_2\right|\to0.
\end{equation}
Let $(u^E,\varpi^E)$ be the limiting Euler fields along any subsequence
furnished by Theorem~\ref{thm:roughlimit}.  If $p\ge4/3$, no sign or spatial moment
assumption is required.  If $1<p<4/3$, assume
\begin{equation}\label{eq:fld-bd}
 \sup_{\e>0}\mathfrak M_2(\varpi_0^\e)<\infty
\end{equation}
and, with $p^\sharp$ defined by \eqref{eq:psharp},
\begin{equation}\label{eq:lp-sharp}
 \sup_{\e>0}
 \min\left\{
 \|(\varpi_0^\e)_+\|_{L^{p^\sharp}},
 \|(\varpi_0^\e)_-\|_{L^{p^\sharp}}
 \right\}<\infty.
\end{equation}
Then, along the same subsequence,
\begin{align}
 u_A^\e\to u^E
 & \text{ strongly in }C([0,T];L^2(\R^3)),\label{eq:eul-conv}\\
 \mathbb Pu^\e\to u^E
 & \text{ strongly in }C([0,T];L^2(\R^3)),\label{eq:rough-conv}
\end{align}
and
\begin{equation}\label{eq:pp-bd}
 \|u^E(t)\|_2=\|u_0^E\|_2,
 \qquad0\le t\le T.
\end{equation}
\end{theorem}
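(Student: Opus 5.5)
The plan is to feed the transported component $u_A^\e$ into Theorem~\ref{thm:mfreecomp} (for $p\ge4/3$) or Corollary~\ref{cor:moment-energy-comp} (for $1<p<4/3$), and then transfer the conclusion to $\mathbb Pu^\e=u_A^\e+u_B^\e$ using the vanishing of $u_B^\e$.

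\emph{Step 1: approximate Euler structure.} From the transport--forcing split of Subsection~\ref{sec:transport}, $u_A^\e$ satisfies
\[
\partial_tu_A^\e+u_A^\e\cdot\nabla u_A^\e+\nabla\widetilde\pi_A^\e=f_A^\e,
\qquad \nabla\times u_A^\e=r\varpi_A^\e e_\vartheta ,
\]
with $\sup_\e\|u_A^\e\|_{L^\infty_TL^2}<\infty$ and $\|f_A^\e\|_{L^1_TL^2}\to0$ on every fixed $[0,T]$. At fixed $\e$, the kinetic regularity from Theorem~\ref{thm:longtime} ($N\ge5$) supplies $u_A^\e\in C_tH^{m}\cap C^1_tH^{m-1}$ with some $m>5/2$. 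Since $\varpi_B^\e(0)=0$, we have $u_A^\e(0)=\mathbb Pu_0^\e\to u_0^E$ in $L^2$ by \eqref{eq:comp-conv}. Along the subsequence of Theorem~\ref{thm:roughlimit}, \eqref{eq:vel-conv} gives $\varpi_A^\e\to\varpi^E$ in $C_t(L^1\cap L^p)$. For $p\ge4/3$, all hypotheses of Theorem~\ref{thm:mfreecomp} are then in place. It yields $u_A^\e\to\Bcal\varpi^E$ in $C_tL^2$ together with energy conservation, and \eqref{eq:loc-conv} identifies $\Bcal\varpi^E=u^E$. This proves \eqref{eq:eul-conv} and \eqref{eq:pp-bd}.

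\emph{Step 2: the case $1<p<4/3$.} Here we need $\sup_\e\sup_{t\le T}\mathfrak M_2(\varpi_A^\e(t))<\infty$, after which Corollary~\ref{cor:moment-energy-comp} applies in exactly the same way. Because $\varpi_A^\e$ is transported by the smooth divergence-free $v^\e$, the $L^q$ norms of each sign component are conserved. Hence $N_1$, $N_p$ and the one-sided $L^{p^\sharp}$ bound \eqref{eq:lp-sharp} hold uniformly in time.

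The difficulty is that the transporting field is $v^\e=u_A^\e+u_B^\e$, not $\Bcal\varpi_A^\e$. So $\frac d{dt}\mathfrak M_2(\varpi_A^\e)=\pi^{-1}\int (u_A^{\e})^r r|\varpi_A^\e|+\pi^{-1}\int (u_B^\e)^r r|\varpi_A^\e|$. The first term is handled by Corollary~\ref{cor:sign-cancel}, with the local $L^2$ condition on $r\varpi_A^\e$ holding at fixed $\e$ by smoothness. This gives the bound $N_1^2+N_p^{1/4}N_\sharp Y^{3/4}$.

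For the second term I would use $\|u_B^\e\|_{L^\infty}$, or $\|r^{-1}u_B^{\e,r}\|_{L^\infty}$, obtained from the decay of $\varpi_B^\e$ (Lemma~\ref{lem:splitconv}). This bounds it by $o(1)\,Y$ or $o(1)(\|\varpi_A\|_1Y)^{1/2}$ via \eqref{eq:mominterp}. A Gronwall argument on $(1+Y)^{1/4}$ then closes uniformly on $[0,T]$, starting from \eqref{eq:fld-bd}. I expect this forcing-velocity error to be the \textbf{main obstacle}, since it needs a weighted sup bound on $u_B^\e$ that is uniform in $\e$. One also has to justify the truncated balance \eqref{eq:psi-tau} for the kinetic transport, which follows by the characteristic argument of Corollary~\ref{cor:smooth-impulse}.

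\emph{Step 3: transfer to $\mathbb Pu^\e$ and energy.} Since $\|u_B^\e\|_{C_tL^2}\to0$, \eqref{eq:rough-conv} follows from \eqref{eq:eul-conv}. For \eqref{eq:pp-fin}, whose statement has no subsequence, argue by contradiction: any sequence violating it has a further subsequence along which Theorem~\ref{thm:roughlimit} and Steps 1--2 give $\|\mathbb Pu^\e(t)\|_2\to\|u^E(t)\|_2=\|u_0^E\|_2$ uniformly in $t$. Alternatively, \eqref{eq:pp-fin} follows directly from the approximate energy identity for $u_A^\e$ and the smallness of $u_B^\e$.
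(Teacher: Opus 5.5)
Your proposal is correct and follows essentially the paper's route. For $p\ge4/3$ you apply Theorem~\ref{thm:mfreecomp} to $u_A^\e$; for $1<p<4/3$ you prove a uniform absolute-impulse bound and then use moment-confined $L^2$ stability (Corollary~\ref{cor:unifstab}, packaged as Corollary~\ref{cor:moment-energy-comp}); and $u_B^\e\to0$ transfers everything to $\mathbb Pu^\e$. Your Step~2 re-derives Corollary~\ref{cor:unifimp}, which you could simply cite. The $u_B^\e$ term you flag as the main obstacle is harmless: \eqref{eq:del-kin} with $q=\infty$ together with \eqref{eq:mominterp} bounds it by $o(1)\,Y^{1/2}$, and the paper bounds it even more simply by $\|u_B^\e\|_2\|r\varpi_A^\e\|_2\to0$.

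One point needs fixing. Estimate \eqref{eq:pp-fin} is asserted for every $p>1$ without \eqref{eq:fld-bd}--\eqref{eq:lp-sharp}, so your contradiction argument through Steps~1--2 does not cover $1<p<4/3$. There you need your direct energy-identity route. The paper gets \eqref{eq:pp-fin} in one line from the drift estimate \eqref{eq:ltwo-del} of Lemma~\ref{lem:fixedenergy} combined with \eqref{eq:comp-conv}.
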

\begin{corollary}[Radial-moment criteria across the $3/2$ and $5/3$ thresholds]\label{cor:radcrit}
Assume the hypotheses of Theorem~\ref{thm:roughlimit} for some $1<p<\infty$ and fix
$T<\infty$.
If $p\ge3/2$ and
\begin{equation}\label{eq:cond-bd}
 \sup_{\e>0}\mathfrak M_1(\varpi_0^\e)<\infty,
\end{equation}
then, without a sign condition or an absolute-impulse assumption,
\begin{equation}\label{eq:lsup-mone}
 \limsup_{\e\to0}\sup_{0\le t\le T}
 \mathfrak M_1(\varpi_A^\e(t))<\infty.
\end{equation}
More generally, for every $1<p<\infty$, suppose the initial absolute
impulse bound \eqref{eq:fld-bd} holds and assume either $p\ge5/3$, or
$1<p<5/3$ together with the mixed sign-component condition \eqref{eq:lp-sharp}.  Then
Corollary~\ref{cor:unifimp} gives
\begin{equation}\label{eq:lsup-mtwo}
 \limsup_{\e\to0}\sup_{0\le t\le T}\mathfrak M_2(\varpi_A^\e(t))<\infty.
\end{equation}
Consequently, \eqref{eq:mominterp}, \eqref{eq:lp-lone}, and conservation of the
$L^1$ norm of $\varpi_A^\e$ imply \eqref{eq:lsup-mone} in this entire conditional
range as well.  In particular, neither $3/2$ nor $5/3$ is a barrier to
moment propagation: $3/2$ is the sign-free first-radial-moment endpoint, while
$5/3$ is the symmetric point at which the mixed $L^{p^\sharp}$ condition
becomes automatic from the ambient $L^p$ control.
\end{corollary}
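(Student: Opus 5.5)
The proof treats the two assertions of Corollary~\ref{cor:radcrit} separately. Both rely on one structural fact: $\varpi_A^\e$ is transported exactly by the smooth divergence-free field $v^\e=\mathbb Pu^\e$. Hence its $L^q$ norms, and those of its sign parts, equal their initial values on $[0,T^\e]$, and $\varpi_A^\e(0)=\varpi_0^\e$.

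\emph{First assertion ($p\ge3/2$, bounded initial $\mathfrak M_1$).} The plan is to apply Theorem~\ref{thm:firstmom} to the pair $(u_A^\e,\varpi_A^\e)$. The div--curl relations hold by definition of $u_A^\e$. The constants $N_1,N_p$ are bounded uniformly in $\e$ by \eqref{eq:lp-lone} and transport. The difficulty is that the transporting field is $v^\e=u_A^\e+u_B^\e$, not $u_A^\e$. So the balance identity \eqref{eq:urad-id} acquires an extra flux
\[
 \frac1{2\pi}\int_0^t\!\int\Psi_L'(r)\,(u_B^\e)^r|\varpi_A^\e|\,dx\,d\tau .
\]
I would derive the balance along the smooth flow of $v^\e$, exactly as in the proof of Corollary~\ref{cor:smooth-impulse}; smoothness of each kinetic solution justifies the truncation argument. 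The main flux is then bounded by \eqref{eq:dt-mone}. For the error flux I would use H\"older with $\|u_B^\e\|_{L^\infty_x}$, or with $\|u_B^\e\|_{L^{p'}}\|\varpi_A^\e\|_{L^p}$. Either is $o(1)$ on $[0,T]$ by the smallness of the forcing component supplied by the kinetic estimates (Theorem~\ref{thm:roughlimit} and the splitting lemma). This gives
\[
 \tfrac{d}{dt}\mathfrak M_1\lesssim\Lambda\,\mathfrak M_1^{1/2}+o(1).
\]
Integrating as in \eqref{eq:sign-bd} yields \eqref{eq:lsup-mone}.

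\emph{Second assertion (absolute impulse).} Here \eqref{eq:lsup-mtwo} is asserted to follow from Corollary~\ref{cor:unifimp}, and I would take that result as given. Its content is the analogue of the argument above for $Y^\e=\mathfrak M_2(\varpi_A^\e)$. One uses the identity $\pi Y'=\int (u_A^\e)^r r|\varpi_A^\e|+\int(u_B^\e)^r r|\varpi_A^\e|$. The first term is handled by Corollary~\ref{cor:sign-cancel}, using the transported one-sided bound \eqref{eq:lp-sharp}, or automatic interpolation when $p\ge5/3$. Local $L^2$ of $r\varpi_A^\e$ holds by smoothness at fixed $\e$. The result is the differential inequality \eqref{eq:sharp-mix}, plus an error bounded by $\|u_B^\e\|_{L^\infty}\int r|\varpi_A^\e|\lesssim o(1)\,(N_1Y)^{1/2}$ via \eqref{eq:mominterp}.

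Given \eqref{eq:lsup-mtwo}, \eqref{eq:mominterp} gives
\[
 \mathfrak M_1(\varpi_A^\e(t))^2\le(2\pi)^{-1}\|\varpi_0^\e\|_1\,\mathfrak M_2(\varpi_A^\e(t)).
\]
Together with \eqref{eq:lp-lone} this yields \eqref{eq:lsup-mone} in the full conditional range.

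The \textbf{main obstacle} is the $u_B^\e$ error term. I need a norm of $u_B^\e$ that is integrable in time on $[0,T]$ and vanishes as $\e\to0$, paired against $r|\varpi_A^\e|$. My first attempt would be $L^\infty_x$ smallness of $u_B^\e$, obtained from the acoustic and forcing estimates. Failing that, I would pair $\|u_B^\e\|_{L^2}$ against $\|r\varpi_A^\e\|_{L^2}$, using the large but $\e$-controlled kinetic energy bound \eqref{eq:ac-bd} and checking that the resulting product still tends to zero.
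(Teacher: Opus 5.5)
Your proposal is correct and follows the paper's route. The first assertion is Proposition~\ref{prop:radfirst}, proved with your split $\mathbb Pu^\e=u_A^\e+u_B^\e$, the bound $\|h_\e\|_{4/3}^2\lesssim\Lambda_{p,T}\,\mathfrak M_1(\varpi_A^\e)^{1/2}$ for the $u_A^\e$ flux, and $\|\varpi_0^\e\|_1\|u_B^\e\|_\infty=o(1)$ for the error; the paper uses the pointwise displacement bound $r(X^\e(t,x))\le r(x)+\int_0^t|(\mathbb Pu^\e)^r|$ in place of the truncated balance identity. The second assertion is Corollary~\ref{cor:unifimp}, followed by \eqref{eq:mominterp} and conservation of $\|\varpi_A^\e\|_1$, exactly as you propose.
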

The proof of Theorem \ref{thm:globenergy} and Corollary \ref{cor:radcrit} are given in Section \ref{sec:proof-conservation}.

\subsection{Local--global Maxwellian bridge}
The bootstrap yields uniform control of the local Maxwellian parameters.  The following bounds are used throughout the collision and energy estimates.
\begin{lemma}\label{lem:macrocoef}
Under \eqref{eq:tot-bd},
\begin{align}
 &\Vert\nabla_x[(\mathrm P^\e)^{\pm1}]\Vert_{H^{N-1}}
 +\Vert\nabla_x[(\Theta^\e)^{\pm1}]\Vert_{H^{N-1}}
 +\Vert(\mathrm P^\e)^{\pm1}\Vert_\infty
 +\Vert(\Theta^\e)^{\pm1}\Vert_\infty
 +\Vert\mathrm U^\e\Vert_{H^N}\lesssim1,\label{eq:hn-en}\\
 &\left\|\frac{\Theta^\e-1}{\e}\right\|_{H^N}
 +\left\|\frac{\nabla_x\Theta^\e}{\e}\right\|_{H^{N-1}}
 \lesssim (\mathcal E_M^N)^{1/2},\label{eq:hn-bd}\\
 &\e\| (\rho^\e,u^\e,\theta^\e)\|_{H^N_x}
 \lesssim \e(\mathcal E_M^N)^{1/2}
 \lesssim \e^{1-c_0/2}=o(1).\label{eq:loc-bd}
\end{align}
\end{lemma}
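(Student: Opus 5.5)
The plan is to reduce everything to the parametrization \eqref{eq:param-id}, namely $\mathrm P^\e=e^{\e\rho^\e}$, $\Theta^\e=e^{\e\theta^\e}$, $\mathrm U^\e=\e u^\e$, together with the bootstrap bound \eqref{eq:tot-bd}. The first step is to read off from \eqref{eq:tot-bd} that
\[
\mathcal E_M^N(t)\le\mathcal E_{tot}^N(t)\lesssim(\e^{\mathfrak y}+\e\kappa^{-1/2})^{-1}\lesssim\e^{-c_0},
\]
using $c_0=\mathfrak y$ and Lemma~\ref{lem:scalecons}. Hence $\|(\rho^\e,u^\e,\theta^\e)\|_{H^N_x}\lesssim(\mathcal E_M^N)^{1/2}\lesssim\e^{-c_0/2}$, which is exactly \eqref{eq:loc-bd}. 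Since $N\ge4$, Sobolev embedding gives $\e\|(\rho^\e,\theta^\e)\|_\infty\lesssim\e^{1-c_0/2}=o(1)$. Therefore $(\mathrm P^\e)^{\pm1}=e^{\pm\e\rho^\e}$ and $(\Theta^\e)^{\pm1}$ are bounded uniformly in $L^\infty$, and $\|\mathrm U^\e\|_{H^N}=\e\|u^\e\|_{H^N}=o(1)$.

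For the derivative bounds in \eqref{eq:hn-en} and \eqref{eq:hn-bd}, I would write $\Theta^\e-1=\e\theta^\e\,\phi(\e\theta^\e)$ with $\phi(y)=(e^y-1)/y$ smooth. I would then use the Moser composition estimate in $H^N$, which holds since $N>3/2$ makes $H^N$ an algebra, namely $\|F(g)\|_{H^N}\lesssim C(\|g\|_\infty)(1+\|g\|_{H^N})^{N}\|g\|_{H^N}$ for $F(0)=0$. Applied to $g=\e\theta^\e$ with $\|g\|_{H^N}=o(1)$, this gives $\|\Theta^\e-1\|_{H^N}\lesssim\e\|\theta^\e\|_{H^N}$. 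Dividing by $\e$ yields the first half of \eqref{eq:hn-bd}, and the second half follows since $\nabla\Theta^\e=\nabla(\Theta^\e-1)$.

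Similarly, $\nabla[(\mathrm P^\e)^{\pm1}]=\pm\e(\nabla\rho^\e)e^{\pm\e\rho^\e}$. The product estimate $\|fh\|_{H^{N-1}}\lesssim\|f\|_{H^{N-1}}\|h\|_{L^\infty}+\|f\|_{L^\infty}\|\nabla h\|_{H^{N-2}}$ with $h=e^{\pm\e\rho^\e}$ bounds this by $\e\|\rho^\e\|_{H^N}(1+o(1))=o(1)$. The same argument handles $\Theta^\e$, which closes \eqref{eq:hn-en}.

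The only point requiring care is quantifying the composition estimate so that no hidden power of $\e^{-c_0}$ appears. This is harmless because every nonlinear factor carries the prefactor $\e$ and $\|\e(\rho^\e,\theta^\e)\|_{H^N}=o(1)$, so the polynomial factor in the Moser estimate is $1+o(1)$.
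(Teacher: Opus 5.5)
Your proposal is correct and follows the paper's proof essentially step for step. The bootstrap bound gives $\mathcal E_M^N\lesssim\e^{-c_0}$ and hence \eqref{eq:loc-bd}; the Moser composition estimate applied to $e^{\pm\e\rho^\e}$ and $e^{\pm\e\theta^\e}$, where every nonlinear factor carries $\e$, gives \eqref{eq:hn-en}; and your factorization $\Theta^\e-1=\e\theta^\e\phi(\e\theta^\e)$ with $\phi(y)=\int_0^1e^{\tau y}\,d\tau$ is exactly the representation the paper uses for \eqref{eq:hn-bd}. One small remark: the appeal to Lemma~\ref{lem:scalecons} in your first step is unnecessary, since $(\e^{\mathfrak y}+\e\kappa^{-1/2})^{-1}\le\e^{-\mathfrak y}$ holds trivially.
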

The proof, based on the bootstrap bound \eqref{eq:tot-bd}, is given in Appendix~\ref{sec:auxana}.

For the non-cutoff Boltzmann equation, the colder global Maxwellian in
\eqref{eq:glob-id} is useful only because its relation to the exact local
Maxwellian is quantitative.  The following lemma isolates that relation.
\begin{lemma}[Local--global Maxwellian bridge]\label{lem:maxbridge}
Assume \eqref{eq:tot-bd} and \eqref{eq:choice-bd}--\eqref{eq:choice-boot}, and consider the
non-cutoff Boltzmann equation.  Set
\begin{equation}\label{eq:bolt-def}
 \delta_\e(t):=\e\bigl(\e^{-c_0}+(\mathcal E^N(t))^{1/2}\bigr).
\end{equation}
Then, after decreasing $\e_0$ if necessary,
\begin{equation}\label{eq:glob-bd}
 \sup_{0\le t\le T_*}\delta_\e(t)
 \lesssim \e^{1-c_0}+\e^{1-c_0/2}
 \lesssim \e^{1-c_0}=o(1),
\end{equation}
and uniformly in $(t,x)$,
\begin{equation}\label{eq:bridge-bd}
 |\mathrm P^\e-1|+|\mathrm U^\e|+|\Theta^\e-\Theta_0^\e|
 \lesssim\delta_\e(t).
\end{equation}
Moreover, the temperature gap is strictly positive:
\begin{equation}\label{eq:pos-bd}
 \Theta^\e-\Theta_0^\e
 \ge \frac12\e^{1-c_0}.
\end{equation}
For every $\xi\in\mathbb R^3$ one has the exact identity
\begin{align}\label{eq:xi-loc}
 &-\frac{|\xi|^2}{2k_B\Theta_0^\e}
 +\frac{|\xi-\mathrm U^\e|^2}{2k_B\Theta^\e} =
 -\frac{\Theta^\e-\Theta_0^\e}
 {2k_B\Theta^\e\Theta_0^\e}
 \left|\xi+
 \frac{\Theta_0^\e}{\Theta^\e-\Theta_0^\e}\mathrm U^\e\right|^2
 +\frac{|\mathrm U^\e|^2}
 {2k_B(\Theta^\e-\Theta_0^\e)}.
\end{align}
Consequently,
\begin{equation}\label{eq:mu-maxw}
 \frac{\mu_0}{M^\e}\lesssim1,
\end{equation}
and, for every fixed polynomial exponent $m\ge0$, there are constants
$C_m<\infty$ and $c_q>0$, independent of $\e,t,x,\xi$, such that
\begin{equation}\label{eq:poly-bd}
 \langle\xi\rangle^m\mu_0(M^\e)^{-1}w_0^{-2}
 \le C_m e^{-c_q\langle\xi\rangle}.
\end{equation}
Finally, set
\[
 A^\e:=\frac{M^\e-\mu_0}{\sqrt{\mu_0}}, \qquad H^\e_\infty:= \frac{\mu-\mu_0}{\e \sqrt{\mu_0}}, \qquad \tilde{H}^\e:= \frac{M^\e-\mu}{\e\sqrt{\mu_0}},
\]
then
\begin{equation}\label{eq:vd-bd}
 \sum_{|\beta|\le2}
 \left|\partial_\xi^\beta A^\e(t,x,\xi)\right|
 \lesssim \delta_\e(t)\mu_0(\xi)^{1/3},
\end{equation}

\begin{align}
    & \|w_0\partial_\beta H_\infty^\e\|_{L_\xi^2\cap L_{\Bb}^2}
 +\|w_0H_\infty^\e\|_{L^2_{2s+\gamma}}
 \lesssim \e^{-1}(1-\Theta_0^\e)=\e^{-c_0}, \ \ |\beta|\leq 1 \label{eq:hinfty-bd}\\
 & \|w_0\widetilde H^\e\|_{H_x^2L_\xi^2}
 +\|w_0\widetilde H^\e\|_{H_x^2L_{\Bb}^2}
 +\|w_0\widetilde H^\e\|_{H_x^2L^2_{2s+\gamma}} +
 \sum_{|\beta|=1}
 \|w_0^{1/2}\partial_\beta\widetilde H^\e\|_{H_x^2L_\xi^2}
 \lesssim (\mathcal E^N)^{1/2}. \label{eq:xi-gam}
\end{align}

\end{lemma}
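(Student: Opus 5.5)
The plan is to derive everything from the bootstrap bound \eqref{eq:tot-bd}, through Lemma~\ref{lem:macrocoef} and Lemma~\ref{lem:scalecons}, using explicit Gaussian algebra for the rest. I would proceed in the following order.

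\textbf{Step 1: the size $\delta_\e$ and the pointwise bounds.} First, \eqref{eq:tot-bd} together with $\e^{\mathfrak y}=\e^{c_0}$ gives $\mathcal E^N(t)\lesssim\e^{-c_0}$. Hence $\e(\mathcal E^N)^{1/2}\lesssim\e^{1-c_0/2}$, and since $\e\e^{-c_0}\ge\e^{1-c_0/2}$ this proves \eqref{eq:glob-bd}.

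For \eqref{eq:bridge-bd}, write $\mathrm P^\e-1=e^{\e\rho^\e}-1$ and $\mathrm U^\e=\e u^\e$. Sobolev embedding ($N\ge4$) and \eqref{eq:loc-bd} give $|\mathrm P^\e-1|+|\mathrm U^\e|+|\Theta^\e-1|\lesssim\e(\mathcal E_M^N)^{1/2}$. Then $|\Theta^\e-\Theta_0^\e|\le|\Theta^\e-1|+\e^{1-c_0}$.

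For \eqref{eq:pos-bd}, note that $|\Theta^\e-1|\lesssim\e^{1-c_0/2}=o(\e^{1-c_0})$. This gives $\Theta^\e-\Theta_0^\e\ge\e^{1-c_0}-C\e^{1-c_0/2}\ge\frac12\e^{1-c_0}$ once $\e_0$ is reduced. This step is exactly where $c_0$-losses must be tracked (the sharper form of \eqref{eq:assm-bd}).

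\textbf{Step 2: completing the square.} Identity \eqref{eq:xi-loc} is pure algebra: expand $|\xi-\mathrm U|^2/\Theta-|\xi|^2/\Theta_0$ and complete the square in $\xi$. Consequently $\mu_0/M^\e$ equals $(\Theta^\e/\Theta_0^\e)^{3/2}(\mathrm P^\e)^{-1}$ times $\exp$ of the right side of \eqref{eq:xi-loc}. The negative square can be dropped. The residual term is $|\mathrm U^\e|^2/(\Theta^\e-\Theta_0^\e)\lesssim\e^2\mathcal E_M^N/\e^{1-c_0}\lesssim\e^{1}$, which is bounded. This proves \eqref{eq:mu-maxw}.

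For \eqref{eq:poly-bd}, $w_0^{-2}$ contributes $e^{-2q\langle\xi/a_\e\rangle}$, which beats any polynomial. So \eqref{eq:poly-bd} follows from \eqref{eq:mu-maxw} with, say, $c_q=q/a_\e$ uniformly in $\e$.

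\textbf{Step 3: the derivative bound \eqref{eq:vd-bd}.} This is the main obstacle, because $M^\e$ is hotter than $\mu_0$, so $M^\e/\sqrt{\mu_0}$ is not automatically small at large $\xi$. I would write $M^\e=M_{[\mathrm P,\mathrm U,\Theta]}$ and apply the mean value theorem along the segment from $(1,0,\Theta_0^\e)$ to $(\mathrm P^\e,\mathrm U^\e,\Theta^\e)$. Each parameter derivative of the Maxwellian is a polynomial in $\xi$ times an intermediate Maxwellian $M_{[\tilde{\mathrm P},\tilde{\mathrm U},\tilde\Theta]}$, where $\tilde\Theta\le\Theta_0^\e+C\delta_\e$. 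Dividing by $\sqrt{\mu_0}$ gives the exponent $-|\xi-\tilde{\mathrm U}|^2/(2k_B\tilde\Theta)+|\xi|^2/(4k_B\Theta_0^\e)$. Since $\tilde\Theta\le\Theta_0^\e(1+o(1))$, this exponent is at most $-(\frac12-\frac13-o(1))|\xi|^2/(2k_B\Theta_0^\e)+O(1)$, and absorbing polynomials yields $\mu_0^{1/3}$. The same argument applied to $\xi$-derivatives up to order two gives \eqref{eq:vd-bd}.

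\textbf{Step 4: the bounds on $H^\e_\infty$ and $\widetilde H^\e$.} For \eqref{eq:hinfty-bd}, $H^\e_\infty=(\mu-\mu_0)/(\e\sqrt{\mu_0})$ is a temperature difference of size $1-\Theta_0^\e=\e^{1-c_0}$. The same mean-value argument applies, and the intermediate temperatures now lie between $\Theta_0^\e$ and $1$. Here $w_0$ with its sub-Gaussian exponential $e^{q\langle\xi\rangle}$ is harmless against $\mu_0^{1/3}$ decay, giving the bound $\e^{-1}\e^{1-c_0}$ in every listed velocity norm.

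For \eqref{eq:xi-gam}, expand $M^\e-\mu$ to first order in $(\e\rho^\e,\e u^\e,\e\theta^\e)$ with a Taylor remainder, divide by $\e$, and take $x$-derivatives up to order two. Each term is a product of $\partial_x^{\le2}(\rho,u,\theta)$ with smooth functions of the parameters, and these are bounded by Lemma~\ref{lem:macrocoef}. Each such term is multiplied by a velocity factor of Maxwellian type divided by $\sqrt{\mu_0}$, which is again controlled as in Step 3 because the temperature of $\mu$ is close to $\Theta_0^\e$. The $H^2_x$ norms are then $\lesssim(\mathcal E^N)^{1/2}$ by the algebra property of $H^N$.
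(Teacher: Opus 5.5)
Your proposal is correct and follows the paper's proof essentially step for step. The bootstrap bound $\mathcal E^N\lesssim\e^{-c_0}$ gives the pointwise parameter bounds and the gap $\Theta^\e-\Theta_0^\e\ge\e^{1-c_0}-C\e^{1-c_0/2}$. Completing the square with $|\mathrm U^\e|^2/(\Theta^\e-\Theta_0^\e)\lesssim\e$ gives \eqref{eq:mu-maxw}--\eqref{eq:poly-bd}. The fundamental theorem of calculus along Maxwellian parameter paths gives the remaining bounds: the path from $(1,0,\Theta_0^\e)$ for $A^\e$, the temperature for $H_\infty^\e$, and the path from $\mu$ for $\widetilde H^\e$.

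Two small points. First, Step~3 is not really the obstacle: $M_{[\tilde{\mathrm P},\tilde{\mathrm U},\tilde\Theta]}/\sqrt{\mu_0}\lesssim\mu_0^{1/3}$ (after absorbing polynomials) whenever $\tilde{\mathrm U}$ is bounded and $\tilde\Theta$ stays below $\frac65\Theta_0^\e$ by a fixed margin. The delicate point is instead the positive gap in Step~1, which is what makes $\mu_0/M^\e$ bounded. Second, for the $L^2_{\Bb}$ norms in \eqref{eq:hinfty-bd}--\eqref{eq:xi-gam}, say explicitly that you use the upper bound in \eqref{eq:xi-bd}. Then bound the fractional seminorm by a mean-value estimate, which needs one more $\xi$-derivative of the polynomial--Gaussian profile.
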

The proof applies Lemma~\ref{lem:macrocoef} to the Maxwellians and is given in Appendix~\ref{sec:auxana}.

\begin{lemma}[Positive-order fluctuation transfer]\label{lem:fluctxfer}
Under the hypotheses of Lemma~\ref{lem:maxbridge}, for $1\le|\alpha|\le N$,
\begin{align}
 \|w_0\partial^\alpha g^\e\|_{L^2_{x,\xi}}
 +\|w_0\partial^\alpha g^\e\|_{L^2_{x,\Bb}}
 +\|w_0\partial^\alpha g^\e\|_{L^2_xL^2_{2s+\gamma}} 
 &\lesssim
 (\mathcal E^N)^{1/2}
 +\|w_0\partial^\alpha f^\e\|_{L^2_{x,\xi}}
 +\|w_0\partial^\alpha f^\e\|_{L^2_{x,\Bb}}
 \notag\\
 &\quad+
 \|w_0\partial^\alpha f^\e\|_{L^2_xL^2_{2s+\gamma}},
 \label{eq:der-alp}\\
 \|w_0^{1/2}\partial_\beta^\alpha g^\e\|_{L^2_{x,\xi}}  
 &\lesssim
 (\mathcal E^N)^{1/2}
 +\|w_0^{1/2}\partial_\beta^\alpha f^\e\|_{L^2_{x,\xi}},
 \qquad |\beta|=1, \notag
 \\
 \|\partial^\alpha g^\e\|_{L^2_{x,\xi}}
 +\|\partial^\alpha g^\e\|_{L^2_{x,\Bb}}
 &\lesssim
 (\mathcal E^N)^{1/2}
 +\|\partial^\alpha f^\e\|_{L^2_{x,\xi}}
 +\|\partial^\alpha f^\e\|_{L^2_{x,\Bb}}.
 \label{eq:der-alp-bd}
\end{align}
\end{lemma}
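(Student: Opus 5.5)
The plan is to express $g^\e$ through $f^\e$ and a remainder that carries only macroscopic fields. Subtracting the two representations in \eqref{eq:dist-id} gives the exact identity
\[
 \sqrt{\mu_0}\,g^\e=\sqrt{\mu_0}f^\e-\frac{M^\e-\mu_0}{\e},
 \qquad\text{i.e.}\qquad
 g^\e=f^\e-\e^{-1}A^\e=f^\e-\widetilde H^\e-H_\infty^\e .
\]
The homogeneous piece $H_\infty^\e$ depends only on $\xi$. Hence for $|\alpha|\ge1$ it is annihilated, and
\[
 \partial^\alpha g^\e=\partial^\alpha f^\e-\partial^\alpha\widetilde H^\e .
\]
This is exactly why the non-perturbative temperature mismatch $\e^{-c_0}$ of \eqref{eq:hinfty-bd} does not appear on the right-hand sides. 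Each asserted estimate then follows from the triangle inequality once the corresponding norm of $\partial^\alpha\widetilde H^\e$ is bounded by $(\mathcal E^N)^{1/2}$.

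The next step is to bound $\widetilde H^\e$. I would write $\widetilde H^\e=(M^\e-\mu)/(\e\sqrt{\mu_0})$ and differentiate in $x$. Since $\mu$ is constant in $x$, we get $\partial^\alpha\widetilde H^\e=\partial^\alpha M^\e/(\e\sqrt{\mu_0})$. By Faà di Bruno, $\partial^\alpha M^\e$ is a sum of products of $x$-derivatives of $(\mathrm P^\e,\mathrm U^\e,\Theta^\e)$ multiplied by polynomials in $\xi-\mathrm U^\e$ and powers of $(\Theta^\e)^{-1}$ times $M^\e$. Each such product contains at least one factor $\partial^{\alpha_1}(\mathrm P^\e,\mathrm U^\e,\Theta^\e)$, and this factor equals $\e$ times a derivative of $(\rho^\e,u^\e,\theta^\e)$ up to bounded coefficients, by \eqref{eq:param-id} and Lemma~\ref{lem:macrocoef}. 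The $\e$ cancels the prefactor $\e^{-1}$. For $|\alpha|\le N$, the resulting $L^2_x$ norm is then controlled by $\|(\rho^\e,u^\e,\theta^\e)\|_{H^N}$, with the lower-order factors placed in $L^\infty_x$ via \eqref{eq:hn-en} and Sobolev embedding ($N\ge5$). This gives $\lesssim(\mathcal E_M^N)^{1/2}$ for the spatial part.

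The velocity part reduces to the weighted Gaussian bound
\[
 \langle\xi\rangle^{m}\,w_0^2\,(M^\e)^2/\mu_0\lesssim e^{-c\langle\xi\rangle},
\]
together with its derivative variant for one $\xi$-derivative and the $\Bb$ and $L^2_{2s+\gamma}$ norms. I expect this step to be the main obstacle. The difficulty is that $M^\e/\sqrt{\mu_0}$ is only mildly decaying, because $\Theta^\e>\Theta_0^\e$ by \eqref{eq:pos-bd}. It still has a Gaussian tail, since $\Theta^\e\le 2\Theta_0^\e$, so $(M^\e)^2/\mu_0$ behaves like $\exp(-|\xi|^2(2/\Theta^\e-1/\Theta_0^\e)/2k_B)$ with a positive coefficient near $1$. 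This beats the sub-Gaussian weight $w_0^2=\langle\cdot\rangle^{2\ell}e^{2q\langle\cdot\rangle}$. I would verify this via the exact identity \eqref{eq:xi-loc}, uniformly using \eqref{eq:bridge-bd}.

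For the $\Bb$ norm, I would use the upper bound $\|\cdot\|_{L^2_\Bb}\lesssim\|\cdot\|_{H^s_{2s+\gamma}}\le\|\cdot\|_{H^1_{2s+\gamma}}$ from \eqref{eq:xi-bd}. This reduces it to weighted $L^2$ bounds on one $\xi$-derivative, which add only polynomial factors. The same argument with $w_0^{1/2}$ and $\partial_\beta$ gives the middle estimate, and dropping $w_0$ gives \eqref{eq:der-alp-bd}. Finally, $\mathcal E_M^N\le\mathcal E^N$ completes the proof.
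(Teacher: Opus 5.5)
Your proposal is correct and follows essentially the paper's argument: the identity $\partial^\alpha g^\e=\partial^\alpha f^\e-\partial^\alpha M^\e/(\e\sqrt{\mu_0})$ for $|\alpha|\ge1$, the Maxwellian chain rule with one factor $\e$ gained per differentiated parameter, uniform polynomial--Gaussian bounds on the velocity coefficients (including the fractional $H^s_{2s+\gamma}$ part) from the compact parameter range in Lemma~\ref{lem:maxbridge}, $H^2_x$ product estimates, and the triangle inequality. The step you flag as the main obstacle is in fact routine: $\Theta^\e$ and $\Theta_0^\e$ both tend to $1$, so $M^\e/\sqrt{\mu_0}$ decays like $e^{-|\xi|^2/(4k_B)}$ and \eqref{eq:xi-loc} is not needed. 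The sign of $\Theta^\e-\Theta_0^\e$ matters only for the reciprocal ratio $\mu_0/M^\e$, which does not enter this lemma.
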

The proof uses $g^\e=f^\e+(\mu_0-M^\e)/(\e\sqrt{\mu_0})$ together with Lemma~\ref{lem:maxbridge}; it is given in Appendix~\ref{sec:auxana}.

\subsection{A priori energy theorem}
\begin{theorem}[Kinetic energy estimate]\label{thm:kinenergy}
Let $N\ge4$. Consider either the Coulomb Landau
equation or the non-cutoff Boltzmann equation.  Assume
\eqref{eq:tot-bd}, \eqref{eq:choice-bd}, \eqref{eq:boot-bd}, and \eqref{eq:choice-boot}.  Then, for
$0\le t\le T_*$,
\begin{align}\label{eq:diss-bd}
 \mathcal E_{tot}^N(t)+\int_0^t\mathcal D_{tot}^N(s)\,ds
 &\lesssim \mathcal E_{tot}^N(0)
 +\int_0^t
 \bigl(1+\|\nabla_x(\rho^\e,u^\e,\theta^\e)(s)\|_{L_x^\infty}^2\bigr)
 \mathcal E_{tot}^N(s)\,ds +C.
\end{align}
\end{theorem}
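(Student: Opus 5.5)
We prove Theorem~\ref{thm:kinenergy} by a weighted energy method in the exact local-Maxwellian geometry, in three layers, and then sum them with the multipliers $0<\delta_2\ll\delta_1\ll1$. \emph{First}, we obtain the macroscopic estimate for $\mathcal E_M^N$. Apply $\partial^\alpha$, $|\alpha|\le N$, to the balance laws \eqref{eq:invar-id} rewritten in the low-Mach variables \eqref{eq:param-id}. The fast acoustic terms $\e^{-1}(\nabla\rho,\nabla\cdot u,\nabla\theta)$ are skew in the symmetrized energy, so they drop out. Transport commutators are bounded by $\|\nabla_x(\rho^\e,u^\e,\theta^\e)\|_{L^\infty}\mathcal E^N_M$. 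Only the Burnett fluxes $\e^{-1}\nabla\cdot\partial^\alpha\mathbf r^\e$ and $\e^{-1}\nabla\cdot\partial^\alpha\mathfrak q^\e$ remain, and they involve $\partial^\alpha\Pi_M^\perp F^\e$ only.

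\emph{Second}, we treat the microscopic part $\mathcal E_G^N$. Project the kinetic equation with $\Pi_M^\perp$ and test against $\partial^\alpha\Pi_M^\perp F^\e (M^\e)^{-1}$. The local coercivity of $\mathcal L_M$ yields $\mathcal D_G^N$. The key structural point is that the coupling term from $\Pi_M^\perp(\e^{-1}\xi\cdot\nabla_x M^\e)$ is, by \eqref{eq:xd-id} and the orthogonality of $\mathfrak R_{ij},\mathcal Q_i$ to $M^\e$, exactly the Burnett pairing $\nabla u:\mathbf r$, $\nabla\theta\cdot\mathfrak q$. It cancels the flux terms of the first step in the combined energy (the stress--heat-flux cancellation). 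Time derivatives of $M^\e$ in the weight $(M^\e)^{-1}$ and the nonlinear terms $\mathcal N(\Pi_M^\perp F,\Pi_M^\perp F)$ are then bounded via Lemma~\ref{lem:macrocoef}. They produce $(1+\|\nabla_x(\rho,u,\theta)\|_\infty^2)\mathcal E^N$ plus small multiples of $\mathcal D^N$.

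At the top order $|\alpha|=N+1$, we use the full-distribution functional $\mathcal H_{\rm top}$ instead, since $\partial^{N+1}$ of $\mathrm P,\mathrm U,\Theta$ is not controlled. The factor $\kappa$ in $\mathcal H_{\rm top}$ makes the macroscopic part of $\partial^{N+1}F$ harmless. The dissipation $\mathcal D_{top}$ absorbs the commutator of $\Pi_M^\perp$ with $\partial^\alpha$.

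\emph{Third}, we close the high-velocity-moment layer $\mathcal E_H^N$. For Landau we work around $\mu$ with weight $w$; for non-cutoff Boltzmann we use $g^\e$ at zeroth order (thanks to $\Pi_0g^\e=0$, \eqref{eq:pr-id}) and $f^\e$ at orders $|\alpha|\ge1$. The weighted linear estimates give $\mathcal D_H^N$ modulo $\kappa^{2c}\times$ lower-weight terms. The model difference operators $\mathcal L_d^0$, $H_\infty^\e$ and $\tilde H^\e$ are controlled through Lemma~\ref{lem:maxbridge} (\eqref{eq:vd-bd}--\eqref{eq:xi-gam}) and Lemma~\ref{lem:fluctxfer}. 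The non-weighted parts are recovered from $\mathcal E_G^N$, $\mathcal D_G^N$ using \eqref{eq:mu-maxw} and \eqref{eq:poly-bd}.

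\textbf{Closing.} The scale relations \eqref{eq:kap-choice}--\eqref{eq:maxw-bd} make the error coefficients (for instance $\e^2/\kappa$ and $\kappa^{2c-1}$) small, so they are absorbed into $\mathcal D^N_{tot}$. The bootstrap bound \eqref{eq:tot-bd} turns the cubic nonlinear terms into quadratic ones. The contribution of the homogeneous temperature mismatch, of size $\e^{-c_0}\kappa^{c}$, is bounded by a constant, which gives the $+C$. Gronwall is \emph{not} applied here; integrating in time yields exactly \eqref{eq:diss-bd}.

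\textbf{Main obstacle.} The hard step is the top order $N+1$ together with the $\mu_0$ mismatch in the Boltzmann case. Here one must check that every term carrying $\e^{-1}$ or $(\kappa\e^2)^{-1}$ is either cancelled, absorbed by $\mathcal D_{top}$ or $\mathcal D_H$, or small by \eqref{eq:boot-bd}. For this we would first track the exact powers of $\kappa$ and $\e$ in the commutator $[\partial^{N+1},\Pi_M^\perp]$, and use the $\e^2$-weighted term $\kappa^{2c+1}\e^2\|w\partial^{N+1}f\|^2$ to handle the streaming term.
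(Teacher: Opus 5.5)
Your three-layer architecture matches the paper's. It has a lower local-Maxwellian block with the acoustic and stress--heat-flux cancellations, the top-order full-distribution energy $\mathcal H_{\rm top}$, and the global weighted hierarchy with $g^\e$ at zero order and $f^\e$ at positive order for non-cutoff Boltzmann. These are then summed without Gronwall. However, a step in your second layer fails. You assert that the moving-weight term coming from $(M^\e)^{-1}$ and the collision terms are bounded via Lemma~\ref{lem:macrocoef} by $(1+\|\nabla_x Z^\e\|_\infty^2)\mathcal E^N$ plus small multiples of $\mathcal D^N$. But $\mathcal T_\e M^\e/M^\e$ has size $\|\nabla_xZ^\e\|_\infty\langle\xi\rangle^3+\e\kappa^{1/2}(\mathcal D_G^N)^{1/2}\langle\xi\rangle^2$. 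Hence the term $\frac1{2\e^2}\iint \mathcal T_\e M^\e\,|\partial^\alpha G^\e|^2(M^\e)^{-2}$ requires the velocity-weighted quantities $\mathcal V_3^\alpha,\mathcal V_2^\alpha$ of \eqref{eq:largev-def}. Likewise, the commutator and nonlinear terms $\mathcal N(\partial^\beta M^\e,\partial^{\alpha-\beta}G^\e)$ and $\mathcal N(\partial^\beta G^\e,\partial^{\alpha-\beta}G^\e)$ require putting the low-order factor in $L^\infty_x$. That differentiates $(M^\e)^{-1/2}$ in $x$ and produces polynomial velocity weights. Neither $\mathcal E_G^N$ nor $\mathcal D_G^N$ controls any positive velocity moment. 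By \eqref{eq:xi-bd}, the Coulomb Landau norm only gives the weight $\langle\xi\rangle^{-1/2}$, and the soft-potential non-cutoff norm gives $\langle\xi\rangle^{(2s+\gamma)/2}$ with $2s+\gamma\le0$. So the local block cannot close on its own, however carefully the Maxwellian parameters are estimated.

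The missing ingredient is the large-velocity closure \eqref{eq:large-bd}--\eqref{eq:wt-bd}. Split at $|\xi|=R_\e=\e^{-1/(2(m+2))}$. On $|\xi|\le R_\e$, pay $\langle\xi\rangle^m\lesssim\e^{-1/2}\langle\xi\rangle^{-2}$ against the factor $\kappa\e^2$ relating $\e^{-2}\|\partial^\alpha G^\e/\sqrt{M^\e}\|^2_{L^2_{x,\mathbf D}}$ to $\mathcal D_G^N$. On $|\xi|>R_\e$, rewrite $\partial^\alpha G^\e$ through the global expansion, namely $\e\sqrt\mu\,\partial^\alpha f^\e-\partial^\alpha M^\e$, or its analogue with $\mu_0$ and $g^\e$. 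Compare $(M^\e)^{-1}$ with the global weights (for Boltzmann via \eqref{eq:mu-maxw}--\eqref{eq:poly-bd}). Then the exponential tail of the weights in $\mathcal E_H^N$ beats the fixed negative powers of $\e$ and $\kappa$, yielding $\e^{10}\mathcal E_H^N$. Note that \eqref{eq:mu-maxw}--\eqref{eq:poly-bd} bound local-Maxwellian norms by global ones. They serve exactly this direction, not the local-to-global recovery you use them for. This is why \eqref{eq:lower-local-block} and \eqref{eq:top-local-block} carry $\e^2(\mathcal E_H^N+1)^2$ and $o(1)\mathcal D_H^N$. It is also why \eqref{eq:compact-local} and \eqref{eq:td-en} must be closed simultaneously, absorbing $o(1)\int\mathcal D_H^N$ on the local side and $o(1)\int\mathcal D^N$ on the hierarchy side at once. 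Your sketch touches the weighted top term only at order $N+1$. Finally, small coefficients in front of \emph{energy} terms cannot be absorbed into $\mathcal D_{tot}^N$. These are $\kappa^{2c-1}\e^{-2c_0}\mathcal E^N$ from the temperature mismatch, $\e^2(\mathcal E_H^N+1)^2$, and $\e^2\kappa^{-1}(\mathcal E^N+1)^3$. They become the $+C$ only after integration over $[0,t]$, using the bootstrap bound and $T_*\lesssim\e^{-c_0}$ from \eqref{eq:choice-boot}, which your argument never invokes.
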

The local-Maxwellian closure follows the exact moment-matching and cancellation identities developed in Bae--Kim \cite{bae2026quantitative}.  Moment matching yields the macroscopic balance laws and the stress--heat-flux cancellation in the combined energy identity, while the $(M^\e)^{-1}$ full-distribution energy controls the tangent macroscopic derivatives together with the microscopic projection at the highest spatial order; the corresponding top-order estimate appears in \cite[Lemma~2.13]{bae2026quantitative}.  Here these estimates are combined with the three-dimensional axisymmetric finite-energy bounds and the collision-specific coercive estimates.  Landau uses $w(\xi)=\langle\xi/a_L\rangle^\ell e^{q\langle\xi/a_L\rangle^2}$ and the Coulomb coercive norm, whereas non-cutoff Boltzmann uses the temperature-adapted weight $w_0$, the shifted Maxwellian $\mu_0$ from \eqref{eq:glob-id}, and the hybrid zero-order $g^\e$/positive-order $f^\e$ hierarchy established above.  The proof of Theorem~\ref{thm:kinenergy} is completed at the end of Section~\ref{sec:macenergy}.

\section{Quantitative kinetic closure}\label{sec:macroclose}
For the non-cutoff positive-order estimates we use the exact decomposition
\begin{equation}\label{eq:maxw-def}
 f^\e=g^\e+H_\infty^\e+\widetilde H^\e,
\end{equation}
with the terms defined in Lemma~\ref{lem:maxbridge}.

\subsection{Exact balance laws and the local-Maxwellian defect}\label{sec:energyhier}
With the logarithmic variables in \eqref{eq:param-id}, the collision-invariant
balances become
\begin{align}\label{eq:tu-id}
\bega
&\p_t \rho^{\e} +u^{\e}\cdot\nabla_x\rho^{\e} + \frac{1}{\e}\nabla_x\cdot u^{\e} =0 ,\cr
&\frac{1}{k_B\mathrm{\Theta}^{\e}}\bigg[\p_tu^{\e} + u^{\e}\cdot \nabla_x u^{\e} \bigg] +\frac{1}{\e}\nabla_x(\rho^{\e}+\theta^{\e})
+\frac{1}{\e^2}\frac{1}{k_B\mathrm{P}^{\e} \mathrm{\Theta}^{\e}}\sum_{j} \p_{x_j} \mathbf{r}_{ij}^{\e} =0, \cr
&\frac{3}{2}\bigg[\p_t \theta^{\e} +u^{\e}\cdot\nabla_x\theta^{\e} \bigg] + \frac{1}{\e}\nabla_x\cdot u^{\e}
 +\frac{1}{\e^2}\frac{1}{k_B\mathrm{P}^{\e}\mathrm{\Theta}^{\e} }\sum_{j} \p_{x_j} \mathfrak{q}_j^{\e}
 + \frac{1}{\e^2}\frac{1}{k_B\mathrm{P}^{\e}\mathrm{\Theta}^{\e} }\sum_{i,j} \p_{x_i}\mathrm{U}^{\e}_j \mathbf{r}_{ij}^{\e}=0.
\enda
\end{align}
The singular acoustic terms in these three equations cancel exactly in the
macroscopic energy identity.  Exact moment matching also removes the five
collision-invariant directions from the microscopic defect
$G^\e=F^\e-M^\e$.

Set $c^\e=\xi-\mathrm U^\e$ and
\[
 \chi^\e=\frac{|c^\e|^2}{2k_B\Theta^\e}-\frac32,
 \qquad \mathcal T_\e=\partial_t+\e^{-1}\xi\cdot\nabla_x.
\]
A direct differentiation of the Maxwellian, followed by substitution of
\eqref{eq:tu-id}, gives
\begin{align}\label{eq:mic-id}
 \mathcal T_\e G^\e+\frac1{\kappa\e^2}\mathcal L_MG^\e
 &=-\mathcal A^\e+\mathcal F^\e
 +\frac1{\kappa\e^2}\mathcal N(G^\e,G^\e),
\end{align}
where
\begin{align*}
 \mathcal A^\e
 &=\frac{M^\e}{k_B\Theta^\e}
 \left(\sum_{i,j}\partial_i u_j^\e\,\mathfrak R_{ij}^\e
 +\sum_i\partial_i\theta^\e\,\mathcal Q_i^\e\right),\\
 \mathcal F^\e
 &=\frac{M^\e}{\e k_B\mathrm P^\e\Theta^\e}
 \sum_{i,j}\partial_j\mathbf r_{ij}^\e c_i^\e
 +\frac{2M^\e}{3\e k_B\mathrm P^\e\Theta^\e}
 \left(\sum_j\partial_j\mathfrak q_j^\e
 +\sum_{i,j}\partial_i\mathrm U_j^\e\mathbf r_{ij}^\e\right)\chi^\e .
\end{align*}
The tensors $\mathfrak R^\e$ and $\mathcal Q^\e$ are the traceless quadratic
and cubic Burnett modes associated with $M^\e$ defined in \eqref{eq:ij-def}.  Formula \eqref{eq:mic-id} is
the only local-Maxwellian transport identity needed below.

The stress and heat flux satisfy the following collision-independent estimates.
\begin{lemma}[Stress and heat-flux bounds]\label{lem:stress}
For either collision operator and $0\le t\le T_*$,
\begin{equation}\label{eq:hn-kap}
 \|\mathbf r^\e\|_{H^N_x}+\|\mathfrak q^\e\|_{H^N_x}
 \lesssim \e^2\kappa^{1/2}(\mathcal D_G^N)^{1/2}.
\end{equation}
At the one-higher spatial order, one has the complementary estimate
\begin{equation}\label{eq:htop-bd}
 \|\mathbf r^\e\|_{H^{N+1}_x}+\|\mathfrak q^\e\|_{H^{N+1}_x}
 \lesssim
 \e^2\left((\mathcal D^N)^{1/2}+\mathcal E_M^N\right).
\end{equation}
\end{lemma}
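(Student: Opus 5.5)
The plan is to read off both bounds directly from the moment definitions. I would write $\mathbf r^\e$ and $\mathfrak q^\e$ as velocity pairings of the microscopic defect $G^\e=\Pi_M^\perp F^\e$ against the Burnett functions. Since $\int M^\e\mathfrak R_{ij}^\e\,d\xi=\int M^\e\mathcal Q_i^\e\,d\xi=0$, we have
\[
 \mathbf r_{ij}^\e=\int G^\e\,\mathfrak R_{ij}^\e\,d\xi
 =\int \frac{G^\e}{\sqrt{M^\e}}\,\mathfrak R_{ij}^\e\sqrt{M^\e}\,d\xi,
\]
and similarly for $\mathfrak q_i^\e$.

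\emph{Proof of \eqref{eq:hn-kap}.}

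1. Apply $\partial^\alpha$ with $|\alpha|\le N$ and distribute derivatives by Leibniz. One piece keeps all derivatives on the defect, giving $\langle\partial^\alpha\Pi_M^\perp F^\e,\mathfrak R^\e_{ij}\rangle$. The other pieces put derivatives on $\mathfrak R^\e_{ij}$ and $\mathcal Q^\e_i$, whose $x$-dependence enters only through $\mathrm U^\e$ and $\Theta^\e$.

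2. For the main piece, use Cauchy--Schwarz in $\xi$ in the weighted form
\[
 |\langle h,\mathfrak R^\e\rangle|\le \|h|M^\e|^{-1/2}\|_{L^2_{\Lb}}\,\|\mathfrak R^\e\sqrt{M^\e}\|_{(L^2_{\Lb})^*}.
\]
The dual factor is bounded uniformly: $\mathfrak R^\e\sqrt{M^\e}$ is a Gaussian times a polynomial, and the dissipation norms control $L^2$ with polynomial weights $\langle\xi\rangle^{-1}$ (Landau) or $\langle\xi\rangle^{\gamma+2s}$ (Boltzmann), by \eqref{eq:xi-bd}. This is uniform in $(t,x)$ because Lemma~\ref{lem:macrocoef} keeps $\mathrm P^\e$, $\Theta^\e$, $\mathrm U^\e$ in a compact set.

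3. Squaring and integrating in $x$ gives $\kappa\e^4\mathcal D_G^N$, by the definition \eqref{eq:diss-def}. Taking square roots yields $\e^2\kappa^{1/2}(\mathcal D_G^N)^{1/2}$.

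4. For the commutator pieces, the functions $\partial^{\alpha_1}(\mathfrak R^\e\sqrt{M^\e})$ are again polynomial-times-Gaussian. Their coefficients are products of derivatives of $\mathrm U^\e$ and $\Theta^\e$, which are bounded in $H^{N}$ and $L^\infty$ by \eqref{eq:hn-en}. I would use the standard Moser split: put $L^\infty_x$ on the lower-order factor (via Sobolev, since $N\ge4$) and $L^2_x$ on the other. This bounds the commutators by the same dissipation, since each still pairs against some $\partial^{\alpha_2}G^\e$ with $|\alpha_2|\le N$.

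5. One subtlety is that $\partial^\alpha$ does not commute with $\Pi_M^\perp$. This causes no difficulty here, because $\mathcal D_G^N$ is defined with $\partial^\alpha\Pi_M^\perp F^\e$, which is exactly what appears.

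\emph{Proof of \eqref{eq:htop-bd}.}

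1. At order $N+1$, write $\partial^\alpha G^\e=\Pi_M^\perp\partial^\alpha F^\e+[\partial^\alpha,\Pi_M^\perp]F^\e$. Since $\Pi_M F^\e=M^\e$, the commutator equals $(\Pi_M\partial^\alpha F^\e)-\partial^\alpha M^\e$.

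2. The first term pairs with $\mathfrak R^\e$ exactly as above and is controlled by $\e^2(\mathcal D_{top}^N)^{1/2}$.

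3. For the commutator, the moments of $\partial^\alpha F^\e$ in the collision-invariant directions coincide with those of $\partial^\alpha M^\e$. Hence the commutator is a sum of terms in which at least one derivative hits the projection coefficients, with $\partial^\alpha F^\e$ traded for lower-order $x$-derivatives of $G^\e$. Concretely, $\Pi_M\partial^\alpha G^\e$ is a sum of $\partial^{\alpha_1}(\text{coefficients})\cdot\partial^{\alpha_2}G^\e$ moments with $|\alpha_2|\le N$.

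4. Those terms carry coefficient derivatives of $\mathrm U^\e,\Theta^\e$ of size $\e\|(u^\e,\theta^\e)\|_{H^{N+1}}$-type. Combining them with the lower-order defect, bounded by $\e^2\kappa^{1/2}(\mathcal D_G^N)^{1/2}\lesssim\e^2(\mathcal D^N)^{1/2}$, gives the stated bound. Terms where all $N+1$ derivatives fall on $\mathfrak R^\e$ are quadratic in macroscopic derivatives times $\e$, and contribute $\e^2\mathcal E_M^N$.

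\emph{Main obstacle.} I expect the top-order commutator bookkeeping to be the hardest step. There $\partial^{N+1}$ of $\mathrm U^\e=\e u^\e$ is not in $\mathcal E_M^N$, so it must be paired with a defect factor $\|G^\e/\sqrt{M^\e}\|_{L^\infty_x}\lesssim\e(\mathcal E_G^N)^{1/2}$ (using $N\ge4$). Alternatively, it can be absorbed through $\mathcal H_{\rm top}$; this is the alternative I would try if the pairing route fails, following \cite[Lemma~2.13]{bae2026quantitative}.
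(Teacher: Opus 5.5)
Your argument for \eqref{eq:hn-kap} is sound. The paper avoids your commutators altogether. Because $\partial^\alpha G^\e$ has zero moments against $1,\xi,|\xi|^2$, the centered Burnett functions can be replaced by fixed polynomials: $\mathbf r_{ij}^\e=\int G^\e A_{ij}\,d\xi$ and $\mathfrak q_i^\e=\int G^\e B_i\,d\xi-\sum_j\mathrm U_j^\e\mathbf r_{ij}^\e$ (see \eqref{eq:ij-mic}). In your Leibniz expansion, every $x$-derivative of $\mathfrak R_{ij}^\e$ has degree at most one in $\xi$. So all commutator pieces for $\mathbf r^\e$ vanish identically, and for $\mathfrak q^\e$ they collapse to $-\sum_k\partial^{\alpha_1}\mathrm U_k^\e\,\partial^{\alpha_2}\mathbf r_{ik}^\e$. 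Also, in step~4 you should differentiate $\mathfrak R^\e$ alone, not $\mathfrak R^\e\sqrt{M^\e}$: $\mathcal D_G^N$ controls $(\partial^\alpha G^\e)(M^\e)^{-1/2}$, not $\partial^\alpha\bigl(G^\e(M^\e)^{-1/2}\bigr)$.

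The genuine gap is in \eqref{eq:htop-bd}.

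\emph{The commutator is misidentified.} You correctly observe that $\partial^\alpha F^\e$ and $\partial^\alpha M^\e$ have the same collision-invariant moments. Together with $\Pi_M\partial^\alpha G^\e=0$ from \eqref{eq:xd-id}, this already shows that your commutator equals $-\Pi_M^\perp\partial^\alpha M^\e$. That is a purely Maxwellian term with no defect factor, and it contains $\e\,\partial^{N+1}(\rho^\e,u^\e,\theta^\e)$ linearly. It is not a sum of coefficient derivatives times lower-order moments of $G^\e$; the quantity $\Pi_M\partial^\alpha G^\e$ you expand is zero. Lemma~\ref{lem:topmacro} only gives $\|\partial^{N+1}Z^\e\|_2\lesssim\kappa^{-1/2}(\mathcal E^N)^{1/2}$. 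So a direct estimate of this term gives $\e\kappa^{-1/2}(\mathcal E^N)^{1/2}$, far above $\e^2(\cdots)$.

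\emph{The missing idea} is the chain-rule splitting $\partial^\alpha M^\e=\e M^\e\Phi_{\alpha,1}^\e+\mathcal R_{\alpha,M}^\e$. The linear part lies in the local null space $\mathcal N_M$ and is killed by $\Pi_M^\perp$. Every term of $\mathcal R^\e_{\alpha,M}$ carries a factor $\e^2$ and at least two differentiated macroscopic factors, each of order $\le N$, so it is $\lesssim\e^2\mathcal E_M^N$. This is the true source of $\e^2\mathcal E_M^N$, not the terms with all derivatives on the Burnett function. Those terms vanish for $\mathbf r^\e$, and for $\mathfrak q^\e$ they equal $-\sum_k\partial^\alpha\mathrm U_k^\e\,\mathbf r_{ik}^\e$, which is linear in the defect.

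\emph{The product term.} For $-\sum_k\partial^\alpha\mathrm U_k^\e\,\mathbf r_{ik}^\e$, your first proposed pairing fails: $\|\partial^{N+1}\mathrm U^\e\|_2\cdot\e(\mathcal E_G^N)^{1/2}\lesssim\e^2\kappa^{-1/2}\mathcal E^N$ loses a factor $\kappa^{-1/2}$. Use instead $\|\mathbf r^\e\|_{L^\infty}\lesssim\|\mathbf r^\e\|_{H^N}\lesssim\e^2\kappa^{1/2}(\mathcal D_G^N)^{1/2}$ from \eqref{eq:hn-kap}. Its $\kappa^{1/2}$ cancels the $\kappa^{-1/2}$ in $\|\mathrm U^\e\|_{H^{N+1}}\lesssim1+\e\kappa^{-1/2}(\mathcal E^N)^{1/2}$; your $\mathcal H_{\rm top}$ alternative is exactly Lemma~\ref{lem:topmacro}. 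The resulting cross term $\e^3(\mathcal E^N)^{1/2}(\mathcal D_G^N)^{1/2}$ is absorbed using $\e(\mathcal E^N)^{1/2}=o(1)$.
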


\begin{lemma}[Top spatial macroscopic control]\label{lem:topmacro}
Let $N\ge4$ and assume \eqref{eq:tot-bd}.  With
$Z^\e=(\rho^\e,u^\e,\theta^\e)$, one has
\begin{equation}\label{eq:topmacro-bd}
 \sqrt\kappa\,\|Z^\e\|_{\dot H_x^{N+1}}
 \lesssim (\mathcal E^N)^{1/2}.
\end{equation}
\end{lemma}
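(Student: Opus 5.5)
The plan is to control $Z^\e$ at order $N+1$ through the top-order full-distribution energy $\mathcal H_{\rm top}$, which is a summand of $\mathcal E^N$. Fix $|\alpha|=N+1$ and split $\partial^\alpha F^\e=\partial^\alpha M^\e+\partial^\alpha G^\e$. Since $M^\e$ is an explicit function of $(\rho^\e,u^\e,\theta^\e)$ through \eqref{eq:param-id}, the Leibniz rule gives
\[
 \partial^\alpha M^\e
 =\e M^\e\Bigl(\partial^\alpha\rho^\e
 +\frac{c^\e\cdot\partial^\alpha u^\e}{k_B\Theta^\e}\,\e\,e^{-\e\theta^\e}\cdots
 +\chi^\e\partial^\alpha\theta^\e\Bigr)+\mathcal R_\alpha ,
\]
with leading part lying in the local null space $\mathcal N_M$ with coefficients $\e\,\partial^\alpha Z^\e$, up to bounded invertible factors controlled by Lemma~\ref{lem:macrocoef}. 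The remainder $\mathcal R_\alpha$ collects products in which no single factor carries all $N+1$ derivatives.

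First, I would apply $\Pi_M$ and use the Gaussian identities after \eqref{eq:mp-def}, which identify the $\mathcal N_M$-coefficients orthogonally in $L^2((M^\e)^{-1}d\xi)$. This gives
\[
 \e^2\|\partial^\alpha Z^\e\|_{L^2_x}^2
 \lesssim \iint\frac{|\Pi_M\partial^\alpha M^\e|^2}{M^\e}
 \lesssim \iint\frac{|\partial^\alpha F^\e|^2}{M^\e}
 +\iint\frac{|\Pi_M\partial^\alpha G^\e|^2}{M^\e}
 +\iint\frac{|\mathcal R_\alpha|^2}{M^\e}.
\]
Multiplying by $\kappa/\e^2$, the first term is exactly bounded by $\mathcal H_{\rm top}\le\mathcal E^N$.

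Second, the defect term. By \eqref{eq:xd-id}, $\Pi_M\partial^\alpha G^\e$ vanishes when no derivative falls on the projection. It therefore reduces to the commutator $[\partial^\alpha,\Pi_M]$, which moves at least one derivative onto the moments of $M^\e$ (that is, onto $\e Z^\e$) and leaves at most $N$ derivatives on $G^\e$. The $(M^\e)^{-1}$-weighted norm of $\partial^{\le N}G^\e$ is $\e(\mathcal E_G^N)^{1/2}$. The coefficient factor $\e\nabla^{k}Z^\e$ is bounded in $L^\infty$ or $L^2$ by Lemma~\ref{lem:macrocoef} and Sobolev embedding ($N\ge4$). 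The top-order piece, in which all $N+1$ derivatives fall on a coefficient, carries an extra factor $\|G^\e/\sqrt{M^\e}\|_{L^\infty_x}\lesssim\e(\mathcal E_G^N)^{1/2}$. Absorbing it, its contribution to $\kappa\|\partial^\alpha Z^\e\|^2$ has a small prefactor times $\kappa\|Z^\e\|_{\dot H^{N+1}}^2$, and so can be absorbed into the left-hand side. The remaining parts are $\lesssim\kappa\,\mathcal E_G^N\le\mathcal E^N$.

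Third, the Maxwellian remainder $\mathcal R_\alpha$. Each term is $\e^kM^\e$ times a polynomial in $c^\e$ times a product $\prod\partial^{\alpha_i}Z^\e$ with $k\ge2$ factors and every $|\alpha_i|\le N$. Gagliardo--Nirenberg with the $H^N$ bound $\e\|Z^\e\|_{H^N}\lesssim\e^{1-c_0/2}$ from \eqref{eq:loc-bd} gives $\|\mathcal R_\alpha/\sqrt{M^\e}\|_{L^2}\lesssim\e\cdot o(1)\,(\mathcal E_M^N)^{1/2}$. The resulting term $\kappa\,\mathcal E_M^N\,o(1)$ is $\lesssim\mathcal E^N$. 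Collecting the three steps yields \eqref{eq:topmacro-bd}.

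The main obstacle is the commutator step. Terms where derivatives split between $M^\e$ (or $\Theta^\e$) and $G^\e$ near top order require a careful $L^\infty$--$L^2$ allocation so that nothing exceeds $\dot H^{N+1}$ on $Z^\e$ without a small factor permitting absorption. I would first try allocating $L^\infty_x$ to the lower-order factor and $L^2_x$ to the higher-order one, using $N\ge4$.
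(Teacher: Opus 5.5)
Your proposal is correct and follows essentially the paper's route. You project $\partial^\alpha F^\e$ onto the local null space, using that $\e M^\e\Phi_{\alpha,1}^\e\in\mathcal N_M$ and that $\Pi_M$ is contractive in $L^2((M^\e)^{-1}d\xi)$; you bound $\partial^\alpha F^\e$ by $\mathcal H_{\rm top}$; and you bound the chain-rule remainder by $\e^2\mathcal E_M^N$ via \eqref{eq:loc-bd}.

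The one difference is your Step~2, which is unnecessary. \eqref{eq:xd-id} states $\Pi_M(\partial^\alpha G^\e)=0$ outright: differentiating the moment-matching relations makes $\partial^\alpha G^\e$ orthogonal to $1,\xi,|\xi|^2$, and these moments are all $\Pi_M$ sees at fixed $(t,x)$. So the commutator you flag as the main obstacle is identically zero. No absorption argument is needed, and neither is an $L^\infty_x$ bound on $G^\e/\sqrt{M^\e}$ — which would otherwise bring in uncontrolled velocity weights from $\partial_x(M^\e)^{-1/2}$.
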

\begin{proof}
Fix $|\alpha|=N+1$.  Exact moment matching \eqref{eq:xd-id} gives
$\Pi_M(\partial^\alpha F^\e)=\Pi_M(\partial^\alpha M^\e)$.
The Maxwellian chain rule has the form
\[
 \partial^\alpha M^\e
 =\e M^\e\Phi_{\alpha,1}^\e+R_{\alpha,M}^\e,
\]
where
\[
 \Phi_{\alpha,1}^\e
 =\partial^\alpha\rho^\e
 +\frac{c^\e\cdot\partial^\alpha u^\e}{k_B\Theta^\e}
 +\left(\frac{|c^\e|^2}{2k_B\Theta^\e}-\frac32\right)
   \partial^\alpha\theta^\e,
\]
and every term in $R_{\alpha,M}^\e$ contains at least two positive
spatial derivatives and a factor $\e^2$.  Hence the $H_x^2$ high--low
product estimate, Lemma~\ref{lem:macrocoef}, and \eqref{eq:loc-bd} imply
\[
 \left(\sum_{|\alpha|=N+1}
 \left\|\frac{R_{\alpha,M}^\e}{\sqrt{M^\e}}\right\|_2^2\right)^{1/2}
 \lesssim \e^2\mathcal E_M^N.
\]
Since the linear term lies in the local null space and $\Pi_M$ is
contractive in $L^2_\xi((M^\e)^{-1}d\xi)$,
\[
 \e\|\Phi_{\alpha,1}^\e\sqrt{M^\e}\|_2
 \le \left\|\frac{\partial^\alpha F^\e}{\sqrt{M^\e}}\right\|_2
 +\left\|\frac{R_{\alpha,M}^\e}{\sqrt{M^\e}}\right\|_2.
\]
Moreover,
\[
 \int|\Phi_{\alpha,1}^\e|^2M^\e\,d\xi
 =\mathrm P^\e\left(
 |\partial^\alpha\rho^\e|^2
 +\frac{|\partial^\alpha u^\e|^2}{k_B\Theta^\e}
 +\frac32|\partial^\alpha\theta^\e|^2\right).
\]
Using Lemma~\ref{lem:macrocoef}, \eqref{eq:top-def}, and the remainder estimate just obtained,
\[
 \sqrt\kappa\,\|Z^\e\|_{\dot H^{N+1}}
 \lesssim (\mathcal E^N)^{1/2}
 +\e\sqrt\kappa\,\mathcal E_M^N.
\]
The last term is $o(1)(\mathcal E^N)^{1/2}$ by \eqref{eq:loc-bd}; this proves
\eqref{eq:topmacro-bd}.
\end{proof}

\begin{proof}[Proof of Lemma \ref{lem:stress}]
We give the direct collision-independent moment argument.  It reduces the stress and heat flux to fixed velocity polynomials and then uses the dissipation norms defined in \eqref{eq:xi-bd}.

Set $G^\e:=\Pi_M^\perp F^\e=F^\e-M^\e$.  By exact moment matching
\eqref{eq:xd-id},
\[
 \int_{\mathbb R^3}G^\e(1,\xi,|\xi|^2)\,d\xi=0.
\]
Introduce the fixed velocity polynomials
\[
 A_{ij}(\xi):=\xi_i\xi_j-\frac{|\xi|^2}{3}\delta_{ij},
 \qquad
 B_i(\xi):=\frac12\xi_i|\xi|^2.
\]
Expanding the centered Burnett polynomials in \eqref{eq:ij-def} and using the
five vanishing moments gives the exact identities
\begin{equation}\label{eq:ij-mic}
 \mathbf r_{ij}^\e=\int_{\mathbb R^3}G^\e A_{ij}\,d\xi,
 \qquad
 \mathfrak q_i^\e
 =\int_{\mathbb R^3}G^\e B_i\,d\xi
   -\sum_j\mathrm U_j^\e\mathbf r_{ij}^\e.
\end{equation}
Indeed, all terms of degree at most one in $\xi$, as well as the trace
term containing $|\xi|^2$, vanish by \eqref{eq:xd-id}; in the cubic moment
only $\frac12\xi_i|\xi|^2$ and
$-\mathrm U_j^\e\xi_i\xi_j$ remain, and
$\int G^\e\xi_i\xi_j\,d\xi=\mathbf r_{ij}^\e$.

Let $\mathbf D=\Lb$ in the Landau case and $\mathbf D=\Bb$ in the
Boltzmann case.  The lower bounds in \eqref{eq:xi-bd} imply the following
uniform Gaussian dual estimate: for every fixed polynomial $P$,
\begin{equation}\label{eq:xi-poly}
 \left|\int_{\mathbb R^3}h(\xi)P(\xi)\sqrt{M^\e(t,x,\xi)}\,d\xi\right|
 \le C_P\|h\|_{L^2_{\mathbf D}},
\end{equation}
where $C_P$ is independent of $(\e,t,x)$.  Indeed, in the Landau case
$\|h\|_{L^2_{\mathbf D}}$ controls
$\|\langle\xi\rangle^{-1/2}h\|_2$, while in the Boltzmann case it controls
$\|\langle\xi\rangle^{(2s+\gamma)/2}h\|_2$; the reciprocal polynomial
weights are absorbed by the Gaussian $\sqrt{M^\e}$.  Uniformity follows
from \eqref{eq:hn-en} and \eqref{eq:assm-bd}.

For every $|\alpha|\le N$, differentiation of the fixed moments in
\eqref{eq:ij-mic} gives, by \eqref{eq:xi-poly},
\[
 \|\partial^\alpha\mathbf r^\e\|_2
 +\left\|\partial^\alpha\int G^\e B\,d\xi\right\|_2
 \lesssim
 \left\|\frac{\partial^\alpha G^\e}{\sqrt{M^\e}}
 \right\|_{L_x^2L_{\mathbf D}^2}.
\]
Summing and using \eqref{eq:diss-def} yields
\begin{equation}\label{eq:mic-xi}
 \|\mathbf r^\e\|_{H_x^N}
 +\left\|\int G^\e B\,d\xi\right\|_{H_x^N}
 \lesssim \e^2\kappa^{1/2}(\mathcal D_G^N)^{1/2}.
\end{equation}
Since $N\ge4$, $H^N(\mathbb R^3)$ is an algebra, and
\eqref{eq:hn-en} gives $\|\mathrm U^\e\|_{H^N}\lesssim1$.
The second identity in \eqref{eq:ij-mic} and \eqref{eq:mic-xi} therefore prove
\eqref{eq:hn-kap}.  Notice that this argument never differentiates
$(M^\e)^{-1/2}$.

We next prove \eqref{eq:htop-bd}.  Let $|\alpha|=N+1$.  The parameter chain
rule for
$M^\e=M_{[e^{\e\rho^\e},\e u^\e,e^{\e\theta^\e}]}$ gives
\begin{equation*}
 \partial^\alpha M^\e
 =\e M^\e\Phi_{\alpha,1}^\e+\mathcal R_{\alpha,M}^\e,
\end{equation*}
where
\[
 \Phi_{\alpha,1}^\e
 =\partial^\alpha\rho^\e
 +\frac{c^\e\cdot\partial^\alpha u^\e}{k_B\Theta^\e}
 +\left(\frac{|c^\e|^2}{2k_B\Theta^\e}-\frac32\right)
  \partial^\alpha\theta^\e,
 \qquad c^\e=\xi-\mathrm U^\e,
\]
and every term in $\mathcal R_{\alpha,M}^\e$ contains at least two
positive spatial derivatives of $Z^\e=(\rho^\e,u^\e,\theta^\e)$.
The $H^N$ product estimate, the uniform Gaussian parameter bounds
\eqref{eq:hn-en}, and \eqref{eq:loc-bd} give
\begin{equation*}
 \sum_{|\alpha|=N+1}
 \left[
 \left\|\frac{\mathcal R_{\alpha,M}^\e}{\sqrt{M^\e}}\right\|_{L^2_{x,\xi}}
 +\left\|\frac{\mathcal R_{\alpha,M}^\e}{\sqrt{M^\e}}\right\|_{L_x^2L_{\mathbf D}^2}
 \right]
 \lesssim \e^2\mathcal E_M^N.
\end{equation*}
Indeed, the quadratic terms are bounded by
$C\e^2\|\nabla Z^\e\|_\infty\|Z^\e\|_{H^N}$, and every term of degree
$m\ge3$ is bounded by $C_m\e^m(\mathcal E_M^N)^{m/2}$; the latter series
is absorbed into the right-hand side because
$\e(\mathcal E_M^N)^{1/2}=o(1)$.

Set $H_\alpha^\e:=\Pi_M^\perp(\partial^\alpha F^\e)$.  By
\eqref{eq:xd-id}, $\Pi_M(\partial^\alpha G^\e)=0$.  Since the linear term
$\e M^\e\Phi_{\alpha,1}^\e$ belongs to the local null space,
\begin{equation*}
 \partial^\alpha G^\e
 =H_\alpha^\e-\Pi_M^\perp\mathcal R_{\alpha,M}^\e.
\end{equation*}
For fixed $(t,x)$, the normalized local null space is spanned by finitely
many Gaussian polynomials with parameters in the compact set furnished by
\eqref{eq:hn-en}.  Hence \eqref{eq:xi-poly} and the upper bounds in
\eqref{eq:xi-bd} imply uniformly
\[
 \left\|\frac{\Pi_M h}{\sqrt{M^\e}}\right\|_{L^2_{\mathbf D}}
 \lesssim
 \left\|\frac{h}{\sqrt{M^\e}}\right\|_{L^2_{\mathbf D}},
 \qquad
 \left\|\frac{\Pi_M^\perp h}{\sqrt{M^\e}}\right\|_{L^2_{\mathbf D}}
 \lesssim
 \left\|\frac{h}{\sqrt{M^\e}}\right\|_{L^2_{\mathbf D}}.
\]
Therefore \eqref{eq:top-def} and these two estimates give
\begin{equation*}
 \left(\sum_{|\alpha|=N+1}
 \left\|\frac{\partial^\alpha G^\e}{\sqrt{M^\e}}
 \right\|_{L_x^2L_{\mathbf D}^2}^2\right)^{1/2}
 \lesssim
 \e^2\left((\mathcal D_{top}^N)^{1/2}+\mathcal E_M^N\right).
\end{equation*}
Applying \eqref{eq:xi-poly} to the fixed polynomials $A_{ij}$ and $B_i$ in
\eqref{eq:ij-mic}, and combining \eqref{eq:mic-xi} with the last bound, yields
\begin{equation*}
 \|\mathbf r^\e\|_{H_x^{N+1}}
 +\left\|\int G^\e B\,d\xi\right\|_{H_x^{N+1}}
 \lesssim
 \e^2\left((\mathcal D^N)^{1/2}+\mathcal E_M^N\right).
\end{equation*}

It remains only to control the product $\mathrm U^\e\mathbf r^\e$ in
the second identity of \eqref{eq:ij-mic}.  The top macroscopic estimate
\eqref{eq:topmacro-bd} gives
\[
 \|\mathrm U^\e\|_{H^{N+1}}
 \lesssim 1+\e\kappa^{-1/2}(\mathcal E^N)^{1/2}.
\]
The Moser product inequality, \eqref{eq:hn-en}, \eqref{eq:hn-kap}, and
\eqref{eq:topmacro-bd} imply
\begin{align*}
 \|\mathrm U^\e\mathbf r^\e\|_{H^{N+1}}
 &\lesssim
 \|\mathrm U^\e\|_\infty\|\mathbf r^\e\|_{H^{N+1}}
 +\|\mathrm U^\e\|_{H^{N+1}}\|\mathbf r^\e\|_\infty \lesssim
 \e^2\left((\mathcal D^N)^{1/2}+\mathcal E_M^N\right)
 +\e^3(\mathcal E^N)^{1/2}(\mathcal D_G^N)^{1/2}.
\end{align*}
Since $\e(\mathcal E^N)^{1/2}=o(1)$ by \eqref{eq:tot-bd}, the last term is
absorbed by $C\e^2(\mathcal D^N)^{1/2}$.  Combining this estimate with \eqref{eq:ij-mic} and the moment estimate just derived proves \eqref{eq:htop-bd}.
\end{proof}

\subsection{Collision hierarchy and local energy closure}\label{sec:macenergy}
\subsubsection{Fixed-global collision estimates}
The global-Maxwellian macroscopic components used in the high-order estimates
are controlled by the local macroscopic energy. For Landau,
\begin{equation}\label{eq:alp-pr}
 \sum_{|\alpha|\le N}
 \|\Pi\partial^\alpha f^\e\|_{L^2_{x,\xi}}^2
 +\kappa\|\Pi\partial^{N+1}f^\e\|_{L^2_{x,\xi}}^2
 \lesssim \mathcal E^N.
\end{equation}
For non-cutoff Boltzmann, the shifted Maxwellian affects the zero spatial
order, which is why $g^\e$ is used there. At positive spatial order,
\begin{equation}\label{eq:alp-pr-bd}
 \sum_{1\le|\alpha|\le N}
 \|\Pi_0\partial^\alpha f^\e\|_{L^2_{x,\xi}}^2
 +\kappa\|\Pi_0\partial^{N+1}f^\e\|_{L^2_{x,\xi}}^2
 \lesssim \mathcal E^N.
\end{equation}
The zero spatial order in the Boltzmann expansion is instead microscopic by
\eqref{eq:pr-id}.  Indeed, the microscopic part $F^\e-M^\e$ has zero
collision-invariant moments. Thus the global macroscopic projections in
\eqref{eq:alp-pr}--\eqref{eq:alp-pr-bd} come entirely from derivatives of
$M^\e-\mu$ or $M^\e-\mu_0$, respectively. The lower spatial orders follow from Lemma~\ref{lem:macrocoef} and the
macroscopic energy in \eqref{eq:diss-def}; the $N+1$ spatial order follows from
the top macroscopic control \eqref{eq:topmacro-bd}.
The lower-order microscopic identity, the macroscopic energy pairing,
and their stress--heat-flux cancellation are derived below directly from
the exact balance laws.  Thus the local energy estimate will not require
an external microscopic--macroscopic energy identity.  The
collision-specific coercive and trilinear estimates remain the only
model-dependent analytic input in this part of the proof.
\begin{lemma}[Collision estimates]\label{lem:collest}
Let $N\ge4$ and assume \eqref{eq:tot-bd}. Set
\[
 (\mathcal L_*,\Pi_*,w_*,\mathbf D)
 =
 \begin{cases}
 (\mathcal L,\Pi,w,\Lb),&\text{Landau},\\
 (\mathcal L_0,\Pi_0,w_0,\Bb),&\text{non-cutoff Boltzmann}.
 \end{cases}
\]
For $0\le|\alpha|\le N+1$,
\begin{equation}\label{eq:lin-bd}
 \langle\partial^\alpha\mathcal L_*h,\partial^\alpha h\rangle_{x,\xi}
 \gtrsim
 \|(I-\Pi_*)\partial^\alpha h\|_{L^2_{x,\mathbf D}}^2.
\end{equation}
Moreover,
\begin{equation}\label{eq:xi-pr}
 \|\langle\xi\rangle^{-1}(I-\Pi_*)h\|_{L^2_\xi}
 \lesssim \|(I-\Pi_*)h\|_{L^2_{\mathbf D}}.
\end{equation}
The corresponding local-Maxwellian quadratic form is uniformly coercive.
More precisely, if $h=h(x,\xi)$ has zero local mass, momentum, and energy
moments for a.e. $x$, then
\begin{equation}\label{eq:mh-bd}
 \iint \mathcal L_Mh\,\frac{h}{M^\e}\,\dd\xi\dd x
 \ge c_M
 \left\|\frac{h}{\sqrt{M^\e}}\right\|_{L^2_{x,\mathbf D}}^2,
 \qquad c_M>0,
\end{equation}
where $c_M$ is independent of $\e$ and $0\le t\le T_*$.  For arbitrary
$h$ the same estimate holds with $h$ replaced by $\Pi_M^\perp h$ in both
slots; in particular,
\begin{equation}\label{eq:mq-bd}
 \iint \mathcal L_Mh\,
 \frac{\Pi_M^\perp h}{M^\e}\,\dd\xi\dd x
 \ge c_M
 \left\|\frac{\Pi_M^\perp h}{\sqrt{M^\e}}
 \right\|_{L^2_{x,\mathbf D}}^2.
\end{equation}
Denote $B_R:=\{\xi \in \R^3 | |\xi|<R\}.$ For $0\le|\alpha|\le N$, there is a fixed $R>1$ such that
\begin{align}
 &\left\langle
 \mathcal L_*(I-\Pi_*)\partial^\alpha h,
 w_*^2(I-\Pi_*)\partial^\alpha h
 \right\rangle_{x,\xi}
  \gtrsim
 \|w_*(I-\Pi_*)\partial^\alpha h\|_{L^2_{x,\mathbf D}}^2
 -C_R\|(I-\Pi_*)\partial^\alpha h\|_{L^2(\R_x^3\times B_R)}^2.
 \label{eq:top-bd}
\end{align}
At top spatial order,
\begin{align}
 \left\langle
 \mathcal L_*\partial^{N+1}h,
 w_*^2\partial^{N+1}h
 \right\rangle_{x,\xi}
 &\gtrsim
 \|w_*\partial^{N+1}h\|_{L^2_{x,\mathbf D}}^2
 -C_R\|\partial^{N+1}h\|_{L^2(\R_x^3\times B_R)}^2.
 \label{eq:vel-bd}
\end{align}
For every integer $0\le m\le\lfloor N/2\rfloor$, the one-velocity-derivative
coercivity needed below holds after summing the finitely many components:
\begin{align}
 &\sum_{|\alpha|\le m}\sum_{|\beta|=1}
 \left\langle
 \partial_\beta^\alpha\mathcal L_*(I-\Pi_*)h,
 w_*\partial_\beta^\alpha(I-\Pi_*)h
 \right\rangle_{x,\xi}
 \notag\\
 &\qquad\gtrsim
 \sum_{|\alpha|\le m}\sum_{|\beta|=1}
 \|w_*^{1/2}\partial_\beta^\alpha(I-\Pi_*)h\|_{L^2_{x,\mathbf D}}^2
 -C\sum_{|\alpha|\le m}
 \|w_*^{1/2}\partial^\alpha(I-\Pi_*)h\|_{L^2_{x,\mathbf D}}^2.
 \label{eq:alp-bet}
\end{align}
For the local-Maxwellian collision operator,
\begin{align}\label{eq:op-bd}
 \left|\int_{\mathbb R^3}\mathcal N(F,G)H(M^\e)^{-1}\,\dd\xi\right|
 \lesssim
 &\left(
 \|F(M^\e)^{-1/2}\|_{L^2_\xi}
 \|G(M^\e)^{-1/2}\|_{L^2_{\mathbf D}}\right.\notag\\
 &\left.\quad+
 \|F(M^\e)^{-1/2}\|_{L^2_{\mathbf D}}
 \|G(M^\e)^{-1/2}\|_{L^2_\xi}
 \right)
 \|H(M^\e)^{-1/2}\|_{L^2_{\mathbf D}}.
\end{align}
For the global-Maxwellian Landau operator, we record separately the
unweighted and weighted estimates actually used below.  For each fixed
$x$ and every spatial multi-index $\alpha$,
\begin{align}\label{eq:landau-tri}
 &|\langle \partial^\alpha\Gamma(f,g),\partial^\alpha h\rangle_\xi| \lesssim
 \|\partial^\alpha h\|_{L^2_{\Lb}}
 \sum_{\alpha_1+\alpha_2=\alpha}
 \Big(
 \|\mu^{1/32}\partial^{\alpha_1}f\|_{L^2_\xi}
 \|\partial^{\alpha_2}g\|_{L^2_{\Lb}}
 +
 \|\partial^{\alpha_1}f\|_{L^2_{\Lb}}
 \|\partial^{\alpha_2}g\|_{L^2_{-1}}
 \Big).
\end{align}
Set
\[
 \vartheta_L(\xi):=e^{-q\langle\xi/a_L\rangle^2}w(\xi)
 =\langle\xi/a_L\rangle^\ell.
\]
For spatial derivatives, again pointwise in $x$,
\begin{align}\label{eq:landau-tri-wt}
 &|\langle \partial^\alpha\Gamma(f,g),
          w^2\partial^\alpha h\rangle_\xi| \lesssim
 \|w\partial^\alpha h\|_{L^2_{\Lb}}
 \sum_{\alpha_1+\alpha_2=\alpha}
 \Big(
 \|\vartheta_L\partial^{\alpha_1}f\|_{L^2_\xi}
 \|w\partial^{\alpha_2}g\|_{L^2_{\Lb}}
 +
 \|\vartheta_L\partial^{\alpha_1}f\|_{L^2_{\Lb}}
 \|w\partial^{\alpha_2}g\|_{L^2_{-1}}
 \Big).
\end{align}
For $|\beta|=1$, specialize
\cite[Lemma~2.5(ii)]{lei2024global} to the constant Maxwellian with
$(\ell_s,Q)=(1+\ell/2,2q)$.  In the normalized variable
$y=\xi/a_L$, set
\[
 A_0:=\langle y\rangle^{1+\ell/2},\qquad
 A_1:=\langle y\rangle^{\ell/2},\qquad
 W_0:=w,\qquad W_1:=w^{1/2}.
\]
The localized first-slot weights satisfy
\begin{equation}\label{eq:landau-slot-wt}
 \|A_j h\|_{L^2_\xi}+\|A_j h\|_{L^2_{\Lb}}
 \lesssim \|W_j h\|_{L^2_{\Lb}},\qquad j=0,1.
\end{equation}
The first source slot in that lemma carries $A_{|\beta_1|}$, whereas the
second and testing slots are bounded by $W_{|\beta_2|}$ and $W_1$,
respectively; moreover
$\alpha_1+\alpha_2=\alpha$ and $\beta_1+\beta_2\le\beta$.
The additional source sum there is proportional to spatial derivatives of
the Maxwellian parameters and therefore vanishes for the constant
Maxwellian used here.
For the constant Maxwellian the same bounded-velocity/Gaussian split used
in \eqref{eq:landau-tri-wt} gives the $L^2_{-1}$ refinement in the second
result.  Thus the form used below is
\begin{align}
 &|\langle\partial_\beta^\alpha\Gamma(f,g),
          w\partial_\beta^\alpha h\rangle_\xi|\notag\\
 &\quad\lesssim
 \|W_1\partial_\beta^\alpha h\|_{L^2_{\Lb}}
 \sum_{\substack{\alpha_1+\alpha_2=\alpha\\
                  \beta_1+\beta_2\le\beta}}
 \Big(
 \|A_{|\beta_1|}\partial_{\beta_1}^{\alpha_1}f\|_{L^2_\xi}
 \|W_{|\beta_2|}\partial_{\beta_2}^{\alpha_2}g\|_{L^2_{\Lb}}
 +
 \|A_{|\beta_1|}\partial_{\beta_1}^{\alpha_1}f\|_{L^2_{\Lb}}
 \|W_{|\beta_2|}\partial_{\beta_2}^{\alpha_2}g\|_{L^2_{-1}}
 \Big). \label{eq:landau-space-velocity}
\end{align}

For the non-cutoff Boltzmann operator, the unweighted estimate is
\begin{align}\label{eq:boltz-tri}
 |\langle\Gamma_0(f,g),h\rangle_\xi|
 \lesssim{}&\Big\{
 \|f\|_{L^2_{2s+\gamma}}\|g\|_{L^2_{\Bb}}
 +\|g\|_{L^2_{2s+\gamma}}\|f\|_{L^2_{\Bb}} +
 \min\big(
 \|f\|_{L^2_\xi}\|g\|_{L^2_{2s+\gamma}},
 \|g\|_{L^2_\xi}\|f\|_{L^2_{2s+\gamma}}
 \big)\Big\}\|h\|_{L^2_{\Bb}}.
\end{align}
Spatial derivatives are obtained from \eqref{eq:boltz-tri} by the
Leibniz rule.  The weighted mixed-derivative estimate, pointwise in $x$, is
\begin{align}
 &|\langle \partial^\alpha_\beta\Gamma_0(f,g),
          w_0^2\partial_\beta^\alpha h\rangle_\xi|\notag\\
 &\lesssim \|w_0\partial_\beta^\alpha h\|_{L^2_{\Bb}}
 \Big\{\sum_{\substack{\alpha_1+\alpha_2=\alpha\\
                        \beta_1+\beta_2\le\beta}}
 \Big[
 \|w_0\partial^{\alpha_1}_{\beta_1}f\|_{L^2_{2s+\gamma}}
 \|\partial^{\alpha_2}_{\beta_2}g\|_{L^2_{\Bb}}
 +\|w_0\partial^{\alpha_1}_{\beta_1}f\|_{L^2_{\Bb}}
 \|\partial^{\alpha_2}_{\beta_2}g\|_{L^2_{2s+\gamma}}\notag\\
 &\hspace{4.4cm}+
 \min\Big\{
 \|w_0\partial^{\alpha_1}_{\beta_1}f\|_{L^2_\xi}
 \|\partial^{\alpha_2}_{\beta_2}g\|_{L^2_{2s+\gamma}},
 \|w_0\partial^{\alpha_1}_{\beta_1}f\|_{L^2_{2s+\gamma}}
 \|\partial^{\alpha_2}_{\beta_2}g\|_{L^2_\xi}
 \Big\}\Big]\notag\\
 &\hspace{1.0cm}+
 \sum_{\substack{\alpha_1+\alpha_2=\alpha\\
                  \beta_1+\beta_2\le\beta}}
 \|\langle\xi\rangle^{-\ell}w_0
      \partial^{\alpha_2}_{\beta_2}g\|_{L^2_\xi}
 \|w_0\partial^{\alpha_1}_{\beta_1}f\|_{L^2_{2s+\gamma}}
 \Big\}.\label{eq:boltz-tri-wt}
\end{align}
For $|\beta|=1$ we use the same source estimate with the weight parameters
$(\ell,q)$ replaced by $(\ell/2,q/2)$; hence the test weight is $w_0$ and
the testing/coercive weight is $w_0^{1/2}$.  The extra weighted $L^2$
term in \eqref{eq:boltz-tri-wt} is also coercive.  Indeed,
$\ell\ge-(\gamma+2s)$ and \eqref{eq:xi-bd} imply
\begin{align}\label{eq:boltz-wt-coer}
 \|\langle\xi\rangle^{-\ell}w_0h\|_{L^2_\xi}
 &\lesssim \|w_0h\|_{L^2_{2s+\gamma}}
 \lesssim \|w_0h\|_{L^2_{\Bb}},\notag\\
 \|\langle\xi\rangle^{-\ell/2}w_0^{1/2}h\|_{L^2_\xi}
 &\lesssim \|w_0^{1/2}h\|_{L^2_{2s+\gamma}}
 \lesssim \|w_0^{1/2}h\|_{L^2_{\Bb}}.
\end{align}
The fixed weight hierarchy is also bounded in the non-cutoff norm:
\begin{equation}\label{eq:boltz-wt-hier}
 \|h\|_{L^2_{\Bb}}
 \lesssim \|w_0^{1/2}h\|_{L^2_{\Bb}}
 \lesssim \|w_0h\|_{L^2_{\Bb}}.
\end{equation}

\end{lemma}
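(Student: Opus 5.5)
The lemma is a collection of collision estimates, and I would prove it piece by piece. Most pieces follow from standard results for the two linearized operators around a fixed Maxwellian. Only the local-Maxwellian items \eqref{eq:mh-bd}, \eqref{eq:mq-bd} and \eqref{eq:op-bd} need a uniformity argument in the parameters $(\mathrm P^\e,\mathrm U^\e,\Theta^\e)$.

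\emph{Global-Maxwellian items.} For \eqref{eq:lin-bd}, spatial derivatives commute with $\mathcal L_*$ and $\Pi_*$, because the reference Maxwellians $\mu,\mu_0$ are $x$-independent. The estimate is therefore the pointwise-in-$x$ spectral-gap coercivity recorded above \eqref{eq:xi-bd}, applied to $\partial^\alpha h$ and integrated in $x$. For $\mu_0$ the constant must be uniform in $\Theta_0^\e\to1$. I would get this by the scaling $\xi\mapsto\xi/a_\e$, which conjugates $\mathcal L_0$ to a $\Theta_0^\e$-dependent multiple of the $k_B$-normalized operator. The bound \eqref{eq:xi-pr} is immediate from the lower bounds in \eqref{eq:xi-bd}: in the Landau case the norm controls $\|\langle\xi\rangle^{-1/2}h\|_2$, and in the Boltzmann case it controls $\|\langle\xi\rangle^{(2s+\gamma)/2}h\|_2$ with $2s+\gamma>-3/2>-2$.

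The weighted estimates \eqref{eq:top-bd}, \eqref{eq:vel-bd} and \eqref{eq:alp-bet} are the standard weighted coercivity with compact remainder. These are due to Guo, and to Strain--Guo for Coulomb Landau; for non-cutoff Boltzmann they are due to Gressman--Strain and AMUXY. In each case one writes $\mathcal L_*=A+K$ with a compact part $K$ whose kernel decays in $\xi$, and absorbs $K$ outside $B_R$. For \eqref{eq:alp-bet}, the commutator $[\partial_\beta,\mathcal L_*]$ produces lower-order terms with one fewer velocity derivative. These are bounded by the last term on the right after summing over $|\beta|=1$.

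The trilinear bounds \eqref{eq:landau-tri}, \eqref{eq:landau-tri-wt}, \eqref{eq:landau-space-velocity}, \eqref{eq:boltz-tri} and \eqref{eq:boltz-tri-wt} are quoted from the literature (Strain--Guo, \cite{lei2024global}, and the non-cutoff estimates), with the Leibniz rule in $x$; the paper already indicates the specialization of \cite{lei2024global}. The weight inequalities \eqref{eq:landau-slot-wt}, \eqref{eq:boltz-wt-coer} and \eqref{eq:boltz-wt-hier} are elementary. They follow from the lower bounds in \eqref{eq:xi-bd}, from $\ell\ge-(\gamma+2s)$, and from $w_0\ge1$ (resp.\ $w\ge1$), which makes $w_0^{1/2}\le w_0$ pointwise and lets the weight be commuted past the $\Bb$-norm up to lower-order terms.

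\emph{Local-Maxwellian items.} Here I would use Galilean and scaling invariance. Set $\xi=\mathrm U^\e+(k_B\Theta^\e)^{1/2}v$ and normalize by $\mathrm P^\e$. Then $M^\e$ becomes the standard Gaussian, and $\mathcal L_M$ becomes $\mathrm P^\e(\Theta^\e)^{\gamma/2}$ (Boltzmann) or $\mathrm P^\e(\Theta^\e)^{-1/2}$ (Landau) times the standard linearized operator in $v$. With this change, \eqref{eq:mh-bd} is exactly the standard coercivity on the orthogonal complement of the null space, in the Hilbert space $L^2(M^{-1}d\xi)$. The vanishing of the five moments of $h$ is precisely orthogonality to $\mathcal N_M$ in this pairing.

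The delicate point, and the step I expect to be the main obstacle, is that the dissipation norm $\|\cdot\|_{L^2_{\mathbf D}}$ is defined with respect to $\mu$ (resp.\ $\mu_0$), not with respect to the rescaled Gaussian. I must show the two norms are equivalent uniformly in $\e$ and $t$. The parameters lie in a compact set by \eqref{eq:hn-en} and \eqref{eq:assm-bd}, with $|\mathrm U^\e|\lesssim\e^{1-c_0}$ and $\Theta^\e\to1$. The equivalence then follows from \eqref{eq:xi-bd}: $\langle \mathrm U+\sqrt{\Theta}v\rangle\simeq\langle v\rangle$, and a fractional or first-order $H^s$ seminorm changes by a bounded factor under affine maps with bounded dilation.

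The estimate \eqref{eq:mq-bd} follows from \eqref{eq:mh-bd} and self-adjointness of $\mathcal L_M$ in $L^2(M^{-1})$. Since $\mathcal L_M\Pi_Mh=0$ and $\mathcal L_M$ maps into $\operatorname{Ran}\Pi_M^\perp$, we have $\langle\mathcal L_Mh,\Pi_M^\perp h\rangle=\langle\mathcal L_M\Pi_M^\perp h,\Pi_M^\perp h\rangle$. Finally, \eqref{eq:op-bd} is the same change of variables applied to \eqref{eq:landau-tri} or \eqref{eq:boltz-tri} with $\mu$ replaced by the normalized $M^\e$. Writing $F=\sqrt{M^\e}f$ and using bilinearity of $\mathcal N$, the $\Gamma$-type form around the standard Gaussian appears. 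The $L^2_{2s+\gamma}$ and $\mu^{1/32}$-weighted norms are dominated by $L^2_\xi$ or $L^2_{\mathbf D}$, and the same uniform norm equivalence transfers the result back to the original variables.
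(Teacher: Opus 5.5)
Your architecture matches the paper's. You quote the fixed-global coercive and trilinear bounds and transfer them to $\mu_0$ by the dilation $\xi\mapsto\xi/a_\e$. For the local Maxwellian you substitute $\xi=\mathrm U^\e+(k_B\Theta^\e)^{1/2}y$, so that $h/\sqrt{M^\e}$ becomes a function $g(y)$ orthogonal to the standard null space. Then \eqref{eq:mh-bd} reduces to normalized coercivity, \eqref{eq:mq-bd} to symmetry plus $\mathcal L_M\Pi_M=0$, and \eqref{eq:op-bd} to the normalized trilinear bound.

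\textbf{The gap.} It lies exactly in the step you flagged: the uniform comparison between the physical norm $\|h/\sqrt{M^\e}\|_{L^2_{\mathbf D}}$ (built on $\mu$, resp.\ $\mu_0$) and the normalized norm $\|g\|_{D_{\rm st}}$. This does not follow from \eqref{eq:xi-bd}, because \eqref{eq:xi-bd} is not a two-sided equivalence.
\begin{itemize}
\item For Landau, the lower bound is $H^1_{-3}+L^2_{-1}$ while the upper bound is $H^1_{-1}$.
\item For Boltzmann, the lower bound is $H^s_\gamma+L^2_{2s+\gamma}$ while the upper bound is $H^s_{2s+\gamma}$.
\end{itemize}
Affine invariance of these isotropic norms only gives $\|h/\sqrt{M^\e}\|_{L^2_{\Lb}}\lesssim\|g\|_{H^1_{-1}}$ against $\|g\|_{D_{\rm st}}\gtrsim\|g\|_{H^1_{-3}}+\|g\|_{L^2_{-1}}$. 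The Boltzmann case has the analogous $H^s_{2s+\gamma}$ versus $H^s_\gamma$ mismatch. These bounds do not combine, so neither \eqref{eq:mh-bd} nor \eqref{eq:op-bd} closes.

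You also cannot compare the defining integrands directly. After the change of variables, the Gaussian inside the $\mathbf D$-norm is $\mu_0(\mathrm U^\e+ay)$ (resp.\ $\mu(\mathrm U^\e+ay)$). Its center and width differ from those of $\mu_{\rm st}(y)$ (for instance $\Theta^\e>\Theta_0^\e$), so the ratio is not bounded above and below as $|y|\to\infty$.

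\textbf{The missing ingredient.} You need the anisotropic structure of the dissipation norms. It is stable under affine maps $\xi=V+by$ with $b+b^{-1}+|V|\lesssim1$, and this is how the paper closes the step.
\begin{itemize}
\item For Landau, use $\eta\cdot\sigma(\xi)\eta\asymp\langle\xi\rangle^{-1}|P_{\xi^\perp}\eta|^2+\langle\xi\rangle^{-3}|P_\xi\eta|^2$, with $P_\xi$ the projection onto $\mathbb R\xi$. Under the shift, $\langle\xi\rangle\asymp\langle y\rangle$ and the radial direction rotates only by $O(|V|\langle y\rangle^{-1})$. That rotation costs an admissible $\langle y\rangle^{-3}|\eta|^2$ error, and on bounded sets $\sigma$ is uniformly elliptic.
\item For Boltzmann, pass through the Gressman--Strain geometric norm $N^{s,\gamma}$, to which both the physical and the normalized Dirichlet norms are equivalent. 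Its lifted variable $(\xi-\xi',(|\xi|^2-|\xi'|^2)/2)$ transforms under $\xi=V+by$ by a bounded invertible linear map. Hence the anisotropic distance and the weights are comparable.
\end{itemize}

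\textbf{A smaller point of the same kind.} Your argument for \eqref{eq:boltz-wt-hier} needs more. $\|\cdot\|_{L^2_\Bb}$ is not monotone in $|h|$, so the pointwise bound $w_0^{1/2}\le w_0$ is not enough. What works is that $m=w_0^{-1/2}$ satisfies $|m|+|\nabla_\xi m|\lesssim e^{-c\langle\xi\rangle}$. This decay absorbs the $\langle\xi\rangle^{2s}$ weight gap between the two sides of \eqref{eq:xi-bd}, and it bounds the commutator piece $[m(\xi)-m(\xi-\eta)]H(\xi-\eta)$ by $\|H\|_{L^2_{2s+\gamma}}$. The same decay of the ratio $A_j/W_j$ is what gives \eqref{eq:landau-slot-wt}.
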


\subsubsection{Large-velocity closure}

For every fixed polynomial exponent $m\ge0$ and $|\alpha|\le N+1$, set
\begin{align}
 \mathcal V_m^\alpha(t)
 &:=\frac{\kappa^{(|\alpha|-N)_+}}{\e^2}
   \iint_{\mathbb R^3\times\mathbb R^3}
   \langle\xi\rangle^m
   |\partial^\alpha\Pi_M^\perp F^\e|^2(M^\e)^{-1}\,\dd\xi dx. \label{eq:largev-def} 
\end{align}
The constants below may depend on the fixed exponent $m$, but are uniform
in $\e,t$.
\begin{lemma}[Maxwellian spatial chain rule]\label{lem:maxchain}
Let $Z^\e = (\rho^\e,u^\e,\theta^\e)$. For every $1\le|\alpha|\le N+1$,
\begin{equation}\label{eq:par-id}
 \partial^\alpha M^\e
 =\e M^\e
 \sum_{n=1}^{|\alpha|}\e^{n-1}
 \sum_{\substack{\alpha_1+\cdots+\alpha_n=\alpha\\|\alpha_j|\ge1}}
 \mathcal P_{\alpha_1,\ldots,\alpha_n}(\xi-\mathrm{U}^\e)
 \prod_{j=1}^n\partial^{\alpha_j}Z^\e,
\end{equation}
where the coefficients of the fixed polynomials $\mathcal P$ are uniformly
bounded under Lemma~\ref{lem:macrocoef}.
\end{lemma}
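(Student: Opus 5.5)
The plan is to derive the formula by induction on $|\alpha|$, starting from the explicit parametrization of the local Maxwellian. With \eqref{eq:param-id} we write
\[
 M^\e=\frac{e^{\e\rho^\e}}{(2\pi k_B e^{\e\theta^\e})^{3/2}}
 \exp\Big(-\frac{|\xi-\e u^\e|^2}{2k_B}e^{-\e\theta^\e}\Big).
\]
Taking logarithms, $\log M^\e=\e\rho^\e-\tfrac32\e\theta^\e-\tfrac12 e^{-\e\theta^\e}|\xi-\e u^\e|^2/k_B+\text{const}$. For one derivative $\partial_j$ this gives
\[
 \partial_jM^\e=\e M^\e\Big[\partial_j\rho^\e+\frac{c^\e\cdot\partial_ju^\e}{k_B\Theta^\e}+\Big(\frac{|c^\e|^2}{2k_B\Theta^\e}-\frac32\Big)\partial_j\theta^\e\Big],
 \qquad c^\e=\xi-\mathrm U^\e .
\]
This is \eqref{eq:par-id} for $|\alpha|=1$, with $n=1$ and a polynomial $\mathcal P_{e_j}(c^\e)$ of degree at most $2$. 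Its coefficients are built from $(\Theta^\e)^{-1}$, and these are bounded by \eqref{eq:hn-en}.

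For the inductive step we assume \eqref{eq:par-id} holds for $\alpha$ and apply one more derivative $\partial_j$. Three kinds of terms appear, each treated as follows.
\begin{enumerate}
\item When $\partial_j$ falls on $M^\e$, the base case gives an extra factor $\e\,\mathcal P_{e_j}(c^\e)\,\partial_jZ^\e$. This raises $n$ by one and supplies the extra power $\e^{n}$, with the new block $\alpha_{n+1}=e_j$.
\item When $\partial_j$ falls on some $\partial^{\alpha_k}Z^\e$, we simply replace $\alpha_k$ by $\alpha_k+e_j$; both $n$ and the power of $\e$ are unchanged.
\item When $\partial_j$ falls on the polynomial $\mathcal P(\xi-\mathrm U^\e)$, we must also account for the dependence of its coefficients on $\Theta^\e=e^{\e\theta^\e}$. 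The derivative produces $\partial_j\mathrm U^\e=\e\,\partial_ju^\e$ (from the $c^\e$-dependence) and $\partial_j(\Theta^\e)^{-k}=-k\e(\Theta^\e)^{-k}\partial_j\theta^\e$ (from the coefficients). Each is $\e$ times a new polynomial in $c^\e$, with bounded coefficients, multiplying $\partial_jZ^\e$. So this term again raises $n$ by one and gains one power of $\e$.
\end{enumerate}

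To make case (3) cleanly inductive, the hypothesis should be strengthened. We assert that each $\mathcal P_{\alpha_1,\dots,\alpha_n}(c)$ is a polynomial in $c$ of degree at most $2|\alpha|$ whose coefficients are polynomials in $(\Theta^\e)^{-1}$ with universal numerical coefficients. This class is closed under the operations in the three cases, and Lemma~\ref{lem:macrocoef} then gives uniform bounds on $\|(\Theta^\e)^{-1}\|_\infty$, hence on all coefficients.

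Collecting terms, every term in $\partial^{\alpha+e_j}M^\e$ has the form $\e^{n}M^\e\,\mathcal P\prod\partial^{\alpha_i}Z^\e$ with $|\alpha_i|\ge1$ and $\sum\alpha_i=\alpha+e_j$. The number of blocks $n$ is at most $|\alpha|+1$, which closes the induction.

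The main obstacle is only bookkeeping. One must verify that every factor of $\mathrm U^\e$ or $\Theta^\e$ produced by differentiation is accompanied by a factor of $\e$, which holds because $\mathrm U^\e=\e u^\e$ and $\Theta^\e=e^{\e\theta^\e}$. One must also verify that no undifferentiated $Z^\e$ factors appear outside the coefficient class. Both are ensured by the strengthened hypothesis.
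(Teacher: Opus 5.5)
Your induction is correct. The paper writes out no separate proof of this lemma, but wherever it uses the same expansion (the proofs of Lemma~\ref{lem:fluctxfer} and Proposition~\ref{prop:diagreal}) it argues the same way. It applies the chain rule/Fa\`a di Bruno formula to $M^\e=\mathcal M_{\rm par}(\e\rho^\e,\e u^\e,\e\theta^\e)$ and groups terms by the number $n$ of parameter derivatives: each parameter derivative is $M^\e$ times a polynomial in $\xi-\mathrm U^\e$ with coefficients polynomial in $(\Theta^\e)^{-1}$, and each inner derivative carries one factor $\e$. Your three-case induction, with the strengthened hypothesis on the coefficients, is that same bookkeeping done by hand.
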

\begin{lemma}[Large-velocity closure]
For either collision operator, every fixed $m\ge0$, and
$|\alpha|\le N+1$,
\begin{align}
 \mathcal V_m^\alpha(t)
 &\lesssim
 \e^2\mathcal E^N
 +\e^{3/2}\kappa
 \bigl(\mathcal D^N+\mathbf1_{|\alpha|\ge2}(\mathcal E_M^N)^2\bigr)
 +\e^{10}\mathcal E_H^N,
 \label{eq:large-bd}\\
 \mathcal V_m^\alpha(t)
 &\lesssim \e^{-1/2}\mathcal E^N+\e^{10}\mathcal E_H^N.
 \label{eq:wt-bd}
\end{align}
\end{lemma}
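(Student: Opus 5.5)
The plan is to reduce $\mathcal V_m^\alpha$ to quantities already in the energy and dissipation functionals. The reduction has two parts: split the velocity domain at a radius $|\xi|\sim\lambda$, and treat the large-velocity tail through the global weighted norms of $\mathcal E_H^N$.

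\textbf{Bounded velocities.} On the region $|\xi|\le\lambda$ we have $\langle\xi\rangle^m\le\lambda^m$. Hence this part of $\mathcal V_m^\alpha$ is at most $\lambda^m$ times the unweighted quantity $\e^{-2}\kappa^{(|\alpha|-N)_+}\|\partial^\alpha\Pi_M^\perp F^\e(M^\e)^{-1/2}\|^2$.
\begin{itemize}
\item For $|\alpha|\le N$ this quantity is exactly a component of $\mathcal E_G^N$.
\item For $|\alpha|=N+1$ we write $\partial^\alpha\Pi_M^\perp F^\e=\Pi_M^\perp\partial^\alpha F^\e-[\Pi_M^\perp,\partial^\alpha]F^\e$. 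The first term is bounded by $\mathcal H_{\rm top}$, since $\Pi_M^\perp$ is contractive in $L^2(M^{-1})$. The commutator involves derivatives of the Maxwellian parameters, so Lemma~\ref{lem:maxchain} and the remainder bound from the proof of Lemma~\ref{lem:topmacro} control it by $\e^2\mathcal E_M^N$-type terms.
\end{itemize}
This gives the crude bound \eqref{eq:wt-bd} once $\lambda=\e^{-1/(2m)}$ is chosen, which produces the loss $\e^{-1/2}$.

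For the sharper bound \eqref{eq:large-bd} we also use the dissipation. The norms $L^2_{\mathbf D}$ in \eqref{eq:xi-bd} control $\langle\xi\rangle^{-1}$- or $\langle\xi\rangle^{2s+\gamma}$-weighted $L^2$. So on $|\xi|\le\lambda$ the defect is bounded by $\lambda^{m+C}\kappa\e^{2}\mathcal D^N$. We interpolate this against the energy bound. For $|\alpha|\ge2$, the Maxwellian chain-rule remainder contributes the term $\e^{3/2}\kappa(\mathcal E_M^N)^2$.

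\textbf{Large velocities.} On the region $|\xi|>\lambda$ we change representation. For Landau we write $\Pi_M^\perp F^\e=\e\sqrt\mu f^\e-(M^\e-\mu)$; for Boltzmann we use $\e\sqrt{\mu_0}\,g^\e$ together with \eqref{eq:maxw-def} and Lemma~\ref{lem:fluctxfer}. This expresses the weight as $\sqrt\mu(M^\e)^{-1/2}$ times a fluctuation.
\begin{itemize}
\item In the Landau case, the bounds of Lemma~\ref{lem:macrocoef} on $\Theta^\e$ give $\mu/M^\e\lesssim e^{C\e^{1-c_0}|\xi|^2}$. This is absorbed by $w^2=e^{2q\langle\xi\rangle^2}$ because $q>0$ is fixed and $\e^{1-c_0}\to0$.
\item In the Boltzmann case, \eqref{eq:mu-maxw} and \eqref{eq:poly-bd} give $\langle\xi\rangle^m\mu_0(M^\e)^{-1}w_0^{-2}\lesssim e^{-c_q\langle\xi\rangle}$.
\end{itemize}
In both cases the tail is bounded by $e^{-c\lambda}\kappa^{-2c}\mathcal E_H^N$, up to the Maxwellian-difference parts. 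Those parts are explicit Gaussians, handled by \eqref{eq:vd-bd}--\eqref{eq:xi-gam}, and are dominated by $\e^2\mathcal E^N$.

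\textbf{Choice of $\lambda$ and the main obstacle.} We take $\lambda=C|\log\e|$, large enough that $e^{-c\lambda}\kappa^{-2c}\le\e^{10}$; by \eqref{eq:kap-choice}, $\kappa^{-2c}\lesssim\e^{-2}$, so this is possible. The polynomial factor $\lambda^{m+C}$ is then only logarithmic and is absorbed into the slack between $\e^2$ and $\e^{3/2}$. I expect the main obstacle to be the top order $|\alpha|=N+1$, because no $(M^\e)^{-1}$-weighted energy of $\partial^\alpha\Pi_M^\perp F^\e$ itself is available there. The commutator with $\Pi_M^\perp$ must be tracked through Lemma~\ref{lem:maxchain}, with the factor $\kappa$ in $\mathcal V^\alpha$ compensating the $\kappa^{-1/2}$ loss in Lemma~\ref{lem:topmacro}. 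This matching of powers is what forces the specific exponents $\e^{3/2}\kappa$ in \eqref{eq:large-bd}.
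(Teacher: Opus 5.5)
Your proposal is correct and is essentially the paper's proof. You split at a velocity radius and bound the inner region either by $\mathcal E^N$ or, using the weighted coercivity in \eqref{eq:xi-bd} together with the remainder $\Pi_M^\perp R_{\alpha,M}$ at order $N+1$, by a polynomial loss times $\e^2\kappa(\mathcal D^N+(\mathcal E_M^N)^2)$; for the tail you pass to $\e\sqrt\mu\,f^\e$ (respectively $\e\sqrt{\mu_0}\,g^\e$ and $\e\sqrt{\mu_0}\,\partial^\alpha f^\e$) minus explicit Maxwellian derivatives, which the exponential weights absorb.

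The paper instead cuts at $R_\e=\e^{-1/(2(m+2))}$, which produces the factors $\e^{-1/2}$ and $\e^{3/2}$ exactly, so the $\e^{3/2}$ reflects the radius choice rather than top-order $\kappa$-matching. Your radius $C|\log\e|$ gives the slightly better $\e^{2-}\kappa$, but it needs $C$ large enough to absorb an extra top-order tail loss $\e^{-2}$ (from the $\kappa^{2c+1}\e^2$ normalization in $\mathcal E_H^N$). Also, at order $N+1$ the Maxwellian tail term must be handled by Lemma~\ref{lem:maxchain} with Lemma~\ref{lem:topmacro}: \eqref{eq:vd-bd}--\eqref{eq:xi-gam} give at most $H^2_x$ control, and Lemma~\ref{lem:fluctxfer} only covers $|\alpha|\le N$.
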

Estimate~\eqref{eq:large-bd} is the $\e$-gain estimate used when the weighted
microscopic norm is multiplied by a singular kinetic prefactor; the rougher
bound~\eqref{eq:wt-bd} is used when only a direct weighted $L^2$ control is
needed.  The two estimates therefore serve different parts of the closure.
\begin{proof}
Set $R_\e=\e^{-1/(2(m+2))}$ and split at $|\xi|=R_\e$.
On $|\xi|\le R_\e$,
\[
 \langle\xi\rangle^m\lesssim\e^{-1/2},
 \qquad
 \langle\xi\rangle^m
 \lesssim\e^{-1/2}\langle\xi-\mathrm U^\e\rangle^{-2}.
\]
Define
\begin{align*}
 \mathcal V_{m,\mathrm{lo}}^\alpha
 &:=\frac{\kappa^{(|\alpha|-N)_+}}{\e^2}
   \iint_{|\xi|\le R_\e}\langle\xi\rangle^m
   |\partial^\alpha\Pi_M^\perp F^\e|^2(M^\e)^{-1}\,\dd\xi dx,\\
 \mathcal V_{m,\mathrm{hi}}^\alpha
 &:=\frac{\kappa^{(|\alpha|-N)_+}}{\e^2}
   \iint_{|\xi|>R_\e}\langle\xi\rangle^m
   |\partial^\alpha\Pi_M^\perp F^\e|^2(M^\e)^{-1}\,\dd\xi dx.
\end{align*}
Thus $\mathcal V_m^\alpha=\mathcal V_{m,\mathrm{lo}}^\alpha+
\mathcal V_{m,\mathrm{hi}}^\alpha$.  Using \eqref{eq:xi-bd}, \eqref{eq:diss-def}, and
\eqref{eq:top-def} gives
\begin{equation}\label{eq:lo-bd}
 \mathcal V_{m,\mathrm{lo}}^\alpha
 \lesssim
 \min\left\{
 \e^{-1/2}\mathcal E^N,
 \e^{3/2}\kappa
 \bigl(\mathcal D^N+\mathbf1_{|\alpha|\ge2}(\mathcal E_M^N)^2\bigr)
 \right\}.
\end{equation}
For Landau, use the chain rule \eqref{eq:par-id} and write
$F^\e=\mu+\e\sqrt\mu f^\e=M^\e+\Pi_M^\perp F^\e$.
For $|\alpha|\ge1$,
\[
 \partial^\alpha\Pi_M^\perp F^\e
 =\e\sqrt\mu\,\partial^\alpha f^\e-\partial^\alpha M^\e,
\]
while at zero order
$\Pi_M^\perp F^\e=\mu-M^\e+\e\sqrt\mu f^\e$.
Decompose
$\partial^\alpha f^\e=(I-\Pi)\partial^\alpha f^\e+
\Pi\partial^\alpha f^\e$.  The first part is controlled by the weighted
terms in \eqref{eq:land-id}.  Since $\Pi$ has finite-dimensional range,
there is a fixed polynomial $P_\alpha$ such that
\[
 |\Pi\partial^\alpha f^\e(\xi)|
 \le C P_\alpha(\xi)\sqrt\mu(\xi)
       \|\partial^\alpha f^\e\|_{L^2_\xi}.
\]
Consequently, for every fixed $m$ and $R\ge1$,
\begin{align*}
 \int_{|\xi|>R}\langle\xi\rangle^m
 \mu(M^\e)^{-1}|\partial^\alpha f^\e|^2\,d\xi
 &\lesssim e^{-cR^2}
 \left(\|w(I-\Pi)\partial^\alpha f^\e\|_{L^2_\xi}^2
       +\|\partial^\alpha f^\e\|_{L^2_\xi}^2\right).
\end{align*}
Using \eqref{eq:par-id} and the uniform Gaussian bounds of
Lemma~\ref{lem:macrocoef} gives
\[
 \frac{\kappa^{(|\alpha|-N)_+}}{\e^2}
 \iint_{|\xi|>R_\e}\langle\xi\rangle^m
 |\partial^\alpha M^\e|^2(M^\e)^{-1}\,d\xi dx
 \lesssim \e^2\mathcal E^N.
\]
At zero order the Taylor formula for $M^\e-\mu$ gives the same bound.
Using \eqref{eq:land-id} and allowing harmless fixed powers of
$\e^{-1}$ and $\kappa^{-1}$ therefore yields
\[
 \mathcal V_{m,\mathrm{hi}}^\alpha
 \lesssim \e^2\mathcal E^N
 +\frac{e^{-cR_\e^2}}{\e^2\kappa^{2c+2}}\mathcal E_H^N
 \lesssim \e^2\mathcal E^N+\e^{10}\mathcal E_H^N.
\]
For non-cutoff Boltzmann, Lemma~\ref{lem:maxbridge} gives directly
\begin{equation}\label{eq:cut-bd}
 \langle\xi\rangle^m\mu_0(M^\e)^{-1}w_0^{-2}
 \lesssim e^{-c_q\langle\xi\rangle}.
\end{equation}
At zero spatial order,
$\Pi_M^\perp F^\e=\e\sqrt{\mu_0}\,g^\e$, so \eqref{eq:cut-bd} gives
\[
 \mathcal V_{m,\mathrm{hi}}^0
 \lesssim
 \frac{e^{-cR_\e}}{\e^2\kappa^{2c+2}}\mathcal E_H^N.
\]
For $|\alpha|\ge1$ we use the exact decomposition
\begin{equation*}
 \partial^\alpha\Pi_M^\perp F^\e
 =\e\sqrt{\mu_0}\,\partial^\alpha f^\e
 -\partial^\alpha M^\e.
\end{equation*}
For the $f^\e$ term, first split
\[
 \partial^\alpha f^\e
 =(I-\Pi_0)\partial^\alpha f^\e
  +\Pi_0\partial^\alpha f^\e.
\]
By \eqref{eq:cut-bd},
\begin{align*}
 &\int_{|\xi|>R_\e}\langle\xi\rangle^m
 \mu_0(M^\e)^{-1}
 |(I-\Pi_0)\partial^\alpha f^\e|^2\,d\xi \lesssim e^{-c_qR_\e}
 \|w_0(I-\Pi_0)\partial^\alpha f^\e\|_{L^2_\xi}^2.
\end{align*}
On the other hand, the range of $\Pi_0$ is spanned by fixed-degree
polynomials times $\sqrt{\mu_0}$.  Since \eqref{eq:mu-maxw} gives
$\mu_0(M^\e)^{-1}\lesssim1$, finite dimensionality and Gaussian decay give
\begin{align*}
 &\int_{|\xi|>R_\e}\langle\xi\rangle^m
 \mu_0(M^\e)^{-1}|\Pi_0\partial^\alpha f^\e|^2\,d\xi \lesssim e^{-cR_\e^2}
 \|\partial^\alpha f^\e\|_{L^2_\xi}^2.
\end{align*}
After integration in $x$, \eqref{eq:xi-def} therefore gives, uniformly for
$1\le |\alpha|\le N+1$,
\begin{equation}\label{eq:kap-alp}
 \kappa^{(|\alpha|-N)_+}
 \iint_{|\xi|>R_\e}\langle\xi\rangle^m
 \mu_0(M^\e)^{-1}|\partial^\alpha f^\e|^2\,d\xi dx
 \lesssim
 \frac{e^{-c_qR_\e}}{\e^2\kappa^{2c}}\mathcal E_H^N.
\end{equation}
For $|\alpha|\le N$ the factor $\e^{-2}$ on the right is an admissible
loss.  For $|\alpha|=N+1$, the microscopic part uses the top weighted term
$\kappa^{2c+1}\e^2\|w_0\partial^{N+1}f^\e\|_{{L^2_{x,\xi}}}^2$ in \eqref{eq:xi-def}; the
finite-dimensional part uses the accompanying unweighted top-order term.
For the Maxwellian term we use the chain rule \eqref{eq:par-id} itself rather than a norm
comparison.  Since the parameters of $M^\e$ range in a
fixed compact subset of
$(0,\infty)\times\mathbb R^3\times(0,\infty)$, there are fixed $c,C>0$
such that
\[
 M^\e(t,x,\xi)\,|\mathcal P_{\alpha_1,\ldots,\alpha_n}(c^\e)|^2
 \le C e^{-c|\xi|^2}
\]
for every polynomial occurring in \eqref{eq:par-id}.  The same
$H_x^2$ product estimate used in the proof of Lemma~\ref{lem:fluctxfer},
together with
$\e\|Z^\e\|_{H^N}=o(1)$, yields
\begin{align}\label{eq:kap-alp-bd}
 &\frac{\kappa^{(|\alpha|-N)_+}}{\e^2}
 \iint_{|\xi|>R_\e}\langle\xi\rangle^m
 |\partial^\alpha M^\e|^2(M^\e)^{-1}\,d\xi dx\notag\\
 &\qquad\lesssim
 e^{-cR_\e^2}
 \begin{cases}
  \|Z^\e\|_{H^N}^2,&1\le|\alpha|\le N,\\
  \kappa\|Z^\e\|_{H^{N+1}}^2+\|Z^\e\|_{H^N}^2,&|\alpha|=N+1,
 \end{cases}
 \lesssim e^{-cR_\e^2}\mathcal E^N
 \lesssim \e^2\mathcal E^N.
\end{align}
For the top order, the term in \eqref{eq:par-id} containing the unique
$N+1$ derivative is bounded by \eqref{eq:topmacro-bd}, namely
$\kappa\|Z^\e\|_{\dot H^{N+1}}^2\lesssim\mathcal E^N$; every other
product contains only derivatives of order at most $N$ and is bounded by
the same $H_x^2$ high--low product estimate.  The last inequality in
\eqref{eq:kap-alp-bd} holds for all sufficiently small $\e$, since
$R_\e=\e^{-1/(2(m+2))}$.
Combining \eqref{eq:kap-alp} and \eqref{eq:kap-alp-bd} gives
\[
 \mathcal V_{m,\mathrm{hi}}^\alpha
 \lesssim \e^2\mathcal E^N
 +\frac{e^{-cR_\e}}{\e^2\kappa^{2c}}\mathcal E_H^N
 \lesssim \e^2\mathcal E^N+\e^{10}\mathcal E_H^N.
\]
Indeed, \eqref{eq:choice-bd} gives
$\kappa\asymp\e^{(1-\mathfrak y)/c}$, so the prefactor is a fixed power
of $\e^{-1}$, whereas $e^{-cR_\e^2}$ and $e^{-cR_\e}$ are
$o(\e^m)$ for every $m>0$.
Combining this with \eqref{eq:lo-bd} proves
\eqref{eq:large-bd}--\eqref{eq:wt-bd}.
\end{proof}
\subsubsection{Local-Maxwellian source and defect estimates}
\begin{lemma}[Local-Maxwellian transport source]\label{lem:transsrc}
For the non-cutoff Boltzmann equation, assume \eqref{eq:tot-bd} and set
\[
 \mathcal T_\e:=\partial_t+\e^{-1}\xi\cdot\nabla_x.
\]
Then
\begin{align}\label{eq:mu-ltwo}
 &\left\|\mu_0^{-3/4}\mathcal T_\e M^\e\right\|_{L^2_{x,\xi}} +
 \sum_{|\beta|=1}
 \left\|\langle\xi\rangle w_0^{1/2}\partial_\beta
 \left(\mu_0^{-1/2}\mathcal T_\e M^\e\right)
 \right\|_{L^2_{x,\xi}}
 \lesssim
 (\mathcal E^N)^{1/2}+\e\kappa^{1/2}(\mathcal D^N)^{1/2},
\end{align}
\begin{equation}\label{eq:xi-mu}
 \left\|\langle\xi\rangle\mu_0^{-1/2}\mathcal T_\e M^\e\right\|_{L^2_{x,\xi}}
 +\left\|\langle\xi\rangle w_0\mu_0^{-1/2}\mathcal T_\e M^\e\right\|_{L^2_{x,\xi}}
 \lesssim
 (\mathcal E^N)^{1/2}+\e\kappa^{1/2}(\mathcal D^N)^{1/2}.
\end{equation}
\end{lemma}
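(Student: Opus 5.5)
The plan is to use the exact formula for $\mathcal T_\e M^\e$ that comes out of the balance laws. Every $\xi$-dependence then appears as an explicit Gaussian polynomial times $M^\e$, and every $x$-dependence appears as a spatial coefficient already controlled by Lemma~\ref{lem:macrocoef} and Lemma~\ref{lem:stress}.

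\emph{Step 1 (exact source identity).} With the parametrization \eqref{eq:param-id}, the chain rule gives
\[
 \mathcal T_\e M^\e
 =\e M^\e\Big[\mathcal T_\e\rho^\e+\frac{c^\e\cdot\mathcal T_\e u^\e}{k_B\Theta^\e}+\Big(\frac{|c^\e|^2}{2k_B\Theta^\e}-\frac32\Big)\mathcal T_\e\theta^\e\Big].
\]
Substituting \eqref{eq:tu-id} for $\partial_t(\rho^\e,u^\e,\theta^\e)$ has two effects.
\begin{itemize}
\item The singular acoustic terms $\e^{-1}\nabla\cdot u^\e$ and $\e^{-1}\nabla(\rho^\e+\theta^\e)$ combine with $\e^{-1}\xi\cdot\nabla_x$ acting on the parameters. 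What survives at order one is only the Burnett combination $\partial_iu_j^\e\,\mathfrak R_{ij}^\e+\partial_i\theta^\e\,\mathcal Q_i^\e$, together with $O(\e)$ quadratic terms $\e\,u^\e\cdot\nabla Z^\e$.
\item The stress and heat-flux terms contribute $\e^{-1}M^\e\times(\text{polynomial in }c^\e)\times(\partial\mathbf r^\e,\partial\mathfrak q^\e,\partial\mathrm U^\e\,\mathbf r^\e)$.
\end{itemize}
This is precisely $\mathcal T_\e M^\e=-\mathcal A^\e+\mathcal F^\e$. One can read it off \eqref{eq:mic-id}: apply $\mathcal T_\e$ to $F^\e=M^\e+G^\e$ and use $Q(M^\e,M^\e)=0$. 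Thus
\[
 \mathcal T_\e M^\e=M^\e\sum_k P_k(c^\e;\Theta^\e)\,a_k(t,x),
\]
with finitely many fixed polynomials $P_k$. The coefficients $a_k$ are of three types: components of $\nabla_x Z^\e$; terms $\e\,Z^\e\nabla_xZ^\e$; and terms $\e^{-1}(\mathrm P^\e\Theta^\e)^{-1}(\nabla\mathbf r^\e,\nabla\mathfrak q^\e,\nabla\mathrm U^\e\,\mathbf r^\e)$.

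\emph{Step 2 (uniform Gaussian envelopes).} I show that, uniformly in $(\e,t,x)$, each of the following is bounded by $C e^{-c|\xi|^2}$:
\[
 \mu_0^{-3/4}M^\e|P_k|,\qquad
 \langle\xi\rangle\mu_0^{-1/2}M^\e|P_k|,\qquad
 \langle\xi\rangle w_0\mu_0^{-1/2}M^\e|P_k|,\qquad
 \langle\xi\rangle w_0^{1/2}\,\big|\partial_\beta(\mu_0^{-1/2}M^\e P_k)\big|.
\]
For this I use the quadratic identity \eqref{eq:xi-loc} with exponents $1/2$ and $3/4$ in place of $1$, together with the bridge bounds \eqref{eq:bridge-bd} and \eqref{eq:pos-bd}. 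These give $\Theta^\e\ge\Theta_0^\e$, $|\Theta^\e-1|\lesssim\e^{1-c_0}$ and $|\mathrm U^\e|\lesssim\delta_\e$. Hence, for example,
\[
 -\frac{|\xi-\mathrm U^\e|^2}{2k_B\Theta^\e}+\frac{3|\xi|^2}{8k_B\Theta_0^\e}\le-\frac{|\xi|^2}{16k_B}+C .
\]
Here the shift $\mathrm U^\e$ costs only a bounded constant, because it is multiplied by a coefficient that is negative and of order one. The weight $w_0$ grows only like $e^{q\langle\xi\rangle}$ and is absorbed. Velocity derivatives only produce further polynomials. Consequently every norm on the left of \eqref{eq:mu-ltwo} and \eqref{eq:xi-mu} is $\lesssim\sum_k\|a_k\|_{L^2_x}$.

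\emph{Step 3 (coefficients).} The three coefficient types are handled as follows.
\begin{itemize}
\item Type one: $\|\nabla Z^\e\|_{L^2}\le(\mathcal E_M^N)^{1/2}$.
\item Type two: $\e\|Z^\e\|_{L^\infty}\|\nabla Z^\e\|_{L^2}\lesssim \e(\mathcal E_M^N)\lesssim o(1)(\mathcal E^N)^{1/2}$, by Sobolev and \eqref{eq:loc-bd}.
\item Type three: by \eqref{eq:hn-kap} and \eqref{eq:hn-en},
\[
 \e^{-1}\big(\|\nabla\mathbf r^\e\|_{L^2}+\|\nabla\mathfrak q^\e\|_{L^2}+\|\nabla\mathrm U^\e\|_{L^\infty}\|\mathbf r^\e\|_{L^2}\big)\lesssim\e\kappa^{1/2}(\mathcal D_G^N)^{1/2}.
\]
\end{itemize}
Summing gives the claimed right-hand side.

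\emph{Main obstacle.} I expect the delicate point to be Step 2. One must check that dividing by the colder $\mu_0$, to the power $3/4$ or to the power $1/2$ with the weight $w_0$, still leaves a Gaussian decay rate independent of $\e$, despite the $O(\e^{1-c_0})$ temperature gap and the $\e$-small shift $\mathrm U^\e$. The exact completion of squares \eqref{eq:xi-loc} is what makes this uniform. A secondary check is that the cancellation of the $\e^{-1}$ acoustic terms is exact, so that no $\e^{-1}\nabla Z^\e$ coefficient survives. I would verify this by matching the derivation against the identity \eqref{eq:mic-id}.
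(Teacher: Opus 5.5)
Your proposal is correct and follows essentially the paper's route: express $\mathcal T_\e M^\e/M^\e$ through the balance laws \eqref{eq:tu-id} as fixed polynomials in $c^\e$ with spatial coefficients $\nabla_x Z^\e$ and $\e^{-1}(\mathrm P^\e\Theta^\e)^{-1}(\nabla\mathbf r^\e,\nabla\mathfrak q^\e,\nabla\mathrm U^\e:\mathbf r^\e)$. Then absorb all velocity factors by uniform Gaussian bounds (the paper factors $M^\e\mu_0^{-1/2}=(M^\e\mu_0^{-3/4})\mu_0^{1/4}$), and bound the coefficients by Lemma~\ref{lem:macrocoef} and \eqref{eq:hn-kap}.

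There are two harmless slips. First, the identity read off from \eqref{eq:mic-id} is $\mathcal T_\e M^\e=\mathcal A^\e-\mathcal F^\e$, not $-\mathcal A^\e+\mathcal F^\e$. Second, neither anticipated obstacle really arises: the prefactor $\e$ in the chain rule already turns the $\e^{-1}$ acoustic terms into order-one coefficients, so the Burnett cancellation is true but unnecessary. Likewise, the exponents $1/2$ and $3/4$ leave an order-one Gaussian margin, so the temperature gap and \eqref{eq:xi-loc} matter only for the ratio $\mu_0/M^\e$.
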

The proof is given in Appendix~\ref{sec:auxana}.

\begin{lemma}[Perturbative Maxwellian defect operator]\label{lem:maxdefect}
Under \eqref{eq:tot-bd}, let $A^\e:=\frac{M^\e-\mu_0}{\sqrt{\mu_0}}.$
Then the non-cutoff defect operator in \eqref{eq:lin-def} satisfies
\begin{align}\label{eq:op-id}
 &|\langle\mathcal L_d^0g^\e,g^\e\rangle_{x,\xi}|
 +|\langle\mathcal L_d^0g^\e,w_0^2g^\e\rangle_{x,\xi}|
 +\sum_{|\beta|=1}|\langle\partial_\beta\mathcal L_d^0g^\e,
           w_0\partial_\beta g^\e\rangle_{x,\xi}| \notag\\
 &\qquad\lesssim
 \delta_\e(t)\left(
 \|g^\e\|_{L^2_{x,\Bb}}^2
 +\|w_0g^\e\|_{L^2_{x,\Bb}}^2
 +\sum_{|\beta|=1}
  \|w_0^{1/2}\partial_\beta g^\e\|_{L^2_{x,\Bb}}^2\right)  \lesssim
 o(1)\frac{\e^2}{\kappa^{2c-1}}\mathcal D_H^N.
\end{align}
\end{lemma}
\begin{proof}
By \eqref{eq:lin-def}, every term in $\mathcal L_d^0g^\e$ is a fixed-global
Boltzmann bilinear form with one input
$A^\e=(M^\e-\mu_0)/\sqrt{\mu_0}$ and the other $g^\e$.  The explicit
Maxwellian ratio, \eqref{eq:vd-bd}, and the Gaussian comparison give, uniformly
in $x$, for $a\in\{0,1/2,1\}$ and $|\beta|\le1$,
\[
 |w_0^a\partial_\beta A^\e|+|\nabla_\xi(w_0^a\partial_\beta A^\e)|
 \le C\delta_\e(t)e^{-c|\xi|^2}.
\]
Hence
\[
 \|w_0^a\partial_\beta A^\e\|_{L^2_{2s+\gamma}}
 +\|w_0^a\partial_\beta A^\e\|_{H^s_{2s+\gamma}}
 \lesssim\delta_\e(t):
\]
indeed the local fractional part is bounded by
$C\delta_\e(t)^2\int_0^1 \tau^{1-2s}\,d \tau$, while the $|\tau|>1$ part is Gaussian
integrable.  Applying Lemma~\ref{lem:collest} with weights $(0,0)$,
$(\ell,q)$, and $(\ell/2,q/2)$, and using \eqref{eq:xi-bd}, yields
\[
 |\langle\mathcal L_d^0g^\e,g^\e\rangle_{x,\xi}|
 +|\langle\mathcal L_d^0g^\e,w_0^2g^\e\rangle_{x,\xi}|
 +\sum_{|\beta|=1}|\langle\partial_\beta\mathcal L_d^0g^\e,
 w_0\partial_\beta g^\e\rangle_{x,\xi}|
 \lesssim\delta_\e(t)\,\mathfrak D_g,
\]
where $\mathfrak D_g$ is the three-term dissipation bracket in \eqref{eq:op-id}.
By \eqref{eq:hn-def},
$\mathfrak D_g\lesssim\e^2\kappa^{-(2c-1)}\mathcal D_H^N$, and
$\delta_\e=o(1)$ by \eqref{eq:glob-bd}; this is \eqref{eq:op-id}.
\end{proof}

\subsubsection{High-order global hierarchy}
\begin{proposition}[High-order moment estimate]
\label{prop:highord}
Under the bootstrap and scale assumptions
\eqref{eq:tot-bd}, \eqref{eq:choice-bd}, \eqref{eq:boot-bd}, and
\eqref{eq:choice-boot}, the Landau equation satisfies
\begin{equation}\label{eq:td-bd}
  \mathcal E_H^N(t)+ \int_0^t \mathcal D_H^N(s) d s
 \lesssim \mathcal{E}_H^N(0)+ \int_0^t \Big[ 
 \kappa^{2c-1}\bigl(\mathcal E^N+(\mathcal E^N)^2\bigr)
 +\frac{\e}{\kappa^c}\mathcal D_H^N(\mathcal E_H^N)^{1/2}\Big] ds.
\end{equation}
The non-cutoff Boltzmann equation satisfies
\begin{equation}\label{eq:td-en}
  \mathcal E_H^N(t)+ \int_0^t \mathcal D_H^N(s) ds
 \lesssim \mathcal{E}_H^N(0)+
 \int_0^t \Big[ o(1)\mathcal D^N
 +\kappa^{2c-1}\bigl(\e^{-2c_0}\mathcal E^N+(\mathcal E^N)^2\bigr)
 +\frac{\e}{\kappa^c}\mathcal D_H^N
 \bigl((\mathcal E_H^N)^{1/2}+(\mathcal E^N)^{1/2}\bigr)\Big] ds.
\end{equation}
\end{proposition}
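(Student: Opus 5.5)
We derive the weighted energy identities for the global fluctuation separately in each collision model and close them using the coercivity and trilinear bounds of Lemma~\ref{lem:collest}. In the Landau case we write $F^\e=\mu+\e\sqrt\mu f^\e$. Then
\[
 \partial_t f^\e+\e^{-1}\xi\cdot\nabla_x f^\e+(\kappa\e^2)^{-1}\mathcal Lf^\e=(\kappa\e)^{-1}\Gamma(f^\e,f^\e).
\]
We apply $\partial^\alpha$ and take inner products in three forms: with $\partial^\alpha f^\e$ for $|\alpha|\le N$; with $w^2(I-\Pi)\partial^\alpha f^\e$ after applying $I-\Pi$ to the equation; and with $w\,\partial^\alpha_\beta(I-\Pi)f^\e$ for $|\beta|=1$, $|\alpha|\le\lfloor N/2\rfloor$. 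At the top order we pair with $\partial^{N+1}f^\e$ and $w^2\partial^{N+1}f^\e$, using the weights $\kappa^{2c+1}$ and $\kappa^{2c+1}\e^2$. Each identity is multiplied by the prefactor $\kappa^{2c}$ appearing in \eqref{eq:land-id}. The linear terms then produce $\mathcal D_H^N$ by \eqref{eq:lin-bd}, \eqref{eq:top-bd}, \eqref{eq:vel-bd} and \eqref{eq:alp-bet}.

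The main task is to control the error terms. There are three kinds.

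\emph{(i) Bounded-velocity losses.} These are the $C_R\|(I-\Pi)\partial^\alpha f^\e\|_{L^2(B_R)}^2$ terms in \eqref{eq:top-bd}--\eqref{eq:vel-bd}. They carry the factor $\kappa^{2c-1}\e^{-2}$. We absorb them by rewriting $\e\sqrt\mu(I-\Pi)f^\e$ on $B_R$ through the local defect $\Pi_M^\perp F^\e$ and Maxwellian differences. On a bounded velocity set $\mu\simeq M^\e$, so these losses are bounded by $\kappa^{2c-1}\mathcal E^N$, plus quadratic Maxwellian corrections of size $\kappa^{2c-1}(\mathcal E^N)^2$. We do this via \eqref{eq:alp-pr}, the chain rule of Lemma~\ref{lem:maxchain}, and the bound $\e^{-2}\|\Pi_M^\perp\partial^\alpha F^\e/\sqrt{M^\e}\|^2\lesssim\mathcal E^N$.

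\emph{(ii) Transport commutators.} The commutator $[\partial_\beta,\e^{-1}\xi\cdot\nabla_x]=\e^{-1}\partial^{e_\beta}$ loses a factor $\e^{-1}$. We control it by Cauchy--Schwarz between the dissipation of the $\beta$-level, which contains $\e^{-2}$, and the spatial weighted dissipation at the level $|\alpha|+1$. This is why the $\beta$-hierarchy only uses the weight $w^{1/2}$; the remainder is absorbed with a small constant.

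\emph{(iii) Nonlinear terms.} By \eqref{eq:landau-tri}, \eqref{eq:landau-tri-wt} and \eqref{eq:landau-space-velocity} they are bounded by $(\kappa\e)^{-1}\kappa^{2c}\|f\|\,\|f\|_{\Lb}\|h\|_{\Lb}$. The dissipation factors combine to $\kappa^{1-2c}\e^2\mathcal D_H^N$, and the energy factor is $\kappa^{-c}(\mathcal E_H^N)^{1/2}$. This yields $\e\kappa^{-c}\mathcal D_H^N(\mathcal E_H^N)^{1/2}$. Low--high splittings use Sobolev embedding in $x$ with $N\ge4$. In the top-order case one $\partial^{N+1}$ may fall on the first slot; this is handled by the $\kappa^{2c+1}$ top energy together with \eqref{eq:landau-slot-wt}. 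Integrating in time gives \eqref{eq:td-bd}.

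In the non-cutoff Boltzmann case the equation for positive spatial orders is the same with $\mu_0$, $\mathcal L_0$ and $w_0$. It acquires an extra linear source $-(\e\sqrt{\mu_0})^{-1}\mathcal T_\e\mu_0$, which vanishes for $|\alpha|\ge1$, together with terms coming from $\mu-\mu_0$. We treat these by the decomposition \eqref{eq:maxw-def}, Lemma~\ref{lem:fluctxfer} and \eqref{eq:hinfty-bd}. The homogeneous mismatch $H_\infty^\e$ is of size $\e^{-c_0}$, which accounts for the term $\kappa^{2c-1}\e^{-2c_0}\mathcal E^N$.

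At zero spatial order we work with $g^\e$, which satisfies
\[
 \mathcal T_\e g^\e+(\kappa\e^2)^{-1}(\mathcal L_0+\mathcal L_d^0)g^\e=-(\e\sqrt{\mu_0})^{-1}\mathcal T_\e M^\e+(\kappa\e)^{-1}\Gamma_0(g^\e,g^\e).
\]
We pair it with $g^\e$, $w_0^2g^\e$ and $w_0\partial_\beta g^\e$.
\begin{itemize}
\item The defect operator is absorbed by Lemma~\ref{lem:maxdefect}.
\item The source is estimated by Lemma~\ref{lem:transsrc}. This gives $\kappa^{2c}\e^{-1}\bigl((\mathcal E^N)^{1/2}+\e\kappa^{1/2}(\mathcal D^N)^{1/2}\bigr)\|w_0g^\e\|$. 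Writing $\|w_0g^\e\|\lesssim\e\kappa^{(1-2c)/2}(\mathcal D_H^N)^{1/2}$, Young's inequality produces $\kappa^{2c-1}\mathcal E^N+o(1)\mathcal D^N$ plus a small multiple of $\mathcal D_H^N$.
\item The cross terms with $M^\e$ inside $\Gamma_0$ produce $\e\kappa^{-c}\mathcal D_H^N(\mathcal E^N)^{1/2}$, using \eqref{eq:xi-gam} and \eqref{eq:boltz-tri-wt}.
\end{itemize}

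We expect the main obstacle to be step (i): the bounded-velocity losses, and for Boltzmann the zero-order source, carry the singular factor $\e^{-2}$, and must be converted into the lower-order local energy $\mathcal E^N$ without losing powers of $\e$. This relies on exact moment matching (through \eqref{eq:pr-id} and \eqref{eq:xd-id}) and on the large-velocity closure \eqref{eq:large-bd}, which handles the passage between the $\mu_0$-based and $M^\e$-based weights on $B_R$.
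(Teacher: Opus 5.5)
Your overall architecture matches the paper's: the five test families with multipliers $\kappa^{2c}$, $\kappa^{2c+1}$ and $\kappa^{2c+1}\e^2$, the $g^\e$ equation at zero spatial order, and $H_\infty^\e$ as the source of $\e^{-2c_0}$. However, step (i), which you single out as the main obstacle, fails as stated.

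\textbf{Step (i): the lower-order compact losses cannot be converted into energy.} For $|\alpha|\le N$ the compact-velocity loss of the $w^2$-weighted family is $\kappa^{2c-1}\e^{-2}C_R\|(I-\Pi)\partial^\alpha f^\e\|_{L^2(\R^3_x\times B_R)}^2$. By \eqref{eq:xd-id}, $\Pi(\partial^\alpha G^\e/\sqrt\mu)=0$. Hence the defect part of $(I-\Pi)\partial^\alpha f^\e$ is exactly $\e^{-1}\partial^\alpha G^\e/\sqrt\mu$, and its squared $L^2(B_R)$ norm is only $\lesssim\mathcal E_G^N$, because the $\e^{-2}$ built into $\mathcal E_G^N$ has already been used.
\begin{itemize}
\item The loss is therefore $\kappa^{2c-1}\e^{-2}\mathcal E^N$, and $\kappa^{2c-1}\e^{-2}\to\infty$. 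Your bound $\kappa^{2c-1}\mathcal E^N$ silently drops this $\e^{-2}$.
\item Using $\mathcal D_G^N$ instead gives $\kappa^{2c}\mathcal D_G^N$. That is acceptable in \eqref{eq:td-en}, but no such term appears in the Landau bound \eqref{eq:td-bd}.
\item The remainder in \eqref{eq:alp-bet} is a $w^{1/2}$-weighted dissipation norm, which cannot be traded for energy at all.
\end{itemize}

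The missing idea is an absorption hierarchy, not a conversion. Multiply the weighted family by a fixed $\delta_1$ and the spatial--velocity family by a fixed $\delta_2$, with $0<\delta_2\ll\delta_1\ll1$.
\begin{itemize}
\item The lower-order compact loss then carries exactly the prefactor $\kappa^{2c-1}\e^{-2}$ of the unweighted dissipation from \eqref{eq:lin-bd}. Since $\|\cdot\|_{L^2(B_R)}\lesssim\|\cdot\|_{L^2_{\Lb}}$, it is absorbed there.
\item The remainder of \eqref{eq:alp-bet} is absorbed by the weighted dissipation.
\item Only the top-order weighted loss needs an energy bound. There the multiplier $\kappa^{2c+1}\e^2$ lowers the prefactor to $\kappa^{2c}$, not $\kappa^{2c-1}\e^{-2}$, and \eqref{eq:alp-pr} gives
\[
 \kappa^{2c}\|\partial^{N+1}f^\e\|_{L^2(\R^3_x\times B_R)}^2
 \lesssim \e^2\mathcal D_H^N+\kappa^{2c-1}\mathcal E^N .
\]
Your conversion through $\mathcal H_{\rm top}$ would also work here, precisely because the prefactor is $\kappa^{2c}$.
\end{itemize}
The same scheme closes the $g^\e$ identities, because $\Pi_0g^\e=0$. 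So the obstacle you describe is avoided, not overcome.

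\textbf{Step (iii): the macroscopic inputs are missing.} $\mathcal D_H^N$ controls only $(I-\Pi)\partial^\alpha f^\e$ in $L^2_{\Lb}$ for $|\alpha|\le N$. You therefore cannot fill the collision-norm slots of $\Gamma(f^\e,f^\e)$ with the full $f^\e$. Each input must be split into $\Pi\partial^\nu f^\e$ and $(I-\Pi)\partial^\nu f^\e$, with the macroscopic pieces bounded by \eqref{eq:alp-pr}; at top order this gives $\|\Pi\partial^{N+1}f^\e\|_{L^2_{x,\xi}}\lesssim\kappa^{-1/2}(\mathcal E^N)^{1/2}$. After the prefactors:
\begin{itemize}
\item the macro--macro terms are of size $\kappa^{c-1/2}\mathcal E^N(\mathcal D_H^N)^{1/2}$;
\item the mixed terms are of size $\e(\mathcal E^N)^{1/2}\mathcal D_H^N=o(1)\mathcal D_H^N$.
\end{itemize}
Young's inequality on the macro--macro terms is what produces the term $\kappa^{2c-1}(\mathcal E^N)^2$ in \eqref{eq:td-bd}. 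It does not come from Maxwellian corrections to compact losses, as you suggest.

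You also omit the projection--transport terms of the equation for $h^\e=(I-\Pi)f^\e$, namely $\e^{-1}\Pi(\xi\cdot\nabla_x\partial^\alpha h^\e)$ and $\e^{-1}(I-\Pi)(\xi\cdot\nabla_x\Pi\partial^\alpha f^\e)$. At $|\alpha|=N$ they involve $N+1$ derivatives. They are bounded by $o(1)\mathcal D_H^N+\kappa^{2c-1}\mathcal E^N$ only through the top-order families.
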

\begin{proof}

We first treat Landau, and then
separate the positive and zero spatial orders for non-cutoff Boltzmann. For Landau, let $h^\e = (I-\Pi)f^\e, \ \mathcal{T}_\e = \p_t + \e^{-1}\xi \cdot \nabla_x.$ We use the full and microscopic equation
\begin{align*}
    &    \mathcal{T}_\e f^\e + \frac{1}{\kappa \e^2} \mathcal{L}f^\e = \frac{1}{\kappa \e}\Gamma(f^\e,f^\e), \\
    &   \mathcal{T}_\e h^\e + \frac{1}{\kappa \e^2} \mathcal{L}h^\e = \frac{1}{\kappa \e} \Gamma(f^\e,f^\e) + \frac{1}{\e} \Pi (\xi\cdot \nabla_x h^\e) - \frac{1}{\e} (I-\Pi) (\xi\cdot \nabla_x \Pi f^\e).
\end{align*}
We differentiate and test these equations using the five families below.
The first two rows have $|\alpha|\le N$, the next two have
$|\alpha|=N+1$, and the last has
$|\alpha|\le\lfloor N/2\rfloor$, $|\beta|=1$.
\begin{equation}
\begin{array}{c|c|c|c}
\text{family}&\text{test}&\text{energy multiplier}
&\text{nonlinear prefactor}\\ \hline
\text{spatial}&\partial^\alpha f^\varepsilon
&\kappa^{2c}&\kappa^{2c-1}/\varepsilon\\
\text{weighted spatial}&w^2\partial^\alpha h^\varepsilon
&\kappa^{2c}&\kappa^{2c-1}/\varepsilon\\
\text{top unweighted}&\partial^\alpha f^\varepsilon
&\kappa^{2c+1}&\kappa^{2c}/\varepsilon\\
\text{top weighted}&w^2\partial^\alpha f^\varepsilon
&\kappa^{2c+1}\varepsilon^2&\kappa^{2c}\varepsilon\\
\text{spatial--velocity}&w\partial_\beta^\alpha h^\varepsilon
&\kappa^{2c}&\kappa^{2c-1}/\varepsilon
\end{array}
\label{eq:high-energy-test}
\end{equation}
We will combine these identities with fixed small multipliers on the weighted lower-order and spatial--velocity families, so that their compact-velocity and lower-weight remainders are absorbed by the preceding coercive estimates.

The microscopic equation is used in the second and fifth rows.
Collision invariance removes the macroscopic part of the unweighted
collision testing factors. Set
\[
 m_*:=\left\lfloor\frac{N+1}{2}\right\rfloor,
 \qquad f_\nu^{\rm mac}:=\Pi\partial^\nu f^\e,
 \qquad f_\nu^{\rm mic}:=(I-\Pi)\partial^\nu f^\e.
\]
Since $m_*+2\le N$, Sobolev embedding in $x$, \eqref{eq:alp-pr}, and
\eqref{eq:land-id} give, for $|\nu|\le m_*$,
\begin{align}
 \|f_\nu^{\rm mac}\|_{L_x^\infty(L_\xi^2\cap L_{\Lb}^2)}
 &\lesssim (\mathcal E^N)^{1/2}, \quad
 \|f_\nu^{\rm mic}\|_{L_x^\infty L_\xi^2}
 \lesssim \kappa^{-c}(\mathcal E_H^N)^{1/2}, \quad
 \|f_\nu^{\rm mic}\|_{L_x^\infty L_{\Lb}^2}
 \lesssim \frac{\e}{\kappa^{(2c-1)/2}}(\mathcal D_H^N)^{1/2}.
\end{align}
The microscopic estimates remain valid with the weights from \eqref{eq:land-id}. The macroscopic bounds remain valid with these weights and one velocity
derivative, since the projection has finite-dimensional Gaussian range.
The corresponding $L_x^2$ bounds hold through order $N$; at order $N+1$,
\begin{align}\label{eq:mac-xi}
 \|f_{N+1}^{\rm mac}\|_{L_x^2(L_\xi^2\cap L_{\Lb}^2)}
 &\lesssim \kappa^{-1/2}(\mathcal E^N)^{1/2},\quad
 \|f_{N+1}^{\rm mic}\|_{L^2_{x,\Lb}}
 \lesssim \e\kappa^{-c}(\mathcal D_H^N)^{1/2},\quad
 \|w\partial^{N+1}f^\e\|_{L^2_{x,\Lb}}
 \lesssim \kappa^{-c}(\mathcal D_H^N)^{1/2}.
\end{align}

Expand the nonlinear term by the Leibniz rule and split each input into
its macroscopic and microscopic parts. For spatial derivatives, put the
factor of order at most $m_*$ in $L_x^\infty$ and the other in $L_x^2$.
Apply \eqref{eq:landau-tri}--\eqref{eq:landau-tri-wt}, using
\[
\|\mu^{1/32}h\|_{L^2_\xi}+\|h\|_{L_{-1}^2}\lesssim\|h\|_{L_\Lb^2},
\qquad
\|\vartheta_L h\|_{L^2_\xi}+\|\vartheta_L h\|_{L_\Lb^2}
\lesssim\|wh\|_{L_\Lb^2}.
\]
Here we note that
$\vartheta_L/w=e^{-q\langle\xi/a_L\rangle^2}$, and the second bound follows from differentiating this
Gaussian multiplier in the Landau norm.
Before multiplication by the nonlinear prefactors in \eqref{eq:high-energy-test}, the lower-order trilinear pairings are bounded, respectively, by the following macro--macro, mixed, and micro--micro expressions:
\begin{align}\label{eq:mm-def}
 \frac{\e}{\kappa^{(2c-1)/2}}
 \mathcal E^N(\mathcal D_H^N)^{1/2}, \qquad
 \frac{\e^2}{\kappa^{2c-1}}
 (\mathcal E^N)^{1/2}\mathcal D_H^N, \qquad
 \frac{\e^2}{\kappa^{3c-1}}
 \mathcal D_H^N(\mathcal E_H^N)^{1/2}.
\end{align}
For the spatial--velocity family, use \eqref{eq:landau-slot-wt} and \eqref{eq:landau-space-velocity}. Put the no-velocity-derivative input in
$L_x^\infty$ and the velocity-differentiated microscopic input in
$L_x^2$. The same three bounds follow.

At top order, use \eqref{eq:mac-xi}. When a top microscopic input is estimated by energy, pair it
with a lower microscopic collision norm; use the opposite allocation
in the other branch. The three bounds for the unweighted top pairing are
\[
\frac{\varepsilon}{\kappa^{c+1/2}}\mathcal{E}^N (\mathcal{D}_H^N)^{1/2},\qquad
\frac{\varepsilon^2}{\kappa^{2c}}(\mathcal{E}^N)^{1/2} \mathcal{D}_H^N,\qquad
\frac{\varepsilon^2}{\kappa^{3c}} \mathcal{D}_H^N (\mathcal{E}_H^N)^{1/2},
\]
whereas for the weighted top pairing they are
\[
\frac{1}{\kappa^{c+1/2}}\mathcal{E}^N (\mathcal{D}_H^N)^{1/2},\qquad
\frac{1}{\kappa^{2c}}(\mathcal{E}^N)^{1/2} \mathcal{D}_H^N,\qquad
\frac{1}{\kappa^{3c}}\mathcal{D}^N_H (\mathcal{E}_H^N)^{1/2}.
\]
After multiplication by the prefactors in \eqref{eq:high-energy-test}, the
macro--macro terms are bounded by
$C\kappa^{c-1/2}\mathcal{E}^N (\mathcal{D}_H^N)^{1/2}$, the mixed terms by
$C\varepsilon (\mathcal{E}^N)^{1/2} \mathcal{D}^N_H$, and the micro--micro terms by
$C\varepsilon\kappa^{-c} \mathcal{D}_H^N (\mathcal{E}_H^N)^{1/2}$.
Let $\mathfrak N_L$ denote the sum of the absolute nonlinear
contributions with prefactors. Young's inequality gives
\begin{equation}
\mathfrak N_L
\le C_\eta\kappa^{2c-1} (\mathcal{E}^N)^2
+(\eta+o(1)) \mathcal{D}_H^N
+C\frac{\varepsilon}{\kappa^c} \mathcal{D}_H^N (\mathcal{E}_H^N)^{1/2},
\label{eq:highmoment-landau-nonlinear}
\end{equation}
where $\varepsilon (\mathcal{E}^N){1/2}=o(1)$ follows from the bootstrap.

It remains to consider the linear operator and the remainder term in the equation of $h^\e$. For the linear operator estimates, use \eqref{eq:lin-bd}, \eqref{eq:top-bd}, and \eqref{eq:vel-bd} for the spatial families.
For the spatial--velocity family, sum over
$|\alpha|\le\lfloor N/2\rfloor$, $|\beta|=1$ before applying \eqref{eq:alp-bet}.
The top compact-velocity remainder is estimated by
\begin{align}
\kappa^{2c}\sum_{|\alpha|=N+1}
\|\partial^\alpha f^\varepsilon\|_{L^2(\mathbb R_x^3\times B_R)}^2
&\lesssim\kappa^{2c}\sum_{|\alpha|=N+1}
\left(
\|\partial^\alpha h^\varepsilon\|_{L_x^2L_\Lb^2}^2
+\|\Pi\partial^\alpha f^\varepsilon\|_{L^2_{x,\xi}}^2
\right)
 \lesssim\varepsilon^2 \mathcal{D}_H^N + \kappa^{2c-1}\mathcal{E}^N,
\label{eq:highmoment-top-compact}
\end{align}
by \eqref{eq:alp-pr} and the dissipation definition.
For the projection--transport terms, Young's inequality leaves a small
multiple of $\mathcal{D}_H^N$ and terms bounded by
\begin{align*}
&\kappa^{2c+1}\sum_{|\alpha|\le N}
\left[
\|\Pi(\xi\cdot\nabla_x\partial^\alpha h^\varepsilon)\|_{L^2_{x,\xi}}^2
+\|(I-\Pi)(\xi\cdot\nabla_x\Pi\partial^\alpha f^\varepsilon)\|_{L^2_{x,\xi}}^2
\right]
\\
&\qquad\lesssim
\kappa^{2c+1}
\left(
\|h^\varepsilon\|_{H_x^{N+1}L_\Lb^2}^2
+\|\Pi f^\varepsilon\|_{H_x^{N+1}L_\xi^2}^2
\right)
\lesssim o(1)\mathcal{D}_H^N + \kappa^{2c-1}\mathcal{E}^N.
\end{align*}
The same estimate applies after one velocity derivative to these
finite-dimensional Gaussian terms. The additional commutator
$[\partial_{\xi_j},\mathcal T_\varepsilon]
=\varepsilon^{-1}\partial_{x_j}$ contributes at most
\[
C\varepsilon\kappa \mathcal{D}_H^N
+C\kappa^{c+1/2}(\mathcal{E}^N \mathcal{D}_H^N)^{1/2}
\lesssim o(1)\mathcal{D}_H^N + C\kappa^{2c-1}\mathcal{E}^N,
\]
using the exponential weight gap and Young's inequality.

Combine the five identities with fixed multipliers
$0<\delta_2\ll\delta_1\ll1$ on the spatial--velocity and weighted
lower-order families, respectively. The lower-order compact and
lower-weight remainders are absorbed successively; the top remainder
is controlled by \eqref{eq:highmoment-top-compact}.
Next choose $\eta$ small in \eqref{eq:highmoment-landau-nonlinear} and decrease
$\varepsilon_0$. The resulting energy
$\widetilde{\mathcal E}_{H,L}^N$ is uniformly equivalent to
$\mathcal E_H^N$, and for some $\lambda_L>0$,
\begin{equation}
\frac{d}{dt}\widetilde{\mathcal E}_{H,L}^N+\lambda_L \mathcal{D}_H^N
\le C\kappa^{2c-1}(\mathcal{E}^N+(\mathcal{E}^N)^2)
+C\frac{\varepsilon}{\kappa^c}\mathcal{D}_H^N (\mathcal{E}_H^N)^{1/2}.
\label{eq:highmoment-landau-closed}
\end{equation}
Integration and energy equivalence prove \eqref{eq:td-bd}.

\textbf{Non-cutoff Boltzmann}. First consider lower spatial families  $1\le|\alpha|\le N$
and the mixed family $1\le|\alpha|\le\lfloor N/2\rfloor$. Use the same five families \eqref{eq:high-energy-test} with
$(L,\Pi,w,\Gamma)$ replaced by $(L_0,\Pi_0,w_0,\Gamma_0)$.
These are precisely the components of $\mathcal E_{H,2}^N$ in \eqref{eq:xi-def}.
Apply \eqref{eq:boltz-tri}--\eqref{eq:boltz-tri-wt}, with the $(\ell/2,q/2)$ specialization for one
velocity derivative. In a mixed macroscopic--microscopic contribution,
choose the minimum bracket so that the $L_{2s+\gamma}^2$ norm falls on
the microscopic input. The extra weighted term is controlled by \eqref{eq:boltz-wt-coer},
and \eqref{eq:boltz-wt-hier} controls the lower weight in the second collision slot.

For positive-order inputs, \eqref{eq:alp-pr-bd} and \eqref{eq:xi-def}--\eqref{eq:hn-def} give the same
high--low bounds as above. The input carrying no spatial derivative
requires the decomposition \eqref{eq:maxw-def}:
\[
f^\varepsilon=g^\varepsilon+H_\infty^\varepsilon
+\widetilde H^\varepsilon.
\]
Here $H_\infty^\varepsilon$ is independent of $x$ and therefore occurs
only in such an input. By \eqref{eq:hinfty-bd}--\eqref{eq:der-alp-bd}, \eqref{eq:alp-pr-bd}, and Sobolev embedding,
\begin{align}
\sum_{|\beta|\le1}
\|w_0^{1-|\beta|/2}\partial_\beta f^\varepsilon\|_{L_x^\infty L_\xi^2}
&\lesssim\varepsilon^{-c_0}
+\kappa^{-c}((\mathcal{E}^N)^{1/2}+(\mathcal{E}_H^N)^{1/2}),
\nonumber\\
\|\Pi_0f^\varepsilon\|_{L_x^\infty L_\xi^2}
&\lesssim\varepsilon^{-c_0}+(\mathcal{E}^N)^{1/2},
\nonumber\\
\sum_{|\beta|\le1}
\|w_0^{1-|\beta|/2}\partial_\beta f^\varepsilon\|_{L_x^\infty L_\Bb^2}
&\lesssim\varepsilon^{-c_0} + (\mathcal{E}^N)^{1/2}
+\frac{\varepsilon}{\kappa^{c-1/2}}(\mathcal{D}^N_H)^{1/2}.
\label{eq:highmoment-boltzmann-low}
\end{align}
The $\varepsilon^{-c_0}$ term comes only from $H_\infty^\varepsilon$.
After the lower-order nonlinear prefactor, its contribution paired
with a macroscopic differentiated input is bounded by
\begin{equation}
C\varepsilon^{-c_0}\kappa^{c-1/2}(\mathcal{E}^N (\mathcal{D}^N_H))^{1/2}
\le\eta \mathcal{D}_H^N + C_\eta\kappa^{2c-1}\varepsilon^{-2c_0}\mathcal{E}^N.
\label{eq:highmoment-homogeneous-term}
\end{equation}
The top unweighted prefactor gives the same bound; the weighted top
prefactor gives an additional factor $\varepsilon$.
When the other input is microscopic, the corresponding contribution
is at most $C\varepsilon^{1-c_0}\mathcal{D}^N_H=o(1)\mathcal{D}^N_H$.
The remaining terms satisfy the same estimates in the Landau case, with the microscopic
energy factor replaced by $(\mathcal{E}_H^N)^{1/2}+ (\mathcal{E}^N)^{1/2}$.

Combining the positive-order identities with the same fixed
$\delta_1,\delta_2$ hierarchy therefore gives an equivalent energy
$\widetilde{\mathcal E}_{H,2}^N\simeq\mathcal E_{H,2}^N$ such that
\begin{align}
\frac{d}{dt}\widetilde{\mathcal E}_{H,2}^N
+\lambda_2\mathcal D_{H,2}^N
\le{}&C_\eta\kappa^{2c-1}
\left(\varepsilon^{-2c_0}\mathcal{E}^N + (\mathcal{E}^N)^2\right)
+(\eta+o(1))\mathcal{D}^N_H
 +C\frac{\varepsilon}{\kappa^c}\mathcal{D}_H^N
\left((\mathcal{E}_H^N)^{1/2}+(\mathcal{E}^N)^{1/2}\right).
\label{eq:highmoment-boltzmann-positive}
\end{align}
We retain the small multiple of the full $\mathcal{D}_H^N$ until the zero-order estimate is added.

Finally, consider the zero spatial order. From $F^\varepsilon=M^\varepsilon+\varepsilon\sqrt{\mu_0}g^\varepsilon$
and \eqref{eq:coll-id},
\begin{equation}
\mathcal T_\varepsilon g^\varepsilon
+\frac{1}{\kappa\varepsilon^2}(\mathcal{L}_0+\mathcal{L}_d^0)g^\varepsilon
=-\frac{1}{\varepsilon}\mu_0^{-1/2}
  \mathcal T_\varepsilon M^\varepsilon
+\frac{1}{\kappa\varepsilon}
  \Gamma_0(g^\varepsilon,g^\varepsilon).
\label{eq:highmoment-g-equation}
\end{equation}
Test the unweighted, $w_0^2$-weighted, and one-velocity-derivative
equations with energy multiplier $\kappa^{2c}$.
Set
\begin{align*}
D_g&:=\|g^\varepsilon\|_{L_x^2L_\Bb^2}
+\|w_0g^\varepsilon\|_{L_x^2L_\Bb^2}
+\sum_{|\beta|=1}
 \|w_0^{1/2}\partial_\beta g^\varepsilon\|_{L_x^2L_\Bb^2},
\\
E_g&:=\|g^\varepsilon\|_{H_x^2L_\xi^2}
+\|w_0g^\varepsilon\|_{H_x^2L_\xi^2}
+\sum_{|\beta|=1}
 \|w_0^{1/2}\partial_\beta g^\varepsilon\|_{H_x^2L_\xi^2}.
\end{align*}
The positive-order transfer estimates in Lemma \ref{lem:fluctxfer}, and \eqref{eq:alp-pr-bd} give
\begin{equation}
D_g\lesssim\frac{\varepsilon}{\kappa^{c-1/2}}(\mathcal{D}_H^N)^{1/2},
\qquad
E_g\lesssim (\mathcal{E}^N)^{1/2}+\kappa^{-c}(\mathcal{E}_H^N)^{1/2}.
\label{eq:highmoment-g-norms}
\end{equation}

After multiplication by $\kappa^{2c-1}/\varepsilon^2$,
Lemma \ref{lem:maxdefect} bounds the defect contributions by $o(1)\mathcal{D}_H^N$.
Since $\gamma+2s\le0$, \eqref{eq:boltz-tri}--\eqref{eq:boltz-wt-hier} and Sobolev embedding imply
\begin{align*}
&|\langle\Gamma_0(g^\varepsilon,g^\varepsilon),g^\varepsilon\rangle_{x,\xi}|
+|\langle\Gamma_0(g^\varepsilon,g^\varepsilon),w_0^2g^\varepsilon\rangle_{x,\xi}|
 +
\sum_{|\beta|=1}
|\langle\partial_\beta\Gamma_0(g^\varepsilon,g^\varepsilon),
 w_0\partial_\beta g^\varepsilon\rangle_{x,\xi}|
\lesssim D_g^2E_g.
\end{align*}
Consequently, the nonlinear contribution, including its prefactor, is
bounded by
\begin{equation}
\frac{\kappa^{2c-1}}{\varepsilon}D_g^2E_g
\lesssim\frac{\varepsilon}{\kappa^c}\mathcal{D}_H^N
\left((\mathcal{E}_H^N)^{1/2}+(\mathcal{E}^N)^{1/2}\right).
\label{eq:highmoment-g-nonlinear}
\end{equation}

It remains to consider the source and commutator terms.
Let $S_g$ denote the sum of the absolute Maxwellian-source
pairings with the energy prefactor.
Lemma \ref{lem:transsrc}, \eqref{eq:xi-bd}, and \eqref{eq:highmoment-g-norms} give
\begin{align}
S_g
&\lesssim\frac{\kappa^{2c}}{\varepsilon}
\left((\mathcal{E}^N)^{1/2}+\varepsilon\kappa^{1/2}(\mathcal{D}^N)^{1/2}\right)D_g
\nonumber\\
&\lesssim\kappa^{c+1/2}(\mathcal{E}^N \mathcal{D}_H^N)^{1/2}
+\varepsilon\kappa^{c+1}(\mathcal{D}^N \mathcal{D}_H^N)^{1/2}
 \le\eta \mathcal{D}_H^N + C_\eta\kappa^{2c+1} \mathcal{E}^N
+C_\eta\varepsilon^2\kappa^{2c+2} \mathcal{D}^N.
\label{eq:highmoment-g-source}
\end{align}
For fixed $\eta$, the last term is $o(1)\mathcal{D}^N$.
The commutator
$[\partial_{\xi_j},\mathcal T_\varepsilon]g^\varepsilon
=\varepsilon^{-1}\partial_{x_j}g^\varepsilon$
is controlled using
\[
\|w_0^{3/4}h\|_{L^2_\xi}\lesssim\|w_0h\|_{L_\Bb^2},\qquad
\|w_0^{1/4}h\|_{L^2_\xi}\lesssim\|w_0^{1/2}h\|_{L_\Bb^2},
\]
and the positive-order transfer estimate in Lemma \ref{lem:fluctxfer}. Thus
\begin{align}
\frac{\kappa^{2c}}{\varepsilon}
\sum_{j=1}^3
|\langle\partial_{x_j}g^\varepsilon,
 w_0\partial_{\xi_j}g^\varepsilon\rangle|
&\lesssim\varepsilon\kappa \mathcal{D}_H^N
+\kappa^{c+1/2}(\mathcal{E}^N \mathcal{D}_H^N)^{1/2}
 \le(\eta+o(1))\mathcal{D}_H^N + C_\eta\kappa^{2c+1}\mathcal{E}^N.
\label{eq:highmoment-g-transport}
\end{align}

Since $\Pi_0g^\varepsilon=0$ by \eqref{eq:pr-id}, combine the unweighted identity,
a fixed small $\delta_1$ multiple of the weighted identity, and a fixed
$0<\delta_2\ll\delta_1$ multiple of the summed
one-velocity-derivative identities. Estimates \eqref{eq:lin-bd}, \eqref{eq:top-bd}, and \eqref{eq:alp-bet}
absorb the compact and lower-weight remainders.
Using \eqref{eq:highmoment-g-nonlinear}--\eqref{eq:highmoment-g-transport}, and renaming
the free Young parameters, yields an equivalent energy
$\widetilde{\mathcal E}_{H,1}^N\simeq\mathcal E_{H,1}^N$ satisfying
\begin{align}
\frac{d}{dt}\widetilde{\mathcal E}_{H,1}^N
+\lambda_1\mathcal D_{H,1}^N
\le{}&(\eta+o(1))\mathcal{D}^N_H+o(1) \mathcal{D}^N
+C_\eta\kappa^{2c+1} \mathcal{E}^N
 +C\frac{\varepsilon}{\kappa^c} \mathcal{D}_H^N
\left((\mathcal{E}_H^N)^{1/2}+(\mathcal{E}^N)^{1/2}\right).
\label{eq:highmoment-boltzmann-zero}
\end{align}

Finally, add \eqref{eq:highmoment-boltzmann-positive} and
\eqref{eq:highmoment-boltzmann-zero}. Choose
$\eta<\tfrac14\min\{\lambda_1,\lambda_2\}$ and then decrease
$\varepsilon_0$ to absorb the small multiples of
$\mathcal{D}_H^N = \mathcal D_{H,1}^N+\mathcal D_{H,2}^N$.
Since
\[
\kappa^{2c+1} \mathcal{E}
\le\kappa^{2c-1}\varepsilon^{-2c_0} \mathcal{E},
\qquad 0<\kappa,\varepsilon\le1,
\]
integration and the uniform equivalence
$\widetilde{\mathcal E}_{H,1}^N+
\widetilde{\mathcal E}_{H,2}^N\simeq\mathcal E_H^N$
prove \eqref{eq:td-en}. 
\end{proof}

\subsubsection{Local-Maxwellian energy closure}
The global hierarchy above controls the fixed-Maxwellian velocity weights.
We next derive the intrinsic energy associated with the moving local Maxwellian.
The lower-order and top-order estimates are stated separately.

\begin{lemma}[Lower local-Maxwellian energy block]\label{lem:lowerlocalblock}
Set
\begin{equation}\label{eq:hg-local}
 \mathcal H_G^N(t):=\sum_{|\alpha|\le N}\frac1{2\e^2}
 \iint\frac{|\partial^\alpha G^\e|^2}{M^\e}\,d\xi dx
\end{equation}
and
\begin{equation}\label{eq:mn-local}
 \mathfrak E_{\rm mac}^N(t):=\frac12\sum_{|\alpha|\le N}\int
 \left(|\partial^\alpha\rho^\e|^2+(k_B\Theta^\e)^{-1}
 |\partial^\alpha u^\e|^2+\frac32|\partial^\alpha\theta^\e|^2\right)dx.
\end{equation}
Then $\mathfrak E_{\rm mac}^N\simeq\mathcal E_M^N$ uniformly on the bootstrap interval,
and for every fixed $\eta\in(0,1)$,
\begin{align}\label{eq:lower-local-block}
 \frac d{dt}(\mathfrak E_{\rm mac}^N+\mathcal H_G^N)+c_1\mathcal D_G^N
 &\le \eta\mathcal D_{top}^N
 +C_\eta(1+\|\nabla_xZ^\e\|_\infty^2)\mathcal E^N
 +o(1)\mathcal D_H^N\notag\\
 &\quad+C_\eta\e^2(\mathcal E_H^N+1)^2
 +C_\eta\frac{\e^2}{\kappa}(\mathcal E^N+1)^3,
 \qquad Z^\e=(\rho^\e,u^\e,\theta^\e).
\end{align}
\end{lemma}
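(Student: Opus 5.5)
We will prove \eqref{eq:lower-local-block} by combining the exact macroscopic balance laws \eqref{eq:tu-id} with the microscopic defect equation \eqref{eq:mic-id}, differentiated by $\partial^\alpha$ for $|\alpha|\le N$ and tested against $\partial^\alpha G^\e/(\e^2M^\e)$. The equivalence $\mathfrak E_{\rm mac}^N\simeq\mathcal E_M^N$ follows directly from the uniform bounds $(\Theta^\e)^{\pm1}\lesssim1$ in \eqref{eq:hn-en}.

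\emph{Step 1 (macroscopic part).} We apply $\partial^\alpha$ to \eqref{eq:tu-id} and pair the three equations with $\partial^\alpha\rho^\e$, $\partial^\alpha u^\e$ and $\partial^\alpha\theta^\e$. After integration by parts the singular $\e^{-1}$ acoustic terms cancel exactly, because $\nabla\cdot u^\e$ enters both the mass and temperature equations while $\nabla(\rho^\e+\theta^\e)$ enters the momentum equation. The transport and commutator terms, including $\partial_t(k_B\Theta^\e)^{-1}$, are bounded by $C(1+\|\nabla_xZ^\e\|_\infty)\mathcal E_M^N$ using Moser estimates, Lemma~\ref{lem:macrocoef}, and the fact that $\partial_t\Theta^\e$ is controlled through the $\theta$-equation. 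What remains are the Burnett terms $\e^{-2}\partial^\alpha\partial_j\mathbf r_{ij}^\e\,\partial^\alpha u_i^\e$ and $\e^{-2}\partial^\alpha\partial_j\mathfrak q_j^\e\,\partial^\alpha\theta^\e$, together with their coefficient commutators and the quadratic term $\nabla\mathrm U^\e:\mathbf r^\e$. The commutators are estimated with \eqref{eq:hn-kap} as $\kappa^{1/2}(\mathcal D_G^N)^{1/2}(\mathcal E_M^N)^{1/2}$, which Young's inequality absorbs.

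\emph{Step 2 (microscopic part).} We differentiate \eqref{eq:mic-id} and pair it with $\partial^\alpha G^\e(M^\e)^{-1}\e^{-2}$. Since $\Pi_M\partial^\alpha G^\e=0$ by \eqref{eq:xd-id}, the coercivity \eqref{eq:mh-bd} gives $c_M\mathcal D_G^N$. The weight $(M^\e)^{-1}$ produces two terms. First, $\partial_t(M^\e)^{-1}$ and $\e^{-1}\xi\cdot\nabla_x(M^\e)^{-1}$ yield factors $\e\partial Z^\e$ times polynomials in $\xi$; the $\e^{-1}$ cancels, and the polynomial weights are handled by the large-velocity bounds \eqref{eq:large-bd}--\eqref{eq:wt-bd}. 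This is the origin of the $\e^2(\mathcal E_H^N+1)^2$ and $o(1)\mathcal D_H^N$ contributions. Second, the spatial commutator $[\partial^\alpha,\mathcal L_M]$ falls on the Maxwellian parameters and is bounded via \eqref{eq:op-bd} and Lemma~\ref{lem:maxchain}. The nonlinear term $(\kappa\e^2)^{-1}\mathcal N(G,G)$ is controlled by \eqref{eq:op-bd}, giving $\e^{-1}\kappa^{-1}\cdot\e\,(\mathcal E_G)^{1/2}\cdot\kappa\e^2\mathcal D_G$, which is $o(1)\mathcal D_G^N$. The term $\mathcal F^\e$ involves $\e^{-1}\partial\mathbf r,\partial\mathfrak q$ times null-space directions. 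Its pairing with $\partial^\alpha G^\e/M^\e$ vanishes up to commutators, because $\mathcal F^\e$ lies in $\mathcal N_M$ and $G^\e$ is orthogonal to that space.

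\emph{Step 3 (the key cancellation, and the main obstacle).} The source $-\mathcal A^\e$ paired with $\partial^\alpha G^\e/(\e^2M^\e)$ gives $-\e^{-2}(k_B\Theta^\e)^{-1}\bigl(\partial^\alpha\partial_iu_j^\e\,\partial^\alpha\mathbf r_{ij}^\e+\partial^\alpha\partial_i\theta^\e\,\partial^\alpha\mathfrak q_i^\e\bigr)$ plus Leibniz remainders. This must cancel, after integration by parts, against the Burnett terms from Step~1. The normalisations must match exactly: the factor $\frac32$ in the $\theta$-equation and the $\frac{2}{3}\chi$ structure in $\mathcal Q$ have to agree, up to lower-order coefficient differences such as $\mathrm P^\e$ versus $1$. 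Those coefficient differences are $O(\e\|Z\|_\infty)$ and are absorbed. We expect the main difficulty to be the Leibniz remainders at $|\alpha|=N$ in which a derivative of order $N+1$ lands on $Z^\e$ or on $\mathbf r^\e$. For these we plan to use \eqref{eq:htop-bd} and \eqref{eq:topmacro-bd}. These lose $\kappa^{-1/2}$, which yields the $\eta\mathcal D_{top}^N$ and $C_\eta\e^2\kappa^{-1}(\mathcal E^N+1)^3$ terms. Summing over $|\alpha|\le N$ and applying Young's inequality then gives \eqref{eq:lower-local-block}.
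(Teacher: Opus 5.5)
Your overall architecture is the paper's. It has the same four ingredients: the macroscopic energy with exact cancellation of the $\e^{-1}$ acoustic pairings; the $(M^\e)^{-1}$-weighted energy for $\partial^\alpha G^\e$ with local coercivity; cancellation of the $\mathcal A^\e$ pairing against the Burnett terms up to the density mismatch $(\mathrm P^\e)^{-1}-1$; and vanishing of the principal $\mathcal F^\e$ pairing by moment matching.

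\textbf{The gap.} It lies in the collision pairings, which you dispose of in one line each; they cannot be closed with $\mathcal E_G^N$, $\mathcal D_G^N$ and \eqref{eq:op-bd} alone. After testing with $\partial^\alpha G^\e/(\e^2M^\e)$, consider
$\frac{1}{\kappa\e^4}\iint\mathcal N(\partial^\beta M^\e,\partial^{\alpha-\beta}G^\e)\,\partial^\alpha G^\e/M^\e$ for $0<\beta\le\alpha$, and $\frac{1}{\kappa\e^4}\iint\mathcal N(\partial^\beta G^\e,\partial^{\alpha-\beta}G^\e)\,\partial^\alpha G^\e/M^\e$.
\begin{itemize}
\item Both terms spend one factor $\e^2\kappa^{1/2}(\mathcal D_G^N)^{1/2}$ on the test function. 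In the branch of \eqref{eq:op-bd} where the other microscopic factor is measured in $L^2_\xi$ rather than $L^2_{\mathbf D}$, they keep a net $(\e\kappa^{1/2})^{-1}$.
\item For $\beta=\alpha$ this branch is $\frac{(\mathcal E^N)^{1/2}}{\e\kappa^{1/2}}\|G^\e/\sqrt{M^\e}\|_{L^\infty_xL^2_\xi}(\mathcal D_G^N)^{1/2}$. The bound $\e(\mathcal E^N)^{1/2}$ that $\mathcal E_G^N$ suggests yields $\kappa^{-1/2}\mathcal E^N(\mathcal D_G^N)^{1/2}$, hence $C\kappa^{-1}(\mathcal E^N)^2$ after Young. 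That term is absent from \eqref{eq:lower-local-block} and would ruin the long-time closure.
\item Your $\mathcal N(G^\e,G^\e)$ estimate keeps only one branch. The other also carries $(\e\kappa^{1/2})^{-1}$. It is $o(1)\mathcal D_G^N$ only if the $L^\infty_xL^2_{\mathbf D}$ norm of the low-order factor is $O(\e^2\kappa^{1/2}(\mathcal D_G^N)^{1/2})$.
\item That last bound fails. Placing a factor in $L^\infty_x$ requires $H^2_x$ norms of $\partial^\delta G^\e/\sqrt{M^\e}$, and the $x$-derivatives fall on $(M^\e)^{-1/2}$. This creates factors $\e\nabla_xZ^\e\langle\xi\rangle^2$ (and $\langle\xi\rangle^4$ at second order), which neither $\mathcal E_G^N$ nor $\mathcal D_G^N$ controls.
\end{itemize}

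\textbf{The missing ingredient.} You need an $\e^2$-gain for the low-order microscopic factors. Let $X_0$ and $X_D$ be $\sum_{|\delta|\le\lfloor(N+1)/2\rfloor}$ of the $H^2_xL^2_\xi$ and $H^2_xL^2_{\mathbf D}$ norms of $\partial^\delta G^\e/\sqrt{M^\e}$. The paper proves $X_0\lesssim\e^2(1+\mathcal E^N+(\mathcal D^N)^{1/2})+\e^5(\mathcal E_H^N)^{1/2}$ and $X_D\lesssim\kappa^{1/2}\e^2(\mathcal D^N)^{1/2}+\e^2\mathcal E^N+\e^3\kappa^{-(2c-1)/2}(\mathcal E^N\mathcal D_H^N)^{1/2}$.
\begin{itemize}
\item The $X_0$ bound comes from the large-velocity closure \eqref{eq:large-bd}. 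Bounded velocities are controlled by the dissipation; large velocities by Gaussian decay against $\mathcal E_H^N$.
\item The $X_D$ bound comes from rewriting the $\langle\xi\rangle^4$-weighted dissipation through the global-Maxwellian expansion \eqref{eq:dist-id}, so that $\mathcal D_H^N$ controls it.
\end{itemize}
Inserted into the two pairings, these give contributions such as $\frac{\e}{\kappa^{1/2}}(\mathcal E^N)^{3/2}(\mathcal D_G^N)^{1/2}$ and $\frac{\e^2}{\kappa^c}\mathcal E^N(\mathcal D_H^N\mathcal D_G^N)^{1/2}$. These are what force the $o(1)\mathcal D_H^N$ term and the factor $\kappa^{-1}$ in $C_\eta\e^2\kappa^{-1}(\mathcal E^N+1)^3$. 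Neither comes from the moving-weight term, whose bounds \eqref{eq:large-bd}--\eqref{eq:wt-bd} contain no $\mathcal D_H^N$. Nor do they come from the order-$(N+1)$ remainders of the $\mathcal A^\e$ cancellation, which cost only $\eta\mathcal D^N+C_\eta\e^2(\mathcal E^N)^2$. Without this step your Step 2 does not close.
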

\begin{proof}
Spatial differentiation of
\eqref{eq:mic-id} gives, for $|\alpha|\le N$,
\begin{align}
 \mathcal T_\e\partial^\alpha G^\e
 +\frac1{\kappa\e^2}\mathcal L_M(\partial^\alpha G^\e)
 &=-\partial^\alpha\mathcal A^\e+\partial^\alpha\mathcal F^\e\notag\\
 &\quad+\frac2{\kappa\e^2}\sum_{0<\beta\le\alpha}\binom\alpha\beta
 \mathcal N(\partial^\beta M^\e,\partial^{\alpha-\beta}G^\e)\notag\\
 &\quad+\frac1{\kappa\e^2}\sum_{\beta\le\alpha}\binom\alpha\beta
 \mathcal N(\partial^\beta G^\e,\partial^{\alpha-\beta}G^\e).
\end{align}
Testing by $\partial^\alpha G^\e/(\e^2M^\e)$ and summing gives
\begin{equation}\label{eq:micro-local-exact}
 \frac d{dt}\mathcal H_G^N
 +\sum_{|\alpha|\le N}\frac1{\kappa\e^4}
 \iint\mathcal L_M(\partial^\alpha G^\e)
 \frac{\partial^\alpha G^\e}{M^\e}
 =J_0+J_A+J_F+J_{MG}+J_{GG},
\end{equation}
where
\begin{align*}
 J_0&:=-\sum_{|\alpha|\le N}\frac1{2\e^2}
 \iint \mathcal T_\e M^\e\frac{|\partial^\alpha G^\e|^2}{(M^\e)^2},\\
 J_A&:=-\sum_{|\alpha|\le N}\frac1{\e^2}
 \iint\partial^\alpha\mathcal A^\e\frac{\partial^\alpha G^\e}{M^\e},
 \qquad
 J_F:=\sum_{|\alpha|\le N}\frac1{\e^2}
 \iint\partial^\alpha\mathcal F^\e\frac{\partial^\alpha G^\e}{M^\e},\\
 J_{MG}&:=\sum_{|\alpha|\le N}\frac2{\kappa\e^4}
 \sum_{0<\beta\le\alpha}\binom\alpha\beta
 \iint\mathcal N(\partial^\beta M^\e,\partial^{\alpha-\beta}G^\e)
 \frac{\partial^\alpha G^\e}{M^\e},\\
 J_{GG}&:=\sum_{|\alpha|\le N}\frac1{\kappa\e^4}
 \sum_{\beta\le\alpha}\binom\alpha\beta
 \iint\mathcal N(\partial^\beta G^\e,\partial^{\alpha-\beta}G^\e)
 \frac{\partial^\alpha G^\e}{M^\e}.
\end{align*}
The identity for $J_0$ follows exactly from differentiating the moving weight:
\[
 \frac1{\e^2}\iint\mathcal T_\e(\partial^\alpha G^\e)
 \frac{\partial^\alpha G^\e}{M^\e}
 =\frac d{dt}\frac1{2\e^2}\iint\frac{|\partial^\alpha G^\e|^2}{M^\e}
 +\frac1{2\e^2}\iint\mathcal T_\e M^\e
 \frac{|\partial^\alpha G^\e|^2}{(M^\e)^2}.
\]
Because every $\partial^\alpha G^\e$ has zero local mass, momentum, and energy
moments, the coercivity estimate \eqref{eq:mh-bd} makes the second term on the
left of \eqref{eq:micro-local-exact} bounded below by $c\mathcal D_G^N$.

For the macroscopic partner, define the macroscopic stress--heat-flux pairing by
\begin{align*}
\bega
\mathfrak{S}_{\alpha}(t) &:= -\frac{1}{\e^2} \int_{\Omega} \p^{\alpha}\bigg[\frac{1}{k_B\mathrm{P}^{\e} \mathrm{\Theta}^{\e}} \nabla_x \cdot \mathbf{r}^{\e}\bigg] \cdot \p^{\alpha}u^{\e} \dd x  -\frac{1}{\e^2}\int_{\Omega} \p^{\alpha}\bigg[\frac{1}{k_B\mathrm{P}^{\e}\mathrm{\Theta}^{\e}} \Big(\nabla_x\cdot \mathfrak{q}^{\e} + \nabla_x \mathrm{U}^{\e} : \mathbf{r}^{\e}\Big)\bigg] \p^{\alpha}\theta^{\e} \dd x .
\enda
\end{align*}
Applying $\partial^\alpha$ to the three balance laws
\eqref{eq:tu-id}, testing with
$(\partial^\alpha\rho^\e, \partial^\alpha u^\e,
\partial^\alpha\theta^\e)$ and summing gives
\begin{align*}
    &   \frac{d}{dt} \mathfrak{E}_{\rm mac}^N = \sum_{0\leq |\alpha|\leq N} \mathfrak{S}_\alpha + \mathcal{R}_{\rm mac}.
\end{align*}  
The two $\e^{-1}$ acoustic pairings cancel
exactly after integration by parts. Writing \(b^\varepsilon=(k_B\Theta^\varepsilon)^{-1}\) and \(D_t=\partial_t+u^\varepsilon\cdot\nabla_x\), the remainder $\mathcal{R}_{\rm mac}$ consists of the transport commutators, the commutator $[\p^\alpha, b^\e]D_t u^\e$ and the velocity-weight contribution
\begin{align*}
    &  \frac{1}{2} \sum_{|\alpha|\leq N} \int_{\R^3} (D_t b^\e + b^\e \nabla_x \cdot u^\e)|\p^\alpha u^\e|^2 dx.
\end{align*}
The temperature equation controls \(D_tb^\varepsilon=-\varepsilon b^\varepsilon D_t\theta^\varepsilon\). For the commutator term, use $\nabla_x  b^\e= -\e b^\e \nabla_x \theta^\e$ and the equation of $D_t u^\e$ in \eqref{eq:tu-id}. Thus the factor \(\varepsilon\) from differentiating $b^\e$ compensates for the singular acoustic scale. Lemma \ref{lem:macrocoef} and Lemma~\ref{lem:stress} and the Sobolev commutator estimates therefore give
\begin{align*}
     \mathcal{R}_{\rm mac} &\lesssim \Vert \nabla_x Z^\e \Vert_\infty \mathcal{E}_M^N + \e \kappa^{1/2} \mathcal{E}_M^N (\mathcal{D}_G^N)^{1/2}  \leq C(1+ \Vert \nabla_x Z^\e\Vert_\infty^2) \mathcal{E}_M^N + \eta \mathcal{D}_G^N + C_\eta \e^2 \kappa (\mathcal{E}^N+1)^2.
\end{align*}
Collecting the estimates above concludes
\begin{equation}\label{eq:macro-local-rem}
 \frac d{dt}\mathfrak E_{\rm mac}^N
 \le \sum_{|\alpha|\le N}\mathfrak S_\alpha
 +C(1+\|\nabla_xZ^\e\|_\infty^2)\mathcal E_M^N
 +\eta\mathcal D_G^N+C_\eta\e^2\kappa(\mathcal E^N+1)^2.
\end{equation}

We next combine $\sum_\alpha \mathfrak{S}_\alpha$ with $J_A$. Exact moment matching \eqref{eq:xd-id} and \eqref{eq:ij-mic} give
\begin{align*}
    &   \int_{\R^3} \mathfrak{R}_{ij}^\e \p^\alpha G^\e d \xi = \p^\alpha \mathbf{r}_{ij}^\e, \ \ \int_{\R^3} \mathcal{Q}_i^\e \p^\alpha G^\e d \xi = \p^\alpha \mathfrak{q}_i^\e + \sum_{0<\beta\leq \alpha} \begin{pmatrix}
        \alpha \\
        \beta
    \end{pmatrix} \sum_k \p^\beta \mathrm{U}^\e_k \p^{\alpha-\beta} \mathbf{r}_{ik}^\e.
\end{align*}
Isolate the terms in \(J_A\) where all derivatives fall on
\(\nabla_xu^\varepsilon\) or \(\nabla_x\theta^\varepsilon\), and integrate the principal macroscopic flux terms by parts. The principal microscopic and macroscopic flux pairings cancel up to the density mismatch \((P^\varepsilon)^{-1}-1\), which supplies an additional factor \(\varepsilon\):
\begin{align*}
    &    J_A + \sum_{|\alpha|\leq N} \mathfrak{S}_\alpha = \frac{1}{\e^2} \sum_{|\alpha|\leq N} \int_{\R^3} \frac{1}{k_B \Theta^\e} ((\mathrm{P}^\e)^{-1}-1) (\p^\alpha \nabla_x u^\e : \p^\alpha \mathbf{r}^\e + \p^\alpha \nabla_x \theta^\e \cdot \p^\alpha \mathfrak{q}^\e) dx + \mathcal{R}_{\rm flux}.
\end{align*}
Applying Lemma \ref{lem:macrocoef}, Lemma \ref{lem:stress} and Lemma \ref{lem:topmacro} controls the first term by 
\begin{align*}
    & \frac{1}{\e^2} \Vert (\mathrm P^\e)^{-1} -1\Vert_\infty \Vert (\mathbf{r}^\e, \mathfrak{q}^\e)\Vert_{H^N} \Vert (u^\e,\theta^\e)\Vert_{H^{N+1}} \lesssim \eta \mathcal{D}_G^N + C_\eta \e^2 (\mathcal{E}^N)^2.
\end{align*}
$\mathcal{R}_{\rm flux}$ contains the differentiated-coefficient terms, the heat-flux commutator displayed above, and the stress-work contribution involving \(\nabla_xU^\varepsilon:r^\varepsilon\). By the same Sobolev product argument: differentiated Maxwellian coefficients and positive derivatives of \(U^\varepsilon=\varepsilon u^\varepsilon\) supply an additional factor \(\varepsilon\); moment derivatives are controlled by Lemma \ref{lem:stress}, and any \(N+1\) macroscopic derivative by Lemma \ref{lem:topmacro}. Thus, $     \mathcal{R}_{\rm flux} \lesssim \e \mathcal{E}^N (\mathcal{D}_G^N)^{1/2}.$ Consequently,
\begin{equation}\label{eq:flux-cancel-local}
 \left|J_A+\sum_{|\alpha|\le N}\mathfrak S_\alpha\right| \lesssim \eta \mathcal{D}^N + C_\eta \e^2 (\mathcal{E}^N)^2
 \le \eta\mathcal D^N+C_\eta\frac{\e^2}{\kappa}(\mathcal E^N+1)^3.
\end{equation}

It remains to check that the moving-weight, source, and collision remainders
have precisely the scales retained in \eqref{eq:lower-local-block}. For the low-order microscopic factors that will be placed in \(L^\infty_x\) in the collision pairings, we introduce quantities \(X_0\),\(X_D\) and collect their \(H^2_x\)-based energy and dissipation bounds. Denote
$m_*:=\lfloor(N+1)/2\rfloor$ and
\begin{align}
 X_0&:=\sum_{|\delta|\le m_*}
 \left\|\frac{\partial^\delta G^\e}{\sqrt{M^\e}}\right\|_{H_x^2L_\xi^2},\\
 X_D&:=\sum_{|\delta|\le m_*}
 \left\|\frac{\partial^\delta G^\e}{\sqrt{M^\e}}\right\|_{H_x^2L_{\mathbf D}^2}.
\end{align}
The non-trivial terms arise when the spatial derivative acts on $(M^\e)^{-1/2}$ and the Maxwellian chain rule \eqref{eq:par-id} is needed. With the macroscopic coefficients bound in Lemma \ref{lem:macrocoef}, 
\begin{align*}
    & X_0 \lesssim \e \sum_{|\delta|\le m_*+2}(\mathcal{V}_0^\delta)^{\frac{1}{2}} + \e^{3/2}  \sum_{|\delta|\leq m_*+1} (\mathcal{V}_4^\delta)^{1/2} + \e^{3/2} \sum_{|\delta|\leq m_* } (\mathcal{V}_8^\delta)^{1/2}.
\end{align*}
The large-velocity
bounds \eqref{eq:large-bd}--\eqref{eq:wt-bd} imply
\begin{align}
 X_0&\lesssim \e^{3/4}(\mathcal E^N)^{1/2}
             +\e^5(\mathcal E_H^N)^{1/2},\notag\\
    X_0         &  \lesssim \e^2 (\mathcal{E}^N)^{1/2} + \e^{7/4}\kappa^{1/2} (\mathcal{E}^N + (\mathcal{D}^N)^{1/2}) + \e^5 (\mathcal{E}_H^N)^{1/2} \lesssim \e^{2} (1+ \mathcal{E}^N + (\mathcal{D}^N)^{1/2}) + \e^5 (\mathcal{E}_H^N)^{1/2}  \label{eq:x0xd-bd-local} .
\end{align}
The first bound for \(X_0\) is used to obtain small coefficients multiplying dissipation terms. The second retains the stronger \(\varepsilon\)-gain needed when \(X_0\) is accompanied by a singular prefactor.

For $X_D$, the macroscopic coefficients bound in Lemma \ref{lem:macrocoef} leads to
\begin{align}
    &  \Big\Vert  \frac{\p^{\delta} G^\e}{|M^\e|^{\frac{1}{2}}} \Big\Vert_{H^2_x L^2_\Db}  \lesssim   \underbrace{\sum_{|\gamma|=0}^2\Big\Vert  \frac{\p^{\delta+\gamma} G^\e}{|M^\e|^{\frac{1}{2}}} \Big\Vert_{L^2_x L^2_\Db}}_{\eqref{eq:xd-h2-split}_1} + \underbrace{\e|\mathcal{E}_M^{N}|^{\frac{1}{2}}  \sum_{|\gamma|=0}^1\Big\Vert \langle \xi\rangle^4  \frac{\p^{\delta+\gamma} G^\e}{|M^\e|^{\frac{1}{2}}} \Big\Vert_{L^2_x L^2_\Db}}_{\eqref{eq:xd-h2-split}_2} . \label{eq:xd-h2-split}
\end{align}
For the Landau equation, applying the global Maxwellian expansion \eqref{eq:dist-id},
\begin{align*}
    & \eqref{eq:xd-h2-split}_2 \lesssim \e (\mathcal{E}_M^{N})^{\frac{1}{2}} \sum_{|\gamma|=0}^1\Big\Vert \langle \xi\rangle^4  \frac{\p^{\delta+\gamma} (M^\e-\mu-\e\sqrt{\mu}f^\e)}{|M^\e|^{\frac{1}{2}}} \Big\Vert_{L^2_x L^2_\Lb} \lesssim \e^2 \mathcal{E}_M^{N}  + \e^2 (\mathcal{E}_M^{N})^{\frac{1}{2}} \sum_{|\gamma|=0}^1 \Big\Vert \langle \xi\rangle^4  \frac{w^{-1}\sqrt{\mu}}{|M^\e|^{\frac{1}{2}}} w\p^{\delta+\gamma} f^\e \Big\Vert_{L^2_{x}L^2_{\Lb}} \\
    &  \lesssim \e^2 \mathcal{E}_M^{N} + \e^2 (\mathcal{E}_M^{N})^{\frac{1}{2}} [\Vert w \Pi f^\e\Vert_{H^{N}_x L^2_\Lb} + \Vert w (I-\Pi) f^\e\Vert_{H^{N}_x L^2_\Lb} ]    \lesssim  \e^2 \mathcal{E}^{N} + \e^2 (\mathcal{E}_M^{N})^{\frac{1}{2}} \frac{\e}{\kappa^{\frac{2c-1}{2}}} (\mathcal{D}_H^{N})^{\frac{1}{2}}.
\end{align*}
For the non-cutoff Boltzmann equation, applying the local-global Maxwellian bridge \eqref{eq:poly-bd} and \eqref{eq:der-alp},
\begin{align*}
    & \eqref{eq:xd-h2-split}_2 \lesssim \e|\mathcal{E}_M^{N}|^{\frac{1}{2}} \sum_{|\gamma|=0}^1\Big\Vert \langle \xi\rangle^4  \frac{\p^{\delta+\gamma} \e \sqrt{\mu_0}g^\e}{|M^\e|^{\frac{1}{2}}} \Big\Vert_{L^2_x L^2_\Bb} \lesssim   \e^2 |\mathcal{E}_M^{N}|^{\frac{1}{2}} \sum_{|\gamma|=0}^1 \Big\Vert \langle \xi\rangle^4  \frac{w^{-1}_0\sqrt{\mu_0}}{|M^\e|^{\frac{1}{2}}} w_0\p^{\delta+\gamma} g^\e \Big\Vert_{L^2_{x}L^2_{\Bb}}  \\
    &\lesssim  \e^2 |\mathcal{E}_M^{N}|^{\frac{1}{2}} \Vert w_0 g^\e\Vert_{H^{N}_x L^2_\Bb}   \lesssim \e^2 |\mathcal{E}_M^{N}|^{\frac{1}{2}} [\Vert w_0g\Vert_{L^2_{x,\Bb}} + |\mathcal{E}^{N}|^{\frac{1}{2}} + \sum_{|\delta|=1}^{N}\Vert w_0 \p^\delta f^\e\Vert_{L^2_{x,\Bb}}] \lesssim  \e^2 |\mathcal{E}^{N}| + \e^2 |\mathcal{E}^{N}|^{\frac{1}{2}}\frac{\e}{\kappa^{\frac{2c-1}{2}}} |\mathcal{D}_H^{N}|^{\frac{1}{2}}.
\end{align*}
Consequently, $X_D$ satisfies, in both cases,
\begin{align}
     X_D&\lesssim \kappa^{1/2}\e^2(\mathcal D^N)^{1/2}
             +\e^2\mathcal E^N
             +\frac{\e^3}{\kappa^{(2c-1)/2}}
               (\mathcal E^N)^{1/2}(\mathcal D_H^N)^{1/2}. \label{eq:xd-bd-local}
\end{align}

For each Leibniz splitting in $J_{MG}$ or $J_{GG}$ at least one collision
factor has at most $m_*$ spatial derivatives and is placed in $L_x^\infty$.
Using \eqref{eq:op-bd} in Lemma~\ref{lem:collest},
\begin{align*}
    &    |J_{MG}| \lesssim \varepsilon (\mathcal E^N)^{1/2}\mathcal D_G^N
    +
    \frac{(\mathcal E^N)^{1/2}}{\varepsilon\kappa^{1/2}}
    (X_0+X_D)(\mathcal D_G^N)^{1/2}, \\
    & |J_{GG}| \lesssim X_0 \mathcal{D}_G^N + \frac{X_D}{\e \kappa^{1/2}} (\mathcal{E}^N)^{1/2} (\mathcal{D}_G^N)^{1/2}.
\end{align*}
Applying \eqref{eq:x0xd-bd-local},\eqref{eq:xd-bd-local}, the estimate to $J_{GG},J_{MG}$ reduce to the following four types.
\begin{align}\label{eq:local-scale-summary}
 &\frac\e{\kappa^{1/2}}(1+(\mathcal E^N)^{3/2})(\mathcal D_G^N)^{1/2},
 \qquad
 \frac{\e^4}{\kappa^{1/2}}(\mathcal E_H^N\mathcal E^N)^{1/2}
 (\mathcal D_G^N)^{1/2},\notag\\
 &\frac\e{\kappa^{1/2}}(\mathcal E^N)^{1/2}
 (\mathcal D^N\mathcal D_G^N)^{1/2},
 \qquad
 \frac{\e^2}{\kappa^c}\mathcal E^N
 (\mathcal D_H^N\mathcal D_G^N)^{1/2}.
\end{align}
The first two are bounded by $\eta\mathcal D_G^N$ plus
$C_\eta\e^2\kappa^{-1}(1+(\mathcal E^N)^3)$ and
$C_\eta\e^8\kappa^{-1}\mathcal E_H^N\mathcal E^N$, respectively; the latter is
bounded by $C_\eta\e^2(\mathcal E_H^N+1)^2$ under the bootstrap.
The third term is $o(1)\mathcal D^N$ because
$\e\kappa^{-1/2}(\mathcal E^N)^{1/2}=o(1)$, while the fourth is
$o(1)(\mathcal D_H^N+\mathcal D_G^N)$ by the scale relations.

Finally, for \(J_0\), insert the exact identity \(\mathcal T_\varepsilon M^\varepsilon=\mathcal A^\varepsilon-\mathcal F^\varepsilon\) and apply the large-velocity estimates \eqref{eq:large-bd},\eqref{eq:wt-bd}:
\begin{align*}
    &  |J_0| \lesssim \Vert \nabla_x Z^\e\Vert_\infty \sum_{|\alpha| \leq N} \mathcal{V}_3^\alpha + \e \sqrt{\kappa}(\mathcal{D}_G^N)^{1/2} \sum_{|\alpha|\leq N} \mathcal{V}_2^\alpha.
\end{align*}
For \(J_F\), the terms in which no derivative falls on the Maxwellian coefficient vanish by moment matching \eqref{eq:xd-id}; the remaining terms are controlled by the Maxwellian chain rule \eqref{eq:par-id}, Lemma~\ref{lem:stress}, and the Gaussian dual
bound \eqref{eq:xi-poly}. Therefore,
\begin{equation*}
 |J_0|+|J_F|
 \le \eta(\mathcal D^N+\mathcal D_G^N)
 +C_\eta(1+\|\nabla_xZ^\e\|_\infty^2)\mathcal E^N
 +C_\eta\e^2(\mathcal E_H^N+1)^2+o(1)\mathcal D_H^N.
\end{equation*}
Adding \eqref{eq:micro-local-exact}, \eqref{eq:flux-cancel-local},
\eqref{eq:macro-local-rem} and \eqref{eq:local-scale-summary}, together with the $J_0,J_F$ estimate, and decreasing the free Young parameters, proves
\eqref{eq:lower-local-block}.
\end{proof}

\begin{lemma}[Top local-Maxwellian energy block]\label{lem:toplocalblock}
Recall the top energy and dissipation defined in \eqref{eq:top-def}. For every fixed $\eta\in(0,1)$, after decreasing $\e_0$ if necessary,
\begin{align}\label{eq:top-local-block}
 \frac d{dt}(2\mathcal H_{\rm top})+c_2\mathcal D_{top}^N
 &\le \eta\mathcal D^N
 +C_\eta(1+\|\nabla_xZ^\e\|_\infty^2)\mathcal E^N
 +o(1)\mathcal D_H^N\notag\\
 &\quad+C_\eta\e^2(\mathcal E_H^N+1)^2
 +C_\eta\frac{\e^2}{\kappa}(\mathcal E^N+1)^3.
\end{align}
\end{lemma}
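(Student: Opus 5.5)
The plan is to derive the top-order estimate from the evolution of the full distribution $\partial^\alpha F^\e$ with $|\alpha|=N+1$, tested against the moving weight $(M^\e)^{-1}$, in the spirit of \cite[Lemma~2.13]{bae2026quantitative}. We deliberately do not pass through the microscopic equation \eqref{eq:mic-id}. Applying $\partial^\alpha$ to $\mathcal T_\e F^\e=(\kappa\e^2)^{-1}\mathcal N(F^\e,F^\e)$ gives
\[
\mathcal T_\e\partial^\alpha F^\e
=\frac{1}{\kappa\e^2}\Bigl(2\mathcal N(M^\e,\partial^\alpha F^\e)+\sum_{0<\beta\le\alpha}\binom\alpha\beta\mathcal N(\partial^\beta F^\e,\partial^{\alpha-\beta}F^\e)\Bigr).
\]
Note that $2\mathcal N(M^\e,\partial^\alpha F^\e)=-\mathcal L_M\partial^\alpha F^\e$. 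We multiply by $\kappa\partial^\alpha F^\e/(\e^2M^\e)$, integrate, and sum over $|\alpha|=N+1$. This produces
\[
\frac d{dt}\mathcal H_{\rm top}
+\frac{1}{\e^4}\iint\mathcal L_M(\partial^\alpha F^\e)\frac{\partial^\alpha F^\e}{M^\e}
\]
on the left. The right-hand side consists of a moving-weight term $J_0^{\rm top}=-\frac{\kappa}{2\e^2}\iint\mathcal T_\e M^\e|\partial^\alpha F^\e|^2(M^\e)^{-2}$ and the Leibniz collision terms. Since $\mathcal L_M$ annihilates $\mathcal N_M$ and is $(M^\e)^{-1}$-symmetric, the quadratic form equals the same form with $\Pi_M^\perp\partial^\alpha F^\e$ in both slots. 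By \eqref{eq:mq-bd} it is therefore bounded below by $c_M\mathcal D_{top}^N$. The point of using the full distribution is that the singular transport $\e^{-1}\xi\cdot\nabla_x$ is exactly antisymmetric except where it hits the weight, which is already contained in $J_0^{\rm top}$. No $\e^{-1}$ acoustic or Burnett source terms appear at top order.

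Next we estimate $J_0^{\rm top}$. We insert $\mathcal T_\e M^\e=\mathcal A^\e-\mathcal F^\e$ and split $\partial^\alpha F^\e=\partial^\alpha M^\e+\partial^\alpha G^\e$. The $\mathcal A^\e$ part has coefficient $\|\nabla_x Z^\e\|_\infty$ times a cubic polynomial in $c^\e$. It is bounded by $\|\nabla_xZ^\e\|_\infty(\mathcal V^\alpha_3+\kappa\e^{-2}\|\partial^\alpha M^\e/\sqrt{M^\e}\|_2^2)$. We control this using \eqref{eq:wt-bd}, the chain rule \eqref{eq:par-id}, and Lemma~\ref{lem:topmacro} ($\kappa\|Z^\e\|_{\dot H^{N+1}}^2\lesssim\mathcal E^N$). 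Young's inequality then gives $C(1+\|\nabla_xZ^\e\|_\infty^2)\mathcal E^N+\e^{10}\mathcal E_H^N$. The $\mathcal F^\e$ part carries $\e^{-1}\nabla(\mathbf r^\e,\mathfrak q^\e)$. Here we use Lemma~\ref{lem:stress} in $L^\infty_x$ via $H^N$ Sobolev, which gives a factor $\e\kappa^{1/2}(\mathcal D_G^N)^{1/2}$, together with $\mathcal V^\alpha_2$ from \eqref{eq:large-bd}. This yields $\eta\mathcal D^N$ plus lower terms.

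Then we treat the collision terms with $0<\beta\le\alpha$. We write $\partial^\beta F^\e=\partial^\beta M^\e+\partial^\beta G^\e$ and use \eqref{eq:op-bd}. For the test function we split $\partial^\alpha F^\e=\Pi_M\partial^\alpha F^\e+\Pi_M^\perp\partial^\alpha F^\e$. Only the $\Pi_M^\perp$ part survives, because $\mathcal N$ takes values orthogonal to $\mathcal N_M$ in the $(M^\e)^{-1}$ pairing (the collision invariants are preserved). So the $L^2_{\mathbf D}$ test norm is always $\e^2(\mathcal D_{top}^N)^{1/2}$, and the prefactor $\kappa\e^{-4}/\kappa$ leaves $\e^{-2}$. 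Two cases then arise.

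\emph{Case 1: both factors Maxwellian.} The $M$--$M$ products carry at least $\e^2$ from \eqref{eq:par-id}. When one factor has $N+1$ derivatives, we use $\sqrt\kappa\,\|Z^\e\|_{\dot H^{N+1}}$ from Lemma~\ref{lem:topmacro}. This gives terms $\e\kappa^{-1/2}(\mathcal E^N)^{1/2}\cdot(\mathcal E^N)^{1/2}(\mathcal D^N_{top})^{1/2}$, which are $\eta\mathcal D^N+C_\eta\e^2\kappa^{-1}(\mathcal E^N+1)^3$.

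\emph{Case 2: at least one $G$ factor.} We place the lower-derivative factor in $L^\infty_x$ and reuse the $X_0,X_D$ bounds \eqref{eq:x0xd-bd-local}--\eqref{eq:xd-bd-local}. The top $G$ factor $\partial^{N+1}G^\e$ is handled by the top-order analogue $\partial^\alpha G^\e=\Pi_M^\perp\partial^\alpha F^\e-\Pi_M^\perp\mathcal R^\e_{\alpha,M}$ from the proof of Lemma~\ref{lem:stress}. This reproduces the four scale types \eqref{eq:local-scale-summary}, now with $\mathcal D^N$ in place of $\mathcal D_G^N$. Together with the $\e^5(\mathcal E_H^N)^{1/2}$ contributions, these give $o(1)\mathcal D_H^N+C_\eta\e^2(\mathcal E_H^N+1)^2$.

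The main obstacle is the top-order $M$--$G$ term in which all $N+1$ derivatives fall on $M^\e$. Its $\partial^{N+1}Z^\e$ is only controlled with the loss $\kappa^{-1/2}$, and it is paired with the singular $(\kappa\e^2)^{-1}$. I would handle it through the linear piece $\e M^\e\Phi^\e_{\alpha,1}\in\mathcal N_M$. The pairing $\mathcal N(\e M^\e\Phi,G^\e)$ against $\Pi_M^\perp\partial^\alpha F^\e$ is bounded by $\e\|\Phi\|\,\|G^\e/\sqrt{M^\e}\|_{L^\infty_xL^2_{\mathbf D}}\,\e^2(\mathcal D_{top})^{1/2}$. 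Then $\|G^\e/\sqrt{M^\e}\|_{L^\infty_xL^2_{\mathbf D}}\lesssim\e^2\kappa^{1/2}(\mathcal D_G^N)^{1/2}$ recovers the lost $\kappa^{1/2}$, so the term closes as $\e\kappa^{-1/2}(\mathcal E^N)^{1/2}(\mathcal D^N)$, which is $o(1)\mathcal D^N$. Collecting all the estimates and choosing $\eta$ small yields \eqref{eq:top-local-block}.
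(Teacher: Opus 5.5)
\textbf{Verdict: the structure matches the paper's proof, but one step fails as written.} You differentiate the full kinetic equation at order $N+1$ and test with $\kappa\partial^\alpha F^\e/(\e^2M^\e)$. You take coercivity on $\Pi_M^\perp\partial^\alpha F^\e$ from \eqref{eq:mq-bd}, treat the moving weight through $\mathcal T_\e M^\e=\mathcal A^\e-\mathcal F^\e$ and the large-velocity bounds, and pair the top-order input $\partial^\alpha F^\e$ with an undifferentiated $G^\e$ in $L^\infty_x$; your ``main obstacle'' paragraph is the paper's $I_3^{top}$.

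\textbf{The moving-weight step fails because the two large-velocity estimates are swapped.}

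\emph{The $\mathcal A^\e$ part.} Here $\mathcal V_3^{top}$ is multiplied by $\|\nabla_xZ^\e\|_\infty$. Using \eqref{eq:wt-bd} leaves $\e^{-1/2}\|\nabla_xZ^\e\|_\infty\mathcal E^N$. This is not bounded by $C(1+\|\nabla_xZ^\e\|_\infty^2)\mathcal E^N$ with an $\e$-independent constant; Young's inequality only trades it for $\|\nabla_xZ^\e\|_\infty^2\mathcal E^N+\e^{-1}\mathcal E^N$. In the Gronwall closure of Theorem~\ref{thm:longtime} it would produce a factor $\exp(C\e^{-1/2}\int_0^t\|\nabla_xZ^\e\|_\infty)$.

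\emph{The $\mathcal F^\e$ part.} Here $\mathcal V_2^{top}$ is multiplied by $\e\kappa^{1/2}(\mathcal D_G^N)^{1/2}$. Using \eqref{eq:large-bd} produces $\e^{5/2}\kappa^{3/2}(\mathcal D_G^N)^{1/2}\mathcal D^N$. This is superlinear in the dissipation and cannot be absorbed into $\eta\mathcal D^N$, because $\mathcal D^N$ is controlled only after time integration.

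\emph{The fix is the paper's pairing.} Use \eqref{eq:large-bd} for $\mathcal V_3^{top}$: its dissipative part $\|\nabla_xZ^\e\|_\infty\e^{3/2}\kappa\mathcal D^N$ is $o(1)\mathcal D^N$, since $\|\nabla_xZ^\e\|_\infty\lesssim\e^{-c_0/2}$ under \eqref{eq:tot-bd}. Use \eqref{eq:wt-bd} for $\mathcal V_2^{top}$: the $\e^{-1/2}$ loss is paid by the prefactor, giving $\e^{1/2}\kappa^{1/2}(\mathcal D_G^N)^{1/2}\mathcal E^N\le\eta\mathcal D^N+C_\eta\e\kappa(\mathcal E^N)^2$. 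This is admissible since $\kappa^2\lesssim\e$. With this swap the step closes.

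\textbf{Smaller points to repair.}

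\emph{Leibniz count.} The correct identity is $\partial^\alpha\mathcal N(F^\e,F^\e)=2\mathcal N(M^\e,\partial^\alpha F^\e)+2\mathcal N(G^\e,\partial^\alpha F^\e)+\sum_{0<\beta<\alpha}\binom\alpha\beta\mathcal N(\partial^\beta F^\e,\partial^{\alpha-\beta}F^\e)$. Your displayed formula counts $\mathcal N(M^\e,\partial^\alpha F^\e)$ three times and $\mathcal N(G^\e,\partial^\alpha F^\e)$ once.

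\emph{Case~1.} As a consequence, Case~1 treats a term that does not occur. No Maxwellian--Maxwellian product contains $\partial^{N+1}Z^\e$. The only such piece of $\partial^\alpha M^\e$ is $\e M^\e\Phi_{\alpha,1}^\e\in\mathcal N_M$, which appears only inside the coercive term $2\mathcal N(M^\e,\partial^\alpha M^\e)$, where $\mathcal L_M$ annihilates it.

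The genuine $M$--$M$ terms have $0<\beta<\alpha$. There the two factors of $\e$ exactly cancel the net $\e^{-2}$, so there is no smallness. You need the Moser bound $\e^2\|\nabla_xZ^\e\|_\infty(\mathcal E_M^N)^{1/2}+\e^3(\mathcal E_M^N)^{3/2}$ to land in $\eta\mathcal D^N+C_\eta\|\nabla_xZ^\e\|_\infty^2\mathcal E^N+C\e^2(\mathcal E^N)^3$. The paper packages these terms via $\partial^\alpha\mathcal N(M^\e,M^\e)=0$ as $\mathcal L_M R_{\alpha,M}^\e$, its $I_1^{top}$.

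\emph{Bound on $G^\e$.} The quantity $\|G^\e/\sqrt{M^\e}\|_{L^\infty_xL^2_{\mathbf D}}$ is bounded by $X_D$ in \eqref{eq:xd-bd-local}, not by $\e^2\kappa^{1/2}(\mathcal D_G^N)^{1/2}$ alone. $X_D$ carries the extra terms $\e^2\mathcal E^N+\e^3\kappa^{-(2c-1)/2}(\mathcal E^N\mathcal D_H^N)^{1/2}$ from derivatives hitting $(M^\e)^{-1/2}$. These are harmless. Likewise, the companion term $\e|\partial^\alpha Z^\e|\,\|G^\e/\sqrt{M^\e}\|_{L^\infty_xL^2_\xi}$ from \eqref{eq:op-bd} must be included; it is handled by $X_0$.
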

\begin{proof}
For $|\alpha|=N+1$, set
$H_\alpha^\e:=\Pi_M^\perp(\partial^\alpha F^\e)$.
Differentiating the full kinetic equation only in $x$, testing by
$\kappa \p^\alpha F^\e /(\e^2M^\e)$, and summing gives the exact identity
\begin{equation}
 \frac d{dt}\mathcal H_{\rm top}
 +\sum_{|\alpha|=N+1}\frac1{\e^4}
 \iint\mathcal L_M(\partial^\alpha F^\e)\frac{H_\alpha^\e}{M^\e}
 =\sum_{j=1}^5 I_j^{top},
\end{equation}
where
\begin{align*}
 I_1^{top}&:=\sum_{|\alpha|=N+1}\frac1{\e^4}
 \iint\mathcal L_M(\partial^\alpha M^\e)\frac{H_\alpha^\e}{M^\e},\\
 I_2^{top}&:=-\sum_{|\alpha|=N+1}\frac\kappa{2\e^2}
 \iint\mathcal T_\e M^\e\frac{|\partial^\alpha F^\e|^2}{(M^\e)^2},\\
 I_3^{top}&:=\sum_{|\alpha|=N+1}\frac2{\e^4}
 \iint\mathcal N(\partial^\alpha F^\e,G^\e)\frac{H_\alpha^\e}{M^\e},\\
 I_4^{top}&:=\sum_{|\alpha|=N+1}\frac2{\e^4}
 \sum_{0<\beta<\alpha}\binom\alpha\beta
 \iint\mathcal N(\partial^\beta G^\e,\partial^{\alpha-\beta}M^\e)
 \frac{H_\alpha^\e}{M^\e},\\
 I_5^{top}&:=\sum_{|\alpha|=N+1}\frac1{\e^4}
 \sum_{0<\beta<\alpha}\binom\alpha\beta
 \iint\mathcal N(\partial^\beta G^\e,\partial^{\alpha-\beta}G^\e)
 \frac{H_\alpha^\e}{M^\e}.
\end{align*}
 In \(I_4^{top}\) and \(I_5^{top}\), both collision inputs have spatial order at most \(N\), and one has order at most \(m_*\). We therefore use the same high--low input estimates as in the derivation of \eqref{eq:local-scale-summary}, with the top-order testing bound $(\sum_{|\alpha|=N+1}  \Vert H^\e_\alpha/\sqrt{M^\e} \Vert_{L^2_{x,\mathbf{D}}}^2)^{1/2}\lesssim \e^2 (\mathcal{D}_{top}^N)^{1/2}.$

For $I_1^{top}$, recall the Maxwellian decomposition from the proof of Lemma \ref{lem:stress}:
\[
\partial^\alpha M^\varepsilon
=
\varepsilon M^\varepsilon\Phi_{\alpha,1}^\varepsilon
+
R_{\alpha,M}^\varepsilon,
\qquad |\alpha|=N+1.
\]
Since $M^\varepsilon\Phi_{\alpha,1}^\varepsilon$ belongs to the
local null space, $\mathcal L_M(\partial^\alpha M^\varepsilon)=
\mathcal L_M R_{\alpha,M}^\varepsilon.$ The quadratic terms in the remainder contain two positive spatial
derivatives, while all higher-degree terms contain at least three.
The Maxwellian chain rule \eqref{eq:par-id} and Sobolev product estimates therefore give
\begin{align*}
&\sum_{|\alpha|=N+1}
\left(
\left\|
\frac{R_{\alpha,M}^\varepsilon}{\sqrt{M^\varepsilon}}
\right\|_{L^2_{x,\xi}}
+
\left\|
\frac{R_{\alpha,M}^\varepsilon}{\sqrt{M^\varepsilon}}
\right\|_{L_x^2L_\mathbf{D}^2}
\right) \lesssim
\varepsilon^2
\|\nabla_x Z^\varepsilon\|_{L_x^\infty}
(\mathcal E_M^N)^{1/2}
+
\varepsilon^3(\mathcal E_M^N)^{3/2}.
\end{align*}
Here the terms of degree higher than three are absorbed using
$\varepsilon(\mathcal E_M^N)^{1/2}=o(1)$.
Using $\mathcal L_M h=-2\mathcal N(M^\varepsilon,h)$,
the collision estimate \eqref{eq:op-bd}, and the definition of
$\mathcal D_{\mathrm{top}}^N$, we obtain the desired bound \eqref{eq:top-local-block} for $I_1^{top}$.

For $I_2^{top}$, set
\[
\mathcal V_m^{top}
:=
\frac{\kappa}{\varepsilon^2}
\sum_{|\alpha|=N+1}
\iint_{\mathbb R^3\times\mathbb R^3}
\langle\xi\rangle^m
\frac{|\partial^\alpha G^\varepsilon|^2}{M^\varepsilon}
\,d\xi\,dx.
\]
The Maxwellian chain rule \eqref{eq:par-id}, Lemma \ref{lem:stress}, Lemma \ref{lem:topmacro} imply
\[
\left|
\frac{\mathcal T_\varepsilon M^\varepsilon}{M^\varepsilon}
\right|
\lesssim
\|\nabla_x Z^\varepsilon\|_{L_x^\infty}\langle\xi\rangle^3
+
\varepsilon\kappa^{1/2}
(\mathcal D_G^N)^{1/2}\langle\xi\rangle^2, \ \ \frac{\kappa}{\varepsilon^2}
\sum_{|\alpha|=N+1}
\left\|
\langle\xi\rangle^{m/2}
\frac{\partial^\alpha M^\varepsilon}{\sqrt{M^\varepsilon}}
\right\|_{L^2_{x,\xi}}^2
\lesssim \mathcal E^N.
\]
Therefore,
\begin{align}
|I_2^{top}|
\lesssim{}&
\|\nabla_x Z^\varepsilon\|_{L_x^\infty}
\left(\mathcal E^N+\mathcal V_3^{top}\right)+
\varepsilon\kappa^{1/2}(\mathcal D_G^N)^{1/2}
\left(\mathcal E^N+\mathcal V_2^{top}\right).
\label{eq:top-i2-moments}
\end{align}
We use the dissipation-dependent bound \eqref{eq:large-bd} for
$\mathcal V_3^{top}$ and the energy bound \eqref{eq:wt-bd} for
$\mathcal V_2^{top}$:
\begin{align*}
\mathcal V_3^{top}
&\lesssim
\varepsilon^2\mathcal E^N
+
\varepsilon^{3/2}\kappa
\left[
\mathcal D^N+(\mathcal E_M^N)^2
\right]
+
\varepsilon^{10}\mathcal E_H^N,
\\
\mathcal V_2^{top}
&\lesssim
\varepsilon^{-1/2}\mathcal E^N
+
\varepsilon^{10}\mathcal E_H^N.
\end{align*}
Inserting the above bounds into \eqref{eq:top-i2-moments} gives the desired bound in \eqref{eq:top-local-block}.

For $I_3^{top}$, the top energy, the decomposition
$\partial^\alpha F^\varepsilon
=\Pi_M\partial^\alpha F^\varepsilon+H_\alpha^\varepsilon$,
and the uniform Gaussian bounds on the local macroscopic projection give
\begin{align*}
\left(
\sum_{|\alpha|=N+1}
\left\|
\frac{\partial^\alpha F^\varepsilon}{\sqrt{M^\varepsilon}}
\right\|_{L^2_{x,\xi}}^2
\right)^{1/2}
&\lesssim
\frac{\varepsilon}{\kappa^{1/2}}(\mathcal E^N)^{1/2},
\\
\left(
\sum_{|\alpha|=N+1}
\left\|
\frac{\partial^\alpha F^\varepsilon}{\sqrt{M^\varepsilon}}
\right\|_{L_x^2L_{\mathbf{D}}^2}^2
\right)^{1/2}
&\lesssim
\frac{\varepsilon}{\kappa^{1/2}}(\mathcal E^N)^{1/2}
+
\varepsilon^2(\mathcal D^N)^{1/2}.
\end{align*}
Thus, applying \eqref{eq:op-bd} to $I_3^{top}$ and placing the
undifferentiated microscopic factor in $L_x^\infty$, we obtain the desired bound in \eqref{eq:top-local-block}.
\end{proof}

\begin{proposition}[Local-Maxwellian energy estimate]\label{prop:compactclosure}
For either collision operator, under the bootstrap assumptions of
Theorem~\ref{thm:kinenergy},
\begin{align}\label{eq:compact-local}
 \mathcal E^N(t)+\int_0^t\mathcal D^N(s)\,ds
 &\lesssim \mathcal E^N(0)
 +\int_0^t
 \bigl(1+\|\nabla_x Z^\e(s)\|_{L_x^\infty}^2\bigr)\mathcal E^N(s)\,ds\notag\\
 &\quad+o(1)\int_0^t\mathcal D_H^N(s)\,ds
 +\e^2t\sup_{s\le t}(\mathcal E_H^N(s)+1)^2
 +\frac{\e^2}{\kappa}t\sup_{s\le t}(\mathcal E^N(s)+1)^3,
\end{align}
where $Z^\e=(\rho^\e,u^\e,\theta^\e)$.
\end{proposition}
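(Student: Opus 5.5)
The plan is to add the lower block, Lemma~\ref{lem:lowerlocalblock}, to the top block, Lemma~\ref{lem:toplocalblock}, and integrate in time. The first step is to identify the functionals. By \eqref{eq:hg-local} and \eqref{eq:diss-def} we have $\mathcal H_G^N=\frac12\mathcal E_G^N$, since $G^\e=\Pi_M^\perp F^\e$. Lemma~\ref{lem:lowerlocalblock} also gives $\mathfrak E_{\rm mac}^N\simeq\mathcal E_M^N$ uniformly on the bootstrap interval. Hence
\[
\widetilde{\mathcal E}^N:=\mathfrak E_{\rm mac}^N+\mathcal H_G^N+2\mathcal H_{\rm top}
\]
is uniformly equivalent to $\mathcal E^N=\mathcal E_M^N+\mathcal E_G^N+2\mathcal H_{\rm top}$.

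Adding \eqref{eq:lower-local-block} and \eqref{eq:top-local-block} gives
\[
\frac{d}{dt}\widetilde{\mathcal E}^N+c_1\mathcal D_G^N+c_2\mathcal D_{top}^N
\le \eta\mathcal D_{top}^N+\eta\mathcal D^N+2C_\eta(1+\|\nabla_xZ^\e\|_\infty^2)\mathcal E^N+o(1)\mathcal D_H^N+2C_\eta\e^2(\mathcal E_H^N+1)^2+2C_\eta\frac{\e^2}{\kappa}(\mathcal E^N+1)^3.
\]
Since $\mathcal D^N=\mathcal D_G^N+\mathcal D_{top}^N$, the left side dominates $\min\{c_1,c_2\}\mathcal D^N$. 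Choosing $\eta<\frac14\min\{c_1,c_2\}$ absorbs both $\eta$-terms. The remaining constants $C_\eta$ are then fixed, and the $o(1)$ coefficient of $\mathcal D_H^N$ is uniform in time because it comes from the bootstrap and scale relations alone.

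Next, integrate over $[0,t]$. Bound $\int_0^t\e^2(\mathcal E_H^N+1)^2\,ds\le \e^2t\sup_{s\le t}(\mathcal E_H^N+1)^2$, and bound the cubic term the same way. Then use the equivalence $\widetilde{\mathcal E}^N\simeq\mathcal E^N$ at times $t$ and $0$. This yields \eqref{eq:compact-local}.

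No Gr\"onwall step is needed, since the linear term $\int_0^t(1+\|\nabla_xZ^\e\|_\infty^2)\mathcal E^N$ is kept on the right side, as in the statement.

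The only delicate point is bookkeeping. We must check that the $\eta\mathcal D^N$ appearing in each block is genuinely the full dissipation $\mathcal D_G^N+\mathcal D_{top}^N$, and that it does not contain the $\mathcal D_H^N$ part. The latter is tracked separately with an $o(1)$ coefficient and is closed later, against Proposition~\ref{prop:highord}. A second check is that the equivalence constant for $\mathfrak E_{\rm mac}^N$, namely the factor $(k_B\Theta^\e)^{-1}$, is uniform by Lemma~\ref{lem:macrocoef}.
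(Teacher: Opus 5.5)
Your proposal is correct and follows the paper's proof. You add \eqref{eq:lower-local-block} and \eqref{eq:top-local-block}, choose $\eta$ small relative to the coercive constants $c_1,c_2$ to absorb the cross-dissipation terms, use $\mathfrak E_{\rm mac}^N+\mathcal H_G^N+2\mathcal H_{\rm top}\simeq\mathcal E^N$, and integrate in time. The only cosmetic difference is that you take a single $\eta<\tfrac14\min\{c_1,c_2\}$, whereas the paper tunes the two occurrences of $\eta$ separately against half of each coercive constant.
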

\begin{proof}
Choose the occurrence of $\eta$ in Lemma~\ref{lem:lowerlocalblock} so that
its $\mathcal D_{top}^N$ term is at most half the coercive constant in
Lemma~\ref{lem:toplocalblock}, and choose the occurrence in
Lemma~\ref{lem:toplocalblock} so that its $\mathcal D_G^N$ term is at most
half the coercive constant in Lemma~\ref{lem:lowerlocalblock}.  Add
\eqref{eq:lower-local-block} and \eqref{eq:top-local-block}.  By the definition
of $\mathcal E^N$ and the local Maxwellian equivalence,
\[
 \mathfrak E_{\rm mac}^N+\mathcal H_G^N+2\mathcal H_{\rm top}\simeq\mathcal E^N
\]
uniformly on the bootstrap interval.  Integrating in time gives
\eqref{eq:compact-local}.
\end{proof}

\begin{proof}[Proof of Theorem~\ref{thm:kinenergy}]
Combine Proposition~\ref{prop:compactclosure} with
Proposition~\ref{prop:highord}.  For Landau, \eqref{eq:td-bd} is stronger
than the Boltzmann-form inequality \eqref{eq:td-en}, so it is enough to
use the latter structure in both cases.  Lemma~\ref{lem:scalecons}, the
bootstrap bound, and $c_0=\mathfrak y$ give
\[
 \kappa^{2c-1}\lesssim\e^{6c_0},\qquad
 \frac{\e}{\kappa^c}\asymp\e^{c_0},\qquad
 \sup_{s\le t}(\mathcal E_{tot}^N(s)+1)\lesssim\e^{-c_0}.
\]
Hence the coefficient of $\mathcal D_H^N$ in \eqref{eq:td-en} is
$O(\e^{c_0/2})$ and is absorbed.  The remaining scale errors satisfy
\[
 \kappa^{2c-1}\e^{-2c_0}t\sup_{s\le t}
 \bigl(\mathcal E^N+(\mathcal E^N)^2\bigr)\lesssim\e^{2c_0}t,
\]
\[
 \e^2t\sup_{s\le t}(\mathcal E_H^N+1)^2\lesssim\e^{2-2c_0}t,
 \qquad
 \frac{\e^2}{\kappa}t\sup_{s\le t}(\mathcal E^N+1)^3
 \lesssim\e^{5c_0}t.
\]
By \eqref{eq:choice-boot}, $T_*\lesssim\e^{-c_0}$, so these three terms
are respectively $O(\e^{c_0})$, $O(\e^{2-3c_0})$, and
$O(\e^{4c_0})$.  They are $o(1)$ for the chosen $c_0>0$.

We make the final cross-dissipation absorption explicit.  After the
previous absorption of the nonlinear $\mathcal D_H^N$ term, the integrated
high-order estimate has the form
\[
 \mathcal E_H^N(t)+c_H\int_0^t\mathcal D_H^N(s)\,ds
 \le C\mathcal E_H^N(0)+b_\e\int_0^t\mathcal D^N(s)\,ds+C,
 \qquad b_\e\to0,
\]
whereas Proposition~\ref{prop:compactclosure} gives, for some $c_L>0$,
\[
 \mathcal E^N(t)+c_L\int_0^t\mathcal D^N(s)\,ds
 \le C\mathcal E^N(0)
 +C\int_0^t
  \bigl(1+\|\nabla_xZ^\e(s)\|_{L_x^\infty}^2\bigr)
  \mathcal E^N(s)\,ds
 +a_\e\int_0^t\mathcal D_H^N(s)\,ds+C,
 \qquad a_\e\to0.
\]
Here $a_\e$ is the $o(1)$ coefficient in \eqref{eq:compact-local}, while
$b_\e$ collects the $o(1)\mathcal D^N$ terms in the high-order hierarchy.
Decrease $\e_0$ so that $Ca_\e\le c_H/4$ and
$Cb_\e\le c_L/4$.  Adding the two inequalities then absorbs both
cross-dissipation terms simultaneously.  Since
$\mathcal E^N\le\mathcal E_{tot}^N$ and
$\mathcal D^N+\mathcal D_H^N=\mathcal D_{tot}^N$, the result is exactly
\eqref{eq:diss-bd}.
\end{proof}

\section{Long-time estimates and rough kinetic-to-Euler convergence}\label{sec:longconv}
Let $x\in\Omega=\mathbb R^3$ and assume axisymmetric no-swirl initial data.  By Theorem~\ref{thm:kinenergy}, the long-time estimate reduces to controlling
\[
 \int_0^t\|\nabla_x(\rho^\e,u^\e,\theta^\e)(s)\|_{L^\infty_x}^2\,ds.
\]
The acoustic variables $\rho^\e+\theta^\e$ and $\mathbb P^\perp u^\e$ satisfy \eqref{eq:bega-rho-bd}.  The incompressible part is controlled through the relative-vorticity equation \eqref{eq:bega-td-id} and Danchin's estimate \eqref{eq:omth-bd}.
\subsection{Acoustic and axisymmetric vorticity estimates}
\subsubsection{Acoustic dispersion and Besov estimates in $\mathbb{R}^3$}
We use the following elementary consequences of the homogeneous
Littlewood--Paley decomposition.  For the nonnegative indices used below,
\begin{align}
 \|u\|_{\dot B_{2,2}^{s}}&\lesssim\|u\|_{H^s},\label{eq:decomp-bd}\\
 \|u\|_{L^q}&\lesssim\|u\|_{\dot B_{p,1}^{3/p-3/q}},\qquad p\le q,\label{eq:lw-bd}
\end{align}
and, for $s_1<s_2$ and $0<\tau<1$,
\begin{equation}\label{eq:tau-ac}
 \|u\|_{\dot B_{p,1}^{\tau s_1+(1-\tau)s_2}}
 \lesssim_{s_1,s_2,\tau}
 \|u\|_{\dot B_{p,2}^{s_1}}^\tau
 \|u\|_{\dot B_{p,2}^{s_2}}^{1-\tau}.
\end{equation}
Indeed, \eqref{eq:decomp-bd} follows by separating low and high dyadic
frequencies, and \eqref{eq:lw-bd} is the Bernstein inequality summed in
$\ell^1$.  To see \eqref{eq:tau-ac}, set
$s=\tau s_1+(1-\tau)s_2$, $A=\|u\|_{\dot B_{p,2}^{s_1}}$, and
$B=\|u\|_{\dot B_{p,2}^{s_2}}$.  Splitting at an integer $J$ and applying
Cauchy--Schwarz to the two geometric tails gives
\[
 \sum_{j\le J}2^{js}\|\dot\Delta_j u\|_p
 \lesssim A\,2^{J(s-s_1)},\qquad
 \sum_{j>J}2^{js}\|\dot\Delta_j u\|_p
 \lesssim B\,2^{-J(s_2-s)}.
\]
Choosing $J$ so that $2^{J(s_2-s_1)}\simeq B/A$ proves
\eqref{eq:tau-ac}; the cases $A=0$ or $B=0$ follow by approximation.  We also
use the immediate dyadic derivative equivalence
$\|\nabla^j f\|_{\dot B_{p,q}^s}\simeq\|f\|_{\dot B_{p,q}^{s+j}}$.
We derive the acoustic estimate directly from \eqref{eq:tu-id}.  The only
harmonic-analysis input is the frequency-localized half-wave estimate
proved in the next lemma.
\begin{lemma}[Dyadic acoustic dispersion]\label{lem:acdisp}
Let $A,B>0$ be fixed constants, let
$a:[0,T]\times\R^3\to\R$, and let
$q:[0,T]\times\R^3\to\R^3$ satisfy
$q=\mathbb P^\perp q$ and
\begin{equation}\label{eq:pq-id}
 \partial_ta+\frac{A}{\e}\nabla\cdot q=F_a,
 \qquad
 \partial_tq+\frac{B}{\e}\nabla a=F_q,
 \qquad F_q=\mathbb P^\perp F_q.
\end{equation}
If $2\le p\le\infty$ and
\begin{equation}\label{eq:pq-bd}
 \frac{2}{\mathfrak{r}}\le \min\left\{1,2\left(\frac12-\frac1p\right)\right\},
 \qquad (\mathfrak{r},p)\ne(2,\infty),
\end{equation}
then, for every $s\in\R$ for which the right-hand side is finite,
\begin{align}\label{eq:fin-ac}
 \|(a,q)\|_{L^\mathfrak{r}(0,T;\dot B_{p,1}^{s+3(1/p-1/2)+1/\mathfrak{r}})}
 &\lesssim_{A,B,p,\mathfrak{r}}
 \e^{1/\mathfrak{r}}\|(a(0),q(0))\|_{\dot B_{2,1}^s} +
 \e^{1/\mathfrak{r}}\int_0^T
 \|(F_a,F_q)(\tau)\|_{\dot B_{2,1}^s}\,d\tau.
\end{align}
\end{lemma}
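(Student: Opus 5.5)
The plan is to diagonalize the linear system by a half-wave decomposition and apply dyadic dispersive Strichartz estimates, with the semiclassical factor $\e^{1/\mathfrak r}$ coming from the time rescaling $t\mapsto t/\e$.

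\emph{Step 1: diagonalization.} Since $q=\mathbb P^\perp q$, write $q=\nabla|\nabla|^{-1}b$ with the scalar $b:=-|\nabla|^{-1}\nabla\cdot q$, so that $\|q\|_{\dot B^\sigma_{p,1}}\simeq\|b\|_{\dot B^\sigma_{p,1}}$ dyadically (Riesz transforms are bounded on each dyadic block in $L^p$, including $p=\infty$, because their localized kernels are integrable). Setting $c:=\sqrt{AB}$ and
\[
 w_\pm:=\sqrt{B}\,a\pm i\sqrt{A}\,b ,
\]
the system \eqref{eq:pq-id} becomes $\partial_tw_\pm\mp \frac{ic}{\e}|\nabla|w_\pm=G_\pm$, where $G_\pm$ is a fixed linear combination of $F_a$ and $|\nabla|^{-1}\nabla\cdot F_q$. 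Hence $\|G_\pm\|_{\dot B^s_{2,1}}\lesssim\|(F_a,F_q)\|_{\dot B^s_{2,1}}$, and by Duhamel
\[
 w_\pm(t)=e^{\pm ict|\nabla|/\e}w_\pm(0)+\int_0^te^{\pm ic(t-\tau)|\nabla|/\e}G_\pm(\tau)\,d\tau .
\]

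\emph{Step 2: dyadic Strichartz.} Fix a block $\dot\Delta_j$. The frequency-localized half-wave propagator satisfies the dispersive bound $\|e^{it|\nabla|}\dot\Delta_jf\|_{L^\infty}\lesssim2^{3j}(1+2^j|t|)^{-1}\|f\|_{L^1}$. Combining this with energy conservation via the Keel--Tao $TT^*$ argument gives, for wave-admissible pairs $2/\mathfrak r\le 2(1/2-1/p)$ with $(\mathfrak r,p)\neq(2,\infty)$,
\[
 \|e^{it|\nabla|}\dot\Delta_jf\|_{L^{\mathfrak r}_t(\R;L^p)}\lesssim 2^{j(3/2-3/p-1/\mathfrak r)}\|\dot\Delta_jf\|_{L^2}.
\]
Here the restriction $2/\mathfrak r\le1$ is part of \eqref{eq:pq-bd}, and the regularity loss $3/2-3/p-1/\mathfrak r$ is fixed by scaling. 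Substituting $t=c\tau/\e$ turns $L^{\mathfrak r}_t$ over $[0,T]$ into a factor $(\e/c)^{1/\mathfrak r}$. Multiplying by $2^{j(s+3(1/p-1/2)+1/\mathfrak r)}$ yields exactly $\e^{1/\mathfrak r}2^{js}\|\dot\Delta_jw(0)\|_2$.

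\emph{Step 3: Duhamel term and summation.} For the Duhamel term, apply Minkowski's inequality in $\tau$: each $\tau$-slice is a free half-wave evolution started from $G(\tau)$ and truncated to $t>\tau$, and the truncation is harmless because the homogeneous estimate already holds on all of $\R_t$. This gives $\e^{1/\mathfrak r}\int_0^T2^{js}\|\dot\Delta_jG(\tau)\|_2\,d\tau$. Finally, take the $\ell^1_j$ sum and use Minkowski's inequality to interchange it with $L^{\mathfrak r}_t$, which is valid since $\mathfrak r\ge1$. This gives \eqref{eq:fin-ac} for $w_\pm$, and hence for $a=(w_++w_-)/(2\sqrt B)$ and $q$.

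The main obstacle is the dyadic Strichartz bound, in particular the endpoint-type admissible cases ($\mathfrak r=2$ with $p<\infty$), which require Keel--Tao in full. I would cite the standard frequency-localized Strichartz estimate for the wave equation rather than re-derive it, checking only the scaling exponent and the origin of the $\e$ factor.
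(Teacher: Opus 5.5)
Your proposal is correct and follows the paper's proof essentially step for step. The shared steps are the half-wave diagonalization $W_\pm=\sqrt B\,a\pm i\sqrt A\,b$ with dyadic Riesz-transform equivalence (including $p=\infty$); the frequency-localized dispersive bound feeding a $TT^*$ Strichartz estimate; the time rescaling that produces $\varepsilon^{1/\mathfrak r}$; and Minkowski in the source time for the Duhamel term, followed by dyadic summation.

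One small correction: the obstacle you flag does not exist. Under $1/\mathfrak r\le 1/2-1/p$, the choice $\mathfrak r=2$ forces $p=\infty$, which is excluded, so no endpoint case ever arises. The non-endpoint $TT^*$ argument with one-dimensional Hardy--Littlewood--Sobolev in time already suffices, interpolated with the energy bound off the sharp line, exactly as in the paper.
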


The proof is recorded in Appendix~\ref{sec:auxana}.

Recall the acoustic variables from the introduction and set the source combination
\begin{equation}\label{eq:acoustic-source-def}
 a_{\rm ac}^\e=\rho^\e+\theta^\e,
 \qquad
 q_{\rm ac}^\e=\mathbb P^\perp u^\e,
 \qquad
 \mathfrak h^\e:=\nabla\cdot\mathfrak q^\e+
                   \nabla\mathrm U^\e:\mathbf r^\e.
\end{equation}
Adding the first line of \eqref{eq:tu-id} to $2/3$ times its third line,
and multiplying the second line by $k_B\Theta^\e$ before applying
$\mathbb P^\perp$, gives the exact acoustic system
\begin{align}\label{eq:ac-id}
 \partial_ta_{\rm ac}^\e+\frac{5}{3\e}\nabla\cdot q_{\rm ac}^\e
 &=\mathcal S_a^\e,\notag\\
 \partial_tq_{\rm ac}^\e+\frac{k_B}{\e}\nabla a_{\rm ac}^\e
 &=\mathcal S_q^\e,
\end{align}
where
\begin{align}\label{eq:ac-def}
 \mathcal S_a^\e
 &:=-u^\e\cdot\nabla a_{\rm ac}^\e
 -\frac{2}{3\e^2k_B\mathrm P^\e\Theta^\e}\mathfrak h^\e,\notag\\
 \mathcal S_q^\e
 &:=-\mathbb P^\perp(u^\e\cdot\nabla u^\e)
 -\mathbb P^\perp\left(
   \frac{k_B(\Theta^\e-1)}{\e}\nabla a_{\rm ac}^\e
   +\frac1{\e^2\mathrm P^\e}\nabla\cdot\mathbf r^\e
   \right).
\end{align}
No smallness or vanishing assumption on
$(a_{\rm ac}^\e(0),q_{\rm ac}^\e(0))$ has been used.
\begin{lemma}[Acoustic source bound]\label{lem:acsource}
Let $X_N^\e(t):=\mathcal E_{tot}^N(t)+\kappa^{1/2}(\mathcal D_{tot}^N(t))^{1/2}$.
Under \eqref{eq:tot-bd}, for every $0\le s<N$,
\begin{equation}\label{eq:disp-bd}
 \|(\mathcal S_a^\e,\mathcal S_q^\e)(t)\|_{\dot B_{2,1}^s}
 \lesssim
 \mathfrak L_s(\kappa)X_N^\e(t),
\end{equation}
where, for every fixed $\eta>0$,
\begin{equation*}
 \mathfrak L_s(\kappa):=
 \begin{cases}
 1,&0\le s<N-1,\\
 C_\eta\kappa^{-\eta},&s=N-1,\\
 \kappa^{-(s+1-N)/2},&N-1<s<N.
 \end{cases}
\end{equation*}
\end{lemma}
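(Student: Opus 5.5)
The plan is to estimate the two source terms $\mathcal S_a^\e,\mathcal S_q^\e$ in \eqref{eq:ac-def} term by term in $\dot B^s_{2,1}$. Since $\mathbb P^\perp$ is a Fourier multiplier of order zero, it is bounded on every $\dot B^s_{2,1}$ and may be dropped. The single tool is the elementary interpolation \eqref{eq:tau-ac} combined with \eqref{eq:decomp-bd}. For $0\le s<N-1$ we have $\|f\|_{\dot B^s_{2,1}}\lesssim\|f\|_{H^{s+}}\lesssim\|f\|_{H^{N-1}}$. At the endpoint, $\|f\|_{\dot B^{N-1}_{2,1}}\lesssim\|f\|_{\dot H^{N-1-\eta}}^{1/2}\|f\|_{\dot H^{N-1+\eta}}^{1/2}$, and for $N-1<s<N$ we interpolate between $H^{N-1}$ and $H^{N}$. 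Every source term is either a product carrying at least one derivative of order up to $N$, or a stress/heat-flux moment. For the top derivative we have two controls: $\|Z^\e\|_{\dot H^{N}}\lesssim(\mathcal E^N)^{1/2}$, and, via Lemma~\ref{lem:topmacro}, $\|Z^\e\|_{\dot H^{N+1}}\lesssim\kappa^{-1/2}(\mathcal E^N)^{1/2}$. Interpolating between these one-derivative-apart bounds produces exactly the loss $\mathfrak L_s(\kappa)$.

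First I treat the transport and quadratic terms: $u^\e\cdot\nabla a^\e_{\rm ac}$, $u^\e\cdot\nabla u^\e$, and $\e^{-1}k_B(\Theta^\e-1)\nabla a^\e_{\rm ac}$. By \eqref{eq:hn-bd}, $\e^{-1}(\Theta^\e-1)$ is bounded in $H^N$ by $(\mathcal E_M^N)^{1/2}$, so this term is also a genuine quadratic product $\Phi\,\nabla a$ with $\Phi\in H^N$. The $H^N$ algebra property and Moser estimates give $\|fg\|_{H^{\sigma}}\lesssim\|f\|_{H^{N}}\|g\|_{H^{\sigma}}$ for $\sigma\le N$ and $N\ge4$. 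This yields $\|\text{product}\|_{H^{\sigma}}\lesssim\mathcal E_M^N$ for $\sigma\le N-1$, and for $\sigma\in(N-1,N]$ a bound by $\mathcal E^N_M+(\mathcal E_M^N)^{1/2}\|Z\|_{\dot H^{\sigma+1}}$. Interpolating $\|Z\|_{\dot H^{\sigma+1}}$ between $\dot H^N$ and $\dot H^{N+1}$ gives $\kappa^{-(\sigma+1-N)/2}\mathcal E^N$. Taking $\sigma=s\pm\eta$ in \eqref{eq:tau-ac} produces the three regimes of $\mathfrak L_s$, including the $C_\eta\kappa^{-\eta}$ endpoint loss at $s=N-1$.

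Next I treat the kinetic fluxes $\e^{-2}(\mathrm P^\e\Theta^\e)^{-1}\mathfrak h^\e$ and $\e^{-2}(\mathrm P^\e)^{-1}\nabla\cdot\mathbf r^\e$. The coefficients $(\mathrm P^\e\Theta^\e)^{-1}$ are bounded with $H^{N-1}$-bounded gradients by \eqref{eq:hn-en}, so they act as multipliers on $H^\sigma$ for $\sigma\le N-1$; above that we split off $1+(\text{coefficient}-1)$ and use $\e$-smallness. Lemma~\ref{lem:stress} gives $\e^{-2}\|\nabla\cdot\mathbf r^\e\|_{H^{N-1}}\lesssim\kappa^{1/2}(\mathcal D^N_G)^{1/2}\le X_N^\e$. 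It also gives $\e^{-2}\|\nabla\cdot\mathbf r^\e\|_{H^{N}}\lesssim(\mathcal D^N)^{1/2}+\mathcal E_M^N\lesssim\kappa^{-1/2}X_N^\e$, and likewise for $\mathfrak q^\e$. The product $\nabla\mathrm U^\e:\mathbf r^\e$ carries an extra factor $\e$ from $\mathrm U^\e=\e u^\e$ and is handled by the same algebra estimate. Interpolating between the $H^{N-1}$ bound ($\kappa^0$) and the $H^{N}$ bound ($\kappa^{-1/2}$) again gives the factor $\kappa^{-(s+1-N)/2}$, with the log-type loss $\kappa^{-\eta}$ at $s=N-1$ coming from $\ell^1$ summation in \eqref{eq:tau-ac}.

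The main obstacle I expect is the $\dot B^s_{2,1}$ low-frequency part. The homogeneous norm with $s$ near $0$ requires $\|f\|_{\dot B^0_{2,1}}$, which $L^2$ does not control. I would handle it by noting that every source is a divergence or a product. For products, $\|fg\|_{\dot B^0_{2,1}}\lesssim\|fg\|_{L^{2}}^{1/2}\|fg\|_{\dot H^{1}}^{1/2}$ and $L^{2}$ control of products of $H^N$ functions are both fine. For divergences $\nabla\cdot\mathbf r$, $\|\nabla\cdot\mathbf r\|_{\dot B^s_{2,1}}\simeq\|\mathbf r\|_{\dot B^{s+1}_{2,1}}$, which is controlled by $\|\mathbf r\|_{H^{s+1\pm\eta}}$. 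So low frequencies cost nothing beyond the $H^N$ bounds already listed, and the remaining bookkeeping is routine.
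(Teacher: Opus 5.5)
Your overall scheme matches the paper's. You prove the integer bounds $\|(\mathcal S_a^\e,\mathcal S_q^\e)\|_{H^{N-1}}\lesssim X_N^\e$ and $\|(\mathcal S_a^\e,\mathcal S_q^\e)\|_{H^N}\lesssim\kappa^{-1/2}X_N^\e$ (via Lemma~\ref{lem:topmacro} and \eqref{eq:htop-bd}), and then interpolate dyadically. For fixed $s>0$ the low frequencies are harmless, because $\sum_{j\le0}2^{js}\|\dot\Delta_jh\|_2\lesssim_s\|h\|_2$. At $s=N-1$, your interpolation through $\dot H^{N-1\mp\eta}$ is an acceptable substitute for the paper's frequency split at $2^J\simeq\kappa^{-1/2}$.

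The gap is the endpoint $s=0$, which the statement includes. There your claim $\|f\|_{\dot B^0_{2,1}}\lesssim\|f\|_{H^{0+}}$ is false: no $L^2$-based Sobolev norm controls $\sum_{j\le0}\|\dot\Delta_jf\|_2$, as one sees by taking $\|\dot\Delta_jf\|_2\simeq|j|^{-1}$ for $j<0$ and no high frequencies. The repair you apply to products, $\|h\|_{\dot B^0_{2,1}}\lesssim\|h\|_{2}^{1/2}\|h\|_{\dot H^1}^{1/2}$ with $h=fg$, is also false. By \eqref{eq:tau-ac} its right side only controls $\dot B^{1/2}_{2,1}$. Concretely, replacing $h=\phi^2$ ($\phi$ Schwartz) by $h(\lambda\,\cdot)$ multiplies the ratio of right side to left side by $\lambda^{1/2}$, which tends to $0$ as $\lambda\to0$. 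Your divergence remark does not rescue this either. The flux terms in \eqref{eq:ac-def} carry the variable coefficients $(\mathrm P^\e)^{-1}$ and $(\mathrm P^\e\Theta^\e)^{-1}$, and $\mathfrak h^\e$ contains the non-divergence term $\nabla\mathrm U^\e:\mathbf r^\e$.

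The missing idea is to drop below $L^2$ in integrability at low frequencies. Each non-divergence piece is a product of two factors already bounded in $L^2$, so it lies in $L^1$. Bernstein then gives $\|\dot\Delta_jh\|_2\lesssim2^{3j/2}\|h\|_1$, which is summable over $j\le0$. Concretely:
\begin{itemize}
\item $\|u^\e\cdot\nabla a_{\rm ac}^\e\|_1+\|u^\e\cdot\nabla u^\e\|_1+\|\e^{-1}(\Theta^\e-1)\nabla a_{\rm ac}^\e\|_1\lesssim\mathcal E_M^N$, by \eqref{eq:hn-bd};
\item $\e^{-2}\|\nabla\mathrm U^\e:\mathbf r^\e\|_1\lesssim\e\|\nabla u^\e\|_2\,\kappa^{1/2}(\mathcal D_G^N)^{1/2}$, by \eqref{eq:hn-kap};
\item write $(\mathrm P^\e)^{-1}=1+\bigl((\mathrm P^\e)^{-1}-1\bigr)$ with $\|(\mathrm P^\e)^{-1}-1\|_2\lesssim\e\|\rho^\e\|_2$, and likewise for $(\mathrm P^\e\Theta^\e)^{-1}$. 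This splits each flux term into a pure divergence, bounded by $\|\dot\Delta_j\nabla\cdot H\|_2\lesssim2^j\|H\|_2$ and \eqref{eq:hn-kap}, plus an $L^1$ product.
\end{itemize}
Since $\mathbb P^\perp$ commutes with $\dot\Delta_j$ and is $L^2$-contractive, this yields $\sum_{j\le0}\|\dot\Delta_j(\mathcal S_a^\e,\mathcal S_q^\e)\|_2\lesssim X_N^\e$. This is the paper's low-frequency step \eqref{eq:js-bd}, which it phrases as divergences of $L^1$ or $L^2$ products plus $L^1$ products. Together with your high-frequency bounds, it completes the case $s=0$.
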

\begin{proof}
We first establish the two integer Sobolev bounds
\begin{align}\label{eq:hn-sob}
 \|(\mathcal S_a^\e,\mathcal S_q^\e)\|_{H^{N-1}}
 &\lesssim X_N^\e, \qquad
 \|(\mathcal S_a^\e,\mathcal S_q^\e)\|_{H^N}
 \lesssim \kappa^{-1/2}X_N^\e.
\end{align}
Since $N\ge5$, $H^{N-1}(\R^3)$ is an algebra.  The convective terms
satisfy
\begin{align}\label{eq:ac-hn}
 \|u^\e\cdot\nabla a_{\rm ac}^\e\|_{H^{N-1}}
 +\|u^\e\cdot\nabla u^\e\|_{H^{N-1}}
 &\lesssim \mathcal E_M^N,\notag\\
 \|u^\e\cdot\nabla a_{\rm ac}^\e\|_{H^N}
 +\|u^\e\cdot\nabla u^\e\|_{H^N}
 &\lesssim \kappa^{-1/2}\mathcal E^N.
\end{align}
For the second line we used
$\|Z^\e\|_{H^{N+1}}
 \lesssim \kappa^{-1/2}(\mathcal E^N)^{1/2}$, which follows directly
from the top macroscopic control \eqref{eq:topmacro-bd}.
Put $b^\e:=(\Theta^\e-1)/\e$.  By \eqref{eq:hn-bd},
$\|b^\e\|_{H^N}\lesssim(\mathcal E_M^N)^{1/2}$, and hence
\begin{align}
 \|b^\e\nabla a_{\rm ac}^\e\|_{H^{N-1}}
 &\lesssim \mathcal E_M^N,\qquad 
 \|b^\e\nabla a_{\rm ac}^\e\|_{H^N}
 \lesssim \kappa^{-1/2}\mathcal E^N. \notag 
\end{align}
The Moser bounds \eqref{eq:hn-en} and the flux bounds
\eqref{eq:hn-kap}--\eqref{eq:htop-bd} give
\begin{align}
 \left\|\frac1{\e^2\mathrm P^\e}\nabla\cdot\mathbf r^\e
 \right\|_{H^{N-1}}
 +\left\|\frac1{\e^2\mathrm P^\e\Theta^\e}
 \nabla\cdot\mathfrak q^\e\right\|_{H^{N-1}}
 &\lesssim \kappa^{1/2}(\mathcal D_G^N)^{1/2},\notag\\
 \left\|\frac1{\e^2\mathrm P^\e}\nabla\cdot\mathbf r^\e
 \right\|_{H^N}
 +\left\|\frac1{\e^2\mathrm P^\e\Theta^\e}
 \nabla\cdot\mathfrak q^\e\right\|_{H^N}
 &\lesssim (\mathcal D^N)^{1/2}+\mathcal E_M^N. \notag 
\end{align}
For the remaining piece of $\mathfrak h^\e$, use
$\mathrm U^\e=\e u^\e$, \eqref{eq:hn-kap}, and the $H^{N-1}$ product
estimate to obtain
\begin{align}
 \frac1{\e^2}
 \|\nabla\mathrm U^\e:\mathbf r^\e\|_{H^{N-1}}
 &\lesssim
 \e\kappa^{1/2}(\mathcal E_M^N)^{1/2}
 (\mathcal D_G^N)^{1/2}
 \lesssim \kappa^{1/2}(\mathcal D_G^N)^{1/2}. \notag 
\end{align}
At order $H^N$, one factor may carry $N$ derivatives.  If it is
$\nabla\mathrm U^\e$, \eqref{eq:topmacro-bd} gives
$\|\nabla\mathrm U^\e\|_{H^N}
 \lesssim \e\kappa^{-1/2}(\mathcal E^N)^{1/2}$; if it is
$\mathbf r^\e$, use \eqref{eq:htop-bd}.  Thus
\begin{align*}
 \frac1{\e^2}
 \|\nabla\mathrm U^\e:\mathbf r^\e\|_{H^N}
 &\lesssim
 \e(\mathcal E^N)^{1/2}(\mathcal D_G^N)^{1/2}
 +\e(\mathcal E_M^N)^{1/2}
   \bigl((\mathcal D^N)^{1/2}+\mathcal E_M^N\bigr) \lesssim (\mathcal D^N)^{1/2}+\kappa^{-1/2}\mathcal E^N.
\end{align*}
Here \eqref{eq:loc-bd} was used in the last step.  This bound together with \eqref{eq:ac-hn} and the boundedness
of $\mathbb P^\perp$ on Sobolev spaces, proves \eqref{eq:hn-sob}.
We next control the homogeneous low frequencies explicitly.  For
$j\le0$, Bernstein's inequalities give
\begin{align}
 \|\dot\Delta_j\nabla\cdot H\|_2
 &\lesssim
 2^j\|H\|_2+2^{5j/2}\|H\|_1,\qquad  
 \|\dot\Delta_j h\|_2
 \lesssim2^{3j/2}\|h\|_1, \notag 
\end{align}
where in the first term either available norm may be used separately.
Every nonmicroscopic term in \eqref{eq:ac-def} is the sum of a
divergence of an $L^1$ or $L^2$ product and an $L^1$ product; explicitly,
\begin{align*}
 u\cdot\nabla a&=\nabla\cdot(ua)-a\nabla\cdot u,\\
 u\cdot\nabla u&=\nabla\cdot(u\otimes u)-u\nabla\cdot u,\\
 b\nabla a&=\nabla(ba)-a\nabla b.
\end{align*}
Likewise,
\[
 (\mathrm P^\e)^{-1}\nabla\cdot\mathbf r^\e
 =\nabla\cdot((\mathrm P^\e)^{-1}\mathbf r^\e)
  -\nabla(\mathrm P^\e)^{-1}:\mathbf r^\e,
\]
and the same identity applies to
$(\mathrm P^\e\Theta^\e)^{-1}\nabla\cdot\mathfrak q^\e$.
The last term
$(\mathrm P^\e\Theta^\e)^{-1}\nabla\mathrm U^\e:\mathbf r^\e$
is itself in $L^1_x$.  H\"older's inequality, \eqref{eq:hn-en},
\eqref{eq:hn-bd}, and \eqref{eq:hn-kap} therefore give
\begin{equation}\label{eq:js-bd}
 \sum_{j\le0}2^{js}
 \|\dot\Delta_j(\mathcal S_a^\e,\mathcal S_q^\e)\|_2
 \lesssim X_N^\e,
 \qquad s\ge0,
\end{equation}
because both geometric series
$\sum_{j\le0}2^{j(s+1)}$ and
$\sum_{j\le0}2^{j(s+3/2)}$ converge.
It remains to sum $j\ge1$.  If $0\le s<N-1$, Cauchy--Schwarz and the
first line of \eqref{eq:hn-sob} give
\begin{align*}
 \sum_{j\ge1}2^{js}\|\dot\Delta_j\mathcal S^\e\|_2
 &\le
 \left(\sum_{j\ge1}2^{-2j(N-1-s)}\right)^{1/2}
 \|\mathcal S^\e\|_{H^{N-1}}
 \lesssim X_N^\e.
\end{align*}
At $s=N-1$, choose
$J=\lceil\frac12\log_2(\kappa^{-1})\rceil$.  The two lines of
\eqref{eq:hn-sob} imply
\begin{align}\label{eq:kap-ac}
 \sum_{j\ge1}2^{j(N-1)}\|\dot\Delta_j\mathcal S^\e\|_2
 &\lesssim J^{1/2}X_N^\e
 +2^{-J}\kappa^{-1/2}X_N^\e
 \lesssim (1+|\log\kappa|)^{1/2}X_N^\e \lesssim_\eta\kappa^{-\eta}X_N^\e.
\end{align}
Finally, if $N-1<s<N$, put $\vartheta=s+1-N\in(0,1)$.  Dyadic
H\"older and \eqref{eq:hn-sob} give
\begin{align*}
 \sum_{j\ge1}2^{js}\|\dot\Delta_j\mathcal S^\e\|_2
 &\lesssim 
 \|\mathcal S^\e\|_{H^{N-1}}^{1-\vartheta}
 \|\mathcal S^\e\|_{H^N}^{\vartheta}
 \lesssim \kappa^{-\vartheta/2}X_N^\e.
\end{align*}
Combining \eqref{eq:js-bd}, \eqref{eq:kap-ac}, and this dyadic estimate proves \eqref{eq:disp-bd}.
\end{proof}
Apply Lemma~\ref{lem:acdisp} to \eqref{eq:ac-id} with
$A=5/3$ and $B=k_B$, then use Lemma~\ref{lem:acsource}.  For
$0\le s<N$ we obtain
\begin{align}\label{eq:bega-rho-bd}
\bega
\Big\|&\Big((\rho^{\e}+\theta^{\e}),\mathbb{P}^{\perp}u^{\e}\Big)\Big\|_{L^{\mathfrak{r}}_T\dot{B}_{p,1}^{s+3(\frac{1}{p}-\frac{1}{2})+\frac{1}{\mathfrak{r}}}} \leq C \e^{\frac{1}{\mathfrak{r}}}\Big\|\Big((\rho^{\e}_0+\theta^{\e}_0),(\mathbb{P}^{\perp}u^{\e}_0)\Big)\Big\|_{\dot{B}_{2,1}^s} \cr 
&+ \begin{cases}
C\e^{\frac{1}{\mathfrak{r}}}\int_0^T\Big(\mathcal{E}_{tot}^{N}(
F^\e(\tau)
) +\kappa^{\frac{1}{2}}\Big(\mathcal{D}_{tot}^{N}(F^\e(\tau))\Big)^{\frac{1}{2}}\Big) d\tau, \quad &0\leq s < N-1, \\
C\e^{\frac{1}{\mathfrak{r}}}\kappa^{-\eta}\int_0^T\Big(\mathcal{E}_{tot}^{N}(F^\e(\tau)) +\kappa^{\frac{1}{2}}\Big(\mathcal{D}_{tot}^{N}(F^\e(\tau))\Big)^{\frac{1}{2}}\Big) d\tau, \quad & s = N-1, \\
C\e^{\frac{1}{\mathfrak{r}}}\kappa^{-\frac{s+1-N}{2}}\int_0^T\Big(\mathcal{E}_{tot}^{N}(F^\e(\tau)) +\kappa^{\frac{1}{2}}\Big(\mathcal{D}_{tot}^{N}(F^\e(\tau))\Big)^{\frac{1}{2}}\Big) d\tau, \quad &N-1 < s < N,
\end{cases} 
\enda
\end{align}
where
\begin{align*}
\bega
2 \leq p \leq \infty, \qquad \frac{2}{\mathfrak{r}} \leq \min \left\{ 1, 2\bigg(\frac{1}{2}-\frac{1}{p}\bigg)\right\}, \qquad (\mathfrak{r},p) \neq (2,\infty),
\enda
\end{align*}
for any fixed $0<\eta\ll1$.
\subsubsection{Symmetry and relative-vorticity equations}
The kinetic and relative-vorticity equations are recorded below.
\begin{lemma}\label{lem:sympres}
For either collision operator, axisymmetric no-swirl initial data remain axisymmetric no-swirl. The macroscopic fields $u^\e$ and $\mathbb P u^\e$ have the same symmetry.
\end{lemma}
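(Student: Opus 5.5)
The plan is to argue by symmetry invariance combined with uniqueness of strong solutions. Fix $G\in O(2)_z$ and set $F^\e_G(t,x,\xi):=F^\e(t,Gx,G\xi)$. The first step is to check that $F^\e_G$ solves the same kinetic equation \eqref{eq:kinetic-model}. For the transport part, $\xi\cdot\nabla_x$ commutes with the simultaneous orthogonal change $(x,\xi)\mapsto(Gx,G\xi)$, because $(G\xi)\cdot(G\nabla_x)=\xi\cdot\nabla_x$. For the collision part, we use the standard rotation/reflection invariance of $Q$: $Q(F_G,F_G)=(Q(F,F))_G$ for every orthogonal $G$. In the Landau case this follows from $\Phi(G\mathbf k)=G\Phi(\mathbf k)G^{\mathrm T}$, the change of variables $\xi_*\mapsto G\xi_*$, and $\nabla_\xi(h\circ G)=G^{\mathrm T}(\nabla h)\circ G$. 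In the Boltzmann case it follows from $B(G\mathbf k,G\mathbf n)=B(\mathbf k,\mathbf n)$, since $B$ depends only on $|\mathbf k|$ and $\cos\sigma$, together with the substitutions $\xi_*\mapsto G\xi_*$, $\mathbf n\mapsto G\mathbf n$ on $\mathbb S^2$, which map the post-collisional velocities to $G\xi',G\xi_*'$.

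The second step is to confirm that $F^\e_G$ lies in the class where Theorem~\ref{thm:longtime} gives uniqueness. Since $\mu,\mu_0,w,w_0$ are radial in $\xi$ and Lebesgue measure is $O(3)$-invariant, every norm in $\mathcal E^N_{tot}$ and $\mathcal D^N_{tot}$ is invariant under the transformation. In addition, $F^\e_G(0)=F^\e_0$ by assumption. Uniqueness of the strong solution then gives $F^\e_G=F^\e$ on the whole lifespan. I expect this to be the only delicate point. The uniqueness must hold in a class preserved by the symmetry; this is immediate here because the energy functionals are built from radial weights and $x$-norms invariant under orthogonal maps.

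The third step passes the symmetry to the macroscopic fields. From \eqref{eq:xi-id}, after the substitution $\xi\mapsto G\xi$, we get $\mathrm P^\e(Gx)=\mathrm P^\e(x)$ and $\Theta^\e(Gx)=\Theta^\e(x)$. For the momentum, $\int F(Gx,\xi)\xi\,d\xi=G\int F(x,\xi)\xi\,d\xi$, so $\mathrm U^\e(Gx)=G\mathrm U^\e(x)$. Hence $u^\e$ is $O(2)_z$-equivariant, and by the remark preceding the lemma, equivariance under reflections forces the no-swirl form. Finally, the Leray projector $\mathbb P$ commutes with composition by orthogonal maps: its Fourier multiplier $I-k\otimes k/|k|^2$ is $O(3)$-equivariant, so $(\mathbb P u)(G\cdot)=G(\mathbb P u)(\cdot)$. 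Therefore $\mathbb Pu^\e$ is also axisymmetric without swirl, which is \eqref{eq:pp-id}.
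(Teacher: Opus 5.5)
Your proposal is correct and follows the paper's proof: invariance of the transport term and of both collision kernels under $(x,\xi)\mapsto(Gx,G\xi)$, then uniqueness, then equivariance of the moments and of $\mathbb P$, with the reflections in $O(2)_z$ removing the swirl. Two small corrections. First, take uniqueness from the classical local well-posedness theory rather than from Theorem~\ref{thm:longtime}, since the proof of that theorem uses this lemma through the relative-vorticity equation. Second, the multi-index Sobolev sums in $\mathcal E^N_{tot}$ are only equivalent under rotations, not identical; equivalence is all you need to show that the uniqueness class is preserved.
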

The proof is recorded in Appendix \ref{sec:auxana}.

Taking the curl of the momentum equation in \eqref{eq:tu-id} gives
\begin{align}\label{eq:xu-id}
\partial_t\omega^\e+\mathbb Pu^\e\cdot\nabla_x\omega^\e
={}&\omega^\e\cdot\nabla_xu^\e
-\mathbb P^\perp u^\e\cdot\nabla_x\omega^\e
-(\nabla_x\cdot u^\e)\omega^\e \notag\\
&-k_B\Theta^\e\nabla_x\theta^\e\times\nabla_x(\rho^\e+\theta^\e)
-\frac1{\e^2}\nabla_x\times\left(\frac1{\mathrm P^\e}\nabla_x\cdot\mathbf r^\e\right).
\end{align}
Indeed, $\nabla_x\Theta^\e=\e\Theta^\e\nabla_x\theta^\e$ and
$\nabla\times(u\cdot\nabla u)=u\cdot\nabla\omega+(\nabla\cdot u)\omega-\omega\cdot\nabla u$.
\begin{lemma}
Let $F^\e$ be an axisymmetric no-swirl solution. Then the azimuthal
vorticity $\omega^{\e,\vartheta}$ satisfies
\begin{align}\label{eq:bega-td}
\bega
\partial_t\omega^{\e,\vartheta}
+
v^{\e,r}\partial_r\omega^{\e,\vartheta}
&+
v^{\e,z}\partial_z\omega^{\e,\vartheta}
=
\frac{v^{\e,r}}{r}\omega^{\e,\vartheta}
+
\big(
\nabla_x\times(\mathbb P^\perp u^\e\times\omega^\e)
\big)^\vartheta \cr
&-
k_B\Theta^\e
\big(
\nabla_x\theta^\e\times\nabla_x(\rho^\e+\theta^\e)
\big)^\vartheta
-
\frac{1}{\e^2}
\bigg(
\nabla_x\times
\bigg(
\frac{1}{\mathrm P^\e}
\nabla_x\cdot\mathbf r^\e
\bigg)
\bigg)^\vartheta.
\enda
\end{align}
The relative vorticity $\varpi^{\e}= \frac{\omega^{\e,\vartheta}}{r}$ satisfies
\begin{align}\label{eq:bega-td-id}
\bega
\partial_t \varpi^{\e}
+
v^{\e,r}\partial_r\varpi^{\e}
+
v^{\e,z}\partial_z\varpi^{\e}
=
\bPhi^{\e}_{\varpi},
\enda
\end{align}
where
\begin{align}\label{eq:pq-th}
\bPhi^{\e}_{\varpi}(t,r,z) &:= \frac{1}{r} \big(\nabla_x\times(\mathbb P^\perp u^\e\times\omega^\e)\big)^\vartheta 
-\frac{1}{r}k_B\mathrm{\Theta}^{\e}
\big(\nabla_x\theta^{\e}\times\nabla_x(\rho^{\e}+\theta^{\e})\big)^{\vartheta}
\cr
& -\frac{1}{\e^2}\frac{1}{r}
\bigg(
\nabla_x \times
\bigg(
\frac{1}{\mathrm{P}^{\e}}
\nabla_x \cdot \mathbf{r}^{\e}
\bigg)
\bigg)^{\vartheta}.
\end{align}
\end{lemma}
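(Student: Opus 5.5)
The plan is to derive \eqref{eq:bega-td} directly from the Cartesian vorticity equation \eqref{eq:xu-id}, and then \eqref{eq:bega-td-id} by dividing by $r$. All identities are pointwise on $\{r>0\}$ for the smooth solution, and Lemma~\ref{lem:sympres} supplies the symmetry. Write $v^\e=\mathbb Pu^\e$ and $w^\e:=\mathbb P^\perp u^\e$, so $u^\e=v^\e+w^\e$, $\nabla\cdot v^\e=0$ and $\nabla\cdot u^\e=\nabla\cdot w^\e$. By Lemma~\ref{lem:sympres} and \eqref{eq:pp-id}, $v^\e=v^{\e,r}e_r+v^{\e,z}e_z$ and $\omega^\e=\omega^{\e,\vartheta}e_\vartheta$.

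\emph{Step 1: isolate the compressible part as a curl.} In the right-hand side of \eqref{eq:xu-id}, split $\omega^\e\cdot\nabla u^\e=\omega^\e\cdot\nabla v^\e+\omega^\e\cdot\nabla w^\e$. The vector identity
\[
\nabla\times(w\times\omega)=w(\nabla\cdot\omega)-\omega(\nabla\cdot w)+\omega\cdot\nabla w-w\cdot\nabla\omega,
\]
together with $\nabla\cdot\omega^\e=0$, shows that
\[
\omega^\e\cdot\nabla w^\e-w^\e\cdot\nabla\omega^\e-(\nabla\cdot u^\e)\omega^\e=\nabla\times(w^\e\times\omega^\e).
\]
Hence \eqref{eq:xu-id} becomes
\[
\partial_t\omega^\e+v^\e\cdot\nabla\omega^\e
=\omega^\e\cdot\nabla v^\e+\nabla\times(\mathbb P^\perp u^\e\times\omega^\e)
-k_B\Theta^\e\nabla\theta^\e\times\nabla(\rho^\e+\theta^\e)
-\e^{-2}\nabla\times\bigl((\mathrm P^\e)^{-1}\nabla\cdot\mathbf r^\e\bigr).
\]

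\emph{Step 2: take the $e_\vartheta$-component.} In cylindrical coordinates $e_\vartheta$ depends only on $\vartheta$, and $v^\e$ has no $e_\vartheta$-component. Therefore
\[
v^\e\cdot\nabla(\omega^{\e,\vartheta}e_\vartheta)=(v^{\e,r}\partial_r+v^{\e,z}\partial_z)\omega^{\e,\vartheta}\,e_\vartheta .
\]
The stretching term is
\[
\omega^\e\cdot\nabla v^\e=\frac{\omega^{\e,\vartheta}}{r}\,\partial_\vartheta\bigl(v^{\e,r}e_r+v^{\e,z}e_z\bigr)=\frac{v^{\e,r}}{r}\,\omega^{\e,\vartheta}e_\vartheta ,
\]
using $\partial_\vartheta e_r=e_\vartheta$, $\partial_\vartheta e_z=0$, and the $\vartheta$-independence of the components. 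Pairing the identity of Step~1 with $e_\vartheta$ and keeping the remaining three terms as their $\vartheta$-components gives exactly \eqref{eq:bega-td}.

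\emph{Step 3: pass to the relative vorticity.} Divide \eqref{eq:bega-td} by $r$. The only new term comes from
\[
(v^{\e,r}\partial_r+v^{\e,z}\partial_z)\Bigl(\frac{\omega^{\e,\vartheta}}{r}\Bigr)
=\frac1r(v^{\e,r}\partial_r+v^{\e,z}\partial_z)\omega^{\e,\vartheta}-\frac{v^{\e,r}}{r^2}\omega^{\e,\vartheta}.
\]
This last term exactly cancels the stretching term $r^{-1}(v^{\e,r}/r)\omega^{\e,\vartheta}$. The three forcing terms divided by $r$ are precisely $\bPhi^\e_\varpi$ in \eqref{eq:pq-th}, which proves \eqref{eq:bega-td-id}.

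The only point needing care is the cylindrical bookkeeping in Step~2, namely that $v\cdot\nabla$ commutes with the $e_\vartheta$-projection and that the stretching reduces to $(v^r/r)\omega^\vartheta$; this rests entirely on the absence of swirl. The identities are stated on $r>0$ and extend as distributions, which is how they are used later.
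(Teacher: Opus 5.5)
Your proposal is correct and follows the paper's own proof. The paper likewise rewrites the compressible terms of \eqref{eq:xu-id} as $\nabla\times(\mathbb P^\perp u^\e\times\omega^\e)$ using $\nabla\cdot\omega^\e=0$, reduces the stretching to $\frac{v^{\e,r}}{r}\omega^{\e,\vartheta}e_\vartheta$ via the no-swirl structure, takes the azimuthal component, and cancels the stretching after dividing by $r$ (it phrases this last step as $(\partial_t+v^{\e,r}\partial_r+v^{\e,z}\partial_z)r=v^{\e,r}$).
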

\begin{proof}
Recall $q_{\rm ac}^\e$ defined in \eqref{eq:acoustic-source-def}. By Lemma~\ref{lem:sympres},
$\mathbb Pu^\e$ and $q_{\rm ac}^\e$ are axisymmetric without swirl and
$\omega^\e=\omega^{\e,\vartheta}e_\vartheta$.  Hence
\[
 (\omega^\e\cdot\nabla_x)\mathbb Pu^\e
 =\frac{v^{\e,r}}{ r}\omega^{\e,\vartheta}e_\vartheta.
\]
Since $\nabla\cdot\omega^\e=0$,
\[
 (\omega^\e\cdot\nabla)q_{\rm ac}^\e-(q_{\rm ac}^\e\cdot\nabla)\omega^\e
 -(\nabla\cdot q_{\rm ac}^\e)\omega^\e
 =\nabla\times(q_{\rm ac}^\e\times\omega^\e).
\]
Taking the azimuthal component of \eqref{eq:xu-id} gives \eqref{eq:bega-td}.
Because
$(\partial_t+v^{\e,r}\partial_r+v^{\e,z}\partial_z)r=v^{\e,r}$,
division by $r$ cancels the stretching term and gives
\eqref{eq:bega-td-id}--\eqref{eq:pq-th}.
\end{proof}
Although $\varpi^\e(t,r,z)$ is defined in the $(r,z)$ variables, we identify it
with its axisymmetric extension $\bar\varpi^\e(t,x):=\varpi^\e(t,r(x),z)$ to
$\R^3$.  Then \eqref{eq:bega-td-id} can be rewritten in $\R^3$ as
\begin{align}
\bega
\partial_t\bar\varpi^\e
+
v^\e\cdot\nabla_x\bar\varpi^\e
=
\bPhi_\varpi^\e,
\enda \notag 
\end{align}
where $\bPhi_\varpi^\e$ is understood as its axisymmetric extension to
$\R^3$.
Write $\varpi^\e$ for
$\bar\varpi^\e$.
\subsubsection{Kinetic vorticity estimates}
Danchin's axisymmetric Biot--Savart estimate
\cite[Lemma~2]{danchin2007axisymmetric} gives
\begin{equation}\label{eq:omth-bd}
 \left\|\frac{u^r}{r}\right\|_{L_x^\infty}
 \lesssim \left\|\frac{\omega^\vartheta}{r}\right\|_{L_x^{3,1}}.
\end{equation}
\begin{lemma}\label{lem:wenergy}
Let $\omega^\e$ satisfy \eqref{eq:xu-id}. Then
\begin{align}\label{eq:bega-dt}
\bega
\frac{d}{dt}\|\omega^{\e}\|_{H^2_x}^2 
&\lesssim \Big(
\|\nabla_x \mathbb{P}u^{\e}\|_{L^\infty_x}
+\|\nabla_x \mathbb{P}^{\perp}u^{\e}\|_{L^\infty_x}
+\|\omega^{\e}\|_{L^\infty_x}
+\|\nabla_x^2 \mathbb{P}^{\perp}u^{\e}\|_{L^\infty_x}
\Big)
\|\omega^{\e}\|_{H^2_x}^2 \cr 
&+\bigg(
\|\nabla_x\nabla_x\cdot u^{\e}\|_{L^\infty_x}
\|\omega^{\e}\|_{L^2_x}
+\big\|
k_B\mathrm{\Theta}^{\e}\nabla_x\theta^{\e}\times\nabla_x(\rho^{\e}+\theta^{\e})
\big\|_{H^1(\R^3)}
\bigg)
\|\omega^{\e}\|_{H^3(\R^3)} \cr 
&+\bigg\|
\frac{1}{\e^2}\nabla_x \times
\bigg(
\frac{1}{\mathrm{P}^{\e}}
\nabla_x \cdot \mathbf{r}^{\e}
\bigg)
\bigg\|_{H^2_x}
\|\omega^{\e}\|_{H^2_x}.
\enda
\end{align}
\end{lemma}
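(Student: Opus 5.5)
We will prove \eqref{eq:bega-dt} by a direct $H^2$ energy estimate on the vorticity equation \eqref{eq:xu-id}. For each multi-index $|\alpha|\le2$ we apply $\partial^\alpha$ to \eqref{eq:xu-id}, test against $\partial^\alpha\omega^\e$, and sum over $\alpha$. The terms are then sorted into four groups: the solenoidal transport, the stretching and compressible terms, the baroclinic term, and the stress term. Smoothness of the strong solution on the bootstrap interval justifies every integration by parts.

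\emph{Transport.} For the transport term $\mathbb Pu^\e\cdot\nabla\omega^\e$, the choice $\alpha=0$ vanishes because $\nabla\cdot\mathbb Pu^\e=0$. For $|\alpha|\ge1$, only the commutator $[\partial^\alpha,\mathbb Pu^\e\cdot\nabla]\omega^\e$ survives. We bound it by the Kato--Ponce commutator estimate:
\[
\|\nabla\mathbb Pu^\e\|_\infty\|\omega^\e\|_{H^2}+\|\mathbb Pu^\e\|_{\dot H^2}\|\nabla\omega^\e\|_{L^\infty}.
\]
To avoid $\|\nabla\omega^\e\|_\infty$, we distribute derivatives so that at least one factor lands in $L^\infty$ with one derivative. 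In the term $\partial^2\mathbb Pu\,\partial\omega$ we use H\"older with $L^4\times L^4$ and Gagliardo--Nirenberg, together with $\|\nabla\mathbb Pu^\e\|_{H^2}\lesssim\|\omega^\e\|_{H^2}$ (curl of the Leray part equals $\omega^\e$). This gives a bound of the form $(\|\nabla\mathbb Pu^\e\|_\infty+\|\omega^\e\|_\infty)\|\omega^\e\|_{H^2}^2$ up to lower-order terms.

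\emph{Stretching and compressible terms.} The stretching term $\omega\cdot\nabla u$ is split as $\omega\cdot\nabla\mathbb Pu+\omega\cdot\nabla\mathbb P^\perp u$. For each piece, the top-order part, in which all derivatives land on $\nabla u$, is handled by writing $\|\nabla\mathbb Pu\|_{\dot H^2}\lesssim\|\omega\|_{\dot H^2}$ with the $\omega$ factor placed in $L^\infty$; the remaining parts are bounded by $\|\nabla u\|_\infty\|\omega\|_{H^2}$. For the compressible pieces $-\mathbb P^\perp u\cdot\nabla\omega-(\nabla\cdot u)\omega$ we proceed as follows.
\begin{enumerate}
\item The top transport derivative is integrated by parts. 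This produces $\|\nabla\mathbb P^\perp u\|_\infty\|\omega\|_{H^2}^2$ and commutators that use $\|\nabla^2\mathbb P^\perp u\|_\infty$.
\item In the term $\partial^\alpha(\nabla\cdot u\,\omega)$, two derivatives can fall on $\nabla\cdot u$. Rather than estimating $\|\nabla\cdot u\|_{H^2}$, we note that $\nabla\cdot u=\nabla\cdot\mathbb P^\perp u$.
\item For the term $\nabla^2(\nabla\cdot u)\,\omega$ we move one derivative onto the test function. This yields $\|\nabla\nabla\cdot u\|_\infty\|\omega\|_{L^2}\|\omega\|_{H^3}$, which is exactly the form appearing in the second line of \eqref{eq:bega-dt}.
\end{enumerate}

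\emph{Baroclinic and stress terms.} For the baroclinic term, we likewise move one derivative from $\partial^\alpha$ onto $\partial^\alpha\omega$. This bounds it by $\|k_B\Theta^\e\nabla\theta^\e\times\nabla(\rho^\e+\theta^\e)\|_{H^1}\|\omega^\e\|_{H^3}$. The stress term is simply estimated by Cauchy--Schwarz in $H^2$.

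The main obstacle is the careful Leibniz bookkeeping. The difficulty is to ensure that no term requires $\|\nabla\omega\|_\infty$ or $\|\nabla^3\mathbb P^\perp u\|_{L^2}$ beyond what is listed. The key devices are the elliptic bound $\|\nabla\mathbb Pu\|_{H^2}\lesssim\|\omega\|_{H^2}$ and the placement of one derivative on the test function in the non-transported source terms.
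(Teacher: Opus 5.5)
Your bookkeeping matches the paper's proof in most places. The solenoidal transport and stretching are handled by Calder\'on--Zygmund together with the $L^4$ Gagliardo--Nirenberg bound. The terms $\partial^\alpha(\nabla\cdot u^\e)\,\omega^\e$ and the baroclinic term each take one integration by parts, and the stress term is Cauchy--Schwarz. There is, however, a genuine gap in the stretching step for the compressible piece.

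Set $q:=\mathbb P^\perp u^\e$. When $|\alpha|=2$ and all derivatives land on $\nabla q$, you get
\[
\int \omega_j^\e\,\partial_j\partial^\alpha q_i\,\partial^\alpha\omega_i^\e ,
\]
which contains three derivatives of $q$. Your claim that ``each piece'' is handled by $\|\nabla\mathbb Pu^\e\|_{\dot H^2}\lesssim\|\omega^\e\|_{\dot H^2}$ fails here. Since $q$ is curl free, $\nabla q$ is not controlled by $\omega^\e$ at all. Putting $\omega^\e$ in $L^\infty$ would then require $\|\nabla^3\mathbb P^\perp u^\e\|_{L^2}$, which is not on the right-hand side of \eqref{eq:bega-dt}. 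This is exactly the obstruction you flag at the end without resolving it.

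Your other device, moving $\partial_j$ onto the test function, does not rescue this term either. Using $\nabla\cdot\omega^\e=0$, it produces $\|\nabla^2\mathbb P^\perp u^\e\|_\infty\|\omega^\e\|_{L^2}\|\omega^\e\|_{H^3}$. The statement only allows $\|\nabla\nabla\cdot u^\e\|_\infty$ in front of $\|\omega^\e\|_{L^2}\|\omega^\e\|_{H^3}$, and $\|\nabla^2\mathbb P^\perp u^\e\|_\infty$ is not controlled by $\|\nabla\nabla\cdot u^\e\|_\infty$.

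The missing idea is the symmetry of $\nabla q$. Since $q$ is a gradient, $\partial_jq_i=\partial_iq_j$, so integrating by parts in $x_i$ gives
\[
\int \omega_j^\e\,\partial_j\partial^\alpha q_i\,\partial^\alpha\omega_i^\e
=\int \omega_j^\e\,\partial_i\partial^\alpha q_j\,\partial^\alpha\omega_i^\e
=-\int \partial_i\omega_j^\e\,\partial^\alpha q_j\,\partial^\alpha\omega_i^\e .
\]
The other boundary term, $\int\omega_j^\e\,\partial^\alpha q_j\,\partial^\alpha(\nabla\cdot\omega^\e)$, vanishes. What remains is bounded by $\|\nabla^2\mathbb P^\perp u^\e\|_\infty\|\omega^\e\|_{H^2}^2$, which is admissible. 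In the paper's proof this is one of the two genuinely top-order compressible terms; the other is the divergence term you already treat correctly.

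A smaller point: the intermediate stretching term $\nabla\omega^\e\cdot\nabla^2\mathbb Pu^\e$ is not bounded by $\|\nabla u^\e\|_\infty\|\omega^\e\|_{H^2}$, as you state. It needs the same $L^4\times L^4$ Calder\'on--Zygmund/Gagliardo--Nirenberg argument you used for the transport commutator, which produces the factor $\|\omega^\e\|_\infty$.
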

\begin{proof}
Recall $q_{\rm ac}^\e=\mathbb P^\perp u^\e$.  Since
$\nabla\cdot\mathbb Pu^\e=0$, $\nabla\times q_{\rm ac}^\e=0$, and
$\omega^\e=\nabla\times\mathbb Pu^\e$,
Calder\'on--Zygmund estimates give
\[
 \|\nabla^2\mathbb Pu^\e\|_4\lesssim\|\nabla\omega^\e\|_4,
 \qquad
 \|\nabla^3\mathbb Pu^\e\|_2\lesssim\|\nabla^2\omega^\e\|_2.
\]
Moreover, integration by parts yields
\[
 \|\nabla\omega^\e\|_4^2
 \lesssim \|\omega^\e\|_\infty\|\nabla^2\omega^\e\|_2.
\]
Applying $\partial^\alpha$, $|\alpha|\le2$, to \eqref{eq:xu-id}, the
solenoidal transport and stretching terms are therefore bounded by
\[
 C(\|\nabla\mathbb Pu^\e\|_\infty+\|\omega^\e\|_\infty)
 \|\omega^\e\|_{H^2}^2.
\]
The $q_{\rm ac}^\e$ terms are bounded by
$C(\|\nabla q_{\rm ac}^\e\|_\infty+\|\nabla^2q_{\rm ac}^\e\|_\infty)\|\omega^\e\|_{H^2}^2$,
except for two apparent top-order terms.  Since
$\partial_j(q_{\rm ac}^\e)_i=\partial_i(q_{\rm ac}^\e)_j$ and $\nabla\cdot\omega^\e=0$,
\[
 \int \omega_j^\e\partial_j\partial^\alpha(q_{\rm ac}^\e)_i
       \partial^\alpha\omega_i^\e
 =-\int \partial_i\omega_j^\e\partial^\alpha(q_{\rm ac}^\e)_j
       \partial^\alpha\omega_i^\e,
 \qquad |\alpha|=2,
\]
which is controlled by $\|\nabla^2q_{\rm ac}^\e\|_\infty\|\omega^\e\|_{H^2}^2$.
For $d^\e=\nabla\cdot q_{\rm ac}^\e$, one integration by parts gives
\[
 \left|\int \partial^\alpha d^\e\,\omega^\e\cdot
       \partial^\alpha\omega^\e\right|
 \lesssim \|\nabla d^\e\|_\infty
 \big(\|\nabla\omega^\e\|_2\|\nabla^2\omega^\e\|_2
      +\|\omega^\e\|_2\|\nabla^3\omega^\e\|_2\big).
\]
Finally set
$\mathcal F^\e=k_B\Theta^\e\nabla\theta^\e\times\nabla(\rho^\e+\theta^\e)$ and
$\mathcal R^\e=\e^{-2}\nabla\times((\mathrm P^\e)^{-1}\nabla\cdot\mathbf r^\e)$.
Cauchy--Schwarz, with one integration by parts at $|\alpha|=2$ for
$\mathcal F^\e$, gives
\[
 \sum_{|\alpha|\le2}|\langle\partial^\alpha\mathcal F^\e,
 \partial^\alpha\omega^\e\rangle|
 \lesssim\|\mathcal F^\e\|_{H^1}\|\omega^\e\|_{H^3},
 \qquad
 \sum_{|\alpha|\le2}|\langle\partial^\alpha\mathcal R^\e,
 \partial^\alpha\omega^\e\rangle|
 \lesssim\|\mathcal R^\e\|_{H^2}\|\omega^\e\|_{H^2}.
\]
Summing the $H^2$ energy identities proves \eqref{eq:bega-dt}.
\end{proof}
Let $X^\e(t;s,x)$ denote the flow generated by $v^\e$:
\begin{align}\label{eq:kin-id}
\frac{d}{dt}X^\e(t;s,x)
=
 v^\e\bigl(t,X^\e(t;s,x)\bigr),
\qquad
X^\e(s;s,x)=x.
\end{align}
Since $\nabla_x\cdot v^\e=0$, the flow $X^\e(t;s,\cdot)$ is measure-preserving in $\R^3$. Moreover, since the axisymmetric no-swirl field $v^\e$ has the form in \eqref{eq:pp-id}, the angular variable remains constant along the flow, while its radial and axial components satisfy
\begin{align*}
\bega
\frac{d}{dt}X^{\e,r}(t;s,x)
&=
v^{\e,r}\bigl(t,X^{\e,r}(t;s,x),X^{\e,z}(t;s,x)\bigr),
\cr
\frac{d}{dt}X^{\e,z}(t;s,x)
&=
v^{\e,z}\bigl(t,X^{\e,r}(t;s,x),X^{\e,z}(t;s,x)\bigr).
\enda
\end{align*}
\begin{lemma}\label{lem:angenergy}
Let $\omega^{\e,\vartheta}$ satisfy \eqref{eq:bega-td} and let $N\ge4$.
For every sufficiently small $\zeta>0$, there is $C_\zeta$ such that
\begin{align}\label{eq:bega-om}
\bega
\|\omega^{\e,\vartheta}(t)&\|_{L^\infty_x}
\leq
\exp\bigg(
\int_0^t \left\|\frac{v^{\e,r}(s)}{r}\right\|_{L^\infty_x}\,ds \bigg)
\|\omega^{\e,\vartheta}_0\|_{L^\infty_x}
\cr
&+
C_\zeta\exp\bigg(
\int_0^t \left\|\frac{v^{\e,r}(s)}{r}\right\|_{L^\infty_x}\,ds \bigg)
\bigg[
\kappa^{\frac{1}{2}}\int_0^t |\mathcal{D}_G^{N}(s)|^{\frac{1}{2}}ds +\e^{1/2-\zeta}(1+t) \sup_{0\leq s \leq t}|\mathcal{E}_M^{N}(s)|^{\frac{1}{2}} \cr 
&\quad \times \bigg(|\mathcal{E}_M^{N}(0)|^{\frac{1}{2}}+ \int_0^t\Big(\mathcal{E}_{tot}^{N}(s) +\kappa^{\frac{1}{2}}\Big(\mathcal{D}_{tot}^{N}(s)\Big)^{\frac{1}{2}}\Big) ds \bigg) \bigg].
\enda
\end{align}
\end{lemma}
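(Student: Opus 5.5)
The plan is to run a Lagrangian argument along the flow $X^\e$ of $v^\e=\mathbb Pu^\e$ defined in \eqref{eq:kin-id}. Equation \eqref{eq:bega-td} has the form
\[
 \partial_t\omega^{\e,\vartheta}+v^\e\cdot\nabla_x\omega^{\e,\vartheta}
 =\frac{v^{\e,r}}{r}\omega^{\e,\vartheta}+S^\e ,
\]
where $S^\e$ collects the acoustic commutator $(\nabla\times(\mathbb P^\perp u^\e\times\omega^\e))^\vartheta$, the baroclinic term $k_B\Theta^\e(\nabla\theta^\e\times\nabla(\rho^\e+\theta^\e))^\vartheta$, and the stress term. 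Along characteristics this becomes a scalar linear ODE. Gr\"onwall, or the explicit integrating factor, then gives
\[
 \|\omega^{\e,\vartheta}(t)\|_\infty\le e^{\int_0^t\|v^{\e,r}/r\|_\infty}\Big(\|\omega_0^{\e,\vartheta}\|_\infty+\int_0^t\|S^\e(s)\|_\infty ds\Big).
\]
The integrating factor is monotone, which lets us pull the exponential outside the time integral. Smoothness of the strong solution, with $N\ge4$, justifies the flow and the pointwise identity. It remains to bound $\int_0^t\|S^\e\|_\infty$ by the bracket in \eqref{eq:bega-om}.

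The stress term is treated first. By Sobolev embedding $H^2\hookrightarrow L^\infty$ and the Moser bounds \eqref{eq:hn-en},
\[
 \e^{-2}\big\|\nabla\times((\mathrm P^\e)^{-1}\nabla\cdot\mathbf r^\e)\big\|_\infty\lesssim \e^{-2}\|\mathbf r^\e\|_{H^4}.
\]
Since $N\ge4$, the bound \eqref{eq:hn-kap} gives $\e^{-2}\|\mathbf r^\e\|_{H^N}\lesssim\kappa^{1/2}(\mathcal D_G^N)^{1/2}$. Integrating in time yields the term $\kappa^{1/2}\int_0^t(\mathcal D_G^N)^{1/2}$.

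The acoustic commutator and the baroclinic term are bilinear, with one factor always acoustic: either $\mathbb P^\perp u^\e$ or $\nabla(\rho^\e+\theta^\e)$. Expanding, the acoustic commutator is bounded by
\[
 \big(\|\nabla\mathbb P^\perp u^\e\|_\infty+\|\mathbb P^\perp u^\e\|_\infty\big)\|\omega^\e\|_{W^{1,\infty}},
\]
and the baroclinic term by $\|\nabla\theta^\e\|_\infty\|\nabla(\rho^\e+\theta^\e)\|_\infty$. The non-acoustic factors are controlled by $\sup_s(\mathcal E_M^N)^{1/2}$ via Sobolev embedding. The acoustic factors are put in $L^{\mathfrak r}_t$ of an $L^\infty$-controlling Besov space. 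Concretely, I apply the acoustic estimate \eqref{eq:bega-rho-bd}, or equivalently Lemma~\ref{lem:acdisp} with Lemma~\ref{lem:acsource}, with $p=\infty$ and $\mathfrak r=2+$ (just above $2$, since $(2,\infty)$ is excluded). Here $s$ is chosen so that the regularity index $s-3/2+1/\mathfrak r$ equals $1$ (or $2$ for the $\nabla^2$ factor, which is only needed if we expand further). This requires $s\approx 2+$, which lies in the admissible range $s<N-1$. Then
\[
 \dot B^{1}_{\infty,1}\hookrightarrow W^{1,\infty}\ \text{(homogeneous)},
\]
and H\"older in time gives
\[
 \int_0^t\|\nabla(\rho^\e+\theta^\e,\mathbb P^\perp u^\e)\|_\infty\le t^{1-1/\mathfrak r}\e^{1/\mathfrak r}\Big(\|(\cdot)_0\|_{\dot B^s_{2,1}}+\int_0^t X_N^\e\Big).
\]
Finally, $\|(\cdot)_0\|_{\dot B^s_{2,1}}\lesssim(\mathcal E_M^N(0))^{1/2}$ by the low/high frequency splitting used in \eqref{eq:decomp-bd}, together with the $L^1$-type low-frequency control for the products. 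With $1/\mathfrak r=1/2-\zeta$ and $t^{1-1/\mathfrak r}\le1+t$, this produces exactly the $\e^{1/2-\zeta}(1+t)\sup(\mathcal E_M^N)^{1/2}(\cdots)$ term.

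The main obstacle is the choice of endpoint. We need $p=\infty$ to get pointwise control, while $\mathfrak r$ must approach $2$ to extract the $\e^{1/2-\zeta}$ gain, and $(\mathfrak r,p)=(2,\infty)$ itself is forbidden; this is the source of the loss $\zeta$. A second point is the undifferentiated $\mathbb P^\perp u^\e$ factor in the commutator. To avoid low-frequency issues there, I would rewrite the commutator as $-\mathbb P^\perp u\cdot\nabla\omega+\omega\cdot\nabla\mathbb P^\perp u-(\nabla\cdot\mathbb P^\perp u)\omega$ and use the structure $\omega=\omega^\vartheta e_\vartheta$. The azimuthal component then involves only $\mathbb P^\perp u$ itself, multiplied by $\nabla\omega^\vartheta$, and $\nabla\mathbb P^\perp u$. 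Both are handled by dispersive bounds at indices $0$ and $1$: $\dot B^0_{\infty,1}\hookrightarrow L^\infty$, with the corresponding $s\approx1+$ still admissible.
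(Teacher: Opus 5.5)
Your proposal is correct and follows the same route as the paper's proof. Both integrate \eqref{eq:bega-td} along the measure-preserving flow of $v^\e$ with integrating factor $\exp\bigl(\int_0^t\|v^{\e,r}/r\|_\infty\,ds\bigr)$. Both bound the stress term by $H^2\hookrightarrow L^\infty$, \eqref{eq:hn-en} and \eqref{eq:hn-kap}. Both control the acoustic factors by \eqref{eq:bega-rho-bd} with $p=\infty$, $1/\mathfrak r=1/2-\sigma$ and $s=1+\sigma,\,2+\sigma$ (Besov indices $0$ and $1$), followed by H\"older in time.

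One small point: the initial-data term needs no $L^1$-type low-frequency control. Since $0<s<N$, the bound $\|\cdot\|_{\dot B^s_{2,1}}\lesssim\|\cdot\|_{H^N}\lesssim(\mathcal E_M^N(0))^{1/2}$ follows directly.
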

\begin{proof}
Integrate \eqref{eq:bega-td} along the measure-preserving flow \eqref{eq:kin-id}
and apply Gronwall.  The three forcing terms satisfy
\begin{align*}
 \int_0^t\|\nabla\times(\mathbb P^\perp u^\e\times\omega^\e)\|_\infty ds
 &\lesssim \e^{1/2-\zeta}(1+t)\sup_{s\le t}(\mathcal E_M^N(s))^{1/2}\,\mathfrak A(t),\\
 \int_0^t\|\Theta^\e\nabla\theta^\e\times\nabla(\rho^\e+\theta^\e)\|_\infty ds
 &\lesssim \e^{1/2-\zeta}(1+t)\sup_{s\le t}(\mathcal E_M^N(s))^{1/2}\,\mathfrak A(t),\\
 \int_0^t\left\|\e^{-2}\nabla\times((\mathrm P^\e)^{-1}
 \nabla\cdot\mathbf r^\e)\right\|_\infty ds
 &\lesssim \kappa^{1/2}\int_0^t(\mathcal D_G^N(s))^{1/2}ds,
\end{align*}
where
\[
 \mathfrak A(t):=(\mathcal E_M^N(0))^{1/2}
 +\int_0^t\big(\mathcal E_{tot}^N(s)
 +\kappa^{1/2}(\mathcal D_{tot}^N(s))^{1/2}\big)ds.
\]
Choose $0<\sigma<\zeta$ and apply \eqref{eq:bega-rho-bd} with $p=\infty$, $\mathfrak{r}=(1/2-\sigma)^{-1}$, $s=1+\sigma$ and
$p=\infty$, $\mathfrak{r}=(1/2-\sigma)^{-1}$, $s=2+\sigma$. Then $s-\frac{3}{2}+\frac{1}{\mathfrak r} = 0$ and $s-\frac32+\frac{1}{\mathfrak{r}}=1$, respectively. This yields control in $L^\mathfrak{r}_t \dot{B}^0_{\infty,1}$ and $L^\mathfrak{r}_t \dot{B}^1_{\infty,1}$ of the acoustic factor. Since
$N\ge4$ and $\sigma$ is small, thus the acoustic factor is bounded in $L_t^\mathfrak{r} W_x^{1,\infty}$ with
coefficient $\e^{1/2-\sigma}\le\e^{1/2-\zeta}$.
Sobolev products and H\"older in time give the first two bounds.  The
last follows from \eqref{eq:hn-en} and \eqref{eq:hn-kap}.  Substitution gives
\eqref{eq:bega-om}.
\end{proof}
For the relative-vorticity forcing in \eqref{eq:bega-td-id}, we use the following axisymmetric estimate.
For an axisymmetric vector field
$A=A^re_r+A^\vartheta e_\vartheta+A^ze_z$,
$\partial_\vartheta A=A^re_\vartheta-A^\vartheta e_r$; hence
\begin{equation}\label{eq:xa-bd}
 \left|\frac{A^\vartheta}{r}\right|\le |\nabla_xA|,
 \qquad
 \left\|\frac{A^\vartheta}{r}\right\|_{L^p}\le\|\nabla_xA\|_{L^p}.
\end{equation}
\begin{lemma}\label{lem:vortenergy}
Let $\varpi^{\e}$ satisfy \eqref{eq:bega-td-id} and let $N\ge5$.
For every sufficiently small $\zeta>0$, there is $C_\zeta$ such that
\begin{align*}
\|\varpi^{\e}(t)\|_{L^{3,1}_x}
&\leq \|\varpi^{\e}_0\|_{L^{3,1}_x}
+ C_\zeta \bigg[\kappa^{\frac{1}{2}}\int_0^t|\mathcal{D}_G^{N}(s)|^{\frac{1}{2}} ds
+ \e^{1/2-\zeta}(1+t) \sup_{0\leq s \leq t}|\mathcal{E}_M^{N}(s)|^{\frac{1}{2}} \cr 
&\quad \times \bigg(|\mathcal{E}_M^{N}(0)|^{\frac{1}{2}}+ \int_0^t\Big(\mathcal{E}_{tot}^{N}(s) +\kappa^{\frac{1}{2}}\Big(\mathcal{D}_{tot}^{N}(s)\Big)^{\frac{1}{2}}\Big) ds \bigg) \bigg] .
\end{align*}
\end{lemma}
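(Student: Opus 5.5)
The proof will mirror that of Lemma~\ref{lem:angenergy}, with the stretching term now absent. Equation \eqref{eq:bega-td-id} is a pure transport equation for $\varpi^\e$ along the divergence-free field $v^\e$, forced by $\bPhi^\e_\varpi$. Along the measure-preserving flow \eqref{eq:kin-id} we have the Duhamel formula
\[
 \varpi^\e(t,X^\e(t;0,x))=\varpi^\e_0(x)+\int_0^t\bPhi^\e_\varpi(s,X^\e(s;0,x))\,ds .
\]
Measure preservation leaves every rearrangement-invariant norm unchanged, in particular $L^{3,1}$, since its norm depends only on the distribution function. $L^{3,1}$ is normable, so Minkowski's inequality in time applies up to a fixed constant. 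This gives
\[
 \|\varpi^\e(t)\|_{L^{3,1}}\le\|\varpi^\e_0\|_{L^{3,1}}+C\int_0^t\|\bPhi^\e_\varpi(s)\|_{L^{3,1}}\,ds .
\]
No Gronwall factor appears. It remains to bound $\|\bPhi^\e_\varpi\|_{L^{3,1}}$ term by term, using $\|f\|_{L^{3,1}}\lesssim\|f\|_{L^2}^{\theta}\|f\|_{L^{q}}^{1-\theta}$ for any $q>3$ (real interpolation), or simply $\|f\|_{L^{3,1}}\lesssim\|f\|_{L^2\cap L^\infty}$.

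Each of the three terms in \eqref{eq:pq-th} has the form $A^\vartheta/r$ for an axisymmetric vector field $A$. By \eqref{eq:xa-bd} it is enough to bound $\nabla_x A$ in $L^2\cap L^\infty$, which costs one extra derivative compared with Lemma~\ref{lem:angenergy}; this is why $N\ge5$ is assumed.

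\emph{Microscopic stress term.} Take $A=\e^{-2}\nabla\times((\mathrm P^\e)^{-1}\nabla\cdot\mathbf r^\e)$. Then $\nabla A$ involves three derivatives of $\mathbf r^\e$, controlled in $H^2\subset L^2\cap L^\infty$ by $\|\mathbf r^\e\|_{H^5}\le\|\mathbf r^\e\|_{H^N}$. By \eqref{eq:hn-kap} and \eqref{eq:hn-en} this is at most $\kappa^{1/2}(\mathcal D_G^N)^{1/2}$.

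\emph{Baroclinic term.} Take $A=\Theta^\e\nabla\theta^\e\times\nabla(\rho^\e+\theta^\e)$. Then $\|\nabla A\|_{L^2\cap L^\infty}\lesssim\|\theta^\e\|_{H^N}\,\|\rho^\e+\theta^\e\|_{W^{2,\infty}\cap H^2}$ plus the symmetric term. The acoustic factor is estimated through \eqref{eq:bega-rho-bd} with $p=\infty$ and $\mathfrak r=(1/2-\sigma)^{-1}$.

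\emph{Acoustic transport term.} Take $A=\nabla\times(\mathbb P^\perp u^\e\times\omega^\e)$. Here $\|\nabla A\|\lesssim\|\mathbb P^\perp u^\e\|_{W^{3,\infty}}\|u^\e\|_{H^N}$, again treated by \eqref{eq:bega-rho-bd}.

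In both acoustic terms we choose $s=1+\sigma,2+\sigma,3+\sigma$ (all $<N-1$ when $N\ge5$) and interpolate dyadically. This gives $L^{\mathfrak r}_t\dot B^{j}_{\infty,1}$ control for $j=0,\dots,3$ with prefactor $\e^{1/2-\sigma}\le\e^{1/2-\zeta}$, and the low frequencies are handled by the $L^2$-based energy. Hölder's inequality in time over $[0,t]$ then produces a factor $t^{1-1/\mathfrak r}\le 1+t$. The initial acoustic norm $\|(\rho_0^\e+\theta_0^\e,\mathbb P^\perp u_0^\e)\|_{\dot B^{s}_{2,1}}$ is bounded by $(\mathcal E_M^N(0))^{1/2}$ via \eqref{eq:decomp-bd} and the $\ell^1$ summation at low frequency. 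Collecting the three terms gives the stated bound, with the same bracket $\mathfrak A(t)$ as in Lemma~\ref{lem:angenergy}.

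\textbf{Main obstacle.} The step I expect to be most delicate is the extra derivative in the acoustic term $\nabla_x^3\mathbb P^\perp u^\e$ in $L^\infty$. It requires Besov regularity $\dot B^3_{\infty,1}$, that is, $s=3+\sigma<N-1$, which is where $N\ge5$ enters. A second point needing care is the behaviour of $A^\vartheta/r$ near the axis. This should be covered by \eqref{eq:xa-bd} once $A$ is known to be a smooth axisymmetric field, which is guaranteed by Lemma~\ref{lem:sympres}.
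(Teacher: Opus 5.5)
Your skeleton is the paper's. You use Duhamel along the measure-preserving flow \eqref{eq:kin-id} and invariance of the $L^{3,1}$ norm. You reduce each forcing term $A^\vartheta/r$ to $\nabla_xA$ via \eqref{eq:xa-bd}, then apply \eqref{eq:bega-rho-bd} with $p=\infty$, $\mathfrak r=(1/2-\sigma)^{-1}$, followed by H\"older in time. Coefficient one in front of $\|\varpi_0^\e\|_{L^{3,1}}$ is legitimate: the layer-cake functional defining $L^{3,1}$ in the paper equals $\frac13\int_0^\infty t^{-2/3}f^*(t)\,dt$, which is a genuine norm, whereas mere normability would leave a constant there. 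The only structural difference is the interpolation: you interpolate $\nabla_xA$ between $L^2$ and $L^\infty$, while the paper interpolates between $L^2$ and $L^6$ and then bounds everything by $\|A\|_{H^2}$.

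Two steps, however, do not work as written.

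\textbf{Besov count.} For $p=\infty$ and $\mathfrak r=(1/2-\sigma)^{-1}$, estimate \eqref{eq:bega-rho-bd} controls the index $s-1-\sigma$. So $s=1+\sigma,2+\sigma,3+\sigma$ give $\dot B^j_{\infty,1}$ only for $j=0,1,2$. The index $j=3$ requires $s=4+\sigma$, which for $N=5$ lies in $N-1<s<N$ and costs $\kappa^{-\sigma/2}$. Hence your claimed $W^{3,\infty}$ bound on $\mathbb P^\perp u^\e$ ``with all $s<N-1$'' is false, and your closing paragraph misplaces where $N\ge5$ enters.

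In your route this is easy to repair, because you never need $W^{3,\infty}$. The field $\nabla_xA=\nabla\nabla\times(\mathbb P^\perp u^\e\times\omega^\e)$ carries only two derivatives of the product, so $\|\nabla_xA\|_{L^2\cap L^\infty}\lesssim\|\mathbb P^\perp u^\e\|_{W^{2,\infty}}\|u^\e\|_{H^5}$, using $\|\nabla^2\omega^\e\|_\infty\lesssim\|u^\e\|_{H^5}$. Then $N\ge5$ enters through $\|u^\e\|_{H^5}$ and $\|\mathbf r^\e\|_{H^5}$, and all acoustic indices stay below $N-1$. You even avoid the $\kappa^{-\sigma/2}$ loss that the paper's $H^2$ reduction incurs: it puts three derivatives on the acoustic factor, takes $s=4+\sigma$, and absorbs the loss by choosing $\sigma(1+(1-\mathfrak y)/(2c))<\zeta$ and using \eqref{eq:choice-bd}. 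Alternatively, keep $W^{3,\infty}$ and do exactly that.

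\textbf{Baroclinic term.} In every Leibniz term, the factor $\nabla^k(\rho^\e+\theta^\e)$, $k\le2$, must sit in $L^\infty$. Neither the $H^2$ part of your $\|\rho^\e+\theta^\e\|_{W^{2,\infty}\cap H^2}$ nor a ``symmetric'' Moser term with $\theta^\e$ in $W^{2,\infty}$ and $\rho^\e+\theta^\e$ in $H^N$ is small. The reason is that $\theta^\e=\frac25\bigl(\rho^\e+\theta^\e+\mathfrak s^\e\bigr)$ contains the non-dispersive entropy mode. The correct allocation is
\[
\|\nabla_xA\|_{L^2\cap L^\infty}\lesssim(\mathcal E_M^N)^{1/2}\|\rho^\e+\theta^\e\|_{W^{2,\infty}},
\]
using $\nabla\Theta^\e=\e\Theta^\e\nabla\theta^\e$ and \eqref{eq:loc-bd}. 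It is always available because at most two derivatives land on $\rho^\e+\theta^\e$.
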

\begin{proof}
The flow \eqref{eq:kin-id} preserves Lebesgue measure, hence also Lorentz
norms.  Integrating \eqref{eq:bega-td-id} along the flow gives
$\|\varpi^\e(t)\|_{3,1}\le\|\varpi_0^\e\|_{3,1}+
\int_0^t\|\Phi_\varpi^\e(s)\|_{3,1}ds$.
For each vector field $A$ occurring in \eqref{eq:pq-th},
\eqref{eq:xa-bd}, real interpolation between $L^2$ and $L^6$, and
$H^1\hookrightarrow L^6$ give
\[
 \left\|\frac{A^\vartheta}{r}\right\|_{L^{3,1}}
 \lesssim \|\nabla A\|_2^{1/2}\|\nabla A\|_6^{1/2}
 \lesssim \|A\|_{H^2}.
\]
Thus $\|\Phi_\varpi^\e\|_{3,1}$ is bounded by
\[
 \|\nabla\times(\mathbb P^\perp u^\e\times\omega^\e)\|_{H^2}
 +\|\Theta^\e\nabla\theta^\e\times\nabla(\rho^\e+\theta^\e)\|_{H^2}
 +\e^{-2}\|\nabla\times((\mathrm P^\e)^{-1}\nabla\cdot\mathbf r^\e)\|_{H^2}.
\]
Choose $0<\sigma\ll1$ so that
$\sigma(1+(1-\mathfrak y)/(2c))<\zeta$.  For $0\le j\le3$, apply
\eqref{eq:bega-rho-bd} with $p=\infty$, $\mathfrak{r}=(1/2-\sigma)^{-1}$, and
$s=j+1+\sigma$.  The largest index is $4+\sigma$.  If $N=5$, the
last case in \eqref{eq:bega-rho-bd} contributes $\kappa^{-\sigma/2}$; if
$N>5$, insert this factor since $0<\kappa<1$.  By \eqref{eq:choice-bd}, $\e^{1/2-\sigma}\kappa^{-\sigma/2}\asymp\e^{1/2-\sigma(1+(1-\mathfrak y)/(2c))}\le \e^{1/2-\zeta}$.
Sobolev products, \eqref{eq:hn-en}, \eqref{eq:hn-kap}, and H\"older in
time give the stated bound.
\end{proof}

\subsubsection{Entropy estimate}
Set
\begin{equation}\label{eq:par-def}
 S^\e:=\frac{\nabla_x\cdot\mathfrak q^\e+\nabla_x\mathrm U^\e:\mathbf r^\e}
 {\e^2k_B\mathrm P^\e\Theta^\e}.
\end{equation}
The first and third equations in \eqref{eq:tu-id} give
\begin{equation}\label{eq:fst-id}
 (\partial_t+u^\e\cdot\nabla_x)\mathfrak{s}^\e=-S^\e.
\end{equation}
\begin{proposition}[Entropy-gradient estimate]\label{prop:entgrad}
Let $\Omega=\mathbb R^3$, let $N\ge5$, and assume
\eqref{eq:tot-bd} on $[0,T]$.  Then, for $0\le t\le T$,
\begin{align}
\|\nabla_x\mathfrak{s}^\e(t)\|_{L^\infty_x}
&\le
\exp\left(\int_0^t\|\nabla_xu^\e(\tau)\|_{L^\infty_x}\,d\tau\right)
\|\nabla_x\mathfrak{s}^\e_0\|_{L^\infty_x}
\notag\\
&\quad
+C\kappa^{1/2}
\int_0^t
\exp\left(\int_s^t\|\nabla_xu^\e(\tau)\|_{L^\infty_x}\,d\tau\right)
\big(\mathcal D_G^{N}(s)\big)^{1/2}\,ds .
\label{eq:kap-xu}
\end{align}
\end{proposition}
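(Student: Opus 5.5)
The plan is to differentiate the transport equation \eqref{eq:fst-id} in space and integrate the resulting gradient equation along the characteristics of the full velocity $u^\e$. Setting $\mathfrak g^\e:=\nabla_x\mathfrak s^\e$, applying $\partial_{x_k}$ to \eqref{eq:fst-id} gives
\[
 (\partial_t+u^\e\cdot\nabla_x)\mathfrak g^\e_k
 =-(\partial_ku^\e)\cdot\nabla_x\mathfrak s^\e-\partial_kS^\e .
\]
For fixed $\e$ the solution lies in $H^{N}_x$ with $N\ge5$, so $u^\e$ is Lipschitz and the flow $Y^\e(t;s,x)$ of $u^\e$ is well defined. This flow is not measure preserving, because $\nabla\cdot u^\e\neq0$, but measure preservation is not needed for an $L^\infty$ estimate. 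Along characteristics we have
\[
 \frac{d}{dt}|\mathfrak g^\e(t,Y^\e)|\le\|\nabla_xu^\e(t)\|_{L^\infty}|\mathfrak g^\e(t,Y^\e)|+\|\nabla_xS^\e(t)\|_{L^\infty}.
\]
Gronwall's inequality, followed by a supremum over starting points (using that $Y^\e(t;0,\cdot)$ is a bijection), then yields
\[
 \|\nabla\mathfrak s^\e(t)\|_\infty\le e^{\int_0^t\|\nabla u^\e\|_\infty}\|\nabla\mathfrak s_0^\e\|_\infty+\int_0^te^{\int_s^t\|\nabla u^\e\|_\infty}\|\nabla S^\e(s)\|_\infty\,ds .
\]
To make this rigorous one should work with $(|\mathfrak g^\e|^2+\delta)^{1/2}$, or argue in Duhamel form, and then let $\delta\downarrow0$.

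The remaining task is the source bound $\|\nabla_xS^\e\|_{L^\infty_x}\lesssim\kappa^{1/2}(\mathcal D_G^N)^{1/2}$. Using $H^2(\R^3)\hookrightarrow L^\infty$, it suffices to control $\|S^\e\|_{H^3}$. The prefactor $(k_B\mathrm P^\e\Theta^\e)^{-1}$ is bounded in $L^\infty$, and its gradient is bounded in $H^{N-1}$ by \eqref{eq:hn-en}, so the Moser product estimate reduces the problem to
\[
 \e^{-2}\bigl(\|\nabla\cdot\mathfrak q^\e\|_{H^3}+\|\nabla\mathrm U^\e:\mathbf r^\e\|_{H^3}\bigr)\lesssim\e^{-2}\bigl(\|\mathfrak q^\e\|_{H^4}+\|\mathrm U^\e\|_{H^N}\|\mathbf r^\e\|_{H^4}\bigr).
\]
Since $N\ge5$, we have $H^4\subset H^N$. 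Lemma~\ref{lem:stress}, estimate \eqref{eq:hn-kap}, then bounds both terms by $\e^{-2}\cdot\e^2\kappa^{1/2}(\mathcal D_G^N)^{1/2}$, using $\|\mathrm U^\e\|_{H^N}\lesssim1$ from \eqref{eq:hn-en}. Inserting this into the Gronwall bound gives \eqref{eq:kap-xu}.

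The main point to check is derivative counting. The estimate needs four derivatives on $\mathfrak q^\e$ and $\mathbf r^\e$, and this must stay within the $H^N$ range of \eqref{eq:hn-kap}, so that we never need the top-order bound \eqref{eq:htop-bd}, which carries no $\kappa^{1/2}$ gain. This works exactly because $N\ge5$. The secondary technical point is justifying the characteristic argument for the compressible field $u^\e$ at fixed $\e$, which follows from the fixed-$\e$ Sobolev regularity on the bootstrap interval.
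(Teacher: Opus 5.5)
Your proposal is correct and follows the paper's proof essentially verbatim: differentiate \eqref{eq:fst-id}, apply Gronwall to $|\nabla_x\mathfrak s^\e|$ along the flow of $u^\e$, and bound the source by $\|\nabla_xS^\e\|_{L^\infty_x}\lesssim\kappa^{1/2}(\mathcal D_G^N)^{1/2}$ using \eqref{eq:hn-en}, \eqref{eq:hn-kap} and Sobolev embedding. One small slip: the inclusion you use is $H^N\subset H^4$, i.e.\ $\|\cdot\|_{H^4}\le\|\cdot\|_{H^N}$, not $H^4\subset H^N$; this derivative count in fact needs only $N\ge4$, so $N\ge5$ is more than enough rather than exactly what is required.
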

\begin{proof}
By \eqref{eq:hn-en}, \eqref{eq:hn-kap}, Sobolev embedding, and $N\ge5$,
\begin{equation*}
 \|\nabla_xS^\e(t)\|_{L_x^\infty}
 \lesssim \kappa^{1/2}(\mathcal D_G^N(t))^{1/2}.
\end{equation*}
Differentiating \eqref{eq:fst-id} gives
\[
 (\partial_t+u^\e\cdot\nabla_x)\nabla_x\mathfrak{s}^\e
 =-(\nabla_xu^\e)^T\nabla_x\mathfrak{s}^\e-\nabla_xS^\e.
\]
Taking the Euclidean norm along the flow of $u^\e$ gives, for a.e. $t$,
\[
 \frac{d}{dt}|\nabla_x\mathfrak{s}^\e|
 \le
 \|\nabla_xu^\e(t)\|_\infty|\nabla_x\mathfrak{s}^\e|
 +\|\nabla_xS^\e(t)\|_\infty.
\]
Gronwall's inequality and this source bound give \eqref{eq:kap-xu}.
\end{proof}

\subsection{Long-time kinetic estimates}
The acoustic dispersion estimate and the axisymmetric vorticity bounds close the long-time bootstrap of Theorem~\ref{thm:longtime}.
\begin{definition}
For a sufficiently large $C>0$, define
\begin{align}\label{eq:ct-def}
\bega
\Psi[\varpi^{\e}_0,\omega^{\e}_0,\mathfrak{s}^{\e}_0](t)
:={}&\exp\bigg[Ct
\exp\bigg\{(1+t)
\exp\bigg(
C\big(\|\omega^{\e,\vartheta}_0\|_{L^\infty_x}+1\big)
e^{C(\|\varpi^{\e}_0\|_{L^{3,1}_x}+1)t}
\bigg)
\cr
&\hspace{35mm}\times
\big(1+\ln^+\|\omega^{\e}_0\|_{H^2_x}\big)
\bigg\}
\big(\|\nabla_x\mathfrak{s}^{\e}_0\|_{L^\infty_x}+1\big)^2
\bigg].
\enda
\end{align}
Write $\Psi(t):=\Psi[\varpi^{\e}_0,\omega^{\e}_0,\mathfrak{s}^{\e}_0](t)$.
\end{definition}
\begin{lemma}[Logarithmic Biot--Savart estimate]\label{lem:logbs}
Let $v\in L^2(\R^3)$ be a smooth divergence-free vector field and set
$\omega:=\nabla_x\times v$.  If $\omega\in H^2(\R^3)\cap
L^\infty(\R^3)$, then
\begin{equation}\label{eq:xv-bd}
 \|\nabla_xv\|_{L^\infty_x}
 \lesssim
 (1+\|\omega\|_{L^\infty_x})
 (1+\ln^+\|\omega\|_{H^2_x}).
\end{equation}
\end{lemma}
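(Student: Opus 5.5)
The plan is a three-range homogeneous Littlewood--Paley splitting of $\nabla_xv$. All constants come only from Bernstein's inequality and from the boundedness of a fixed Fourier multiplier on dyadic shells, so they are absolute. No norm of $v$ other than those of $\omega$ will enter.

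\emph{Step 1 (representation).} Since $v\in L^2$ is divergence free with $\nabla\times v=\omega$, Lemma~\ref{lem:divcurl} gives $v=\nabla\times(-\Delta)^{-1}\omega$. Hence $\nabla_xv=\mathcal K\omega$, where $\mathcal K$ is a matrix of Fourier multipliers, homogeneous of degree zero and smooth away from the origin (products of Riesz transforms). Because $\omega\in H^2\subset L^2$, we have $\nabla_xv\in L^2$. Thus $\nabla_xv$ has no polynomial part, and $\nabla_xv=\sum_{j\in\mathbb Z}\dot\Delta_j\nabla_xv$ with convergence in $L^2$. Each dyadic piece is smooth and bounded, so the pointwise series bound below yields the $L^\infty$ estimate once the right-hand side is finite. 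For every $j$,
\[
\|\dot\Delta_j\nabla_xv\|_{L^\infty}=\|\mathcal K\dot\Delta_j\omega\|_{L^\infty}\lesssim\|\widetilde{\dot\Delta}_j\omega\|_{L^\infty},
\]
where $\widetilde{\dot\Delta}_j$ is a fattened projector. The constant is uniform in $j$ because $\mathcal K$ restricted to a dyadic shell is, after rescaling, a fixed Schwartz kernel.

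\emph{Step 2 (three ranges).} Fix an integer $J\ge1$ and estimate each range separately.
\begin{itemize}
\item For $|j|\le J$, use $\|\widetilde{\dot\Delta}_j\omega\|_\infty\lesssim\|\omega\|_\infty$. This range contributes $\lesssim J\|\omega\|_\infty$.
\item For $j<-J$, Bernstein gives $\|\widetilde{\dot\Delta}_j\omega\|_\infty\lesssim2^{3j/2}\|\omega\|_2$. Summing yields $\lesssim 2^{-3J/2}\|\omega\|_{H^2}$.
\item For $j>J$, Bernstein and the $\dot H^2$ weight give $\|\widetilde{\dot\Delta}_j\omega\|_\infty\lesssim 2^{3j/2}\|\widetilde{\dot\Delta}_j\omega\|_2\lesssim 2^{-j/2}\|\omega\|_{\dot H^2}$. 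Summing yields $\lesssim2^{-J/2}\|\omega\|_{H^2}$.
\end{itemize}
Altogether,
\[
\|\nabla_xv\|_\infty\lesssim J\|\omega\|_\infty+2^{-J/2}\|\omega\|_{H^2}.
\]

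\emph{Step 3 (choice of $J$).} Take $J=\lceil 2\log_2(2+\|\omega\|_{H^2})\rceil$. Then $2^{-J/2}\|\omega\|_{H^2}\le1$ and $J\lesssim 1+\ln^+\|\omega\|_{H^2}$. This gives
\[
\|\nabla_xv\|_\infty\lesssim 1+\|\omega\|_\infty(1+\ln^+\|\omega\|_{H^2}),
\]
which is bounded by the right-hand side of \eqref{eq:xv-bd}.

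The only delicate point is the low-frequency range. Summing $\|\omega\|_\infty$ over all $j\le0$ would diverge, and the $L^2$ norm of $v$ must not be used there. The fix is to control the far low frequencies by $\|\omega\|_2\le\|\omega\|_{H^2}$, which is absorbed by the same logarithmic choice of $J$. With this, the constant in \eqref{eq:xv-bd} is absolute, as the statement requires.
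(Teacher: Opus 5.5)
Your proof is correct and follows the paper's argument essentially verbatim. The paper also writes $\nabla_x v$ as a degree-zero Fourier multiplier applied to $\omega$, bounds the middle dyadic blocks uniformly by $\|\omega\|_{L^\infty}$, and uses Bernstein with $\|\omega\|_2$ on the low-frequency tail and $\|\omega\|_{\dot H^2}$ on the high-frequency tail. The only cosmetic differences are that the paper takes separate cutoffs $J_-,J_+$ with $2^{3J_-/2}\simeq 2^{J_+/2}\simeq e+\|\omega\|_{H^2}$ and applies Cauchy--Schwarz in $j$ on the tails, whereas you use a single symmetric cutoff and sum the geometric factors directly.
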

The proof is recorded in Appendix~\ref{sec:auxana}.

\subsubsection{Long-time bootstrap closure}\label{sec:closure}
\begin{proof}[Proof of Theorem \ref{thm:longtime}]
Let $T_{\max}^\e>0$ be the maximal lifespan furnished by the standard
perturbative classical local theory for the given smooth datum; for the
non-cutoff Boltzmann equation see \cite{alexandre2012boltzmann}; for a
whole-space Coulomb Landau local theory in the more strongly coupled
Vlasov--Poisson--Landau setting, see \cite{strainzhu2013vpl}.  Since
$\mathcal E_{\rm tot}^N(0)\le\Lambda_5^\e<\Lambda_1^\e$ for all sufficiently
small $\e$, continuity of the local solution makes the following bootstrap
time positive:
\begin{equation*}
 T_b^\e:=\sup\left\{0<T<\min\{T_{\max}^\e,T^\e\}:\
 \sup_{0\le t\le T}\mathcal E_{\rm tot}^N(t)
 +\int_0^T\mathcal D_{\rm tot}^N(s)\,ds<\Lambda_1^\e\right\}.
\end{equation*}
Fix $0<T<T_b^\e$.  On $[0,T]$ we therefore have the bootstrap bound
\begin{align}\label{eq:tot-ds}
\sup_{0\leq t\leq T}
\mathcal{E}^{N}_{tot}(t)
+\int_0^T\mathcal{D}^{N}_{tot}(s)\,ds
< \log  \frac{1}{\e}.
\end{align}
\noindent\textbf{Acoustic estimate.}
Under \eqref{eq:tot-ds}, \eqref{eq:bega-rho-bd} yields
\eqref{eq:bega-rho} on $[0,T]$. It suffices to prove \eqref{eq:ac-bd} there, with a
bound independent of the chosen $T<T_b^\e$.
By \eqref{eq:choice-bd},
$\e\kappa^{-1/2}<\e^{\mathfrak y}=\e^{c_0}$.  Since
$T\le T^\e=(\Lambda_5^\e)^{1/32}$ and
$\Lambda_1^\e=\log(1/\e)$, one has the stronger limit
\begin{equation*}
 \bigl(\e^{c_0}+\e\kappa^{-1/2}\bigr)
 (1+T^\e)\Lambda_1^\e\to0.
\end{equation*}
Thus \eqref{eq:tot-ds} implies \eqref{eq:tot-bd} on $[0,T]$ for all sufficiently
small $\e$.  The same iterated-log comparison gives
$\e^{c_0}T^\e\to0$, hence \eqref{eq:choice-boot}.  Therefore
Theorem~\ref{thm:kinenergy} applies with $T_*=T$.
To control $\int_0^t \Vert \nabla_x (\rho^\e,u^\e,\theta^\e)(s)\Vert_{L^\infty_x}^2 \dd s$, we decompose
\begin{align*}
\bega
\|\nabla_xu^{\e}\|_{L^\infty_x}^2
&\lesssim
\|\nabla_x\mathbb{P}u^{\e}\|_{L^\infty_x}^2
+\|\nabla_x\mathbb{P}^{\perp}u^{\e}\|_{L^\infty_x}^2,
\cr
\|\nabla_x\rho^{\e}\|_{L^\infty_x}^2
+\|\nabla_x\theta^{\e}\|_{L^\infty_x}^2
&\lesssim
\|\nabla_xa_{\rm ac}^{\e}\|_{L^\infty_x}^2
+\|\nabla_x\mathfrak{s}^{\e}\|_{L^\infty_x}^2,
\enda
\end{align*}
where $a_{\rm ac}^{\e}=\rho^{\e}+\theta^{\e}$ and
$\mathfrak{s}^{\e}=\frac{3}{2}\theta^{\e}-\rho^{\e}$.
To estimate the acoustic part, choose $\sigma_\delta>0$ so small that
\[
 c_\delta:=\frac12-
 \sigma_\delta\left(1+\frac{1-\mathfrak y}{2c}\right)>0,
 \qquad
 r_\delta:=\left(\frac12-\sigma_\delta\right)^{-1}>2.
\]
For $1\le j\le3$, apply \eqref{eq:bega-rho-bd} with $p=\infty$,
$\mathfrak r=r_\delta$, and $s=j+1+\sigma_\delta$.  When $N=5$, the top
case contributes $\kappa^{-\sigma_\delta/2}$; for $N>5$ we may insert the
same factor since $0<\kappa<1$.  By \eqref{eq:choice-bd}, the resulting
scale factor is $\asymp \e^{c_\delta}$.
Set $U_0^\e=(\rho_0^\e+\theta_0^\e,\mathbb P^\perp u_0^\e)$.
Since $N\ge5$, \eqref{eq:decomp-bd}--\eqref{eq:tau-ac} give
$\|U_0^\e\|_{\dot B_{2,1}^{j+1+\sigma_\delta}}
\lesssim_\delta\|U_0^\e\|_{H^5}$.  The initial bounds and the iterated-log
choice of $T^\e$ grow more slowly than every negative power of $\e$.
Consequently, for all sufficiently small $\e$,
\begin{equation}\label{eq:time-bd}
 \sum_{j=1}^3
 \|\nabla_x^j((\rho^\e+\theta^\e),\mathbb P^\perp u^\e)
 \|_{L^{r_\delta}(0,T;L^\infty_x)}
 \lesssim_\delta \e^{c_\delta/2},
 \qquad 0<T\le T^\e.
\end{equation}
Since $T^\e$ is also sub-polynomial in $\e^{-1}$, H\"older's inequality
then gives
\[
 \int_0^t
 \|\nabla_x((\rho^\e+\theta^\e),\mathbb P^\perp u^\e)(s)
 \|_{L^\infty_x}^2\,ds=o(1)
\]
uniformly for $0\le t\le T^\e$.
The remaining bounds concern
$\|\nabla_x\mathbb P u^{\e}\|_{L^\infty_x}$ and
$\|\nabla_x\mathfrak{s}^{\e}\|_{L^\infty_x}$.

\medskip

\noindent\textbf{Incompressible estimate.}
Fix $\zeta_0>0$ sufficiently small.  For $N\geq5$, applying Lemma~\ref{lem:vortenergy} with $\zeta=\zeta_0$ together with
the bootstrap assumption \eqref{eq:tot-ds}, we obtain
\begin{align*}
\bega
\|\varpi^{\e}(t)\|_{L^{3,1}_x}
&\leq
\|\varpi^{\e}_0\|_{L^{3,1}_x}
+C \bigg[\kappa^{\frac{1}{2}}\int_0^t|\mathcal{D}_G^{N}(s)|^{\frac{1}{2}} ds
+ \e^{1/2-\zeta_0}(1+t) \sup_{0\leq s \leq t}|\mathcal{E}_M^{N}(s)|^{\frac{1}{2}} \cr 
&\quad \times \bigg(|\mathcal{E}_M^{N}(0)|^{\frac{1}{2}}+ \int_0^t\Big(\mathcal{E}_{tot}^{N}(s) +\kappa^{\frac{1}{2}}\Big(\mathcal{D}_{tot}^{N}(s)\Big)^{\frac{1}{2}}\Big) ds \bigg) \bigg]
 \leq
C\big(\|\varpi^{\e}_0\|_{L^{3,1}_x}+1\big).
\enda
\end{align*}
For the azimuthal vorticity $\omega^{\e,\vartheta}$, this estimate and \eqref{eq:omth-bd} give
\[
\left\|\frac{v^{\e,r}(t)}{r}\right\|_{L^\infty_x}
\lesssim
\|\varpi^\e(t)\|_{L^{3,1}_x}
\lesssim
\|\varpi_0^\e\|_{L^{3,1}_x}+1.
\]
Applying this bound in Lemma~\ref{lem:angenergy} with $\zeta=\zeta_0$, we get
{\small
\begin{align}\label{eq:bega-th}
\bega
\|\omega^{\e,\vartheta}(t)\|_{L^\infty_x}
&\le e^{Ct (\|\varpi^{\e}_0\|_{L^{3,1}_x}+1)}
\bigg[\|\omega^{\e,\vartheta}_0\|_{L^\infty_x} + 
\kappa^{\frac{1}{2}}\int_0^t |\mathcal{D}_G^{N}(s)|^{\frac{1}{2}}ds +\e^{1/2-\zeta_0}(1+t) \sup_{0\leq s \leq t}|\mathcal{E}_M^{N}(s)|^{\frac{1}{2}} \cr 
&\quad \times \bigg(|\mathcal{E}_M^{N}(0)|^{\frac{1}{2}}+ \int_0^t\Big(\mathcal{E}_{tot}^{N}(s) +\kappa^{\frac{1}{2}}\Big(\mathcal{D}_{tot}^{N}(s)\Big)^{\frac{1}{2}}\Big) ds \bigg) \bigg] \cr
&\lesssim e^{Ct (\|\varpi^{\e}_0\|_{L^{3,1}_x}+1)}
(\|\omega^{\e,\vartheta}_0\|_{L^\infty_x} +1),
\enda
\end{align} }
where the energy and dissipation terms are absorbed by the factors
$\kappa^{\frac{1}{2}}$ and $\e^{1/2-\zeta_0}$ under the bootstrap assumption \eqref{eq:tot-ds}.
Since $\omega^\e=\omega^{\e,\vartheta}e_\vartheta$ and
$|e_\vartheta|=1$, we have
$\|\omega^\e(t)\|_{L^\infty_x}
=\|\omega^{\e,\vartheta}(t)\|_{L^\infty_x}$.
Thus, combining \eqref{eq:bega-th} with the logarithmic Biot--Savart estimate
\eqref{eq:xv-bd}, we obtain
\begin{align}\label{eq:bega-est}
\bega
\|\nabla_x\mathbb{P}u^{\e}(t)\|_{L^\infty_x}
\lesssim
e^{C(\|\varpi^{\e}_0\|_{L^{3,1}_x}+1)t}
\big(\|\omega^{\e,\vartheta}_0\|_{L^\infty_x}+1\big)
\big(1+\ln^+\|\omega^{\e}(t)\|_{H^2_x}\big).
\enda
\end{align}
The $H_x^2$ norm of $\omega^\e$ is estimated by
Lemma~\ref{lem:wenergy}. Whenever
$\|\omega^{\e}(t)\|_{H^2_x}^2>1$, dividing \eqref{eq:bega-dt} by
$\|\omega^{\e}(t)\|_{H^2_x}^2$ gives
\begin{align*}
\frac{d}{dt}
\lw(1+\ln^+\|\omega^{\e}\|_{H^2_x}^2\rw)
\lesssim
\Big(\|\nabla_x\mathbb{P}u^{\e}\|_{L^\infty_x}+\|\omega^{\e}\|_{L^\infty_x} \Big)
+\mathfrak{O}(t),
\end{align*}
where
\begin{align*}
\bega
\mathfrak{O}(t) &:=\|\nabla_x \mathbb{P}^{\perp}u^{\e}\|_{L^\infty_x}+\|\nabla_x^2 \mathbb{P}^{\perp}u^{\e}\|_{L^\infty_x} +\|\nabla_x\nabla_x\cdot u^{\e}\|_{L^\infty_x}\|\omega^{\e}\|_{H^3(\R^3)} \cr 
&+ \big\|
k_B\mathrm{\Theta}^{\e}\nabla_x\theta^{\e}\times\nabla_x(\rho^{\e}+\theta^{\e})
\big\|_{H^1(\R^3)}\|\omega^{\e}\|_{H^3(\R^3)} + \bigg\|
\frac{1}{\e^2}\nabla_x \times
\bigg(
\frac{1}{\mathrm{P}^{\e}}
\nabla_x \cdot \mathbf{r}^{\e}
\bigg)
\bigg\|_{H^2_x}.
\enda
\end{align*}
By \eqref{eq:hn-en}, \eqref{eq:hn-kap}, $N\ge5$, and Sobolev product estimates,
\begin{equation}\label{eq:rho-th-bd}
 \mathfrak O(t)
 \lesssim
 \|((\rho^\e+\theta^\e),\mathbb P^\perp u^\e)(t)\|_{W^{2,\infty}}
 \bigl(1+\mathcal E_M^N(t)\bigr)
 +\kappa^{1/2}(\mathcal D_G^N(t))^{1/2}.
\end{equation}
Indeed, since $a_{\rm ac}^\e=\rho^\e+\theta^\e$ and
$\mathfrak{s}^\e=\frac32\theta^\e-\rho^\e$, one has
\[
 \theta^\e=\frac25(a_{\rm ac}^\e+\mathfrak{s}^\e),\qquad
 \nabla\theta^\e\times\nabla a_{\rm ac}^\e
 =\frac25\nabla\mathfrak{s}^\e\times\nabla a_{\rm ac}^\e.
\]
Therefore, since $N\ge5$,
\begin{align}\label{eq:th-time}
 &\left\|
 k_B\Theta^\e\nabla\theta^\e\times\nabla a_{\rm ac}^\e
 \right\|_{H^1_x}\|\omega^\e\|_{H^3_x} \lesssim
 \|a_{\rm ac}^\e\|_{W^{2,\infty}_x}
 \|\mathfrak{s}^\e\|_{H^2_x}
 \|\omega^\e\|_{H^3_x}
 \lesssim
 \|a_{\rm ac}^\e\|_{W^{2,\infty}_x}\mathcal E_M^N.
\end{align}
Moreover, $\nabla\cdot u^\e=\nabla\cdot\mathbb P^\perp u^\e$, so
\begin{equation*}
 \|\nabla\mathbb P^\perp u^\e\|_\infty
 +\|\nabla^2\mathbb P^\perp u^\e\|_\infty
 +\|\nabla\nabla\cdot u^\e\|_\infty\|\omega^\e\|_{H^3}
 \lesssim
 \|\mathbb P^\perp u^\e\|_{W^{2,\infty}}
 (1+\mathcal E_M^N).
\end{equation*}
Finally, the product estimate in $H^2$, the multiplier bounds
$\|(\mathrm P^\e)^{-1}\|_\infty+\|\nabla(\mathrm P^\e)^{-1}\|_{H^{N-1}}\lesssim1$
from \eqref{eq:hn-en}, and \eqref{eq:hn-kap} imply
\begin{align*}
 \frac1{\e^2}\left\|\nabla\times
 \left((\mathrm P^\e)^{-1}\nabla\cdot\mathbf r^\e\right)
 \right\|_{H^2_x}
 &\lesssim
 \frac{1+\e(\mathcal E_M^N)^{1/2}}{\e^2}
 \|\mathbf r^\e\|_{H^4_x} \lesssim \kappa^{1/2}(\mathcal D_G^N)^{1/2}.
\end{align*}
This curl estimate together with \eqref{eq:th-time} gives \eqref{eq:rho-th-bd}.
Apply \eqref{eq:bega-est} and \eqref{eq:bega-th} to
$\|\nabla_x \mathbb{P}u^{\e}\|_{L^\infty_x}$ and
$\|\omega^{\e}\|_{L^\infty_x}$, respectively, to obtain
\begin{align*}
\frac{d}{dt}
\lw(1+\ln^+\|\omega^{\e}\|_{H^2_x}^2\rw)
&\lesssim
e^{C(\|\varpi^{\e}_0\|_{L^{3,1}_x}+1)t}
\big(\|\omega^{\e,\vartheta}_0\|_{L^\infty_x}+1\big)
\big(1+\ln^+\|\omega^{\e}\|_{H^2_x}\big)
+\mathfrak{O}(t) \\
& \lesssim e^{C(\|\varpi^{\e}_0\|_{L^{3,1}_x}+1)t}
\big(\|\omega^{\e,\vartheta}_0\|_{L^\infty_x}+1\big)
\big(1+\ln^+\|\omega^{\e}\|^2_{H^2_x}\big)
+\mathfrak{O}(t) .
\end{align*}
Hence, Gr\"onwall's inequality yields
\begin{align*}
\bega
\ln^+\|\omega^{\e}(t)\|_{H^2_x}^2 &\leq \exp\bigg(\int_0^t Ce^{C (\|\varpi^{\e}_0\|_{L^{3,1}_x}+1)s}
(\|\omega^{\e,\vartheta}_0\|_{L^\infty_x} +1) ds\bigg) \big(1+\ln^+\|\omega^{\e}_0\|_{H^2_x}^2\big) \cr 
&+\int_0^t \exp\bigg(\int_s^t Ce^{C (\|\varpi^{\e}_0\|_{L^{3,1}_x}+1)\tau}
(\|\omega^{\e,\vartheta}_0\|_{L^\infty_x} +1) d\tau\bigg)\mathfrak{O}(s) ds.
\enda
\end{align*}
To control the last term, use \eqref{eq:rho-th-bd}.  Fix a sufficiently small $\zeta>0$ and choose
$0<\sigma\ll\zeta$.  Apply \eqref{eq:bega-rho-bd} with $p=\infty$,
$\mathfrak{r}=(1/2-\sigma)^{-1}$, and $s=j+1+\sigma$ for $j=1,2$.  Since
$s<N-1$, no top-order $\kappa$-loss occurs.  Using \eqref{eq:small-def} and
\eqref{eq:tot-ds}, H\"older's inequality gives
\begin{align*}
 \int_0^{T^\e}
 \|((\rho^\e+\theta^\e),\mathbb P^\perp u^\e)(s)\|_{W^{2,\infty}}
 (1+\mathcal E_M^N(s))\,ds
 \lesssim_\zeta \e^{1/2-\zeta}.
\end{align*}
Moreover, by Cauchy--Schwarz, \eqref{eq:choice-bd}, \eqref{eq:small-def}, and
\eqref{eq:tot-ds},
\[
 \kappa^{1/2}\int_0^{T^\e}(\mathcal D_G^N(s))^{1/2}\,ds
 \lesssim_\zeta \kappa^{1/2-\zeta}.
\]
Hence
\begin{equation*}
 \int_0^{T^\e}\mathfrak O(s)\,ds
 \lesssim_\zeta
 \e^{1/2-\zeta}+\kappa^{1/2-\zeta}=o(1).
\end{equation*}
If
\[
 A_\e(t):=\int_0^t
 Ce^{C(\|\varpi_0^\e\|_{L^{3,1}}+1)\tau}
 (\|\omega_0^{\e,\vartheta}\|_\infty+1)\,d\tau,
\]
then the inhomogeneous Gronwall term satisfies
\[
 \int_0^t e^{A_\e(t)-A_\e(s)}\mathfrak O(s)\,ds
 \le e^{A_\e(t)}\int_0^t\mathfrak O(s)\,ds.
\]
Since $1+\ln^+\|\omega_0^\e\|_{H^2_x}^2\ge1$, the last display is,
for all sufficiently small $\e$, absorbed into the homogeneous Gronwall
term.  Consequently,
\begin{align*}
\bega
\ln^+\|\omega^{\e}(t)\|_{H^2_x}^2
\lesssim
\exp\bigg(
C\big(\|\omega^{\e,\vartheta}_0\|_{L^\infty_x}+1\big)
e^{C(\|\varpi^{\e}_0\|_{L^{3,1}_x}+1)t}
\bigg)
\big(1+\ln^+\|\omega^{\e}_0\|_{H^2_x}^2\big). 
\enda
\end{align*}
Substituting this bound into \eqref{eq:bega-est}, we obtain
\begin{align}\label{eq:bega-kin}
\bega
&\|\nabla_x\mathbb{P}u^{\e}(t)\|_{L^\infty_x} \\
&\lesssim
\exp\bigg(
C\big(\|\omega^{\e,\vartheta}_0\|_{L^\infty_x}+1\big)
e^{C(\|\varpi^{\e}_0\|_{L^{3,1}_x}+1)t} + C(\Vert \varpi_0^\e\Vert_{L^{3,1}_x}+1)t
\bigg)
\big(1+\ln^+\|\omega^{\e}_0\|_{H^2_x}\big)(1+\Vert \omega_0^{\e,\vartheta}\Vert_{L^\infty_x}) \\
& \lesssim \exp\bigg(
(C+1)\big(\|\omega^{\e,\vartheta}_0\|_{L^\infty_x}+1\big)
e^{(C+1)(\|\varpi^{\e}_0\|_{L^{3,1}_x}+1)t}
\bigg)
\big(1+\ln^+\|\omega^{\e}_0\|_{H^2_x}\big) .
\enda
\end{align}
Since $\mathfrak{s}^{\e}=\frac32\theta^\e-\rho^\e$, Proposition
\ref{prop:entgrad} and \eqref{eq:kap-xu} apply on $\R^3$.  Decomposing
$\nabla_xu^{\e}
=\nabla_x\mathbb{P}u^{\e}
+\nabla_x\mathbb{P}^{\perp}u^{\e}$
and using \eqref{eq:bega-kin} and \eqref{eq:time-bd}, we obtain
\begin{align*}
	\bega
	\|\nabla_x\mathfrak{s}^{\e}(t)\|_{L^\infty_x}
	&\leq
	\exp\bigg[Ct
	\exp\bigg(
	(C+1)\big(\|\omega^{\e,\vartheta}_0\|_{L^\infty_x}+1\big)
	e^{(C+1)(\|\varpi^{\e}_0\|_{L^{3,1}_x}+1)t}
	\bigg)
	\big(1+\ln^+\|\omega^{\e}_0\|_{H^2_x}\big)
	\bigg]
	\cr
	&\quad\times
	\Big(
	\|\nabla_x\mathfrak{s}^{\e}_0\|_{L^\infty_x}
	+1
	\Big).
	\enda
\end{align*}
\medskip
\noindent\textbf{Closure.}
Combining the estimates obtained above with Theorem \ref{thm:kinenergy} and applying
Gr\"onwall's inequality, we obtain
\begin{align*}
\bega
\mathcal{E}_{tot}^{N}(t)
+\int_0^t\mathcal{D}_{tot}^{N}(s)\,ds
&\lesssim
e^{C\int_0^t
\big(1+\|\nabla_x(\rho^{\e},u^{\e},\theta^{\e})(s)\|_{L^\infty_x}^2\big)\,ds}
\big(\mathcal{E}_{tot}(0)+C\big)
 \lesssim
\Psi[\varpi^{\e}_0,\omega^{\e}_0,\mathfrak{s}^{\e}_0](t)
\big(\mathcal{E}_{tot}(0)+C\big),
\enda
\end{align*}
where $\Psi[\varpi^{\e}_0,\omega^{\e}_0,\mathfrak{s}^{\e}_0](t)$ is defined
in \eqref{eq:ct-def} with some constant $C$. 
Since $N\ge5$, \eqref{eq:diss-def} and \eqref{eq:init-bd}, together with Sobolev
embedding, give
\begin{equation*}
 \|U_0^\e\|_{H^5}
 +\|\omega_0^\e\|_{L^\infty\cap H^2}
 +\|\nabla_x\mathfrak{s}_0^\e\|_{L^\infty}
 +\|\varpi_0^\e\|_{L^{3,1}}
 \lesssim (\Lambda_5^\e)^{1/2}.
\end{equation*}
To close \eqref{eq:tot-ds}, use this initial bound and the
iterated-log hierarchy in \eqref{eq:small-def}.  We use repeatedly the
elementary fact that, for fixed exponents $A_k\ge0$ and every $\eta>0$,
\[
 \prod_{k=j+1}^5(\Lambda_k^\e)^{A_k}
 =o\big((\Lambda_j^\e)^\eta\big),
 \qquad 1\le j\le4.
\]
Since $T^\e$ is a fixed power of $\Lambda_5^\e$, the successive exponential
factors in these bounds are therefore
\[
 (\Lambda_4^\e)^{o(1)},\qquad
 (\Lambda_3^\e)^{o(1)},\qquad
 (\Lambda_2^\e)^{o(1)},\qquad
 (\Lambda_1^\e)^{o(1)}.
\]
Together with $\mathcal E_{tot}^N(0)\lesssim\Lambda_5^\e$ and
$\Lambda_5^\e=(\Lambda_1^\e)^{o(1)}$, this gives, for all sufficiently
small $\e$ and every $T<T_b^\e$,
\begin{equation}\label{eq:en-tot}
 \sup_{0\le t\le T}\mathcal E_{\rm tot}^N(t)
 +\int_0^T\mathcal D_{\rm tot}^N(s)\,ds
 \le (\Lambda_1^\e)^{1/2}<\Lambda_1^\e.
\end{equation}
The improvement is uniform in $T<T_b^\e$.  Letting $T\uparrow T_b^\e$ and
using continuity of the energy shows that equality with the bootstrap
threshold cannot occur at $T_b^\e$.  Hence
$T_b^\e=\min\{T_{\max}^\e,T^\e\}.$
If $T_{\max}^\e<T^\e$, then \eqref{eq:en-tot}, the parameter
bounds of Lemmas~\ref{lem:macrocoef} and \ref{lem:maxbridge}, and
$\Theta^\e\ge\Theta_0^\e>0$ keep the solution in a fixed perturbative
class up to $T_{\max}^\e$.  The classical local continuation theorem may
therefore be restarted at times approaching $T_{\max}^\e$, extending the
solution beyond its maximal lifespan, a contradiction.  Consequently
$T_{\max}^\e\ge T^\e$ and $T_b^\e=T^\e$.  Equation
\eqref{eq:en-tot} is stronger than \eqref{eq:ac-bd}; the acoustic estimate
\eqref{eq:bega-rho} follows from the already proved bound on every
$0<T\le T^\e$.
\end{proof}
\subsubsection{Fixed-time solenoidal energy drift}
The rough-limit argument uses only the following fixed-time consequence of the
long-time estimates.
\begin{lemma}[Fixed-time solenoidal energy drift]\label{lem:fixedenergy}
Assume the hypotheses of Theorem~\ref{thm:longtime} and
$\sup_\e\|\mathbb Pu_0^\e\|_2<\infty$.  For every fixed $T<\infty$ and every
sufficiently small $\delta>0$,
\begin{equation}\label{eq:ltwo-del}
 \sup_{0\le t\le T}
 \Big|\|\mathbb Pu^\e(t)\|_2-\|\mathbb Pu_0^\e\|_2\Big|
 \le C_{T,\delta}\left(\e^{1/4-\delta}+\kappa^{1/2-\delta}\right).
\end{equation}
\end{lemma}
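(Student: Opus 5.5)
The plan is to derive an exact energy identity for $v^\e:=\mathbb Pu^\e$ from the momentum balance in \eqref{eq:tu-id}, and to show that every term it contains is small on a fixed interval $[0,T]$. First multiply the second line of \eqref{eq:tu-id} by $k_B\Theta^\e$. This gives
\[
 \partial_tu^\e+u^\e\cdot\nabla u^\e+\frac{k_B\Theta^\e}{\e}\nabla(\rho^\e+\theta^\e)
 +\frac{1}{\e^2\mathrm P^\e}\nabla\cdot\mathbf r^\e=0 .
\]
Next apply $\mathbb P$. The decomposition $\Theta^\e=1+\e b^\e$, with $b^\e=(\Theta^\e-1)/\e$, lets us write
\[
 \mathbb P\bigl(\e^{-1}k_B\Theta^\e\nabla a_{\rm ac}^\e\bigr)=k_B\,\mathbb P(b^\e\nabla a_{\rm ac}^\e),
\]
so the singular gradient disappears. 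Then test with $v^\e$. Using $u^\e=v^\e+q_{\rm ac}^\e$ and the vanishing of $\int (v^\e\cdot\nabla v^\e)\cdot v^\e$, we obtain
\[
 \tfrac12\tfrac{d}{dt}\|v^\e\|_2^2=-\int\Bigl[(q_{\rm ac}^\e\cdot\nabla u^\e)+(v^\e\cdot\nabla q_{\rm ac}^\e)+k_Bb^\e\nabla a_{\rm ac}^\e+\e^{-2}(\mathrm P^\e)^{-1}\nabla\cdot\mathbf r^\e\Bigr]\cdot v^\e\,dx .
\]
Dividing by $\|v^\e\|_2$ (or working with $\|v^\e\|_2^2$ and using the uniform $L^2$ bound) reduces \eqref{eq:ltwo-del} to an $L^1(0,T)$ bound on the right-hand side.

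The stress term is handled by Lemma~\ref{lem:stress}, which gives
\[
 \|\e^{-2}(\mathrm P^\e)^{-1}\nabla\cdot\mathbf r^\e\|_2\lesssim\kappa^{1/2}(\mathcal D_G^N)^{1/2}.
\]
By Cauchy--Schwarz in time and \eqref{eq:ac-bd}, its time integral on $[0,T]$ is at most $\kappa^{1/2}T^{1/2}|\log\e|^{15/32}\lesssim\kappa^{1/2-\delta}$.

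The acoustic terms are each bounded by
\[
 \|(a_{\rm ac}^\e,q_{\rm ac}^\e)\|_{W^{1,\infty}}\bigl(\|u^\e\|_{H^1}+\|b^\e\|_2\bigr)\|v^\e\|_2 ,
\]
with $\|b^\e\|_2\lesssim(\mathcal E_M^N)^{1/2}$ by \eqref{eq:hn-bd}. The first factor is controlled in $L^{\mathfrak r}_tL^\infty$ through \eqref{eq:bega-rho} with $p=\infty$, $\mathfrak r=4$ and $s=j+5/4$, $j=0,1$; this gives a gain of $\e^{1/4}$ times the initial $\dot B^{s}_{2,1}$ norm plus the time integral of $\mathcal E_{tot}^N+\kappa^{1/2}(\mathcal D_{tot}^N)^{1/2}$. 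For the logarithmic factors, \eqref{eq:decomp-bd}--\eqref{eq:tau-ac} bound the initial Besov norm by $(\Lambda_5^\e)^{1/2}$, and \eqref{eq:ac-bd} bounds the energy by a power of $|\log\e|$. Hölder in time then yields $C_T\e^{1/4}|\log\e|^{C}\le C_{T,\delta}\e^{1/4-\delta}$.

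The remaining issue is that the initial acoustic data need not be small, so the $\e^{1/4}$ gain must come entirely from dispersion. The exponent $1/4$ corresponds to the admissible choice $\mathfrak r=4$; $\mathfrak r$ may be adjusted as long as $\e^{1/\mathfrak r}$ beats the logarithms. The main obstacle is the bookkeeping that every factor is merely polylogarithmic on $[0,T]$. After that, integrating the identity gives $\bigl|\|v^\e(t)\|_2^2-\|v_0^\e\|_2^2\bigr|\lesssim\e^{1/4-\delta}+\kappa^{1/2-\delta}$, and \eqref{eq:ltwo-del} follows.
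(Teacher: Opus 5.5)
Your argument is essentially the paper's proof. You multiply the momentum balance by $k_B\Theta^\e$, let $\mathbb P$ annihilate $k_B\e^{-1}\nabla a_{\rm ac}^\e$, and test with $v^\e=\mathbb Pu^\e$. You bound the stress term by $\kappa^{1/2}(\mathcal D_G^N)^{1/2}$ via \eqref{eq:hn-kap}. You take the $\e^{1/4}$ gain entirely from the $\mathfrak r=4$ dispersive estimate \eqref{eq:bega-rho-bd}, with all energy factors polylogarithmic and absorbed into $\delta$. The only difference is cosmetic. You bound $\int(q_{\rm ac}^\e\cdot\nabla u^\e)\cdot v^\e$ by $\|q_{\rm ac}^\e\|_\infty\|\nabla u^\e\|_2\|v^\e\|_2$, using $s=5/4$ and the energy bound on $\|\nabla u^\e\|_2$. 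The paper instead integrates the transport part by parts into $-\tfrac12\int(\nabla\cdot q_{\rm ac}^\e)|v^\e|^2$ and handles $q_{\rm ac}^\e\cdot\nabla q_{\rm ac}^\e$ in $L^4_tL^4_x$. Both choices work.

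Your final step needs one correction. A bound $\bigl|\|v^\e(t)\|_2^2-\|v_0^\e\|_2^2\bigr|\le\eta$ gives only $\bigl|\|v^\e(t)\|_2-\|v_0^\e\|_2\bigr|\le\sqrt\eta$ unless the norms are bounded below. That would halve both rates. Likewise, no $\e$-uniform bound on $\|v^\e(t)\|_2$ is available at this stage, since it is a consequence of this lemma; only $\|v^\e\|_2\le(\mathcal E_{tot}^N)^{1/2}$ is available. So the parenthetical ``work with $\|v^\e\|_2^2$'' route does not deliver \eqref{eq:ltwo-del}. Your ``divide by $\|v^\e\|_2$'' route does, provided it is done at zeros of $\|v^\e\|_2$ as well. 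Since every term on your right-hand side carries a factor $\|v^\e\|_2$, set $Z_\lambda:=(\|v^\e\|_2^2+\lambda^2)^{1/2}$. It satisfies $|Z_\lambda'|\le X(t)$, where $X$ is the bracket you estimate in $L^1(0,T)$. Integrate and let $\lambda\downarrow0$; this is exactly how the paper concludes, and it gives the stated rate.
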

\begin{proof}
Recall $a_{\rm ac}^\e,q_{\rm ac}^\e$ defined in \eqref{eq:acoustic-source-def}. Multiplying the momentum equation in \eqref{eq:tu-id} by $k_B\Theta^\e$
and applying $\mathbb P$ gives
\begin{equation}\label{eq:snd-id}
 \partial_t\mathbb Pu^\e+\mathbb P(u^\e\!\cdot\nabla u^\e)
 =-\mathbb P\!\left[
 \frac{k_B(\Theta^\e-1)}\e\nabla a_{\rm ac}^\e
 +\frac1{\e^2\mathrm P^\e}\sum_j\partial_{x_j}\mathbf r_{ij}^\e
 \right].
\end{equation}
The constant-gradient term $k_B\e^{-1}\nabla a_{\rm ac}^\e$ is annihilated by
$\mathbb P$.  The acoustic estimate \eqref{eq:bega-rho-bd}, with
\[
 (s,p,\mathfrak{r})=\left(\frac12,4,4\right),\quad
 \left(\frac32,4,4\right),\quad
 \left(\frac94,\infty,4\right),
\]
and \eqref{eq:decomp-bd}--\eqref{eq:tau-ac} imply, for each fixed $T$,
\begin{align}\label{eq:en-kap}
 &\|q_{\rm ac}^\e\|_{L^4(0,T;L^4)}
 +\|\nabla q_{\rm ac}^\e\|_{L^4(0,T;L^4)}
 +\|\nabla q_{\rm ac}^\e\|_{L^4(0,T;L^\infty)}
 +\|\nabla a_{\rm ac}^\e\|_{L^4(0,T;L^\infty)}\notag\\
 &\qquad\le C_T\e^{1/4}
 \left[\mathcal E_{tot}^N(0)^{1/2}
 +\int_0^T\left(\mathcal E_{tot}^N(s)
 +\kappa^{1/2}\mathcal D_{tot}^N(s)^{1/2}\right)ds\right].
\end{align}
No additional initial norm is used here, since $N\ge5$ and the initial
Besov norms in these three applications are bounded by the $H^5$ norm
already contained in the energy.

Taking the $L^2$ inner product of \eqref{eq:snd-id} with $\mathbb Pu^\e$
and writing $u^\e=\mathbb Pu^\e+q_{\rm ac}^\e$ gives
\begin{align*}
 \left|\int (u^\e\!\cdot\nabla u^\e)\cdot\mathbb Pu^\e\,dx\right|
 &\le \frac12\|\nabla\!\cdot q_{\rm ac}^\e\|_\infty\|\mathbb Pu^\e\|_2^2
 +\|\nabla q_{\rm ac}^\e\|_\infty\|\mathbb Pu^\e\|_2^2 +\|q_{\rm ac}^\e\|_4\|\nabla q_{\rm ac}^\e\|_4\|\mathbb Pu^\e\|_2.
\end{align*}
Moreover Lemma~\ref{lem:macrocoef} and \eqref{eq:hn-kap} give
\[
 \left\|\frac{\Theta^\e-1}{\e}\right\|_2
 \lesssim (\mathcal E_M^N)^{1/2},\qquad
 \left\|\frac1{\e^2\mathrm P^\e}\nabla\!\cdot\mathbf r^\e\right\|_2
 \lesssim \kappa^{1/2}(\mathcal D_G^N)^{1/2}.
\]
To avoid division by $\|\mathbb Pu^\e\|_2$ at its zeros, put
$Z_\lambda=(\|\mathbb Pu^\e\|_2^2+\lambda^2)^{1/2}$.  Since
$\|\mathbb Pu^\e\|_2\le(\mathcal E_{tot}^N)^{1/2}$, these estimates
give, for a.e. $t$,
\begin{align*}
 |Z_\lambda'(t)|
 &\lesssim (\mathcal E_{tot}^N(t))^{1/2}
 \bigl(\|\nabla q_{\rm ac}^\e(t)\|_\infty+\|\nabla a_{\rm ac}^\e(t)\|_\infty\bigr)
 +\|q_{\rm ac}^\e(t)\|_4\|\nabla q_{\rm ac}^\e(t)\|_4 +\kappa^{1/2}(\mathcal D_G^N(t))^{1/2}.
\end{align*}
Integrate, apply H\"older and \eqref{eq:en-kap}, and then let
$\lambda\downarrow0$.  With
\[
 \mathfrak S_{\e,T}:=(\mathcal E_{tot}^N(0))^{1/2}
 +\int_0^T\left(\mathcal E_{tot}^N(s)
 +\kappa^{1/2}(\mathcal D_{tot}^N(s))^{1/2}\right)ds,
\]
we obtain
\begin{align*}
 \sup_{0\le t\le T}
 \Big|\|\mathbb Pu^\e(t)\|_2-\|\mathbb Pu_0^\e\|_2\Big|
 &\lesssim_T
 \sup_{t\le T}(\mathcal E_{tot}^N(t))^{1/2}\e^{1/4}\mathfrak S_{\e,T}
 +\e^{1/2}\mathfrak S_{\e,T}^2 +\kappa^{1/2}\left(\int_0^T\mathcal D_G^N(s)\,ds\right)^{1/2}.
\end{align*}
By \eqref{eq:init-bd} and \eqref{eq:ac-bd}, all energy and dissipation factors above grow at most like a fixed power of $|\log\e|$ on every fixed interval.  Hence, for some $C>0$,
\[
 \sup_{0\le t\le T}
 \Big|\|\mathbb Pu^\e(t)\|_2-\|\mathbb Pu_0^\e\|_2\Big|
 \lesssim_T
 \bigl(\e^{1/4}+\e^{1/2}+\kappa^{1/2}\bigr)
 \left|\log\frac1\e\right|^C.
\]
Finally $\kappa^c\asymp\e^{1-\mathfrak y}$ and every positive power of
$\e$ or $\kappa$ dominates every logarithmic power.  Absorbing the logarithm
into an arbitrarily small loss $\delta$ proves \eqref{eq:ltwo-del}.
\end{proof}

\subsection{Strong convergence of the transported relative vorticity}\label{sec:transport}
The strong transported-relative-vorticity limit and the associated renormalized
axisymmetric Euler flow are identified under a uniform $L^p$ bound on
$\varpi_0^\e$.
\begin{theorem}[DiPerna--Lions stability]\label{thm:dlstab}
Let $1\le \mathfrak p<\infty$.  Let $V_n,V$ be divergence-free vector fields with
$V_n\in L^1(0,T;L^1_{\rm loc}(\R^3))$ such that
\begin{equation}\label{eq:lone-loc}
 V_n\to V
 \qquad\text{in }L^1(0,T;L^1_{\rm loc}(\R^3)),
\end{equation}
\begin{equation}\label{eq:lone-strg}
 V\in L^1(0,T;W^{1,1}_{\rm loc}(\R^3)),
\end{equation}
and
\begin{equation}\label{eq:lone-vort}
 \frac{|V(t,x)|}{1+|x|}
 \in L^1(0,T;L^1(\R^3))
 +L^1(0,T;L^\infty(\R^3)).
\end{equation}
Let $\rho_n$ be renormalized solutions of
\begin{equation*}
 \partial_t\rho_n+V_n\cdot\nabla_x\rho_n=0,
 \qquad
 \rho_n(0)=\rho_{0,n},
\end{equation*}
such that
\begin{equation}\label{eq:van-bd}
 \sup_n\|\rho_n\|_{L^\infty(0,T;L^\mathfrak p(\R^3))}<\infty
\end{equation}
and, for every bounded $\beta\in C(\R)$ vanishing in a neighborhood of $0$,
\begin{equation}\label{eq:van-id}
 \int_{\R^3}\beta(\rho_n(t,x))\,dx
 =
 \int_{\R^3}\beta(\rho_{0,n}(x))\,dx
 \qquad\text{for a.e. }t\in(0,T).
\end{equation}
If
\begin{equation*}
 \rho_{0,n}\to\rho_0
 \qquad\text{in }L^\mathfrak p(\R^3),
\end{equation*}
then
\begin{equation}\label{eq:rho-strg}
 \rho_n\to\rho
 \qquad\text{in }C([0,T];L^\mathfrak p(\R^3)),
\end{equation}
where $\rho$ is the unique renormalized solution of
\begin{equation*}
 \partial_t\rho+V\cdot\nabla_x\rho=0,
 \qquad
 \rho(0)=\rho_0.
\end{equation*}
\end{theorem}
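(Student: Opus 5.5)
The plan is to follow the classical DiPerna--Lions compactness scheme, in four steps: weak compactness, identification of the limit through uniqueness, conversion of weak to strong convergence using the conserved norms, and an upgrade to uniformity in time.

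\emph{Step 1: weak compactness.} By the uniform bound \eqref{eq:van-bd}, $\{\rho_n\}$ is bounded in $L^\infty(0,T;L^{\mathfrak p})$. The equation gives $\partial_t\rho_n=-\nabla\cdot(V_n\rho_n)$. To obtain equicontinuity in time in a weak topology, I would test against $\phi\in C_c^\infty$ after a renormalization. Concretely, I would use truncations $\beta_M(\rho_n)$, which are bounded, so that $V_n\beta_M(\rho_n)$ is bounded in $L^1(0,T;L^1_{\rm loc})$. For $\mathfrak p>1$ one may also work with $\rho_n$ directly when $V_n$ is locally bounded in the dual space, but truncations avoid any such integrability requirement. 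Arzel\`a--Ascoli then yields, along a subsequence, $\beta(\rho_n)\to\overline{\beta(\rho)}$ in $C([0,T];L^{\mathfrak p}\text{-weak})$ for each fixed bounded $\beta$, and $\rho_n\to\rho$ weakly.

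\emph{Step 2: identification of the limit.} The product $V_n\beta(\rho_n)$ converges distributionally to $V\,\overline{\beta(\rho)}$, since $V_n\to V$ strongly in $L^1_{\rm loc}$ by \eqref{eq:lone-loc} while $\beta(\rho_n)$ converges weak-$*$ in $L^\infty$. Hence both $\rho$ and $\overline{\beta(\rho)}$ solve the transport equation with the limit field $V$. The regularity \eqref{eq:lone-strg} and the growth condition \eqref{eq:lone-vort} are exactly the DiPerna--Lions hypotheses: commutator estimates make every $L^\infty_tL^{\mathfrak p}_x$ weak solution renormalized, and the solution is unique. Two consequences follow. First, $\rho$ is the unique renormalized solution with data $\rho_0$. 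Second, $\overline{\beta(\rho)}$ solves the transport equation with initial datum $\beta(\rho_0)$, which is the strong limit of $\beta(\rho_{0,n})$. Uniqueness then gives $\overline{\beta(\rho)}=\beta(\rho)$.

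\emph{Step 3: weak to strong convergence.} This is the main obstacle, because it needs norm convergence and not just the identification above. For a strictly convex choice such as $\beta(s)=|s|^{\mathfrak p}$, suitably truncated near $0$ and at infinity, the conservation identity \eqref{eq:van-id} together with the strong initial convergence gives
\[
\int\beta(\rho_n(t))=\int\beta(\rho_{0,n})\to\int\beta(\rho_0)=\int\beta(\rho(t)),
\]
where the last equality is the renormalized conservation for the limit. Conservation for the limit follows from $\nabla\cdot V=0$ and \eqref{eq:lone-vort}, applied through a cutoff argument in space. Two ingredients are then needed. The first is uniform tightness and uniform control of the truncation error at $0$ and at $\infty$. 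The hypothesis that $\beta$ vanishes near $0$ controls small values. Tightness at spatial infinity would come from the conserved distribution function: $|\{|\rho_n(t)|>\lambda\}|$ is independent of $t$ and converges as $n\to\infty$. The second ingredient is the identity $\overline{\beta(\rho)}=\beta(\rho)$ from Step 2, applied to a strictly convex $\beta$; together with the norm convergence it forces $\rho_n(t)\to\rho(t)$ strongly in $L^{\mathfrak p}$ for a.e.\ $t$. In particular $\||\rho_n|^{\mathfrak p}\|_1\to\||\rho|^{\mathfrak p}\|_1$, and weak convergence plus convergence of norms gives strong convergence for $\mathfrak p>1$. For $\mathfrak p=1$ I would argue with Young measures instead: the convergence $\overline{\beta(\rho)}=\beta(\rho)$ for all $\beta$ says the Young measure is a Dirac mass, which yields convergence in measure, and equi-integrability from the conserved distribution function upgrades this to $L^1$.

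\emph{Step 4: uniformity in time.} Finally I would upgrade pointwise-in-time convergence to $C([0,T];L^{\mathfrak p})$. The limit satisfies $\rho\in C([0,T];L^{\mathfrak p})$ by the renormalized theory. Weak equicontinuity of $\rho_n$ combined with convergence of norms, uniformly in $t$ since the norms are constant in time, gives uniform strong convergence by the standard argument: if $t_n\to t$, then $\rho_n(t_n)\rightharpoonup\rho(t)$ and $\|\rho_n(t_n)\|_{\mathfrak p}\to\|\rho(t)\|_{\mathfrak p}$. Uniqueness of the limit removes the need to pass to a subsequence.
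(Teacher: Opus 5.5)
\textbf{How the paper proves it.} The paper does not reprove this statement. It applies \cite[Theorem~II.5]{diperna1989ordinary} with $b_n=-V_n$, $b=-V$, $c=0$ and only checks the hypotheses: the divergence conditions are automatic, \eqref{eq:lone-strg} and \eqref{eq:lone-vort} are conditions (*) and (**), Chebyshev gives the $L^\infty(0,T;L^0)$ bound, and \eqref{eq:van-id} is the global identity (31). Your plan reconstructs that theorem instead.

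\textbf{The gap in Step~2.} Step~2 does not go through as written. You assert that the weak limit $\rho$ solves the limit equation, and that commutator estimates make every $L^\infty_tL^{\mathfrak p}_x$ weak solution renormalized. Under \eqref{eq:lone-strg} the second claim is false for $\mathfrak p<\infty$. The DiPerna--Lions commutator lemma pairs $\nabla V\in L^{\alpha}_{\rm loc}$ with densities in $L^{\alpha'}_{\rm loc}$, so $\nabla V\in L^1_{\rm loc}$ only handles bounded densities.

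Nor is $\rho$ known to be a distributional solution at all. Since $V\in L^1_tL^{3/2}_{x,\rm loc}$ only, $V\rho$ need not be locally integrable when $\mathfrak p<3$. There is also no way to pass to the limit in $V_n\rho_n$: it need not even be locally integrable, because $\rho_n$ is only renormalized, and weak $L^{\mathfrak p}$ convergence against strong $L^1_{\rm loc}$ convergence does not close the product.

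Consequently, ``uniqueness gives $\overline{\beta(\rho)}=\beta(\rho)$'' presupposes that $\beta(\rho)$ is a bounded solution, i.e.\ that $\rho$ is renormalized. That is essentially the conclusion you are trying to prove.

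\textbf{How to repair Step~2.} The standard fix works only with bounded objects.
\begin{itemize}
\item $\gamma:=\overline{\beta(\rho)}$ is a bounded solution, hence renormalized, so $\gamma^2$ solves the limit equation.
\item The weak limit $\overline{\beta^2(\rho)}$ of $\beta(\rho_n)^2$ also solves it, since $\beta^2$ is again admissible.
\item Both have datum $\beta(\rho_0)^2$. Uniqueness of bounded solutions (from $\nabla\cdot V=0$ and your cutoff argument with \eqref{eq:lone-vort}) gives $\overline{\beta^2(\rho)}=\gamma^2$, i.e.\ $\beta(\rho_n)\to\gamma$ strongly.
\item Let $\beta$ exhaust the identity, controlling small and large values through \eqref{eq:van-id} and the strong convergence of the data. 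This gives strong convergence of $\rho_n$. Only afterwards do you learn that $\gamma=\beta(\rho)$ and that $\rho$ is renormalized.
\end{itemize}
Alternatively, import the DiPerna--Lions existence of a renormalized solution $\tilde\rho$ with datum $\rho_0$, and identify $\gamma=\beta(\tilde\rho)$ by uniqueness of bounded solutions.

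\textbf{Step~4 at $\mathfrak p=1$.} Step~4 fails at the endpoint $\mathfrak p=1$, which the statement includes and which the paper uses to get $C([0,T];L^1)$ convergence of $\varpi_A^\e$. Weak convergence plus convergence of norms forces strong convergence only in uniformly convex spaces. In $L^1$ it does not: take $1+\sin(2\pi n x_1)$ on the unit cube. You need to run the defect argument above at moving times $t_n\to t$, which your $C([0,T];\text{weak})$ convergence of $\beta(\rho_n)$ and $\beta^2(\rho_n)$ permits, and then conclude with Scheff\'e's lemma.

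\textbf{Spatial tightness.} The conserved distribution functions control small and large values of $\rho_n$, but they say nothing about mass escaping to spatial infinity. The global upgrade comes instead from the identity $\|\rho(t)\|_{\mathfrak p}=\|\rho_0\|_{\mathfrak p}$ (your cutoff argument with \eqref{eq:lone-vort}), combined with local strong convergence.
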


\begin{proof}[Proof of Theorem~\ref{thm:dlstab}]
Apply \cite[Theorem~II.5, pp.~527--528]{diperna1989ordinary} with
$b_n=-V_n$, $b=-V$, and $c=0$.  We use its global $L^\mathfrak{p}$ alternative,
whose stated range is $1\le \mathfrak p<\infty$; in particular, the endpoint
$\mathfrak{p}=1$ is part of the theorem rather than an interpolation consequence.
Since $V_n$ and $V$ are divergence free, $\operatorname{div}b_n=\operatorname{div}b=0$, so the uniform $L^1(0,T;L^\infty)$ bound on $\operatorname{div}b_n$
and the divergence part of condition (*) in that theorem are automatic.
The remaining Sobolev part of (*) is \eqref{eq:lone-strg}, while (**) is
\eqref{eq:lone-vort}.  By Chebyshev's inequality, \eqref{eq:van-bd} implies the
required boundedness in $L^\infty(0,T;L^0)$ in the notation of
\cite[p.~521]{diperna1989ordinary}.  Finally, because
$c=\operatorname{div}b_n=0$, \eqref{eq:van-id} is exactly the global identity
\cite[(31), p.~522]{diperna1989ordinary}.  Hence Theorem~II.5 gives
\eqref{eq:rho-strg}, including the endpoint $\mathfrak p=1$.
\end{proof}

\subsubsection{Decomposition of the kinetic vorticity}
Since the microscopic dissipation is controlled only in $L^2(\R^3)$, the microscopic forcing in \eqref{eq:bega-td-id} is not controlled in $L^p(\R^3)$ for $1<p<2$.
To apply the DiPerna--Lions--Majda framework, decompose
$\varpi^\e$ into the initial-data part $\varpi_A^\e$ and the forcing part
$\varpi_B^\e$.
\begin{definition}\label{def:vortsplit}
(1) Define $\varpi_A^\e$ and $\varpi_B^\e$ by
\begin{align}\label{eq:mm-id}
\bega
\partial_t \varpi^{\e}_A
+v^{\e,r}\partial_r\varpi^{\e}_A
+v^{\e,z}\partial_z\varpi^{\e}_A
&=0,
\qquad \hspace{3mm}
\varpi^{\e}_A|_{t=0}=\varpi^{\e}_0,
\cr
\partial_t \varpi^{\e}_B
+v^{\e,r}\partial_r\varpi^{\e}_B
+v^{\e,z}\partial_z\varpi^{\e}_B
&=\bPhi^{\e}_{\varpi},
\qquad
\varpi^{\e}_B|_{t=0}=0,
\enda
\end{align}
where $v^\e=\mathbb P u^\e$.
Then $\varpi^\e=\varpi_A^\e+\varpi_B^\e$ solves \eqref{eq:bega-td-id}.
(2) Define the corresponding azimuthal vorticities by
\begin{align}\label{eq:pp-def}
\bega
\omega^{\e,\vartheta}_A:=r\varpi^{\e}_A,
\qquad
\omega^{\e,\vartheta}_B:=r\varpi^{\e}_B,
\enda
\end{align}
so that
$\omega^{\e,\vartheta}=\omega^{\e,\vartheta}_A+\omega^{\e,\vartheta}_B$.
(3) Since the solution in Theorem~\ref{thm:longtime} is classical, $v^\e$ is smooth
on every fixed interval $[0,T]\subset[0,T^\e]$.  Define $u_A^\e$ as the
unique classical solution of the Leray-projected linear equation
\begin{equation}\label{eq:xv-id}
 \partial_tu_A^\e
 +\mathbb P\!\left[
 v^\e\cdot\nabla_xu_A^\e+(\nabla_xv^\e)^Tu_A^\e
 \right]=0,
 \qquad
 u_A^\e|_{t=0}=\mathbb Pu_0^\e.
\end{equation}
The right-hand side of \eqref{eq:xv-id} is divergence free, so
$\nabla_x\cdot u_A^\e=0$.  Equivalently, there is a scalar pressure
$\pi_A^\e$, unique up to a function of time, such that
\begin{equation}\label{eq:xp-id}
 \partial_tu_A^\e+v^\e\cdot\nabla_xu_A^\e
 +(\nabla_xv^\e)^Tu_A^\e+\nabla_x\pi_A^\e=0,
 \qquad
 \nabla_x\cdot u_A^\e=0.
\end{equation}
Set
\begin{equation}\label{eq:decomp-def}
 u_B^\e:=v^\e-u_A^\e,
 \qquad
 v^\e=u_A^\e+u_B^\e.
\end{equation}
The symmetry of \eqref{eq:xp-id} and uniqueness show that $u_A^\e$ and
$u_B^\e$ are axisymmetric without swirl.
\end{definition}
\begin{lemma}
The azimuthal vorticities $\omega_A^{\e,\vartheta}$ and
$\omega_B^{\e,\vartheta}$ satisfy
\begin{align}\label{eq:cm-id}
\bega
\partial_t \omega^{\e,\vartheta}_A
+v^{\e,r}\partial_r\omega^{\e,\vartheta}_A
+v^{\e,z}\partial_z\omega^{\e,\vartheta}_A
&=
\frac{v^{\e,r}}{r}\omega^{\e,\vartheta}_A,
\qquad \hspace{1.1cm}
\omega^{\e,\vartheta}_A|_{t=0}=\omega^{\e,\vartheta}_0,
\cr
\partial_t \omega^{\e,\vartheta}_B
+v^{\e,r}\partial_r\omega^{\e,\vartheta}_B
+v^{\e,z}\partial_z\omega^{\e,\vartheta}_B
&=
\frac{v^{\e,r}}{r}\omega^{\e,\vartheta}_B
+r\bPhi^{\e}_{\varpi},
\qquad
\omega^{\e,\vartheta}_B|_{t=0}=0.
\enda
\end{align}
Moreover,
\begin{equation}\label{eq:decomp-id}
 \nabla_x\times u_A^\e=\omega_A^{\e,\vartheta}e_\vartheta,
 \qquad
 \nabla_x\times u_B^\e=\omega_B^{\e,\vartheta}e_\vartheta,
 \qquad
 \nabla_x\cdot u_A^\e=\nabla_x\cdot u_B^\e=0.
\end{equation}
For each fixed $t$, $u_A^\e(t),u_B^\e(t)\in L^2(\R^3)$, and hence the
Fourier div--curl identity gives
\begin{equation}\label{eq:gam-th}
 u_\gamma^\e
 =\nabla_x\times(-\Delta_x)^{-1}
   \bigl(\omega_\gamma^{\e,\vartheta}e_\vartheta\bigr),
 \qquad \gamma\in\{A,B\},
\end{equation}
in $L^2(\R^3)$.
The velocity fields also satisfy
\begin{align}\label{eq:fld-id}
\bega
\partial_t u_A^\e
+\mathbb{P}u^\e\cdot\nabla_x u_A^\e
+(\nabla_x \mathbb{P}u^\e)^T u_A^\e
+\nabla_x \pi_A^\e
&=0,
\qquad
u^{\e}_A|_{t=0}=\mathbb{P}u^{\e}_0,
\cr
\partial_t u_B^\e
+\mathbb{P}u^\e\cdot\nabla_x u_B^\e
+(\nabla_x \mathbb{P}u^\e)^T u_B^\e
+\nabla_x \pi_B^\e
&=
\nabla_x\times(-\Delta_x^{-1})
\big(r\bPhi^{\e}_{\varpi}e_{\vartheta}\big),
\qquad
u^{\e}_B|_{t=0}=0.
\enda
\end{align}
\end{lemma}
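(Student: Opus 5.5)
The plan is to derive all four assertions from the defining equations in Definition~\ref{def:vortsplit}, using the relation between $\varpi$, $\omega^\vartheta$ and the curl of a no-swirl field. The proof has three steps, in order.

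\emph{Step 1: azimuthal equations.} Since $\omega_\gamma^{\e,\vartheta}=r\varpi_\gamma^\e$, apply the operator $D_t^\e:=\partial_t+v^{\e,r}\partial_r+v^{\e,z}\partial_z$. The product rule gives
\[
D_t^\e(r\varpi_\gamma^\e)=rD_t^\e\varpi_\gamma^\e+v^{\e,r}\varpi_\gamma^\e ,
\]
and $v^{\e,r}\varpi_\gamma^\e=(v^{\e,r}/r)\omega_\gamma^{\e,\vartheta}$. Substituting \eqref{eq:mm-id} then yields \eqref{eq:cm-id}. The initial data follow from $\varpi_0^\e=\omega_0^{\e,\vartheta}/r$.

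\emph{Step 2: the vorticity of $u_A^\e$, which I expect to be the main obstacle.} Take the curl of \eqref{eq:xp-id}. The key identity is
\[
v\cdot\nabla w+(\nabla v)^Tw=\nabla(v\cdot w)-v\times(\nabla\times w).
\]
Hence $\zeta:=\nabla\times u_A^\e$ satisfies the linear equation
\[
\partial_t\zeta=\nabla\times(v^\e\times\zeta),
\]
that is, $\partial_t\zeta+v^\e\cdot\nabla\zeta=\zeta\cdot\nabla v^\e$, using $\nabla\cdot v^\e=\nabla\cdot\zeta=0$.
\begin{itemize}
\item By the symmetry of $u_A^\e$, the field $\zeta$ is azimuthal: $\zeta=\zeta^\vartheta e_\vartheta$.
\item For a no-swirl $v^\e$ one has $(\zeta\cdot\nabla)v^\e=(v^{\e,r}/r)\zeta^\vartheta e_\vartheta$, exactly as in the proof of \eqref{eq:bega-td}.
\item Therefore $\zeta^\vartheta$ solves the same linear transport equation as $\omega_A^{\e,\vartheta}$, with the same initial datum $(\nabla\times\mathbb Pu_0^\e)^\vartheta=\omega_0^{\e,\vartheta}$.
\end{itemize}
The coefficients are smooth and Lipschitz on $[0,T]$, so uniqueness for this linear equation along the flow \eqref{eq:kin-id}, by the characteristic ODE with Gronwall, gives $\zeta^\vartheta=\omega_A^{\e,\vartheta}$. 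The point needing care is that both functions lie in a class where uniqueness holds; they are classical and locally bounded, so integration along characteristics suffices.

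\emph{Step 3: $u_B^\e$, div--curl representation, and \eqref{eq:fld-id}.} Subtracting Step~2 from $\nabla\times v^\e=\omega^{\e,\vartheta}e_\vartheta$ gives the curl of $u_B^\e$. Divergence-freeness is built in. Both $u_A^\e$ and $u_B^\e$ lie in $L^2$: for $u_A^\e$ this follows from the $L^2$ energy estimate of the linear equation \eqref{eq:xv-id} with smooth coefficients, and for $u_B^\e$ it follows since $v^\e\in L^2$. Lemma~\ref{lem:divcurl} then gives \eqref{eq:gam-th}.

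For \eqref{eq:fld-id}, the first line is \eqref{eq:xp-id}. For the second line, project \eqref{eq:snd-id} and subtract. Equivalently, observe that $u_B^\e$ has curl $\omega_B^{\e,\vartheta}e_\vartheta$, which obeys the forced version of Step~2 with forcing $r\bPhi_\varpi^\e e_\vartheta$. Then define $\pi_B^\e$ as the gradient part, so that the Leray-projected equation reads with right-hand side
\[
\nabla\times(-\Delta)^{-1}(r\bPhi_\varpi^\e e_\vartheta).
\]
This uses the same Step~2 uniqueness argument applied to the difference, and the divergence-free forcing is recovered by the div--curl identity.
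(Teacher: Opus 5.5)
Your proposal is correct and follows the paper's proof essentially step by step. The product-rule derivation of \eqref{eq:cm-id}, the curl of \eqref{eq:xp-id} combined with uniqueness for the linear stretching equation and the initial identity $\nabla_x\times\mathbb Pu_0^\e=\omega_0^{\e,\vartheta}e_\vartheta$, the subtraction giving $\nabla_x\times u_B^\e$, the Gronwall $L^2$ bound with div--curl identification, and the Helmholtz argument for the second line of \eqref{eq:fld-id} are exactly the paper's steps. The one difference is that the paper runs the uniqueness on the vector fields $\nabla_x\times u_A^\e$ and $\omega_A^{\e,\vartheta}e_\vartheta$ directly, so it does not need the axisymmetry of $u_A^\e$ that your scalar reduction invokes.

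Two small remarks:
\begin{itemize}
\item In the last step no uniqueness argument is needed: once $\nabla_x\times u_B^\e=\omega_B^{\e,\vartheta}e_\vartheta$ is known, the curl of the residual vanishes directly by the second line of \eqref{eq:cm-id}.
\item Your alternative ``project \eqref{eq:snd-id} and subtract'' would additionally require matching the projected acoustic and stress terms with $\nabla_x\times(-\Delta_x)^{-1}(r\bPhi^\e_\varpi e_\vartheta)$, i.e.\ the identity \eqref{eq:ctrl-id}, which you do not supply.
\end{itemize}
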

\begin{proof}
Substituting $\varpi_A^\e=\omega_A^{\e,\vartheta}/r$ and
$\varpi_B^\e=\omega_B^{\e,\vartheta}/r$ into \eqref{eq:mm-id} gives
\eqref{eq:cm-id}.
For each fixed $\e$ and $T<T^\e$, the coefficients in
\eqref{eq:xv-id} are smooth.  The boundedness of $\mathbb P$ on $H^m$
and the usual energy estimate for a linear first-order system give
existence and uniqueness in $C([0,T];H^m)$ for every finite $m$ allowed
by the regularity of the kinetic solution and the initial datum.  Thus the
classical solution used in Definition~\ref{def:vortsplit} is well defined.
Set $\widetilde\omega_A^\e:=\nabla_x\times u_A^\e$.  Taking the curl of
\eqref{eq:xp-id} and using $\nabla_x\cdot v^\e=0$ gives
\begin{equation*}
 \partial_t\widetilde\omega_A^\e+v^\e\cdot\nabla_x\widetilde\omega_A^\e
 =\widetilde\omega_A^\e\cdot\nabla_xv^\e.
\end{equation*}
The vector field $\omega_A^{\e,\vartheta}e_\vartheta$ satisfies the same
equation by \eqref{eq:cm-id}.  At $t=0$,
\[
 \nabla_x\times\mathbb Pu_0^\e
 =\nabla_x\times u_0^\e
 =\omega_0^{\e,\vartheta}e_\vartheta,
\]
because the Leray projection changes a vector field only by a gradient.
Uniqueness for this smooth linear equation therefore gives
the first curl identity in \eqref{eq:decomp-id}.  The second follows from
\eqref{eq:decomp-def} and
$\nabla_x\times v^\e=\omega^{\e,\vartheta}e_\vartheta$.
Taking the $L^2$ inner product of \eqref{eq:xp-id} with $u_A^\e$ yields
\[
 \frac12\frac{d}{dt}\|u_A^\e\|_2^2
 =-\int_{\R^3}u_A^\e\otimes u_A^\e: \nabla_x v^\e\,dx,
\]
so $u_A^\e(t)\in L^2$ on every fixed interval by Gronwall, because
\eqref{eq:ac-bd} and the Sobolev embedding $H^3\hookrightarrow W^{1,\infty}$ give
$\int_0^T\|\nabla_xv^\e(t)\|_\infty\,dt<\infty$.  Then
\eqref{eq:decomp-def} and $v^\e\in L^2$ give
$u_B^\e(t)\in L^2$.  For an $L^2$ divergence-free field $U$,
Plancherel gives
\[
 \widehat U(\zeta)
 =\frac{i\zeta\times\widehat{\nabla\times U}(\zeta)}{|\zeta|^2}
 \quad\text{for a.e. }\zeta\ne0.
\]
Applying this to $u_A^\e$ and $u_B^\e$ proves \eqref{eq:gam-th}.
The first equation in \eqref{eq:fld-id} is \eqref{eq:xp-id}.  For the second,
write the second equation in \eqref{eq:cm-id} in vector form,
\[
 \partial_t\omega_B^\e+v^\e\cdot\nabla_x\omega_B^\e
 =\omega_B^\e\cdot\nabla_xv^\e+r\bPhi_\varpi^\e e_\vartheta.
\]
Using
\[
 \nabla_x\times\bigl(v^\e\cdot\nabla_xu_B^\e
 +(\nabla_xv^\e)^Tu_B^\e\bigr)
 =v^\e\cdot\nabla_x\omega_B^\e-\omega_B^\e\cdot\nabla_xv^\e
\]
and \eqref{eq:gam-th}, the curl of the difference between the two sides of
the second equation in \eqref{eq:fld-id} vanishes.  Since every other term is
in the classical energy class, the Helmholtz decomposition identifies this
curl-free difference with a gradient; absorbing it into $\pi_B^\e$ proves
\eqref{eq:fld-id}.
\end{proof}
\begin{lemma}\label{lem:splitconv}
Under the assumptions of Theorem~\ref{thm:longtime}, let
$\{F^{\e}\}_{\e>0}$ be the corresponding kinetic solutions on
$[0,T^\e]$, where $T^\e\to\infty$ as $\e\to0$, and let
$\varpi_A^\e$ and $\varpi_B^\e$ be defined by \eqref{eq:mm-id}.
Then the following properties hold on $[0,T]$ for any $T\in(0,T^\e]$.
\begin{enumerate}
\item The solution $\varpi_A^\e$ in \eqref{eq:mm-id} is renormalized in the DiPerna--Lions sense.  For every bounded
$\beta\in C(\R)$ vanishing in a neighborhood of $0$,
\begin{equation}\label{eq:bet-dx}
 \int_{\R^3}\beta(\varpi_A^\e(t,x))\,dx
 =
 \int_{\R^3}\beta(\varpi_0^\e(x))\,dx,
 \qquad 0\le t\le T.
\end{equation}
Consequently, if $\varpi_0^\e\in L^p(\R^3)$, $1\le p\le\infty$, then
\begin{equation}\label{eq:strg-bd}
 \|\varpi_A^\e(t)\|_{L^p(\R^3)}
 =\|\varpi_0^\e\|_{L^p(\R^3)},
 \qquad 0\le t\le T.
\end{equation}
If $1\le p<\infty$, then moreover
\begin{equation}\label{eq:lp-decomp}
 \varpi_A^\e\in C([0,T];L^p(\R^3)).
\end{equation}
Define
\begin{equation*}
 \varpi_{A,+}^\e:=(\varpi_A^\e)_+,
 \qquad
 \varpi_{A,-}^\e:=(-\varpi_A^\e)_+,
 \qquad
 \varpi_{0,+}^\e:=(\varpi_0^\e)_+,
 \qquad
 \varpi_{0,-}^\e:=(-\varpi_0^\e)_+.
\end{equation*}
Let $X^\e$ be the flow of $v^\e$,
\begin{equation}\label{eq:strg-id}
 \partial_tX^\e(t,x)=v^\e(t,X^\e(t,x)),
 \qquad X^\e(0,x)=x.
\end{equation}
Then
\begin{align}
 \det\nabla_xX^\e(t,x)&=1,\label{eq:xx-id}\\
 \varpi_A^\e(t,X^\e(t,x))&=\varpi_0^\e(x),\label{eq:sign-decomp}\\
 \varpi_{A,\pm}^\e(t,X^\e(t,x))&=\varpi_{0,\pm}^\e(x).\label{eq:sign-psi}
\end{align}
For each sign and every
$\psi\in C_c^\infty([0,T)\times\R^3)$,
\begin{equation}\label{eq:td-psi}
 \int_0^T\!\int_{\R^3}\varpi_{A,\pm}^\e
 \bigl(\partial_t\psi+v^\e\cdot\nabla_x\psi\bigr)
 \,dx\,dt
 +\int_{\R^3}\varpi_{0,\pm}^\e\psi(0,x)\,dx=0.
\end{equation}
\item The forcing parts $\varpi_B^\e$, $\omega_B^\e$, and $u_B^\e$ defined
in Definition~\ref{def:vortsplit} satisfy, for every sufficiently small $\delta>0$,
\begin{align}
&\sup_{0\leq t\leq T}\|\varpi^{\e}_B(t)\|_{L^q(\R^3)}
 \leq C_{T,\delta}\Big(\e^{\frac12-\delta}+\kappa^{\frac12-\delta}\Big)
 \to0,
 \qquad 2\leq q\leq6, \label{eq:del-kap-bd}\\
&\sup_{0\leq t\leq T}\|\omega^{\e}_B(t)\|_{L^q(\R^3)}
 \leq C_{T,\delta}\Big(\e^{\frac12-\delta}+\kappa^{\frac12-\delta}\Big)
 \to0,
 \qquad 2\leq q\leq\infty, \label{eq:del-decomp}\\
&\sup_{0\leq t\leq T}\|u^{\e}_B(t)\|_{L^q(\R^3)}
 \leq C_{T,\delta}\Big(\e^{\frac12-\delta}+\kappa^{\frac12-\delta}\Big)
 \to0,
 \qquad 2\leq q\leq\infty. \label{eq:del-kin}
\end{align}
\end{enumerate}
\end{lemma}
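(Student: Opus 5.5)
Part (1) is pure transport by a smooth field, and I would treat it first and classically. For fixed $\e$ and $T\le T^\e$, the solution of Theorem~\ref{thm:longtime} is classical, so $v^\e=\mathbb Pu^\e\in C([0,T];H^{N})$ with $N\ge5$. Hence $v^\e$ is Lipschitz in $x$ uniformly on $[0,T]$, and the flow \eqref{eq:strg-id} is a $C^1$ diffeomorphism of $\R^3$. Liouville's formula $\partial_t\det\nabla X^\e=(\nabla\cdot v^\e)(X^\e)\det\nabla X^\e$ with $\nabla\cdot v^\e=0$ gives \eqref{eq:xx-id}. The first equation of \eqref{eq:mm-id}, written in $\R^3$ as $\partial_t\varpi_A^\e+v^\e\cdot\nabla\varpi_A^\e=0$, is solved along characteristics, giving \eqref{eq:sign-decomp}. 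Since $s\mapsto s_\pm$ commutes with composition, \eqref{eq:sign-psi} follows.

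The identity \eqref{eq:bet-dx} then comes from the change of variables $x\mapsto X^\e(t,x)$ with unit Jacobian, and \eqref{eq:strg-bd} follows by taking $\beta$ to be truncations of $|s|^p$ and using monotone convergence; the case $p=\infty$ is read off directly from \eqref{eq:sign-decomp}. The weak formulation \eqref{eq:td-psi}, and renormalization in general, come from differentiating $\int\beta(\varpi_0^\e(x))\psi(t,X^\e(t,x))\,dx$ in $t$ and changing variables back. For continuity \eqref{eq:lp-decomp}, I would approximate $\varpi_0^\e$ in $L^p$ by $C_c$ functions $g$. The map $t\mapsto g\circ (X^\e(t))^{-1}$ is continuous in $L^p$ by uniform continuity of $g$ and compactness of the support on $[0,T]$. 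The composition is an $L^p$ isometry, so a $3\eta$-argument gives continuity for $\varpi_0^\e$ itself.

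Part (2) is the real work. I would use Duhamel along the same measure-preserving flow: $\varpi_B^\e(t,X^\e(t,x))=\int_0^t\bPhi_\varpi^\e(s,X^\e(s,x))\,ds$. This gives
\[
\|\varpi_B^\e(t)\|_{L^q}\le\int_0^t\|\bPhi_\varpi^\e(s)\|_{L^q}\,ds ,
\]
and likewise $\|\omega_B^{\e,\vartheta}(t)\|_{L^q}\le e^{\int_0^t\|v^{\e,r}/r\|_\infty}\int_0^t\|r\bPhi_\varpi^\e\|_{L^q}$ from \eqref{eq:cm-id}. The exponential factor is bounded by $C_T$ through \eqref{eq:omth-bd} and the $L^{3,1}$ bound obtained in the proof of Theorem~\ref{thm:longtime}.

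Each of the three forcing terms in \eqref{eq:pq-th} has the form $A^\vartheta/r$. By \eqref{eq:xa-bd}, each is bounded in $L^q$ by $\|\nabla A\|_{L^q}$, hence for $2\le q\le6$ by $\|A\|_{H^2}$. The microscopic term is then bounded by $\kappa^{1/2}(\mathcal D_G^N)^{1/2}$ via \eqref{eq:hn-kap}. The acoustic term $\nabla\times(\mathbb P^\perp u^\e\times\omega^\e)$ and the baroclinic term $\nabla\theta^\e\times\nabla(\rho^\e+\theta^\e)=\tfrac25\nabla\mathfrak s^\e\times\nabla a_{\rm ac}^\e$ each contain an acoustic factor. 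That factor is controlled in $L^{\mathfrak r}_tW^{2,\infty}_x$ with gain $\e^{1/2-\sigma}$ by \eqref{eq:bega-rho-bd}, exactly as in Lemma~\ref{lem:vortenergy}. Integrating in time with H\"older, Cauchy--Schwarz on $\int_0^T(\mathcal D_G^N)^{1/2}$, and the logarithmic bound \eqref{eq:ac-bd}, I obtain $C_{T,\delta}(\e^{1/2-\delta}+\kappa^{1/2-\delta})$ after absorbing the logarithmic powers into $\delta$. This proves \eqref{eq:del-kap-bd}.

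For \eqref{eq:del-decomp}, the factor $r$ in $r\bPhi_\varpi^\e$ only turns $A^\vartheta/r$ back into $A^\vartheta$. I would then use $H^2\hookrightarrow L^q$ for $2\le q\le\infty$, the same sources estimated one derivative lower, which is fine since $N\ge5$.

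The main obstacle is \eqref{eq:del-kin}, since $u_B^\e$ is nonlocal. For $q\le 6$, the div--curl representation \eqref{eq:gam-th} gives $\|\nabla u_B^\e\|_{L^q}\lesssim\|\omega_B^\e\|_{L^q}$. An $L^2$ bound comes from the energy identity for the second equation of \eqref{eq:fld-id}. Testing it with $u_B^\e$ gives $\tfrac{d}{dt}\|u_B^\e\|_2\le\|\nabla v^\e\|_\infty\|u_B^\e\|_2+\|\nabla\times(-\Delta)^{-1}(r\bPhi_\varpi^\e e_\vartheta)\|_2$. The last term is bounded by $\|r\bPhi_\varpi^\e\|_{\dot H^{-1}}\lesssim\|r\bPhi_\varpi^\e\|_{6/5}$, or more simply via Lemma~\ref{lem:divcurl} by $\|A\|_{L^2}$, because $r\bPhi e_\vartheta$ is a curl of $A$. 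Gronwall with $\int_0^T\|\nabla v^\e\|_\infty\le C_T$ (from \eqref{eq:bega-kin} and \eqref{eq:time-bd}) then gives the $L^2$ bound, and Gagliardo--Nirenberg interpolation between $L^2$ and $\nabla u_B^\e\in L^2\cap L^\infty$ covers $2\le q\le\infty$.
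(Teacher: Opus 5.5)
This is essentially the paper's argument. Part (1) goes through the unit-Jacobian flow of $v^\e$. Part (2) uses Duhamel along that flow, the bound \eqref{eq:xa-bd}, the dispersive gain from \eqref{eq:bega-rho-bd}, and an $L^2$ energy estimate for $u_B^\e$ that exploits $r\bPhi_\varpi^\e e_\vartheta=\nabla\times\mathcal G_\varpi^\e$. You should write the potential out explicitly. For the baroclinic piece it rests on $\Theta^\e\nabla\theta^\e\times\nabla a_{\rm ac}^\e=\nabla\times\bigl(\e^{-1}(\Theta^\e-1)\nabla a_{\rm ac}^\e\bigr)$. For the microscopic piece, the symmetry of $\mathbf r^\e$ is needed to see that its curl has no meridional part.

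One claim needs fixing. The amplification factors $\exp\int_0^T\|v^{\e,r}/r\|_\infty\,dt$ and $\exp\int_0^T\|\nabla v^\e\|_\infty\,dt$ are \emph{not} bounded by a constant $C_T$ independent of $\e$. Under \eqref{eq:init-bd}, the quantities $\|\varpi_0^\e\|_{L^{3,1}}$, $\|\omega_0^{\e,\vartheta}\|_\infty$ and $\|\omega_0^\e\|_{H^2}$ that feed \eqref{eq:omth-bd} and \eqref{eq:bega-kin} are only bounded by $(\Lambda_5^\e)^{1/2}$. What does hold is $\sup_{t\le T}\|\nabla\mathbb Pu^\e(t)\|_\infty\lesssim\|u^\e\|_{H^3}\lesssim|\log\e|^{15/32}$. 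Since $|v^{\e,r}/r|\le|\nabla v^\e|$, both factors are therefore $\e^{-o(1)}$ and are absorbed into the $\delta$-loss, exactly as in \eqref{eq:pp-dt}.

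Separately, Calder\'on--Zygmund does not give $\nabla u_B^\e\in L^\infty$. For $q=\infty$, use $\|u_B^\e\|_\infty\lesssim\|u_B^\e\|_2^{1/4}\|\omega_B^\e\|_6^{3/4}$. Alternatively, as the paper does, split the Biot--Savart kernel into an $L^1$ piece and an $L^2$ piece and use $\|\omega_B^\e\|_\infty$ and $\|\omega_B^\e\|_2$.
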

\begin{proof}
Since $N\ge5$, the boundedness of $\mathbb P$ on $H^3(\R^3)$,
\eqref{eq:ac-bd}, and $H^3(\R^3)\hookrightarrow W^{1,\infty}(\R^3)$ give
\begin{equation}\label{eq:ct-bd}
 \int_0^T\|\mathbb Pu^\e(t)\|_{W^{1,\infty}}\,dt
 \le CT\sup_{0\le t\le T}\|u^\e(t)\|_{H^3}<\infty.
\end{equation}
By \eqref{eq:pq-def}, the Jacobian of \eqref{eq:strg-id} is constant and equals one;
the transport equation \eqref{eq:mm-id} then gives
\eqref{eq:xx-id}--\eqref{eq:sign-psi}.
Using \eqref{eq:xx-id} and
\eqref{eq:sign-psi}, for every
$\psi\in C_c^\infty([0,T)\times\R^3)$,
\begin{align*}
 &\int_0^T\!\int_{\R^3}\varpi_{A,\pm}^\e
 \bigl(\partial_t\psi+v^\e\cdot\nabla_x\psi\bigr)
 \,dx\,dt
 +\int_{\R^3}\varpi_{0,\pm}^\e\psi(0,x)\,dx\\
 &\qquad=
 \int_{\R^3}\varpi_{0,\pm}^\e(x)
 \int_0^T\frac{d}{dt}\psi(t,X^\e(t,x))\,dt\,dx
 +\int_{\R^3}\varpi_{0,\pm}^\e\psi(0,x)\,dx
 =0.
\end{align*}
This proves \eqref{eq:td-psi}.
For every such $\beta$ and every
$\psi\in C_c^\infty([0,T)\times\R^3)$,
\eqref{eq:xx-id}--\eqref{eq:sign-decomp} yield
\begin{align*}
 &\int_0^T\int_{\R^3}
 \beta(\varpi_A^\e)
 \bigl(\partial_t\psi+v^\e\cdot\nabla_x\psi\bigr)
 \,dx\,dt
 +\int_{\R^3}\beta(\varpi_0^\e)\psi(0)\,dx\notag\\
 &\qquad=
 \int_{\R^3}\beta(\varpi_0^\e(x))
 \int_0^T\frac{d}{dt}\psi(t,X^\e(t,x))\,dt\,dx
 +\int_{\R^3}\beta(\varpi_0^\e)\psi(0)\,dx
 =0.
\end{align*}
Thus $\varpi_A^\e$ is renormalized.  Using
\eqref{eq:sign-decomp} and the change of variables
$y=X^\e(t,x)$, whose Jacobian equals one by
\eqref{eq:xx-id}, gives \eqref{eq:bet-dx}.
For $1\le p<\infty$, define
\begin{equation*}
 \beta_{m,k}(\lambda):=\min\bigl\{(|\lambda|-k^{-1})_+^p,m\bigr\},
 \qquad m,k\ge1.
\end{equation*}
Applying \eqref{eq:bet-dx} to these truncations, then letting
$k\to\infty$ and $m\to\infty$, the monotone convergence theorem gives
\eqref{eq:strg-bd}.  The case $p=\infty$ follows directly from
\eqref{eq:sign-decomp}.
To prove \eqref{eq:lp-decomp}, fix $1\le p<\infty$ and choose
$f_n\in C_c^\infty(\R^3)$ such that
$f_n\to\varpi_0^\e$ in $L^p(\R^3)$.  Set $f_n(t):=f_n\circ (X^\e(t,\cdot))^{-1}$.
By \eqref{eq:xx-id} and \eqref{eq:sign-decomp},
\begin{equation*}
 \|\varpi_A^\e(t)-f_n(t)\|_{L^p}
 =\|\varpi_0^\e-f_n\|_{L^p},
 \qquad 0\le t\le T.
\end{equation*}
Since \eqref{eq:ct-bd} makes $X^\e$ a bi-Lipschitz flow and
$\mathbb Pu^\e\in L^1(0,T;L^\infty)$, for fixed $n$ the maps
$(X^\e(t,\cdot))^{-1}$ converge locally uniformly as $t\to s$, while
$f_n(t)$ have supports in one compact set for $0\le t\le T$.
Hence dominated convergence gives
\begin{equation*}
 \|f_n(t)-f_n(s)\|_{L^p}\to0
 \qquad\text{as }t\to s.
\end{equation*}
Therefore the isometry just proved and continuity imply
\[
 \|\varpi_A^\e(t)-\varpi_A^\e(s)\|_{L^p}
 \le
 2\|\varpi_0^\e-f_n\|_{L^p}
 +\|f_n(t)-f_n(s)\|_{L^p}.
\]
Letting $t\to s$ then $n\to\infty$ proves
\eqref{eq:lp-decomp}.
Recall $a_{\rm ac}^\e,q_{\rm ac}^\e$ defined in \eqref{eq:acoustic-source-def}. Fix sufficiently small $\delta>0$ and set
\begin{equation}\label{eq:xa-def}
 \mathcal S_\varpi^\e:=r\bPhi_\varpi^\e e_\vartheta,
 \qquad
 \mathcal G_\varpi^\e
 :=q_{\rm ac}^\e\times\omega^\e
 -k_B\frac{\Theta^\e-1}{\e}\nabla_x a_{\rm ac}^\e
 -\frac1{\e^2\mathrm P^\e}\nabla_x\cdot\mathbf r^\e.
\end{equation}
\begin{equation*}
 \nabla_x\!\left(\frac{\Theta^\e-1}{\e}\right)
 =\Theta^\e\nabla_x\theta^\e,
 \qquad
 \nabla_x\times\left(
 \frac{\Theta^\e-1}{\e}\nabla_xa^\e
 \right)
 =\Theta^\e\nabla_x\theta^\e\times\nabla_xa^\e.
\end{equation*}
Lemma~\ref{lem:sympres} shows that $q_{\rm ac}^\e$ is axisymmetric without swirl and
$\omega^\e$ is azimuthal.  Moreover, the moment definition and the same
$O(2)_z$ symmetry give
$\mathbf r^\e(t,Gx)=G\mathbf r^\e(t,x)G^T, \  G\in O(2)_z.$
Hence $(\mathrm P^\e)^{-1}\nabla_x\cdot\mathbf r^\e$ is equivariant
and has no swirl, so its curl is azimuthal.  Together with
Definition~\ref{def:vortsplit}, \eqref{eq:pq-th}, and these identities, this gives
\begin{equation}\label{eq:ctrl-id}
 \mathcal S_\varpi^\e=\nabla_x\times\mathcal G_\varpi^\e.
\end{equation}
First control the acoustic factor.  Choose $\sigma>0$ so small that
\[
 \sigma\left(1+\frac{1-\mathfrak y}{2c}\right)<\frac\delta2,
 \qquad
 r_\sigma:=\left(\frac12-\sigma\right)^{-1}.
\]
For $j=0,1,2,3$, apply \eqref{eq:bega-rho-bd} with $p=\infty$, $\mathfrak r=r_\sigma$, and $s=j+1+\sigma$; the resulting Besov index is exactly $j$, so \eqref{eq:lw-bd} yields the required $W^{3,\infty}$ norm.  The top case for $N=5$ contributes $\kappa^{-\sigma/2}$, while for $N>5$ the same factor may be inserted harmlessly because $0<\kappa<1$.  Set
\[
 \mathfrak S_{\e,T}:=
 \mathcal E_{tot}^{N}(0)^{1/2}
 +\int_0^T\left(
 \mathcal E_{tot}^{N}(s)
 +\kappa^{1/2}(\mathcal D_{tot}^{N}(s))^{1/2}
 \right)ds.
\]
By \eqref{eq:init-bd}--\eqref{eq:ac-bd},
$\mathfrak S_{\e,T}\lesssim_T|\log\e|^{15/16}$ for fixed $T$.
Using \eqref{eq:decomp-bd}--\eqref{eq:tau-ac}, we obtain
\begin{equation}\label{eq:sig-kap}
 \sum_{j=0}^3
 \|\nabla_x^j(q_{\rm ac}^\e,a_{\rm ac}^\e)\|_{L^{r_\sigma}(0,T;L^\infty_x)}
 \lesssim_{T,\sigma}
 \e^{\frac12-\sigma}\kappa^{-\frac\sigma2}
 \mathfrak S_{\e,T}.
\end{equation}
The product rule, \eqref{eq:hn-en}--\eqref{eq:hn-bd},
\eqref{eq:hn-kap} and the definitions
\eqref{eq:diss-def}, \eqref{eq:kin-def}, and \eqref{eq:tot-def} give,
for a.e. $t\in(0,T)$,
\begin{align}
 \|\mathcal S_\varpi^\e(t)\|_{H^2_x}
 +\|\mathcal G_\varpi^\e(t)\|_{L^2_x}
 &\lesssim
 \bigl(1+(\mathcal E_{tot}^{N}(t))^{1/2}\bigr)
 \sum_{j=0}^3
 \|\nabla_x^j(q_{\rm ac}^\e,a_{\rm ac}^\e)(t)\|_{L^\infty_x}
  +
 \kappa^{1/2}(\mathcal D_G^{N}(t))^{1/2}.
 \label{eq:kap-fst}
\end{align}
Indeed, the first term in \eqref{eq:xa-def} is estimated by
\begin{equation*}
 \|\nabla_x\times(q_{\rm ac}^\e\times\omega^\e)\|_{H^2_x}
 +\|q_{\rm ac}^\e\times\omega^\e\|_2
 \lesssim
 \|q_{\rm ac}^\e\|_{W^{3,\infty}_x}
 \bigl(\|\omega^\e\|_{H^3_x}+\|\omega^\e\|_2\bigr),
\end{equation*}
the second by
\begin{align*}
 &\|\Theta^\e\nabla_x\theta^\e\times\nabla_xa_{\rm ac}^\e\|_{H^2_x}
 +\left\|\frac{\Theta^\e-1}{\e}\nabla_xa_{\rm ac}^\e\right\|_2
  \lesssim
 \bigl(1+(\mathcal E_{tot}^{N}(t))^{1/2}\bigr)
 \|a_{\rm ac}^\e\|_{W^{3,\infty}_x},
\end{align*}
and the last by
\begin{align*}
 &\left\|\frac1{\e^2}\nabla_x\times\left(
 \frac1{\mathrm P^\e}\nabla_x\cdot\mathbf r^\e\right)\right\|_{H^2_x}
 +\left\|\frac1{\e^2\mathrm P^\e}\nabla_x\cdot\mathbf r^\e\right\|_2
 \lesssim
 \kappa^{1/2}(\mathcal D_G^{N}(t))^{1/2}.
\end{align*}
By \eqref{eq:kap-fst}, H\"older's inequality in time,
\eqref{eq:sig-kap}, and \eqref{eq:ac-bd}, for some fixed $C>0$,
\[
 \int_0^T\left(
 \|\mathcal S_\varpi^\e(t)\|_{H^2_x}
 +\|\mathcal G_\varpi^\e(t)\|_2
 \right)dt
 \lesssim_{T,\sigma}
 \e^{\frac12-\sigma}\kappa^{-\frac\sigma2}
 \left|\log\frac1\e\right|^C
 +\kappa^{1/2}\left|\log\frac1\e\right|^C.
\]
By the choice of $\sigma$ and the scale relation \eqref{eq:choice-bd},
the first prefactor has a strictly larger power than $\e^{1/2-\delta/2}$.
Fixed logarithmic powers can then be absorbed into the remaining $\delta/2$
loss, and similarly into an arbitrarily small power of $\kappa$.  Hence
\begin{equation}\label{eq:dt-del}
 \int_0^T\left(
 \|\mathcal S_\varpi^\e(t)\|_{H^2_x}
 +\|\mathcal G_\varpi^\e(t)\|_2
 \right)dt
 \le C_{T,\delta}
 \left(\e^{\frac12-\delta}
 +\kappa^{\frac12-\delta}\right).
\end{equation}
Next control the flow amplification.  Since $N\ge5$, \eqref{eq:ac-bd} and
$H^3(\R^3)\hookrightarrow W^{1,\infty}(\R^3)$ give, for fixed $T$,
\[
 \sup_{0\le t\le T}\|\nabla_x\mathbb Pu^\e(t)\|_\infty
 \lesssim |\log\e|^{15/32}.
\]
Hence
\begin{equation}\label{eq:pp-dt}
 \exp\left(
 \int_0^T\|\nabla_x\mathbb Pu^\e(t)\|_\infty\,dt
 \right)
 \le \exp\big(C_T|\log\e|^{15/32}\big)=\e^{-o(1)}.
\end{equation}
Combined with \eqref{eq:dt-del}, this factor preserves every strictly positive
power of $\e$ or $\kappa$ below.
For \eqref{eq:del-kap-bd}, since the flow \eqref{eq:strg-id} is
measure preserving by \eqref{eq:xx-id}, the second equation
in \eqref{eq:mm-id}, \eqref{eq:xa-def}, and
\eqref{eq:xa-bd} give, for $2\le q\le6$,
\begin{align*}
 \|\varpi_B^\e(t)\|_{L^q_x}
 &\le\int_0^t\|\bPhi_\varpi^\e(s)\|_{L^q_x}\,ds
 =\int_0^t\left\|\frac{(\mathcal S_\varpi^\e)^\vartheta}{r}(s)\right\|_{L^q_x}\,ds
 \le\int_0^t\|\nabla_x\mathcal S_\varpi^\e(s)\|_{L^q_x}\,ds.
\end{align*}
For $2\le q\le6$, interpolation between $L^2$ and $L^6$ and Sobolev
embedding give
\[
 \|\nabla_x\mathcal S_\varpi^\e\|_{L^q_x}
 \lesssim \|\mathcal S_\varpi^\e\|_{H^2_x}.
\]
Together with \eqref{eq:dt-del}, this proves \eqref{eq:del-kap-bd}.
For the azimuthal vorticity, the second equation in \eqref{eq:cm-id},
together with $|v^{\e,r}/r|\le\|\nabla_x\mathbb Pu^\e\|_\infty$
and \eqref{eq:xa-def}, gives
\begin{equation*}
 \|\omega_B^\e(t)\|_{L^q_x}
 \le
 \exp\left(\int_0^t\|\nabla_x\mathbb Pu^\e(\tau)\|_\infty d\tau\right)
 \int_0^t\|\mathcal S_\varpi^\e(s)\|_{L^q_x}\,ds,
 \qquad 2\le q\le\infty.
\end{equation*}
By $H^2(\R^3)\hookrightarrow L^\infty(\R^3)$ and interpolation,
\[
 \|\mathcal S_\varpi^\e\|_{L^q_x}
 \lesssim\|\mathcal S_\varpi^\e\|_{H^2_x},
 \qquad 2\le q\le\infty.
\]
Together with \eqref{eq:dt-del}, \eqref{eq:pp-dt}, and the vorticity estimate just obtained,
this proves \eqref{eq:del-decomp}, since the factor
in \eqref{eq:pp-dt} is $\e^{-o(1)}$.
Finally, \eqref{eq:ctrl-id} gives
\begin{equation*}
 \nabla_x\times(-\Delta_x)^{-1}
 \mathcal S_\varpi^\e
 =\nabla_x\times(-\Delta_x)^{-1}
 \nabla_x\times\mathcal G_\varpi^\e
 =\mathbb P\mathcal G_\varpi^\e,
 \qquad
 \|\mathbb P\mathcal G_\varpi^\e\|_2
 \le\|\mathcal G_\varpi^\e\|_2.
\end{equation*}
Indeed, the Fourier multiplier in this identity is
$I-\zeta\otimes\zeta/|\zeta|^2$ for $\zeta\ne0$.
Set
$Z_\lambda(t):=\bigl(\|u_B^\e(t)\|_2^2+\lambda^2\bigr)^{1/2}, \quad \lambda>0.$
Taking the $L^2$ inner product of the second equation in
\eqref{eq:fld-id} with $u_B^\e$, using
$\nabla_x\cdot\mathbb Pu^\e=0$, and applying
this projection identity, we obtain for a.e. $t$,
\begin{equation*}
 Z_\lambda'(t)
 \le
 \|\nabla_x\mathbb Pu^\e(t)\|_\infty Z_\lambda(t)
 +\|\mathcal G_\varpi^\e(t)\|_2.
\end{equation*}
Since $u_B^\e(0)=0$, applying Gronwall's inequality to this differential inequality and then letting $\lambda\downarrow0$ gives
\begin{equation*}
 \sup_{0\le t\le T}\|u_B^\e(t)\|_2
 \le
 \exp\left(\int_0^T\|\nabla_x\mathbb Pu^\e(t)\|_\infty dt\right)
 \int_0^T\|\mathcal G_\varpi^\e(t)\|_2dt.
\end{equation*}
Equations \eqref{eq:dt-del}, \eqref{eq:pp-dt}, and this $L^2$ estimate,
together with $\e^\delta|\log\e|^m,\kappa^\delta|\log\e|^m\to0$
for every $\delta,m>0$, yield
\begin{equation}\label{eq:del-kin-bd}
 \sup_{0\le t\le T}\|u_B^\e(t)\|_2
 \le C_{T,\delta}
 \left(\e^{\frac12-\delta}+\kappa^{\frac12-\delta}\right).
\end{equation}
For the $L^\infty$ bound, decompose
$\mathbf K=\mathbf K_1+\mathbf K_2$ with
$\mathbf K_1=\mathbf K\mathbf1_{\{|x|\le1\}}$ and
$\mathbf K_2=\mathbf K\mathbf1_{\{|x|>1\}}$.
Since $|\mathbf K(x)|\lesssim|x|^{-2}$,
$\mathbf K_1\in L^1(\R^3)$ and $\mathbf K_2\in L^2(\R^3)$.
By \eqref{eq:gam-th}, Young's inequality, and \eqref{eq:del-decomp},
\begin{align*}
 \|u_B^\e(t)\|_\infty
 &\le
 \|\mathbf K_1\|_1\|\omega_B^\e(t)\|_\infty
 +\|\mathbf K_2\|_2\|\omega_B^\e(t)\|_2
 \le C_{T,\delta}
 \left(\e^{\frac12-\delta}+\kappa^{\frac12-\delta}\right).
\end{align*}
Interpolating this estimate with \eqref{eq:del-kin-bd} proves
\eqref{eq:del-kin} for
$2\le q\le\infty$.
\end{proof}
\subsubsection{Compactness of the velocity}
Nobili--Seis \cite{nobili2022renormalization} give, for axisymmetric
no-swirl $u$ and $1<p<2$,
\begin{equation}\label{eq:xu-bd}
 \left\|\frac{\nabla_xu}{r}\right\|_{L^p(\R^3)}
 \lesssim \left\|\frac{\omega^\vartheta}{r}\right\|_{L^p(\R^3)}.
\end{equation}
\begin{definition}[Truncated meridional half-space]
Recall the half-space $\Hh$ from Section~\ref{sec:bscompact}.
For $R>0$, define the truncated half-space
\begin{align*}
\mathbb{H}_R:=\{(r,z)\in\R^2:r>0,\ \sqrt{r^2+z^2}<R\}.
\end{align*}
For an axisymmetric vector field $u$, the notation
$\|u\|_{W^{1,p}(\mathbb H_R)}$ below means
\[
 \|u\|_{L^p(\mathbb H_R;dr\,dz)}
 +\|\nabla_xu\|_{L^p(\mathbb H_R;dr\,dz)},
\]
where the full Cartesian gradient is evaluated on a meridional half-plane.
In particular, the angular contribution $u^r/r$ is included in this norm.
\end{definition}
We first justify the unweighted meridional norm at the axis.  By
$dx=2\pi r\,dr\,dz$, $r\le R$ on $\mathbb H_R$, \eqref{eq:xu-bd}, and
\eqref{eq:strg-bd}, for $1<p<2$,
\begin{equation}\label{eq:norm-bd}
 \|\nabla_xu_A^\e(t)\|_{L^p(\mathbb H_R)}
 \le C R^{1-1/p}
 \left\|\frac{\nabla_xu_A^\e(t)}r\right\|_{L^p(\R^3)}
 \lesssim_p R^{1-1/p}\|\varpi_0^\e\|_{L^p}.
\end{equation}
For the zero-order term, Poincar\'e's inequality on the half-disc gives
\begin{equation*}
 \|u_A^\e(t)\|_{L^p(\mathbb H_R)}
 \lesssim_p R\|\partial_{r,z}u_A^\e(t)\|_{L^p(\mathbb H_R)}
 +|\mathbb H_R|^{1/p-1}\|u_A^\e(t)\|_{L^1(\mathbb H_R)}.
\end{equation*}
It remains to estimate the last term.  By \eqref{eq:gam-th}, $u_A^\e$ is the
finite-energy axisymmetric Biot--Savart field generated by
$\omega_A^{\e,\vartheta}$.  The meridional kernel estimate
\cite[Proposition~2.3, (2.11)]{gallay2015remarks} gives
\begin{equation}\label{eq:sav-bd}
 |u_A^\e(t,r,z)|
 \lesssim
 \int_{\Hh}
 \frac{|\omega_A^{\e,\vartheta}(t,\bar r,\bar z)|}
 {|r-\bar r|+|z-\bar z|}\,d\bar r\,d\bar z.
\end{equation}
Split the integral in \eqref{eq:sav-bd} into $\mathbb H_{2R}$ and its
complement.  On the near part, Young's inequality in $\R^2$ and
$\int_{|Y|\le3R}|Y|^{-1}\,dY\lesssim R$ give
\begin{equation*}
 \|u_{A,\mathrm{near}}^\e(t)\|_{L^1(\mathbb H_R)}
 \lesssim R\|\omega_A^{\e,\vartheta}(t)\|_{L^1(\Hh)}.
\end{equation*}
If $(r,z)\in\mathbb H_R$ and $(\bar r,\bar z)\notin\mathbb H_{2R}$,
then $|(r,z)-(\bar r,\bar z)|\ge R$; hence
\begin{equation*}
 \|u_{A,\mathrm{far}}^\e(t)\|_{L^1(\mathbb H_R)}
 \lesssim \frac{|\mathbb H_R|}{R}
 \|\omega_A^{\e,\vartheta}(t)\|_{L^1(\Hh)}
 \lesssim R\|\omega_A^{\e,\vartheta}(t)\|_{L^1(\Hh)}.
\end{equation*}
Since $\omega_A^{\e,\vartheta}=r\varpi_A^\e$,
\begin{equation}\label{eq:comp-id}
 \|\omega_A^{\e,\vartheta}(t)\|_{L^1(\Hh)}
 =\frac1{2\pi}\|\varpi_A^\e(t)\|_{L^1(\R^3)}
 =\frac1{2\pi}\|\varpi_0^\e\|_{L^1(\R^3)}.
\end{equation}
Combining \eqref{eq:norm-bd}--\eqref{eq:comp-id} yields
\begin{equation}\label{eq:lone-lp}
 \sup_{0\le t\le T}\|u_A^\e(t)\|_{W^{1,p}(\mathbb H_R)}
 \lesssim_{p,R}\|\varpi_0^\e\|_{L^1\cap L^p}.
\end{equation}
The two-dimensional Sobolev embedding on $\mathbb H_R$ then gives
\begin{equation}\label{eq:lone-lp-bd}
 \sup_{0\le t\le T}\|u_A^\e(t)\|_{L^q(\mathbb H_R)}
 \lesssim_{p,q,R}\|\varpi_0^\e\|_{L^1\cap L^p},
 \qquad 2<q<\frac{2p}{2-p}.
\end{equation}
For $u\in L^2(\R^3)$ with $\nabla\cdot u=0$ and
$\omega=\nabla\times u$, Fourier inversion gives
\begin{equation}\label{eq:four-id}
 u=\nabla\times(-\Delta)^{-1}\omega
 \qquad\text{in }\mathcal S'(\R^3).
\end{equation}
\begin{lemma}\label{lem:locreg}
Let $2\le p<\infty$ and $R>0$. Suppose
$u\in L^2(\R^3;\R^3)$ satisfies
\begin{equation}\label{eq:comp-def}
 \nabla\cdot u=0,
 \qquad
 \omega:=\nabla\times u\in L^p(B_{2R}).
\end{equation}
Then
\begin{equation}\label{eq:ltwo-om}
 \|u\|_{W^{1,p}(B_R)}
 \le C_{p,R}
 \left(\|u\|_{L^2(\R^3)}+\|\omega\|_{L^p(B_{2R})}\right).
\end{equation}
\end{lemma}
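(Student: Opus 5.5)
We will prove Lemma~\ref{lem:locreg} by a cutoff and div--curl argument: the local part of $\omega$ is handled by Calder\'on--Zygmund theory, and the remainder by harmonicity. Fix a cutoff $\chi\in C_c^\infty(B_{2R})$ with $\chi=1$ on $B_{3R/2}$. Using \eqref{eq:four-id}, split
\[
 u=\nabla\times(-\Delta)^{-1}(\chi\omega)+\nabla\times(-\Delta)^{-1}\bigl((1-\chi)\omega\bigr)=:u_1+u_2 .
\]
The first step is to bound $u_1$. Since $\chi\omega\in L^p$ is compactly supported and $p\ge2$, we have $\chi\omega\in L^{q}$ for every $q\le p$, in particular $q=6/5$. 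Hence $u_1\in L^2$ by Lemma~\ref{lem:divcurl}, and
\[
 \|\nabla u_1\|_{L^p(\R^3)}\lesssim_p\|\chi\omega\|_{L^p}
\]
because $\nabla u_1$ is a Riesz-transform composition of $\chi\omega$. For the zero-order term on $B_R$, we use the Biot--Savart kernel $|x|^{-2}$ convolved with a function supported in $B_{2R}$. Young's inequality with the integrable truncated kernel on $B_{3R}$ gives $\|u_1\|_{L^p(B_R)}\lesssim_R\|\omega\|_{L^p(B_{2R})}$. Thus $\|u_1\|_{W^{1,p}(B_R)}\lesssim_{p,R}\|\omega\|_{L^p(B_{2R})}$, and also $\|u_1\|_{L^2(B_{3R/2})}\lesssim_R\|\omega\|_{L^p(B_{2R})}$.

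Next, for $u_2$ we work with $w:=u-u_1$ rather than the formula for $u_2$. Here the subtlety is that $(1-\chi)\omega$ need not lie in any global space, since $\omega$ is only a distribution away from $B_{2R}$. The field $w$ is in $L^2_{\rm loc}$ and is divergence free. On $B_{3R/2}$ its curl is $\omega-\chi\omega=0$, because the curl of $u_1$ equals $\chi\omega$ by the div--curl identity, $\chi\omega$ being divergence free only up to $\nabla\chi\cdot\omega$. To avoid this issue, we do not take the curl of $u_1$ directly. Instead we define $u_1:=-\nabla\times(\Delta^{-1})$ applied to $\chi\omega$ minus a gradient correction: set $u_1:=\nabla\times\Phi$ with $\Phi=(-\Delta)^{-1}(\chi\omega)$. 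Then $\nabla\times u_1=\chi\omega+\nabla(\nabla\cdot\Phi)$, and on $B_{3R/2}$ we get $\Delta(\nabla\cdot\Phi)=-\nabla\cdot(\chi\omega)=0$ there, since $\nabla\cdot\omega=0$ and $\chi=1$. So $\nabla\cdot\Phi$ is harmonic on $B_{3R/2}$ with $L^p$ derivative bounds by the same CZ estimate. Consequently $\nabla\times w$ is a gradient of a harmonic function while $\nabla\cdot w=0$. It follows that $\Delta w=-\nabla\times\nabla\times w+\nabla(\nabla\cdot w)=0$ in $B_{3R/2}$, so each component of $w$ is harmonic there.

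Finally, interior estimates for harmonic functions give
\[
 \|w\|_{W^{1,\infty}(B_R)}\lesssim_R\|w\|_{L^2(B_{3R/2})}\le\|u\|_{L^2}+\|u_1\|_{L^2(B_{3R/2})},
\]
and the last term was bounded in the first step. Summing the bounds for $u_1$ and $w$ proves \eqref{eq:ltwo-om}. We expect the main obstacle to be exactly the bookkeeping in the second step: making rigorous that $u-u_1$ is harmonic near $B_R$ when $\omega$ is controlled only on $B_{2R}$. This requires using divergence-freeness of $\omega$ on the ball and the $L^2$ div--curl representation \eqref{eq:four-id} in the distributional sense. The remaining steps are standard Calder\'on--Zygmund and Young estimates.
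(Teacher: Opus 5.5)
Your proof is correct and follows the paper's route. It uses the same cutoff $\chi$ and the field $U=\nabla\times(-\Delta)^{-1}(\chi\omega)$ controlled by Calder\'on--Zygmund, then applies interior harmonic estimates to $u-U$ on $B_{3R/2}$; your Young-inequality bound for the zero-order term is a harmless variant of the paper's Sobolev-plus-Poincar\'e step. Your detour through $\nabla\times u_1=\chi\omega+\nabla(\nabla\cdot\Phi)$ is valid but unnecessary. The paper just observes $-\Delta U=\nabla\times(\chi\omega)$ and $-\Delta u=\nabla\times\omega$ (using $\nabla\cdot u=0$), so $\Delta(u-U)=0$ on $B_{3R/2}$ directly.
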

The proof is recorded in Appendix~\ref{sec:auxana}.

We use Simon's compactness criterion
\cite[Corollary~4]{simon1987compact}: if $X\Subset B\hookrightarrow Y$,
then bounded subsets of
$L^\infty(0,T;X)\cap W^{1,\infty}(0,T;Y)$ are relatively compact in
$C([0,T];B)$.
\begin{lemma}
For every $R>0$ there exists $s>5/2$ such that
\begin{equation}\label{eq:td-comp}
 \sup_{\e>0}
 \|\partial_tu_A^\e\|_{L^\infty(0,T;H^{-s}(B_R))}<\infty.
\end{equation}
\end{lemma}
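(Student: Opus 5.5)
We bound $\partial_tu_A^\e$ by duality, directly from the velocity equation \eqref{eq:xp-id}. Fix $R>0$ and $\phi\in H^s_0(B_R;\R^3)$ with $s>5/2$. First we remove the pressure. Since $u_A^\e$ is divergence free, \eqref{eq:xv-id} gives
\[
 \partial_tu_A^\e=-\mathbb P\bigl[v^\e\cdot\nabla u_A^\e+(\nabla v^\e)^Tu_A^\e\bigr].
\]
The Leray projector is nonlocal, however, so pairing with a compactly supported $\phi$ is awkward. Instead we pair \eqref{eq:xp-id} itself with $\phi$. The pressure term $\langle\nabla\pi_A^\e,\phi\rangle$ is handled by the Helmholtz split $\phi=\mathbb P\phi+\nabla\Delta^{-1}\nabla\cdot\phi$. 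Because $\partial_tu_A^\e$ is divergence free, only $\mathbb P\phi$ is tested, and $\|\mathbb P\phi\|_{W^{1,\infty}(\R^3)}\lesssim\|\phi\|_{H^s}$ for $s>5/2$. It therefore suffices to bound
\[
 \Bigl|\int \bigl(v^\e\cdot\nabla u_A^\e+(\nabla v^\e)^Tu_A^\e\bigr)\cdot\Phi\,dx\Bigr|,\qquad \Phi:=\mathbb P\phi\in W^{1,\infty}\cap L^2 .
\]

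Next we integrate by parts to move every derivative onto $\Phi$. Using $\nabla\cdot v^\e=0$, the identity $v^\e\cdot\nabla u_A^\e+(\nabla v^\e)^Tu_A^\e=\nabla(v^\e\cdot u_A^\e)-v^\e\times(\nabla\times u_A^\e)$, and $\nabla\cdot\Phi=0$, the gradient term drops out. We are left with
\[
 \int (v^\e\times\omega_A^\e)\cdot\Phi\,dx
 =\int (v^\e\otimes u_A^\e - u_A^\e\otimes v^\e):\nabla\Phi\,dx ,
\]
up to signs and index placement; this is the standard symmetric-tensor form of $\omega\times u$. The resulting term is bounded by
\[
 \|v^\e\|_2\,\|u_A^\e\|_2\,\|\nabla\Phi\|_\infty .
\]
Here $\|v^\e\|_2=\|\mathbb Pu^\e\|_2$ is uniformly bounded on $[0,T]$ by Lemma~\ref{lem:fixedenergy} and \eqref{eq:rough-bd}. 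For $\|u_A^\e\|_2$ we write $u_A^\e=v^\e-u_B^\e$ and use \eqref{eq:del-kin}, which makes $\|u_B^\e\|_2$ small. This yields \eqref{eq:td-comp}, and in fact gives the bound in $H^{-s}(\R^3)$ uniformly in $\e$.

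The main obstacle is the global $\sup_\e\|u_A^\e\|_{L^\infty_tL^2}$ bound. The Gronwall energy estimate for $u_A^\e$ in the proof of \eqref{eq:gam-th} involves $\|\nabla v^\e\|_\infty$, which is not uniformly bounded. We avoid it entirely through the splitting $u_A^\e=v^\e-u_B^\e$ and the smallness of $u_B^\e$ in Lemma~\ref{lem:splitconv}.

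A secondary point is that the integration by parts is justified because all fields are smooth at fixed $\e$, while $\Phi$ and $\nabla\Phi$ lie in $L^2\cap L^\infty$. If $\nabla\Phi\notin L^\infty$ globally, one can replace $\mathbb P\phi$ by $\phi$ and keep the pressure term. In that case we bound $\pi_A^\e$ through $\pi_A^\e=-\Delta^{-1}\nabla\cdot[v^\e\cdot\nabla u_A^\e+(\nabla v^\e)^Tu_A^\e]$. This is a Riesz-type operator applied to divergences of products bounded in $L^1$, so $\pi_A^\e$ is bounded in $L^1+$ Besov-negative spaces. That is still fine in $H^{-s}(B_R)$ for large $s$ by Sobolev embedding $H^s_0\hookrightarrow W^{1,\infty}$.
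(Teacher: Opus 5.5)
\textbf{There is a genuine gap in the integration-by-parts step.} The identity
\[
\int (v^\e\times\omega_A^\e)\cdot\Phi\,dx=\int (v^\e\otimes u_A^\e-u_A^\e\otimes v^\e):\nabla\Phi\,dx
\]
is false, and no choice of signs or index placement repairs it. Work modulo gradients, which a divergence-free $\Phi$ annihilates. Then $v\times(\nabla\times u_A)\equiv-(v\cdot\nabla)u_A-(\nabla v)^Tu_A$.

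The first piece is $-\nabla\cdot(v\otimes u_A)$. It does give your bound $\|v\|_2\|u_A\|_2\|\nabla\Phi\|_\infty$.

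The second piece is a genuine trilinear form. For $\nabla\cdot\Phi=0$ one has $\int\Phi_iu_{A,j}\partial_iv_j\,dx=-\int\Phi_iv_j\partial_iu_{A,j}\,dx$. So the form is antisymmetric in $(v,u_A)$, and one derivative always stays on one of the two rough fields.
\begin{itemize}
\item It vanishes on the diagonal $v=u_A$, which is why the Euler nonlinearity itself has divergence form.
\item Off the diagonal, $\|v\|_2\|u_A\|_2$ does not control it. Take modulated wave packets $v\approx a\chi\cos(\lambda e\cdot x)$ and $u_A\approx b\chi\sin(\lambda e\cdot x)$ with $a,b\perp e$ and $a\cdot b\neq0$. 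The form then grows like $\lambda$ while both $L^2$ norms stay bounded.
\item Your displayed identity already fails at $v=u_A$: the left side equals $\int(u_A\otimes u_A):\nabla\Phi\,dx$, while the right side is $0$.
\end{itemize}
You also cannot fall back on $\|\nabla v^\e\|_2=\|\omega^\e\|_2$. Under the $L^1\cap L^p$ hypotheses it is not uniformly bounded; \eqref{eq:ac-bd} only gives $\lesssim|\log\e|^{15/32}$. Your pressure fallback has the same defect, because $(\nabla v^\e)^Tu_A^\e$ is not the divergence of an $L^1$ product.

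\textbf{The missing idea} is that $v^\e$ is close to $u_A^\e$ at the level of derivatives. Write $v^\e=u_A^\e+u_B^\e$, so that
\[
(\nabla v^\e)^Tu_A^\e=\nabla\bigl(|u_A^\e|^2/2\bigr)+(\nabla u_B^\e)^Tu_A^\e .
\]
The gradient term is killed by testing against $\mathbb P\phi$, or equivalently by the Leray projection. The remainder is bounded by $\|\nabla u_B^\e\|_2\|u_A^\e\|_2\|\Phi\|_\infty$. Since $u_B^\e$ is divergence free, Plancherel gives $\|\nabla u_B^\e\|_2=\|\omega_B^\e\|_2$, which tends to $0$ by \eqref{eq:del-decomp}. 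This is exactly the paper's argument, and it yields
\[
\|\partial_tu_A^\e\|_{H^{-s}}\lesssim\bigl(\|v^\e\|_2+\|\nabla u_B^\e\|_2\bigr)\|u_A^\e\|_2 .
\]
The rest of your proposal is sound: testing against $\mathbb P\phi$, and the uniform $L^2$ bounds obtained from $u_A^\e=v^\e-u_B^\e$, Lemma~\ref{lem:fixedenergy}, and \eqref{eq:del-kin}.
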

\begin{proof}
Recall that
\begin{equation}\label{eq:vel-def}
 v^\e=\mathbb Pu^\e=u_A^\e+u_B^\e.
\end{equation}
By \eqref{eq:fld-id}, $\nabla\cdot v^\e=0$, and
$\nabla\cdot u_A^\e=0$. Since
\[
 ((\nabla u_A^\e)^Tu_A^\e)_i
 =\partial_i u_{A,j}^\e\,u_{A,j}^\e
 =\frac12\partial_i|u_A^\e|^2,
\]
application of the Leray projector to $\eqref{eq:fld-id}_1$ gives
\begin{equation}\label{eq:leray-id}
 \partial_tu_A^\e
 =
 -\mathbb P\left[
 \partial_j\!\left(v_j^\e u_A^\e\right)
 +(\nabla u_B^\e)^Tu_A^\e
 \right]
 \qquad\text{in }\mathcal D'(\R^3).
\end{equation}
For $\psi\in H^s(\R^3;\R^3)$ and $s>5/2$, the Sobolev embedding
$H^s(\R^3)\hookrightarrow W^{1,\infty}(\R^3)$ yields
\begin{align*}
 \left|
 \left\langle
 \partial_j(v_j^\e u_A^\e),\psi
 \right\rangle
 \right|
 &\le
 \|v^\e\|_2\|u_A^\e\|_2
 \|\nabla\psi\|_\infty, \notag \\
 \left|
 \left\langle
 (\nabla u_B^\e)^Tu_A^\e,\psi
 \right\rangle
 \right|
 &\le
 \|\nabla u_B^\e\|_2\|u_A^\e\|_2
 \|\psi\|_\infty.
\end{align*}
Since $\mathbb P$ is bounded on $H^{-s}(\R^3)$,
\eqref{eq:leray-id} and these estimates imply
\begin{equation}\label{eq:td-pp}
 \|\partial_tu_A^\e(t)\|_{H^{-s}(\R^3)}
 \le
 C_s
 \left(
 \|v^\e(t)\|_2+\|\nabla u_B^\e(t)\|_2
 \right)
 \|u_A^\e(t)\|_2.
\end{equation}
By \eqref{eq:decomp-id}, $\nabla\cdot u_B^\e=0$ and
$\nabla\times u_B^\e=\omega_B^\e$. Hence Plancherel's identity gives
\begin{align}
 \|\nabla u_B^\e(t)\|_2^2
 &=
 \|\nabla\times u_B^\e(t)\|_2^2
 +\|\nabla\cdot u_B^\e(t)\|_2^2 =
 \|\omega_B^\e(t)\|_2^2. \notag 
\end{align}
Therefore \eqref{eq:del-decomp} yields
\begin{equation*}
 \sup_{0\le t\le T}\|\nabla u_B^\e(t)\|_2
 \to0
 \qquad\text{as }\e\to0.
\end{equation*}
Moreover, \eqref{eq:ltwo-del}, \eqref{eq:del-kin}, and \eqref{eq:vel-def} give
\begin{equation*}
 \sup_{\e>0}\sup_{0\le t\le T}
 \left(
 \|v^\e(t)\|_2+\|u_A^\e(t)\|_2+\|u_B^\e(t)\|_2
 \right)<\infty.
\end{equation*}
Equation \eqref{eq:td-pp}, together with the convergence of $\nabla u_B^\e$ just established and the uniform $L^2$ bound, proves the uniform bound in $H^{-s}(\R^3)$, hence
\eqref{eq:td-comp}.
\end{proof}
\subsubsection{Identification of the rough Euler limit}\label{sec:proof-rough-limit}

\begin{proof}[Proof of Theorem \ref{thm:roughlimit}]

By exact moment matching \eqref{eq:xd-id}, $G^\e=\Pi_M^\perp F^\e$.  Hence
\eqref{eq:diss-def} and \eqref{eq:ac-bd} give
\[
 \sup_{0\le t\le T^\e}
 \left\|\frac{G^\e(t)}{\sqrt{M^\e(t)}}\right\|_{L^2_{x,\xi}}^2
 \le \e^2\sup_{0\le t\le T^\e}\mathcal E_G^N(t)
 \le \e^2\left|\log\frac1\e\right|^{15/16},
\]
which proves \eqref{eq:mic-maxw}.

The thermodynamic mode is treated below by the exact
transport--forcing splitting \eqref{eq:rough-def}--\eqref{eq:dt-ineq}, which uses only
the data convergence \eqref{eq:mic-conv} and the kinetic forcing estimates.  We first
focus on the strong convergence of the transported component $\varpi_A^\e$.  The convergence of the forcing
parts $\varpi_B^\e$ and $u_B^\e$ has already been established in
Lemma~\ref{lem:splitconv}, so only the initial-data part remains.
Use a separate exponent for velocity compactness and the target
exponent $p$.  If $1<p<2$, set $p_0=p$; if $p\ge2$, fix any
$p_0\in(1,2)$.  In the latter case the uniform $L^{p_0}$ bound follows by
interpolation between the uniform $L^1$ and $L^p$ bounds.  Hence, in both
ranges, \eqref{eq:strg-bd} gives
\begin{equation*}
 \sup_{\e>0}\sup_{0\le t\le T}
 \|\varpi_A^\e(t)\|_{L^1\cap L^{p_0}}<\infty.
\end{equation*}
\paragraph{Compactness of $u_A^\e$.}
Fix $R>0$. \eqref{eq:lone-lp}--\eqref{eq:lone-lp-bd}, with $p=p_0$, and
the uniform $L^1\cap L^{p_0}$ bound give, for every $2<q_0<\frac{2p_0}{2-p_0},$
\begin{equation}\label{eq:lim-bd}
 \sup_{\e>0}\sup_{0\le t\le T}
 \left(
 \|u_A^\e(t)\|_{W^{1,p_0}(B_R)}
 +\|u_A^\e(t)\|_{L^{q_0}(B_R)}
 \right)
 \le C_R.
\end{equation}
Indeed, $dx=2\pi r\,dr\,dz$ and $r\le R$ on $B_R$ give
\begin{align}
 \|u_A^\e(t)\|_{W^{1,p_0}(B_R)}
 &\le C_R\|u_A^\e(t)\|_{W^{1,p_0}(\mathbb H_R)}, \notag \\
 \|u_A^\e(t)\|_{L^{q_0}(B_R)}
 &\le C_R\|u_A^\e(t)\|_{L^{q_0}(\mathbb H_R)}. \notag
\end{align}
Equations \eqref{eq:lone-lp}--\eqref{eq:lone-lp-bd} then yield
\eqref{eq:lim-bd}.
Choose
\[
 1<r_0<\min\left\{2,\frac{3p_0}{3-p_0}\right\}.
\]
The Rellich embedding gives
\begin{equation*}
 W^{1,p_0}(B_R)\Subset L^{r_0}(B_R).
\end{equation*}
If $v_j$ is bounded in
$W^{1,p_0}(B_R)\cap L^{q_0}(B_R)$, then, after extraction,
$v_j\to v$ in $L^{r_0}(B_R)$ by Rellich compactness and
$v_j\rightharpoonup w$ in $L^{q_0}(B_R)$. Both convergences hold in
$\mathcal D'(B_R)$, hence $w=v$.  Since $r_0<2<q_0$, interpolation gives
$v_j\to v$ in $L^2(B_R)$.  Therefore
\begin{equation*}
 W^{1,p_0}(B_R)\cap L^{q_0}(B_R)
 \Subset L^2(B_R).
\end{equation*}
Apply \cite[Corollary~4]{simon1987compact} on $B_R$ with
\[
 X=W^{1,p_0}(B_R)\cap L^{q_0}(B_R),
 \qquad
 B=L^2(B_R),
 \qquad
 Y=H^{-s}(B_R).
\]
This compact embedding, together with the continuous embedding
$L^2(B_R)\hookrightarrow H^{-s}(B_R)$, the regularity of the kinetic
solution in Theorem~\ref{thm:longtime}, and the bounds
\eqref{eq:lim-bd} and \eqref{eq:td-comp} verify the hypotheses of Simon's
criterion.  Hence, after a
diagonal extraction in $R$,
\begin{equation}\label{eq:ltwo-loc}
 u_A^\e\to u^E
 \qquad\text{in }C([0,T];L^2_{\rm loc}(\R^3)).
\end{equation}
Equations \eqref{eq:rough-bd}, \eqref{eq:ltwo-del}, and \eqref{eq:del-kin}, together with
$u_A^\e=\mathbb Pu^\e-u_B^\e$, give a uniform
$L^\infty(0,T;L^2)$ bound.  After the same extraction,
\begin{equation}\label{eq:ltwo-unif}
 u_A^\e\overset{\ast}{\rightharpoonup}u^E
 \qquad\text{in }L^\infty(0,T;L^2(\R^3)).
\end{equation}
Finally, \eqref{eq:strg-bd} with the target exponent $p$ gives, after a
further extraction,
\begin{equation}\label{eq:lp-exp}
 \varpi_A^\e\overset{\ast}{\rightharpoonup}\varpi^E
 \qquad\text{in }L^\infty(0,T;L^p(\R^3)).
\end{equation}
We next identify the limiting velocity and relative vorticity.  Since
$\omega_A^\e=r\varpi_A^\e e_\vartheta$ and $r e_\vartheta=(-x_2,x_1,0)$, for every $\Phi\in C_c^\infty((0,T)\times\R^3;\R^3)$, the function
$r e_\vartheta\cdot\Phi$ belongs to
$C_c^\infty((0,T)\times\R^3)$. Hence, by the weak-$\star$ convergence
\eqref{eq:lp-exp},
\begin{align*}
\int_0^T\int_{\R^3}\omega_A^\e\cdot\Phi\,dx\,dt
&=
\int_0^T\int_{\R^3}
\varpi_A^\e(r e_\vartheta\cdot\Phi)\,dx\,dt
\to
\int_0^T\int_{\R^3}
\varpi^E(r e_\vartheta\cdot\Phi)\,dx\,dt.
\end{align*}
Thus
\begin{equation}\label{eq:ee-def}
 \omega_A^\e\rightharpoonup
 \omega^E:=r\varpi^Ee_\vartheta
 \qquad
 \text{in }\mathcal D'((0,T)\times\R^3).
\end{equation}
On the other hand, since $\omega_A^\e=\nabla_x\times u_A^\e$, for every
$\Phi\in C_c^\infty((0,T)\times\R^3;\R^3)$, integration by parts and
\eqref{eq:ltwo-loc} give
\begin{align}
 \int_0^T\int_{\R^3}\omega_A^\e\cdot\Phi\,dx\,dt
 &=
 \int_0^T\int_{\R^3}
 u_A^\e\cdot(\nabla_x\times\Phi)\,dx\,dt \to
 \int_0^T\int_{\R^3}
 u^E\cdot(\nabla_x\times\Phi)\,dx\,dt. \notag 
\end{align}
Hence
\begin{equation*}
 \omega_A^\e\rightharpoonup\nabla_x\times u^E
 \qquad
 \text{in }\mathcal D'((0,T)\times\R^3).
\end{equation*}
Equation \eqref{eq:ee-def} and this distributional convergence imply
\begin{equation}\label{eq:ee-id}
 \nabla_x\times u^E
 =
 \omega^E
 =
 r\varpi^Ee_\vartheta
 \qquad
 \text{in }\mathcal D'((0,T)\times\R^3).
\end{equation}
Since $\nabla_x\cdot u_A^\e=0$, \eqref{eq:ltwo-loc} also gives
\begin{equation}\label{eq:eul-id}
 \nabla_x\cdot u^E=0
 \qquad
 \text{in }\mathcal D'((0,T)\times\R^3).
\end{equation}
By \eqref{eq:ltwo-unif},
$u^E\in L^\infty(0,T;L^2(\R^3))$. Testing
\eqref{eq:ee-id}--\eqref{eq:eul-id} with separated space--time test
functions shows that, for a.e. $t\in(0,T)$, both identities hold in
$\mathcal D'(\R^3)$. \eqref{eq:four-id} therefore gives
\begin{equation*}
 u^E=\nabla_x\times(-\Delta_x)^{-1}\omega^E
 ={\bf K}\ast\omega^E,
 \qquad
 \omega^E=r\varpi^Ee_\vartheta.
\end{equation*}
First verify the regularity and growth assumptions in
Theorem~\ref{thm:dlstab}.  Suppose $1<p<2$.  By
\eqref{eq:lim-bd}, the sequence fixed in \eqref{eq:ltwo-loc} is bounded in
$L^\infty(0,T;W^{1,p}(B_R))$.  Let $\widetilde u$ be any weak-$\ast$
cluster point in this space.  Since weak-$\ast$ convergence in
$L^\infty(0,T;W^{1,p}(B_R))$ implies distributional convergence on
$(0,T)\times B_R$, while \eqref{eq:ltwo-loc} gives distributional convergence
to $u^E$, one has $\widetilde u=u^E$.  Hence every weak-$\ast$ cluster point is $u^E$, so
\[
 u_A^\e\overset{\ast}{\rightharpoonup}u^E
 \qquad\text{in }L^\infty(0,T;W^{1,p}(B_R)).
\]
By weak lower semicontinuity and \eqref{eq:lim-bd},
\begin{equation*}
 \|u^E\|_{L^\infty(0,T;W^{1,p}(B_R))}
 \le
 \liminf_{\e\to0}
 \|u_A^\e\|_{L^\infty(0,T;W^{1,p}(B_R))}
 \le C_R.
\end{equation*}
Suppose $p\ge2$.
By \eqref{eq:ltwo-unif}, $u^E\in L^\infty(0,T;L^2(\R^3))$.
Equations \eqref{eq:ee-id} and \eqref{eq:lp-exp} give, for every $R>0$,
\[
 \|\omega^E(t)\|_{L^p(B_{2R})}
 \le 2R\|\varpi^E(t)\|_{L^p(\R^3)}
 \qquad\text{for a.e. }t.
\]
Applying Lemma~\ref{lem:locreg} to this bound yields
\begin{equation*}
 \|u^E\|_{L^\infty(0,T;W^{1,p}(B_R))}
 \le C_{p,R}
 \left(
 \|u^E\|_{L^\infty_tL^2_x}
 +\|\varpi^E\|_{L^\infty_tL^p_x}
 \right).
\end{equation*}
Thus, for every $1<p<\infty$,
\begin{equation}\label{eq:loc-lp}
 u^E\in L^\infty(0,T;W^{1,p}_{\rm loc}(\R^3)).
\end{equation}
To verify \eqref{eq:lone-vort}, set $g(t,x):=|u^E(t,x)|/(1+|x|)$.
By \eqref{eq:ltwo-unif},
\begin{equation*}
 \|g\|_{L^\infty(0,T;L^2)}
 \le
 \|u^E\|_{L^\infty(0,T;L^2)}<\infty.
\end{equation*}
Define $g_1:=g\mathbf 1_{\{g>1\}}$ and $g_\infty:=g\mathbf 1_{\{g\le1\}}$.
Then
\begin{align}
 \int_0^T\|g_1(t)\|_{L^1}\,dt
 &\le
 \int_0^T\|g(t)\|_{L^2}^2\,dt
 \le
 T\|u^E\|_{L^\infty_tL^2_x}^2, \qquad
 \int_0^T\|g_\infty(t)\|_{L^\infty}\,dt
 \le T. \notag
\end{align}
Thus
\begin{equation}\label{eq:lone-kin}
 \frac{|u^E|}{1+|x|}
 \in
 L^1(0,T;L^1(\R^3))
 +
 L^1(0,T;L^\infty(\R^3)).
\end{equation}
By \eqref{eq:ltwo-loc}, \eqref{eq:del-kin}, and
$\mathbb Pu^\e=u_A^\e+u_B^\e$, \eqref{eq:pp-conv} holds.  Hence, for every
compact $K\subset\R^3$,
\begin{align}
 \int_0^T
 \|\mathbb Pu^\e(t)-u^E(t)\|_{L^1(K)}\,dt
 &\le
 T|K|^{1/2}
 \|\mathbb Pu^\e-u^E\|_{C([0,T];L^2(K))}
 \to0.
 \label{eq:pp-lone}
\end{align}
Thus \eqref{eq:lone-loc} holds with
$V_\e=\mathbb Pu^\e$ and $V=u^E$.  By \eqref{eq:loc-lp},
\begin{align}
 \int_0^T\|u^E(t)\|_{W^{1,1}(K)}\,dt
 &\le
 T|K|^{1-\frac1p}
 \|u^E\|_{L^\infty(0,T;W^{1,p}(K))}<\infty,
 \label{eq:dt-free}
\end{align}
so \eqref{eq:lone-strg} holds.  The divergence-free conditions follow from
\eqref{eq:pq-def} and \eqref{eq:eul-id}, and \eqref{eq:lone-vort} is \eqref{eq:lone-kin}.
Furthermore, \eqref{eq:strg-bd} and \eqref{eq:lp-lone} give
\begin{equation}\label{eq:lone-cond}
 \sup_{\e>0}\sup_{0\le t\le T}
 \left(\|\varpi_A^\e(t)\|_{L^1}+\|\varpi_A^\e(t)\|_{L^p}\right)
 =
 \sup_{\e>0}
 \left(\|\varpi_0^\e\|_{L^1}+\|\varpi_0^\e\|_{L^p}\right)<\infty.
\end{equation}
Relabel the subsequence fixed in \eqref{eq:ltwo-loc}--\eqref{eq:lp-exp} as
$\e_n\to0$.  Apply Theorem~\ref{thm:dlstab} first with
\begin{equation*}
 \mathfrak p=p,\qquad
 V_n=v^{\e_n},\qquad
 V=u^E,\qquad
 \rho_n=\varpi_A^{\e_n},\qquad
 \rho_{0,n}=\varpi_0^{\e_n}.
\end{equation*}
The hypotheses follow from
\eqref{eq:pp-lone}, \eqref{eq:dt-free}, \eqref{eq:pq-def}, \eqref{eq:eul-id},
\eqref{eq:lone-kin}, Lemma~\ref{lem:splitconv}, \eqref{eq:bet-dx},
\eqref{eq:lone-cond}, and \eqref{eq:kin-conv}.  Thus, for the
unique renormalized solution $\varpi^{(p)}$ of
\begin{equation*}
 \partial_t\varpi^{(p)}+u^E\cdot\nabla_x\varpi^{(p)}=0,
 \qquad
 \varpi^{(p)}(0)=\varpi_0^E,
\end{equation*}
we have
\begin{equation}\label{eq:lp-strg}
 \varpi_A^{\e_n}\to\varpi^{(p)}
 \qquad\text{in }C([0,T];L^p(\R^3)).
\end{equation}
Comparison with \eqref{eq:lp-exp} gives
\begin{equation}\label{eq:lp-strg-id}
 \varpi^E=\varpi^{(p)}
 \qquad\text{a.e. on }(0,T)\times\R^3.
\end{equation}
Apply Theorem~\ref{thm:dlstab} again with
\begin{equation*}
 \mathfrak p=1,\qquad
 V_n=v^{\e_n},\qquad
 V=u^E,\qquad
 \rho_n=\varpi_A^{\e_n},\qquad
 \rho_{0,n}=\varpi_0^{\e_n}.
\end{equation*}
The velocity hypotheses are \eqref{eq:pp-lone}, \eqref{eq:dt-free},
\eqref{eq:pq-def}, \eqref{eq:eul-id}, and \eqref{eq:lone-kin}.  The remaining
hypotheses follow from Lemma~\ref{lem:splitconv}, \eqref{eq:bet-dx},
\eqref{eq:lone-cond}, and \eqref{eq:kin-conv}.  Hence there is a
unique renormalized solution $\varpi^{(1)}\in C([0,T];L^1)$ such that
\begin{equation*}
 \varpi_A^{\e_n}\to\varpi^{(1)}
 \qquad\text{in }C([0,T];L^1(\R^3)).
\end{equation*}
For every $t\in[0,T]$ and every $\phi\in C_c^\infty(\R^3)$,
\eqref{eq:lp-strg} and the $C_tL^1$ convergence just proved give
\begin{equation*}
 \int_{\R^3}\varpi^{(p)}(t)\phi\,dx
 =
 \lim_{n\to\infty}\int_{\R^3}\varpi_A^{\e_n}(t)\phi\,dx
 =
 \int_{\R^3}\varpi^{(1)}(t)\phi\,dx.
\end{equation*}
Hence
\begin{equation*}
 \varpi^{(1)}(t)=\varpi^{(p)}(t)
 \qquad\text{a.e. in }\R^3,\quad 0\le t\le T.
\end{equation*}
By \eqref{eq:lp-strg-id}, this common representative agrees with the weak-$\ast$ limit
$\varpi^E$ a.e. on $(0,T)\times\R^3$.  We henceforth denote this
representative by $\varpi^E$.  Returning to the notation $\e$ for the
extracted sequence,
\begin{equation*}
 \varpi_A^\e\to\varpi^E
 \qquad\text{in }C([0,T];L^1(\R^3))\cap C([0,T];L^p(\R^3)),
 \qquad 1<p<\infty.
\end{equation*}
In particular, \eqref{eq:vel-conv} holds. When $p\geq 6/5$, \eqref{eq:vel-conv} with directly application of Lemma \ref{lem:cylcurl} and interpolation gives \eqref{eq:loc-conv_r}. Together with \eqref{eq:del-kin}, \eqref{eq:pp-conv_r} follows.

For the remaining assertions of Theorem~\ref{thm:roughlimit}, fix an
admissible pair $(\mathfrak{r},\bar p)$ and $0<s<4$ as in assertion~(4).
By \eqref{eq:bega-rho}, \eqref{eq:decomp-bd}--\eqref{eq:tau-ac}, and \eqref{eq:ac-bd},
for fixed $T$,
\[
 \|\mathbb P^\perp u^\e\|_{L^\mathfrak{r}_T\dot B_{\bar p,1}^{s+3(1/\bar p-1/2)+1/\mathfrak{r}}}
 +\|\rho^\e+\theta^\e\|_{L^\mathfrak{r}_T\dot B_{\bar p,1}^{s+3(1/\bar p-1/2)+1/\mathfrak{r}}}
 \lesssim_{T,s,\mathfrak{r},\bar p}
 \e^{1/\mathfrak{r}}\left|\log\frac1\e\right|^{15/16}.
\]
Since $\e^\delta|\log\e|^m\to0$ for every $\delta,m>0$,
this gives \eqref{eq:pq-rho}.
For \eqref{eq:therm-conv}, decompose the thermodynamic mode as
\begin{equation}\label{eq:rough-def}
 \mathfrak{s}^\e=\mathfrak{s}_A^\e+\mathfrak{s}_B^\e,
 \qquad
 \mathfrak{s}_0^\e:=\frac32\theta_0^\e-\rho_0^\e.
\end{equation}
With $S^\e$ defined by \eqref{eq:par-def},
\begin{align}
 (\partial_t+u^\e\cdot\nabla_x)\mathfrak{s}_A^\e&=0,
 &\mathfrak{s}_A^\e(0)&=\mathfrak{s}_0^\e,
 \label{eq:td-strg}\\
 (\partial_t+u^\e\cdot\nabla_x)\mathfrak{s}_B^\e&=-S^\e,
 &\mathfrak{s}_B^\e(0)&=0.
 \label{eq:td-strg-id}
\end{align}
By \eqref{eq:par-def}, \eqref{eq:hn-en}--\eqref{eq:hn-bd},
\eqref{eq:hn-kap}, and \eqref{eq:ac-bd},
\begin{align}
 \int_0^T\|S^\e(t)\|_2\,dt
 &\lesssim_T
 \kappa^{1/2}
 \left(\int_0^T\mathcal D_G^{N}(t)\,dt\right)^{1/2}
 \lesssim_T
 \kappa^{1/2}\left|\log\frac1\e\right|^{15/32}
 \to0.
 \label{eq:kap-strg}
\end{align}
Apply \eqref{eq:bega-rho} with $(s,\bar p,\mathfrak{r})=(9/4,\infty,4)$.
{Using \eqref{eq:decomp-bd}--\eqref{eq:tau-ac}, \eqref{eq:ac-bd}, and
$\dot B_{\infty,1}^1\hookrightarrow \dot{W}^{1,\infty}$ gives
\[
 \|q_{\rm ac}^\e\|_{L^4(0,T;\dot{W}^{1,\infty})}
 \lesssim_T
 \e^{1/4}\left|\log\frac1\e\right|^{15/16}
 \to0.
\]
Here $q_{\rm ac}^\e$ is defined in \eqref{eq:acoustic-source-def}. Hence
\begin{equation}\label{eq:dt-def}
 d_\e(T):=
 \int_0^T\|\nabla_x\cdot u^\e(t)\|_\infty\,dt
 =\int_0^T\|\nabla_x\cdot q_{\rm ac}^\e(t)\|_\infty\,dt
 \to0,
\end{equation}
Bernstein's inequality gives
\begin{align*}
    &  \Vert q_{\rm ac}^\e\Vert_\infty \leq \sum_{j\leq J} \Vert \dot{\Delta}_j q_{\rm ac}^\e\Vert_\infty + \sum_{j>J} \Vert \dot{\Delta}_jq_{\rm ac}^\e \Vert_\infty \lesssim 2^{3J/2}\Vert q_{\rm ac}^\e\Vert_2 + 2^{-J} \Vert q_{\rm ac}^\e\Vert_{\dot{B}_{\infty,1}^1}.
\end{align*}
Therefore, interpolation yields
\[
 q_{\rm ac}^\e\to0
 \qquad\text{in }L^1(0,T;L^\infty(\R^3)).
\]}
Together with \eqref{eq:pp-lone}, this yields
\begin{equation}\label{eq:lone-lim}
 u^\e\to u^E
 \qquad\text{in }L^1(0,T;L^1_{\rm loc}(\R^3)).
\end{equation}
For every bounded $\beta\in C^1(\R)$ vanishing near zero, the smooth
solution of \eqref{eq:td-strg} satisfies
\begin{equation}\label{eq:sol-id}
 \frac{d}{dt}\int_{\R^3}\beta(\mathfrak{s}_A^\e(t,x))\,dx
 =
 \int_{\R^3}(\nabla_x\cdot u^\e)(t,x)
 \beta(\mathfrak{s}_A^\e(t,x))\,dx.
\end{equation}
Thus \eqref{eq:sol-id} is the global renormalization
identity required in \cite[Theorem~II.5, pp.~527--528]{diperna1989ordinary}.
Let $X^\e$ be the classical flow of $u^\e$.  Equation
\eqref{eq:td-strg} and the Jacobian equation for $X^\e$ give
\begin{align*}
 \mathfrak{s}_A^\e(t,X^\e(t,x))
 &=\mathfrak{s}_0^\e(x),\\
 \det \nabla_xX^\e(t,x)
 &=\exp\left(\int_0^t
 (\nabla_x\cdot u^\e)(\tau,X^\e(\tau,x))\,d\tau\right).
\end{align*}
Hence \eqref{eq:dt-def} and these two flow identities imply
\begin{equation}\label{eq:strg-vort}
 \sup_{0\le t\le T}\|\mathfrak{s}_A^\e(t)\|_2
 \le
 e^{d_\e(T)/2}\|\mathfrak{s}_0^\e\|_2.
\end{equation}
Apply \cite[Theorem~II.5, pp.~527--528]{diperna1989ordinary} with
$b_\e=-u^\e$, $b=-u^E$, $c=0$, and exponent $2$.  The fields $u^\e$
are smooth, hence $\mathfrak{s}_A^\e$ is a renormalized solution of \eqref{eq:td-strg}. Moreover,
\eqref{eq:dt-def} gives
\[
 \sup_{\e\ll1}\|\operatorname{div}b_\e\|_{L^1(0,T;L^\infty)}<\infty,
 \qquad
 \operatorname{div}b=0,
\]
and \eqref{eq:sol-id} is precisely the global identity
\cite[(31), p.~522]{diperna1989ordinary} for $b_\e=-u^\e$.
The convergence hypothesis is \eqref{eq:lone-lim}; the limit-field conditions
(*) and (**) are \eqref{eq:dt-free}, \eqref{eq:eul-id}, and \eqref{eq:lone-kin}; the
data convergence is \eqref{eq:mic-conv}; and \eqref{eq:strg-vort} supplies the uniform
$L^2$ bound.  Hence, for the unique renormalized solution
$\mathfrak{s}^E$ of
\begin{equation*}
 \partial_t\mathfrak{s}^E+u^E\cdot\nabla_x\mathfrak{s}^E=0,
 \qquad
 \mathfrak{s}^E(0)=\frac32\theta_0^E-\rho_0^E,
\end{equation*}
one has
\begin{equation}\label{eq:ltwo-rho}
 \mathfrak{s}_A^\e\to\mathfrak{s}^E
 \qquad\text{in }C([0,T];L^2(\R^3)).
\end{equation}
Set $\frac32\theta^E-\rho^E:=\mathfrak{s}^E$.
Taking the $L^2$ inner product of \eqref{eq:td-strg-id} with
$\mathfrak{s}_B^\e$ and using \eqref{eq:dt-def} gives
\begin{equation*}
 \frac{d}{dt}\|\mathfrak{s}_B^\e(t)\|_2
 \le
 \frac12\|\nabla_x\cdot u^\e(t)\|_\infty
 \|\mathfrak{s}_B^\e(t)\|_2
 +\|S^\e(t)\|_2
\end{equation*}
for a.e. $t$ with $\|\mathfrak{s}_B^\e(t)\|_2>0$; the standard
regularization
$(\|\mathfrak{s}_B^\e\|_2^2+\lambda^2)^{1/2}$ gives the same estimate
through its zeros.  Thus \eqref{eq:kap-strg},
\eqref{eq:dt-def}, and Gronwall's inequality imply
\begin{equation}\label{eq:dt-ineq}
 \sup_{0\le t\le T}\|\mathfrak{s}_B^\e(t)\|_2
 \le
 e^{d_\e(T)/2}\int_0^T\|S^\e(t)\|_2\,dt
 \to0.
\end{equation}
Equations \eqref{eq:rough-def},
\eqref{eq:ltwo-rho}, and \eqref{eq:dt-ineq} prove
\eqref{eq:therm-conv}.

\end{proof}

\section{Global energy and radial-moment transfer in the kinetic limit}\label{sec:globlimit}
The all-$p$ theorem gives strong transported-relative-vorticity convergence and local velocity convergence for every $p>1$.  If $p\ge4/3$, Theorem~\ref{thm:mfreecomp} upgrades this to global $C_tL_x^2$ compactness without moment assumptions.  If $1<p<4/3$, finite absolute impulse together with the mixed sign-component $L^{p^\sharp}$ condition provides the radial control needed for the same global conclusion.  The first-radial-moment and absolute-impulse estimates are stated separately because they require different hypotheses.
\subsection{Absolute impulse}
\begin{corollary}[Uniform absolute impulse]\label{cor:unifimp}
Fix $T<\infty$ and let $F^\e$ be the solutions of
Theorem~\ref{thm:longtime}.  Let $\varpi_A^\e,u_A^\e,u_B^\e$ be defined by
Definition~\ref{def:vortsplit}.  Assume, for some $1<p<\infty$,
\begin{equation}\label{eq:mtwo-sol}
 \sup_{\e>0}\left(
 \|\varpi_0^\e\|_1+\|\varpi_0^\e\|_p+\mathfrak M_2(\varpi_0^\e)
 \right)<\infty.
\end{equation}
Then
\begin{equation}\label{eq:lsup-abs}
 \limsup_{\e\to0}\sup_{0\le t\le T}
 \mathfrak M_2(\varpi_A^\e(t))<\infty
\end{equation}
under either of the following conditions.
\begin{enumerate}
\item $p\ge5/3$.
\item $1<p<5/3$ and, with
$p^\sharp=4p/(3p-1)$,
\begin{equation}\label{eq:sharp-abs}
 \sup_{\e>0}
 \min\left\{
 \|(\varpi_0^\e)_+\|_{p^\sharp},
 \|(\varpi_0^\e)_-\|_{p^\sharp}
 \right\}<\infty.
\end{equation}
\end{enumerate}
\end{corollary}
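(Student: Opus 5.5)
The plan is to run the argument of Theorem~\ref{thm:miximpulse} and Corollary~\ref{cor:smooth-impulse} directly on the transported component $\varpi_A^\e$, with one modification. $\varpi_A^\e$ is transported by the full solenoidal velocity $v^\e=u_A^\e+u_B^\e$, while its own div--curl velocity is only $u_A^\e=\Bcal\varpi_A^\e$, by \eqref{eq:decomp-id}--\eqref{eq:gam-th} and Proposition~\ref{prop:radmom}. The mismatch $u_B^\e$ will be treated as a perturbation, using the vanishing bound \eqref{eq:del-kin}.

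First I will verify the conserved quantities. By \eqref{eq:strg-bd} and \eqref{eq:sign-psi} together with volume preservation \eqref{eq:xx-id}, the norms $\|\varpi_A^\e(t)\|_1$ and $\|\varpi_A^\e(t)\|_p$, and the norms $\|(\varpi_A^\e)_\pm(t)\|_{p^\sharp}$, are constant in time. Hence $N_1,N_p$ are bounded uniformly in $\e$ by \eqref{eq:mtwo-sol}, and $N_\sharp$ is bounded by \eqref{eq:sharp-abs} in case~(2). In case~(1), $p^\sharp\le p$, so $N_\sharp$ is bounded by interpolation between $L^1$ and $L^p$, exactly as in Corollary~\ref{cor:signfree}.

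Next comes the truncated balance. For fixed $\e$ the flow $X^\e$ is classical on $[0,T]$ by \eqref{eq:ct-bd}. The characteristic computation in the proof of Corollary~\ref{cor:smooth-impulse} gives, with $Y_\e(t):=\mathfrak M_2(\varpi_A^\e(t))$,
\[
 Y_\e(t)-Y_\e(0)=\frac1\pi\int_0^t\!\int_{\R^3}\bigl(u_A^{\e,r}+u_B^{\e,r}\bigr)\,r|\varpi_A^\e|\,dx\,d\tau .
\]
This is first obtained with the weight $\Psi_L(r^2)$ and then in the limit $L\to\infty$. The needed flux integrability comes from $r\varpi_A^\e=\omega_A^{\e,\vartheta}\in L^2$ and $v^\e\in L^2$ at fixed $\e$; the $L^2$ bound on $\omega_A^\e$ follows from the linear equation \eqref{eq:cm-id}, whose coefficients satisfy $\int_0^T\|\nabla v^\e\|_\infty\,dt<\infty$. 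The same $L^2$ membership of $r\varpi_A^\e$ supplies the one-sided local condition \eqref{eq:mix-local-l2} when $1<p<4/3$.

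Then I split the flux into two terms. For the $u_A^\e$ term I apply Corollary~\ref{cor:sign-cancel} with $a,b$ the sign components of $\varpi_A^\e$. The component $a$ is chosen as the one with the smaller $L^{p^\sharp}$ norm; its membership in $\mathcal X_{p^\sharp}$ holds because it lies in $L^1\cap L^{p^\sharp}$ and $\mathfrak M_2$ is finite at fixed $t$. This gives the bound
\[
 C_p\bigl(N_1^2+N_p^{1/4}N_\sharp\,Y_\e^{3/4}\bigr).
\]
For the $u_B^\e$ term, Cauchy--Schwarz \eqref{eq:mominterp} gives
\[
 \Bigl|\int u_B^{\e,r}\,r|\varpi_A^\e|\,dx\Bigr|\le 2\pi\|u_B^\e\|_\infty\,\mathfrak M_1(\varpi_A^\e)\lesssim \|u_B^\e\|_\infty\,(N_1Y_\e)^{1/2},
\]
and $\sup_{[0,T]}\|u_B^\e\|_\infty\to0$ by \eqref{eq:del-kin}. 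Combining the two terms yields
\[
 |Y_\e'|\lesssim_p N_1^2+N_p^{1/4}N_\sharp\,Y_\e^{3/4}+o(1)\,N_1^{1/2}Y_\e^{1/2}.
\]
Applied to $Z=1+Y_\e$, this gives $\frac{d}{dt}Z^{1/4}\le C$ with $C$ independent of $\e$ for small $\e$. Integrating, and using the uniform bound $\sup_\e Y_\e(0)<\infty$, proves \eqref{eq:lsup-abs}.

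The main obstacle is the justification step. Corollary~\ref{cor:sign-cancel} requires $u^r r(a+b)\in L^1$ and requires that $\mathfrak X(b,a)$ be an absolutely convergent pairing, at each fixed time and fixed $\e$, for data that need not be compactly supported. I would check this using the fixed-$\e$ $L^2$ regularity described above together with the a priori finiteness of $Y_\e(t)$, which follows from the integral identity and a Gronwall bound at fixed $\e$. Only the constants in the final differential inequality need to be uniform in $\e$.
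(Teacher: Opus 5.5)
Your proposal is correct and follows essentially the paper's proof. The common steps are the truncated $\Psi_L(r^2)$ balance along the flow of $v^\e$, the diagonal-cancellation/skew identity reducing the $u_A^\e$ flux to $2\mathfrak X(\varpi_{\bar\sigma},\varpi_\sigma)$ bounded by Proposition~\ref{prop:crossint}, and treating $u_B^\e$ as a vanishing perturbation before integrating $Z^{-3/4}Z'$.

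The differences are minor. The paper takes $p_*=5/3$ in case~(1), where you use $p$ directly; both are valid. For the perturbation, the paper bounds the $u_B^\e$ flux by $\|u_B^\e\|_{L^2}\|\nabla\times u_A^\e\|_{L^2}$, with power decay beating a logarithmic bound. Your $\|u_B^\e\|_{L^\infty}(N_1Y_\e)^{1/2}$ bound via \eqref{eq:mominterp} is equally valid and avoids that logarithmic estimate.
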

\begin{proof}
By \eqref{eq:small-def}, $T<T^\e$ for all sufficiently small $\e$.  Recall $v^\e=\mathbb Pu^\e=u_A^\e+u_B^\e$, and set
$Y_\e(t):=\mathfrak M_2(\varpi_A^\e(t)).$
Choose
\[
 p_*:=
 \begin{cases}
  5/3,& p\ge5/3,\\
  p,& 1<p<5/3,
 \end{cases}
 \qquad
 p_*^\sharp:=\frac{4p_*}{3p_*-1}.
\]
In case~(1), interpolation between $L^1$ and $L^p$, together with
\eqref{eq:strg-bd} and \eqref{eq:mtwo-sol}, gives
\[
 \sup_{\e>0}\sup_{0\le t\le T}
 \|\varpi_A^\e(t)\|_{5/3}<\infty.
\]
Since $(5/3)^\sharp=5/3$, both sign parts are uniformly bounded in
$L^{p_*^\sharp}$.  In case~(2), \eqref{eq:xx-id} and \eqref{eq:sign-psi} give, for each sign,
\[
 \|(\varpi_A^\e(t))_\pm\|_{p^\sharp}
 =\|(\varpi_0^\e)_\pm\|_{p^\sharp},
 \qquad 0\le t\le T.
\]
Hence \eqref{eq:strg-bd}, \eqref{eq:mtwo-sol}, and \eqref{eq:sharp-abs} imply
\[
 \sup_{\e>0}\sup_{0\le t\le T}
 \left(
 \|\varpi_A^\e(t)\|_1+\|\varpi_A^\e(t)\|_{p_*}
 +\min_{\sigma\in\{+,-\}}
 \|(\varpi_A^\e(t))_\sigma\|_{p_*^\sharp}
 \right)<\infty.
\]
Thus the same bound holds in both cases.
Let $\Psi_L$ be the truncation in \eqref{eq:rad-def} and set
\[
 Y_{\e,L}(t):=\frac1{2\pi}\int_{\R^3}
 \Psi_L(r^2)|\varpi_A^\e(t)|\,dx.
\]
The measure-preserving flow identities \eqref{eq:xx-id}--\eqref{eq:sign-psi} give
\[
 Y_{\e,L}(t)-Y_{\e,L}(0)
 =\frac1\pi\int_0^t\!\int_{\R^3}
 \Psi_L'(r^2)r(v^\e)^r|\varpi_A^\e|\,dx\,ds.
\]
Since $u_A^\e=v^\e-u_B^\e$, \eqref{eq:ac-bd} and \eqref{eq:del-decomp}
show that
\[
 \sup_{0\le t\le T}\|r\varpi_A^\e(t)\|_2
 =\sup_{0\le t\le T}\|\nabla\times u_A^\e(t)\|_2<\infty.
\]
Consequently,
\[
 \int_0^T\!\int_{\R^3}|v^\e|r|\varpi_A^\e|\,dx\,dt
 \le T\|v^\e\|_{L_t^\infty L_x^2}
 \|\nabla\times u_A^\e\|_{L_t^\infty L_x^2}<\infty.
\]
Monotone convergence on the left and dominated convergence on the right yield
$Y_\e\in W^{1,1}(0,T)$ and
\[
 \pi Y_\e'(t)
 =\int_{\R^3}(v^\e)^r r|\varpi_A^\e|\,dx
 \qquad\text{for a.e. }t\in(0,T).
\]
Write $\varpi_\pm=(\varpi_A^\e)_\pm$ and suppress $\e,t$.
This bound gives $r\varpi_\pm\in L^2$, while
$\mathfrak M_2(\varpi_\pm)\le Y_\e(t)$.  Together with the
uniform $L^{p_*}$ bound, Proposition~\ref{prop:radmom} and \eqref{eq:gam-th} give
\[
 u_A^\e=\Bcal\varpi_+-\Bcal\varpi_-.
\]  The diagonal pairings vanish by \eqref{eq:bs-dx}.  More explicitly,
using the convention
$\mathfrak X(\alpha,\beta)=\int(\Bcal\beta)^r r\alpha\,dx$ from
\eqref{eq:axis-def} and applying the condition $r \varpi_{\pm} \in L^2 $ we have
\begin{align*}
 \int_{\R^3}(u_A^\e)^r r|\varpi_A^\e|\,dx
 &=\mathfrak X(\varpi_+,\varpi_+)
   +\mathfrak X(\varpi_-,\varpi_+)
   -\mathfrak X(\varpi_+,\varpi_-)
   -\mathfrak X(\varpi_-,\varpi_-)\notag\\
 &=2\mathfrak X(\varpi_-,\varpi_+)
 =-2\mathfrak X(\varpi_+,\varpi_-),
\end{align*}
where the second line uses both \eqref{eq:bs-dx} and the skew identity
\eqref{eq:bs-id}.  Choose $\sigma$ so that
$\|\varpi_\sigma\|_{p_*^\sharp}$ is minimal and let $\bar\sigma$ be the
opposite sign.  This expansion then gives
\[
 \left|\int_{\R^3}(u_A^\e)^r r|\varpi_A^\e|\,dx\right|
 =2|\mathfrak X(\varpi_{\bar\sigma},\varpi_\sigma)|.
\]
The first factor lies in $L^1\cap L^{p_*}$ and the second in
$L^1\cap L^{p_*^\sharp}$.  Proposition~\ref{prop:crossint}
therefore gives
\[
 \left|\int_{\R^3}(u_A^\e)^r r|\varpi_A^\e|\,dx\right|
 \lesssim_{p_*}
 \|\varpi_A^\e\|_1^2
 +\|\varpi_A^\e\|_{p_*}^{1/4}
 Y_\e^{3/4}
 \min_{\sigma\in\{+,-\}}\|\varpi_\sigma\|_{p_*^\sharp}.
\]
The $u_B^\e$ contribution is quantitatively negligible.  Indeed,
\eqref{eq:decomp-id}, \eqref{eq:ac-bd}, and \eqref{eq:del-decomp} give, uniformly on
$[0,T]$,
\begin{align}
 \|\nabla\times u_A^\e(t)\|_2
 &\le \|\nabla\times\mathbb Pu^\e(t)\|_2
      +\|\omega_B^\e(t)\|_2 \lesssim
 \left|\log\frac1\e\right|^{15/32}
 +C_{T,\delta}\bigl(\e^{1/2-\delta}+\kappa^{1/2-\delta}\bigr). \notag
\end{align}
Hence \eqref{eq:del-kin} implies
\begin{align}
 \sup_{0\le t\le T}
 \frac1\pi\left|\int_{\R^3}(u_B^\e)^r r|\varpi_A^\e|\,dx\right|
 &\le \frac1\pi
 \|u_B^\e\|_{L_t^\infty L_x^2}
 \|\nabla\times u_A^\e\|_{L_t^\infty L_x^2}\notag\\
 &\lesssim_{T,\delta}
 \bigl(\e^{1/2-\delta}+\kappa^{1/2-\delta}\bigr)
 \left(
  1+\left|\log\frac1\e\right|^{15/32}
 \right)
 \to0, \notag
\end{align}
where the last limit follows from \eqref{eq:choice-bd} and the elementary fact
that every positive power dominates every logarithm.  Consequently,
\[
 |Y_\e'(t)|\le C_{p,T}(1+Y_\e(t)^{3/4})+o_\e(1)
 \qquad\text{for a.e. }t\in(0,T).
\]
Since $Y_\e(0)$ is uniformly bounded by \eqref{eq:mtwo-sol}, integration of
$(1+Y_\e)^{-3/4}Y_\e'$ gives \eqref{eq:lsup-abs}.
\end{proof}
\subsection{First radial moment}
\begin{proposition}[Uniform first radial moment]\label{prop:radfirst}
Fix $T<\infty$ and $3/2\le p<\infty$.  Let $F^\e$ be the solutions of
Theorem~\ref{thm:longtime}, and let $\varpi_A^\e,u_A^\e,u_B^\e$ be defined by
Definition~\ref{def:vortsplit}.  Assume
\begin{equation}\label{eq:mone-sol}
 \sup_{\e>0}
 \left(
 \|\varpi_0^\e\|_1+\|\varpi_0^\e\|_p+\mathfrak M_1(\varpi_0^\e)
 \right)<\infty.
\end{equation}
Then
\begin{equation}\label{eq:lsup-fst}
 \limsup_{\e\to0}\sup_{0\le t\le T}
 \mathfrak M_1(\varpi_A^\e(t))<\infty.
\end{equation}
\end{proposition}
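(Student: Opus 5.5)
The plan is to follow the proof of Corollary~\ref{cor:unifimp} closely, with the absolute impulse replaced by $\mathfrak M_1$ and the mixed cross-interaction estimate replaced by the HLS flux bound from Theorem~\ref{thm:firstmom}. Since $T<T^\e$ for small $\e$, set $Y_\e(t):=\mathfrak M_1(\varpi_A^\e(t))$ and use the truncation $Y_{\e,L}(t):=(2\pi)^{-1}\int\Psi_L(r)|\varpi_A^\e(t)|\,dx$. The measure-preserving flow identities \eqref{eq:xx-id}--\eqref{eq:sign-psi} give
\[
 Y_{\e,L}(t)-Y_{\e,L}(0)=\frac1{2\pi}\int_0^t\!\int_{\R^3}\Psi_L'(r)(v^\e)^r|\varpi_A^\e|\,dx\,ds.
\]
The flux is integrable because $|(v^\e)^r|\,|\varpi_A^\e|\le\|v^\e\|_\infty|\varpi_A^\e|$, with $v^\e\in L^\infty_tL^\infty_x$ by the $H^N$ bound \eqref{eq:ac-bd} and $\varpi_A^\e\in L^1$. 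Monotone and dominated convergence as $L\to\infty$ then give $Y_\e\in W^{1,1}(0,T)$ and
\[
 2\pi Y_\e'=\int(v^\e)^r|\varpi_A^\e|\,dx
\]
almost everywhere. This verifies the balance hypothesis \eqref{eq:urad-id} of Theorem~\ref{thm:firstmom} for the pair $(v^\e,\varpi_A^\e)$, except that the curl of $v^\e$ is $r\varpi^\e$ and not $r\varpi_A^\e$.

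To handle this mismatch, I would split $v^\e=u_A^\e+u_B^\e$. For the $u_A^\e$ part, \eqref{eq:gam-th} gives $\nabla\times u_A^\e=r\varpi_A^\e e_\vartheta$ with $u_A^\e\in L^2$. So at a.e. fixed time I can run exactly the pointwise argument from the proof of Theorem~\ref{thm:firstmom}. That argument uses the Gallay kernel bound $|(u_A^\e)^r|\lesssim I_1h$ with $h=r|\varpi_A^\e|$, the HLS inequality, and the H\"older factorization of $\|h\|_{4/3}^2$. It yields
\[
 \Big|\int (u_A^\e)^r|\varpi_A^\e|\,dx\Big|\lesssim_p\Lambda_p\,Y_\e^{1/2},
\]
where $\Lambda_p$ depends only on $\sup_t\|\varpi_A^\e\|_1$ and $\sup_t\|\varpi_A^\e\|_p$. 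Both of these equal their initial values by \eqref{eq:strg-bd}, so \eqref{eq:mone-sol} makes them uniformly bounded in $\e$. The density approximation needed there is provided by Lemma~\ref{lem:raddense} together with $Y_\e(t)<\infty$, the latter being a consequence of the $W^{1,1}$ identity just established.

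The main obstacle is the $u_B^\e$ contribution, because $\varpi_A^\e$ is only controlled in $L^1\cap L^p$ and $u_B^\e$ is only small in $L^q$ with $q\ge2$. I would bound it by
\[
 \|u_B^\e\|_{L^\infty_tL^\infty_x}\,\|\varpi_A^\e\|_{L^\infty_tL^1_x}\lesssim_{T,\delta}\bigl(\e^{1/2-\delta}+\kappa^{1/2-\delta}\bigr)\sup_\e\|\varpi_0^\e\|_1,
\]
using \eqref{eq:del-kin} with $q=\infty$. This is $o_\e(1)$ uniformly on $[0,T]$ and needs no moment weight, because the radial weight has already been differentiated away.

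Combining the two contributions gives $|Y_\e'|\le C_{p,T}Y_\e^{1/2}+o_\e(1)$ almost everywhere. Applying the regularization $(Y_\e+\eta)^{1/2}$ as at the end of Theorem~\ref{thm:firstmom} then yields
\[
 Y_\e(t)^{1/2}\le Y_\e(0)^{1/2}+C_{p,T}t+o_\e(1)\,T\,\eta^{-1/2}.
\]
Letting $\e\to0$ first and then $\eta\downarrow0$ proves \eqref{eq:lsup-fst}.
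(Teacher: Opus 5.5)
Your proposal is correct and follows the paper's proof: the same split $\mathbb Pu^\e=u_A^\e+u_B^\e$, the Gallay kernel bound with HLS and the H\"older factorization of $\|r|\varpi_A^\e|\|_{4/3}^2$ for the $u_A^\e$ flux, and the bound $\|u_B^\e\|_\infty\|\varpi_0^\e\|_1=o_\e(1)$ for the forcing part. The only difference is bookkeeping, which does not affect the argument. The paper uses the Lagrangian inequality $r(X^\e(t,x))\le r(x)+\int_0^t|(\mathbb Pu^\e)^r|\,ds$ and closes with a quadratic inequality for $\sup_{s\le T}\mathfrak M_1(\varpi_A^\e(s))^{1/2}$, whereas you use truncated weights and integrate $(Y_\e+\eta)^{1/2}$; your final display should read $(Y_\e(0)+\eta)^{1/2}$ on the right, a harmless slip.
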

\begin{proof}
Set $f_\e:=|\varpi_A^\e|$ and $h_\e:=rf_\e$.  The flow formula
\eqref{eq:sign-decomp}, measure preservation, and
\[
 r(X^\e(t,x))
 \le r(x)+\int_0^t
 |(\mathbb Pu^\e)^r(s,X^\e(s,x))|\,ds
\]
first show that $\mathfrak M_1(\varpi_A^\e(t))<\infty$ for every fixed
$\e$ and $t$.  Keeping the pointwise radial velocity gives
\[
 \mathfrak M_1(\varpi_A^\e(t))
 \le \mathfrak M_1(\varpi_0^\e)
 +\frac1{2\pi}\int_0^t\int_{\R^3}
 f_\e |(\mathbb Pu^\e)^r|\,dx\,ds.
\]
By \eqref{eq:decomp-def}, split the last integral into the $u_A^\e$ and
$u_B^\e$ contributions.  The div--curl identity \eqref{eq:gam-th}
identifies $u_A^\e$ with the Biot--Savart field generated by
$r\varpi_A^\e e_\vartheta$.  Hence the radial kernel estimate
\cite[Proposition~2.3, (2.11)]{gallay2015remarks} and
Lemma~\ref{lem:hls} give
\[
 \frac1{2\pi}\int f_\e |(u_A^\e)^r|\,dx
 \lesssim \|h_\e\|_{L^{4/3}(\R^2)}^2.
\]
For $p>3/2$, the weighted factorization used in
Theorem~\ref{thm:firstmom} yields
\[
 \|h_\e(t)\|_{4/3}^2
 \lesssim_p
 \|\varpi_A^\e(t)\|_p^{\frac{p}{2(p-1)}}
 \|\varpi_A^\e(t)\|_1^{\frac{2p-3}{2(p-1)}}
 \mathfrak M_1(\varpi_A^\e(t))^{1/2},
\]
with the $L^1$ factor absent at $p=3/2$.  Transport preserves the
$L^1\cap L^p$ norms.  For the forcing velocity,
\eqref{eq:strg-bd} and \eqref{eq:del-kin} give, uniformly on $[0,T]$,
\[
 \frac1{2\pi}\int f_\e |(u_B^\e)^r|\,dx
 \lesssim \|\varpi_0^\e\|_1\|u_B^\e\|_\infty=o_\e(1).
\]
Therefore, with
$Z_\e(t):=\sup_{0\le s\le t}\mathfrak M_1(\varpi_A^\e(s))^{1/2}$,
\eqref{eq:mone-sol} gives
\[
 Z_\e(T)^2\lesssim_{p,T}1+Z_\e(T)+o_\e(1).
\]
The resulting uniform bound for $Z_\e(T)$ is exactly
\eqref{eq:lsup-fst}.
\end{proof}
\subsection{Approximate Euler equation}
\begin{lemma}[Approximate Euler equation for the transported velocity]\label{lem:approxeul}
Assume the hypotheses of Theorem~\ref{thm:roughlimit}, fix $T<\infty$, and let
$F^\e$ be the corresponding kinetic solutions.  For all sufficiently small
$\e$, the transported
velocity $a^\e:=u_A^\e$ satisfies
\begin{equation}\label{eq:vel-id}
 \partial_ta^\e+a^\e\cdot\nabla a^\e+\nabla\widetilde\pi_A^\e
 =f_A^\e,
 \qquad \nabla\cdot a^\e=\nabla\cdot f_A^\e=0,
 \qquad \nabla\times a^\e=r\varpi_A^\e e_\vartheta,
\end{equation}
where
\begin{equation}\label{eq:ta-id}
 f_A^\e
 =-\mathbb P\left[
 u_B^\e\cdot\nabla a^\e+(\nabla u_B^\e)^Ta^\e
 \right],\qquad
 \widetilde\pi_A^\e=\pi_A^\e+\frac{|a^\e|^2}{2}
 +\Delta^{-1}\nabla\cdot\left(u_B^\e\cdot\nabla a^\e+(\nabla u_B^\e)^Ta^\e\right).
\end{equation}
The fields $a^\e$ and $f_A^\e$ are axisymmetric without swirl.
Moreover,
\begin{align}
 \sup_{\e\ll1}\|a^\e\|_{L^\infty(0,T;L^2)}&<\infty,
 \label{eq:approx-bd}\\
 \|f_A^\e\|_{L^1(0,T;L^2)}&\to0.
 \label{eq:lone-ltwo}
\end{align}
\end{lemma}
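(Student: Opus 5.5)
The identity \eqref{eq:vel-id} is algebraic, obtained by rewriting \eqref{eq:xp-id} after inserting $v^\e=a^\e+u_B^\e$. Equation \eqref{eq:xp-id} reads
\[
\partial_ta^\e+v^\e\cdot\nabla a^\e+(\nabla v^\e)^Ta^\e+\nabla\pi_A^\e=0 .
\]
Splitting $v^\e$ and using $(\nabla a^\e)^Ta^\e=\tfrac12\nabla|a^\e|^2$ gives
\[
\partial_ta^\e+a^\e\cdot\nabla a^\e+\nabla\Bigl(\pi_A^\e+\tfrac12|a^\e|^2\Bigr)=-\bigl[u_B^\e\cdot\nabla a^\e+(\nabla u_B^\e)^Ta^\e\bigr].
\]
We then apply the Helmholtz decomposition $I=\mathbb P+\mathbb P^\perp$ to the right-hand side, with $\mathbb P^\perp=\nabla\Delta^{-1}\nabla\cdot$ as in \eqref{eq:pq-def}. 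The gradient part is absorbed into the pressure, which produces exactly $\widetilde\pi_A^\e$ and $f_A^\e$ in \eqref{eq:ta-id}; in particular $\nabla\cdot f_A^\e=0$.

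The remaining identities in \eqref{eq:vel-id} are already available. The conditions $\nabla\cdot a^\e=0$ and $\nabla\times a^\e=\omega_A^{\e,\vartheta}e_\vartheta=r\varpi_A^\e e_\vartheta$ are \eqref{eq:decomp-id} together with \eqref{eq:pp-def}. Each term above lies in $C([0,T];H^m)$ for fixed $\e$, by the classical regularity recorded in Definition~\ref{def:vortsplit}, so the manipulations are justified. For the symmetry: $a^\e$ and $u_B^\e$ are axisymmetric without swirl, the bilinear expression is $O(2)_z$-equivariant, and $\mathbb P$ commutes with orthogonal changes of variables. Hence $f_A^\e$ is equivariant, and therefore no-swirl, by the remark in Subsection~\ref{sec:kinlimit}.

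For \eqref{eq:approx-bd} we write $a^\e=\mathbb Pu^\e-u_B^\e$. Lemma~\ref{lem:fixedenergy} together with \eqref{eq:rough-bd} bounds $\|\mathbb Pu^\e\|_{L^\infty_tL^2}$ uniformly, and \eqref{eq:del-kin} with $q=2$ shows $\|u_B^\e\|_{L^\infty_tL^2}\to0$. This part is routine.

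The main step is \eqref{eq:lone-ltwo}. Since $\mathbb P$ is an $L^2$ contraction, it suffices to bound
\[
\int_0^T\Bigl(\|u_B^\e\|_\infty\|\nabla a^\e\|_2+\|\nabla u_B^\e\|_{L^3}\|a^\e\|_{L^6}\Bigr)\,dt .
\]
The obstacle is that $\nabla a^\e$ has no uniform $L^2$ bound in this rough setting; it is controlled only through $\nabla\mathbb Pu^\e$, which grows logarithmically. The plan is to exploit the polynomial smallness of $u_B^\e$ against this logarithmic growth, exactly as in the proof of Corollary~\ref{cor:unifimp}.

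For the gradient of $a^\e$: by Plancherel for divergence-free fields, $\|\nabla a^\e\|_2=\|\nabla\times a^\e\|_2\le\|\omega^\e\|_2+\|\omega_B^\e\|_2$. This is $\lesssim|\log\e|^{15/32}$ by \eqref{eq:ac-bd} and \eqref{eq:del-decomp}, and Sobolev embedding then also controls $\|a^\e\|_{L^6}\lesssim\|\nabla a^\e\|_2$.

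For the forcing velocity: \eqref{eq:del-kin} with $q=\infty$ gives $\|u_B^\e\|_\infty\lesssim\e^{1/2-\delta}+\kappa^{1/2-\delta}$. The remaining factor $\|\nabla u_B^\e\|_{L^3}$ is obtained by Calder\'on--Zygmund from $\|\omega_B^\e\|_{L^3}$, which is small by \eqref{eq:del-decomp}.

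Multiplying out, the integrand is $O\bigl((\e^{1/2-\delta}+\kappa^{1/2-\delta})|\log\e|^{15/32}\bigr)$ uniformly on $[0,T]$. This tends to zero because every positive power of $\e$ or $\kappa$ dominates every power of the logarithm, and \eqref{eq:lone-ltwo} follows.
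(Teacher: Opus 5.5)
Your proof is correct and follows the paper's argument. It uses the same rewriting of \eqref{eq:xp-id} with $v^\e=a^\e+u_B^\e$, the same absorption of the gradient part into $\widetilde\pi_A^\e$, the same $L^2$ bound, and the same power-versus-logarithm comparison to show $f_A^\e\to0$. The only difference is the H\"older split of $(\nabla u_B^\e)^{T}a^\e$: the paper uses $\|\nabla u_B^\e\|_2\|a^\e\|_\infty$, with $\|a^\e\|_\infty$ coming from the $H^2$ bound on $\mathbb Pu^\e$, whereas you use $\|\nabla u_B^\e\|_{L^3}\|a^\e\|_{L^6}$, which needs an $L^3$ Calder\'on--Zygmund estimate but only $H^1$-level control of $a^\e$, and this works equally well.
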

\begin{proof}
Set $a^\e=u_A^\e$ and $b^\e=u_B^\e$.  Since
$\mathbb Pu^\e=a^\e+b^\e$, the first equation in \eqref{eq:fld-id} and
$(\nabla a^\e)^Ta^\e=\nabla(|a^\e|^2/2)$ give
\[
 \partial_ta^\e+a^\e\cdot\nabla a^\e
 +\nabla\left(\pi_A^\e+\frac{|a^\e|^2}{2}\right)
 =-b^\e\cdot\nabla a^\e-(\nabla b^\e)^Ta^\e.
\]
Applying the Leray projection to the right-hand side and absorbing its
gradient part into the pressure proves
\eqref{eq:vel-id}--\eqref{eq:ta-id}.  By Definition~\ref{def:vortsplit}, both
$a^\e$ and $b^\e$ are $O(2)_z$-equivariant.  The vector field inside
the brackets in \eqref{eq:ta-id} is therefore $O(2)_z$-equivariant, and
the Leray projector commutes with orthogonal changes of variables.  Hence
$f_A^\e$ is $O(2)_z$-equivariant; reflection equivariance excludes an
azimuthal component, so $f_A^\e$ is axisymmetric without swirl.
The curl identity in \eqref{eq:vel-id} is
\eqref{eq:decomp-id}.  The construction in Definition~\ref{def:vortsplit} and the $H^m$ energy
estimate recorded in the proof of \eqref{eq:decomp-id} give, for each fixed
$\e$ and every finite $m$ allowed by the classical kinetic solution,
\[
 u_A^\e\in C([0,T];H^m(\R^3)),
 \qquad
 \partial_tu_A^\e
 =-\mathbb P\!\left[v^\e\cdot\nabla u_A^\e
 +(\nabla v^\e)^Tu_A^\e\right]
 \in C([0,T];H^{m-1}(\R^3)).
\]
Taking $m=3$ verifies the pointwise regularity hypothesis of
Theorem~\ref{thm:mfreecomp}.  No uniform $H^3$ bound is used in that theorem.
By \eqref{eq:diss-def}, \eqref{eq:tot-def}, \eqref{eq:ac-bd}, and $N\ge5$,
\begin{equation*}
 \sup_{0\le t\le T}\|\mathbb Pu^\e(t)\|_{H^2}
 \lesssim \left|\log\frac1\e\right|^{15/32}.
\end{equation*}
Since $b^\e$ is divergence free, Plancherel and \eqref{eq:del-decomp} with $q=2$
give
\begin{equation}\label{eq:curl-del}
 \sup_{0\le t\le T}\|\nabla b^\e(t)\|_2
 =\sup_{0\le t\le T}\|\nabla\times b^\e(t)\|_2
 \lesssim_{T,\delta}
 \e^{1/2-\delta}+\kappa^{1/2-\delta}.
\end{equation}
Equation \eqref{eq:del-kin}, with $q=\infty$, gives the same bound for
$\|b^\e\|_{L^\infty_tL^\infty_x}$.  Hence Sobolev embedding and
$a^\e=\mathbb Pu^\e-b^\e$ imply
\begin{equation*}
 \sup_{0\le t\le T}
 \left(\|a^\e(t)\|_\infty+\|\nabla a^\e(t)\|_2\right)
 \lesssim_T
 \left|\log\frac1\e\right|^{15/32}+1.
\end{equation*}
The $L^2$ bound \eqref{eq:approx-bd} follows instead from
\eqref{eq:rough-bd}, \eqref{eq:ltwo-del}, and \eqref{eq:del-kin} with $q=2$.
The $L^2$ boundedness of $\mathbb P$, followed by
\eqref{eq:curl-del} and the Sobolev bound just obtained yields
\begin{align*}
 \|f_A^\e\|_{L^1(0,T;L^2)}
 &\le
 \int_0^T\left(
 \|b^\e\|_\infty\|\nabla a^\e\|_2
 +\|\nabla b^\e\|_2\|a^\e\|_\infty
 \right)dt \lesssim_{T,\delta}
 \left(\e^{1/2-\delta}+\kappa^{1/2-\delta}\right)
 \left(1+\left|\log\frac1\e\right|^{15/32}\right).
\end{align*}
After fixing $0<\delta<1/2$, \eqref{eq:choice-bd} and the elementary
power--logarithm comparison show that the last expression tends to zero.  This proves \eqref{eq:lone-ltwo}.
\end{proof}

\subsection{Global energy upgrade and radial-moment corollary}\label{sec:proof-conservation}
\begin{proof}[Proof of Theorem~\ref{thm:globenergy}]
Equations \eqref{eq:ltwo-del} and \eqref{eq:comp-conv} give \eqref{eq:pp-fin}.  Work along a
subsequence furnished by Theorem~\ref{thm:roughlimit}; in particular,
\eqref{eq:vel-conv}--\eqref{eq:loc-conv} hold.
Assume first that $p\ge4/3$.  Apply Theorem~\ref{thm:mfreecomp} to
$a_n=u_A^{\e_n}$, $\varpi_n=\varpi_A^{\e_n}$ along the chosen subsequence.
The approximate Euler equation, the uniform $L^2$ bound, and the vanishing
forcing are Lemma~\ref{lem:approxeul}; the strong
$C_t(L^1\cap L^p)$ convergence is \eqref{eq:vel-conv}; and the initial $L^2$
convergence follows from the initial condition in \eqref{eq:fld-id} and
\eqref{eq:comp-conv}.  Therefore
\begin{equation}\label{eq:bs-ltwo}
 u_A^\e\to\Bcal\varpi^E
 \qquad\text{in }C([0,T];L^2(\R^3)),
\end{equation}
and Theorem~\ref{thm:mfreecomp} also gives
\begin{equation*}
 \|\Bcal\varpi^E(t)\|_2=\|u_0^E\|_2,
 \qquad0\le t\le T.
\end{equation*}
Comparing \eqref{eq:bs-ltwo} with the local convergence \eqref{eq:loc-conv} yields
\begin{equation}\label{eq:bs-loc}
 u^E(t)=\Bcal\varpi^E(t)
 \qquad\text{for a.e. }x\in\R^3,\quad0\le t\le T.
\end{equation}
Thus \eqref{eq:bs-ltwo} and \eqref{eq:bs-loc} prove \eqref{eq:eul-conv}, while
\eqref{eq:decomp-def} and \eqref{eq:del-kin} prove \eqref{eq:rough-conv}.  This energy identity gives \eqref{eq:pp-bd}.
Assume next that $1<p<4/3$.  Equations \eqref{eq:lp-lone}, \eqref{eq:fld-bd}, and
\eqref{eq:lp-sharp} verify \eqref{eq:mtwo-sol} and assertion~(2) of
Corollary~\ref{cor:unifimp}.  Hence
\begin{equation}\label{eq:lsup-div}
 \limsup_{\e\to0}\sup_{t\in[0,T]}\mathfrak M_2(\varpi_A^\e(t))<\infty.
\end{equation}
For every $\e$ and $t$, the div--curl identity \eqref{eq:decomp-id} gives
\[
 \nabla\cdot u_A^\e=0,
 \qquad
 \nabla\times u_A^\e=r\varpi_A^\e e_\vartheta.
\]
After discarding finitely many members of the chosen subsequence,
\eqref{eq:lsup-div} gives a uniform bound for
$\mathfrak M_2(\varpi_A^\e(t))$, so $\varpi_A^\e(t)\in\mathcal X_p$ for every
$t\in[0,T]$.  The $L^2$ bound for $u_A^\e$ follows from \eqref{eq:rough-bd},
\eqref{eq:ltwo-del}, and \eqref{eq:del-kin}.  The uniqueness assertion in
Proposition~\ref{prop:radmom} therefore gives
\begin{equation*}
 u_A^\e(t)=\Bcal\varpi_A^\e(t),
 \qquad 0\le t\le T.
\end{equation*}
Now \eqref{eq:lsup-div}, \eqref{eq:vel-conv}, Corollary~\ref{cor:unifstab}, and this identification give \eqref{eq:bs-ltwo}.  Comparing \eqref{eq:bs-ltwo} with
\eqref{eq:loc-conv} again gives \eqref{eq:bs-loc}, and hence \eqref{eq:eul-conv}.
Equation \eqref{eq:del-kin}
then gives \eqref{eq:rough-conv}.  Finally,
\begin{align*}
 \sup_{t\in[0,T]}\left|\|u^E(t)\|_2-\|u_0^E\|_2\right|
 &\le
 \|u^E-\mathbb Pu^\e\|_{L^\infty(0,T;L^2)} +
 \sup_{t\in[0,T]}
 \left|\|\mathbb Pu^\e(t)\|_2-\|u_0^E\|_2\right|,
\end{align*}
so \eqref{eq:rough-conv} and \eqref{eq:pp-fin} give \eqref{eq:pp-bd}.
\end{proof}
\begin{proof}[Proof of Corollary~\ref{cor:radcrit}]
Suppose first that $p\ge3/2$ and \eqref{eq:cond-bd} holds.  Equations
\eqref{eq:lp-lone} and \eqref{eq:cond-bd} verify \eqref{eq:mone-sol}; hence
Proposition~\ref{prop:radfirst} gives \eqref{eq:lsup-mone}.  This proves the
sign-free first-radial-moment assertion.
Now assume \eqref{eq:fld-bd}.  Equations \eqref{eq:lp-lone} and \eqref{eq:fld-bd}
verify \eqref{eq:mtwo-sol}.  If $p\ge5/3$, assertion~(1) of
Corollary~\ref{cor:unifimp} gives \eqref{eq:lsup-mtwo}.  If $1<p<5/3$ and
\eqref{eq:lp-sharp} also holds, assertion~(2) of Corollary~\ref{cor:unifimp} gives the
same conclusion.  Thus \eqref{eq:lsup-mtwo} holds throughout the conditional
absolute-impulse range stated in the corollary.
Finally, conservation of the transported $L^1$ norm in \eqref{eq:strg-bd}
and the elementary moment inequality \eqref{eq:mominterp} give, for every
$t\in[0,T]$,
\begin{align*}
 \mathfrak M_1(\varpi_A^\e(t))^2
 &\le \frac1{2\pi}
 \|\varpi_A^\e(t)\|_1\mathfrak M_2(\varpi_A^\e(t)) =\frac1{2\pi}
 \|\varpi_0^\e\|_1\mathfrak M_2(\varpi_A^\e(t)).
\end{align*}
Combining this inequality with \eqref{eq:lp-lone} and \eqref{eq:lsup-mtwo} proves
\eqref{eq:lsup-mone} throughout that conditional range, including
$1<p<3/2$.

\end{proof}

\subsection{Weak--strong identification}\label{sec:weakstrong}

\begin{lemma}[Velocity formulation of the global-energy limit]\label{lem:weak-euler-limit}
Assume the hypotheses of Theorem~\ref{thm:globenergy}, and let
$(u^E,\varpi^E)$ be a limiting Euler pair along a subsequence furnished by
Theorem~\ref{thm:roughlimit}.  Then
\[
    u^E\in C([0,T];L^2(\R^3;\R^3))
\]
and, for every divergence-free
$\phi\in C_c^\infty([0,T)\times\R^3;\R^3)$,
\begin{equation}\label{eq:weak-euler-velocity}
    \int_0^T\!\int_{\R^3}
    \Bigl(u^E\cdot\partial_t\phi
      +(u^E\otimes u^E):\nabla\phi\Bigr)\,dx\,dt
    +\int_{\R^3}u_0^E\cdot\phi(0)\,dx=0.
\end{equation}
In particular, the global-energy limit is a finite-energy weak Euler
solution in the standard divergence-free velocity formulation.
\end{lemma}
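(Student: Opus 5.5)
The plan is to pass to the limit in the weak velocity formulation satisfied by $a^\e:=u_A^\e$. Lemma~\ref{lem:approxeul} shows that $a^\e$ is a classical solution of the approximate Euler equation \eqref{eq:vel-id} with forcing $f_A^\e$. We then use the global $C_tL^2_x$ convergence of Theorem~\ref{thm:globenergy}.

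The continuity claim comes first. Along the subsequence, \eqref{eq:eul-conv} gives $u_A^\e\to u^E$ in $C([0,T];L^2(\R^3))$. Each $u_A^\e$ lies in $C([0,T];H^m)\subset C([0,T];L^2)$ by Definition~\ref{def:vortsplit}. A uniform limit of continuous $L^2$-valued paths is continuous, so $u^E\in C([0,T];L^2)$. At $t=0$ we have $u_A^\e(0)=\mathbb Pu_0^\e\to u_0^E$ by \eqref{eq:comp-conv}, hence $u^E(0)=u_0^E$.

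Next, fix a divergence-free $\phi\in C_c^\infty([0,T)\times\R^3;\R^3)$. Multiply \eqref{eq:vel-id} by $\phi$ and integrate by parts in space and time; this is legitimate because $a^\e$ is smooth and $\phi$ has compact support. Since $\nabla\cdot\phi=0$, the pressure term $\nabla\widetilde\pi_A^\e$ drops out. Since $\nabla\cdot a^\e=0$, the convection term becomes $-\iint (a^\e\otimes a^\e):\nabla\phi$. This gives
\[
 \int_0^T\!\!\int\Bigl(a^\e\cdot\partial_t\phi+(a^\e\otimes a^\e):\nabla\phi\Bigr)dx\,dt+\int a^\e(0)\cdot\phi(0)\,dx=-\int_0^T\!\!\int f_A^\e\cdot\phi\,dx\,dt.
\]

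We then pass to the limit term by term.
\begin{itemize}
\item The linear time-derivative term and the initial term converge by the $C_tL^2$ convergence of $a^\e$ and the convergence $a^\e(0)\to u_0^E$ in $L^2$.
\item For the quadratic term, write $a^\e\otimes a^\e-u^E\otimes u^E=(a^\e-u^E)\otimes a^\e+u^E\otimes(a^\e-u^E)$. This tends to $0$ in $L^\infty(0,T;L^1)$, since $a^\e\to u^E$ in $C_tL^2$ and \eqref{eq:approx-bd} gives a uniform $L^\infty_tL^2_x$ bound. Pairing with $\nabla\phi\in L^1_tL^\infty_x$ then gives convergence.
\item The forcing side is bounded by $\|f_A^\e\|_{L^1_tL^2_x}\|\phi\|_{L^\infty_tL^2_x}$, which tends to $0$ by \eqref{eq:lone-ltwo}.
\end{itemize}
These limits yield \eqref{eq:weak-euler-velocity}.

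The main obstacle is the quadratic term. This is exactly where local compactness alone would still suffice for compactly supported $\phi$, but the global statement needs the energy class. It is also why the global upgrade of Theorem~\ref{thm:globenergy}, which is moment-free for $p\ge4/3$ and impulse-based below $4/3$, is invoked rather than \eqref{eq:loc-conv}. Finally, $u^E\in L^\infty_tL^2_x$ is divergence free with curl $r\varpi^Ee_\vartheta$ by Theorem~\ref{thm:roughlimit}. Together with the energy identity \eqref{eq:pp-bd}, this shows that $u^E$ is a finite-energy weak Euler solution.
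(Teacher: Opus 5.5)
Your proposal is correct and follows the paper's proof step for step. You test the approximate Euler equation of Lemma~\ref{lem:approxeul} for $a^\e=u_A^\e$ against divergence-free $\phi$, then pass to the limit using \eqref{eq:eul-conv}, \eqref{eq:comp-conv}, the same $L^\infty_tL^1_x$ bound on $a^\e\otimes a^\e-u^E\otimes u^E$, and \eqref{eq:lone-ltwo}, and $u^E\in C([0,T];L^2)$ follows from the uniform limit exactly as in the paper.

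One small correction to your closing commentary. Because $\phi$ has compact support, the local convergence \eqref{eq:loc-conv} would already suffice for the quadratic and linear terms. The global upgrade of Theorem~\ref{thm:globenergy} is genuinely needed only for the conclusion $u^E\in C([0,T];L^2(\R^3))$ and for the energy identity.
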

\begin{proof}
Set $a^\e:=u_A^\e$.  By Lemma~\ref{lem:approxeul},
\[
 \partial_ta^\e+a^\e\cdot\nabla a^\e
 +\nabla\widetilde\pi_A^\e=f_A^\e,
 \qquad \nabla\cdot a^\e=\nabla\cdot f_A^\e=0,
\]
with $\|f_A^\e\|_{L^1(0,T;L^2)}\to0$.  Testing against a divergence-free
$\phi\in C_c^\infty([0,T)\times\R^3)$ gives
\begin{align*}
 &\int_0^T\!\int_{\R^3}
 \Bigl(a^\e\cdot\partial_t\phi
 +(a^\e\otimes a^\e):\nabla\phi\Bigr)\,dx\,dt
 +\int_{\R^3}a^\e(0)\cdot\phi(0)\,dx  
 =-\int_0^T\!\int_{\R^3}f_A^\e\cdot\phi\,dx\,dt.
\end{align*}
Along the subsequence in Theorem~\ref{thm:globenergy},
\eqref{eq:eul-conv} gives
$a^\e\to u^E$ in $C([0,T];L^2)$, while
$a^\e(0)=\mathbb Pu_0^\e\to u_0^E$ in $L^2$ by
\eqref{eq:comp-conv}.  Moreover,
\[
 \|a^\e\otimes a^\e-u^E\otimes u^E\|_{L^\infty(0,T;L^1)}
 \le
 \bigl(\|a^\e\|_{L^\infty_tL^2_x}
      +\|u^E\|_{L^\infty_tL^2_x}\bigr)
 \|a^\e-u^E\|_{L^\infty_tL^2_x}
 \longrightarrow0.
\]
Passing to the limit proves \eqref{eq:weak-euler-velocity}.  The same
$C_tL^2_x$ convergence shows $u^E\in C([0,T];L^2)$.
\end{proof}

\begin{proposition}[Weak--strong identification under bounded strain]
\label{prop:weakstrong}
Let $u\in C([0,T];L^2(\R^3;\R^3))$ be a finite-energy weak Euler
solution with initial datum $u_0\in L^2(\R^3)$, and assume
\begin{equation}\label{eq:weak-energy-equality}
    \|u(t)\|_2=\|u_0\|_2,
    \qquad 0\le t\le T.
\end{equation}
Let $0<\tau\le T$, and let
$U\in L^\infty(0,\tau;L^2(\R^3;\R^3))$ be another finite-energy weak
Euler solution with the same initial datum.  Assume that
\begin{equation}\label{eq:weakstrong-strain}
    \nabla^{\rm s}U
    \in L^1(0,\tau;L^\infty(\R^3)),
    \qquad
    \nabla^{\rm s}U:=\frac12\bigl(\nabla U+(\nabla U)^{\mathsf T}\bigr).
\end{equation}
Then $u=U$ almost everywhere on $(0,\tau)\times\R^3$.  If in addition
$U\in C([0,\tau];L^2(\R^3))$, then
\[
    u(t)=U(t)
    \qquad\text{in }L^2(\R^3),\quad 0\le t\le\tau.
\]
\end{proposition}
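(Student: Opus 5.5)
The plan is the classical relative-energy (Dafermos--DiPerna--Brenier) argument. Writing $w:=u-U$, the goal is the inequality
\[
 \|w(t)\|_2^2\le 2\int_0^t\!\int_{\R^3}\bigl|(w\otimes w):\nabla^{\rm s}U\bigr|\,dx\,ds
 \le 2\int_0^t\|\nabla^{\rm s}U(s)\|_\infty\|w(s)\|_2^2\,ds
\]
for a.e.\ $t\le\tau$. Gronwall's inequality and \eqref{eq:weakstrong-strain} then give $w=0$. The relative energy is expanded as
\[
 \|w(t)\|_2^2=\|u(t)\|_2^2+\|U(t)\|_2^2-2\langle u(t),U(t)\rangle .
\]
The first term equals $\|u_0\|_2^2$ by \eqref{eq:weak-energy-equality}. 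So three ingredients are needed: energy conservation (or at least the energy inequality) for $U$, an evolution identity for the cross term $\langle u,U\rangle$, and the cancellation that leaves only the strain of $U$.

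\emph{Step 1: energy conservation for $U$.} Korn's inequality, i.e.\ the Calder\'on--Zygmund representation $\nabla U=\mathcal K[\nabla^{\rm s}U]$ for divergence-free $U$ with zero-order homogeneous kernels, gives $\nabla U\in L^1(0,\tau;BMO)$. Hence $U(t)$ is log-Lipschitz and lies in $L^3_tB^{\alpha}_{3,\infty}$ locally for every $\alpha<1$. I would run the Constantin--E--Titi commutator argument with spatial mollification $U_\delta$ to conclude $\|U(t)\|_2=\|u_0\|_2$. The far field must be handled by the $L^\infty_tL^2_x$ bound, using the same compact-cutoff device as in the proof of Theorem~\ref{thm:mfreecomp}.

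\emph{Step 2: cross term.} I would mollify both fields in space and time, test the weak formulation of $u$ against $U_\delta$ and that of $U$ against $u_\delta$, then add and let $\delta\to0$. Formally the result is
\[
 \frac{d}{dt}\langle u,U\rangle=-\int (u\otimes u):\nabla^{\rm s}U-\int (U\otimes U):\nabla u .
\]
In the second integral $u$ is differentiated. I would rewrite it as $\int (U\cdot\nabla U)\cdot u$, use divergence-freeness to obtain the identity $\int U\cdot\nabla U\cdot U=0$, and combine with the first integral. The standard algebra then yields $\int (w\otimes w):\nabla^{\rm s}U$. Only the symmetric part of the gradient survives, because $w\otimes w$ and $u\otimes u$ are symmetric tensors. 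This is exactly why \eqref{eq:weakstrong-strain} suffices.

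\emph{Main obstacle.} The difficulty is the justification in Step~2. With only $\nabla^{\rm s}U\in L^1L^\infty$ and $u,U\in L^\infty L^2$, the term $\int (U\cdot\nabla U)\cdot u_\delta$ is not obviously integrable, since $\nabla U$ is only in BMO. I would first try to show $U\in L^1_tL^\infty_x$ by a Brezis--Gallou\"et-type bound, $\|U\|_\infty\lesssim\|U\|_2+\|\nabla^{\rm s}U\|_\infty$, combined with the log-Lipschitz estimate. I would then place $\nabla U_\delta$ only against the symmetric products, so that every surviving term is controlled by $\|\nabla^{\rm s}U\|_\infty$ times $L^2$ norms. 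The commutators $(u\otimes u)_\delta-u_\delta\otimes u_\delta$ are paired with $\nabla U$ and vanish in $L^1$ as $\delta\to0$ by the log-Lipschitz modulus.

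\emph{Conclusion.} Once Gronwall gives $u=U$ a.e.\ on $(0,\tau)\times\R^3$, the continuity of $u$ and $U$ in $C([0,\tau];L^2)$ upgrades this to equality at every $t\in[0,\tau]$, proving the final assertion.
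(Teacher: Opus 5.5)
Your architecture (energy conservation for $U$, then relative energy, then Gronwall) is the mechanism behind the paper's two-line proof. That proof simply cites \cite[Proposition~1.7]{DeRosaInversiStefani2023} for conservation of $\|U\|_2$ under \eqref{eq:weakstrong-strain}, and \cite[Theorem~1.6]{DeRosaInversiStefani2023} for relative-energy weak--strong uniqueness, with the energy equality making $u$ admissible. Since you reprove both ingredients, the justification has to close, and the mechanisms you give do not.

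In Step~1, the claim $U\in L^3_tB^{\alpha}_{3,\infty}$ for every $\alpha<1$ is false. The log-Lipschitz constant of $U(t)$ is only integrable in time. Interpolation with $L^\infty_tL^2_x$ therefore gives at best $\|U\|_{B^{1/3}_{3,\infty}}^3\lesssim\|U\|_2^2\|\nabla^{\rm s}U\|_\infty\in L^1_t$. That is the Onsager-critical space, where the Constantin--E--Titi estimate does not close, and localizing in space cannot repair a defect in time integrability.

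In Step~2, the term you flag, $\int (U\otimes U):\nabla u_\delta=-\int (U\cdot\nabla U)\cdot u_\delta$, pairs the full gradient $\nabla U$ with the non-symmetric tensor $u_\delta\otimes U$. Knowing $U\in L^1_tL^\infty_x$ does not make it integrable. ``Placing $\nabla U_\delta$ only against symmetric products'' is exactly what you have not shown how to do for this term. The commutator you then invoke, $(u\otimes u)_\delta-u_\delta\otimes u_\delta$, is not where the difficulty lies. It is symmetric, pairs with $\nabla^{\rm s}U$, and vanishes in $L^1$ simply because $u\in L^2$; no modulus of $U$ plays any role.

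The missing idea is the identity $U\cdot\nabla U=2(\nabla^{\rm s}U)U-\nabla\frac{|U|^2}{2}$, whose gradient part is invisible to divergence-free test fields. To make this rigorous, note that $\int(U_\delta\otimes U_\delta):\nabla\phi=-2\int(\nabla^{\rm s}U)_\delta U_\delta\cdot\phi$ for divergence-free $\phi$. Letting $\delta\to0$ gives
\[
 \int (U\otimes U):\nabla\phi=-2\int(\nabla^{\rm s}U)U\cdot\phi,
 \qquad\text{hence}\qquad
 \partial_tU=-2\,\mathbb P\bigl[(\nabla^{\rm s}U)U\bigr]\in L^1(0,\tau;L^2),
\]
using only $U\in L^\infty_tL^2_x$ and \eqref{eq:weakstrong-strain}. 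Consequently $U_\delta$, mollified in space only, is an admissible test field for $u$, and no derivative ever falls on $u$. This gives
\[
 \langle u(t),U(t)\rangle=\|u_0\|_2^2+\int_0^t\!\!\int\Bigl[(u\otimes u):\nabla^{\rm s}U-2\,u\cdot(\nabla^{\rm s}U)U\Bigr]\,dx\,ds .
\]
The same mollification gives
\[
 \int (U\otimes U):\nabla^{\rm s}U=\lim_{\delta\to0}\int(U_\delta\otimes U_\delta):\nabla^{\rm s}U_\delta=\lim_{\delta\to0}\int(U_\delta\otimes U_\delta):\nabla U_\delta=0.
\]
This is precisely energy conservation for $U$, and it replaces your Besov argument. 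Together these yield $\|w(t)\|_2^2=-2\int_0^t\!\int(w\otimes w):\nabla^{\rm s}U$, and your Gronwall step finishes. No Korn, BMO, log-Lipschitz or $L^\infty$ information is needed.

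Two minor points. Both signs in your formal cross-term identity are flipped, which is harmless once you take absolute values. Also, $\partial_tU\in L^1_tL^2_x$ shows that $U$ automatically has a representative in $C([0,\tau];L^2)$.
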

\begin{proof}
The energy identity \eqref{eq:weak-energy-equality} makes $u$ an
admissible weak Euler solution.  Under \eqref{eq:weakstrong-strain},
$U$ conserves its kinetic energy by
\cite[Proposition~1.7]{DeRosaInversiStefani2023}; hence $U$ is admissible
as well.  The standard weak--strong uniqueness theorem
\cite[Theorem~1.6]{DeRosaInversiStefani2023} gives $u=U$ almost everywhere
on $(0,\tau)\times\R^3$.  If both fields are strongly continuous with
values in $L^2$, the identity extends to every time in $[0,\tau]$.
\end{proof}

\section{Prescribed rough data and kinetic realizations}\label{sec:roughdata}
Smooth kinetic initial data can converge to rough finite-energy Euler data.
Proposition~\ref{prop:graphdens} gives smooth axisymmetric approximation in
the finite-energy graph norm, Proposition~\ref{prop:diagreal} lifts these profiles
to Landau and non-cutoff Boltzmann data with prescribed thermodynamic and
acoustic components, and Proposition~\ref{prop:signchg} exhibits a sign-changing
low-integrability class beyond a symmetric $L^{p^\sharp}$ assumption.
{Singularity-passage consequences of these realizations are treated
in Section~\ref{sec:singularpassage}.}
\subsection{Finite-energy graph class and graph-norm density}
\begin{definition}[Finite-energy graph class]
For $1<p<\infty$, let $\mathcal Z_p$ be the set of axisymmetric
$\varpi\in L^1(\R^3)\cap L^p(\R^3)$ for which the div--curl system
\begin{equation*}
 \nabla\cdot u=0,\qquad
 \nabla\times u=r\varpi e_\vartheta
 \quad\text{in }\mathcal D'(\R^3)
\end{equation*}
has a solution $u\in L^2(\R^3)$.  The solution is unique and is denoted
$u=\Bcal\varpi$.
\end{definition}
\begin{proposition}[Moment-free graph-norm density]\label{prop:graphdens}
Let $6/5<p<\infty$.  For every $\varpi\in\mathcal Z_p$ there are
axisymmetric $\varpi_n\in C_c^\infty(\R^3)$ such that
\begin{equation}\label{eq:bs-axis}
 \|\varpi_n-\varpi\|_1+\|\varpi_n-\varpi\|_p
 +\|\Bcal\varpi_n-\Bcal\varpi\|_2\to0.
\end{equation}
No sign or spatial moment assumption is used.
\end{proposition}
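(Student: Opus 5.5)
The plan is two approximation steps: first mollify to get smooth profiles, then truncate to compact support, controlling the Biot--Savart energy in each step through Lemma~\ref{lem:divcurl}. The energy is never estimated through a moment, only through the $\dot H^{-1}$ norm of divergence-free errors and the $L^2$ velocity itself.

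\emph{Step 1: commutator mollification.} Let $\eta\in C_c^\infty(B_1)$ be a radial mollifier in $x\in\R^3$ and put $\eta_\delta=\delta^{-3}\eta(\cdot/\delta)$. Since $\eta$ is $O(2)_z$-invariant, $\varpi_\delta:=\varpi*\eta_\delta$ is axisymmetric and smooth, and $\varpi_\delta\to\varpi$ in $L^1\cap L^p$. Let $u=\Bcal\varpi$. Then $u*\eta_\delta\to u$ in $L^2$, it is divergence free, and its curl is $(r\varpi e_\vartheta)*\eta_\delta$.

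The key observation comes from the Cartesian identity \eqref{eq:axisym-cart}:
\[
 r\varpi_\delta e_\vartheta-(r\varpi e_\vartheta)*\eta_\delta
 =\int \eta_\delta(x-y)\,J(x-y)\,\varpi(y)\,dy ,
 \qquad J(w):=(-w_2,w_1,0),
\]
and $|J|\le\delta$ on the support of $\eta_\delta$. Hence this commutator $C_\delta$ satisfies $\|C_\delta\|_{q}\le\delta\|\varpi\|_q$ for every $1\le q\le p$. Because $p>6/5$, the case $q=6/5$ is available by interpolation, which is exactly where the hypothesis $p>6/5$ enters.

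Both $r\varpi_\delta e_\vartheta$ and $(r\varpi e_\vartheta)*\eta_\delta$ are divergence free, so $C_\delta$ is divergence free. By \eqref{eq:curl-hm} and \eqref{eq:hminus-lp}, $C_\delta$ therefore generates an $L^2$ field of norm $\lesssim\delta\|\varpi\|_{6/5}$. Adding $u*\eta_\delta$ shows that $\varpi_\delta\in\mathcal Z_p$ and
\[
\|\Bcal\varpi_\delta-u*\eta_\delta\|_2\lesssim\delta\|\varpi\|_{6/5}.
\]
Thus $\|\Bcal\varpi_\delta-\Bcal\varpi\|_2\to0$. It remains to approximate a fixed smooth $\varpi_\delta\in\mathcal Z_p$ by compactly supported profiles. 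For this $\varpi_\delta$, the field $u_\delta:=\Bcal\varpi_\delta$ is smooth, bounded, has bounded gradient, and lies in $L^2$.

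\emph{Step 2: truncation at the level of the stream function.} Write $u_\delta=\nabla\times(\psi r^{-1}e_\vartheta)$ with the Stokes stream function $\psi$, normalized by $\psi(0,z)=0$ and $\psi=\int_0^r s\,u_\delta^z\,ds$. Let $\chi_R(x)=\chi(|x|/R)$, and define $\widetilde u_R:=\nabla\times(\chi_R\psi r^{-1}e_\vartheta)$. This field is smooth, compactly supported, axisymmetric without swirl and divergence free, so $\widetilde\varpi_R:=(\nabla\times\widetilde u_R)^\vartheta/r\in C_c^\infty$. Lemma~\ref{lem:divcurl} gives $\widetilde u_R=\Bcal\widetilde\varpi_R$. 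The difference $u_\delta-\widetilde u_R$ consists of $(1-\chi_R)u_\delta$, which tends to $0$ in $L^2$, plus a term $\psi\,\nabla\chi_R/r$. The vorticity difference $\widetilde\varpi_R-\chi_R\varpi_\delta$ consists of terms of the form $\nabla\chi_R\cdot\nabla\psi/r^2$, $\psi\,\Delta\chi_R/r^2$ and $\psi\,\partial_r\chi_R/r^3$, all supported in the annulus $R<|x|<2R$.

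\emph{Main obstacle.} The hard step is bounding these annular errors in $L^1\cap L^p(dx)$ and $\psi\nabla\chi_R/r$ in $L^2$ using only $u_\delta\in L^2\cap W^{1,\infty}$, with no moment on $\varpi$. Near the axis the factors $r^{-2}$ and $r^{-3}$ are harmless, since $\psi=O(r^2)$ with constant controlled by $\|u_\delta\|_\infty$. Away from the axis I would first try a Hardy/Poincar\'e inequality on the annulus, in the form
\[
\|\psi/r\|_{L^2(\mathrm{ann})}\lesssim R\,\|u_\delta\|_{L^2(\mathrm{ann})}.
\]
Combined with $|\nabla\chi_R|\lesssim R^{-1}$, this bounds the $L^2$ error by the tail $\|u_\delta\|_{L^2(R<|x|<2R)}\to0$.

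For the vorticity errors, the $L^{6/5}$ and $L^1$ scales give at most factors $R^{0}$ or $R^{1/2}$ times this tail, which is not obviously small. If that fails, I would add a second truncation in the axial variable, or choose $R$ along a sequence for which the annular $L^2$ mass decays fast enough (possible since $u_\delta\in L^2$). For the $L^p$ norm with $p$ large I would interpolate using the uniform $W^{1,\infty}$ bound on $u_\delta$. A diagonal choice of $\delta$ and $R$ then gives \eqref{eq:bs-axis}.
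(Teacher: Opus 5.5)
Your Step~1 is correct, and it handles mollification differently from the paper. The commutator $C_\delta=(\eta_\delta J)*\varpi$ is divergence free with $\|C_\delta\|_{6/5}\le\delta\|\varpi\|_{6/5}$, so Lemma~\ref{lem:divcurl} gives $\varpi_\delta\in\mathcal Z_p$ and $\|\Bcal\varpi_\delta-\Bcal\varpi\|_2\to0$ without any truncation. The paper instead truncates first, so that $r\le 2R+1$ on the support of the mollification error.

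The gap is Step~2. It carries the whole difficulty of the proposition, and you leave it open. Showing $\widetilde u_R=\Bcal\widetilde\varpi_R\to u_\delta$ in $L^2$ is the easy half. The proposition needs $E_R:=\widetilde\varpi_R-\chi_R\varpi_\delta\to0$ in $L^1\cap L^p$, and the bounds you record (``$R^0$ or $R^{1/2}$ times the tail'') do not give this.

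Your fallback of choosing $R$ along a sequence with fast annular decay cannot work. The hypothesis $u_\delta\in L^2$ only gives $\sum_k\|u_\delta\|^2_{L^2(2^k<|x|<2^{k+1})}<\infty$. That is compatible with $2^{k/2}\|u_\delta\|_{L^2(2^k<|x|<2^{k+1})}\to\infty$, and without a moment assumption there is no further decay to exploit.

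The near-axis terms are also not harmless from boundedness alone. The term $\partial_z\chi_R\,u_\delta^r/r$ in $E_R$ is only bounded by $R^{-1}\|\nabla u_\delta\|_\infty$ on $\{r<1,\ R<|x|<2R\}$, a set of volume $\sim R$. That is $O(1)$ in $L^1$ unless you also use $u_\delta(x)\to0$ as $|x|\to\infty$.

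Your route can in fact be closed, but the needed ingredients are not in the proposal:
\begin{itemize}
\item the bound $|\partial_r\chi_R|\lesssim r/R^2$ for the spherical cutoff;
\item the Hardy--Poincar\'e inequality you guessed, $\int\psi^2r^{-1}\,dr\,dz\lesssim R^2\|u_\delta\|^2_{L^2(R<|x|<2R)}$ over the meridional half-annulus, which does hold because that section meets the axis, where $\psi=0$;
\item the decay $u_\delta(x)\to0$ at infinity, which follows from $u_\delta\in L^2\cap W^{1,\infty}$.
\end{itemize}
Together these give $\|E_R\|_1\lesssim\sup_{|x|>R-1}|u_\delta|+(R^{-1}\log R)^{1/2}\|u_\delta\|_{L^2(R<|x|<2R)}$ and $\|E_R\|_\infty\lesssim\|u_\delta\|_{W^{1,\infty}}$.

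The simpler idea you are missing is the paper's: truncate the vorticity itself with a spherically radial cutoff, not the stream function. Write $\omega=r\varpi e_\vartheta$. Since $\nabla\chi_R\parallel x$ is orthogonal to $e_\vartheta$, one has $\nabla\cdot(\chi_R\omega)=\nabla\chi_R\cdot\omega=0$, so $\chi_R\omega$ is still a divergence-free curl. Then $\chi_R\varpi\to\varpi$ in $L^1\cap L^p$ is just dominated convergence, and the commutator moves to the velocity side:
\[
 (1-\chi_R)\omega=\nabla\times\bigl((1-\chi_R)u\bigr)+\nabla\chi_R\times u,
 \qquad
 \|\nabla\chi_R\times u\|_{6/5}\lesssim R^{-1}|A_R|^{1/3}\|u\|_{L^2(A_R)}\lesssim\|u\|_{L^2(A_R)},
\]
with $A_R=B_{2R}\setminus B_R$. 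Lemma~\ref{lem:divcurl} then gives $\|\Bcal(\chi_R\varpi)-\Bcal\varpi\|_2\le\|(1-\chi_R)u\|_2+C\|u\|_{L^2(A_R)}\to0$. This uses only the $L^2$ tail of $u$, which is exactly what is available without moments.

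To finish, apply this truncation to $\varpi_\delta$ after your Step~1, so that $\chi_R\varpi_\delta\in C_c^\infty$ is axisymmetric, and choose $(\delta,R)$ diagonally. The paper does the same truncation first and then mollifies on the bounded support.
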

\begin{proof}
Set $u=\Bcal\varpi$ and $\omega=r\varpi e_\vartheta=\nabla\times u$.
Choose a spherically radial $\chi\in C_c^\infty(\R^3)$ with
$0\le\chi\le1$, $\chi=1$ on $B_1$, and $\chi=0$ on $B_2^c$, and set
\[
 \chi_R(x):=\chi(x/R),\qquad
 \varpi^R:=\chi_R\varpi,\qquad
 \omega^R:=\chi_R\omega.
\]
The toroidal field $\omega^R=r\varpi^Re_\vartheta$ is divergence free:
since $\chi_R$ is spherically radial and $e_\vartheta$ is tangent to
spheres,
\[
 \nabla\cdot(\chi_R\omega)
 =\nabla\chi_R\cdot\omega+\chi_R\nabla\cdot\omega=0.
\]
Let $f_R=1-\chi_R$.  From
$f_R\omega=\nabla\times(f_Ru)+\nabla\chi_R\times u$ and
Lemma~\ref{lem:divcurl},
\[
 \|\omega^R-\omega\|_{\dot H^{-1}}
 \le\|f_Ru\|_2+C\|\nabla\chi_R\times u\|_{6/5}.
\]
The second term is supported in $A_R=B_{2R}\setminus B_R$ and satisfies
\[
 \|\nabla\chi_R\times u\|_{6/5}
 \lesssim R^{-1}|A_R|^{1/3}\|u\|_{L^2(A_R)}
 \lesssim\|u\|_{L^2(A_R)}.
\]
Since $u\in L^2$, the curl isomorphism in Lemma~\ref{lem:divcurl} gives
\[
 \|\Bcal\varpi^R-\Bcal\varpi\|_2\to0
 \qquad(R\to\infty),
\]
while dominated convergence gives
\[
 \|\varpi^R-\varpi\|_1+\|\varpi^R-\varpi\|_p\to0.
\]

Fix $R$ and let $\varphi_\nu$ be a spherically radial standard mollifier.
Set $\varpi^{R,\nu}=\varphi_\nu*\varpi^R$.  Then
$\varpi^{R,\nu}\in C_c^\infty$ is axisymmetric and
\[
 \|\varpi^{R,\nu}-\varpi^R\|_1
 +\|\varpi^{R,\nu}-\varpi^R\|_p\to0
 \qquad(\nu\downarrow0).
\]
For $0<\nu\le1$, the difference
$\eta^{R,\nu}:=\varpi^{R,\nu}-\varpi^R$ is supported in $B_{2R+1}$, so
\[
 \|r\eta^{R,\nu}e_\vartheta\|_{6/5}
 \le(2R+1)\|\eta^{R,\nu}\|_{6/5}.
\]
Since $p>6/5$, interpolation between $L^1$ and $L^p$ gives
$\|\eta^{R,\nu}\|_{6/5}\to0$.  Lemma~\ref{lem:divcurl} therefore yields
\[
 \|\Bcal\varpi^{R,\nu}-\Bcal\varpi^R\|_2\to0
 \qquad(\nu\downarrow0).
\]
Choose $R_n\uparrow\infty$ and then $\nu_n\downarrow0$ diagonally so
that all four error terms tend to zero, and set
$\varpi_n:=\varpi^{R_n,\nu_n}$.  Then \eqref{eq:bs-axis} follows.
\end{proof}

\subsection{Slow diagonal kinetic realization and prescribed data}
\begin{proposition}[Slow diagonal kinetic realization of smooth profiles]\label{prop:diagreal}
Fix the integer $N\ge5$ and the kinetic scales of Theorem~\ref{thm:longtime}.
Let $1<p<\infty$.  Let
\[
 \varpi_0\in L^1(\R^3)\cap L^p(\R^3),\qquad
 u_0\in L^2(\R^3),\qquad
 \mathfrak{s}_0\in L^2(\R^3)
\]
be axisymmetric, and suppose that there are axisymmetric
$\varpi_{0,n}\in C_c^\infty(\R^3)$ such that, with
$u_{0,n}^E:=\Bcal\varpi_{0,n}$,
\begin{equation}\label{eq:bs-slow}
 \|\varpi_{0,n}-\varpi_0\|_1+\|\varpi_{0,n}-\varpi_0\|_p
 +\|u_{0,n}^E-u_0\|_2\to0.
\end{equation}
Let $\mathfrak{s}_{0,n},a_{0,n},\phi_{0,n}\in C_c^\infty(\R^3)$ be
axisymmetric scalar functions with
\begin{equation}\label{eq:axis-phi}
 \|\mathfrak{s}_{0,n}-\mathfrak{s}_0\|_2\to0,
\end{equation}
and put
\begin{equation}\label{eq:slow-def}
 q_{0,n}:=\nabla\phi_{0,n},\qquad
 u_{0,n}:=u_{0,n}^E+q_{0,n},\qquad
 \rho_{0,n}:=\frac35a_{0,n}-\frac25\mathfrak{s}_{0,n},\qquad
 \theta_{0,n}:=\frac25(a_{0,n}+\mathfrak{s}_{0,n}).
\end{equation}
No smallness, vanishing, or convergence assumption is imposed on the smooth
acoustic profiles $(a_{0,n},q_{0,n})$; their norms are allowed to enter the
$n$-dependent constants in the slow diagonal construction.
For either the Landau equation or the non-cutoff Boltzmann equation, there
is a family of smooth nonnegative axisymmetric no-swirl kinetic initial data
$F_0^\e$ satisfying the initial-data hypothesis \eqref{eq:init-bd} of
Theorem~\ref{thm:longtime} for all sufficiently small $\e$, and whose macroscopic
initial fields satisfy
\begin{align}
 \varpi_0^\e &\to \varpi_0
 \quad\text{in }L^1(\R^3)\cap L^p(\R^3),
 \label{eq:init-conv}\\
 \mathbb Pu_0^\e &\to u_0
 \quad\text{in }L^2(\R^3),
 \label{eq:slow-conv}\\
 \frac32\theta_0^\e-\rho_0^\e &\to \mathfrak{s}_0
 \quad\text{in }L^2(\R^3).
 \label{eq:real-conv}
\end{align}
More precisely, if $\e_{n+1}<\e\le\e_n$ in the diagonal constructed below,
then
\begin{equation}\label{eq:slow-id}
 \rho_0^\e+\theta_0^\e=a_{0,n},\qquad
 \mathbb P^\perp u_0^\e=q_{0,n},\qquad
 \frac32\theta_0^\e-\rho_0^\e=\mathfrak{s}_{0,n}.
\end{equation}
Thus neither acoustic component is required to vanish.  If, in addition,
$a_{0,n}\to a_0$ and $q_{0,n}\to q_0$ in $L^2$, then the first two
quantities in \eqref{eq:slow-id} converge in $L^2$ to
$a_0$ and $q_0$, respectively.  In particular,
\eqref{eq:lp-lone}--\eqref{eq:mic-conv} hold with
$(\varpi_0^E,u_0^E,\frac32\theta_0^E-\rho_0^E)
=(\varpi_0,u_0,\mathfrak{s}_0)$.
The diagonal choice preserves the following uniform bounds whenever they
hold for the smooth relative-vorticity profiles:
\begin{align}
 \sup_n\mathfrak M_1(\varpi_{0,n})<\infty
 &\quad\Longrightarrow\quad \eqref{eq:cond-bd},
 \label{eq:prof-bd}\\
 \sup_n\mathfrak M_2(\varpi_{0,n})<\infty
 &\quad\Longrightarrow\quad \eqref{eq:fld-bd},
 \label{eq:slow-bd}\\
 \sup_n\min\{\|(\varpi_{0,n})_+\|_{p^\sharp},
                   \|(\varpi_{0,n})_-\|_{p^\sharp}\}<\infty
 &\quad\Longrightarrow\quad \eqref{eq:lp-sharp},
 \label{eq:real-bd}
\end{align}
whenever the displayed quantities are relevant.
\end{proposition}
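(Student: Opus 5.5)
The plan has two stages: for each fixed $n$, build one smooth kinetic datum realizing the $n$-th smooth profile exactly; then choose a slowly decreasing diagonal sequence $\e_n\downarrow0$ so that the $n$-dependent norms are absorbed by the initial budget $\Lambda_5^\e$ in \eqref{eq:init-bd}.

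\emph{Step 1: macroscopic fields for fixed $n$.} Set $u_{0,n}=u^E_{0,n}+q_{0,n}$. Here $u^E_{0,n}=\Bcal\varpi_{0,n}$ is smooth and divergence free, and it lies in every $H^m$: it has $L^2$ decay (Lemma~\ref{lem:divcurl}) and its curl $r\varpi_{0,n}e_\vartheta$ belongs to $C_c^\infty$. The field $q_{0,n}=\nabla\phi_{0,n}$ is a gradient, so $\mathbb P u_{0,n}=u^E_{0,n}$ and $\mathbb P^\perp u_{0,n}=q_{0,n}$. Define $\rho_{0,n},\theta_{0,n}$ by \eqref{eq:slow-def}. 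Then $\rho_{0,n}+\theta_{0,n}=a_{0,n}$ and $\frac32\theta_{0,n}-\rho_{0,n}=\mathfrak s_{0,n}$ hold exactly. Also $\nabla\times u_{0,n}=r\varpi_{0,n}e_\vartheta$, so the relative vorticity is $\varpi_{0,n}$.

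All these fields are axisymmetric. $u_{0,n}$ has no swirl: $\Bcal$ of an axisymmetric scalar has no swirl, and the gradient of an axisymmetric scalar has none either. Hence $u_{0,n}$ is $O(2)_z$-equivariant.

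\emph{Step 2: kinetic lift.} Take $F_0^{\e}:=M_{[e^{\e\rho_{0,n}},\,\e u_{0,n},\,e^{\e\theta_{0,n}}]}$, the exact local Maxwellian. Its properties:
\begin{itemize}
\item[(i)] It is smooth and positive.
\item[(ii)] It is axisymmetric no-swirl in the kinetic sense, because the parameters are equivariant.
\item[(iii)] Its collision-invariant moments reproduce the macroscopic fields exactly via \eqref{eq:param-id}, so \eqref{eq:slow-id} holds.
\item[(iv)] $G^\e(0)=0$, so $\mathcal E^N_G(0)=0$.
\end{itemize}

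It remains to bound the other parts of $\mathcal E^N_{tot}(0)$, namely $\mathcal E_M^N$, $\mathcal H_{\rm top}$ and $\mathcal E_H^N$. Using the Maxwellian chain rule (Lemma~\ref{lem:maxchain}) and, for Boltzmann, the bridge estimates of Lemma~\ref{lem:maxbridge} including \eqref{eq:hinfty-bd}, one gets a bound
\[
\mathcal E^N_{tot}(0)+\|\varpi_{0,n}\|^2_{L^{3,1}}\le C_n\,\e^{-2c_0},
\]
with $C_n$ depending on the $H^{N+1}$ norms of the profiles and valid for $\e\le\bar\e_n$. The negative power comes from the homogeneous temperature mismatch: $\mu_0\neq\mu$ enters $g^\e$ only at zero order. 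This mismatch is the main obstacle. The worry is that $\|w_0 g^\e(0)\|_{L^2_{x,\xi}}$ is infinite, since $\mu-\mu_0$ is not in $L^2_x$ on $\R^3$.

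The first thing I would try is to note that $g^\e=(F^\e-M^\e)/(\e\sqrt{\mu_0})=0$ at $t=0$. So $\mathcal E_{H,1}^N(0)=0$, and only the positive-order $f^\e$ terms appear, where $\partial^\alpha\mu_0=0$. These are controlled by Lemma~\ref{lem:fluctxfer}, and the $H^\e_\infty$ piece drops out. With this, $C_n$ needs no $\e^{-c_0}$ loss. The Landau case is direct, around $\mu$.

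\emph{Step 3: diagonal choice.} $\Lambda_5^\e\to\infty$ and $T^\e\to\infty$, so one can choose $\e_n\downarrow0$ with $\e_n\le\bar\e_n$ and $C_n\le\Lambda_5^\e$ for all $\e\le\e_n$. Then \eqref{eq:init-bd} holds. On $(\e_{n+1},\e_n]$, use profile $n$. Then:
\begin{itemize}
\item[(a)] \eqref{eq:init-conv}--\eqref{eq:real-conv} follow from \eqref{eq:bs-slow}, \eqref{eq:axis-phi} and $\mathbb Pu_0^\e=u^E_{0,n}$.
\item[(b)] The convergence of the acoustic quantities is immediate from \eqref{eq:slow-id}.
\item[(c)] The preserved bounds \eqref{eq:prof-bd}--\eqref{eq:real-bd} are immediate, because $\varpi_0^\e=\varpi_{0,n}$ exactly.
\end{itemize}
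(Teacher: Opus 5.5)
Your proposal is correct and follows essentially the paper's route. The exact local-Maxwellian lift gives $G^\e(0)=0$; for Boltzmann it also gives $g^\e(0)=0$, so $\mathcal E_{H,1}^N(0)=0$ and $H_\infty^\e$ never enters. The positive- and top-order terms are then bounded by $n$-dependent constants via the Maxwellian chain rule, and the slow diagonal choice makes $C_n+\|\varpi_{0,n}\|_{L^{3,1}}^2\le\Lambda_5^\e$.

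Two points of presentation. First, drop the provisional bound with an $\e^{-2c_0}$ loss: such a loss would be fatal against the iterated-logarithmic budget $\Lambda_5^\e$, and, as you then observe, none occurs. Second, the paper verifies the needed Gaussian comparisons directly at $t=0$ for $\e\le\e_n^*$ (for Landau this is where $q<1/8$ is used), rather than citing Lemmas~\ref{lem:maxbridge} and~\ref{lem:fluctxfer}, whose stated hypotheses are the bootstrap bounds.
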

\begin{proof}
Since
$r\varpi_{0,n}e_\vartheta=\varpi_{0,n}(-x_2,x_1,0)$ is smooth and compactly
supported, the div--curl system \eqref{eq:divcurl-sys} and the
$L^2$ boundedness of the Riesz transforms give, for $1\le k\le N+1$,
\begin{equation*}
 \|\nabla^k u_{0,n}^E\|_2
 \lesssim_k
 \|\nabla^{k-1}(r\varpi_{0,n}e_\vartheta)\|_2<\infty.
\end{equation*}
Hence $u_{0,n}^E\in H^{N+1}$.  Since
$a_{0,n},\mathfrak{s}_{0,n},\phi_{0,n}$ are smooth and compactly supported,
\begin{equation}\label{eq:htop-def}
 Z_{0,n}:=(\rho_{0,n},u_{0,n},\theta_{0,n})\in H^{N+1}(\R^3),
 \qquad
 C_{Z,n}:=1+\|Z_{0,n}\|_{H^{N+1}}+\|Z_{0,n}\|_{W^{2,\infty}}<\infty.
\end{equation}
Moreover $q_{0,n}=\nabla\phi_{0,n}$ is irrotational, so
\begin{equation}\label{eq:htop-id}
 \mathbb Pu_{0,n}=u_{0,n}^E,\qquad
 \mathbb P^\perp u_{0,n}=q_{0,n},\qquad
 \frac{(\nabla\times u_{0,n})^\vartheta}{r}=\varpi_{0,n}.
\end{equation}
Lift these macroscopic fields to kinetic initial data.  For fixed
$n$ and sufficiently small $\e>0$, set
\begin{equation}\label{eq:data-def}
 M_{0,n}^\e(x,\xi)
 :=M_{[e^{\e\rho_{0,n}(x)},\,\e u_{0,n}(x),\,e^{\e\theta_{0,n}(x)}]}(\xi),
 \qquad
 F_{0,n}^\e:=M_{0,n}^\e.
\end{equation}
By the parametrization \eqref{eq:param-id}, the macroscopic fields of
$F_{0,n}^\e$ are exactly
\begin{equation*}
 \rho_{0,n}^\e=\rho_{0,n},\qquad
 u_{0,n}^\e=u_{0,n},\qquad
 \theta_{0,n}^\e=\theta_{0,n}.
\end{equation*}
The identities in \eqref{eq:slow-def} therefore give
\begin{equation}\label{eq:ana-id}
 \rho_{0,n}^\e+\theta_{0,n}^\e=a_{0,n},\qquad
 \frac32\theta_{0,n}^\e-\rho_{0,n}^\e=\mathfrak{s}_{0,n}.
\end{equation}
Since the scalar profiles are axisymmetric and both $u_{0,n}^E$ and
$q_{0,n}$ are $O(2)_z$-equivariant without swirl,
\eqref{eq:loc-id} gives
\begin{equation*}
 F_{0,n}^\e(Gx,G\xi)=F_{0,n}^\e(x,\xi),
 \qquad G\in O(2)_z.
\end{equation*}
By \eqref{eq:dist-def},
\begin{equation}\label{eq:mp-id}
 \Pi_{M_{0,n}^\e}^\perp F_{0,n}^\e=0.
\end{equation}
We next verify the top microscopic term without imposing
$\rho_{0,n}=\theta_{0,n}=0$.  Define the parameterized Maxwellian
$\mathcal M_{\rm par}(a,b,c):=M_{[e^a,b,e^c]}.$
If $h=(h_\rho,h_u,h_\theta)$, direct differentiation of \eqref{eq:loc-id}
gives
\begin{equation}\label{eq:par-rho}
 D\mathcal M_{\rm par}(a,b,c)[h]
 =\mathcal M_{\rm par}(a,b,c)\left[
 h_\rho+\frac{\xi-b}{k_Be^c}\cdot h_u
 +\left(\frac{|\xi-b|^2}{2k_Be^c}-\frac32\right)h_\theta
 \right].
\end{equation}
At $(a,b,c)=\e Z_{0,n}(x)$, the three brackets in
\eqref{eq:par-rho} are precisely linear combinations of the five basis
functions defining $\mathcal N_{M_{0,n}^\e}$.  Hence
\begin{equation}\label{eq:par-ran}
 D\mathcal M_{\rm par}(\e Z_{0,n})[h]
 \in\operatorname{Ran}\Pi_{M_{0,n}^\e}
 \qquad\text{for every }h\in\R^5.
\end{equation}
For every spatial multi-index $\gamma$ with $1\le|\gamma|\le N+1$,
Fa\`a di Bruno's formula, grouped according to the number of parameter
derivatives of $\mathcal M_{\rm par}$, yields
\begin{equation*}
 \partial_x^\gamma F_{0,n}^\e
 =\e D\mathcal M_{\rm par}(\e Z_{0,n})[\partial_x^\gamma Z_{0,n}]
 +\e^2R_{\gamma,n}^\e.
\end{equation*}
Every term in $R_{\gamma,n}^\e$ contains at least two positive spatial
derivatives of components of $Z_{0,n}$ and a parameter derivative of
$\mathcal M_{\rm par}$ of order at least two.  After reducing $\e_n^*>0$, we have
for $0<\e\le\e_n^*$,
$\frac12\le e^{\e\rho_{0,n}},e^{\e\theta_{0,n}}\le2, |\e u_{0,n}|\le1.$
Every parameter derivative of $\mathcal M_{\rm par}$ is therefore a polynomial in
$\xi-\e u_{0,n}$ times $M_{0,n}^\e$, with coefficients bounded by $C_n$.
Using \eqref{eq:htop-def}, Sobolev multiplication, and Gaussian integration,
we obtain
\[
 \sup_{0<\e\le\e_n^*}
 \|R_{\gamma,n}^\e(M_{0,n}^\e)^{-1/2}\|_{L^2_{x,\xi}}
 \le C_{\gamma,n}.
\]
Together with \eqref{eq:par-ran}, this implies
\begin{equation}\label{eq:xd-bd}
 \|\Pi_{M_{0,n}^\e}^\perp
 \partial_x^\gamma F_{0,n}^\e\,(M_{0,n}^\e)^{-1/2}\|_2
 \le C_{\gamma,n}\e^2,
 \qquad1\le|\gamma|\le N+1.
\end{equation}
For the Landau expansion in \eqref{eq:dist-id}, define
\begin{equation*}
 M_{0,n}^{\e,\tau}:=\mathcal M_{\rm par}(\tau\e Z_{0,n})
 =M_{[e^{\tau\e\rho_{0,n}},\,\tau\e u_{0,n},\,e^{\tau\e\theta_{0,n}}]},
 \qquad0\le\tau\le1.
\end{equation*}
The fundamental theorem of calculus gives the exact identity
\begin{equation}\label{eq:mu-par}
 f_{0,n}^\e
 =\frac{M_{0,n}^\e-\mu}{\e\sqrt\mu}
 =\int_0^1
 \frac{D\mathcal M_{\rm par}(\tau\e Z_{0,n})[Z_{0,n}]}{\sqrt\mu}
 \,d\tau.
\end{equation}
For $0<\e\le\e_n^*$, reduce $\e_n^*$ further so that
$e^{\tau\e\theta_{0,n}}\in[3/4,5/4]$ and
$|\tau\e u_{0,n}|\le1$ for all $(\tau,x)$.  Expanding the exponent and
using Young's inequality gives constants $c_*>0$ and $C_n<\infty$ such
that
\begin{align*}
 -\frac{|\xi-\tau\e u_{0,n}|^2}{2k_Be^{\tau\e\theta_{0,n}}}
 +\frac{|\xi|^2}{4k_B}
 &\le -c_*|\xi|^2+C_n,
 \qquad0\le\tau\le1 .
\end{align*}
After the fixed velocity dilation used in the Landau collision estimates,
which normalizes $k_B=1$, the quadratic coefficient in the previous bound tends to $1/4$ as $\e\to0$.  Since the Landau weight
was fixed with $q<1/8$, we may reduce $\e_n^*$ so that $c_*>q$ in the
normalized variable.  Differentiating
\eqref{eq:mu-par} in $x$ and at most once in $\xi$, and using
\eqref{eq:htop-def}, \eqref{eq:par-rho}, this Gaussian bound, and Sobolev
multiplication, yields
\begin{align}
 \sup_{0<\e\le\e_n^*}
 \Bigg[
 &\sum_{|\gamma|\le N+1}
 \|w\partial_x^\gamma f_{0,n}^\e\|_2
 +\sum_{\substack{|\beta|=1\\|\gamma|\le\lfloor N/2\rfloor}}
 \|w^{1/2}\partial_\xi^\beta\partial_x^\gamma
 (I-\Pi)f_{0,n}^\e\|_2
 \Bigg]\le C_n.
 \label{eq:der-xd}
\end{align}
For the Boltzmann expansion in \eqref{eq:dist-id}, \eqref{eq:data-def} gives
\begin{equation}\label{eq:bolt-id}
 g_{0,n}^\e=0,
 \qquad
 f_{0,n}^\e
 =\frac{M_{0,n}^\e-\mu_0}{\e\sqrt{\mu_0}}.
\end{equation}
Set
$T_0^\e:=1-\e^{1-c_0}, \  \Theta_{0,n}^\e(x):=e^{\e\theta_{0,n}(x)}.$
Since $|e^{\e\theta_{0,n}}-1|\le C_n\e$ for $0<\e\le\e_n^*$,
and $\e=o(\e^{1-c_0})$, after reducing $\e_n^*$ we have
\[
 \Theta_{0,n}^\e-T_0^\e
 \ge \e^{1-c_0}-C_n\e
 \ge\frac12\e^{1-c_0},
 \qquad
 \frac34\le T_0^\e,\Theta_{0,n}^\e\le\frac54.
\]
Moreover $|\e u_{0,n}|\le1$.  Expanding the exponent gives
\[
 -\frac{|\xi-\e u_{0,n}(x)|^2}{2k_B\Theta_{0,n}^\e(x)}
 +\frac{|\xi|^2}{4k_BT_0^\e}
 =
 -\left(\frac1{2k_B\Theta_{0,n}^\e}
          -\frac1{4k_BT_0^\e}\right)|\xi|^2
 +\frac{\e u_{0,n}\cdot \xi}{k_B\Theta_{0,n}^\e}
 -\frac{\e^2|u_{0,n}|^2}{2k_B\Theta_{0,n}^\e}.
\]
After decreasing $\e_n^*$ once more,
$\frac1{2\Theta_{0,n}^\e}-\frac1{4T_0^\e}\ge\frac1{15}$.
Young's inequality in the linear term then yields constants $c_B>0$ and $C_n$ such
that
\begin{equation*}
 -\frac{|\xi-\e u_{0,n}(x)|^2}{2k_B\Theta_{0,n}^\e(x)}
 +\frac{|\xi|^2}{4k_BT_0^\e}
 \le-c_B|\xi|^2+C_n\e^2.
\end{equation*}
For every spatial multi-index $\gamma\ne0$,
\[
 \partial_x^\gamma f_{0,n}^\e
 =\frac{\partial_x^\gamma M_{0,n}^\e}{\e\sqrt{\mu_0}}.
\]
By Fa\`a di Bruno's formula, every term in
$\partial_x^\gamma M_{0,n}^\e$ contains at least one factor
$\e\partial_x^{\gamma'}Z_{0,n}$ with $|\gamma'|\ge1$.  The Gaussian bound just established, \eqref{eq:htop-def}, and the derivative identity above,
together with the fact
that the Boltzmann weight $w_0$ grows only polynomially times
$e^{q\langle \xi/a_\e\rangle}$, give for every fixed $m\ge0$
\begin{align}
 \sup_{0<\e\le\e_n^*}
 \Bigg[
 &\sum_{1\le|\gamma|\le N+1}
 \|\langle \xi\rangle^m w_0\partial_x^\gamma f_{0,n}^\e
 \|_{L_x^2H_\xi^1}
+\sum_{\substack{|\beta|=1\\1\le|\gamma|\le\lfloor N/2\rfloor}}
 \|\langle \xi\rangle^m w_0^{1/2}
 \partial_\xi^\beta\partial_x^\gamma f_{0,n}^\e
 \|_{L_x^2H_\xi^1}
 \Bigg]
 \le C_{m,n}.
 \label{eq:null-bd}
\end{align}
For $3/4\le T_0^\e\le1$, choose an $L_\xi^2$-orthonormal basis
$\{\phi_j^\e\}_{j=0}^4$ of $\operatorname{Null}(\mathcal L_0)$ obtained
from the five functions in \eqref{eq:pr-conv}.  Each $\phi_j^\e$ is a polynomial
of degree at most two times $\sqrt{\mu_0}$, with coefficients uniformly
bounded for $T_0^\e\in[3/4,1]$.  Thus
\[
 \Pi_0h=\sum_{j=0}^4\langle h,\phi_j^\e\rangle_{L_\xi^2}\phi_j^\e,
 \qquad
 |\langle h,\phi_j^\e\rangle|\le\|h\|_2,
\]
and Gaussian integration, together with $a_\e\sim1$, gives, for every
fixed $m\ge0$ and $|\beta|\le1$,
\begin{equation}
 \|\langle \xi\rangle^m w_0
 \partial_\xi^\beta\Pi_0 h\|_{H_\xi^1}
 \le C_m\|h\|_{L_\xi^2},
\end{equation}
uniformly in $\e$.  Moreover, for $m$ sufficiently large,
\begin{equation}\label{eq:gam-large}
 \|h\|_{H^s_{2s+\gamma}}
 \lesssim_m
 \|\langle \xi\rangle^m h\|_{H_\xi^1}.
\end{equation}
To verify this embedding directly from the fractional norm, put $a:=2s+\gamma$.  For $|\eta|\le1$,
\[
 h(\xi+\eta)-h(\xi)=\int_0^1 \eta\cdot\nabla_\xi h(\xi+\tau \eta)\,d\tau,
\]
and hence, by Cauchy--Schwarz in $\tau$ and the comparability
$\langle \xi\rangle^a\lesssim_a\langle \xi+\tau \eta\rangle^a$,
\begin{align}
 &\iint_{|\eta|\le1}\langle \xi\rangle^a
 \frac{|h(\xi+\eta)-h(\xi)|^2}{|\eta|^{3+2s}}\,d\eta\,d\xi\notag\\
 &\quad\lesssim_a
 \int_{|\eta|\le1}|\eta|^{-1-2s}
 \int_0^1\!\int_{\R^3}
 \langle \xi+\tau \eta\rangle^a|\nabla_\xi h(\xi+\tau \eta)|^2
 \,d\xi\,d\tau\,d\eta \lesssim_{a,s}
 \|\langle \xi\rangle^{a/2}\nabla_\xi h\|_2^2, \notag
\end{align}
where the last step uses
$\int_{|\eta|\le1}|\eta|^{-1-2s}\,d\eta <\infty \ (0<s<1).$
The weighted $L^2$ part is bounded in the same way, and choosing
$2m\ge\max\{a,0\}$ proves \eqref{eq:gam-large}.  Thus
\eqref{eq:null-bd}--\eqref{eq:gam-large} control all positive-spatial-order
Boltzmann high-moment terms in \eqref{eq:xi-def}; the zero spatial order is
measured by $g_{0,n}^\e=0$ in \eqref{eq:bolt-id}.
We now verify the complete initial energy.  Equation \eqref{eq:mp-id} gives
$\mathcal E_G^N(0)=0$.  The macroscopic part is bounded by
\eqref{eq:htop-def}, and \eqref{eq:xd-bd} yields for $|\gamma|=N+1$
\begin{equation*}
 \frac{\kappa}{\e^2}
 \|\Pi_{M_{0,n}^\e}^\perp
 \partial_x^\gamma F_{0,n}^\e(M_{0,n}^\e)^{-1/2}\|_2^2
 \le C_{\gamma,n}\kappa\e^2\le C_{\gamma,n}.
\end{equation*}
For Landau, \eqref{eq:der-xd} bounds \eqref{eq:land-id}.  For Boltzmann,
\eqref{eq:bolt-id} annihilates $\mathcal E_{H,1}^N(0)$, while
\eqref{eq:null-bd}--\eqref{eq:gam-large} bound $\mathcal E_{H,2}^N(0)$.
Consequently, for the corresponding collision model and after reducing
$\e_n^*$ if necessary,
\begin{equation}\label{eq:en-coll}
 \sup_{0<\e\le\e_n^*}
 \mathcal E_{tot}^{N}(F_{0,n}^\e)\le C_n.
\end{equation}
Since $\varpi_{0,n}\in C_c^\infty(\R^3)$,
\begin{equation}\label{eq:ana-def}
 L_n:=C_n+\|\varpi_{0,n}\|_{L^{3,1}}^2<\infty.
\end{equation}
By \eqref{eq:small-def}, after fixing a structural number
$0<\bar\e<e^{-e^{e^{e}}}$ sufficiently small, all five iterated logarithms
$\Lambda_j^\e$ are positive on $(0,\bar\e]$ and
$\Lambda_5^\e$ is nondecreasing as $\e\downarrow0$, with
$\Lambda_5^\e\to\infty$.  We choose $\e_n\downarrow0$ recursively.  First
choose
\[
 0<\e_1\le\min\{\bar\e,\e_1^*,1\}
 \quad\text{so that}\quad
 \Lambda_5^\e\ge L_1\quad(0<\e\le\e_1).
\]
Having chosen $\e_{n-1}$, choose
\begin{equation}\label{eq:aux}
 0<\e_n\le
 \min\{\bar\e,\e_n^*,n^{-1},\tfrac12\e_{n-1}\},
 \qquad
 \Lambda_5^\e\ge L_n
 \quad\text{for every }0<\e\le\e_n.
\end{equation}
Such a choice is possible because $L_n<\infty$ is fixed when $n$ is fixed
and $\Lambda_5^\e\to\infty$ as $\e\downarrow0$.
For $\e_{n+1}<\e\le\e_n$, define
\begin{equation}\label{eq:choice-def}
 F_0^\e:=F_{0,n}^\e,
 \qquad
 \varpi_0^\e:=\varpi_{0,n},
 \qquad
 u_0^\e:=u_{0,n},
 \qquad
 (\rho_0^\e,\theta_0^\e):=(\rho_{0,n},\theta_{0,n}).
\end{equation}
Then \eqref{eq:en-coll}, \eqref{eq:ana-def}, and \eqref{eq:aux} give
\eqref{eq:init-bd}.  Equations \eqref{eq:htop-id}, \eqref{eq:ana-id}, and
\eqref{eq:choice-def} give \eqref{eq:slow-id}.  Since
$\varpi_0^\e=\varpi_{0,n}$ and $\mathbb Pu_0^\e=u_{0,n}^E$ on each interval
$(\e_{n+1},\e_n]$, \eqref{eq:bs-slow} gives
\eqref{eq:init-conv}--\eqref{eq:slow-conv}; it also
implies the uniform bounds \eqref{eq:lp-lone}--\eqref{eq:rough-bd}.  Finally,
\eqref{eq:axis-phi}, \eqref{eq:ana-id}, and
\eqref{eq:choice-def} give \eqref{eq:real-conv}, hence
\eqref{eq:mic-conv}.  The implications
\eqref{eq:prof-bd}--\eqref{eq:real-bd}
follow from the same piecewise definition \eqref{eq:choice-def}.  This proves the
proposition.
\end{proof}

\begin{corollary}[Prescribed rough finite-energy, thermodynamic, and acoustic data]\label{cor:roughreal}
Let $1<p<\infty$.  Let $\varpi_0\in L^1(\R^3)\cap L^p(\R^3)$ and
$\mathfrak{s}_0,a_0\in L^2(\R^3)$ be axisymmetric.  Let
$q_0\in L^2(\R^3;\R^3)$ be axisymmetric without swirl and assume
\begin{equation}\label{eq:data-id}
 \mathbb Pq_0=0.
\end{equation}
Assume one of the following.
\begin{enumerate}
\item If $p\ge4/3$, there is $u_0\in L^2(\R^3)$ such that
\begin{equation}\label{eq:div-curl-id}
 \nabla\cdot u_0=0,
 \qquad
 \nabla\times u_0=r\varpi_0e_\vartheta
 \quad\text{in }\mathcal D'(\R^3).
\end{equation}
\item If $1<p<4/3$, then
\begin{equation}\label{eq:mtwo-lp}
 \mathfrak M_2(\varpi_0)<\infty,
 \qquad
 \min\left\{
 \|(\varpi_0)_+\|_{L^{p^\sharp}},
 \|(\varpi_0)_-\|_{L^{p^\sharp}}
 \right\}<\infty.
\end{equation}
\end{enumerate}
In case (2), set $u_0:=\Bcal\varpi_0$; Proposition~\ref{prop:radmom} gives
$u_0\in L^2(\R^3)$.
For either the Landau equation or the non-cutoff Boltzmann equation, there
exists a family of smooth nonnegative axisymmetric no-swirl kinetic initial
data satisfying the hypotheses of Theorem~\ref{thm:longtime} such that
\begin{align}
 \varpi_0^\e \to\varpi_0
 &\text{ in }L^1(\R^3)\cap L^p(\R^3),
 \label{eq:data-conv}\\
 \mathbb Pu_0^\e\to u_0
 &\text{ in }L^2(\R^3), \notag
 \\
 \frac32\theta_0^\e-\rho_0^\e\to\mathfrak{s}_0
 & \text{ in }L^2(\R^3).
 \label{eq:ac-conv}
\end{align}
The realization may moreover be chosen so that the acoustic initial data
converge to the prescribed targets:
\begin{align}
 \rho_0^\e+\theta_0^\e\to a_0
 &\text{ in }L^2(\R^3),
 \label{eq:presc-conv}\\
 \mathbb P^\perp u_0^\e\to q_0
 &\text{ in }L^2(\R^3).
 \label{eq:pq-conv}
\end{align}
Thus no vanishing acoustic preparation is imposed.
For every sequence $\e_j\downarrow0$ and every finite $T>0$, a subsequence
admits a renormalized axisymmetric Euler limit $(u^E,\varpi^E)$ on $[0,T]$
with initial relative vorticity $\varpi_0$, together with the transported
thermodynamic limit $\mathfrak{s}^E$, and along that subsequence
\begin{align}
 \varpi_A^{\e_j} \to\varpi^E
 &\text{ in }C([0,T];L^1\cap L^p),
 \label{eq:lim-conv}\\
 \mathbb Pu^{\e_j}\to u^E
 &\text{ in }C([0,T];L^2(\R^3)),
 \label{eq:pp-ltwo}\\
 \frac32\theta^{\e_j}-\rho^{\e_j}\to\mathfrak{s}^E
 &\text{ in }C([0,T];L^2(\R^3)),
 \label{eq:th-rho}
\end{align}
where $\mathfrak{s}^E$ is the unique renormalized solution of
\begin{equation}\label{eq:real-id}
 \partial_t\mathfrak{s}^E+u^E\cdot\nabla\mathfrak{s}^E=0,
 \qquad
 \mathfrak{s}^E(0)=\mathfrak{s}_0.
\end{equation}
Moreover,
\begin{equation}\label{eq:presc-bd}
 \|u^E(t)\|_2=\|u_0\|_2,
 \qquad0\le t\le T.
\end{equation}
For $p\ge4/3$, no sign or spatial moment assumption is required.
\end{corollary}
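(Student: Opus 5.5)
The proof reduces Corollary~\ref{cor:roughreal} to Proposition~\ref{prop:diagreal}, followed by Theorems~\ref{thm:roughlimit} and~\ref{thm:globenergy}. The real work lies in producing the smooth approximating profiles required by \eqref{eq:bs-slow} and by the moment-preservation clauses \eqref{eq:slow-bd}--\eqref{eq:real-bd}.

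\emph{Step 1 (vorticity profiles).} In case (1), $p\ge4/3>6/5$ and \eqref{eq:div-curl-id} say that $\varpi_0\in\mathcal Z_p$ with $\Bcal\varpi_0=u_0$. Proposition~\ref{prop:graphdens} then gives axisymmetric $\varpi_{0,n}\in C_c^\infty$ satisfying \eqref{eq:bs-slow}.

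In case (2), $\varpi_0\in\mathcal X_p\cap L^1$, so Lemma~\ref{lem:raddense} gives $\varpi_{0,n}\to\varpi_0$ in $L^1\cap L^p$ with $\mathfrak M_2(\varpi_{0,n}-\varpi_0)\to0$. The stability estimate \eqref{eq:bs-xstab} then yields $\Bcal\varpi_{0,n}\to u_0$ in $L^2$. We also need $\sup_n\min_\pm\|(\varpi_{0,n})_\pm\|_{p^\sharp}<\infty$, which is the main technical point. I would redo the density construction concretely: cut off by a spherically radial $\chi_R$, then mollify by a spherically radial mollifier. Cutoff and mollification act separately on the nonnegative functions $(\varpi_0)_\pm$, so the $L^{p^\sharp}$ norm of the favourable sign part does not increase.

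However, the positive and negative parts of $\varphi_\nu*(\chi_R\varpi_0)$ are not exactly $\varphi_\nu*(\chi_R(\varpi_0)_\pm)$. To avoid this, apply the approximation to $a:=(\varpi_0)_+$ and $b:=(\varpi_0)_-$ separately and set $\varpi_{0,n}=a_n-b_n$ with $a_n,b_n\ge0$. Then $(\varpi_{0,n})_\sigma\le(\cdot)_\sigma$-component, giving $\|(\varpi_{0,n})_\sigma\|_{p^\sharp}\le\|\varphi_\nu*\chi_R(\varpi_0)_\sigma\|_{p^\sharp}\le\|(\varpi_0)_\sigma\|_{p^\sharp}$ for the sign $\sigma$ where this norm is finite. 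This holds because $(a_n-b_n)_+\le a_n$ and $(a_n-b_n)_-\le b_n$.

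The spherically radial cutoff and mollifier preserve axisymmetry. They also keep $\mathfrak M_2$ convergence, since $r^2$ changes by $O(\nu R+\nu^2)$ on the support and dominated convergence handles the cutoff. This gives \eqref{eq:bs-slow} together with uniform bounds on $\mathfrak M_2(\varpi_{0,n})$ and on the one-sided $L^{p^\sharp}$ norm.

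\emph{Step 2 (thermodynamic and acoustic profiles).} Choose axisymmetric $\mathfrak s_{0,n},a_{0,n}\in C_c^\infty$ converging in $L^2$ to $\mathfrak s_0,a_0$ (cutoff plus radial mollification). For $q_0$, the condition \eqref{eq:data-id} gives $q_0=\nabla\Delta^{-1}\nabla\cdot q_0$. Approximate $q_0$ in $L^2$ by $\nabla\phi_{0,n}$ with $\phi_{0,n}\in C_c^\infty$ axisymmetric: first take the potential $\phi=\Delta^{-1}\nabla\cdot q_0\in\dot H^1$, which is axisymmetric because $q_0$ is equivariant without swirl. Then use density of radially mollified, cut-off functions in $\dot H^1$. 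The cutoff error $\|\phi\nabla\chi_R\|_2$ is controlled, after subtracting averages on annuli, by Poincaré/Hardy on annuli, which is standard in $\dot H^1(\R^3)$.

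\emph{Step 3 (kinetic data and limits).} Proposition~\ref{prop:diagreal} now produces the kinetic data satisfying \eqref{eq:data-conv}--\eqref{eq:pq-conv}, using \eqref{eq:slow-id} and $a_{0,n}\to a_0$, $q_{0,n}\to q_0$. It also gives the hypotheses \eqref{eq:lp-lone}--\eqref{eq:mic-conv}, plus \eqref{eq:fld-bd} and \eqref{eq:lp-sharp} in case (2) through \eqref{eq:slow-bd}--\eqref{eq:real-bd}.

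Theorem~\ref{thm:roughlimit} gives, along a subsequence, \eqref{eq:lim-conv}, \eqref{eq:th-rho} and the renormalized limit \eqref{eq:real-id}. Theorem~\ref{thm:globenergy}, whose assumption \eqref{eq:comp-conv} is \eqref{eq:slow-conv}, upgrades this to \eqref{eq:pp-ltwo} and the energy identity \eqref{eq:presc-bd}.

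The only genuine obstacle is Step 1 in case (2): the uniform one-sided $L^{p^\sharp}$ bound and the $\mathfrak M_2$ convergence must survive the smoothing simultaneously. The separate approximation of the two sign parts is designed to handle exactly this.
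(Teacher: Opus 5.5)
Your proposal is correct and follows the paper's proof essentially step for step. The shared steps are: Proposition~\ref{prop:graphdens} for $p\ge4/3$; in case (2), separate spherically radial cutoff and mollification of $(\varpi_0)_\pm$, with $(a_n-b_n)_+\le a_n$, $(a_n-b_n)_-\le b_n$ and Young's inequality giving the one-sided $L^{p^\sharp}$ bound, where only a uniform $\mathfrak M_2$ bound is actually needed in \eqref{eq:bs-xstab}; an axisymmetric $\dot H^1$-potential approximation of $q_0$; and then Proposition~\ref{prop:diagreal} followed by Theorems~\ref{thm:roughlimit} and~\ref{thm:globenergy}. The differences are cosmetic: you control the potential cutoff error by annular Poincar\'e/Hardy rather than the paper's $\dot H^1\hookrightarrow L^6$ bound, and you skip the paper's rotation-averaging step by noting that $\Delta^{-1}\nabla\cdot q_0$ is already axisymmetric.
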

\begin{proof}[Proof of Corollary~\ref{cor:roughreal}]
Suppose first that $p\ge4/3$.  The hypothesis
\eqref{eq:div-curl-id} gives $\varpi_0\in\mathcal Z_p$, and
Lemma~\ref{lem:divcurl} gives $u_0=\Bcal\varpi_0$.
Proposition~\ref{prop:graphdens} provides axisymmetric
$\varpi_{0,n}\in C_c^\infty(\R^3)$ such that
\begin{equation}\label{eq:bs-kin}
 \|\varpi_{0,n}-\varpi_0\|_1+\|\varpi_{0,n}-\varpi_0\|_p
 +\|\Bcal\varpi_{0,n}-u_0\|_2\to0.
\end{equation}
Now let $1<p<4/3$ and assume \eqref{eq:mtwo-lp}.  Set
$\eta_+:=(\varpi_0)_+, \quad \eta_-:=(\varpi_0)_-.$
Then $\eta_\pm\in L^1\cap L^p$ and
$\mathfrak M_2(\eta_+)+\mathfrak M_2(\eta_-)=\mathfrak M_2(\varpi_0)<\infty$.  Hence
Proposition~\ref{prop:radmom} gives $u_0=\Bcal\varpi_0\in L^2$.
Choose $\sigma\in\{+,-\}$ so that
$\eta_\sigma\in L^{p^\sharp}$.  Let $0\le\chi_R\le1$ be spherically radial,
$\chi_R=1$ on $B_R$, $\chi_R=0$ on $B_{2R}^c$, and let
$\varphi_\nu$ be a nonnegative spherically radial standard mollifier supported in
$B_\nu$.  By dominated convergence, choose $R_n\uparrow\infty$ so that,
for both signs,
\[
 \|(1-\chi_{R_n})\eta_\pm\|_1
 +\|(1-\chi_{R_n})\eta_\pm\|_p
 +\mathfrak M_2((1-\chi_{R_n})\eta_\pm)\to0.
\]
For each $n$, choose $0<\nu_n\le1$ sufficiently small and set
$\eta_{\pm,n}:=\varphi_{\nu_n}*(\chi_{R_n}\eta_\pm).$
If $g_{\pm,n}:=\eta_{\pm,n}-\chi_{R_n}\eta_\pm$, then
$\operatorname{supp}g_{\pm,n}\subset B_{2R_n+1}$ and
$\mathfrak M_2(g_{\pm,n})\le \frac{(2R_n+1)^2}{2\pi}\|g_{\pm,n}\|_1.$
Hence the mollification error is controlled by its $L^1$ norm.
Thus $\nu_n$ may be chosen so that
\begin{equation*}
 \|\eta_{\pm,n}-\eta_\pm\|_1
 +\|\eta_{\pm,n}-\eta_\pm\|_p
 +\mathfrak M_2(\eta_{\pm,n}-\eta_\pm)\to0.
\end{equation*}
Moreover, Young's inequality and $0\le\chi_{R_n}\le1$ give
\begin{equation*}
 \|\eta_{\sigma,n}\|_{p^\sharp}
 \le\|\chi_{R_n}\eta_\sigma\|_{p^\sharp}
 \le\|\eta_\sigma\|_{p^\sharp}.
\end{equation*}
Set
$\varpi_{0,n}:=\eta_{+,n}-\eta_{-,n}.$
Then $\eta_{\pm,n}\ge0$ and
$\varpi_{0,n}\in C_c^\infty(\R^3)$ is axisymmetric.  Moreover,
this approximation bound gives
\begin{align}
 &\|\varpi_{0,n}-\varpi_0\|_1+\|\varpi_{0,n}-\varpi_0\|_p\to0,
 \label{eq:axis-mtwo}\\
& \mathfrak M_2(\varpi_{0,n})
 \le\mathfrak M_2(\eta_{+,n})+\mathfrak M_2(\eta_{-,n})
 \le\mathfrak M_2(\varpi_0)+o(1).
 \label{eq:mtwo-eta-bd}
\end{align}
Since
$(\varpi_{0,n})_+\le\eta_{+,n},$ $(\varpi_{0,n})_-\le\eta_{-,n},$
this $L^{p^\sharp}$ bound yields
\begin{equation}\label{eq:sharp-eta}
 \sup_n\min\left\{
 \|(\varpi_{0,n})_+\|_{p^\sharp},
 \|(\varpi_{0,n})_-\|_{p^\sharp}
 \right\}
 \le\|\eta_\sigma\|_{p^\sharp}<\infty.
\end{equation}
Finally, Proposition~\ref{prop:radmom} and \eqref{eq:axis-mtwo}--\eqref{eq:mtwo-eta-bd} imply
\begin{align}
 \|\Bcal\varpi_{0,n}-u_0\|_2^2
 &\lesssim_p
 \|\varpi_{0,n}-\varpi_0\|_p^{A_p}
 \bigl(\mathfrak M_2(\varpi_{0,n})+\mathfrak M_2(\varpi_0)\bigr)^{B_p}
 \to0. \notag
\end{align}
Hence \eqref{eq:bs-kin} holds in the low-$p$ regime as
well.
Choose axisymmetric
$\mathfrak{s}_{0,n},a_{0,n}\in C_c^\infty(\R^3)$ such that
\begin{equation}\label{eq:axis-slow}
 \|\mathfrak{s}_{0,n}-\mathfrak{s}_0\|_2
 +\|a_{0,n}-a_0\|_2\to0.
\end{equation}
We next approximate the irrotational acoustic velocity.  By
\eqref{eq:data-id}, there is
$\phi_0\in\dot H^1(\R^3)$, unique modulo constants, such that
$q_0=\nabla\phi_0, \quad \|\nabla\phi_0\|_2=\|q_0\|_2.$
Indeed, in Fourier variables one may take
$\widehat{\phi_0}(\eta)=-i\,\eta\cdot\widehat q_0(\eta)/|\eta|^2$;
then $\mathbb Pq_0=0$ gives
$i\eta\widehat{\phi_0}=\widehat q_0$ almost everywhere.
Choose a spherically radial cutoff $\chi_R$ with $\chi_R=1$ on $B_R$ and
$\chi_R=0$ on $B_{2R}^c$.  Sobolev's inequality
$\dot H^1(\R^3)\hookrightarrow L^6(\R^3)$ yields
\begin{align}
 \|\nabla(\chi_R\phi_0)-\nabla\phi_0\|_2
 &\le \|(1-\chi_R)\nabla\phi_0\|_2
 +C R^{-1}\|\phi_0\|_{L^2(B_{2R}\setminus B_R)}\notag\\
 &\le \|(1-\chi_R)\nabla\phi_0\|_2
 +C\|\phi_0\|_{L^6(B_{2R}\setminus B_R)}\to0. \notag 
\end{align}
Mollifying $\chi_R\phi_0$ by a spherically radial mollifier and then averaging over
rotations about the $z$-axis produces
$\phi_{0,n}\in C_c^\infty(\R^3)$ axisymmetric such that, with
$q_{0,n}:=\nabla\phi_{0,n}$,
\begin{equation}\label{eq:axis-rad}
 \|q_{0,n}-q_0\|_2\to0.
\end{equation}
To see that the averaging does not increase the error, define
$\mathcal A\psi(x):=(2\pi)^{-1}\int_0^{2\pi}
\psi(R_\vartheta x)\,d\vartheta$.  Since the axisymmetric vector field
$q_0$ is equivariant, $q_0(R_\vartheta x)=R_\vartheta q_0(x)$, and
\[
 \nabla\mathcal A\psi(x)-q_0(x)
 =\frac1{2\pi}\int_0^{2\pi}
 R_\vartheta^{\mathsf T}
 \bigl(\nabla\psi(R_\vartheta x)-q_0(R_\vartheta x)\bigr)
 \,d\vartheta.
\]
Jensen's inequality and the invariance of Lebesgue measure under rotations
therefore give
$\|\nabla\mathcal A\psi-q_0\|_2 \le \|\nabla\psi-q_0\|_2.$
Thus each $q_{0,n}$ is smooth, compactly supported, axisymmetric without
swirl, and irrotational.
Apply Proposition~\ref{prop:diagreal} with
$(\mathfrak{s}_{0,n},a_{0,n},q_{0,n})$.  Equations
\eqref{eq:slow-id},
\eqref{eq:axis-slow}, and
\eqref{eq:axis-rad} give
\eqref{eq:presc-conv}--
\eqref{eq:pq-conv}, while the same proposition gives
Theorems~\ref{thm:longtime}--\ref{thm:roughlimit} and
\eqref{eq:data-conv}--\eqref{eq:ac-conv}.  If
$1<p<4/3$, equations \eqref{eq:mtwo-eta-bd} and
\eqref{eq:sharp-eta}, together with
\eqref{eq:slow-bd}--\eqref{eq:real-bd},
give \eqref{eq:fld-bd}--\eqref{eq:lp-sharp}.
Given any sequence $\e_j\downarrow0$, Theorem~\ref{thm:roughlimit} supplies a
subsequence and a renormalized axisymmetric Euler pair $(u^E,\varpi^E)$ with
initial relative vorticity $\varpi_0$ and the strong convergence
\eqref{eq:lim-conv}.  In the high-$p$ regime Theorem~\ref{thm:globenergy}
requires no sign or spatial moment assumption, while in the low-$p$ regime
these bounds verify its additional hypotheses.  Therefore
\eqref{eq:pp-ltwo} and \eqref{eq:presc-bd} follow from
\eqref{eq:rough-conv}--\eqref{eq:pp-bd} in both cases.  The convergence
\eqref{eq:th-rho} and the transport equation
\eqref{eq:real-id} are exactly assertion~(3) and the
identification in the proof of Theorem~\ref{thm:roughlimit}, namely
\eqref{eq:ltwo-rho} with initial datum $\mathfrak{s}_0$.\end{proof}

{
The construction above supplies the kinetic approximations used for the
singularity-passage results in Section~\ref{sec:singularpassage}.  We conclude
the present section with a sign-changing realization in the rough
integrability range.
}

\subsection{A rough sign-changing kinetic class}
\begin{proposition}[Rough sign-changing data]\label{prop:signchg}
Let $1<p<5/3$ and let $p^\sharp$ be defined by
\eqref{eq:psharp}.  For either the Landau equation or the non-cutoff Boltzmann equation there
is a family of smooth nonnegative axisymmetric no-swirl kinetic initial data
satisfying the hypotheses of Theorems~\ref{thm:longtime}--\ref{thm:globenergy}
such that the limiting relative vorticity satisfies
\begin{equation}\label{eq:lone-sharp}
 \varpi_0^E\in L^1(\R^3)\cap L^p(\R^3),\qquad
 \varpi_0^E\notin L^{p^\sharp}(\R^3),\qquad
 \varpi_0^E\notin L^3(\R^3),
\end{equation}
and
\begin{equation}\label{eq:sharp-lp}
 (\varpi_0^E)_+\not\equiv0,
 \qquad
 (\varpi_0^E)_-\not\equiv0.
\end{equation}
Moreover,
\begin{align}
    &  (\varpi_0^E)_-\in L^{p^\sharp},
 \qquad
 (\varpi_0^E)_+\notin  L^{p^\sharp}.\label{oneside-Lpsharp}
\end{align}
For every finite $T>0$ and every limiting Euler field furnished by
Theorem~\ref{thm:roughlimit},
\begin{equation}\label{eq:fld-eul}
 \|(\varpi^E(t))_+\|_p=\|(\varpi_0^E)_+\|_p>0,
 \qquad
 \|(\varpi^E(t))_-\|_p=\|(\varpi_0^E)_-\|_p>0,
 \qquad 0\le t\le T.
\end{equation}
For $1<p<4/3$, condition \eqref{eq:lp-sharp} does not force
either sign component to vanish.  For $4/3\le p<5/3$, the same construction
lies in the moment-free energy class of Theorem~\ref{thm:globenergy}.
\end{proposition}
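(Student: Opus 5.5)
The construction is explicit. Build a limiting datum $\varpi_0^E=\eta_+-\eta_-$ from two nonnegative axisymmetric pieces with disjoint compact supports away from the axis. Then feed it into Corollary~\ref{cor:roughreal} and transfer the sign information to the limit through Lemma~\ref{lem:splitconv} and the strong convergence \eqref{eq:vel-conv}.

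\textbf{The datum.} Choose a smooth nonnegative bump $\eta_-\in C_c^\infty$ supported in a torus $\{1<r<2,\ |z|<1\}$. It lies in every $L^q$, so $(\varpi_0^E)_-\in L^{p^\sharp}$. Choose a singular nonnegative profile $\eta_+$ supported in a disjoint torus around $(r,z)=(5,0)$. In meridional coordinates centred at that point, take
\[
\eta_+(r,z)=|X|^{-a}\,\mathbf 1_{\{|X|<1/2\}}, \qquad X=(r-5,z).
\]
Since $r\simeq 5$ on the support, $\eta_+\in L^q(\R^3)$ exactly when $aq<2$. Pick $a$ with $2/p^\sharp\le a<2/p$. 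This interval is nonempty because $p<5/3$ gives $p^\sharp>p$, that is $2/p^\sharp<2/p$. Then $\eta_+\in L^1\cap L^p$ and $\eta_+\notin L^{p^\sharp}$. Moreover $p^\sharp<3$ for $p>1$, since $4p<9p-3$; hence $a\ge 2/p^\sharp>2/3$ and $\eta_+\notin L^3$. This gives \eqref{eq:lone-sharp}, \eqref{eq:sharp-lp} and \eqref{oneside-Lpsharp}. Both pieces are compactly supported away from the axis, so $\mathfrak M_2(\varpi_0^E)<\infty$. The minimum in \eqref{eq:mtwo-lp} is finite thanks to $\eta_-$.

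\textbf{Hypotheses of the earlier results.} For $1<p<4/3$, case (2) of Corollary~\ref{cor:roughreal} applies directly, and it supplies \eqref{eq:fld-bd}--\eqref{eq:lp-sharp}. This also shows that \eqref{eq:lp-sharp} is compatible with both sign parts being nonzero. For $4/3\le p<5/3$, Proposition~\ref{prop:radmom} gives $u_0=\Bcal\varpi_0^E\in L^2$, which verifies \eqref{eq:div-curl-id}, so case (1) applies with no moment assumption. Choose any $\mathfrak s_0,a_0,q_0$, for instance all zero. The corollary then yields smooth kinetic data satisfying Theorems~\ref{thm:longtime}--\ref{thm:globenergy}.

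\textbf{Propagation of the sign norms \eqref{eq:fld-eul}.} This is the main point. By \eqref{eq:sign-psi} and \eqref{eq:xx-id}, $\|(\varpi_A^\e(t))_\pm\|_p=\|(\varpi_0^\e)_\pm\|_p$. Along the subsequence, \eqref{eq:vel-conv} gives $\varpi_A^\e(t)\to\varpi^E(t)$ in $L^p$ uniformly in $t$. Since $h\mapsto h_\pm$ is $1$-Lipschitz on $L^p$, we get $(\varpi_A^\e(t))_\pm\to(\varpi^E(t))_\pm$. The initial data converge as well: $\varpi_0^\e\to\varpi_0^E$ in $L^p$ by \eqref{eq:data-conv}. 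Passing to the limit in the conserved norms gives \eqref{eq:fld-eul}, and positivity follows from $\eta_\pm\not\equiv0$.

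One point needs care: whether \eqref{eq:fld-eul} is claimed for an arbitrary limiting field, or only for those produced from this particular family. I would state and prove it for the subsequential limits furnished by Theorem~\ref{thm:roughlimit} for this family, which is what the argument above covers.
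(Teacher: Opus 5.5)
Your proof is correct. It reaches the statement by a somewhat different route from the paper's.

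\textbf{The paper's route.} The paper places the singularity on the symmetry axis. It takes $\zeta=r^{-\alpha}\chi(r^2)\psi(z)$ with $2/p^\sharp<\alpha<2/p$, together with a smooth negative bump $\eta$ supported in $\{3<r<4\}$. It then builds explicit smooth approximants $\zeta_n=(r^2+n^{-2})^{-\alpha/2}\chi(r^2)\psi(z)$. The pointwise domination $0\le\zeta_n\le\zeta$ gives the uniform $L^1\cap L^p$, $\mathfrak M_2$ and one-sided $L^{p^\sharp}$ bounds at once. The stability estimate \eqref{eq:bs-xstab} gives $\Bcal\varpi_{0,n}\to\Bcal\varpi_0^E$ in $L^2$. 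Proposition~\ref{prop:diagreal} is then applied directly, with one monotone sequence serving both ranges of $p$.

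\textbf{Your route.} You put the singularity on a ring away from the axis. Since $r\simeq5$ on the support, the integrability criterion $aq<2$ and the finiteness of $\mathfrak M_2$ are immediate. Both singular sets have codimension two in $\R^3$, which is why the same exponent window appears in both constructions. You also delegate the approximation to Corollary~\ref{cor:roughreal}, using case~(2) for $p<4/3$ (cutoff plus mollification, with the one-sided $L^{p^\sharp}$ bound preserved by Young's inequality) and case~(1) for $p\ge4/3$ (the graph-norm density of Proposition~\ref{prop:graphdens}). This is shorter. It also lets you realize arbitrary thermodynamic and acoustic targets $(\mathfrak s_0,a_0,q_0)$ at the same time. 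The price is that you rely on two different approximation mechanisms rather than one explicit sequence.

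\textbf{A referencing point.} In case~(2), the fact that the realized family satisfies \eqref{eq:fld-bd}--\eqref{eq:lp-sharp} is established in the proof of Corollary~\ref{cor:roughreal}, not in its displayed statement. It comes through \eqref{eq:mtwo-eta-bd}, \eqref{eq:sharp-eta} and \eqref{eq:slow-bd}--\eqref{eq:real-bd}, so cite it that way. Your propagation argument for \eqref{eq:fld-eul} is the same as the paper's. Your reading that \eqref{eq:fld-eul} concerns the cluster limits of this particular family matches what the paper's proof actually covers.
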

\begin{proof}
Since $1<p<5/3$, one has $p<p^\sharp<2$.  Fix
\begin{equation}\label{eq:sharp-alp}
 \frac{2}{p^\sharp}<\alpha<\frac{2}{p}.
\end{equation}
Choose nonnegative $\chi,\psi\in C_c^\infty(\R)$ such that
\begin{equation*}
 \chi(s)=1\quad(0\le s\le1),
 \qquad
 \operatorname{supp}\chi\subset(-1,4),
 \qquad
 \psi(z)=1\quad(|z|\le1/2),
 \qquad
 \operatorname{supp}\psi\subset(-1,1).
\end{equation*}
For $r>0$, set $\zeta(r,z):=r^{-\alpha}\chi(r^2)\psi(z)$.
By cylindrical coordinates, for $1\le s<\infty$,
\[
 \|\zeta\|_{L^s(\R^3)}^s
 =2\pi\|\psi\|_{L^s(\R)}^s
 \int_0^\infty r^{1-\alpha s}\chi(r^2)^s\,dr.
\]
The choice of $\alpha$ gives $\alpha p<2<\alpha p^\sharp$ and, since $p^\sharp<2$, also $3\alpha>2$.  Hence
\begin{equation}\label{eq:zet-lone}
 \zeta\in L^1\cap L^p,
 \qquad
 \zeta\notin L^{p^\sharp},
 \qquad
 \zeta\notin L^3.
\end{equation}
Moreover, by \eqref{eq:radmom-def},
\begin{equation}\label{eq:mtwo-zet}
 \mathfrak M_2(\zeta)
 =\|\psi\|_{L^1(\R)}
 \int_0^\infty r^{3-\alpha}\chi(r^2)\,dr<\infty,
\end{equation}
since \eqref{eq:sharp-alp} implies $\alpha<2$.
Choose a nonzero nonnegative axisymmetric $\eta\in C_c^\infty(\R^3)$ with
$\operatorname{supp}\eta\subset\{x:\ 3<r<4,\ |z|<1\}$.  Its support is disjoint from that of $\zeta$.  Define $\varpi_0^E:=\zeta-\eta$.  Then $(\varpi_0^E)_+=\zeta$ and
$(\varpi_0^E)_-=\eta$, so \eqref{eq:zet-lone} proves
\eqref{eq:lone-sharp} and \eqref{eq:sharp-lp}. $\eta \in C_c^\infty(\R^3)$ proves \eqref{oneside-Lpsharp}.
For $n\ge1$, define
\begin{equation}\label{eq:zet-alp}
 \zeta_n(r,z)
 :=(r^2+n^{-2})^{-\alpha/2}\chi(r^2)\psi(z),
 \qquad
 \varpi_{0,n}:=\zeta_n-\eta.
\end{equation}
The axisymmetric extensions of $\zeta_n$ and $\varpi_{0,n}$ belong to
$C_c^\infty(\R^3)$.  Since
$0\le\zeta_n\le\zeta$ and $\zeta_n\to\zeta$ a.e.,
\eqref{eq:zet-lone} and dominated convergence give
\begin{equation}\label{eq:lone-axis}
 \varpi_{0,n}\to\varpi_0^E
 \qquad\text{in }L^1(\R^3)\cap L^p(\R^3).
\end{equation}
The disjoint supports also give $(\varpi_{0,n})_+=\zeta_n$ and
$(\varpi_{0,n})_- =\eta$.  Moreover, $0\le\zeta_n\le\zeta$ and
\eqref{eq:zet-lone} imply
\begin{equation*}
 \sup_{n\ge1}\|\varpi_{0,n}\|_{L^1\cap L^p}
 \le
 \|\zeta\|_{L^1\cap L^p}+\|\eta\|_{L^1\cap L^p}<\infty.
\end{equation*}
Consequently, 
\begin{equation}\label{eq:sharp-lp-bd}
 \sup_{n\ge1}
 \min\left\{
 \|(\varpi_{0,n})_+\|_{p^\sharp},
 \|(\varpi_{0,n})_-\|_{p^\sharp}
 \right\}
 \le\|\eta\|_{p^\sharp}<\infty.
\end{equation}
Furthermore, \eqref{eq:mtwo-zet} and
\eqref{eq:zet-alp} imply
\begin{equation}\label{eq:mtwo-zet-bd}
 \sup_{n\ge1}\mathfrak M_2(\varpi_{0,n})
 \le\mathfrak M_2(\zeta)+\mathfrak M_2(\eta)<\infty.
\end{equation}
Let
$u_{0,n}:=\Bcal\varpi_{0,n}, \  u_0^E:=\Bcal\varpi_0^E.$
By Proposition~\ref{prop:radmom}, these fields belong to
$L^2(\R^3)$.  Equations \eqref{eq:bs-xstab},
\eqref{eq:lone-axis}, and
\eqref{eq:mtwo-zet-bd} yield
\begin{equation*}
 \|u_{0,n}-u_0^E\|_2^2
 \le C_p
 \|\varpi_{0,n}-\varpi_0^E\|_p^{A_p}
 \bigl(\mathfrak M_2(\varpi_{0,n})+\mathfrak M_2(\varpi_0^E)\bigr)^{B_p}
 \to0.
\end{equation*}
Apply Proposition~\ref{prop:diagreal} to the sequence
$\{\varpi_{0,n}\}$ and these $L^1\cap L^p$ and $L^2$ convergences,
with the auxiliary profiles chosen as
$\mathfrak{s}_0=0, \ \mathfrak{s}_{0,n}=a_{0,n}=\phi_{0,n}=0.$
The proposition then produces a smooth nonnegative kinetic family satisfying
Theorems~\ref{thm:longtime}--\ref{thm:roughlimit} and \eqref{eq:comp-conv}.  If $1<p<4/3$,
\eqref{eq:sharp-lp-bd}--\eqref{eq:mtwo-zet-bd} and
\eqref{eq:slow-bd}--\eqref{eq:real-bd}
verify \eqref{eq:fld-bd}--\eqref{eq:lp-sharp}; if $4/3\le p<5/3$, no additional
moment hypothesis is required for the energy conclusion of
Theorem~\ref{thm:globenergy}.  Thus in either range the family satisfies the relevant
hypotheses of Theorem~\ref{thm:globenergy}, while \eqref{eq:lone-sharp}--\eqref{eq:sharp-lp} hold.
To verify \eqref{eq:fld-eul}, \eqref{eq:xx-id} and \eqref{eq:sign-psi} give
\[
 \|(\varpi_A^\e(t))_\pm\|_p=\|(\varpi_0^\e)_\pm\|_p,
 \qquad 0\le t\le T.
\]
Since $|a_\pm-b_\pm|\le|a-b|$, this identity together with
\eqref{eq:vel-conv} and \eqref{eq:kin-conv} gives
\begin{align*}
 \sup_{0\le t\le T}
 \left|
 \|(\varpi^E(t))_\pm\|_p-\|(\varpi_0^E)_\pm\|_p
 \right|
 &\le
 \sup_{0\le t\le T}\|\varpi_A^\e(t)-\varpi^E(t)\|_p
 +\|\varpi_0^\e-\varpi_0^E\|_p
 \to0.
\end{align*}
Together with \eqref{eq:sharp-lp}, this convergence proves \eqref{eq:fld-eul}.
\end{proof}

\section{Kinetic passage through Euler singularities}\label{sec:singularpassage}
The statements below concern the projected macroscopic velocity and the
transported relative vorticity.  For each fixed admissible kinetic family, no
subsequence extraction is needed on $[0,T_*]$: $\mathbb P u^\e$ converges
strongly in $C([0,T_*];L^2)$ to the regular Euler velocity extended by its
terminal trace, while $\varpi_A^\e$ converges strongly in
$C([0,T_*];L^1\cap L^p)$ to the corresponding trace of the transported
relative vorticity.  The same trace is reached from both sides in shrinking
kinetic time windows.  In the $p=2$ regime, $u_A^\e$ is uniformly almost
Lipschitz in local $L^2$, and every Euler cluster limit inherits the same
modulus.  The kinetic $L^{3,1}$ control also yields a local Lipschitz estimate
with coefficient controlled by $1+\|\varpi_0^\e\|_{L^{3,1}}$.

\subsection{Continuation of a regular Euler solution}
We begin with the pre-breakdown weak--strong identification.  This is the only
place in the passage argument where weak--strong uniqueness is used.

\begin{corollary}[Kinetic continuation past a regular Euler lifespan]
\label{cor:beyondbreakdown}
Assume the hypotheses of Corollary~\ref{cor:roughreal} for some $p>1$, and
let $u_0$ be the corresponding finite-energy Euler velocity.  Suppose that for some
$0<T_*<\infty$ there is a finite-energy axisymmetric no-swirl Euler solution
\[
    u^{\rm reg}\in C([0,T_*);L^2(\R^3;\R^3))
\]
with initial datum $u_0$ such that, for every $\tau<T_*$,
\begin{equation}\label{eq:prebreak-strain}
    \nabla^{\rm s}u^{\rm reg}
    \in L^1(0,\tau;L^\infty(\R^3)).
\end{equation}
Then, for every $T>T_*$, the kinetic solutions furnished by
Corollary~\ref{cor:roughreal} exist on $[0,T]$ for all sufficiently small
$\e$.  From every sequence $\e_j\downarrow0$ one may extract a subsequence
and an energy-conserving axisymmetric no-swirl weak Euler solution
$u^E\in C([0,T];L^2(\R^3))$ such that
\[
    \mathbb Pu^{\e_j}\to u^E
    \qquad\text{in }C([0,T];L^2(\R^3)),
\]
and
\begin{equation}
    u^E(t)=u^{\rm reg}(t)
    \qquad 0\le t<T_*.
\end{equation}
In particular,
\begin{equation}\label{eq:break-ltwo-limit}
    u^{\rm reg}(t)\longrightarrow u^E(T_*)
    \qquad\text{in }L^2(\R^3)\quad\text{as }t\uparrow T_*.
\end{equation}
Thus $u^E$ gives an energy-conserving Euler continuation of the regular
solution along the selected kinetic subsequence.
\end{corollary}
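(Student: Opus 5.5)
The plan is to assemble existing results: existence on $[0,T]$ from Theorem~\ref{thm:longtime}, subsequential global-energy compactness from Corollary~\ref{cor:roughreal} together with Theorem~\ref{thm:globenergy} and Lemma~\ref{lem:weak-euler-limit}, and identification before $T_*$ from Proposition~\ref{prop:weakstrong}. No new estimate is needed, and the order of steps is as follows.

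\emph{Step 1 (lifespan).} The kinetic families of Corollary~\ref{cor:roughreal} satisfy the hypotheses of Theorem~\ref{thm:longtime}, whose lifespan is $T^\e\to\infty$. Hence for any fixed $T>T_*$ we have $T<T^\e$ for all sufficiently small $\e$, and the strong solutions exist uniquely on $[0,T]$.

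\emph{Step 2 (subsequential limit).} Given $\e_j\downarrow0$, apply Corollary~\ref{cor:roughreal} on $[0,T]$ to extract a subsequence and a renormalized pair $(u^E,\varpi^E)$. Along it, $\mathbb Pu^{\e_j}\to u^E$ in $C([0,T];L^2)$ and $\|u^E(t)\|_2=\|u_0\|_2$. The data realized there satisfy \eqref{eq:comp-conv}, so the hypotheses of Theorem~\ref{thm:globenergy} hold. Lemma~\ref{lem:weak-euler-limit} then shows that $u^E\in C([0,T];L^2)$ is a finite-energy weak Euler solution in the divergence-free velocity formulation with initial datum $u_0$. Axisymmetry without swirl passes to the $L^2$ limit, since every $\mathbb Pu^\e$ has this symmetry by Lemma~\ref{lem:sympres}.

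\emph{Step 3 (identification before $T_*$).} Fix $\tau<T_*$ and apply Proposition~\ref{prop:weakstrong} with $u=u^E$, which is energy conserving on $[0,T]$, and $U=u^{\rm reg}|_{[0,\tau]}$. Here $U\in C([0,\tau];L^2)$ has the same datum and bounded strain by \eqref{eq:prebreak-strain}. The main point to check is that $u^{\rm reg}$ is a finite-energy weak Euler solution in the same test-function sense as \eqref{eq:weak-euler-velocity}; I would take this as part of the hypothesis that it is a finite-energy Euler solution, and note that $L^\infty_tL^2_x$ holds on $[0,\tau]$ by continuity. The proposition gives $u^E(t)=u^{\rm reg}(t)$ for all $0\le t\le\tau$, and since $\tau<T_*$ is arbitrary, for all $t<T_*$.

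\emph{Step 4 (terminal trace).} Continuity of $u^E$ in $L^2$ at $T_*$, combined with $u^E=u^{\rm reg}$ on $[0,T_*)$, gives $u^{\rm reg}(t)=u^E(t)\to u^E(T_*)$ in $L^2$ as $t\uparrow T_*$. This is \eqref{eq:break-ltwo-limit}. The limit is independent of the subsequence, because it is the $L^2$ limit of $u^{\rm reg}$ itself. Energy conservation of $u^E$ on $[0,T]$ then exhibits $u^E$ as the asserted continuation.

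The main obstacle is the compatibility check in Step~3, namely verifying that both solutions fall within the admissibility framework of \cite{DeRosaInversiStefani2023}. Every other step is a direct citation of the results listed above.
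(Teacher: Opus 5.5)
Your proposal is correct and follows the paper's proof essentially step for step. The steps are: existence on $[0,T]$ from $T^\e\to\infty$; a subsequential, energy-conserving $C([0,T];L^2)$ limit from Corollary~\ref{cor:roughreal} and Lemma~\ref{lem:weak-euler-limit}; identification with $u^{\rm reg}$ on each $[0,\tau]$, $\tau<T_*$, via Proposition~\ref{prop:weakstrong}; and the trace \eqref{eq:break-ltwo-limit} from continuity of $u^E$ at $T_*$. The paper also takes for granted the point you flag in Step~3, namely that $u^{\rm reg}$ is a weak solution in the same velocity formulation, by treating it as part of the hypothesis that $u^{\rm reg}$ is a finite-energy Euler solution.
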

\begin{proof}[Proof sketch]
Fix $T>T_*$ and a sequence $\e_j\downarrow0$.  Since $T^\e\to\infty$,
the corresponding kinetic solutions are defined on $[0,T]$ for all large
$j$.  Corollary~\ref{cor:roughreal} yields, after extraction, an
energy-conserving renormalized Euler pair $(u^E,\varpi^E)$ with
$\mathbb Pu^{\e_j}\to u^E$ in $C([0,T];L^2)$; by
Lemma~\ref{lem:weak-euler-limit}, $u^E$ is a weak Euler solution in the
standard velocity formulation.  For every $\tau<T_*$, the energy equality
for $u^E$, the common initial datum, and \eqref{eq:prebreak-strain} allow us
to apply Proposition~\ref{prop:weakstrong} on $[0,\tau]$, giving
$u^E=u^{\rm reg}$ there.  Since $\tau<T_*$ is arbitrary,
$u^E=u^{\rm reg}$ on $[0,T_*)$.  Finally,
$u^E\in C([0,T];L^2)$ implies
$u^{\rm reg}(t)=u^E(t)\to u^E(T_*)$ strongly in $L^2$ as
$t\uparrow T_*$, proving \eqref{eq:break-ltwo-limit} and the claimed
continuation.
\end{proof}

\subsection{A common singular trace and quantitative kinetic passage}
The continuation may remain subsequential at fixed positive times beyond
$T_*$, but the projected velocity and transported relative vorticity of one
fixed admissible kinetic family converge without extraction on $[0,T_*]$.
Compactness together with the pre-singular identification also forces every
post-singular cluster path to take the same value at $T_*$.

\begin{corollary}[Global passage, full-family singular trace, and transported invariants]
\label{cor:global-passage-invariants}
Assume the hypotheses of Corollary~\ref{cor:beyondbreakdown} and fix one
kinetic family furnished by Corollary~\ref{cor:roughreal}.  Let $p>1$ be the
relative-vorticity exponent in that corollary.  Then the following hold.

\smallskip
\noindent
\emph{(i) Global subsequential passage.}
For every sequence $\e_j\downarrow0$ there are a subsequence
$\e_{j_k}\downarrow0$ and a single renormalized axisymmetric no-swirl Euler
pair $(u^E,\varpi^E)$ defined for all $t\ge0$ such that, for every finite
$T>0$,
\begin{equation}\label{eq:global-single-subsequence}
\begin{aligned}
 \mathbb P u^{\e_{j_k}}&\longrightarrow u^E
       &&\text{in }C([0,T];L^2(\R^3)),\\
 \varpi_A^{\e_{j_k}}&\longrightarrow\varpi^E
       &&\text{in }C([0,T];L^1(\R^3)\cap L^p(\R^3)).
\end{aligned}
\end{equation}
Moreover,
\begin{equation}
 \|u^E(t)\|_2=\|u_0\|_2\quad(t\ge0),
 \qquad u^E(t)=u^{\rm reg}(t)\quad(0\le t<T_*).
\end{equation}

\smallskip
\noindent
\emph{(ii) Full-family convergence to a common singular trace.}
There is a unique
\[
 \varpi^{\rm reg}\in C([0,T_*);L^1(\R^3)\cap L^p(\R^3))
\]
such that
\[
 \nabla\times u^{\rm reg}(t)=r\varpi^{\rm reg}(t)e_\vartheta
 \qquad\text{in }\mathcal D'(\R^3),\qquad 0\le t<T_*.
\]
There are also unique traces
\[
 u_*\in L^2(\R^3;\R^3),
 \qquad
 \varpi_*\in L^1(\R^3)\cap L^p(\R^3),
\]
such that
\begin{equation}\label{eq:singular-vorticity-trace}
 u^{\rm reg}(t)\to u_*\quad\text{in }L^2,
 \qquad
 \varpi^{\rm reg}(t)\to\varpi_*\quad\text{in }L^1\cap L^p
 \qquad(t\uparrow T_*).
\end{equation}
Moreover, $\nabla\times u_*=r\varpi_*e_\vartheta$ in distributions.  If we set
$u^{\rm reg}(T_*):=u_*$ and $\varpi^{\rm reg}(T_*):=\varpi_*$, then the
projected velocity and transported relative vorticity of the
entire fixed kinetic family satisfy
\begin{equation}\label{eq:full-family-to-trace}
 \lim_{\e\downarrow0}\sup_{0\le t\le T_*}
 \left(
   \|\mathbb P u^\e(t)-u^{\rm reg}(t)\|_2
  +\|\varpi_A^\e(t)-\varpi^{\rm reg}(t)\|_{L^1\cap L^p}
 \right)=0.
\end{equation}
The same trace is reached from both sides in shrinking kinetic time windows:
\begin{equation}\label{eq:two-sided-singular-trace}
 \lim_{\rho\downarrow0}\limsup_{\e\downarrow0}
 \sup_{\substack{0\le t\le T^\e\\ |t-T_*|\le\rho}}
 \left(
   \|\mathbb P u^\e(t)-u_*\|_2
  +\|\varpi_A^\e(t)-\varpi_*\|_{L^1\cap L^p}
 \right)=0.
\end{equation}
Since $T^\e\to\infty$, for each fixed $\rho>0$ the restriction
$t\le T^\e$ is eventually inactive on the whole interval
$[\max\{0,T_*-\rho\},T_*+\rho]$.  Hence it does not affect the
$\limsup_{\e\downarrow0}$ in \eqref{eq:two-sided-singular-trace}.
Consequently every global kinetic Euler cluster limit has the same two-sided
velocity trace $u_*$ and the same trace $\varpi_*$ of the transported relative
vorticity at $T_*$.  This does not imply that two such limits coincide at any fixed time
$t>T_*$, and it does not compare distinct microscopic preparations.

\smallskip
\noindent
\emph{(iii) Transported distribution.}
For every globally Lipschitz function $\beta:\R\to\R$ with $\beta(0)=0$,
\begin{equation}\label{eq:global-kinetic-renormalized-invariant}
 \int_{\R^3}\beta(\varpi^E(t,x))\,dx
   =\int_{\R^3}\beta(\varpi_0(x))\,dx,
 \qquad t\ge0.
\end{equation}
In particular, for $q\in[1,p]$,
\begin{equation}\label{eq:global-signed-vorticity-norm}
 \|(\varpi^E(t))_\pm\|_{L^q}
 =\|(\varpi_0)_\pm\|_{L^q},\qquad t\ge0,
\end{equation}
and the positive and negative superlevel-set distributions are preserved.
\end{corollary}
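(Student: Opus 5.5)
The plan is to build everything from Corollaries~\ref{cor:roughreal} and \ref{cor:beyondbreakdown}, a diagonal extraction in time, and a subsequence-of-subsequence argument driven by weak--strong uniqueness on $[0,\tau]$, $\tau<T_*$.

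\emph{Step (i).} Fix $\e_j\downarrow0$. For $T=1,2,3,\dots$ apply Corollary~\ref{cor:roughreal} successively, each time extracting from the previous subsequence. This gives nested subsequences and limits $(u^{E,m},\varpi^{E,m})$ on $[0,m]$. Each limit restricts to the previous one, because it is a limit along a further subsequence. Taking the diagonal sequence $\e_{j_k}$ therefore yields one pair defined on $[0,\infty)$, together with \eqref{eq:global-single-subsequence} on every $[0,T]$. The energy identity comes from \eqref{eq:presc-bd}. The identification $u^E=u^{\rm reg}$ on $[0,T_*)$ is Corollary~\ref{cor:beyondbreakdown}, that is, Lemma~\ref{lem:weak-euler-limit} combined with Proposition~\ref{prop:weakstrong}.

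\emph{Step (ii).} This is the main step. On $[0,T_*)$ every cluster velocity equals $u^{\rm reg}$. The pair $(u^{\rm reg},\varpi^E)$ satisfies the div--curl relations, and $\varpi^E(t)\in L^1\cap L^p$ is determined by $\nabla\times u^{\rm reg}(t)=r\varpi^E e_\vartheta$ as a distribution; this uses that $r>0$ a.e. Hence $\varpi^E|_{[0,T_*)}$ is independent of the subsequence; define $\varpi^{\rm reg}$ to be this function. Each cluster pair is continuous on $[0,T_*+1]$ with values in $L^2\times(L^1\cap L^p)$. So the limits \eqref{eq:singular-vorticity-trace} exist and equal $(u^E(T_*),\varpi^E(T_*))$, and they are therefore independent of the cluster. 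Passing to the distributional limit in the div--curl relation gives $\nabla\times u_*=r\varpi_*e_\vartheta$.

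For \eqref{eq:full-family-to-trace}, argue by contradiction. Suppose there are $\eta>0$ and $\e_j\to0$ along which the supremum stays at least $\eta$. Extract by Corollary~\ref{cor:roughreal} on $[0,T_*]$. The limit pair coincides with $(u^{\rm reg},\varpi^{\rm reg})$ extended by the traces on all of $[0,T_*]$, including $t=T_*$ by continuity. This contradicts the lower bound $\eta$.

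For \eqref{eq:two-sided-singular-trace}, argue the same way on $[0,T_*+1]$. Suppose $\rho_k\downarrow0$, $\e_k\downarrow0$ and times $t_k$ with $|t_k-T_*|\le\rho_k$ keep the error at least $\eta$. Extract a $C_t$-convergent subsequence with limit $(u^E,\varpi^E)$. Then the error at $t_k$ is bounded by the uniform convergence error plus the modulus of continuity of $(u^E,\varpi^E)$ at $T_*$, since $u^E(T_*)=u_*$. Both terms tend to zero, which is a contradiction. Equicontinuity is not needed, because uniform convergence to one continuous limit suffices.

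\emph{Step (iii).} For $\beta$ bounded, continuous and vanishing near $0$, \eqref{eq:bet-dx} gives $\int\beta(\varpi_A^\e(t))=\int\beta(\varpi_0^\e)$. Strong $L^1$ convergence at fixed $t$, together with \eqref{eq:data-conv}, passes this to the limit for Lipschitz $\beta$ with $\beta(0)=0$, since $|\beta(a)-\beta(b)|\le L|a-b|$. Indeed, apply \eqref{eq:bet-dx} to truncations of $\beta$ and use monotone convergence. An alternative is to use the flow identity \eqref{eq:sign-decomp} with measure preservation directly for Lipschitz $\beta$, which gives exact invariance at level $\e$; then pass to the limit in $L^1$. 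Taking $\beta(\lambda)=\min\{(\lambda_\pm)^q,M\}$ and the levels $\mathbf1_{\{\pm\lambda>s\}}$, approximated by Lipschitz functions, gives \eqref{eq:global-signed-vorticity-norm} and the preservation of the superlevel distributions. For $q\in[1,p]$ the $L^q$ convergence follows by interpolation.
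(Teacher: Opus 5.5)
Your proposal is correct and follows essentially the same route as the paper: a diagonal extraction over nested time intervals, weak--strong identification on $[0,\tau]$ for every $\tau<T_*$, uniqueness of the vorticity from the curl relation because $r>0$ a.e., singular traces obtained from time continuity of a cluster pair, contradiction-plus-extraction for both the full-family and the two-sided shrinking-window statements, and passage to the limit in the exact $\e$-level transport invariants using strong $C_tL^1$ convergence. The one detail you leave implicit is that the curl relation, which a priori holds only for a.e.\ $t$, holds for every $t$ (including $t=T_*$); the paper gets this by testing against $\Phi\in C_c^\infty$ and using $u\in C_tL^2$ and $\varpi\in C_tL^1$, and this is what justifies stating $\nabla\times u^{\rm reg}(t)=r\varpi^{\rm reg}(t)e_\vartheta$ for all $0\le t<T_*$.
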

\begin{proof}
For (i), start with an arbitrary sequence $\e_j\downarrow0$ and apply
Corollary~\ref{cor:roughreal} on the nested intervals
$[0,T_*+m]$, $m=1,2,\ldots$.  A diagonal extraction converges in both
topologies of \eqref{eq:global-single-subsequence} on every finite time
interval; uniqueness of limits on overlaps gives a single global pair
$(u^E,\varpi^E)$.  Corollary~\ref{cor:roughreal} and
Theorem~\ref{thm:globenergy} give the renormalized Euler structure and the
energy identity, while Corollary~\ref{cor:beyondbreakdown} gives
$u^E=u^{\rm reg}$ on $[0,T_*)$.

For (ii), fix $T>T_*$ and consider any cluster pair $(u,\varpi)$ on
$[0,T]$.  Corollary~\ref{cor:beyondbreakdown} gives
$u=u^{\rm reg}$ on $[0,T_*)$.  The relation
$\nabla\times u=r\varpi e_\vartheta$, initially valid for almost every time,
in fact holds for every time: after testing against
$\Phi\in C_c^\infty(\R^3;\R^3)$, both sides are continuous in time by
$u\in C_tL^2$ and $\varpi\in C_tL^1$.  Hence any two cluster vorticities
$\varpi_1,\varpi_2$ satisfy
$r(\varpi_1-\varpi_2)e_\vartheta=0$ on $[0,T_*)$.  Testing on
$\Omega\Subset\{r>0\}$ against
$\psi\,r e_\vartheta/r^2$, $\psi\in C_c^\infty(\Omega)$, gives
$\varpi_1=\varpi_2$ on $\Omega$; since the symmetry axis has zero
three-dimensional measure, the equality holds almost everywhere in $\R^3$.
This defines the unique
$\varpi^{\rm reg}\in C([0,T_*);L^1\cap L^p)$.

Choose one cluster pair and set
$(u_*,\varpi_*):=(u(T_*),\varpi(T_*))$.  Its strong time continuity and the
preceding pre-singular uniqueness give \eqref{eq:singular-vorticity-trace};
any other cluster pair has the same trace by the same argument.  Passing to
the curl identity at $T_*$ yields
$\nabla\times u_*=r\varpi_*e_\vartheta$.
If \eqref{eq:full-family-to-trace} failed, a sequence
$\e_j\downarrow0$ would stay a fixed positive distance from the extended
pair $(u^{\rm reg},\varpi^{\rm reg})$ somewhere on $[0,T_*]$.  Extracting a
cluster pair on $[0,T]$ contradicts the strong uniform-in-time convergence,
because every such cluster pair equals the extended regular pair on
$[0,T_*]$.  This proves \eqref{eq:full-family-to-trace}.

The two-sided statement is similar but uses moving times.  If
\eqref{eq:two-sided-singular-trace} failed, there would be
$\rho_j\downarrow0$, $\e_j\downarrow0$, and times $t_j$ with
$|t_j-T_*|\le\rho_j$ such that the distance of
$(\mathbb Pu^{\e_j}(t_j),\varpi_A^{\e_j}(t_j))$ from
$(u_*,\varpi_*)$ is bounded below by a fixed positive constant.  Using
$T^\e\to\infty$, choose the sequence so that all large $j$ are defined on
$[0,T_*+1]$, and extract a cluster pair there.  Then $t_j\to T_*$, while
strong convergence on the fixed interval and continuity of the cluster pair
at $T_*$ force that distance to tend to zero, a contradiction.  Thus
\eqref{eq:two-sided-singular-trace} holds.

For (iii), Lemma~\ref{lem:splitconv} gives, for each $\e$, a
measure-preserving flow and therefore
\[
 \int_{\R^3}\beta(\varpi_A^\e(t))
 =\int_{\R^3}\beta(\varpi_0^\e)
\]
for every globally Lipschitz $\beta$ with $\beta(0)=0$.  The strong
$C_tL^1$ convergence in \eqref{eq:global-single-subsequence}, together with
$\varpi_0^\e\to\varpi_0$ in $L^1$, permits passage to the limit on every
finite interval and proves
\eqref{eq:global-kinetic-renormalized-invariant}.  Exact transport also
preserves the signed $L^q$ norms.  Since strong convergence in
$L^1\cap L^p$ implies strong convergence in $L^q$ for $1\le q\le p$ by
interpolation, passing to the limit gives
\eqref{eq:global-signed-vorticity-norm}.  Finally, applying
\eqref{eq:global-kinetic-renormalized-invariant} to
$\beta_n(z)=\min\{1,n(z-a)_+\}$ and to
$\min\{1,n(-z-a)_+\}$, then letting $n\to\infty$, gives preservation of the
positive and negative superlevel-set distributions.
\end{proof}

\begin{proposition}[Quantitative temporal modulus and singular trace]
\label{prop:quantitative-singular-passage}
Assume the hypotheses of Corollary~\ref{cor:global-passage-invariants} and,
in addition, the $p=2$ prescribed-data case.  Fix $R,T>0$.
For every $s,t\in[0,T]$ with $s\ne t$ and all sufficiently small coupled
parameters (so in particular $T\le T^\e$),
\begin{equation}\label{eq:kinetic-temporal-envelope}
 \|u_A^\e(t)-u_A^\e(s)\|_{L^2(B_R)}
 \le C_{R,T}|t-s|
 \min\left\{
   \sqrt{1+\log^+\!\frac1{|t-s|}},\,
   1+\|\varpi_0^\e\|_{L^{3,1}}
 \right\}.
\end{equation}
For every sufficiently small $\delta>0$,
\begin{align}\label{eq:projected-temporal-envelope}
 &\|\mathbb P u^\e(t)-\mathbb P u^\e(s)\|_{L^2(B_R)}\notag\\
 &\qquad\le C_{R,T}|t-s|
 \min\left\{
   \sqrt{1+\log^+\!\frac1{|t-s|}},\,
   1+\|\varpi_0^\e\|_{L^{3,1}}
 \right\}
 +C_{T,\delta}\bigl(\e^{1/2-\delta}+\kappa^{1/2-\delta}\bigr).
\end{align}
Consequently every global kinetic Euler cluster limit satisfies, for
$s\ne t$,
\begin{equation}\label{eq:limit-almost-lipschitz}
 \|u^E(t)-u^E(s)\|_{L^2(B_R)}
 \le C_{R,T}|t-s|
 \sqrt{1+\log^+\!\frac1{|t-s|}}.
\end{equation}
If $T\ge T_*$, then for the common trace $u_*$ of
Corollary~\ref{cor:global-passage-invariants},
\begin{equation}\label{eq:two-sided-almost-lipschitz-trace}
 \|u^E(t)-u_*\|_{L^2(B_R)}
 \le C_{R,T}|t-T_*|
 \sqrt{1+\log^+\!\frac1{|t-T_*|}},
 \qquad t\ne T_*,\quad t,T_*\in[0,T].
\end{equation}
Thus every post-singular cluster continuation satisfies the same local
almost-Lipschitz estimate relative to the common velocity trace.

The initial bound \eqref{eq:init-bd} gives
$\|\varpi_0^\e\|_{L^{3,1}}\le\sqrt{\Lambda_5^\e}$, and hence
\begin{equation}\label{eq:iterated-log-temporal-envelope}
 \|u_A^\e(t)-u_A^\e(s)\|_{L^2(B_R)}
 \le C_{R,T}|t-s|
 \min\left\{
   \sqrt{1+\log^+\!\frac1{|t-s|}},\,
   1+\sqrt{\Lambda_5^\e}
 \right\}.
\end{equation}
If
$\sup_\e\|\varpi_0^\e\|_{L^{3,1}}<\infty$, then $u_A^\e$ is uniformly
locally Lipschitz in time with values in $L^2_{\rm loc}$, and every kinetic
Euler cluster limit inherits the same local Lipschitz bound.
\end{proposition}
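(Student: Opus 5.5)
The plan is to estimate $\partial_tu_A^\e$ in a weak local norm and then interpolate against spatial regularity of $u_A^\e$. Two bounds will be combined: a log-Lipschitz one from $L^2$ relative vorticity ($p=2$), and a Lipschitz one from $L^{3,1}$. Start from the velocity equation \eqref{eq:leray-id}:
\[
 \partial_tu_A^\e=-\mathbb P\bigl[\partial_j(v_j^\e u_A^\e)+(\nabla u_B^\e)^Tu_A^\e\bigr].
\]
This gives $\partial_tu_A^\e$ bounded in $L^\infty(0,T;H^{-s})$ uniformly in $\e$, as in \eqref{eq:td-pp}. That by itself is too weak, so we exploit the $p=2$ structure.

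\emph{Step 1: spatial regularity.} For $p=2$, $\varpi_A^\e$ is uniformly bounded in $L^1\cap L^2$ by \eqref{eq:strg-bd}. Since $r\le 2R$ on $B_{2R}$, $\omega_A^\e=r\varpi_A^\e e_\vartheta$ is uniformly bounded in $L^2(B_{2R})$. Lemma~\ref{lem:locreg} with $p=2$ then gives $u_A^\e$ bounded in $L^\infty(0,T;H^1(B_R))$. The forcing parts satisfy $v^\e=u_A^\e+u_B^\e$ with $u_B^\e\to0$ in $L^\infty_tW^{1,2}\cap L^\infty$ (Lemma~\ref{lem:splitconv}). Hence $v^\e$ obeys the same local $H^1$ bound up to $o(1)$, together with the global $L^2$ bound.

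\emph{Step 2: log-Lipschitz estimate.} Test $u_A^\e(t)-u_A^\e(s)$ against a mollified version. Let $\varphi_\nu$ be a mollifier and write the difference as three pieces: $(I-\varphi_\nu*)$ applied at times $t$ and $s$, plus the smooth part $\varphi_\nu*\bigl(u_A^\e(t)-u_A^\e(s)\bigr)$.
\begin{itemize}
\item The two high-frequency pieces are bounded by $\nu\|\nabla u_A^\e\|_{L^2(B_{R+1})}\lesssim\nu$.
\item For the smooth part, integrate \eqref{eq:leray-id} in time. The quadratic term $\partial_j(v^\e_ju_A^\e)$ pairs with a derivative of the mollifier.
\item Using the two-dimensional meridional structure and $H^1\hookrightarrow L^q$ for all $q<\infty$ in the meridional variables, we get $\|v^\e u_A^\e\|_{L^2(B_R)}\lesssim q$ with the Orlicz-type control $\exp L$.
\item With the localized Leray projector handled as in Lemma~\ref{lem:preslocal} (near/far split; far part smooth), the smooth part is bounded by $|t-s|\,\nu^{-1}$ times a factor whose best optimization is logarithmic.
\end{itemize}
The goal is $|t-s|\bigl(\nu^{-1}\cdot\text{const}\bigr)$ with a $\sqrt{\log(1/\nu)}$ loss. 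This loss comes from the borderline embedding $H^1\not\subset L^\infty$ in two dimensions, via a Brezis--Gallouët-type bound $\|\varphi_\nu*g\|_\infty\lesssim\|g\|_{H^1}\sqrt{\log(1/\nu)}$. Choosing $\nu\sim|t-s|$ yields the first branch of \eqref{eq:kinetic-temporal-envelope}. I expect this logarithmic bookkeeping, including the axis region where the meridional two-dimensional picture degenerates, to be the main obstacle. I would try first to work in Cartesian $B_R$ and use $\omega_A^\e\in L^2$ only through $\nabla u_A^\e\in L^2$, pairing one factor in $L^2$ and the other in $\mathrm{BMO}$-type mollified $L^\infty$.

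\emph{Step 3: Lipschitz branch.} By Lemma~\ref{lem:vortenergy} and \eqref{eq:omth-bd}, $\|\varpi^\e(t)\|_{L^{3,1}}\lesssim1+\|\varpi_0^\e\|_{L^{3,1}}$. Transport gives the same bound for $\varpi_A^\e$. Local Biot--Savart then gives $u_A^\e\in L^\infty(B_{2R})$ with norm $\lesssim 1+\|\varpi_0^\e\|_{L^{3,1}}$, the far part being handled by the $L^2$ bound. Pairing \eqref{eq:leray-id} directly in $L^2(B_R)$ duality, the term $v^\e\cdot\nabla u_A^\e$ is bounded by $\|v^\e\|_{L^\infty(B_{2R})}\|\nabla u_A^\e\|_{L^2(B_{2R})}$, and the pressure is again localized. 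This gives the second branch of the minimum.

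\emph{Consequences.} Estimate \eqref{eq:projected-temporal-envelope} follows by the triangle inequality with \eqref{eq:del-kin}. Estimate \eqref{eq:limit-almost-lipschitz} follows by passing to the limit with \eqref{eq:loc-conv} and \eqref{eq:pp-conv}. Estimate \eqref{eq:two-sided-almost-lipschitz-trace} is the case $s=T_*$, using the trace from Corollary~\ref{cor:global-passage-invariants}. Estimate \eqref{eq:iterated-log-temporal-envelope} follows from \eqref{eq:init-bd}. The uniform Lipschitz statement follows from the second branch.
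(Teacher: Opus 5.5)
Your Lipschitz branch (Step~3) and the list of consequences essentially match the paper. The log-Lipschitz branch in Step~2, however, has a genuine gap.

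Once the derivative in $\partial_j(v^\e_j u_A^\e)$ is moved onto the mollifier, the smooth part costs $|t-s|\nu^{-1}$ times your logarithmic factor. The high-frequency part only gains $\nu\|\nabla u_A^\e\|_{L^2}$. With $\nu\sim|t-s|$ the smooth part is of size $\sqrt{\log(1/|t-s|)}$, which does not even vanish. Optimizing $\nu+|t-s|\nu^{-1}$ gives at best $|t-s|^{1/2}$, and no logarithmic refinement of $\|\varphi_\nu*g\|_{L^\infty}$ can compensate a full power $\nu^{-1}$.

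The missing idea is to bound $\partial_tu_A^\e$ itself, with no derivative transferred, in a space just below $L^2$. Suppose $\sup_\tau\|\nabla u_A^\e(\tau)\|_{L^2(B_{3R})}\le C$ and $\|u_A^\e(\tau)\|_{L^q(B_{3R})}\le C\sqrt q$ for all $q\ge4$. Then:
\begin{itemize}
\item the near part of $\mathbb P\operatorname{div}(u_A^\e\otimes u_A^\e)$ is bounded in $L^{2q/(q+2)}$ by H\"older, by $C\sqrt q$;
\item the far part is bounded in $L^\infty(B_R)$ by $C_R\|u_A^\e\|_2^2$;
\item the $u_B^\e$ contributions (the forcing $f_A^\e$ of Lemma~\ref{lem:approxeul}) are $o(1)$ in $L^2$.
\end{itemize}
Hence $\|\partial_tu_A^\e\|_{L^{2q/(q+2)}(B_R)}\le C\sqrt q$. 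Interpolating against the $L^q$ bound gives $\|u_A^\e(t)-u_A^\e(s)\|_{L^2(B_R)}\le C\sqrt q\,|t-s|^{1-2/q}$, and choosing $q=2\log(1/|t-s|)$ produces the first branch. Inside your mollifier scheme, this corresponds to using Young's inequality with $\|\varphi_\nu\|_{L^{q/(q-1)}}\sim\nu^{-3/q}$ rather than putting the derivative on $\varphi_\nu$.

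The other missing ingredient is the $\sqrt q$ growth itself, which your Step~1 does not provide. Lemma~\ref{lem:locreg} yields only the three-dimensional bound $\nabla u_A^\e\in L^2(B_R)$. In meridional variables this is a bound with weight $r\,dr\,dz$, which degenerates at the axis and gives nothing beyond $L^6$. For the same reason your Cartesian fallback cannot work: $H^1\not\subset\mathrm{BMO}$ in three dimensions, and interpolating $\partial_tu\in L^{3/2}$ against $L^6$ gives only $|t-s|^{2/3}$. The Brezis--Gallou\"et bound you quote is also false for three-dimensional $H^1$.

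What is needed is the unweighted meridional bound $\sup_\tau\int_{B_{8R}}|\nabla u_A^\e|^2r^{-1}\,dx\le C_{R,T}$. With it, the meridional components lie in $H^1$ of the rectangle $(0,4R)\times(-4R,4R)$ with respect to $dr\,dz$, and the two-dimensional Moser--Trudinger inequality applies. The paper obtains this bound as follows:
\begin{itemize}
\item write $u_A^\e=w^\e+h^\e$ with $w^\e=\nabla\times(-\Delta)^{-1}(\chi_Rr\varpi_A^\e e_\vartheta)$;
\item apply the weighted Calder\'on--Zygmund estimate for the $A_2$ weight $r^{-1}$, together with $\int\chi_R^2r|\varpi_A^\e|^2\,dx\le C_R\|\varpi_A^\e\|_2^2$;
\item use interior harmonic estimates for $h^\e$.
\end{itemize}
This is exactly the axis difficulty you flagged but left unresolved.
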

\begin{proof}
The $p=2$ data convergence, Lemma~\ref{lem:splitconv}, and
Lemma~\ref{lem:approxeul} give
\begin{equation*}
 \sup_{0\le \tau\le T}
 \bigl(\|\varpi_A^\e(\tau)\|_2+\|u_A^\e(\tau)\|_2\bigr)\le C_T.
\end{equation*}
We first derive the estimate that is uniform in the kinetic parameters. Choose a smooth radial cutoff \(\chi_R\), equal to $1$ on \(B_{12R}\) and supported in \(B_{16R}\), and set
\[
w^\varepsilon
:=
\nabla\times(-\Delta)^{-1}
\bigl(\chi_R r\varpi_A^\varepsilon e_\vartheta\bigr),
\qquad
h^\varepsilon:=u_A^\varepsilon-w^\varepsilon.
\]
Weighted Calder\'on--Zygmund estimate with $r^{-1}\in A_2(\R^3)$ in \eqref{As} gives
\[
\begin{aligned}
\int_{\mathbb R^3}\frac{|\nabla w^\varepsilon|^2}{r}\,dx
&\lesssim
\int_{\mathbb R^3}
\frac{|\chi_R r\varpi_A^\varepsilon e_\vartheta|^2}{r}\,dx =
\int_{\mathbb R^3}
\chi_R^2r|\varpi_A^\varepsilon|^2\,dx
\le C_R\|\varpi_A^\varepsilon\|_2^2.
\end{aligned}
\] 
Moreover, the Hardy–Littlewood–Sobolev inequality yields
\[
\|w^\varepsilon\|_2
\lesssim
\|\chi_R r\varpi_A^\varepsilon\|_{6/5}
\le C_R\|\varpi_A^\varepsilon\|_2.
\]
$-\Delta u_A^\varepsilon
=\nabla\times(r\varpi_A^\varepsilon e_\vartheta)$ implies that $h^\e$ is harmonic in \(B_{12R}\). The interior harmonic estimates therefore give
\[
\begin{aligned}
\|h^\varepsilon\|_{L^\infty(B_{8R})}
+\|\nabla h^\varepsilon\|_{L^\infty(B_{8R})}
&\le C_R\|h^\varepsilon\|_{L^2(B_{12R})} \le C_R\left(
\|u_A^\varepsilon\|_2+\|\varpi_A^\varepsilon\|_2
\right)
\le C_{R,T}.
\end{aligned}
\]
Combining these estimates we obtain
\begin{equation}\label{eq:weighted-meridional-gradient}
 \sup_{0\le\tau\le T}
 \int_{B_{8R}}\frac{|\nabla u_A^\e(\tau,x)|^2}{r}\,dx
 \le C_{R,T}.
\end{equation}

In cylindrical coordinates the Jacobian cancels the weight in
\eqref{eq:weighted-meridional-gradient}. The global \(L^2_x\) bound and an anchored Poincaré inequality consequently give, 
\[
\sup_{0\le\tau\le T}
\left\|
\bigl(u_A^{\varepsilon,r}(\tau),u_A^{\varepsilon,z}(\tau)\bigr)
\right\|_{H^1(
(0,4R)\times(-4R,4R)
)}
\le C_{R,T}.
\]
Applying the two-dimensional Moser–Trudinger inequality on $(0,4R)\times(-4R,4R)$ yields
\begin{equation}\label{eq:local-q-growth}
 \sup_{0\le\tau\le T}
 \left(
   \|\nabla u_A^\e(\tau)\|_{L^2(B_{3R})}
   +q^{-1/2}\|u_A^\e(\tau)\|_{L^q(B_{3R})}
 \right)
 \le C_{R,T},
 \qquad q\ge4.
\end{equation}

We next estimate the nonlinear term. Let \(\eta_R\in C_c^\infty(B_{3R})\) equal one on \(B_{2R}\). Since \(\nabla\cdot u_A^\e=0\),
\[
\mathbb{P}\operatorname{div}(u_A^\e\otimes u_A^\e)
={} \mathbb{P}\left[
\eta_R(u_A^\e\cdot\nabla)u_A^\e+(u_A^\e\cdot\nabla\eta_R)u_A^\e
\right] + \mathbb{P}\operatorname{div}\left[(1-\eta_R)u_A^\e\otimes u_A^\e\right].
\]
Hölder’s inequality and \eqref{eq:local-q-growth} give
\[
\left\|
\eta_R(u_A^\e\cdot\nabla)u_A^\e+(u_A^\e\cdot\nabla\eta_R)u_A^\e
\right\|_{L^{2q/(q+2)}(\mathbb R^3)} \le
C_R\|u_A^\e\|_{L^q(B_{3R})}
\left(
\|\nabla u_A^\e\|_{L^2(B_{3R})}+\|u_A^\e\|_2
\right)
\le C_{R,T}\sqrt q.
\]
For the remaining term, the source is separated from \(B_R\), and the off-diagonal kernel of \(P\operatorname{div}\) has order \(|x-y|^{-4}\). Consequently,
\[
\left\|
\mathbb{P}\operatorname{div}\left[(1-\eta_R)u_A^\e\otimes u_A^\e\right]
\right\|_{L^\infty(B_R)}
\lesssim
\sup_{x\in B_R}
\int_{\mathbb R^3\setminus B_{2R}}
\frac{|u_A^\e(y)|^2}{|x-y|^4}\,dy
\le C_R\|u_A^\e\|_2^2.
\]
Hence, uniformly for $q\ge4$,
\begin{equation}\label{eq:local-projected-nonlinearity}
 \|\mathbb P\operatorname{div}(u_A^\e\otimes u_A^\e)
   \|_{L^{2q/(q+2)}(B_R)}
 \le C_{R,T}\sqrt q.
\end{equation}
Moreover, \eqref{eq:ta-id}, \eqref{eq:del-kin}, \eqref{eq:curl-del},
and \eqref{eq:ac-bd}, together with Lemma~\ref{lem:scalecons}, give for every
fixed sufficiently small $\delta>0$
\begin{equation}\label{eq:kinetic-forcing-small}
 \|f_A^\e\|_{L^\infty(0,T;L^2)}
 \le C_{T,\delta}
 \bigl(\e^{1/2-\delta}+\kappa^{1/2-\delta}\bigr)
 \bigl(1+|\log\e|^{15/32}\bigr)=o(1).
\end{equation}
Since $L^2(B_R)\hookrightarrow L^{2q/(q+2)}(B_R)$ uniformly for $q\ge4$,
the approximate Euler equation and
\eqref{eq:local-projected-nonlinearity}--\eqref{eq:kinetic-forcing-small} imply
\[
 \|\partial_tu_A^\e\|_{L^\infty(0,T;L^{2q/(q+2)}(B_R))}
 \le C_{R,T}\sqrt q.
\]
Together with \eqref{eq:local-q-growth}, the fundamental theorem of
calculus and interpolation give
\begin{equation}\label{eq:time-interpolation-q}
 \|u_A^\e(t)-u_A^\e(s)\|_{L^2(B_R)}
 \le C_{R,T}\sqrt q\,|t-s|^{1-2/q},
 \qquad q\ge4.
\end{equation}
For $0<|t-s|\le e^{-2}$ choose
$q=2\log(1/|t-s|)$; for larger increments use the uniform $L^2$ bound.
This proves the first, parameter-uniform estimate in
\eqref{eq:kinetic-temporal-envelope}.

For the second estimate, measure preservation in Lemma~\ref{lem:splitconv}
gives
\[
 \|\varpi_A^\e(t)\|_{L^{3,1}}
 =\|\varpi_0^\e\|_{L^{3,1}}.
\]
The endpoint Riesz-potential estimate $I_1:L^{3,1}(\R^3)\to L^\infty(\R^3)$, applied to the
localized Biot--Savart field yields
\[
\|w^\varepsilon(t)\|_\infty
\lesssim
\|\chi_R r\varpi_A^\varepsilon(t)\|_{L^{3,1}}
\le C_R\|\varpi_0^\varepsilon\|_{L^{3,1}}.
\]
Together with the interior harmonic bound for \(h^\varepsilon\), this gives
\begin{equation}\label{eq:lorentz-local-velocity}
 \|u_A^\e(t)\|_{L^\infty(B_{3R})}
 \le C_{R,T}\bigl(1+\|\varpi_0^\e\|_{L^{3,1}}\bigr).
\end{equation}
Using the same cutoff decomposition as above, now in $L^2(B_R)$, and
\eqref{eq:local-q-growth}, we obtain
\[
 \|\mathbb P\operatorname{div}(u_A^\e\otimes u_A^\e)\|_{L^2(B_R)}
 \le C_{R,T}\bigl(1+\|\varpi_0^\e\|_{L^{3,1}}\bigr).
\]
The approximate Euler equation and \eqref{eq:kinetic-forcing-small} therefore
imply
\[
 \|\partial_tu_A^\e\|_{L^\infty(0,T;L^2(B_R))}
 \le C_{R,T}\bigl(1+\|\varpi_0^\e\|_{L^{3,1}}\bigr),
\]
and hence
\[
 \|u_A^\e(t)-u_A^\e(s)\|_{L^2(B_R)}
 \le C_{R,T}\bigl(1+\|\varpi_0^\e\|_{L^{3,1}}\bigr)|t-s|.
\]
Taking the better of this estimate and the bound obtained from
\eqref{eq:time-interpolation-q} proves
\eqref{eq:kinetic-temporal-envelope}.

Finally, Lemma~\ref{lem:splitconv} gives
$\sup_{\tau\le T}\|u_B^\e(\tau)\|_2
 \lesssim_{T,\delta}\e^{1/2-\delta}+\kappa^{1/2-\delta}$.
Since $\mathbb Pu^\e=u_A^\e+u_B^\e$, this yields
\eqref{eq:projected-temporal-envelope}.  Strong
$C([0,T];L^2)$ convergence along every Euler cluster subsequence gives
\eqref{eq:limit-almost-lipschitz}, and evaluation at the common trace from
Corollary~\ref{cor:global-passage-invariants} gives
\eqref{eq:two-sided-almost-lipschitz-trace}.  The estimate
\eqref{eq:iterated-log-temporal-envelope} follows from
$\|\varpi_0^\e\|_{L^{3,1}}\le\sqrt{\Lambda_5^\e}$.  If the initial
$L^{3,1}$ norms are uniformly bounded, then
$u_A^\e=\mathbb Pu^\e-u_B^\e\to u^E$ in $C([0,T];L^2)$ along each cluster
subsequence, so the local Lipschitz estimate passes to the Euler limit.
\end{proof}

\subsection{Three finite-time singularity mechanisms}
The preceding structure applies to three qualitatively different no-swirl
axisymmetric singularity constructions: the locally self-similar construction
in \cite{ElgindiGhoulMasmoudi2021}, the non-self-similar construction in
\cite{CordobaMartinezZoroaZheng2025}, and the asymptotically self-similar
construction in \cite{Chen2026EulerII}.  The cited constructions supply finite
energy and regularity on every compact
pre-singular time interval.  To apply the $p=2$ kinetic realization, it remains
to verify that the initial relative vorticity belongs to $L^1\cap L^2$.

\begin{lemma}[H\"older vorticity and relative vorticity]
\label{lem:holder-vorticity-relative-lp}
Let $\omega_0\in C_c^\alpha(\R^3;\R^3)$ be axisymmetric and purely
azimuthal, with $0<\alpha<1$.  For $r>0$ write
$\omega_0=\omega_0^\vartheta(r,z)e_\vartheta$ and define
$\varpi_0=\omega_0^\vartheta/r$ there; extend $\varpi_0$ by zero on the
symmetry axis, which does not affect its Lebesgue class.  Then
$\varpi_0\in L^1\cap L^q$ for every $1\le q<2/(1-\alpha)$.
\end{lemma}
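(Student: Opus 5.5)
The plan is to reduce integrability of $\varpi_0$ to a pointwise bound $|\omega_0^\vartheta(r,z)|\lesssim r^\alpha$ near the axis. Since $\omega_0$ is a continuous, purely azimuthal axisymmetric field, it must vanish on the symmetry axis. Indeed, at a point $(0,0,z)$ the field $\omega_0^\vartheta e_\vartheta$ takes the values $\omega_0^\vartheta(r,z)e_\vartheta(\theta)$ for all angles $\theta$. As $r\downarrow0$ continuity forces a single limit, and the directions $e_\vartheta(\theta)$ and $e_\vartheta(\theta+\pi)=-e_\vartheta(\theta)$ are opposite. Concretely, taking the two points $x_\pm=(\pm r,0,z)$ gives $\omega_0(x_\pm)=\pm\omega_0^\vartheta(r,z)e_2$. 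Then
\[
 2|\omega_0^\vartheta(r,z)|=|\omega_0(x_+)-\omega_0(x_-)|\le[\omega_0]_{C^\alpha}(2r)^\alpha .
\]
This yields $|\omega_0^\vartheta(r,z)|\le 2^{\alpha-1}[\omega_0]_{C^\alpha}r^\alpha$ for all $r>0$ and all $z$, with no prior vanishing argument needed.

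Hence $|\varpi_0(r,z)|\le C r^{\alpha-1}$ for $r>0$. We also have the trivial bound $|\varpi_0|\le\|\omega_0\|_\infty/r$. Let the support of $\omega_0$ lie in $\{r<L,\ |z|<L\}$. Then, by \eqref{eq:lp-cyl},
\[
 \|\varpi_0\|_{L^q(\R^3)}^q=2\pi\int_{-L}^{L}\!\int_0^L r|\varpi_0|^q\,dr\,dz\le C\int_0^{1} r^{1-q(1-\alpha)}\,dr+C\int_1^L r^{1-q}\,dr .
\]
The first integral is finite exactly when $q(1-\alpha)<2$, that is, $q<2/(1-\alpha)$. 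The second integral is finite because the range is bounded. The case $q=1$ is included, since $1-(1-\alpha)>-1$.

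Finally, the axis $\{r=0\}$ has three-dimensional measure zero, so the zero extension there is harmless. The only step that needs care is the antipodal-point estimate, which converts Hölder continuity of the vector field into $r^\alpha$ decay of the scalar component. This relies on the field being purely azimuthal: a radial or axial component would not be forced to vanish at the axis. Everything else is a direct cylindrical-coordinates computation.
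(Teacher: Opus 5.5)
Your argument is correct and is essentially the paper's proof: H\"older continuity gives $|\omega_0^\vartheta(r,z)|\lesssim r^\alpha$, and cylindrical coordinates reduce the integrability question to $\int_0^1 r^{1-q(1-\alpha)}\,dr$. The only difference is that you compare the antipodal points $(\pm r,0,z)$, whereas the paper first shows $\omega_0=0$ on the axis and compares with $(0,0,z)$; your version neatly skips that step. Your closing remark overstates the role of pure azimuthality, though: the antipodal difference cancels any axial component and bounds a radial one in exactly the same way.
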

\begin{proof}
Axisymmetry and pure azimuthality imply $\omega_0=0$ on the symmetry axis:
rotation invariance makes the axial value parallel to $e_z$, while continuity
from $r>0$ makes its $e_z$ component zero.  Hence H\"older continuity gives
$|\omega_0^\vartheta(r,z)|\le C r^\alpha$ and therefore
$|\varpi_0(r,z)|\le C r^{\alpha-1}$.  Since $\omega_0$ is compactly
supported, cylindrical coordinates reduce the only possible singularity to
$\int_0^1 r^{1-q(1-\alpha)}\,dr$, which is finite exactly when
$q<2/(1-\alpha)$.  This proves the claim.
\end{proof}
\iffalse
\begin{proof}
Fix $z\in\mathbb R$.  Rotation invariance implies that the Cartesian
vector $\omega_0(0,0,z)$ is fixed by every rotation about the $z$-axis and
hence is parallel to $e_z$.  On the other hand, for $r>0$ pure azimuthality
gives $\omega_0\cdot e_z=0$, so continuity implies that the $e_z$ component
also vanishes on the axis.  Therefore $\omega_0(0,0,z)=0$.  For $r>0$ and
any $\vartheta$, set
$x=(r\cos\vartheta,r\sin\vartheta,z)$ and $x_0=(0,0,z)$.  Since
$|\omega_0^\vartheta(r,z)|=|\omega_0(x)|$ and
$\omega_0\in C^\alpha$, the H\"older seminorm gives
\begin{equation}
 |\omega_0^\vartheta(r,z)|
 =|\omega_0(x)-\omega_0(x_0)|
 \le [\omega_0]_{C^\alpha}|x-x_0|^\alpha
 =[\omega_0]_{C^\alpha}r^\alpha.
\end{equation}
This estimate is uniform in $z$ and $\vartheta$.  Therefore
\[
 |\varpi_0(r,z)|
 =\frac{|\omega_0^\vartheta(r,z)|}{r}
 \le C r^{\alpha-1}.
\]
Choose $R>0$ so that the support is contained in
$\{0\le r\le R,\ |z|\le R\}$.  In cylindrical coordinates,
$dx=r\,dr\,d\vartheta\,dz$, and hence for $1\le q<\infty$,
\begin{align*}
 \|\varpi_0\|_q^q
 &=2\pi\int_{-R}^R\int_0^R
   |\varpi_0(r,z)|^q r\,dr\,dz\\
 &\le C_R\int_0^R r^{q(\alpha-1)}r\,dr
 =C_R\int_0^R r^{1-q(1-\alpha)}\,dr.
\end{align*}
The last integral is finite precisely when
$1-q(1-\alpha)>-1$, equivalently
$q(1-\alpha)<2$, or
$q<2/(1-\alpha)$.  This proves the asserted range.  In particular,
$q=1$ is allowed for every $\alpha>0$, and $q=2$ is allowed for every
$\alpha>0$ because $2<2/(1-\alpha)$.
\end{proof}
\fi

\begin{corollary}[Kinetic passage through three axisymmetric Euler singularity mechanisms]
\label{cor:three-singularity-passage}
Let $u^{\rm reg}$ be any one of the following finite-energy axisymmetric
no-swirl Euler solutions, with regular lifespan $[0,T_*)$:
\begin{itemize}
\item[\rm (i)] the invariant no-swirl subfamily of the construction in
\cite{ElgindiGhoulMasmoudi2021}, whose initial vorticity is compactly supported
and belongs to $C^\alpha(\R^3)$ for some $\alpha>0$;
\item[\rm (ii)] after translating the pre-singular interval of the
construction in \cite{CordobaMartinezZoroaZheng2025} to $[0,T_*)$, the
axisymmetric no-swirl finite-energy solution with compactly supported initial
vorticity in $C^\alpha(\R^3)$ for some $\alpha>0$;
\item[\rm (iii)] for some $0<\alpha<1/3$, an asymptotically self-similar
axisymmetric no-swirl solution from \cite{Chen2026EulerII}, with
$u_0\in C^{1,\alpha}(\R^3)\cap L^2(\R^3)$ and
$\omega_0=\nabla\times u_0\in C_c^\alpha(\R^3)$.
\end{itemize}
Then in each case
\[
 u_0=u^{\rm reg}(0)\in L^2(\R^3),\qquad
 \varpi_0=\frac{(\nabla\times u_0)^\vartheta}{r}
 \in L^1(\R^3)\cap L^2(\R^3),
\]
where the quotient is understood almost everywhere off the symmetry axis and
extended arbitrarily on the axis.  Consequently, for either collision model,
the $p=2$ kinetic realization furnished by
Corollary~\ref{cor:roughreal} satisfies all conclusions of
Corollaries~\ref{cor:beyondbreakdown} and
\ref{cor:global-passage-invariants} and
Proposition~\ref{prop:quantitative-singular-passage}.  In particular, for each
fixed admissible kinetic preparation,
$\mathbb P u^\e\to u^{\rm reg}$ and
$\varpi_A^\e\to\varpi^{\rm reg}$ without subsequence extraction on
$[0,T_*)$, both converge to the common trace pair $(u_*,\varpi_*)$ at
$T_*$, and the pair $(\mathbb P u^\e,\varpi_A^\e)$ approaches
$(u_*,\varpi_*)$ in the two-sided shrinking-window sense of
\eqref{eq:two-sided-singular-trace}.  The velocity fields $u_A^\e$ and
$\mathbb P u^\e$ satisfy the quantitative temporal estimates of
Proposition~\ref{prop:quantitative-singular-passage}, with the split error in
the estimate for $\mathbb P u^\e$.  Every global Euler cluster continuation
conserves energy and the distribution of the transported relative vorticity.
\end{corollary}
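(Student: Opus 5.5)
The plan is to reduce everything to the $p=2$ case of Corollary~\ref{cor:roughreal} and Corollary~\ref{cor:beyondbreakdown}. After that, the remaining conclusions follow from Corollary~\ref{cor:global-passage-invariants} and Proposition~\ref{prop:quantitative-singular-passage} with no further work. So the argument has two parts. First, verify the data hypotheses: $u_0\in L^2$ is divergence free with $\nabla\times u_0=r\varpi_0e_\vartheta$, and $\varpi_0\in L^1\cap L^2$. Second, verify the pre-singular strain hypothesis \eqref{eq:prebreak-strain} for $u^{\rm reg}$. Since $p=2\ge4/3$, case (1) of Corollary~\ref{cor:roughreal} applies, so no impulse or sign condition is required. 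The acoustic and thermodynamic targets may simply be taken to be zero (or arbitrary).

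For the data, in each of (i)--(iii) the cited construction gives compactly supported, axisymmetric, purely azimuthal initial vorticity $\omega_0\in C^\alpha_c$ with some $\alpha>0$. It also gives finite energy $u_0\in L^2$. In case (i) this holds after restricting to the no-swirl invariant subfamily. In case (ii) one uses the time translation; this does not change the regularity class of the datum, which is the state at the new time $0$. Lemma~\ref{lem:holder-vorticity-relative-lp} with $q=1$ and $q=2$ gives $\varpi_0\in L^1\cap L^2$, because $2<2/(1-\alpha)$. The div--curl relation holds because $\omega_0=\nabla\times u_0$, and $u_0=\Bcal\varpi_0$ follows from Lemma~\ref{lem:divcurl}. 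One point to check is that the relevant $\omega_0$ is genuinely compactly supported and $C^\alpha$ on all of $\R^3$, including across the axis. For (i) and (ii) this comes directly from the cited statements. For (iii) it is part of the listed hypotheses.

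For the strain condition, the cited constructions provide, on every compact $[0,\tau]\subset[0,T_*)$, a solution of class $C^{1,\alpha}$ in space, either uniformly in time or as a classical solution. Hence $\nabla u^{\rm reg}\in L^\infty((0,\tau)\times\R^3)$, which implies \eqref{eq:prebreak-strain} on that interval. They also give $u^{\rm reg}\in C([0,T_*);L^2)$ by energy conservation together with continuity of classical solutions. I expect this to be the main obstacle. One has to extract from each paper a statement of global-in-space Lipschitz bounds, not merely local self-similar profile bounds near the singular point. The approach I would take is the standard one. $C^\alpha$ vorticity with compact support is propagated by the classical H\"older theory for as long as $\int_0^\tau\|\nabla u\|_\infty<\infty$. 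Through the Biot--Savart law, $\omega(t)\in C^\alpha_c\cap L^1$ then gives $\nabla u\in L^\infty$, and $T_*$ is by definition the first time this fails.

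With both hypotheses verified, Corollary~\ref{cor:beyondbreakdown} applies. Corollary~\ref{cor:global-passage-invariants} then gives full-family convergence of $(\mathbb Pu^\e,\varpi_A^\e)$ on $[0,T_*]$ to the extended regular pair, the two-sided trace \eqref{eq:two-sided-singular-trace}, and preservation of energy and of the distribution of the transported relative vorticity. Proposition~\ref{prop:quantitative-singular-passage}, in its $p=2$ case, gives the temporal estimates for $u_A^\e$ and $\mathbb Pu^\e$, the latter including the $u_B^\e$ error.
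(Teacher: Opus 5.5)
Your proposal is correct and follows essentially the same route as the paper. The paper likewise obtains $\varpi_0\in L^1\cap L^2$ from Lemma~\ref{lem:holder-vorticity-relative-lp} with $q=1,2$, gets \eqref{eq:prebreak-strain} from classical regularity of each solution on compact subintervals of $[0,T_*)$, invokes the $p=2\ge4/3$ case of Corollary~\ref{cor:roughreal} with zero thermodynamic and acoustic targets, and then chains Corollaries~\ref{cor:beyondbreakdown}, \ref{cor:global-passage-invariants} and Proposition~\ref{prop:quantitative-singular-passage}. The only difference is that the paper reads the pre-singular classical regularity directly off the cited constructions instead of sketching a H\"older continuation argument, so the step you flag as the main obstacle is handled there by citation.
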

\begin{proof}[Proof sketch]
The required source properties are contained in the cited constructions.  For
\cite{ElgindiGhoulMasmoudi2021}, Theorem~1 and Section~2.6 provide
finite-energy data with compactly supported $C^\alpha$ vorticity, and the
homogeneous swirl equation (2.17) leaves the zero-swirl subfamily invariant.
For \cite{CordobaMartinezZoroaZheng2025}, Theorem~2 together with
Remarks~3 and~5 gives the finite-energy axisymmetric no-swirl solution with
compactly supported vorticity; \cite[Theorem~1.2]{Chen2026EulerII} gives the
corresponding properties in the third case.  Each solution is classical on
compact subintervals of $[0,T_*)$, so the strain hypothesis of
Corollary~\ref{cor:beyondbreakdown} holds there.  Lemma~\ref{lem:holder-vorticity-relative-lp}
with $q=1,2$ yields $\varpi_0\in L^1\cap L^2$ in all three cases.  Thus the
$p=2$ hypotheses of Corollary~\ref{cor:roughreal} are satisfied (with, for
example, $\mathfrak s_0=a_0=q_0=0$), and no sign or spatial-moment condition
is needed because $p=2\ge4/3$.  Applying
Corollaries~\ref{cor:beyondbreakdown} and
\ref{cor:global-passage-invariants}, followed by
Proposition~\ref{prop:quantitative-singular-passage}, gives all stated
conclusions.
\end{proof}

\appendix

\section{Auxiliary analytic arguments}\label{sec:auxana}
\begin{proof}[Proof of Lemma~\ref{lem:divcurl}]
For smooth compactly supported $\Omega$,
\[
 \widehat{\nabla\times(-\Delta)^{-1}\Omega}(k)
 =\frac{ik\times\widehat\Omega(k)}{|k|^2}.
\]
Equation \eqref{eq:curl-hm} follows from
$|k\times\widehat\Omega|\le|k||\widehat\Omega|$ and density.  If
$\Omega=\nabla\times u$ and $\nabla\cdot u=0$, then
$\widehat\Omega=ik\times\widehat u$ and
$ik\times\widehat\Omega=|k|^2\widehat u$ for almost every $k\ne0$,
which proves \eqref{eq:curl-isom}.  Finally, for
$\Phi\in C_c^\infty$,
\[
 |\langle G,\Phi\rangle|
 \le\|G\|_{6/5}\|\Phi\|_6
 \lesssim\|G\|_{6/5}\|\nabla\Phi\|_2,
\]
which gives \eqref{eq:hminus-lp} by duality.
\end{proof}

\begin{proof}[Proof of Lemma \ref{lem:cylcurl}]
Set $\omega=r\eta e_\vartheta$ and decompose it as
$\omega_{\rm in}:=\mathbf1_{\{r<2R\}}\omega$ and
$\omega_{\rm out}:=\mathbf1_{\{r>2R\}}\omega$.  Both fields are divergence free in $\mathcal D'(\R^3)$.  To see this
for the sharp cutoff, test against $\phi\in C_c^\infty(\R^3)$ and use
cylindrical coordinates:
\[
 \langle\nabla\cdot\omega_{\rm in},\phi\rangle
 =-\int_{r<2R}r\eta(r,z)e_\vartheta\cdot\nabla\phi\,dx
 =-\int_{r<2R}\eta(r,z)\,\partial_\vartheta\phi\,dx=0.
\]
Equivalently, the jump interface has normal $e_r$, while
$\omega\cdot e_r=0$.  Define
$v_{\rm in}:=\nabla\times(-\Delta)^{-1}\omega_{\rm in}, \quad v_{\rm out}:=v-v_{\rm in}.$
Since $\omega_{\rm in}\in L^{6/5}$ is divergence free, the Fourier
identity
$\nabla\times\nabla\times(-\Delta)^{-1}\omega_{\rm in}
=\omega_{\rm in}$ gives
$\nabla\cdot v_{\rm in}=0$ and
$\nabla\times v_{\rm in}=\omega_{\rm in}$.
Since $\|\omega_{\rm in}\|_{6/5}\le2R\|\eta\|_{6/5}$, the
three-dimensional Hardy--Littlewood--Sobolev estimate gives
$\|v_{\rm in}\|_2\lesssim R\|\eta\|_{6/5}$.  The field $v_{\rm out}\in L^2$ is divergence free and has curl
$\omega_{\rm out}$.  The Fourier div--curl identity therefore identifies it
with the Biot--Savart field of $\omega_{\rm out}$.
Fix the target angle.  If $x=(r,0,z)$ with $0<r<R$ and
$y=(\rho\cos\varphi,\rho\sin\varphi,\zeta)$ with $\rho>2R$, then
$|x-y|^2\gtrsim\rho^2+(z-\zeta)^2$.  The three-dimensional Biot--Savart
formula and $dy=\rho\,d\rho\,d\varphi\,d\zeta$ give
\begin{equation*}
 |v_{\rm out}(r,z)|
 \lesssim\int_{2R}^\infty
 (K_\rho*_z|\eta(\rho,\cdot)|)(z)\,d\rho,
 \qquad K_\rho(s):=\frac{\rho^2}{\rho^2+s^2}.
\end{equation*}
Choose $a>1$ by $1+\frac12=\frac1a+\frac1q$.
Since $\|K_\rho\|_{L^a(\R)}\lesssim_a\rho^{1/a}$, Young's inequality
in $z$ gives
\begin{equation*}
 \|v_{\rm out}(r,\cdot)\|_{L^2_z}
 \lesssim_q\int_{2R}^\infty
 \rho^{1/a}\|\eta(\rho,\cdot)\|_{L^q_z}\,d\rho.
\end{equation*}
Write the integrand as
$\rho^{1/q}\|\eta(\rho,\cdot)\|_{L^q_z}\rho^{1/a-1/q}$.
H\"older's inequality in $\rho$ and the cylindrical $L^q$ identity give
\begin{align}
 \|v_{\rm out}(r,\cdot)\|_{L^2_z}
 &\lesssim_q
 \|\eta\|_q
 \left(\int_{2R}^\infty
 \rho^{(1/a-1/q)q'}\,d\rho\right)^{1/q'} \lesssim_q R^{5/2-3/q}\|\eta\|_q, \notag
\end{align}
where the last integral converges precisely in the present range
$q<6/5$.
Integrating against $2\pi r\,dr$ on $0<r<R$ gives
\begin{equation*}
 \|\mathbf1_{\mathfrak C_R}v_{\rm out}\|_2
 \lesssim_q R^{7/2-3/q}\|\eta\|_q.
\end{equation*}
Together with the estimate for $v_{\rm in}$, this proves
\eqref{eq:eta-qr}.
\end{proof}

\begin{proof}[Proof of Lemma~\ref{lem:axiskern}]
For $0<s\le1$, the inequality
$1-\cos\alpha\gtrsim\alpha^2$ on $[0,\pi]$ gives
\[
 |\mathcal F(s)|
 \lesssim \int_0^\pi(s+\alpha^2)^{-1/2}\,d\alpha
 \lesssim 1+|\log s|
 \lesssim_\sigma s^{-\sigma}.
\]
For $s\ge1$, using $\int_0^\pi\cos\alpha\,d\alpha=0$ and the
mean-value theorem gives
\[
 \mathcal F(s)
 =\int_0^\pi\cos\alpha
 \left[(s+2(1-\cos\alpha))^{-1/2}-s^{-1/2}\right]d\alpha,
 \qquad
 |\mathcal F(s)|\lesssim s^{-3/2}.
\]
Since $\sigma<3/2$, these estimates prove \eqref{eq:kern-bound}.

Differentiating \eqref{eq:kern-def},
\[
 \mathcal F'(s)
 =-\frac12\int_0^\pi
 \frac{\cos\alpha}{(s+2(1-\cos\alpha))^{3/2}}\,d\alpha.
\]
For $0<s\le1$, the same lower bound on $1-\cos\alpha$ and the change
$\alpha=s^{1/2}y$ give
\[
 |\mathcal F'(s)|
 \lesssim\int_0^\pi(s+\alpha^2)^{-3/2}\,d\alpha
 \lesssim s^{-1},
\]
whereas for $s\ge1$ the differentiated formula gives
$|\mathcal F'(s)|\lesssim s^{-3/2}$.  Hence
\eqref{eq:kern-der} follows for $0\le\sigma'\le1/2$.
\end{proof}

\begin{proof}[Proof of Lemma \ref{lem:bsformula}]
Since $re_\vartheta=(-x_2,x_1,0)$, axisymmetry gives
$\nabla\cdot(r\varpi e_\vartheta)=0$.  Moreover,
\eqref{eq:vortlift} and axisymmetry imply
$\int_{\R^3}\omega\,dx=0$.  By \eqref{eq:bs-def},
\begin{equation}\label{eq:bs-kernel}
 u(x)=\frac1{4\pi}\int_{\R^3}
 \omega(y)\times\frac{x-y}{|x-y|^3}\,dy.
\end{equation}
If $Q$ is a rotation about the $z$-axis, then
$\omega(Qx)=Q\omega(x)$ and hence $u(Qx)=Qu(x)$.  If
$S=\operatorname{diag}(1,-1,1)$, then
$\omega(Sx)=-S\omega(x)$ and
$(Sa)\times(Sb)=-S(a\times b)$, so \eqref{eq:bs-kernel} gives
$u(Sx)=Su(x)$.  Thus $u_2=0$ on $\{x_2=0\}$, and rotational symmetry
shows that $u$ has no swirl.

If $\operatorname{supp}\omega\subset B_{R_0}$, the zero mean of $\omega$
and \eqref{eq:bs-kernel} give
\[
 |u(x)|\le C|x|^{-3}\int_{\R^3}|y|\,|\omega(y)|\,dy,
 \qquad |x|\ge4R_0.
\]
Since the Biot--Savart kernel is locally integrable and
$\omega\in C_c^\infty$, this proves $u\in L^2$.
Because $\nabla\cdot\omega=0$, Plancherel's theorem yields
\begin{equation}\label{eq:energy-id}
 \|u\|_2^2
 =\langle\omega,(-\Delta)^{-1}\omega\rangle
 =\frac1{4\pi}\iint_{\R^3\times\R^3}
 \frac{\omega(x)\cdot\omega(y)}{|x-y|}\,dx\,dy.
\end{equation}
The double integral is absolutely convergent.  Write
\[
 x=(r\cos\theta,r\sin\theta,z),\qquad
 y=(\rho\cos\varphi,\rho\sin\varphi,\zeta),\qquad
 \alpha=\theta-\varphi.
\]
Then $e_\vartheta(\theta)\cdot e_\vartheta(\varphi)=\cos\alpha$ and
\[
 |x-y|^2=(r-\rho)^2+(z-\zeta)^2+2r\rho(1-\cos\alpha).
\]
For every even $2\pi$-periodic $h$,
\[
 \int_0^{2\pi}\!\int_0^{2\pi}h(\theta-\varphi)
 \,d\theta\,d\varphi
 =4\pi\int_0^\pi h(\alpha)\,d\alpha.
\]
Substitution of \eqref{eq:vortlift} and the cylindrical volume elements
into \eqref{eq:energy-id} now gives \eqref{eq:bs-energy}.
\end{proof}

\begin{proof}[Proof of Lemma~\ref{lem:raddense}]
Let $\chi_R\in C_c^\infty(\R^3)$ be spherically radial, with
$0\le\chi_R\le1$, $\chi_R=1$ on $B_R$, and $\chi_R=0$ on $B_{2R}^c$.
For the $\mathcal X_p$ approximation set $m=2$ and
$\mathfrak M_m:=\mathfrak M_2$; for the $\mathcal Y_p$ approximation
set $m=1$ and $\mathfrak M_m:=\mathfrak M_1$.  Dominated convergence gives
\[
 \|(1-\chi_R)\varpi\|_p\to0,
 \qquad
 \mathfrak M_m((1-\chi_R)\varpi)\to0,
\]
and, if $\varpi\in L^1$, also
$\|(1-\chi_R)\varpi\|_1\to0$.

For fixed $R$, let $\varphi_\nu$ be a spherically radial standard mollifier
supported in $B_\nu$ and set
$g_{R,\nu}:=\varphi_\nu*(\chi_R\varpi)-\chi_R\varpi$.  Then
$g_{R,\nu}$ is axisymmetric, is supported in $B_{2R+1}$ for
$0<\nu\le1$, and satisfies $\|g_{R,\nu}\|_p\to0$ as $\nu\downarrow0$.
Since $p>1$ and the support is fixed,
\[
 \mathfrak M_m(g_{R,\nu})
 \lesssim_R\|g_{R,\nu}\|_1
 \lesssim_R\|g_{R,\nu}\|_p\to0.
\]
If $\varpi\in L^1$, standard $L^1$ mollification also gives
$\|g_{R,\nu}\|_1\to0$.
Choose $R_n\uparrow\infty$ and then $\nu_n\downarrow0$ diagonally and set
$\varpi_n:=\varphi_{\nu_n}*(\chi_{R_n}\varpi)$.  These bounds
then give \eqref{eq:mtwo-axis}--\eqref{eq:lone-mone}, with $m=2$ for
$\mathcal X_p$ and $m=1$ for $\mathcal Y_p$.
\end{proof}

\begin{proof}[Proof of Lemma~\ref{lem:scalecons}]
The first relation in \eqref{eq:choice-bd} gives
$\kappa^c\asymp\e^{1-c_0}$, which is \eqref{eq:kap-choice}, while the
second gives $\e\kappa^{-1/2}\lesssim\e^{c_0}$.  Since $1/2<c\le1$ and
$c_0$ is fixed sufficiently small, the same relation gives
$\kappa^2\lesssim\e$.  Finally, \eqref{eq:boot-bd} yields
$\kappa^{2c-1}\lesssim\e^{6c_0}$; together with
$\e^2/\kappa=(\e/\kappa^c)^2\kappa^{2c-1}$, this gives $\e^2/\kappa\lesssim\e^{8c_0}$.
\end{proof}

\begin{proof}[Proof of Lemma \ref{lem:macrocoef}]
Since $N\ge4$, $H^N(\mathbb R^3)$ is an algebra and
$H^2(\mathbb R^3)\hookrightarrow L^\infty$.  The bootstrap bound
\eqref{eq:tot-bd}, together with $\mathcal E_M^N\le \mathcal E_{tot}^{N}$,
gives
\[
 \e\| (\rho^\e,u^\e,\theta^\e)\|_{H^N_x}
 \lesssim \e(\mathcal E_M^N)^{1/2}
 \lesssim \e^{1-c_0/2},
\]
which proves \eqref{eq:loc-bd}.  For a smooth scalar function $\Phi$ on a fixed neighborhood of the origin,
the chain rule and the $H^N$ product estimate give the standard Moser bound
\begin{equation}
 \|\nabla_x\Phi(h)\|_{H^{N-1}_x}
 \le C_\Phi\|\nabla_x h\|_{H^{N-1}_x}
 \bigl(1+\|h\|_{H^N_x}^{N-1}\bigr),
 \qquad \|h\|_{L^\infty_x}\le c_*.
\end{equation}
Indeed, each derivative $\partial^\alpha\Phi(h)$, $1\le|\alpha|\le N$,
is a finite sum of terms
$\Phi^{(m)}(h)\prod_{j=1}^m\partial^{\alpha_j}h$ with
$\alpha_j\ne0$ and $\sum_j\alpha_j=\alpha$; one factor is kept in
$L^2_x$ and all remaining factors are put in $L^\infty_x$ by
$H^2_x\hookrightarrow L^\infty_x$.  Applying this composition estimate to
$\Phi(h)=e^{\pm h}$ with $h=\e\rho^\e$ and $h=\e\theta^\e$, and then
using \eqref{eq:loc-bd}, gives the first four terms in \eqref{eq:hn-en};
the last follows from $\mathrm U^\e=\e u^\e$.
Finally,
\[
 \frac{\Theta^\e-1}{\e}
 =\theta^\e\int_0^1e^{\tau\e\theta^\e}\,d\tau,
 \qquad
 \frac{\nabla_x\Theta^\e}{\e}=\Theta^\e\nabla_x\theta^\e.
\]
Applying the same product/composition estimates, together with
\eqref{eq:loc-bd}, gives \eqref{eq:hn-bd}.
\end{proof}

\begin{proof}[Proof of Lemma \ref{lem:maxbridge}]
From \eqref{eq:tot-bd}, $c_0=\mathfrak y$, and \eqref{eq:choice-bd},
\begin{equation*}
 (\mathcal E^N(t))^{1/2}
 \le (\mathcal E_{\rm tot}^N(t))^{1/2}
 \lesssim \e^{-c_0/2},
 \qquad 0\le t\le T_*.
\end{equation*}
This proves \eqref{eq:glob-bd}.  Since $N\ge4$, Sobolev embedding and
\eqref{eq:param-id} give
\begin{align}\label{eq:embed-bd}
 \|\mathrm U^\e\|_{L_x^\infty}
 &\lesssim \e(\mathcal E_M^N)^{1/2}
 \lesssim \e^{1-c_0/2},\notag\\
 \|\mathrm P^\e-1\|_{L_x^\infty}
 +\|\Theta^\e-1\|_{L_x^\infty}
 &\lesssim \e(\mathcal E_M^N)^{1/2}
 \exp\!\big(C\e(\mathcal E_M^N)^{1/2}\big)
 \lesssim \e^{1-c_0/2}.
\end{align}
Here we used $|e^\lambda-1|\le |\lambda|e^{|\lambda|}$.  Since
$\Theta_0^\e=1-\e^{1-c_0}$, \eqref{eq:bridge-bd} follows.
The first lower bound in \eqref{eq:assm-bd} and $e^{-\lambda}\ge1-\lambda$ yield
\[
 \Theta^\e-\Theta_0^\e
 \ge \e^{1-c_0}-C\e^{1-c_0/2}
 =\e^{1-c_0}(1-C\e^{c_0/2}),
\]
which proves \eqref{eq:pos-bd}.  Expanding the two quadratic forms and
completing the square gives \eqref{eq:xi-loc} exactly.  By
\eqref{eq:embed-bd} and \eqref{eq:pos-bd},
\begin{equation*}
 \frac{|\mathrm U^\e|^2}{\Theta^\e-\Theta_0^\e}
 \lesssim\frac{\e^{2-c_0}}{\e^{1-c_0}}
 =\e.
\end{equation*}
Using \eqref{eq:hn-en}, $\Theta_0^\e\to1$, and
\eqref{eq:xi-loc} and this bound in the exact quotient
\[
 \frac{\mu_0}{M^\e}
 =\frac1{\mathrm P^\e}
 \left(\frac{\Theta^\e}{\Theta_0^\e}\right)^{3/2}
 \exp\!\left(
 -\frac{|\xi|^2}{2k_B\Theta_0^\e}
 +\frac{|\xi-\mathrm U^\e|^2}{2k_B\Theta^\e}\right)
\]
proves \eqref{eq:mu-maxw}.  Since $a_\e=(k_B\Theta_0^\e)^{1/2}$
and both $a_\e$ and $a_\e^{-1}$ are uniformly bounded, there is a fixed
$c_a>0$ such that $\left\langle \xi/{a_\e}\right\rangle\ge c_a\langle\xi\rangle.$
By the definition of $w_0$,
\[
 \langle\xi\rangle^m w_0(\xi)^{-2}
 =\langle\xi\rangle^m
 \left\langle\frac\xi{a_\e}\right\rangle^{-2\ell}
 \exp\!\left(-2q\left\langle\frac\xi{a_\e}\right\rangle\right).
\]
For each fixed $m$, the polynomial factor in this display is bounded by
$C_m e^{q c_a\langle\xi\rangle}$, uniformly in $\e$.  Therefore
\eqref{eq:mu-maxw} gives \eqref{eq:poly-bd}, for example with
$c_q=q c_a$.
It remains to prove \eqref{eq:vd-bd}.  For
$Z=(P,U,\Theta)$ write $M_Z=M_{[P,U,\Theta]}$.  The parameter derivatives
are
\begin{align*}
 \partial_P M_Z=P^{-1}M_Z, \quad 
 \partial_{U_j}M_Z=\frac{\xi_j-U_j}{k_B\Theta}M_Z, \quad 
 \partial_\Theta M_Z
 =\left(-\frac3{2\Theta}
 +\frac{|\xi-U|^2}{2k_B\Theta^2}\right)M_Z.
\end{align*}
Let
$Z_\tau=(1,0,\Theta_0^\e)+\tau(\mathrm P^\e-1,
\mathrm U^\e,\Theta^\e-\Theta_0^\e)$, $0\le\tau\le1$.
By \eqref{eq:glob-bd}--\eqref{eq:bridge-bd}, $P_\tau$ and $\Theta_\tau$
stay in a fixed compact subset of $(0,\infty)$ and $|U_\tau|=o(1)$.
For $|\beta|\le2$, differentiating the three displayed parameter formulas
in $\xi$ shows that
\begin{equation}\label{eq:par-bd}
 \left|\partial_\xi^\beta D_Z
 \left(\frac{M_{Z_\tau}}{\sqrt{\mu_0}}\right)\right|
 \le C\langle\xi\rangle^{m_\beta}e^{-c|\xi|^2}
 \le C\mu_0^{1/3},
\end{equation}
with fixed $C,c>0$ independent of $\e,\tau,t,x$.  The fundamental theorem
of calculus in the three Maxwellian parameters gives
\[
 \partial_\xi^\beta A^\e
 =\int_0^1
 \partial_\xi^\beta D_Z\left(\frac{M_{Z_\tau}}{\sqrt{\mu_0}}\right)
 [\mathrm P^\e-1,\mathrm U^\e,
   \Theta^\e-\Theta_0^\e] \,d\tau.
\]
Equations \eqref{eq:bridge-bd} and \eqref{eq:par-bd} prove
\eqref{eq:vd-bd}.

The homogeneous correction $H_\infty^\e$ is independent of $x$.  For
$\mu_T=(2\pi k_BT)^{-3/2}e^{-|\xi|^2/(2k_BT)}$,
\[
 H_\infty^\e=\frac1\e\int_{\Theta_0^\e}^{1}
 \frac{\partial_T\mu_T}{\sqrt{\mu_0}}\,dT,
 \qquad
 \partial_T\mu_T=\mu_T\left(-\frac3{2T}+\frac{|\xi|^2}{2k_BT^2}\right).
\]
For $T\in[\Theta_0^\e,1]$ and $|\beta|\le1$,
$J_{T,\beta}:=w_0\partial_\beta(\partial_T\mu_T/\sqrt{\mu_0})$ satisfies
$|J_{T,\beta}|+|\nabla J_{T,\beta}|\le Ce^{-c|\xi|^2}$ uniformly.  Thus the
mean-value formula gives, on $|\eta|\le1$,
$|J_{T,\beta}(\xi)-J_{T,\beta}(\xi-\eta)|\le C|\eta|e^{-c'|\xi|^2}$, and therefore
$\int_{|\eta|\le1}|\eta|^{-3-2s}|\eta|^2\,d\eta <\infty.$
The $|\eta|>1$ part and all polynomial velocity weights are Gaussian
integrable.  Hence, by the upper bound in \eqref{eq:xi-bd},
\[
 \sup_{\Theta_0^\e\le T\le1}\left(
 \left\|w_0\partial_\beta\frac{\partial_T\mu_T}{\sqrt{\mu_0}}
 \right\|_{L^2_\xi\cap L^2_\Bb}
 +\left\|w_0\frac{\partial_T\mu_T}{\sqrt{\mu_0}}
 \right\|_{L^2_{2s+\gamma}}\right)\le C.
\]
Since $1-\Theta_0^\e=\e^{1-c_0}$, this proves \eqref{eq:hinfty-bd}.

The local-Maxwellian correction $\widetilde H^\e$ in \eqref{eq:maxw-def} is spatially localized through the
macroscopic perturbation.  With
$Z_\tau^\e:=(1,0,1)+\tau (\mathrm P^\e-1,\mathrm U^\e,\Theta^\e-1), \ 0\le\tau\le1,$
the fundamental theorem of calculus in the Maxwellian parameters gives
\begin{equation}\label{eq:maxw-path}
 \widetilde H^\e
 =\int_0^1
 \frac{D_ZM_{Z_\tau^\e}}{\sqrt{\mu_0}}
 \left[
 \frac{\mathrm P^\e-1}{\e},
 \frac{\mathrm U^\e}{\e},
 \frac{\Theta^\e-1}{\e}
 \right]d\tau.
\end{equation}
By \eqref{eq:param-id}, Lemma~\ref{lem:macrocoef}, and \eqref{eq:loc-bd},
\begin{equation}\label{eq:maxw-param-bd}
 \left\|\left(
 \frac{\mathrm P^\e-1}{\e},
 \frac{\mathrm U^\e}{\e},
 \frac{\Theta^\e-1}{\e}\right)\right\|_{H_x^N}
 \lesssim (\mathcal E_M^N)^{1/2},
 \qquad
 \e(\mathcal E_M^N)^{1/2}=o(1).
\end{equation}
The compact parameter bounds already proved in \eqref{eq:embed-bd}--\eqref{eq:par-bd}
imply, for every $|\alpha|\le2$ and $|\beta|\le1$, that each coefficient
obtained by applying $\partial_x^\alpha\partial_\xi^\beta$ to
$D_ZM_{Z_\tau^\e}/\sqrt{\mu_0}$ is a finite sum of polynomial--Gaussian
functions whose velocity norms
\[
 \|w_0(\cdot)\|_{L_\xi^2\cap L_\Bb^2\cap L^2_{2s+\gamma}}
 \quad\hbox{and}\quad
 \|w_0^{1/2}\partial_\beta(\cdot)\|_{L_\xi^2}
\]
are bounded uniformly in $(\e,\tau,t,x)$.  Indeed, every positive
$x$-derivative of such a coefficient contains a factor
$\e\partial^{\gamma}Z^\e$; the remaining factors are bounded in
$L_x^\infty$ by $H_x^2\hookrightarrow L_x^\infty$, and
\eqref{eq:maxw-param-bd} absorbs every product containing two or more positive
spatial derivatives.  Applying the $H_x^2$ product estimate to
\eqref{eq:maxw-path} therefore gives
\begin{align*}
 &\|w_0\widetilde H^\e\|_{H_x^2L_\xi^2}
 +\|w_0\widetilde H^\e\|_{H_x^2L_\Bb^2}
 +\|w_0\widetilde H^\e\|_{H_x^2L^2_{2s+\gamma}} +
 \sum_{|\beta|=1}
 \|w_0^{1/2}\partial_\beta\widetilde H^\e\|_{H_x^2L_\xi^2}
 \lesssim (\mathcal E_M^N)^{1/2}
 \le (\mathcal E^N)^{1/2},
\end{align*}
which is \eqref{eq:xi-gam}.  Notice that this conclusion uses only
Lemma~\ref{lem:macrocoef} and the parameter bounds established above in the
present proof; in particular, it does not invoke Lemma~\ref{lem:maxbridge}
recursively.

\end{proof}

\begin{proof}[Proof of Lemma \ref{lem:fluctxfer}]
Set $Z^\e=(\rho^\e,u^\e,\theta^\e)$ and
\[
 \mathcal M_{\rm par}(a,\mathbf{b},c):=M_{[e^{a},\mathbf{b},e^{c}]},
 \qquad
 M^\e=\mathcal M_{\rm par}(\e\rho^\e,\e u^\e,\e\theta^\e).
\]
Since $\mu_0$ is independent of $x$, every positive spatial derivative
contains at least one differentiated macroscopic parameter.  The multivariable
chain rule gives, for $1\le|\alpha|\le N$,
\begin{equation}\label{eq:maxw-id}
 \frac{\partial^\alpha M^\e}{\e\sqrt{\mu_0}}
 =\sum_{m=1}^{|\alpha|}\e^{m-1}
 \sum_{\substack{\alpha_1+\cdots+\alpha_m=\alpha\\|\alpha_j|\ge1}}
 C_{\alpha_1,\ldots,\alpha_m}\,
 \mathcal G_m^\e(\xi)
 \prod_{j=1}^m\partial^{\alpha_j}Z^\e,
\end{equation}
where $\mathcal G_m^\e$ is a finite linear combination of parameter
derivatives of $\mathcal M_{\rm par}/\sqrt{\mu_0}$ evaluated at
$(\e\rho^\e,\e u^\e,\e\theta^\e)$.  Lemma~\ref{lem:maxbridge} implies that the
parameters range in a fixed compact set.  Hence, for $|\beta|\le1$ and each
fixed $m\le N$,
\begin{equation}\label{eq:der-bet}
 \sup_{t,x}\left(
 \|w_0^{1/2}\partial_\beta\mathcal G_m^\e\|_{L^2_\xi}
 +\|w_0\mathcal G_m^\e\|_{H^s_{2s+\gamma}}
 \right)\le C_{N,\ell,q}.
\end{equation}
Indeed these coefficients are uniformly polynomial--Gaussian.  With
$H=w_0\mathcal G_m^\e$, both $|H|$ and $|\nabla H|$ are bounded by
$Ce^{-c|\xi|^2}$; therefore the local fractional seminorm is bounded by
$C\int_0^1 \tau^{1-2s}\,d \tau<\infty$, while the complementary region is
Gaussian integrable.  This proves \eqref{eq:der-bet}, and \eqref{eq:xi-bd} gives
the corresponding $\Bb$ bounds.
By the $H_x^2$ algebra property and the standard product estimate, each
term in \eqref{eq:maxw-id} satisfies
\begin{align*}
 \e^{m-1}
 \Big\|\prod_{j=1}^m\partial^{\alpha_j}Z^\e\Big\|_{L_x^2}
 &\lesssim
 \|Z^\e\|_{H_x^N}
 \bigl(\e\|Z^\e\|_{H_x^N}\bigr)^{m-1} \lesssim (\mathcal E_M^N)^{1/2},
\end{align*}
where the last step uses \eqref{eq:bolt-def}--\eqref{eq:glob-bd}, which imply
$\e(\mathcal E_M^N)^{1/2}=o(1)$.  Combining this with
\eqref{eq:der-bet} yields
\begin{equation}\label{eq:der-maxw}
 \left\|w_0\frac{\partial^\alpha M^\e}{\e\sqrt{\mu_0}}\right\|_{L^2_{x,\xi}}
 +\left\|w_0\frac{\partial^\alpha M^\e}{\e\sqrt{\mu_0}}\right\|_{L^2_{x,\Bb}}
 +\left\|w_0\frac{\partial^\alpha M^\e}{\e\sqrt{\mu_0}}\right\|_{L^2_xL^2_{2s+\gamma}}
 \lesssim(\mathcal E_M^N)^{1/2}.
\end{equation}
Differentiating \eqref{eq:maxw-id} once in $\xi$, using
\eqref{eq:der-bet}, and applying the same spatial product estimate gives,
for $|\beta|=1$,
\begin{equation*}
 \left\|w_0^{1/2}\partial_\beta
 \left(\frac{\partial^\alpha M^\e}{\e\sqrt{\mu_0}}\right)
 \right\|_{L^2_{x,\xi}}
 \lesssim
 \sum_{m=1}^{|\alpha|}
 \e^{m-1}
 \sum_{\substack{\alpha_1+\cdots+\alpha_m=\alpha\\|\alpha_j|\ge1}}
 \left\|\prod_{j=1}^m\partial^{\alpha_j}Z^\e\right\|_{L_x^2}
 \lesssim(\mathcal E_M^N)^{1/2}.
\end{equation*}
The coefficient bounds used in \eqref{eq:der-maxw} also hold with $w_0$
replaced by $1$; hence
\begin{equation*}
 \left\|\frac{\partial^\alpha M^\e}{\e\sqrt{\mu_0}}\right\|_{L^2_{x,\xi}}
 +\left\|\frac{\partial^\alpha M^\e}{\e\sqrt{\mu_0}}\right\|_{L^2_{x,\Bb}}
 \lesssim(\mathcal E_M^N)^{1/2}.
\end{equation*}
Finally,
\[
 \partial^\alpha g^\e
 =\partial^\alpha f^\e-
 \frac{\partial^\alpha M^\e}{\e\sqrt{\mu_0}},
 \qquad |\alpha|\ge1,
\]
and \eqref{eq:der-alp}--\eqref{eq:der-alp-bd} follow from \eqref{eq:der-maxw} and the
triangle inequality.
\end{proof}

\begin{proof}[Proof of Lemma \ref{lem:collest}]
We separate fixed-global estimates from the local-Maxwellian transfer.
For Landau, \eqref{eq:lin-bd}--\eqref{eq:xi-pr} and the weighted coercivities
\eqref{eq:top-bd}--\eqref{eq:alp-bet} follow from
\cite[Lemmas~2.2--2.3]{lei2024global} after the fixed dilation normalizing
$k_B$.  In the weighted estimates use $(\ell,4q)$ for $|\beta|=0$ and
$(1+\ell/2,2q)$ for $|\beta|=1$; the corresponding test/coercive weights
are $w^2$ and $(w,w^{1/2})$.  Choosing the source small parameter first
and summing over $|\alpha|\le m$, $|\beta|=1$ leaves exactly the lower
velocity-order remainder in \eqref{eq:alp-bet}.  The parameters are admissible
because $q<1/8$.

For non-cutoff Boltzmann, the normalized coercive estimates are
\cite[Lemma~2.1, (2.1)--(2.3)]{xu2026diffusiveVMB}.  To transfer them to
$\mu_0$, put
$(T_\e h)(y):=a_\e^{3/2}h(a_\e y)$,  $\mu_{\rm st}(y):=(2\pi)^{-3/2}e^{-|y|^2/2}.$
Since $a_\e+a_\e^{-1}\lesssim1$ and
$a_\e^3\mu_0(a_\e y)=\mu_{\rm st}(y)$,
\begin{align*}
 T_\e(\mathcal L_0h) =a_\e^\gamma\mathcal L_{\rm st}(T_\e h), \ \ \ 
 \|h\|_{L^2_\Bb}^2 =a_\e^\gamma\|T_\e h\|_{D}^2,\\
 \langle\Gamma_0(h,k),m\rangle_\xi
 =a_\e^\gamma\langle\Gamma_{\rm st}(T_\e h,T_\e k),T_\e m\rangle_y, \ \ \ 
 T_\e\partial_\xi^\beta=a_\e^{-|\beta|}\partial_y^\beta T_\e,
\end{align*}
and $T_\e\Pi_0=\Pi_{\rm st}T_\e$.  Moreover
$w_0(a_\e y)=\langle y\rangle^\ell e^{q\langle y\rangle}$ exactly.
Thus the normalized estimates pull back uniformly to \eqref{eq:top-bd}--
\eqref{eq:alp-bet}; the compact remainder in the one-velocity-derivative
estimate is controlled, by \eqref{eq:xi-bd}, by the lower velocity-order
$w_0^{1/2}L^2_{\mathbf D}$ term already displayed in \eqref{eq:alp-bet}.
The same norm comparison gives \eqref{eq:xi-pr} because
$2s+\gamma>-3/2$.

We next treat the local Maxwellian.  Fix $(t,x)$ and set
\[
 a=(k_B\Theta^\e)^{1/2},\qquad y=\frac{\xi-\mathrm U^\e}{a},\qquad
 M^\e(\xi)=\mathrm P^\e a^{-3}\mu_{\rm st}(y),
\]
Write $h(\xi)=\sqrt{\mathrm P^\e}\,a^{-3/2}\sqrt{\mu_{\rm st}(y)}\,g(y)$.
The five vanishing local moments of $h$ are precisely
$g\perp\operatorname{Null}(\mathcal L_{\rm st})$.  Homogeneity gives
\[
 \int \mathcal L_Mh\,\frac h{M^\e}\,d\xi
 =\begin{cases}
 \mathrm P^\e\langle\mathcal L_{\rm st}g,g\rangle,
       &\text{Landau},\\
 \mathrm P^\e a^{\gamma+3}
 \langle\mathcal L_{\rm st}g,g\rangle,
       &\text{Boltzmann}.
 \end{cases}
\]
All scalar factors are uniformly comparable to one.  It remains only to
compare the normalized collision norm of $g$ with the physical local norm.
This comparison is uniform under every affine map $\xi=V+by$ with
$b+b^{-1}+|V|\lesssim1$.  For Landau it follows from the classical
Coulomb diffusion-matrix structure
\[
 \eta \cdot\sigma(\xi)\eta\asymp
 \langle \xi\rangle^{-1}|P_{\xi^\perp}\eta|^2
 +\langle \xi\rangle^{-3}|P_\xi \eta|^2, \ \eta \in \R^3
\]
because $\langle V+by\rangle\asymp\langle y\rangle$, the radial
directions differ by $O(|V|\langle y\rangle^{-1})$, and the remaining
compact region is uniformly elliptic.

For non-cutoff Boltzmann, use the Gressman--Strain geometric norm
$N^{s,\gamma}$, equivalent to the normalized Dirichlet norm.  With
$\xi=V+by$ one has the exact lifted relation
\[
 \binom{\xi-\xi'}{(|\xi|^2-|\xi'|^2)/2}
 =\begin{pmatrix}bI_3&0\\ bV^{\mathsf T}&b^2\end{pmatrix}
 \binom{y-y'}{(|y|^2-|y'|^2)/2}.
\]
The matrix and its inverse are uniformly bounded, so the lifted distance
and polynomial weights are comparable.  Replacing the cutoff
$d(y,y')\le1$ by $d(y,y')\le C$ creates only a fixed annulus; its
contribution is controlled by the weighted $L^2_{\gamma+2s}$ part because
$|\{y':d(y,y')\le C\}|\lesssim_C\langle y\rangle^{-1}$.
Applying the inverse affine map gives the reverse bound.  Thus, in both
models,
\[
 \left\|\frac h{\sqrt{M^\e}}\right\|_{L^2_{\mathbf D}}
 \asymp \|g\|_{D_{\rm st}}
\]
uniformly.  The normalized coercivity therefore proves \eqref{eq:mh-bd},
and symmetry plus null-space annihilation gives \eqref{eq:mq-bd}.

For \eqref{eq:op-bd}, define
$f(y)=F(U+ay)/\sqrt{M^\e(U+ay)}$ and similarly $g,h$.  The collision
pairing scales as
\[
 \int Q(F,G)H(M^\e)^{-1}\,d\xi
 =\begin{cases}
 (\mathrm P^\e)^{1/2}a^{3/2}
 \langle\Gamma_{\rm st}(f,g),h\rangle,&\text{Landau},\\
 (\mathrm P^\e)^{1/2}a^{\gamma+9/2}
 \langle\Gamma_{\rm st}(f,g),h\rangle,&\text{Boltzmann}.
 \end{cases}
\]
The factors are uniformly bounded.  Combining the affine norm comparison
above with the normalized local-Maxwellian trilinear estimates
\cite[Lemmas~2.4 and~3.2]{lei2024global}, exactly as in the original
detailed proof, and applying the ordered estimate to both $(F,G)$ and
$(G,F)$ proves \eqref{eq:op-bd}.

It remains to justify the fixed-global trilinear statements.  The Landau
forms \eqref{eq:landau-tri} and \eqref{eq:landau-tri-wt} are precisely the
refinements of \cite[Lemma~10]{strain2008exponential} used in the original
detailed proof: bounded-velocity and residual-Gaussian pieces give the
$L^2_{-1}$ gain, while the convolution term leaves respectively the
harmless $\mu^{1/32}$ factor and the polynomial factor
$\vartheta_L=e^{-q\langle\xi/a_L\rangle^2}w$.  For one velocity
derivative use \cite[Lemma~2.5(ii)]{lei2024global} with
$(\ell_s,Q)=(1+\ell/2,2q)$.  After the fixed thermal dilation, its first
slot carries the polynomial weights $A_0=\langle y\rangle^{1+\ell/2}$ and
$A_1=\langle y\rangle^{\ell/2}$.  Since $Q=2q$ and
$\mu_{\rm st}(y)\sim e^{-|y|^2/2}$, the weighted source factors satisfy
\[
 \langle y\rangle^{1+\ell/2}\mu_{\rm st}^{-q}\lesssim w,
 \qquad
 \langle y\rangle^{\ell/2}\mu_{\rm st}^{-q}\asymp w^{1/2},
\]
so the second and testing slots are bounded by $W_0=w$ and $W_1=w^{1/2}$.
The source sum has $\beta_1+\beta_2\le\beta$, while its term involving
spatial derivatives of the Maxwellian parameters vanishes because the
Maxwellian is constant.  Repeating the bounded-velocity and
residual-Gaussian split used for \eqref{eq:landau-tri-wt} yields the same
$L^2_{-1}$ refinement in the second branch: on $|\xi|\lesssim1$ the two
norms are equivalent, while on the complementary Gaussian pieces one may
insert $\langle\xi\rangle^{-1/2}$ at the cost of weakening the Gaussian.
It remains only to verify \eqref{eq:landau-slot-wt}.  Explicitly,
\[
 r_0:=A_0/W_0=\langle y\rangle^{1-\ell/2}e^{-q\langle y\rangle^2},
 \qquad
 r_1:=A_1/W_1=e^{-q\langle y\rangle^2/2}.
\]
Thus both $r_j$ are radial Gaussian-decaying multipliers and
\[
 \sup_\xi\langle\xi\rangle^{1/2}|r_j(\xi)|
 +\sup_\xi\langle\xi\rangle^{-1/2}|\nabla_\xi r_j(\xi)|<\infty.
\]
Therefore \eqref{eq:xi-bd} gives
$\|r_jW_jh\|_2\lesssim\|W_jh\|_{L^2_{-1}}\lesssim\|W_jh\|_{L^2_{\Lb}}$.
For the derivative part of the Landau norm, radiality gives
$\partial_i r_j=b_j(\xi)\xi_i r_j$ with $b_j$ bounded; hence
\[
 \sigma^{ik}\partial_i r_j\partial_k r_j|W_jh|^2
 \lesssim r_j^2\sigma^{ik}\xi_i\xi_k|W_jh|^2,
\]
which is controlled by the zeroth-order part of
$\|W_jh\|_{L^2_{\Lb}}^2$.  This proves \eqref{eq:landau-slot-wt}.

Finally, \eqref{eq:boltz-tri} is
\cite[Theorem~1.2]{alexandre2012boltzmann}, while
\eqref{eq:boltz-tri-wt} follows from
\cite[Lemma~2.2, (2.4)--(2.5)]{xu2026diffusiveVMB}; these estimates cover
$0<s<1$ and $\gamma>\max\{-3,-3/2-2s\}$, include the lower
velocity-order terms, and their $(\ell/2,q/2)$ specialization gives the
one-velocity-derivative weight $w_0^{1/2}$.  To prove \eqref{eq:boltz-wt-hier},
write $m(\xi)=w_0^{-1/2}(\xi)$.  Since $a_\e+a_\e^{-1}\lesssim1$,
$|m|+|\nabla_\xi m|\le Ce^{-c\langle\xi\rangle}$.  By \eqref{eq:xi-bd}, it
suffices to show
\[
 \|mH\|_{H^s_{2s+\gamma}}
 \lesssim \|H\|_{H^s_\gamma}+\|H\|_{L^2_{2s+\gamma}}.
\]
For $|\eta|\le1$ use
$m(\xi)H(\xi)-m(\xi-\eta)H(\xi-\eta) =m(\xi)[H(\xi)-H(\xi-\eta)] +[m(\xi)-m(\xi-\eta)]H(\xi-\eta).$
The first term is controlled by the $H^s_\gamma$ seminorm because
$\langle\xi\rangle^{2s}|m(\xi)|^2\lesssim1$.  For the second,
$\langle\xi\rangle\asymp\langle\xi-\eta\rangle$ and
$|m(\xi)-m(\xi-\eta)|\lesssim|\eta|e^{-c\langle\xi\rangle}$; hence its
contribution is bounded by
$C\|H\|_{L^2_{2s+\gamma}}^2\int_0^1\rho^{1-2s}\,d\rho$, which is finite
for $s<1$.  The $L^2_{2s+\gamma}$ part is immediate from $|m|\le C$.
Thus
$\|mH\|_{L^2_{\Bb}}\lesssim\|H\|_{L^2_{\Bb}}$ by both sides of
\eqref{eq:xi-bd}.  Apply this first to $H=w_0^{1/2}h$ and then to
$H=w_0h$ to obtain \eqref{eq:boltz-wt-hier}.  This completes the proof.
\end{proof}

\begin{proof}[Proof of Lemma \ref{lem:transsrc}]
Set $c^\e:=\xi-\mathrm U^\e$ and $D_u:=\partial_t+u^\e\cdot\nabla_x$.
The Maxwellian identity is
\begin{align*}
 \frac{\mathcal T_\e M^\e}{M^\e}
 &=\mathcal T_\e\log\mathrm P^\e-\frac32\mathcal T_\e\log\Theta^\e
 +\frac{c^\e\cdot\mathcal T_\e\mathrm U^\e}{k_B\Theta^\e}
 +\frac{|c^\e|^2}{2k_B(\Theta^\e)^2}\mathcal T_\e\Theta^\e,\\
 \mathcal T_\e\log\mathrm P^\e&=-\nabla\cdot u^\e+c^\e\cdot\nabla\rho^\e,\\
  \mathcal T_\e\log\Theta^\e&=\e D_u\theta^\e+c^\e\cdot\nabla\theta^\e,\\
 \mathcal T_\e\mathrm U^\e&=\e D_u u^\e+c^\e\cdot\nabla u^\e.
\end{align*}
By \eqref{eq:tu-id},
\[
 \e D_u u_i^\e=-k_B\Theta^\e\partial_i(\rho^\e+\theta^\e)
 -\frac1{\e\mathrm P^\e}\sum_j\partial_j\mathbf r_{ij}^\e,
 \quad
 \e D_u\theta^\e=-\frac23\nabla\cdot u^\e
 -\frac{2(\nabla\cdot\mathfrak q^\e+\nabla\mathrm U^\e:\mathbf r^\e)}
 {3\e k_B\mathrm P^\e\Theta^\e}.
\]
Using $\nabla\mathrm U^\e=\e\nabla u^\e$, Lemma~\ref{lem:macrocoef}, and
\eqref{eq:hn-kap},
\begin{equation*}
 \|\e D_uu^\e\|_2+\|\e D_u\theta^\e\|_2
 \lesssim(\mathcal E^N)^{1/2}
 +\e\kappa^{1/2}(\mathcal D^N)^{1/2}.
\end{equation*}
The compact parameter bounds imply, for every fixed $m$ and $0\le a\le1$,
\[
 \sup_{t,x}\|\langle\xi\rangle^mM^\e\mu_0^{-3/4}\|_{L^2_\xi}
 +\|\langle\xi\rangle^m w_0^a\mu_0^{1/4}\|_{L^\infty_\xi}\le C_{m,a}.
\]
Substitution in the first display therefore proves the first term of
\eqref{eq:mu-ltwo} and both terms of \eqref{eq:xi-mu}.  For $|\beta|=1$, it remains
to note the exact identity
\begin{align*}
 \partial_{\xi_j}(\mu_0^{-1/2}\mathcal T_\e M^\e)
 &=\left(\frac{\xi_j}{2k_B\Theta_0^\e}-\frac{c_j^\e}{k_B\Theta^\e}\right)
 \mu_0^{-1/2}\mathcal T_\e M^\e\\
 &\quad+\frac{M^\e}{\sqrt{\mu_0}}
 \left(\frac{\mathcal T_\e\mathrm U_j^\e}{k_B\Theta^\e}
 +\frac{c_j^\e\mathcal T_\e\Theta^\e}{k_B(\Theta^\e)^2}\right)
 +\frac1\e\frac{\partial_{x_j}M^\e}{\sqrt{\mu_0}},
\end{align*}
where
\[
 \frac1\e\frac{\partial_{x_j}M^\e}{\sqrt{\mu_0}}
 =\frac{M^\e}{\sqrt{\mu_0}}\left[
 \partial_j\rho^\e-\frac32\partial_j\theta^\e
 +\frac{c^\e\cdot\partial_ju^\e}{k_B\Theta^\e}
 +\frac{|c^\e|^2}{2k_B\Theta^\e}\partial_j\theta^\e\right].
\]
Every term is thus a uniformly polynomial--Gaussian velocity factor times a
coefficient controlled above; factoring
$M^\e/\sqrt{\mu_0}=(M^\e\mu_0^{-3/4})\mu_0^{1/4}$ and taking $a=1/2$
gives the velocity-derivative term in \eqref{eq:mu-ltwo}.
\end{proof}

\begin{proof}[Proof of Lemma~\ref{lem:acdisp}]
Set
\[
 b:=|D|^{-1}\nabla\cdot q,
 \qquad
 \lambda:=\sqrt{AB},
 \qquad
 W_\pm:=\sqrt B\,a\pm i\sqrt A\,b.
\]
Because $q=\mathbb P^\perp q$, one has the exact Fourier-multiplier identity
$q=-\nabla|D|^{-1}b$.  Hence the Riesz transforms give
$\|q\|_{\dot B_{p,1}^\sigma}\simeq
 \|b\|_{\dot B_{p,1}^\sigma}$ for $1<p<\infty$.  At $p=\infty$, this multiplier identity is used after dyadic localization: on the fixed
annulus supporting $\dot\Delta_j$, the multiplier
$-i\xi/|\xi|$ has an $L^1$ convolution kernel with norm independent of
$j$, so
$\|\dot\Delta_j q\|_{L^\infty}\lesssim
 \|\dot\Delta_j b\|_{L^\infty}$ uniformly in $j$.
Taking $|D|^{-1}\nabla\cdot$ of the second equation in \eqref{eq:pq-id}
gives
\[
 \partial_tb-\frac{B}{\e}|D|a
 =|D|^{-1}\nabla\cdot F_q.
\]
Hence
\begin{equation}\label{eq:wave-id}
 \partial_tW_\pm\mp i\frac{\lambda}{\e}|D|W_\pm
 =\sqrt B F_a
 \pm i\sqrt A|D|^{-1}\nabla\cdot F_q=:G_\pm.
\end{equation}
We prove the required dyadic half-wave bound.  Let $\dot\Delta_j$ be
supported in a fixed annulus.  The kernel of
$\dot\Delta_j e^{\pm it\lambda|D|/\e}$ is
\[
 K_j(t,x)
 =2^{3j}\int_{\R^3}
 e^{i(2^jx\cdot\zeta\pm 2^jt\lambda|\zeta|/\e)}
 \varphi(\zeta)\,d\zeta.
\]
On the fixed annulus, integration in polar coordinates followed by one
integration by parts away from the stationary cone, and the one-dimensional
stationary-phase estimate on the cone, give
\begin{equation*}
 \|K_j(t,\cdot)\|_{L^\infty_x}
 \lesssim
 2^{3j}\left(1+\frac{2^j|t|}{\e}\right)^{-1}.
\end{equation*}
Thus the propagator is unitary on $L^2$ and, by interpolation,
\begin{align*}
 \|\dot\Delta_j e^{\pm it\lambda|D|/\e}h\|_{L^p_x}
 \lesssim
 2^{3j(1-2/p)}
 \left(1+\frac{2^j|t|}{\e}\right)^{-(1-2/p)}
 \|\dot\Delta_j h\|_{L^{p'}_x}.
\end{align*}
The $TT^*$ argument applied to this dispersive bound, followed by the
one-dimensional Hardy--Littlewood--Sobolev inequality in time, yields the desired estimate on the sharp admissible line; interpolation with the energy estimate
gives the full region \eqref{eq:pq-bd}.  After the rescaling
$t=(\e2^{-j}/\lambda)\tau$, the result is
\begin{equation*}
 \|\dot\Delta_j e^{\pm it\lambda|D|/\e}h\|_{L_t^\mathfrak{r} L_x^p}
 \lesssim
 \e^{1/\mathfrak{r}}
 2^{j(3(1/2-1/p)-1/\mathfrak{r})}
 \|\dot\Delta_jh\|_2.
\end{equation*}
The excluded pair $(2,\infty)$ is precisely the three-dimensional wave
endpoint not supplied by the $TT^*$ estimate.
Duhamel's formula in \eqref{eq:wave-id}, Minkowski's inequality in the source time, and this homogeneous estimate give
\begin{align*}
 \|\dot\Delta_jW_\pm\|_{L_t^\mathfrak{r}L_x^p}
 &\lesssim \e^{1/\mathfrak{r}}
 2^{j(3(1/2-1/p)-1/\mathfrak{r})}
 \left(\|\dot\Delta_jW_\pm(0)\|_2
 +\int_0^T\|\dot\Delta_jG_\pm(\tau)\|_2\,d\tau\right).
\end{align*}
Multiply by
$2^{j(s+3(1/p-1/2)+1/\mathfrak{r})}$ and sum in $j$.  The powers cancel exactly,
leaving $2^{js}$.  Since
$|D|^{-1}\nabla\cdot$ is an order-zero multiplier on every dyadic
annulus, this proves \eqref{eq:fin-ac}.
\end{proof}

\begin{proof}[Proof of Lemma \ref{lem:sympres}]
For $G\in O(2)_z$ set $F_G(t,x,\xi)=F^\e(t,Gx,G\xi)$.  Orthogonality gives $\xi\cdot\nabla_xF_G=(G\xi)\cdot\nabla_yF^\e(t,y,G\xi)|_{y=Gx}$.
For Landau,
$\Phi(G\mathbf k)=G\Phi(\mathbf k)G^T$; for Boltzmann,
$B(G\mathbf k,G\mathbf n)=B(\mathbf k,\mathbf n)$ and the post-collisional velocities are
mapped to $G\xi'$ and $G\xi_*'$.  Thus in both cases $Q(F_G,F_G)(t,x,\xi)=Q(F^\e,F^\e)(t,Gx,G\xi)$.
Hence $F_G$ solves the same Cauchy problem as $F^\e$ and equals $F^\e$
by uniqueness.  Taking moments gives
$\mathrm U^\e(t,Gx)=G\mathrm U^\e(t,x)$.  The Leray projection commutes
with orthogonal transformations, so $\mathbb Pu^\e$ is equivariant.
Reflections in $O(2)_z$ eliminate the azimuthal component.
\end{proof}

\begin{proof}[Proof of Lemma \ref{lem:logbs}]
Since $\nabla_x\cdot v=0$, $-\Delta v=\nabla_x\times\omega$ and
$\nabla_xv=T\omega$, where $T:=\nabla_x\nabla_x\times(-\Delta)^{-1}$ is a matrix of
homogeneous Fourier multipliers of degree zero.  Let $\{\dot\Delta_j\}_{j\in\mathbb Z}$
be a homogeneous Littlewood--Paley decomposition.  On each dyadic annulus,
the multiplier of $\dot\Delta_jT$ has an $L^1$ convolution kernel with
norm bounded independently of $j$.  Hence
\begin{equation}\label{eq:jt-bd}
 \|\dot\Delta_jT\omega\|_\infty
 \lesssim \|\omega\|_\infty,
 \qquad j\in\mathbb Z.
\end{equation}
Bernstein's inequality and the $L^2$ boundedness of $T$ also give
\begin{equation}\label{eq:bern-bd}
 \|\dot\Delta_jT\omega\|_\infty
 \lesssim 2^{3j/2}\|\dot\Delta_j\omega\|_2.
\end{equation}
Set $A:=e+\|\omega\|_{H^2}$.  Choose integers $J_-,J_+\ge1$ so that
$2^{3J_-/2}\simeq A$ and $2^{J_+/2}\simeq A$.
For the very low frequencies, \eqref{eq:bern-bd} and Cauchy--Schwarz yield
\begin{align*}
 \sum_{j<-J_-}\|\dot\Delta_jT\omega\|_\infty
 \lesssim
 \left(\sum_{j<-J_-}2^{3j}\right)^{1/2}
 \left(\sum_{j<-J_-}\|\dot\Delta_j\omega\|_2^2\right)^{1/2} 
 \lesssim 2^{-3J_-/2}\|\omega\|_2
 \lesssim1.
\end{align*}
For the middle frequencies, \eqref{eq:jt-bd} gives
\begin{align*}
 \sum_{-J_-\le j\le J_+}
 \|\dot\Delta_jT\omega\|_\infty
 \lesssim (J_-+J_++1)\|\omega\|_\infty 
 \lesssim (1+\log A)\|\omega\|_\infty.
\end{align*}
For the high frequencies, \eqref{eq:bern-bd} and Cauchy--Schwarz give
\begin{align*}
 \sum_{j>J_+}\|\dot\Delta_jT\omega\|_\infty
  \lesssim
 \left(\sum_{j>J_+}2^{-j}\right)^{1/2}
 \left(\sum_{j>J_+}2^{4j}\|\dot\Delta_j\omega\|_2^2\right)^{1/2} 
 \lesssim 2^{-J_+/2}\|\omega\|_{\dot H^2}
 \lesssim1.
\end{align*}
The three series are absolutely summable in $L^\infty$, so their sum is
$T\omega=\nabla_xv$.  Since
$1+\log(e+x)\lesssim1+\ln^+x$, these estimates prove
\eqref{eq:xv-bd}.
\end{proof}

\begin{proof}[Proof of Lemma \ref{lem:locreg}]
Choose $\chi\in C_c^\infty(B_{2R})$ with $\chi=1$ on $B_{3R/2}$ and set
$U:=\nabla\times(-\Delta)^{-1}(\chi\omega)$ and $h:=u-U$.
Calder\'on--Zygmund gives
$\|\nabla U\|_{L^p(\R^3)}\le C_p\|\omega\|_{L^p(B_{2R})}$.
Since $p\ge2$ and $\chi\omega$ is supported in $B_{2R}$,
$\|\chi\omega\|_2\lesssim_{p,R}\|\omega\|_{L^p(B_{2R})}$; hence the
$L^2$ Calder\'on--Zygmund estimate and Sobolev inequality give
\[
 \|U\|_{L^2(B_{3R/2})}
 \le C_R\|U\|_6
 \le C_R\|\nabla U\|_2
 \le C_{p,R}\|\omega\|_{L^p(B_{2R})}.
\]
By \eqref{eq:comp-def}, $-\Delta u=\nabla\times\omega$ in
$\mathcal D'(B_{2R})$, so $h$ is harmonic in $B_{3R/2}$.  The interior
estimate for harmonic functions therefore yields
\[
 \|\nabla h\|_{L^p(B_R)}
 \le C_{p,R}\|h\|_{L^2(B_{3R/2})}
 \le C_{p,R}\left(\|u\|_2+\|\omega\|_{L^p(B_{2R})}\right).
\]
Combining the last two gradient bounds gives the same estimate for
$\|\nabla u\|_{L^p(B_R)}$.  Finally, Poincar\'e's inequality gives
\[
 \|u\|_{L^p(B_R)}
 \le C_R\|\nabla u\|_{L^p(B_R)}
 +|B_R|^{\frac1p-\frac12}\|u\|_{L^2(B_R)},
\]
which proves \eqref{eq:ltwo-om}.
\end{proof}

\begin{lemma}[Hardy--Littlewood--Sobolev inequality in $\R^2$]\label{lem:hls}
Let $0<\lambda<2$ and $1<a,b<\infty$ satisfy
\begin{equation}\label{eq:hls-cond}
 \frac1a+\frac1b+\frac\lambda2=2.
\end{equation}
Then, for every $f\in L^a(\R^2)$ and $g\in L^b(\R^2)$,
\begin{equation}\label{eq:hls-bound}
 \iint_{\R^2\times\R^2}
 \frac{|f(X)g(Y)|}{|X-Y|^\lambda}\,dX\,dY
 \lesssim_{\lambda,a,b}
 \|f\|_{L^a(\R^2)}\|g\|_{L^b(\R^2)}.
\end{equation}
In particular, for equal exponents,
\begin{equation}\label{eq:hls-equal}
 a=b=\frac{4}{4-\lambda}.
\end{equation}
For the Riesz potential $I_1f(X):=\int_{\R^2}|X-Y|^{-1}f(Y)\,dY$,
\begin{equation}\label{eq:riesz-bound}
 \|I_1f\|_{L^4(\R^2)}
 \lesssim \|f\|_{L^{4/3}(\R^2)}.
\end{equation}
\end{lemma}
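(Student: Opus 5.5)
The plan is to derive \eqref{eq:hls-bound} from the classical one-dimensional-free approach: bound the bilinear form by $\int |g|\,(|f|*|\cdot|^{-\lambda})$ and reduce to the $L^a\to L^{b'}$ boundedness of the Riesz potential $I_{2-\lambda}f=|f|*|\cdot|^{-\lambda}$ on $\R^2$. By H\"older in $Y$, it suffices to show
\[
 \bigl\||f|*|\cdot|^{-\lambda}\bigr\|_{L^{b'}(\R^2)}\lesssim\|f\|_{L^a(\R^2)},
 \qquad \frac1{b'}=\frac1a-\frac{2-\lambda}{2},
\]
and \eqref{eq:hls-cond} is exactly this relation, with $1<a<b'<\infty$ because $1<a,b<\infty$ and $0<\lambda<2$.

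For the Riesz-potential bound I will use Hedberg's pointwise argument. Fix $X$ and $\delta>0$ and split the convolution at $|X-Y|=\delta$. The near part is bounded by a dyadic sum over annuli $2^{-k-1}\delta<|X-Y|\le 2^{-k}\delta$, giving $\lesssim \delta^{2-\lambda}Mf(X)$ with $M$ the Hardy--Littlewood maximal function (the sum converges since $2-\lambda>0$). The far part is bounded by H\"older, giving $\lesssim \|f\|_{a}\,\delta^{2/a'-\lambda}$; here the kernel $|Z|^{-\lambda a'}$ is integrable at infinity precisely because $\lambda a'>2$, equivalently $1/b'>0$. Optimizing in $\delta$ yields
\[
 |f|*|\cdot|^{-\lambda}(X)\lesssim (Mf(X))^{a/b'}\|f\|_a^{1-a/b'},
\]
and the maximal theorem on $L^a$, $a>1$, finishes the bound. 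This step is where care is needed: the exponent bookkeeping and checking that $1/b'\in(0,1/a)$ under \eqref{eq:hls-cond} is the only real content, and I expect no genuine obstacle.

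The special cases are then immediate. Equal exponents in \eqref{eq:hls-cond} give $2/a=2-\lambda/2$, i.e.\ \eqref{eq:hls-equal}. For \eqref{eq:riesz-bound}, take $\lambda=1$ and $a=4/3$, so $1/b'=3/4-1/2=1/4$, giving $I_1:L^{4/3}(\R^2)\to L^4(\R^2)$; for signed $f$ one uses $|I_1f|\le I_1|f|$. Alternatively one may simply cite the standard Hardy--Littlewood--Sobolev inequality (Lieb--Loss), since the statement is classical; I would record the Hedberg argument only as a self-contained justification.
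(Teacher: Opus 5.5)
Your proof is correct. Your fallback (citing Lieb--Loss) is exactly what the paper does: it invokes Lieb--Loss, Theorem~4.3, with $n=2$ for the bilinear bound \eqref{eq:hls-bound}, and the corresponding mapping theorem for \eqref{eq:riesz-bound}. The paper thus treats the bilinear form as primary and the Riesz-potential bound as its dual.

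Your main argument runs the other way. You prove $|f|*|\cdot|^{-\lambda}\colon L^a\to L^{b'}$ via Hedberg's pointwise bound $|f|*|\cdot|^{-\lambda}\lesssim (Mf)^{a/b'}\|f\|_a^{1-a/b'}$ and the maximal theorem, then recover \eqref{eq:hls-bound} by H\"older in $Y$. This is legitimate because \eqref{eq:hls-cond} is exactly $1/b'=1/a-(2-\lambda)/2$. Your exponent checks are also right:
$2-\lambda>0$ makes the dyadic near sum converge;
$\lambda a'>2\Leftrightarrow b>1$ makes the far tail integrable;
$a>1$ is needed for $M$.

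Your route gives a self-contained proof modulo the maximal theorem, and it records where each strict inequality in the hypotheses is used. It also handles the off-diagonal exponents (e.g.\ $1/\tau_p+1/p^\sharp=3/2$ in Proposition~\ref{prop:crossint}) on the same footing as the diagonal ones. The citation gives brevity, plus the sharp constant in the diagonal case $a=b=4/(4-\lambda)$. The paper never needs that constant, since all its applications only require some finite constant.
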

\begin{proof}
The bilinear estimate \eqref{eq:hls-bound} is Theorem~4.3 of
\cite[p.~106]{lieb2001analysis} with $n=2$.
Equation \eqref{eq:hls-equal} is the specialization of
\eqref{eq:hls-cond} to $a=b$.  The mapping estimate
\eqref{eq:riesz-bound} is the corresponding Hardy--Littlewood--Sobolev
mapping theorem with $n=2$, $\lambda=1$, and exponent $4/3$.
\end{proof}

\makeatletter
\let\sectionname\@empty
\makeatother
\section*{Acknowledgments}
G.C.B. was supported by the National Research Foundation of Korea (NRF) grant
funded by the Korea government (MSIT) (No.~2021R1C1C2094843). H.C. is supported by grant from the National Natural Science Foundation of China (Project No.~12601434).
C.K. is partially supported by NSF CAREER Grant No.~2047681.  This material is
partly based upon work supported by the National Science Foundation under Grant
No.~DMS-2424139, while C.K. was in residence at the Simons Laufer Mathematical
Sciences Institute in Berkeley, California, during the Fall 2025 semester.
C.K. thanks Kyudong Choi and In-Jee Jeong for their explanations of their
results and ideas, including those in \cite{ChoiJeong2025}.

\bibliographystyle{siam}
%\bibliography{citation}

\begin{thebibliography}{10}

\bibitem{AbidiHmidiKeraani2010}
{\sc H.~Abidi, T.~Hmidi, and S.~Keraani}, {\em On the global well-posedness for the axisymmetric {Euler} equations}, Math. Ann., 347 (2010), pp.~15--41.

\bibitem{alexandre2012boltzmann}
{\sc R.~Alexandre, Y.~Morimoto, S.~Ukai, C.-J. Xu, and T.~Yang}, {\em {The Boltzmann equation without angular cutoff in the whole space: I, Global existence for soft potential}}, Journal of Functional Analysis, 262 (2012), pp.~915--1010.

\bibitem{bae2026quantitative}
{\sc G.-C. Bae and C.~Kim}, {\em Quantitative closure analysis toward ideal fluids}, arXiv preprint,  (2026).
\newblock arXiv:2603.14671v2.

\bibitem{bardosgolselevermore1991}
{\sc C.~Bardos, F.~Golse, and C.~D. Levermore}, {\em {Fluid dynamic limits of kinetic equations. {I}. Formal derivations}}, J. Statist. Phys., 63 (1991), pp.~323--344.

\bibitem{bardosgolselevermore1993}
\leavevmode\vrule height 2pt depth -1.6pt width 23pt, {\em {Fluid dynamic limits of kinetic equations. {II}. Convergence proofs for the {B}oltzmann equation}}, Comm. Pure Appl. Math., 46 (1993), pp.~667--753.

\bibitem{brkic2024vanishing}
{\sc P.~Brki{\'c} and E.~Wiedemann}, {\em On the vanishing viscosity limit for {3D} axisymmetric flows without swirl}, Nonlinear Anal. Real World Appl., 77 (2024), p.~104066.

\bibitem{CaoFanQin2026}
{\sc D.~Cao, J.~Fan, and G.~Qin}, {\em Linear growth of the vorticity maximum for axisymmetric {E}uler flows without swirl}, arXiv preprint,  (2025).
\newblock arXiv:2511.03171v4.

\bibitem{Chen2026EulerII}
{\sc J.~Chen}, {\em {Asymptotically Self-Similar Blowup for 3D Incompressible Euler with {$C^{1,1/3-}$} Velocity II: 3D Profiles, Blowup, and Limiting Behavior}}, 2026.
\newblock arXiv:2605.15130v2.

\bibitem{ChoiJeong2025}
{\sc K.~Choi and I.-J. Jeong}, {\em On vortex stretching for anti-parallel axisymmetric flows}, American Journal of Mathematics, 147 (2025), pp.~1251--1284.

\bibitem{ConstantinETiti1994}
{\sc P.~Constantin, W.~E, and E.~S. Titi}, {\em {Onsager's conjecture on the energy conservation for solutions of Euler's equation}}, Comm. Math. Phys., 165 (1994), pp.~207--209.

\bibitem{CordobaMartinezZoroaZheng2025}
{\sc D.~C{\'o}rdoba, L.~Mart{\'i}nez-Zoroa, and F.~Zheng}, {\em {Finite time singularities to the 3D incompressible {Euler} equations for solutions in $C^{\infty}(\mathbb{R}^3\setminus\{0\})\cap C^{1,\alpha}\cap L^2$}}, Ann. PDE, 11 (2025), p.~Paper No. 19.

\bibitem{danchin2007axisymmetric}
{\sc R.~Danchin}, {\em Axisymmetric incompressible flows with bounded vorticity}, Russian Math. Surveys, 62 (2007), pp.~475--496.

\bibitem{demasiespositolebowitz1989}
{\sc A.~De~Masi, R.~Esposito, and J.~L. Lebowitz}, {\em {Incompressible Navier--Stokes and Euler Limits of the Boltzmann Equation}}, Comm. Pure Appl. Math., 42 (1989), pp.~1189--1214.

\bibitem{DeRosaInversiStefani2023}
{\sc L.~De~Rosa, M.~Inversi, and G.~Stefani}, {\em {Weak-strong uniqueness and vanishing viscosity for incompressible Euler equations in exponential spaces}}, Journal of Differential Equations, 366 (2023), pp.~833--861.

\bibitem{Delort1992}
{\sc J.-M. Delort}, {\em Une remarque sur le probl\`eme des nappes de tourbillon axisym\'etriques sur {$\mathbb R^3$}}, J. Funct. Anal., 108 (1992), pp.~274--295.

\bibitem{diperna1989cauchy}
{\sc R.~J. DiPerna and P.-L. Lions}, {\em {On the Cauchy problem for Boltzmann equations: global existence and weak stability}}, Annals of Mathematics,  (1989), pp.~321--366.

\bibitem{diperna1989ordinary}
\leavevmode\vrule height 2pt depth -1.6pt width 23pt, {\em {Ordinary differential equations, transport theory and Sobolev spaces}}, Inventiones mathematicae, 98 (1989), pp.~511--547.

\bibitem{EgamberganovYao2025}
{\sc K.~Egamberganov and Y.~Yao}, {\em Growth estimates for axisymmetric {E}uler equations without swirl}, arXiv preprint,  (2025).
\newblock arXiv:2512.13456v1.

\bibitem{Elgindi2021}
{\sc T.~M. Elgindi}, {\em {Finite-time singularity formation for $C^{1,\alpha}$ solutions to the incompressible {Euler} equations on $\mathbb{R}^3$}}, Ann. of Math. (2), 194 (2021), pp.~647--727.

\bibitem{ElgindiGhoulMasmoudi2021}
{\sc T.~M. Elgindi, T.-E. Ghoul, and N.~Masmoudi}, {\em {On the stability of self-similar blow-up for $C^{1,\alpha}$ solutions to the incompressible {Euler} equations on $\mathbb{R}^3$}}, Cambridge J. Math., 9 (2021), pp.~1035--1075.

\bibitem{gallay2015remarks}
{\sc T.~Gallay and V.~{\v{S}}ver{\'a}k}, {\em Remarks on the {Cauchy} problem for the axisymmetric {Navier--Stokes} equations}, Confluentes Math., 7 (2015), pp.~67--92.

\bibitem{golsesaintraymond2004}
{\sc F.~Golse and L.~Saint-Raymond}, {\em {The {N}avier--{S}tokes limit of the {B}oltzmann equation for bounded collision kernels}}, Invent. Math., 155 (2004), pp.~81--161.

\bibitem{golsesaintraymond2009}
\leavevmode\vrule height 2pt depth -1.6pt width 23pt, {\em {The incompressible {N}avier--{S}tokes limit of the {B}oltzmann equation for hard cutoff potentials}}, J. Math. Pures Appl., 91 (2009), pp.~508--552.

\bibitem{guo2006diffusive}
{\sc Y.~Guo}, {\em Boltzmann diffusive limit beyond the {Navier--Stokes} approximation}, Comm. Pure Appl. Math., 59 (2006), pp.~626--687.

\bibitem{GustafsonMillerTsai2026}
{\sc S.~Gustafson, E.~Miller, and T.-P. Tsai}, {\em Growth rates for anti-parallel vortex tube {E}uler flows in three and higher dimensions}, Journal of Mathematical Fluid Mechanics, 28 (2026), p.~Paper No. 19.

\bibitem{JiuLiuNiu2021}
{\sc Q.~Jiu, J.~Liu, and D.~Niu}, {\em Global existence of weak solutions to the incompressible axisymmetric {E}uler equations without swirl}, J. Nonlinear Sci., 31 (2021), p.~36.

\bibitem{JiuWuYang2015}
{\sc Q.~Jiu, J.~Wu, and W.~Yang}, {\em Viscous approximation and weak solutions of the 3d axisymmetric {Euler} equations}, Math. Methods Appl. Sci., 38 (2015), pp.~548--558.

\bibitem{JiuXin2004}
{\sc Q.~Jiu and Z.~Xin}, {\em Viscous approximations and decay rate of maximal vorticity function for 3-{D} axisymmetric {E}uler equations}, Acta Math. Sin. (Engl. Ser.), 20 (2004), pp.~385--404.

\bibitem{JiuXin2006}
\leavevmode\vrule height 2pt depth -1.6pt width 23pt, {\em On strong convergence to 3-{D} axisymmetric vortex sheets}, J. Differential Equations, 223 (2006), pp.~33--50.

\bibitem{lei2024global}
{\sc Y.~Lei, S.~Liu, Q.~Xiao, and H.~Zhao}, {\em {Global Hilbert expansion for some nonrelativistic kinetic equations}}, Journal of the London Mathematical Society, 110 (2024), p.~e70016.

\bibitem{lieb2001analysis}
{\sc E.~H. Lieb and M.~Loss}, {\em Analysis}, vol.~14, American Mathematical Soc., 2001.

\bibitem{LimJeong2025}
{\sc D.~Lim and I.-J. Jeong}, {\em On the optimal rate of vortex stretching for axisymmetric {E}uler flows without swirl}, Archive for Rational Mechanics and Analysis, 249 (2025), p.~Paper No. 32.

\bibitem{lionsmasmoudi2001I}
{\sc P.-L. Lions and N.~Masmoudi}, {\em From the {B}oltzmann equations to the equations of incompressible fluid mechanics. {I}}, Arch. Ration. Mech. Anal., 158 (2001), pp.~173--193.

\bibitem{lionsmasmoudi2001II}
\leavevmode\vrule height 2pt depth -1.6pt width 23pt, {\em From the {B}oltzmann equations to the equations of incompressible fluid mechanics. {II}}, Arch. Ration. Mech. Anal., 158 (2001), pp.~195--211.

\bibitem{nobili2022renormalization}
{\sc C.~Nobili and C.~Seis}, {\em Renormalization and energy conservation for axisymmetric fluid flows}, Mathematische Annalen, 382 (2022), pp.~1--36.

\bibitem{Onsager1949}
{\sc L.~Onsager}, {\em Statistical hydrodynamics}, Nuovo Cimento (9), 6 (1949), pp.~279--287.

\bibitem{saintraymond2003euler}
{\sc L.~Saint-Raymond}, {\em {Convergence of solutions to the {B}oltzmann equation in the incompressible {E}uler limit}}, Arch. Ration. Mech. Anal., 166 (2003), pp.~47--80.

\bibitem{saintraymond2009entropy}
\leavevmode\vrule height 2pt depth -1.6pt width 23pt, {\em {Hydrodynamic limits: some improvements of the relative entropy method}}, Ann. Inst. H. Poincar\'e C Anal. Non Lin\'eaire, 26 (2009), pp.~705--744.

\bibitem{ShaoWeiZhang2026}
{\sc F.~Shao, D.~Wei, and Z.~Zhang}, {\em {Global Regularity of Axisymmetric Euler Equations Without Swirl in Higher Dimensions}}, Acta Mathematica Sinica, English Series, 42 (2026), pp.~663--679.

\bibitem{ShirotaYanagisawa1994}
{\sc T.~Shirota and T.~Yanagisawa}, {\em Note on global existence for axially symmetric solutions of the {Euler} system}, Proc. Japan Acad. Ser. A Math. Sci., 70 (1994), pp.~299--304.

\bibitem{simon1987compact}
{\sc J.~Simon}, {\em Compact sets in the space {$L^p(0,T;B)$}}, Annali di Matematica Pura ed Applicata, 146 (1987), pp.~65--96.

\bibitem{strain2008exponential}
{\sc R.~M. Strain and Y.~Guo}, {\em {Exponential decay for soft potentials near Maxwellian}}, Archive for Rational Mechanics and Analysis, 187 (2008), pp.~287--339.

\bibitem{strainzhu2013vpl}
{\sc R.~M. Strain and K.~Zhu}, {\em {The {V}lasov--{P}oisson--{L}andau system in $\mathbb{R}^3_x$}}, Arch. Ration. Mech. Anal., 210 (2013), pp.~615--671.

\bibitem{UkhovskiiIudovich1968}
{\sc M.~R. Ukhovskii and V.~I. Iudovich}, {\em Axially symmetric flows of ideal and viscous fluids filling the whole space}, J. Appl. Math. Mech., 32 (1968), pp.~52--62.

\bibitem{xu2026diffusiveVMB}
{\sc Y.~Xu, F.~Zhou, W.~Gong, and W.~Wu}, {\em {Diffusive Limit of the Noncutoff Vlasov--Maxwell--Boltzmann System}}, SIAM Journal on Mathematical Analysis, 58 (2026), pp.~1187--1231.

\end{thebibliography}

\end{document}